\providecommand{\U}[1]{\protect\rule{.1in}{.1in}}
\newtheorem{theorem}{Theorem}
\newtheorem{corollary}[theorem]{Corollary}
\newtheorem{definition}[theorem]{Definition}
\newtheorem{example}[theorem]{Example}
\newtheorem{lemma}[theorem]{Lemma}
\newtheorem{notation}[theorem]{Notation}
\newtheorem{proposition}[theorem]{Proposition}
\newtheorem{remark}[theorem]{Remark}
\newenvironment{proof}[1][Proof]{\noindent\textbf{#1.} }{\ \rule{0.5em}{0.5em}}
\newcommand{\GraphicsDirectory}{./}
\newcommand{\executeiffilenewer}[3]{%
\ifnum\pdfstrcmp{\pdffilemoddate{#1}}%
{\pdffilemoddate{#2}}>0%
{\immediate\write18{#3}}\fi%
}
\newcommand{%
\executeiffilenewer{\GraphicsDirectory.svg}{\GraphicsDirectory.pdf}%
{inkscape -z -D --file=\GraphicsDirectory.svg --export-pdf=\GraphicsDirectory.pdf --export-latex}%
\input{\GraphicsDirectory.pdf_tex}%
}[1]{%
\executeiffilenewer{\GraphicsDirectory#1.svg}{\GraphicsDirectory#1.pdf}%
{inkscape -z -D --file=\GraphicsDirectory#1.svg --export-pdf=\GraphicsDirectory#1.pdf --export-latex}%
\input{\GraphicsDirectory#1.pdf_tex}%
}
\def\svgwidth{2in}
\newcommand{\psize}[1]{\def\svgwidth{#1}}
\def\blfootnote{\gdef\@thefnmark{}\@footnotetext}
\begin{document} \blfootnote{\noindent 2010 \emph{Mathematics Subject Classification}. 58J65; 60H30,

\emph{Key words and phrases}. Dynamical Systems, Rough Paths.}
\author{Bruce K. Driver\thanks{The research of Bruce K. Driver was 
supported in part by NSF Grant DMS-0739164. The author also gratefully 
acknowledges the generosity and hospitality of the Imperial College 
Mathematics Department, where part of this research was carried out, and was supported by EPSRC grant
EP/M00516X/1 and a Nelder Visiting Fellowship during the Fall of 2014.}
\and Jeremy S. Semko\thanks{The research of Jeremy S. Semko was
supported in part by NSF Grant DMS-0739164.}}
\title{Controlled Rough Paths on Manifolds I}
\maketitle

\begin{abstract}In this paper, we build the foundation for a theory of controlled rough paths
on manifolds. A number of natural candidates for the definition of manifold
valued controlled rough paths are developed and shown to be equivalent. The
theory of controlled rough one-forms along such a controlled path and their
resulting integrals are then defined. This general integration theory does
require the introduction of an additional geometric structure on the manifold
which we refer to as a \textquotedblleft parallelism.\textquotedblright\ The
transformation properties of the theory under change of parallelisms is
explored. Using these transformation properties, it is shown that the
integration of a smooth one-form along a manifold valued controlled rough path
is in fact well defined independent of any additional geometric structures. We
present a theory of push-forwards and show how it is compatible with our
integration theory. Lastly, we give a number of characterizations for solving
a rough differential equation when the solution is interpreted as a controlled
rough path on a manifold and then show such solutions exist and are unique.

\end{abstract}
\tableofcontents


\numberwithin{theorem}{section} \numberwithin{equation}{section}

\section{Introduction\label{sec.1}}

In a series of papers \cite{lyons-94,lyons-95,lyons-98}, Terry Lyons
introduced and developed the far reaching theory of rough path analysis. This
theory allows one to solve (deterministically) differential equations driven
by rough signals at the expense of \textquotedblleft
enhancing\textquotedblright\ the rough signal with some additional
information. Lyons' theory has found numerous applications to stochastic
calculus and stochastic differential equations, for example see \cite{CF},
\cite{CFV}, \cite{CHLT}, \cite{CLL}, and the references therein. For some more
recent applications, see \cite{Blanchet}, \cite{Kelly}, \cite{Inahama},
\cite{Delaure} , and \cite{Boe}.

The rough path theory mentioned above has been almost exclusively developed in
the context of state spaces being either finite or infinite dimensional Banach
spaces with the two exceptions of \cite{Cass2012a} and \cite{CDL13}. In
\cite{Cass2012a}, a version of manifold valued rough paths is developed in the
context of \textquotedblleft currents,\textquotedblright\ while in
\cite{CDL13} the authors develop a more concrete theory by working with
embedded submanifolds.

The purpose of this paper is to define and develop a third interpretation of
rough paths on manifolds based on Gubinelli's \cite{gubinelli} notions of
\textquotedblleft controlled\textquotedblright\ rough paths. As Gubinelli's
perspective has proved extremely useful in the flat case (most notably see
Hairer \cite{Hairer2014}), it is expected such a theory of controlled rough
paths on manifolds can give new insights as well as applications to the
existing literature. We now will present a brief summary of the results
contained in this paper.

\subsection{Summary of Results\label{sub.1.1}}

Let $M^{d}$ be a $d$ -- dimensional manifold, $\mathbf{X}_{s,t}:=1+x_{s,t}%
+\mathbb{X}_{s,t}$ be a weak-geometric rough path in $W:=\mathbb{R}^{k}$ with
$1\leq p<3.$ A \emph{rough path controlled by} $\mathbf{X}$ on $M$ (see
Definition \ref{def.2.35}) is a pair of continuous functions $y:\left[
0,T\right]  \rightarrow M,$ and $y^{\dag}:\left[  0,T\right]  \rightarrow
L\left(  W,TM\right)  $ such that (somewhat imprecisely speaking) for all
$0\leq s\leq t\leq T;$
\[
\text{1)}~y_{s}^{\dag}:W\rightarrow T_{y_{s}}M,\quad\text{2) }\psi\left(
y_{s},y_{t}\right)  =y_{s}^{\dag}x_{s,t}+O\left(  \left\vert x_{s,t}%
\right\vert ^{2}\right)  ,\text{ and 3) }U\left(  y_{s},y_{t}\right)
y_{t}^{\dag}-y_{s}^{\dag}=O\left(  \left\vert x_{s,t}\right\vert \right)  ,
\]
where $\psi$ is a \textquotedblleft logarithm\textquotedblright\ on $M$ (see
Definition \ref{def.2.15}) and $U$ is a \textquotedblleft
parallelism\textquotedblright\ on $M$ (see Definition \ref{def.2.16}). [When
$M=\mathbb{R}^{d},$ one identifies all tangent spaces in which case one
typically takes $U\left(  m,n\right)  =I$ and $\psi\left(  m,n\right)  =n-m.]$
The pair $\mathcal{G}:=\left(  \psi,U\right)  $ is called a gauge.
Alternatively one can define controlled rough paths locally via a chart $\phi$
by requiring (see Definition \ref{def.2.40})
\[
\phi\left(  y_{t}\right)  -\phi\left(  y_{s}\right)  -d\phi\circ y_{s}^{\dag
}x_{s,t}=O\left(  \left\vert x_{s,t}\right\vert ^{2}\right)  \text{ and }%
d\phi\circ y_{t}^{\dag}-d\phi\circ y_{s}^{\dag}=O\left(  \left\vert
x_{s,t}\right\vert \right)  .
\]
It is shown in Theorem \ref{the.2.44} that these two notions of controlled
rough paths agree. Moreover, these manifold-valued rough paths may also be
characterized as pairs $\mathbf{y}=\left(  y,y^{\dag}\right)  $ whose
\textquotedblleft push-forwards\textquotedblright\ under smooth real-valued
functions are controlled rough paths on $\mathbb{\mathbb{R}}$ (See Theorem
\ref{the.2.57}). 

Two natural examples of manifold valued controlled rough paths are as follows.
1) If $M^{d}$ is an embedded submanifold (see subsection \ref{sub.2.6}) and
the path $x_{s}\in W$ happens to lie in $M$ (i.e. $x_{s}\in M$ for all $s$ in
$\left[  0,T\right]  ),$ then $\left(  x_{s},P(x_{s})\right)  $ is an $M$ --
valued rough path controlled by $\mathbf{X}$ where $P\left(  m\right)  $ is
orthogonal projection onto $T_{m}M$ (see Example \ref{exa.2.55}). 2) If
$f:W\rightarrow M^{d}\subseteq\mathbb{R}^{\tilde{k}}$ is smooth, then $\left(
f\left(  x_{s}\right)  ,f^{\prime}\left(  x_{s}\right)  \right)  $ is a rough
path controlled by $\mathbf{X}$ (see Example \ref{exa.2.56}).

Now let $\mathcal{G}=\left(  \psi,U\right)  $ be a gauge, $V$ be a Banach
space, and $\mathbf{y}=\left(  y,y^{\dag}\right)  $ be an $M$ -- valued
controlled rough path as above. A pair of continuous functions $\alpha:\left[
0,T\right]  \rightarrow L\left(  TM,V\right)  $ and $\alpha^{\dag}:\left[
0,T\right]  \rightarrow L\left(  W\otimes TM,V\right)  $ is a $U-$%
\emph{controlled (rough) one-form along }$y$ with values in a Banach space $V$
provided (see Definition \ref{def.3.1} for details);

\begin{enumerate}
\item $\alpha_{s}:T_{y_{s}}M\rightarrow V$ for all $s,$

\item $\alpha_{s}^{\dag}:W\otimes T_{y_{s}}M\rightarrow V$ for all $s,$

\item $\alpha_{t}\circ U\left(  y_{t},y_{s}\right)  -\alpha_{s}-\alpha
_{s}^{\dag}\left(  x_{s,t}\otimes\left(  \cdot\right)  \right)  =O\left(
\left\vert x_{s,t}\right\vert ^{2}\right)  $, and

\item $\alpha_{t}^{\dag}\circ\left(  I\otimes U\left(  y_{t},y_{s}\right)
\right)  -\alpha_{s}^{\dag}=O\left(  \left\vert x_{s,t}\right\vert \right)  .$
\end{enumerate}

To abbreviate notation we write $\boldsymbol{\alpha}_{s}=\left(  \alpha
_{s},\alpha_{s}^{\dag}\right)  .$ As an example, if $\alpha\in\Omega
^{1}\left(  M,V\right)  $ is a smooth one-form on $M$ and $U$ is a parallelism
it is shown in Proposition \ref{pro.4.2} how to construct $\alpha_{s}^{\dag
U}$ so that $\boldsymbol{\alpha}_{s}^{U}=\left(  \alpha_{s}:=\alpha
|_{T_{y_{s}}M},\alpha_{s}^{\dag U}\right)  $ is a $U-$controlled (rough)
one-form along $y.$

Theorem \ref{the.3.21} below constructs the integral, $\int\left\langle
\boldsymbol{\alpha}\mathbf{,}d\mathbf{y}^{\mathcal{G}}\right\rangle ,$ of
$\boldsymbol{\alpha}$ along $\mathbf{y}=\left(  y,y^{\dag}\right)  .$ This
integral is a standard flat $V-$valued controlled rough path along
$\mathbf{X}$ which, as the notation suggests, a priori depends on a choice of
gauge, $\mathcal{G}=\left(  \psi,U\right)  .$ However, it is shown in
Corollary \ref{cor.3.31} that the integral actually only depends on the
parallelism, $U.$ In Theorem \ref{the.3.24} (also see Proposition
\ref{pro.3.6}), we show that the integral, $\int\left\langle
\boldsymbol{\alpha}\mathbf{,}d\mathbf{y}^{\mathcal{G}}\right\rangle ,$
satisfies a basic but useful associativity property.

In Theorem \ref{the.3.32}, it is shown that there are \textquotedblleft
natural\textquotedblright\ transformations relating all of the above
structures under change of parallelism, $U\rightarrow\tilde{U},$ in such a way
that the integral, $\int\left\langle \boldsymbol{\alpha}\mathbf{,}%
d\mathbf{y}^{\mathcal{G}}\right\rangle ,$ is preserved. Consequently, as shown
in Theorem \ref{the.4.3}, if $\alpha\in\Omega^{1}\left(  M,V\right)  $ is a
smooth one-form on $M$ with values in $V$ and $\boldsymbol{\alpha}_{s}%
^{U}=\left(  \alpha_{s}:=\alpha|_{T_{y_{s}}M},\alpha_{s}^{\dag U}\right)  $ is
the associated $U-$controlled (rough) one-form along $y,$ then the resulting
integral $\int\left\langle \boldsymbol{\alpha}^{U}\mathbf{,}d\mathbf{y}%
^{\mathcal{G}}\right\rangle $ is in fact independent of both the parallelism,
$U,$ and the logarithm, $\psi$, used in the construction. Therefore we may
simply denote the resulting integral by $\int\alpha\left(  d\mathbf{y}\right)
.$ A gauge independent formula for this integral using charts is given in
Corollary \ref{cor.4.7}. It is shown in Theorem \ref{the.4.15} that if
$\alpha\in\Omega^{1}\left(  M,V\right)  $ is a smooth one-form on $M$ and
$f:M\rightarrow\tilde{M}$ is a smooth map between two manifolds, then%
\[
\int\left(  f^{\ast}\alpha\right)  \left(  d\mathbf{y}\right)  =\int%
\alpha\left(  d\left(  f_{\ast}\mathbf{y}\right)  \right)  ,
\]
where $f_{\ast}\mathbf{y:}=\left(  f\circ y,f_{\ast}\circ y_{s}^{\dag}\right)
$ is the \textquotedblleft push-forward\textquotedblright\ of $\mathbf{y}$ by
$f$ (see Definition \ref{def.4.11}) and $f^{\ast}\alpha\in\Omega^{1}\left(
\tilde{M},V\right)  $ is the pull-back of $\alpha.$

In Section \ref{sub.5.1}, we discuss the notion of a controlled rough path
$\mathbf{y}=\left(  y,y^{\dag}\right)  $ solving the rough differential
equation (RDE)%
\[
d\mathbf{y}_{t}=F_{d\mathbf{X}}\left(  y_{t}\right)  \text{\quad with\quad
}y_{0}=\bar{y}_{0}%
\]
when $F:W\rightarrow\Gamma\left(  TM\right)  $. Essentially $\mathbf{y}$ will
solve such an equation if the path $y$, when pushed forward by any smooth
function $f$, has the correct \textquotedblleft Taylor
expansion\textquotedblright\ and $y^{\dag}$ is the correct derivative, i.e.
$y_{s}^{\dag}=F_{\left(  \cdot\right)  }\left(  y_{s}\right)  $ (see
Definition \ref{def.5.2}). Theorem \ref{the.5.3} then compiles a list of
alternative characterizations for solving an RDE both by approximating
solutions and by relating them to familiar flat space rough integrals. Next,
the existence and uniqueness of solutions are proved in Theorem \ref{the.5.4}
and Theorem \ref{the.5.5}. Lastly, in Theorem \ref{the.5.9}, we record what it
means to solve an RDE when one takes the gauge perspective.

In a sequel to this paper, we will develop notions of parallel translation
along a controlled rough path along with rough version of Cartan's rolling and
unrolling maps in order to characterize all controlled rough paths on $M.$

\section{Definitions of Controlled Rough Paths with Examples\label{sec.2}}

\subsection{Review of Euclidean Space Rough Paths\label{sub.2.1}}

The presentation here will be brief. For a more thorough development, the
reader can refer to many sources, for example \cite{FH2014} or \cite{FV}.

Throughout this paper, we denote $W=\mathbb{R}^{k}$. Let $1\leq p<3$ and let
\begin{equation}
\Delta_{\left[  S,T\right]  }=\left\{  \left(  s,t\right)  :S\leq s\leq t\leq
T\right\}  . \label{equ.2.1}%
\end{equation}

\begin{definition}
\label{def.2.1}A \textbf{control} $\omega$ is a continuous function
$\omega:\Delta_{\left[  0,T\right]  }\rightarrow\mathbb{R}_{+}$ which is
superadditive\footnote{To say $\omega$ is superadditive means $\omega\left(
s,t\right)  +\omega\left(  t,u\right)  \leq\omega\left(  s,u\right)  $ for all
$0\leq s\leq t\leq u\leq T.$} and such that $\omega\left(  s,s\right)  =0$ for
all $s\in\left[  0,T\right]  $.
\end{definition}

\begin{definition}
\label{def.2.2}Let $\mathbf{X}=\left(  x,\mathbb{X}\right)  $ where
\[
x:\left[  0,T\right]  \rightarrow W\quad\quad\text{and}\quad\quad
\mathbb{X}:\Delta_{\left[  0,T\right]  }\rightarrow W\otimes W
\]
and are continuous. Then $\mathbf{X}$ is a\textbf{ }$p$\textbf{-rough path
with control }$\omega$ if

\begin{enumerate}
\item The Chen identity holds:%
\begin{equation}
\mathbb{X}_{s,u}=\mathbb{X}_{s,t}+\mathbb{X}_{t,u}+x_{s,t}\otimes x_{t,u}
\label{equ.2.2}%
\end{equation}
for all $0\leq s\leq t\leq u\leq T$ where $x_{s,t}:=x_{t}-x_{s}$.

\item For all $0\leq s\leq t\leq T$,
\begin{equation}
\left\vert x_{s,t}\right\vert \leq\omega\left(  s,t\right)  ^{1/p}\quad
\quad\text{and}\quad\quad\left\vert \mathbb{X}_{s,t}\right\vert \leq
\omega\left(  s,t\right)  ^{2/p}. \label{equ.2.3}%
\end{equation}

\end{enumerate}

Further, we say that $\mathbf{X}$ is \textbf{weak-geometric} if the symmetric
part of $\mathbb{X}_{s,t}$ ($\operatorname{sym}\left(  \mathbb{X}%
_{s,t}\right)  $) satisfies the relation
\[
\operatorname{sym}\left(  \mathbb{X}_{s,t}\right)  =\frac{1}{2}x_{s,t}\otimes
x_{s,t}.
\]

\end{definition}

\begin{notation}
\label{not.2.3}Let $F_{s,t}$ and $G_{s,t}$ be a pair of functions into a
normed space. When it is not important to keep careful track of constants we
will often write $F_{s,t}\underset{^{i}}{\approx}G_{s,t}$ (for any
$i\in\mathbb{N)}$ to indicate that there exists $C<\infty$ and $\delta>0$ such
that%
\[
\left\vert F_{s,t}-G_{s,t}\right\vert \leq C\omega\left(  s,t\right)
^{i/p}\text{ for all }0\leq s\leq t\leq T\text{ with }\left\vert
t-s\right\vert \leq\delta.
\]

\end{notation}

In this paper, $V$,$\tilde{V}$, and $\hat{V}$ will denote Banach spaces, and
$L\left(  V,\tilde{V}\right)  $ will denote the bounded linear transformations
from $V$ to $\tilde{V}$.

\begin{example}
\label{exa.2.4}If $x\left(  t\right)  \in C^{\infty}\left(  \left[
0,T\right]  ,V\right)  $ is a smooth curve in $V$ and
\begin{equation}
\mathbb{X}_{s,t}=\int_{s\leq u\leq v\leq t}dx_{u}\otimes dx_{v}=\int_{s}%
^{t}x_{s,v}\otimes dx_{v}, \label{equ.2.4}%
\end{equation}
then $\mathbf{X}=\left(  x,\mathbb{X}\right)  $ is a weak-geometric rough path
controlled by $\omega\left(  s,t\right)  =\left\vert t-s\right\vert .$ In this
example we could take even take $p=1.$
\end{example}

\begin{definition}
\label{def.2.5}Let $\mathbf{X}$ be a $p$-rough path on $W\oplus W^{\otimes2}$
with control $\omega$. The continuous pair $\mathbf{y}:=\left(  y,y^{\dag
}\right)  \in C\left(  \left[  a,b\right]  ,V\right)  \times C\left(  \left[
a,b\right]  ,L\left(  W,V\right)  \right)  $ is a $V$ -- valued\textbf{ rough
path controlled by }$\mathbf{X}$ (denoted $\mathbf{y\in}CRP_{\mathbf{X}%
}\left(  \left[  a,b\right]  ,V\right)  $) if there exists a $C$ such that

\begin{enumerate}
\item $\left\vert y_{t}-y_{s}-y_{s}^{\dag}x_{s,t}\right\vert \leq
C\omega\left(  s,t\right)  ^{2/p}$, and

\item $\left\vert y_{t}^{\dag}-y_{s}^{\dag}\right\vert \leq C\omega\left(
s,t\right)  ^{1/p}$ for all $s\leq t$ in $\left[  0,T\right]  $.
\end{enumerate}

We denote $CRP_{\mathbf{X}}\left(  V\right)  :=CRP_{\mathbf{X}}\left(  \left[
0,T\right]  ,V\right)  $ for some fixed $T<\infty$.
\end{definition}

The approximations in Definition \ref{def.2.5} are statements which only need
to hold locally because of the following (easy) sewing lemma.

\begin{lemma}
[Sewing Lemma]\label{lem.2.6}Let $\mathbf{y}:=\left(  y,y^{\dag}\right)  \in
C\left(  \left[  0,T\right]  ,V\right)  \times C\left(  \left[  0,T\right]
,L\left(  W,V\right)  \right)  $ and let $0=t_{0}<t_{1}<\ldots<t_{l}=T$ be a
partition of $\left[  0,T\right]  $ such that $\mathbf{y}|_{\left[
t_{i},t_{i+i}\right]  }$ is a rough path controlled by $\mathbf{X}%
|_{[t_{i},t_{i+1}]}:=\left(  x|_{\left[  t_{i},t_{i+1}\right]  }%
,\mathbb{X}|_{\Delta_{\left[  t_{i},t_{i+1}\right]  }}\right)  $ for all
$0\leq i\leq l-1.$ Then $\mathbf{y}$ is a rough path controlled by
$\mathbf{X}$.
\end{lemma}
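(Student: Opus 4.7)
The plan is to bootstrap the two local estimates on each subinterval $[t_i,t_{i+1}]$ into global estimates on $[0,T]$ by a telescoping argument on a suitably refined partition, using superadditivity of $\omega$ and the finiteness of $l$.

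Fix $0\leq s\leq t\leq T$ and let $s=s_{0}<s_{1}<\cdots<s_{m}=t$ be the partition of $[s,t]$ obtained by inserting those original partition points $t_i$ lying in the open interval $(s,t)$. Then $m\leq l+1$, and each consecutive pair $(s_{k},s_{k+1})$ is contained in a single subinterval $[t_i,t_{i+1}]$, so the hypothesized local controlled-rough-path estimates apply with a uniform constant $C:=\max_{i}C_{i}$.

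I would first establish the derivative estimate. By the triangle inequality, the local bound $|y_{s_{k+1}}^{\dag}-y_{s_{k}}^{\dag}|\leq C\,\omega(s_{k},s_{k+1})^{1/p}$, superadditivity (which gives $\omega(s_{k},s_{k+1})\leq\omega(s,t)$), and $m\leq l+1$, one gets
\[
|y_{t}^{\dag}-y_{s}^{\dag}|\leq\sum_{k=0}^{m-1}|y_{s_{k+1}}^{\dag}-y_{s_{k}}^{\dag}|\leq(l+1)C\,\omega(s,t)^{1/p}.
\]
For the primary estimate, I would use the algebraic decomposition
\[
y_{t}-y_{s}-y_{s}^{\dag}x_{s,t}=\sum_{k=0}^{m-1}\bigl[y_{s_{k+1}}-y_{s_{k}}-y_{s_{k}}^{\dag}x_{s_{k},s_{k+1}}\bigr]+\sum_{k=0}^{m-1}(y_{s_{k}}^{\dag}-y_{s}^{\dag})\,x_{s_{k},s_{k+1}},
\]
where the identity $x_{s,t}=\sum_{k}x_{s_{k},s_{k+1}}$ is just the telescoping of increments. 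The first sum is bounded term-by-term by $C\,\omega(s_{k},s_{k+1})^{2/p}$ via the local estimate. The second sum is bounded by combining the control estimate $|x_{s_{k},s_{k+1}}|\leq\omega(s_{k},s_{k+1})^{1/p}$ with the global derivative bound just proved, yielding $(l+1)C\,\omega(s,t)^{1/p}\cdot\omega(s_{k},s_{k+1})^{1/p}$ per summand. Since each $\omega(s_{k},s_{k+1})\leq\omega(s,t)$ and there are at most $l+1$ summands, both contributions collapse to a constant multiple of $\omega(s,t)^{2/p}$, with the constant depending only on $l$, $C$, and $p$.

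There is no genuine obstacle here: the lemma is "easy" precisely because the partition has a fixed finite length $l$, so crude bounds of the form $\sum\omega(s_{k},s_{k+1})^{\alpha}\leq(l+1)\omega(s,t)^{\alpha}$ suffice and no refined Young/Gubinelli sewing machinery is required. The only mild subtlety is ordering the two estimates correctly, since the global bound on $y^{\dag}-y^{\dag}$ feeds into the remainder estimate for the primary expansion of $y$.
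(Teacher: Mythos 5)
Your proof is correct and takes essentially the same approach as the paper: establish the derivative estimate by telescoping over a refined partition, then feed that global bound into the primary estimate via a telescoping decomposition of $y_{t}-y_{s}-y_{s}^{\dag}x_{s,t}$. The paper's decomposition explicitly isolates the two endpoint subintervals rather than writing one uniform sum over the refined partition, but this is a purely cosmetic difference and both yield a constant of order $\mathrm{poly}(l)\cdot\max_{i}C_{i}$.
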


\begin{proof}
Let $C_{i}$ with $0\leq i\leq l-1$ be such that%
\[
\left\vert y_{t}-y_{s}-y_{s}^{\dag}x_{s,t}\right\vert \leq C_{i}\omega\left(
s,t\right)  ^{2/p}\quad\text{and}\quad\left\vert y_{t}^{\dag}-y_{s}^{\dag
}\right\vert \leq C_{i}\omega\left(  s,t\right)  ^{1/p}%
\]
whenever $\left(  s,t\right)  \in\Delta_{\left[  t_{i},t_{i+1}\right]  }$. Let
$\tilde{C}:=\sum_{i=0}^{l-1}C_{i}$. Then by a telescoping series argument and
the fact that $\omega$ is increasing (because it is superadditive), it is
clear that
\[
\left\vert y_{t}^{\dag}-y_{s}^{\dag}\right\vert \leq\tilde{C}\omega\left(
s,t\right)  ^{1/p}~\forall~\left(  s,t\right)  \in\Delta_{\left[  0,T\right]
}.
\]

Now let $C=\left(  2l-1\right)  \tilde{C}.$ If $\left(  s,t\right)  \in
\Delta_{\left[  0,T\right]  }$ then there exists $j$ and $j^{\ast}$ such that
$s\in\lbrack t_{j},t_{j+1}]$ and $t\in\left[  t_{j^{\ast}},t_{j^{\ast}%
+1}\right]  $ with $j\leq j^{\ast}$. If $j=j^{\ast}$ then
\[
\left\vert y_{t}-y_{s}-y_{s}^{\dag}x_{s,t}\right\vert \leq C\omega\left(
s,t\right)  ^{2/p}%
\]
trivially. Otherwise, we have%
\begin{align*}
y_{t}-y_{s}-y_{s}^{\dag}x_{s,t}  &  =\left(  y_{t}-y_{t_{j^{\ast}}}\right)
+\left(  y_{t_{j+1}}-y_{s}\right)  +\sum_{i=j+1}^{j^{\ast}-1}\left(
y_{t_{i+1}}-y_{t_{i}}\right) \\
&  \quad-y_{s}^{\dag}x_{s,t_{j+1}}-y_{s}^{\dag}x_{t_{j^{\ast}},t}-\sum
_{i=j+1}^{j^{\ast}-1}y_{s}^{\dag}x_{t_{i},t_{i+1}}\\
&  =\left(  y_{t}-y_{t_{j^{\ast}}}-y_{t_{j^{\ast}}}^{\dag}x_{t_{j^{\ast}}%
,t}\right)  +\left(  y_{t_{j+1}}-y_{s}-y_{s}^{\dag}x_{s,t_{j+1}}\right)
+\left[  y_{t_{j^{\ast}}}^{\dag}-y_{s}^{\dag}\right]  x_{t_{j^{\ast}},t}\\
&  \quad+\sum_{i=j+1}^{j^{\ast}-1}\left(  y_{t_{i+1}}-y_{t_{i}}-y_{t_{i}%
}^{\dag}x_{t_{i},t_{i+1}}\right)  +\sum_{i=j+1}^{j^{\ast}-1}\left[
y_{s}^{\dag}-y_{t_{i}}^{\dag}\right]  x_{t_{i},t_{i+1}}.
\end{align*}

Taking absolute values and using the fact that $\omega$ is superadditive, we
have that the absolute value of each term on the right (including those within
the summations) is bounded by $\tilde{C}\omega\left(  s,t\right)  ^{2/p}$.
Thus
\begin{align*}
\left\vert y_{t}-y_{s}-y_{s}^{\dag}x_{s,t}\right\vert  &  \leq\left(
2l-1\right)  \tilde{C}\omega\left(  s,t\right)  ^{2/p}\\
&  =C\omega\left(  s,t\right)  ^{2/p}%
\end{align*}

\end{proof}

In \cite[Theorem 1]{gubinelli}, the following generalization of Theorem 3.3.1
of \cite{lyons-98} is proved.

\begin{theorem}
\label{the.2.7}Let $\mathbf{X}$ be a $p$-rough path on $W\oplus W^{\otimes2}$
with control $\omega$ and let $\left(  y,y^{\dag}\right)  $ be an $L\left(
W,V\right)  $ -- valued rough path controlled by $\mathbf{X}$. Then there
exists a $z\in C\left(  \left[  0,T\right]  ,V\right)  $ with $z_{0}=0$ and a
$C\geq0$ such that%
\begin{equation}
\left\vert z_{t}-z_{s}-y_{s}x_{s,t}-y_{s}^{\dag}\mathbb{X}_{s,t}\right\vert
\leq C\omega\left(  s,t\right)  ^{3/p} \label{equ.2.5}%
\end{equation}
for all $s\leq t$ in $\left[  0,T\right]  $.
\end{theorem}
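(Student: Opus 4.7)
The plan is to construct $z$ via Riemann-type sums and show that these sums converge to a limit enjoying the desired local estimate. Concretely, for each partition $\mathcal{P} = \{0=t_0 < t_1 < \cdots < t_n = t\}$ of $[0,t]$, set
\[
z^{\mathcal{P}}_t := \sum_{i=0}^{n-1}\bigl( y_{t_i} x_{t_i,t_{i+1}} + y_{t_i}^{\dag}\mathbb{X}_{t_i,t_{i+1}}\bigr).
\]
Since $z^{\mathcal{P}}_t$ equals the trivial partition $\{0,t\}$ value plus a telescoping sum of ``one-point-removal'' differences across successive refinements, the whole construction reduces to controlling what happens when we delete a single interior point $t_i$ from a three-point partition $\{t_{i-1},t_i,t_{i+1}\}$.

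First I would carry out this one-point-removal computation. Using the Chen identity $\mathbb{X}_{t_{i-1},t_{i+1}} = \mathbb{X}_{t_{i-1},t_i} + \mathbb{X}_{t_i,t_{i+1}} + x_{t_{i-1},t_i}\otimes x_{t_i,t_{i+1}}$ together with additivity of $x$, the change produced by removing $t_i$ works out to
\[
(y_{t_{i-1}}-y_{t_i})\,x_{t_i,t_{i+1}} + (y_{t_{i-1}}^{\dag}-y_{t_i}^{\dag})\,\mathbb{X}_{t_i,t_{i+1}} + y_{t_{i-1}}^{\dag}\bigl(x_{t_{i-1},t_i}\otimes x_{t_i,t_{i+1}}\bigr).
\]
Now the controlled-path hypothesis $y_{t_i}-y_{t_{i-1}} = y_{t_{i-1}}^{\dag} x_{t_{i-1},t_i} + O(\omega(t_{i-1},t_i)^{2/p})$ cancels the leading part of the first term against the third, and the $\dag$-estimate bounds the middle term by $\omega(t_{i-1},t_i)^{1/p}\cdot\omega(t_i,t_{i+1})^{2/p}$. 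By superadditivity of $\omega$ the total is at most a constant times $\omega(t_{i-1},t_{i+1})^{3/p}$. This is where the hypothesis $p<3$ enters in an essential way.

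Next I would run the standard Young/Gubinelli refinement argument. Given any partition $\mathcal{P}$ of $[s,t]$ with $|\mathcal{P}|\geq 3$, by superadditivity there exists $i$ with $\omega(t_{i-1},t_{i+1})\leq \tfrac{2}{|\mathcal{P}|-1}\omega(s,t)$. Removing this point incurs an error bounded (by the step above) by $C \bigl(\tfrac{2}{|\mathcal{P}|-1}\bigr)^{3/p}\omega(s,t)^{3/p}$. Iterating down to the coarsest partition $\{s,t\}$ and summing the convergent series $\sum_{n\geq 2} n^{-3/p}$ (finite precisely because $p<3$) yields
\[
\bigl|z^{\mathcal{P}}_{s,t} - y_s x_{s,t} - y_s^{\dag}\mathbb{X}_{s,t}\bigr| \leq C\,\zeta(3/p)\,\omega(s,t)^{3/p},
\]
where $z^{\mathcal{P}}_{s,t}$ denotes the Riemann sum over any partition of $[s,t]$. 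A second application of this same comparison (between $z^{\mathcal{P}}_{s,t}$ and $z^{\mathcal{P}'}_{s,t}$ for any common refinement) shows the net of Riemann sums is Cauchy as the mesh shrinks, so a limit $I(s,t)$ exists.

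Finally I would set $z_t := I(0,t)$. Additivity $I(s,u)=I(s,t)+I(t,u)$ is immediate from the construction, so $z_t - z_s = I(s,t)$, and passing to the limit in the displayed bound above gives exactly \eqref{equ.2.5}. The main obstacle is the cancellation step: one has to arrange the algebra so that the Chen correction term is exactly what is needed to kill the leading piece of $(y_{t_{i-1}}-y_{t_i})x_{t_i,t_{i+1}}$; everything else is bookkeeping driven by superadditivity of $\omega$ and the hypothesis $3/p > 1$.
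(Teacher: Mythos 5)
Your argument is correct. It is worth noting, though, that the paper does not actually prove Theorem \ref{the.2.7}; it cites it as \cite[Theorem~1]{gubinelli} (a controlled-path strengthening of Theorem 3.3.1 of \cite{lyons-98}), so there is no in-paper proof to compare against. Your argument is precisely the standard sewing-lemma proof that those references use: form the local germ $\Xi_{s,t} := y_s x_{s,t} + y_s^\dag \mathbb{X}_{s,t}$, verify via Chen's identity and the controlled-path estimates that $\Xi_{s,u} - \Xi_{s,t} - \Xi_{t,u}$ is $O(\omega(s,u)^{3/p})$ with $3/p > 1$, and then run the Young-style point-removal argument to get a Cauchy net of Riemann sums whose limit is the additive functional $z$. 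The one-point-removal algebra you wrote out is exactly right — the key cancellation $(y_{t_{i-1}}^\dag x_{t_{i-1},t_i})x_{t_i,t_{i+1}} = y_{t_{i-1}}^\dag (x_{t_{i-1},t_i}\otimes x_{t_i,t_{i+1}})$ is where the identification $L(W,L(W,V)) \cong L(W\otimes W,V)$ from Remark \ref{rem.2.8} is being used. No gaps; this is the right route and matches the cited source.
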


We will more commonly refer to the path $z_{t}$ as $\int_{0}^{t}\left\langle
\mathbf{y}_{\tau},d\mathbf{X}_{\tau}\right\rangle $ and its increment,
$z_{s,t}:=z_{t}-z_{s},$ as $\int_{s}^{t}\left\langle \mathbf{y}_{\tau
},d\mathbf{X}_{\tau}\right\rangle .$ Theorem \ref{the.2.9} below is a
generalization of Theorem \ref{the.2.7}, but before we state it, we will make
a remark about certain identifications of spaces.

\begin{remark}
\label{rem.2.8}If $V,\tilde{V},$ and $\hat{V}$ are vector spaces, we can make
the identification%
\[
L\left(  V,L\left(  \tilde{V},\hat{V}\right)  \right)  \cong L\left(
V\otimes\tilde{V},\hat{V}\right)
\]
via the map $\Xi:L\left(  V,L\left(  \tilde{V},\hat{V}\right)  \right)
\longrightarrow L\left(  V\otimes\tilde{V},\hat{V}\right)  $ given by
\[
\Xi\left(  \alpha\right)  \left[  v\otimes\tilde{v}\right]  =\alpha
\left\langle v\right\rangle \left\langle \tilde{v}\right\rangle .
\]
if $\alpha\in L\left(  V,L\left(  \tilde{V},\hat{V}\right)  \right)  .$
\end{remark}

The proof of the following theorem (modulo a reparameterization) may be found
in \cite{gubinelli} or \cite[Remark 4.11]{FH2014}.

\begin{theorem}
\label{the.2.9}Let $\mathbf{X}$ be a $p$-rough path on $W\oplus W^{\otimes2}$
with control $\omega$, let $\left(  y,y^{\dag}\right)  $ be an $V$ -- valued
rough path controlled by $\mathbf{X}$ and let $\boldsymbol{\alpha}=\left(
\alpha,\alpha^{\dag}\right)  $ be an $L\left(  V,\tilde{V}\right)  $ -- valued
rough path controlled by $\mathbf{X}$ where $\alpha_{s}^{\dag}\in L\left(
W,L\left(  V,\tilde{V}\right)  \right)  \cong L\left(  W\otimes V,\tilde
{V}\right)  $. Then there exists a $z\in C\left(  \left[  0,T\right]
,V\right)  $ with $z_{0}=0$ and a $C>0$ such that%
\begin{equation}
\left\vert z_{t}-z_{s}-\alpha_{s}\left(  y_{t}-y_{s}\right)  -\alpha_{s}%
^{\dag}\left(  I\otimes y_{s}^{\dag}\right)  \mathbb{X}_{s,t}\right\vert \leq
C\omega\left(  s,t\right)  ^{3/p} \label{equ.2.6}%
\end{equation}
for all $s\leq t$ in $\left[  0,T\right]  .$ Moreover if we let $z_{s}^{\dag
}:=\alpha_{s}\circ y_{s}^{\dag},$ then $\mathbf{z}_{s}:=\left(  z_{s}%
,z_{s}^{\dag}\right)  $ is a $\tilde{V}$ -- valued controlled rough path.
\end{theorem}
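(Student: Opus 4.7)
The plan is to prove this as a direct application of the sewing lemma machinery underlying Theorem \ref{the.2.7}. First I would define the local approximating germ
\[
\Xi_{s,t} := \alpha_{s}\left(y_{t}-y_{s}\right) + \alpha_{s}^{\dag}\left(I\otimes y_{s}^{\dag}\right)\mathbb{X}_{s,t},
\]
which is the natural candidate for the two-parameter increment of $z$. The goal is to show that $\Xi$ is almost additive, i.e. $\left|\Xi_{s,t}-\Xi_{s,u}-\Xi_{u,t}\right|\leq C\omega(s,t)^{3/p}$, and then invoke a sewing-type argument (the same one that gives Theorem \ref{the.2.7}) to produce a path $z$ with $z_{0}=0$ satisfying $\left|z_{t}-z_{s}-\Xi_{s,t}\right|\leq C\omega(s,t)^{3/p}$, which is precisely estimate (\ref{equ.2.6}).

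The heart of the argument is the almost-additivity estimate. Using $y_{t}-y_{s}=(y_{t}-y_{u})+(y_{u}-y_{s})$ and Chen's identity $\mathbb{X}_{s,t}-\mathbb{X}_{s,u}=\mathbb{X}_{u,t}+x_{s,u}\otimes x_{u,t}$, the triple difference collapses (after cancellation of the terms built from $\alpha_{s}$ at the common base point $s$) to
\[
\Xi_{s,t}-\Xi_{s,u}-\Xi_{u,t}=(\alpha_{s}-\alpha_{u})(y_{t}-y_{u})+\alpha_{s}^{\dag}(I\otimes y_{s}^{\dag})\left(x_{s,u}\otimes x_{u,t}\right)-\bigl[\alpha_{u}^{\dag}(I\otimes y_{u}^{\dag})-\alpha_{s}^{\dag}(I\otimes y_{s}^{\dag})\bigr]\mathbb{X}_{u,t}.
\]
For the first two terms I would substitute the controlled expansions $\alpha_{u}-\alpha_{s}=\alpha_{s}^{\dag}x_{s,u}+O(\omega(s,u)^{2/p})$ and $y_{t}-y_{u}=y_{u}^{\dag}x_{u,t}+O(\omega(u,t)^{2/p})$; the leading pieces combine (using the identification of Remark \ref{rem.2.8}) into $\alpha_{s}^{\dag}(I\otimes(y_{s}^{\dag}-y_{u}^{\dag}))(x_{s,u}\otimes x_{u,t})$, which is $O(\omega(s,u)^{2/p}\omega(u,t)^{1/p})$ after invoking the controlled estimate on $y^{\dag}$. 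For the last bracket I would split $\alpha_{u}^{\dag}(I\otimes y_{u}^{\dag})-\alpha_{s}^{\dag}(I\otimes y_{s}^{\dag})=(\alpha_{u}^{\dag}-\alpha_{s}^{\dag})(I\otimes y_{s}^{\dag})+\alpha_{u}^{\dag}(I\otimes(y_{u}^{\dag}-y_{s}^{\dag}))$, each factor being $O(\omega(s,u)^{1/p})$, paired against $\mathbb{X}_{u,t}=O(\omega(u,t)^{2/p})$. Superadditivity of $\omega$ then yields the desired bound $\omega(s,t)^{3/p}$, and since $p<3$ the exponent $3/p>1$ is what the sewing lemma needs.

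With $z$ in hand, I would verify that the pair $(z,z^{\dag})$ with $z_{s}^{\dag}:=\alpha_{s}\circ y_{s}^{\dag}$ is itself a controlled rough path. The second condition is immediate since $z_{t}^{\dag}-z_{s}^{\dag}=(\alpha_{t}-\alpha_{s})y_{s}^{\dag}+\alpha_{t}(y_{t}^{\dag}-y_{s}^{\dag})$ is $O(\omega(s,t)^{1/p})$ using boundedness on the compact interval and the controlled-rough-path estimates for $\alpha^{\dag}$ and $y^{\dag}$. For the first condition, write $z_{t}-z_{s}-z_{s}^{\dag}x_{s,t}=[z_{t}-z_{s}-\Xi_{s,t}]+\alpha_{s}(y_{t}-y_{s}-y_{s}^{\dag}x_{s,t})+\alpha_{s}^{\dag}(I\otimes y_{s}^{\dag})\mathbb{X}_{s,t}$; the first bracket is $O(\omega(s,t)^{3/p})$ by (\ref{equ.2.6}), while the remaining two terms are $O(\omega(s,t)^{2/p})$ by the controlled property of $\mathbf{y}$ and the $2/p$-estimate on $\mathbb{X}$.

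The main obstacle is not conceptual but bookkeeping: keeping the identifications $L(W,L(V,\tilde V))\cong L(W\otimes V,\tilde V)$ of Remark \ref{rem.2.8} clean while reassembling cross terms so that they can be grouped as a product $(I\otimes(y_{s}^{\dag}-y_{u}^{\dag}))(x_{s,u}\otimes x_{u,t})$. Once this is done, the estimate is routine and the sewing lemma (together with the uniqueness argument as in \cite{gubinelli}) supplies the path $z$ and the constant $C$; localizing via Lemma \ref{lem.2.6} removes any need to worry about boundedness of $\omega$ on long intervals.
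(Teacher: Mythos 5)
Your proposal is correct, and it is exactly the standard sewing-lemma argument that the paper itself invokes by citing Gubinelli and Friz--Hairer rather than writing a proof: build the germ $\Xi_{s,t}$, use Chen's identity to reduce $\Xi_{s,t}-\Xi_{s,u}-\Xi_{u,t}$ to the three residual terms you display, estimate each by the controlled expansions of $\alpha,\alpha^{\dag},y,y^{\dag}$ together with superadditivity of $\omega$, and apply sewing/uniqueness of the additive functional to produce $z$. The cancellation you identify, namely that the leading parts of $(\alpha_{s}-\alpha_{u})(y_{t}-y_{u})$ and $\alpha_{s}^{\dag}(I\otimes y_{s}^{\dag})(x_{s,u}\otimes x_{u,t})$ combine into $\alpha_{s}^{\dag}\bigl(I\otimes(y_{s}^{\dag}-y_{u}^{\dag})\bigr)(x_{s,u}\otimes x_{u,t})$, is precisely the point, and your verification that $\left(z,\alpha\circ y^{\dag}\right)$ is controlled is also the standard one.
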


The path $z_{t}$ in this case will be denoted $\int_{0}^{t}\left\langle
\boldsymbol{\alpha}_{\tau},d\mathbf{y}_{\tau}\right\rangle $ and we will
typically summarize Inequality (\ref{equ.2.6}) by writing%
\begin{equation}
\int_{s}^{t}\left\langle \boldsymbol{\alpha}_{\tau},d\mathbf{y}_{\tau
}\right\rangle \underset{^{3}}{\approx}\left\langle \boldsymbol{\alpha}%
_{s},\mathbf{y}_{s,t}^{\mathbb{X}}\right\rangle :=\alpha_{s}y_{s,t}+\alpha
_{s}^{\dag}\left(  I\otimes y_{s}^{\dag}\right)  \mathbb{X}_{s,t}
\label{equ.2.7}%
\end{equation}
wherein we let $\mathbf{y}_{s,t}^{\mathbb{X}}$ be the increment process
defined by,
\begin{equation}
\mathbf{y}_{s,t}^{\mathbb{X}}:=\left(  y_{s,t},\left(  I\otimes y_{s}^{\dag
}\right)  \mathbb{X}_{s,t}\right)  . \label{equ.2.8}%
\end{equation}
Notice that Theorem \ref{the.2.7} does indeed follow from Theorem
\ref{the.2.9} upon replacing $\left(  \alpha,\alpha^{\dag}\right)  $ by
$\left(  y,y^{\dag}\right)  $ and $\left(  y,y^{\dag}\right)  $ by $\left(
x,I_{W}\right)  $ in Inequality (\ref{equ.2.6}).

\begin{remark}
[Motivations]\label{rem.2.10}In order to develop some intuition for the
expression appearing on the right side of Eq. (\ref{equ.2.7}), suppose for the
moment that all functions $\mathbf{X},$ $\left(  y,y^{\dag}\right)  ,$ and
$\left(  \alpha,\alpha^{\dag}\right)  $ are smooth so that $\mathbb{X}$ is
given by Eq. (\ref{equ.2.4}). In this case we want $z_{s,t}\ $to be the usual
integral $\int_{s}^{t}\alpha_{\tau}\dot{y}_{\tau}d\tau$ and to arrive at the
expression in Inequality (\ref{equ.2.6}) we look for an appropriate second
order approximation to the integral. Since $p=1$ now we may conclude
\[
\alpha_{s,\tau}=\alpha_{s}^{\dag}x_{s,\tau}+O\left(  \left(  \tau-s\right)
^{2}\right)
\]
and
\[
y_{t}-y_{\tau}=y_{\tau}^{\dag}\left(  x_{t}-x_{\tau}\right)  +O\left(  \left(
t-\tau\right)  ^{2}\right)  \implies\dot{y}_{\tau}=y_{\tau}^{\dag}\dot
{x}_{\tau}.
\]

We have the identity;%
\begin{equation}
\int_{s}^{t}\alpha_{\tau}dy_{\tau}=\int_{s}^{t}\left[  \alpha_{s}%
+\alpha_{s,\tau}\right]  \dot{y}_{\tau}d\tau=\alpha_{s}y_{s,t}+\int_{s}%
^{t}\alpha_{s,\tau}\dot{y}_{\tau}d\tau. \label{equ.2.9}%
\end{equation}
The last term on the right hand side is approximated up to an error of size
$O\left(  \left(  t-s\right)  ^{3}\right)  $ as follows,%
\begin{align}
\int_{s}^{t}\alpha_{s,\tau}\dot{y}_{\tau}d\tau &  =\int_{s}^{t}\alpha_{s,\tau
}y_{\tau}^{\dag}\dot{x}_{\tau}d\tau\label{equ.2.10}\\
&  =\int_{s}^{t}\alpha_{s}^{\dag}x_{s,\tau}y_{\tau}^{\dag}\dot{x}_{\tau}%
d\tau+O\left(  \left(  t-s\right)  ^{3}\right) \nonumber\\
&  =\int_{s}^{t}\alpha_{s}^{\dag}x_{s,\tau}y_{s}^{\dag}\dot{x}_{\tau}%
d\tau+O\left(  \left(  t-s\right)  ^{3}\right) \nonumber\\
&  =\alpha_{s}^{\dag}\left(  I\otimes y_{s}^{\dag}\right)  \int_{s}%
^{t}x_{s,\tau}\otimes\dot{x}_{\tau}d\tau+O\left(  \left(  t-s\right)
^{3}\right) \nonumber\\
&  =\alpha_{s}^{\dag}\left(  I\otimes y_{s}^{\dag}\right)  \mathbb{X}%
_{s,t}+O\left(  \left(  t-s\right)  ^{3}\right)  .\nonumber
\end{align}
Combining Eq. (\ref{equ.2.9}) and Eq. (\ref{equ.2.10}) gives the approximate
equality,%
\[
\int_{s}^{t}\alpha_{\tau}dy_{\tau}=\alpha_{s}y_{s,t}+\alpha_{s}^{\dag}\left(
I\otimes y_{s}^{\dag}\right)  \mathbb{X}_{s,t}+O\left(  \left(  t-s\right)
^{3}\right)  .
\]

\end{remark}

Controlled rough paths are also useful in interpreting solutions to rough
differential equations. Let $F:V\rightarrow L\left(  W,V\right)  $ be smooth
where we will write $F\left(  a\right)  w$ as $F_{w}\left(  a\right)  $. We
can then make sense of the rough differential equation
\begin{equation}
d\mathbf{y}_{t}=F_{d\mathbf{X}_{t}}\left(  y_{t}\right)  \label{equ.2.11}%
\end{equation}
with initial condition $y_{0}=\bar{y}_{0}.$ We will need a bit of notation
regarding tensor products before we say what it means to solve such an equation.

\begin{notation}
\label{not.2.11}If $\Xi:W\times W\rightarrow V$ is a bilinear form into a
vector space $V$, by the universal property of tensor products, $\Xi$ factors
through a unique linear function $\Xi^{\otimes}$ on $W\otimes W$ such that
$\Xi^{\otimes}\left(  w\otimes\tilde{w}\right)  =\Xi\left(  w,\tilde
{w}\right)  $ for a simple tensor $w\otimes\tilde{w}$. If $\mathbb{W}\in
W\otimes W$ we will abuse notation and write%
\[
\Xi\left(  w,\tilde{w}\right)  |_{w\otimes\tilde{w}=\mathbb{W}}=\Xi\left(
w\otimes\tilde{w}\right)  |_{w\otimes\tilde{w}=\mathbb{W}}=\Xi^{\otimes
}\left(  \mathbb{W}\right)  ,
\]
where, to be precise, if $\mathbb{W}=\sum w_{i}\otimes\tilde{w}_{i}$ then
\[
\Xi^{\otimes}\left(  \mathbb{W}\right)  =\sum\Xi\left(  w_{i},\tilde{w}%
_{i}\right)  .
\]

\end{notation}

We say the controlled rough path $\mathbf{y}=\left(  y,y^{\dag}\right)  $
defined on\footnote{Here we allow that $\mathbf{y}\in CRP_{\mathbf{X}}%
(I_{0},V)$ if is in an element of $CRP_{\mathbf{X}}(K,V)$ for every compact
interval $K\in I_{0}.$} $I_{0}=[0,T)$ or $I_{0}=\left[  0,T\right]  $ solves
Eq. (\ref{equ.2.11}) if for every $\left[  0,b\right]  \subseteq I_{0}$, we
have
\begin{align*}
y_{s,t}  &  \underset{^{3}}{\approx}F_{x_{s,t}}\left(  y_{s}\right)  +\left(
\partial_{F_{w}\left(  y_{s}\right)  }F_{w}\right)  \left(  y_{s}\right)
|_{w\otimes\tilde{w}=\mathbb{X}_{s,t}}\\
y_{s}^{\dag}  &  =F_{\cdot}\left(  y_{s}\right)
\end{align*}
for all $s,t\in\left[  0,b\right]  .$ If in addition $y_{0}=\bar{y}_{0}$, we
say $\mathbf{y}$ solves Eq. (\ref{equ.2.11}) with initial condition
$y_{0}=\bar{y}_{0}$.

The existence and uniqueness of solutions (at least of the path $y_{s}$) to
these differential equations (provided $F$ is sufficiently regular) is due to
Lyons \cite{lyons-98}.\ Clearly if $y_{s}$ is given, then $y_{s}^{\dag}$
exists and is uniquely determined by $y_{s}^{\dag}=F_{\cdot}\left(
y_{s}\right)  $. One may refer to Subsection \ref{sub.6.5} in the Appendix for
more results regarding rough differential equations on Euclidean space.

\subsection{Manifold-Valued Controlled Rough Paths\label{sub.2.2}}

Let $M=M^{d}$ be a $d$-dimensional manifold, $TM$ be its tangent space, and
$\pi:=\pi_{TM}:TM\rightarrow M$ be the natural projection map. Throughout, let
$\mathbf{X=}\left(  x,\mathbb{X}\right)  $ be a weak-geometric $p$-rough path
on $\left[  0,T\right]  $ with with values in $W\oplus W^{\otimes2}$ and
control $\omega$.

The letters $x$ and $y$ will appear in this paper generally as paths, but
occasionally they will refer to arbitrary points in Euclidean space. The
context will allow the reader to identify their proper usage.

\begin{notation}
\label{not.2.12}When $M=\mathbb{R}^{d}$ we will identify $T\mathbb{R}^{d}$
with $\mathbb{R}^{d}\times\mathbb{R}^{d}$ via
\[
\mathbb{R}^{d}\times\mathbb{R}^{d}\ni\left(  m,v\right)  \rightarrow
v_{m}:=\frac{d}{dt}|_{0}\left(  m+tv\right)  \in T_{m}\mathbb{R}^{d}%
\]
and, by abuse of notation, we let $\left\vert v_{m}\right\vert =\left\vert
v\right\vert $ when $\left\vert \cdot\right\vert $ is the standard Euclidean norm.
\end{notation}

\begin{notation}
\label{not.2.13}Whenever $\phi$ is a map, let $D\left(  \phi\right)  $ and
$R\left(  \phi\right)  $ denote the domain and range of $\phi$ respectively.
If $\phi\in C^{\infty}\left(  M,\mathbb{R}^{d^{\prime}}\right)  $ has open
domain, let $d\phi:TD\left(  \phi\right)  \longrightarrow\mathbb{R}%
^{d^{\prime}}$ be defined by%
\begin{equation}
d\phi\left(  v_{m}\right)  :=\frac{d}{dt}|_{0}\phi\left(  \sigma\left(
t\right)  \right)  \in\mathbb{R}^{d^{\prime}} \label{equ.2.12}%
\end{equation}
where $\sigma$ is such that $\sigma\left(  0\right)  =m\in D\left(
\phi\right)  $ and $\dot{\sigma}\left(  0\right)  =v_{m}\in T_{m}M$. Denote
$d\phi_{m}:=d\phi|_{T_{m}M}$. If $f\in C^{\infty}\left(  M,\tilde{M}\right)  $
where $\tilde{M}$ is another manifold, we let $f_{\ast}$ be the push-forward
of $f$ so that $f_{\ast}:TD\left(  f\right)  \longrightarrow T\tilde{M}$ is
defined by
\[
f_{\ast}\left(  v_{m}\right)  :=\frac{d}{dt}|_{0}f\left(  \sigma\left(
t\right)  \right)  \in T_{f\left(  m\right)  }\tilde{M}%
\]
where again $\dot{\sigma}\left(  0\right)  =v_{m}$. Analogously we let
$f_{\ast m}=f_{\ast}|_{T_{m}M}.$ Note that $\phi_{\ast}\left(  v_{m}\right)
=\left(  \phi\left(  m\right)  ,d\phi\left(  v_{m}\right)  \right)  =\left[
d\phi\left(  v_{m}\right)  \right]  _{\phi\left(  m\right)  }$.
\end{notation}

\subsection{Gauges\label{sub.2.3}}

\begin{definition}
\label{def.2.14}Let $\mathcal{U}$ be an open set on $M$. An open set
$\mathcal{D}^{\mathcal{U}}\subseteq M\times M$ is a\textbf{ }$\mathcal{U}$ --
\textbf{diagonal domain}$\mathbb{\ }$if it contains the diagonal of
$\mathcal{U}$, that is $\Delta^{\mathcal{U}}:=\bigcup_{m\in\mathcal{U}}\left(
m,m\right)  \subseteq\mathcal{D}^{\mathcal{U}}$. A \textbf{local diagonal
domain} is a $\mathcal{V}$ -- diagonal domain for some nonempty open
$\mathcal{V\subseteq}M$.

If $\mathcal{U}=M$ we write $\mathcal{D}:=\mathcal{D}^{M}$ and refer to
$\mathcal{D}$ simply as a diagonal domain.
\end{definition}

Throughout the paper, $\mathcal{D}$ will always denote a diagonal domain.

\begin{definition}
\label{def.2.15}A smooth function $\psi:\mathcal{D}\rightarrow TM$ is called a
\textbf{logarithm }if:

\begin{enumerate}
\item $\psi\left(  m,n\right)  \in T_{m}M$

\item $\psi\left(  m,m\right)  =0_{m}$

\item $\psi\left(  m,\cdot\right)  _{\ast}|_{T_{m}M}=I_{m}$
\end{enumerate}

We also write $\psi_{m}$ for $\psi\left(  m,\cdot\right)  $.

If the above holds for $\psi$ defined on a local diagonal domain, we may refer
to $\psi$ as a \textbf{local logarithm}.
\end{definition}

If $E$ is a any vector bundle, we will denote the smooth sections of $E$ by
$\Gamma\left(  E\right)  $. We define $L\left(  TM,TM\right)  $ as the vector
bundle $\tilde{E}$ over the manifold $M\times M$ such that $\tilde{E}_{\left(
n,m\right)  }=L\left(  T_{m}M,T_{n}M\right)  $ and
\[
\tilde{E}=\bigcup\left\{  \tilde{E}_{\left(  n,m\right)  }:n,m\in M\right\}
.
\]

\begin{definition}
\label{def.2.16}A smooth section $U$ $\in\Gamma\left(  L\left(  TM,TM\right)
\right)  $ with domain $\mathcal{D}$ (i.e. $U\left(  n,m\right)  \in L\left(
T_{m}M,T_{n}M\right)  $ for all $\left(  n,m\right)  \in\mathcal{D}$) is
called a \textbf{parallelism }if $U\left(  m,m\right)  =I_{m}$. If $U$ is only
defined on a local diagonal domain, we refer to $U$ as a \textbf{local
parallelism}.
\end{definition}

\begin{definition}
\label{def.2.17}We call the pair $\mathcal{G}:\mathcal{=}\left(
\psi,U\right)  $ (where $\psi$ and $U$ have common domain $\mathcal{D}$) a
\textbf{gauge} on the manifold $M.$ If $\mathcal{D}$ is replaced by a local
diagonal domain, we call $\mathcal{G}$ a \textbf{local gauge}.
\end{definition}

\begin{example}
\label{exa.2.18}If $M=\mathbb{R}^{d},$ the maps $\psi\left(  x,y\right)
=\left[  y-x\right]  _{x}$ and $U_{x,y}v_{y}=v_{x}$ form the \textbf{standard
gauge }on $\mathbb{R}^{d}.$
\end{example}

\begin{example}
\label{exa.2.19}One natural example of a gauge comes from any covariant
derivative $\nabla$ on $TM.$ The construction is as follows. Choose an
arbitrary Riemannian metric $g$ on $M.$ If $m,n\in M$ are \textquotedblleft
close enough\textquotedblright, there is a unique vector $v_{m}$ with minimum
length such that $n=\exp_{m}^{\nabla}\left(  v_{m}\right)  $. We denote this
vector by $\psi^{\nabla}\left(  m,n\right)  :=\left(  \exp_{m}^{\nabla
}\right)  ^{-1}\left(  n\right)  \ $or by $\exp_{m}^{-1}\left(  n\right)  $ if
$\nabla$ is clear from the context. We further let%
\[
U^{\nabla}\left(  n,m\right)  :=//_{1}\left(  t\rightarrow\exp_{m}\left(
t\exp_{m}^{-1}\left(  n\right)  \right)  \right)  ,
\]
where, for any smooth curve $\sigma:\left[  0,1\right]  \rightarrow M,$ we let
$//_{s}\left(  \sigma\right)  =//_{s}^{\nabla}\left(  \sigma\right)
:T_{\sigma\left(  0\right)  }M\rightarrow T_{\sigma\left(  s\right)  }M$
denote parallel translation along $\sigma$ up to time $s\in\left[  0,1\right]
.$ It is shown in Corollary \ref{cor.2.33} that there is a diagonal domain
$\mathcal{D}\subseteq M\times M$ such that $\left(  \psi^{\nabla},U^{\nabla
}\right)  $ so defined is a gauge on $\mathcal{D}.$
\end{example}

\begin{remark}
\label{rem.2.20}We can also get a covariant derivative from a parallelism. If
$U$ is a parallelism, then we can define covariant derivative $\nabla^{U}$ on
$TM$ by
\[
\nabla_{v_{m}}^{U}\left(  Y\right)  :=\frac{d}{dt}|_{0}U\left(  m,\sigma
_{t}\right)  Y\left(  \sigma_{t}\right)  ,
\]
where $\dot{\sigma}\left(  0\right)  =v_{m}$ and $Y$ is a vector field on $M.$
\end{remark}

\begin{remark}
\label{rem.2.21}Although the definition of a gauge includes stipulating a $U$,
if we have just $\psi$, we can define $U^{\psi}\left(  n,m\right)
:=\psi\left(  n,\cdot\right)  _{\ast m}$ and set $\mathcal{G}^{\psi}:=\left(
\psi,U^{\psi}\right)  .$
\end{remark}

\begin{remark}
\label{rem.2.22}We may make a local gauge out of a chart $\phi$. Indeed, we
pull back the flat gauge in Example \ref{exa.2.18} to $M$ to define
\begin{align*}
\psi^{\phi}\left(  m,n\right)   &  :=\left(  d\phi_{m}\right)  ^{-1}\left[
\phi\left(  n\right)  -\phi\left(  m\right)  \right] \\
U^{\phi}\left(  n,m\right)   &  :=\left(  d\phi_{n}\right)  ^{-1}d\phi_{m}.
\end{align*}
This is a gauge which is also consistent with Remark \ref{rem.2.21} and
$D\left(  \psi^{\phi}\right)  =D\left(  U^{\phi}\right)  =D\left(
\phi\right)  \times D\left(  \phi\right)  $.
\end{remark}

Before moving on to controlled rough paths on manifolds, let us record the
structure of the general gauge on $\mathbb{R}^{d}.$

\begin{notation}
\label{not.2.23}If $\left(  \psi,U\right)  $ is a local gauge on
$\mathbb{R}^{d}$, then we write $\left(  \bar{\psi},\bar{U}\right)  $ to mean
the functions determined by the relations%
\[
\psi\left(  x,y\right)  =\left[  \bar{\psi}\left(  x,y\right)  \right]
_{x}\text{\quad and\quad}U\left(  x,y\right)  \left(  v_{y}\right)  =\left[
\bar{U}\left(  x,y\right)  v\right]  _{x}%
\]
so that $\bar{\psi}\left(  x,y\right)  \in\mathbb{R}^{d}$ and $\bar{U}\left(
x,y\right)  \in\operatorname*{End}\left(  \mathbb{R}^{d}\right)  .$
\end{notation}

\begin{theorem}
\label{the.2.24}If $\mathcal{G}=\left(  \psi,U\right)  $ is a local gauge on
$\mathbb{R}^{d}$, for every open convex subset $\mathcal{V}\subseteq
\mathbb{R}^{d}$ such that $\mathcal{V}\times\mathcal{V}\subseteq D\left(
\mathcal{G}\right)  $, there exists smoothly varying functions $A\left(
x,y\right)  \in L\left(  \left(  \mathbb{R}^{d}\right)  ^{\otimes2}%
,\mathbb{R}^{d}\right)  $ and $B\left(  x,y\right)  \in L\left(
\mathbb{R}^{d},\operatorname*{End}\left(  \mathbb{R}^{d}\right)  \right)  $
defined for $\left(  x,y\right)  \in$ $\mathcal{V}\times\mathcal{V}$ such that%
\begin{align}
\bar{U}\left(  x,y\right)   &  =I+B\left(  x,y\right)  \left(  y-x\right)
,\label{equ.2.13}\\
\bar{\psi}\left(  x,y\right)   &  =y-x+A\left(  x,y\right)  \left(
y-x\right)  ^{\otimes2},\label{equ.2.14}\\
B\left(  x,x\right)   &  =D_{2}\bar{U}\left(  x,x\right)  ,\text{ and
}A\left(  x,x\right)  =\frac{1}{2}\left(  D_{2}^{2}\bar{\psi}\right)  \left(
x,x\right)  . \label{equ.2.15}%
\end{align}
The converse holds as well.

Furthermore, we can find a smoothly varying function $C\left(  x,y\right)  \in
L\left(  \left(  \mathbb{R}^{d}\right)  ^{\otimes3},\mathbb{R}^{d}\right)  $
defined on $\mathcal{V}\times\mathcal{V}$ such that%
\begin{align}
C\left(  x,x\right)   &  =\frac{1}{6}\left(  D_{2}^{3}\bar{\psi}\right)
\left(  x,x\right)  ,\text{ and}\label{equ.2.16}\\
\bar{\psi}\left(  x,y\right)   &  =y-x+\frac{1}{2}\left(  D_{2}^{2}\bar{\psi
}\right)  \left(  x,x\right)  \left(  y-x\right)  ^{\otimes2}+C\left(
x,y\right)  \left(  y-x\right)  ^{\otimes3}. \label{equ.2.17}%
\end{align}

\end{theorem}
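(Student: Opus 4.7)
The plan is to apply Taylor's theorem with integral remainder in the second variable, exploiting the gauge axioms to identify the first few Taylor coefficients at the diagonal $y=x$. Since $\mathcal{V}$ is open and convex, for $(x,y)\in \mathcal{V}\times\mathcal{V}$ the segment $\{x+t(y-x):t\in[0,1]\}$ stays in $\mathcal{V}$, so all integrals of the form $\int_0^1 \varphi(x,x+t(y-x))\,dt$ are defined and depend smoothly on $(x,y)$ by differentiation under the integral sign. Throughout I identify $L(\mathbb{R}^d,L(\mathbb{R}^d,\mathbb{R}^d))\cong L((\mathbb{R}^d)^{\otimes 2},\mathbb{R}^d)$ as in Remark \ref{rem.2.8}, and similarly for higher tensor powers.

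For the expansion of $\bar U$, the parallelism axiom $U(m,m)=I_m$ translates under Notation \ref{not.2.23} to $\bar{U}(x,x)=I$, so the fundamental theorem of calculus applied to $t\mapsto \bar U(x,x+t(y-x))$ gives
\[
\bar U(x,y)-I=\left(\int_0^1 D_2\bar U(x,x+t(y-x))\,dt\right)(y-x).
\]
Setting $B(x,y)$ equal to the integral in parentheses produces (\ref{equ.2.13}), and evaluating at $y=x$ yields $B(x,x)=D_2\bar U(x,x)$. For $\bar\psi$, conditions (1) and (3) of Definition \ref{def.2.15} translate to $\bar\psi(x,x)=0$ and $D_2\bar\psi(x,x)=I$; hence the function $g(x,y):=\bar\psi(x,y)-(y-x)$ satisfies $g(x,x)=0$ and $D_2 g(x,x)=0$, so the second-order Taylor remainder gives
\[
g(x,y)=\left(\int_0^1(1-t)\,D_2^{\,2}\bar\psi(x,x+t(y-x))\,dt\right)(y-x)^{\otimes 2},
\]
since $D_2^{\,2}g=D_2^{\,2}\bar\psi$. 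Defining $A(x,y)$ to be this integral establishes (\ref{equ.2.14}); evaluating at $y=x$ and using $\int_0^1(1-t)\,dt=1/2$ yields $A(x,x)=\tfrac12 D_2^{\,2}\bar\psi(x,x)$, completing (\ref{equ.2.15}).

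The formulas (\ref{equ.2.16})--(\ref{equ.2.17}) come from the next Taylor order: applied to $\bar\psi$, the third-order integral remainder combined with $\bar\psi(x,x)=0$, $D_2\bar\psi(x,x)=I$ gives (\ref{equ.2.17}) with $C(x,y):=\int_0^1 \tfrac{(1-t)^2}{2}D_2^{\,3}\bar\psi(x,x+t(y-x))\,dt$, and $\int_0^1(1-t)^2\,dt=1/3$ supplies the constant $1/6$ in (\ref{equ.2.16}). The converse direction is immediate by inspection: if $\bar U,\bar\psi$ admit representations (\ref{equ.2.13})--(\ref{equ.2.14}), then setting $y=x$ gives $\bar U(x,x)=I$ and $\bar\psi(x,x)=0$, while differentiating $\bar\psi(x,y)=y-x+A(x,y)(y-x)^{\otimes 2}$ in $y$ at $y=x$ gives $D_2\bar\psi(x,x)=I$ since both the term $D_2 A(x,x)\cdot 0$ and $A(x,x)\cdot D_2[(y-x)^{\otimes 2}]|_{y=x}=0$ vanish. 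I do not anticipate any serious obstacle; the only point requiring some care is the translation of the tangent-bundle axioms of Definitions \ref{def.2.15} and \ref{def.2.16} into the Euclidean statements $\bar U(x,x)=I$, $\bar\psi(x,x)=0$, and $D_2\bar\psi(x,x)=I$ via Notations \ref{not.2.12} and \ref{not.2.23}.
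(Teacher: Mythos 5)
Your proof is correct and follows essentially the same route as the paper: Taylor's theorem with integral remainder in the second variable over the convex set $\mathcal{V}$, giving the identical integral formulas $B(x,y)=\int_0^1 D_2\bar U(x,x+t(y-x))\,dt$, $A(x,y)=\int_0^1(1-t)D_2^{\,2}\bar\psi(x,x+t(y-x))\,dt$, and $C(x,y)=\tfrac12\int_0^1(1-t)^2 D_2^{\,3}\bar\psi(x,x+t(y-x))\,dt$. Your explicit unpacking of the gauge axioms into $\bar U(x,x)=I$, $\bar\psi(x,x)=0$, $D_2\bar\psi(x,x)=I$ via Notations \ref{not.2.12} and \ref{not.2.23} is the step the paper leaves implicit, and your converse argument is the same one-line verification the paper alludes to.
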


\begin{proof}
Let $x,y$ be points in $\mathcal{V}$. Taylor's theorem with integral remainder
applied to the second variable with $x$ fixed gives,
\[
\bar{U}\left(  x,y\right)  =I+\int_{0}^{1}\left(  D_{2}\bar{U}\right)  \left(
x,x+t\left(  y-x\right)  \right)  \left(  y-x\right)  dt
\]
and
\[
\bar{\psi}\left(  x,y\right)  =0+\left(  D_{2}\bar{\psi}\right)  \left(
x,x\right)  \left(  y-x\right)  +\int_{0}^{1}\left(  D_{2}^{2}\bar{\psi
}\right)  \left(  x,x+t\left(  y-x\right)  \right)  \left(  y-x\right)
^{\otimes2}\left(  1-t\right)  dt
\]
from which Eqs. (\ref{equ.2.13}) -- (\ref{equ.2.15}) follow with%
\begin{align*}
B\left(  x,y\right)   &  =\int_{0}^{1}\left(  D_{2}\bar{U}\right)  \left(
x,x+t\left(  y-x\right)  \right)  dt\text{ and }\\
A\left(  x,y\right)   &  =\int_{0}^{1}\left(  D_{2}^{2}\bar{\psi}\right)
\left(  x,x+t\left(  y-x\right)  \right)  \left(  1-t\right)  dt.
\end{align*}
The converse statement is easy to verify. The proof of Eqs. (\ref{equ.2.16})
and (\ref{equ.2.17}) also follow by Taylor's theorem (now to third order) in
which case,%
\[
C\left(  x,y\right)  =\frac{1}{2}\int_{0}^{1}\left(  D_{2}^{3}\bar{\psi
}\right)  \left(  x,x+t\left(  y-x\right)  \right)  \left(  y-x\right)
^{\otimes2}\left(  1-t\right)  ^{2}dt.
\]

\end{proof}

Let $B_{r}\left(  x\right)  \subseteq\mathbb{R}^{d}$ be the open ball of
radius $r$ centered at $x.$

\begin{remark}
\label{rem.2.25}If $\psi$ and $\tilde{\psi}$ are local logarithms on
$\mathbb{R}^{d}$, it is easy to check using Theorem \ref{the.2.24} that for
all $\tilde{x}\in\mathbb{R}^{d}$, there exists an $r>0$ and $C>0$ such that
$\left\vert \psi\left(  x,y\right)  \right\vert \leq C\left\vert \tilde{\psi
}\left(  x,y\right)  \right\vert $ for all $x,y\in B_{r}\left(  \tilde
{x}\right)  $.
\end{remark}

We now wish to transfer these local results to the manifold setting. In order
to do this we need to develop some notation for stating that two objects on a
manifold are \textquotedblleft close\textquotedblright\ up to some order. Let
$g$ be any smooth Riemannian metric on $M$.

\begin{notation}
We write $d_{g}$ for the metric associated to $g$ and define $\left\vert
v_{m}\right\vert _{g}:=\sqrt{g_{m}\left(  v_{m},v_{m}\right)  }$
$\forall~v_{m}\in TM$. Further, we let $\left\vert \cdot\right\vert _{g,op}$
be the operator \textquotedblleft norm\textquotedblright\ induced by
$\left\vert \cdot\right\vert _{g}$ on $L\left(  TM,V\right)  ,$ i.e. if
$f_{m}\in L\left(  T_{m}M,V\right)  $, then
\[
\left\vert f_{m}\right\vert _{g,op}:=\sup\left\{  \left\vert f_{m}\left\langle
v_{m}\right\rangle \right\vert :\left\vert v_{m}\right\vert _{g}=1\right\}  .
\]

\end{notation}

\begin{definition}
\label{def.2.27}Let $F,G$ be smooth $TM$ [respectively $L\left(  TM,TM\right)
$] valued functions with $\mathcal{W}$ -- diagonal domains. The expression
\begin{equation}
F\left(  m,n\right)  =_{k}G\left(  m,n\right)  \text{ on }\mathcal{W}
\label{equ.2.18}%
\end{equation}
indicates that for every point in $w\in\mathcal{W}$, there exists an open
$\mathcal{O}_{w}\subseteq M$ containing $w$ such that $\mathcal{O}_{w}%
\times\mathcal{O}_{w}\subseteq D\left(  F\right)  \cap D\left(  G\right)  $
and a $C>0$ such that%
\begin{equation}
\left\vert F\left(  m,n\right)  -G\left(  m,n\right)  \right\vert
_{g,\text{[}g,op\text{]}}\leq C\left(  d_{g}\left(  m,n\right)  \right)  ^{k}
\label{equ.2.19}%
\end{equation}
for all $m,n\in\mathcal{O}_{w}$.

Occasionally we will omit the reference to $\mathcal{W}$ in which case it we
mean the condition (\ref{equ.2.19}) holds where it makes sense to hold.
\end{definition}

Note that in (\ref{equ.2.18}), the reference to $g$ is not explicit. In fact,
the definition does not depend on the choice of $g$ as all Riemannian metrics
are locally equivalent. [See Corollary \ref{cor.6.4} in the Appendix for
precise statement and proof of this standard fact.]

We may also use the $=_{k}$ notation to make statements in regards to other
measures of distance:

\begin{corollary}
\label{cor.2.28}Let $\mathcal{W}$ be an open subset of $M$ and $g$ and
$\tilde{g}$ be any two Riemannian metrics on $M.$ If $F\left(  m,n\right)
=_{k}G\left(  m,n\right)  $ on $\mathcal{W}$ (so that $F$ and $G$ have
$\mathcal{W}$-diagonal domains), then for every local logarithm $\psi$ and
$w\in\mathcal{W}$ such that $\left(  w,w\right)  \in D\left(  \psi\right)  $,
there exists an open $\mathcal{O}_{w}\subseteq\mathcal{W}$ containing $w$ and
$C>0$ such that
\[
\left\vert F\left(  m,n\right)  -G\left(  m,n\right)  \right\vert
_{g,\text{[}g,op\text{]}}\leq C\left\vert \psi\left(  m,n\right)  \right\vert
_{\tilde{g}}^{k}~\forall~m,n\in\mathcal{O}_{w}.
\]
In particular, using the local logarithm $\psi\left(  m,n\right)  =\left(
d\phi_{m}\right)  ^{-1}\left[  \phi\left(  n\right)  -\phi\left(  m\right)
\right]  $, we have that if $w\in D\left(  \phi\right)  \cap\mathcal{W}$, then
there exists an $\mathcal{O}_{w}\subseteq D\left(  \phi\right)  \cap
\mathcal{W}$ and a $C>0$ such that
\[
\left\vert F\left(  m,n\right)  -G\left(  m,n\right)  \right\vert
_{g,\text{[}g,op\text{]}}\leq C\left\vert \phi\left(  n\right)  -\phi\left(
m\right)  \right\vert ^{k}~\forall~m,n\in\mathcal{O}_{w}.
\]

\end{corollary}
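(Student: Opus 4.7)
The plan is to reduce the statement to local chart computations on $\mathbb{R}^{d}$ and then invoke Theorem \ref{the.2.24} together with the local equivalence of Riemannian metrics.

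First I would use the definition of $=_{k}$ (Definition \ref{def.2.27}) to obtain, on some open neighborhood $\mathcal{O}_{w}^{0}\subseteq\mathcal{W}$ of $w$, a constant $C_{0}$ with $|F(m,n)-G(m,n)|_{g,[g,op]}\leq C_{0}\,d_{g}(m,n)^{k}$ for all $m,n\in\mathcal{O}_{w}^{0}$. Since all Riemannian metrics on $M$ are locally equivalent (cf.\ Corollary \ref{cor.6.4}), shrinking $\mathcal{O}_{w}^{0}$ I may further assume $d_{g}(m,n)\leq C_{1}\,d_{\tilde{g}}(m,n)$ there. Thus it suffices to produce a smaller neighborhood $\mathcal{O}_{w}\subseteq\mathcal{O}_{w}^{0}$ on which
\[
d_{\tilde{g}}(m,n)\leq C_{2}\,|\psi(m,n)|_{\tilde{g}}.
\]

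Next I would pass to a chart. Choose a chart $\phi$ defined on an open set $\mathcal{U}\ni w$ small enough that $\mathcal{U}\times\mathcal{U}\subseteq D(\psi)$ and $\mathcal{U}\subseteq\mathcal{O}_{w}^{0}$. Push forward $\psi$ to $\bar{\psi}(x,y)$ on $\phi(\mathcal{U})\times\phi(\mathcal{U})$ using Notation \ref{not.2.23}, and pull back $\tilde{g}$ to a Riemannian metric on $\phi(\mathcal{U})$. Because $\phi(w)$ has a convex Euclidean neighborhood in $\phi(\mathcal{U})$ on which $\phi_{\ast}\tilde{g}$ is equivalent to the Euclidean inner product, on a slightly smaller ball $B_{r}(\phi(w))$ we have $d_{\tilde{g}}(m,n)$ comparable to $|\phi(n)-\phi(m)|$ and $|\psi(m,n)|_{\tilde{g}}$ comparable to $|\bar{\psi}(\phi(m),\phi(n))|$. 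So the task reduces to showing $|y-x|\lesssim|\bar{\psi}(x,y)|$ for $x,y$ in a neighborhood of $\phi(w)$.

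This last step is the key technical point and is exactly what Theorem \ref{the.2.24} supplies. Applied to $\bar{\psi}$ on a convex neighborhood of $\phi(w)$, Theorem \ref{the.2.24} gives
\[
\bar{\psi}(x,y)=y-x+A(x,y)(y-x)^{\otimes 2}
\]
with $A$ smooth and hence bounded on a small enough ball. Therefore $|\bar{\psi}(x,y)-(y-x)|\leq C_{3}|y-x|^{2}$, and for $|y-x|$ sufficiently small this forces $|y-x|\leq 2|\bar{\psi}(x,y)|$, which is what was needed. Alternatively one can simply cite Remark \ref{rem.2.25}, comparing $\bar{\psi}$ to the flat logarithm $y-x$ of Example \ref{exa.2.18}.

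Combining the three chains of local inequalities yields $|F(m,n)-G(m,n)|_{g,[g,op]}\leq C\,|\psi(m,n)|_{\tilde{g}}^{k}$ on a suitable $\mathcal{O}_{w}$, proving the general statement. The particular case is then just the general one with $\psi=\psi^{\phi}$ as in Remark \ref{rem.2.22}, since $|\psi^{\phi}(m,n)|_{\tilde{g}}=|(d\phi_{m})^{-1}[\phi(n)-\phi(m)]|_{\tilde{g}}$ is comparable to $|\phi(n)-\phi(m)|$ near $w$ by the same local-equivalence argument. The only real obstacle is ensuring all comparisons hold simultaneously on one common neighborhood, which is handled by taking the intersection of the finitely many neighborhoods produced above.
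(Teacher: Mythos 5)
Your proof is correct and follows essentially the same route as the paper's: reduce to $\mathbb{R}^d$ via charts, use Corollary \ref{cor.6.4} to compare the metrics with the Euclidean one, and use the Taylor-expansion structure of logarithms from Theorem \ref{the.2.24} (equivalently Remark \ref{rem.2.25}) to compare $|\bar\psi(x,y)|$ with $|y-x|$. The only cosmetic difference is that you route through the intermediate Riemannian distance $d_{\tilde g}$, whereas the paper compares two arbitrary logarithms directly and then specializes $\tilde\psi = (\exp^{\nabla^g})^{-1}$ so that $|\tilde\psi(m,n)|_g = d_g(m,n)$; both paths rest on the same three ingredients.
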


\begin{proof}
The proof of the Corollary will use Remark \ref{rem.2.25} and the local
equivalence of any two Riemannian metrics, Corollary \ref{cor.6.4} in the
Appendix. First we simplify matters by assuming that we are working in
Euclidean space which may be accomplished by pushing the metric and functions
forward using charts. Assuming this, we now derive a local inequality that
holds for any two logarithms $\psi$ and $\tilde{\psi}$ when $\left(
w,w\right)  \in D\left(  \psi\right)  \cap D\left(  \tilde{\psi}\right)  $.
Namely, there exist an open neighborhood, $\mathcal{O}_{w},$ of $w$ such that%
\[
\left\vert \tilde{\psi}\left(  m,n\right)  \right\vert _{g}\leq C_{1}%
\left\vert \tilde{\psi}\left(  m,n\right)  \right\vert \leq C_{2}%
C_{1}\left\vert \psi\left(  m,n\right)  \right\vert \leq C_{3}C_{2}%
C_{1}\left\vert \psi\left(  m,n\right)  \right\vert _{\tilde{g}}%
~\forall\left(  m,n\right)  \in\mathcal{O}_{w}\times\mathcal{O}_{w},
\]
where the first and third inequality follow from Corollary \ref{cor.6.4} with
one metric being the standard Euclidean metric and the other metric being $g$
or $\tilde{g}$ respectively, and the second inequality is true by Remark
\ref{rem.2.25}. Thus, there exists a $\tilde{C}$ such that%
\[
\left\vert \tilde{\psi}\left(  m,n\right)  \right\vert _{g}\leq\tilde
{C}\left\vert \psi\left(  m,n\right)  \right\vert _{\tilde{g}}%
\]

Now let $\nabla^{g}$ be the Levi-Civita covariant derivative associated to
$g$. By setting $\tilde{\psi}\left(  m,n\right)  =\left(  \exp_{m}^{\nabla
^{g}}\right)  ^{-1}\left(  n\right)  $ and shrinking $\mathcal{O}_{w}$ if
necessary to ensure that $\left(  \exp_{\left(  \cdot\right)  }^{\nabla^{g}%
}\right)  ^{-1}\left(  \cdot\right)  $ is defined and injective on
$\mathcal{O}_{w}\times\mathcal{O}_{w}$, we have that%
\[
\left\vert \left(  \exp_{m}^{\nabla^{g}}\right)  ^{-1}\left(  n\right)
\right\vert _{g}\leq\tilde{C}\left\vert \psi\left(  m,n\right)  \right\vert
_{\tilde{g}}.
\]
In this setting, $d_{g}\left(  m,n\right)  =\left\vert \left(  \exp
_{m}^{\nabla^{g}}\right)  ^{-1}\left(  n\right)  \right\vert _{g}$, and since
$F\left(  m,n\right)  =_{k}G\left(  m,n\right)  $ on $\mathcal{W}$ (by
shrinking $\mathcal{O}_{w}$ if necessary), we have
\[
\left\vert F\left(  m,n\right)  -G\left(  m,n\right)  \right\vert
_{g,\text{[}g,op\text{]}}\leq\hat{C}\left(  d_{g}\left(  m,n\right)  \right)
^{k}\text{ ~}\forall~m,n\in\mathcal{O}_{w}%
\]
for some $\hat{C}$. Thus, we have%
\[
\left\vert F\left(  m,n\right)  -G\left(  m,n\right)  \right\vert
_{g,\text{[}g,op\text{]}}\leq\hat{C}\left(  \tilde{C}\right)  ^{k}\left\vert
\psi\left(  m,n\right)  \right\vert _{\tilde{g}}^{k}.
\]
which is the statement of the Corollary with $C:=\hat{C}\left(  \tilde
{C}\right)  ^{k}$.
\end{proof}

In the sequel, Corollary \ref{cor.2.28} will typically be used without further
reference in order reduce the proof of showing $F\left(  m,n\right)
=_{k}G\left(  m,n\right)  $ in the manifold setting to a local statement about
functions on convex neighborhoods in $\mathbb{R}^{d}$ equipped with the
standard Euclidean flat metric structures. The first example of this strategy
will already occur in the proof of Corollary \ref{cor.2.29} below. For a
general parallelism it is not true that $U\left(  n,m\right)  ^{-1}=U\left(
m,n\right)  ,$ yet $U\left(  m,n\right)  $ is always a very good approximation
to $U\left(  n,m\right)  ^{-1}$.

\begin{corollary}
\label{cor.2.29}If $U$ is a parallelism on a manifold, $M,$ then
\[
U\left(  n,m\right)  ^{-1}=_{2}U\left(  m,n\right)  .
\]

\end{corollary}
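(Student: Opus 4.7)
The plan is to reduce the question to a local statement in $\mathbb{R}^d$ (using Corollary \ref{cor.2.28}) and then apply the structural description of gauges on Euclidean space from Theorem \ref{the.2.24}. Fix a point $m_0 \in M$. Working in a chart near $m_0$, it suffices to prove that there is a convex neighborhood $\mathcal{V} \subseteq \mathbb{R}^d$ on which
\[
\bigl| \bar{U}(x,y)^{-1} - \bar{U}(y,x) \bigr| \leq C |y-x|^2,
\]
where $\bar{U}$ is the matrix representative of $U$ from Notation \ref{not.2.23}, since by Corollary \ref{cor.2.28} this flat estimate is equivalent to the manifold-level statement $U(n,m)^{-1} =_2 U(m,n)$.

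First I would invoke Theorem \ref{the.2.24} on a convex neighborhood $\mathcal{V}\times\mathcal{V} \subseteq D(\mathcal{G})$ to write
\[
\bar{U}(x,y) = I + B(x,y)(y-x), \qquad \bar{U}(y,x) = I - B(y,x)(y-x),
\]
with $B$ smooth on $\mathcal{V}\times\mathcal{V}$. Since $\bar{U}(x,x) = I$, by shrinking $\mathcal{V}$ we may assume $\|B(x,y)(y-x)\| \leq 1/2$ on $\mathcal{V}\times\mathcal{V}$, so that the Neumann series for the inverse converges and yields
\[
\bar{U}(x,y)^{-1} = I - B(x,y)(y-x) + R(x,y),
\]
with $\|R(x,y)\| \leq C_1 |y-x|^2$ for some $C_1$ (using the boundedness of $B$ on the compact closure of a smaller neighborhood).

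Subtracting the two expressions gives
\[
\bar{U}(x,y)^{-1} - \bar{U}(y,x) = \bigl[ B(y,x) - B(x,y) \bigr](y-x) + R(x,y).
\]
The remaining step is to note that, by the smoothness of $B$ together with $B(x,x) - B(x,x) = 0$, a first-order Taylor expansion gives $\|B(y,x) - B(x,y)\| \leq C_2 |y-x|$ on a (possibly smaller) neighborhood. Combining these bounds produces the required $O(|y-x|^2)$ estimate.

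There is no serious obstacle; the mild technical point is to ensure that the Neumann-series remainder is genuinely quadratic in $|y-x|$, which is handled by restricting to a neighborhood where $\bar{U}$ is uniformly close to $I$ and $B$ is bounded. All the work beyond that is bookkeeping to transfer the flat estimate back to $M$ via Corollary \ref{cor.2.28}.
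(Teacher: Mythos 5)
Your proposal is correct and follows essentially the same route as the paper's proof: localize via Corollary \ref{cor.2.28}, apply Theorem \ref{the.2.24} to write $\bar{U}$ in the form $I + B(\cdot,\cdot)(\cdot)$, invert to quadratic order, and observe that $B(y,x)-B(x,y)$ vanishes on the diagonal. The only difference is that you spell out the Neumann-series remainder bound, which the paper leaves implicit.
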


\begin{proof}
This is a local statement so we may use Corollary \ref{cor.2.28} to reduce to
the case that $M$ is a convex open subset of $\mathbb{R}^{d}$. We then may use
Theorem \ref{the.2.24} to learn%
\[
\bar{U}\left(  n,m\right)  ^{-1}=\left(  I+\left[  B\left(  n,m\right)
\left(  m-n\right)  \right]  \right)  ^{-1}=I+\left[  B\left(  n,m\right)
\left(  n-m\right)  \right]  +O\left(  \left\vert n-m\right\vert ^{2}\right)
\]
while
\[
\bar{U}\left(  m,n\right)  =\left(  I+\left[  B\left(  m,n\right)  \left(
n-m\right)  \right]  \right)  .
\]
Subtracting these two equations shows,%
\begin{align*}
\bar{U}\left(  n,m\right)  ^{-1}-\bar{U}\left(  m,n\right)   &  =\left[
B\left(  n,m\right)  -B\left(  m,n\right)  \right]  \left(  n-m\right)
+O\left(  \left\vert n-m\right\vert ^{2}\right) \\
&  =O\left(  \left\vert n-m\right\vert ^{2}\right)
\end{align*}
wherein we have used $B\left(  n,m\right)  -B\left(  m,n\right)  $ vanishes
for $m=n$ and therefore is of order $\left\vert m-n\right\vert .$
\end{proof}

\subsubsection{A Covariant Derivative Gives Rise to a Gauge\label{sub.2.3.1}}

Let $\nabla$ be a covariant derivative on $TM,$ and $g$ be any fixed
Riemannian metric on $M.$ Let $G:TM\rightarrow M\times M$ be the function on
$TM$ defined by%
\begin{equation}
G\left(  v_{m}\right)  :=\left(  m,\exp_{m}^{\nabla}\left(  v_{m}\right)
\right)  \text{ for all }v_{m}\in D\left(  G\right)  , \label{equ.2.20}%
\end{equation}
where $D\left(  G\right)  $ is the domain of $G$ defined by
\[
D\left(  G\right)  :=\left\{  v_{m}\in TM:t\rightarrow\exp_{m}^{\nabla}\left(
tv_{m}\right)  \text{ exists for }0\leq t\leq1\right\}  .
\]

We will now develop a subset of $D\left(  G\right)  $ for which $G$ is
injective. For each $m\in M,$ let $\Lambda_{m}$ denote the set of $r>0$ so
that $B_{r}\left(  0_{m}\right)  \subseteq D\left(  G\right)  ,$ $\exp
_{m}^{\nabla}\left(  B_{r}\left(  0_{m}\right)  \right)  $ is an open
neighborhood of $m$ in $M,$ and $\exp_{m}^{\nabla}:B_{r}\left(  0_{m}\right)
\rightarrow\exp_{m}^{\nabla}\left(  B_{r}\left(  0_{m}\right)  \right)  $ is a
diffeomorphism (here $B_{r}\left(  0_{m}\right)  $ is the open ball in
$T_{m}M$ centered at $0_{m}$ with radius $r$). The fact that $\Lambda_{m}$ is
not empty is a consequence of the inverse function theorem and the fact that
$\left(  \exp_{m}^{\nabla}\right)  _{\ast0_{m}}=I_{T_{m}M}$ is invertible. We
now define $r_{m}:=\sup\Lambda_{m}$ where $r_{m}=\infty$ is possible and
allowed. A little thought shows that $\exp_{m}^{\nabla}\left(  B_{r_{m}%
}\left(  0_{m}\right)  \right)  $ is open and $\exp_{m}^{\nabla}:B_{r_{m}%
}\left(  0_{m}\right)  \rightarrow\exp_{m}^{\nabla}\left(  B_{r_{m}}\left(
0_{m}\right)  \right)  $ is a diffeomorphism, i.e. either $r_{m}=\infty$ or
$r_{m}\in\Lambda_{m}.$

Let us now set $\mathcal{C}^{\ast}:=\cup_{m\in M}B_{r_{m}}\left(
0_{m}\right)  \subseteq TM$ and let $G^{\ast}:\mathcal{C}^{\ast}\rightarrow
M\times M$ be the map defined by%
\[
G^{\ast}\left(  v_{m}\right)  :=\left(  m,\exp_{m}^{\nabla}\left(
v_{m}\right)  \right)  \text{ for all }v_{m}\in\mathcal{C}^{\ast}.
\]
It is easy to verify that $G^{\ast}$ is injective.

We will now build our domain $\mathcal{C}$ for which $G|_{\mathcal{C}}$ is
diffeomorphic onto its range. First we need a simple local invertibility proposition.

\begin{proposition}
\label{pro.2.30}Let $G$ be the function defined in Eq. (\ref{equ.2.20}). Then
for each $m\in M,$ there exists open subsets $\mathcal{V}_{m}\subseteq TM$ and
$\mathcal{W}_{m}\subseteq M$ such that $0_{m}\in\mathcal{V}_{m},~m\in
\mathcal{W}_{m},$ and $G|_{\mathcal{V}_{m}}:\mathcal{V}_{m}\rightarrow
\mathcal{W}_{m}\times\mathcal{W}_{m}$ is a diffeomorphism.
\end{proposition}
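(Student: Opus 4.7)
The plan is to apply the inverse function theorem to $G$ at the point $0_m\in TM$, and then shrink the resulting neighborhood so that its image in $M\times M$ has product form $\mathcal{W}_m\times\mathcal{W}_m$.

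First I would verify that $G$ is smooth in a neighborhood of $0_m$. This is immediate from the smoothness of the geodesic flow associated to $\nabla$ and the fact that the zero section lies in $D(G)$.

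The main step is to compute $dG_{0_m}$ and check it is an isomorphism. Using the canonical splitting $T_{0_m}(TM)\cong T_m M\oplus T_m M$ at the zero section (horizontal piece coming from the zero-section embedding $n\mapsto 0_n$, vertical piece coming from the fiber $T_m M$), I would compute the two partial derivatives separately. Differentiating the curve $n\mapsto G(0_n)=(n,n)$ at $n=m$ yields the diagonal map $a\mapsto(a,a)$ on the horizontal factor. Differentiating $v\mapsto G(v_m)=(m,\exp_m^\nabla(v_m))$ at $v=0_m$ and using $(\exp_m^\nabla)_{*0_m}=I_{T_m M}$ yields $b\mapsto(0,b)$ on the vertical factor. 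Hence
\[
dG_{0_m}(a,b)=(a,\,a+b),
\]
which is a linear isomorphism of $T_m M\oplus T_m M$ with inverse $(x,y)\mapsto(x,y-x)$. I do not expect any serious obstacle here; it is just a careful unwinding of the two natural tangent directions at the zero section.

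By the inverse function theorem there exist open sets $\mathcal{V}'\subseteq TM$ with $0_m\in\mathcal{V}'$ and $\mathcal{U}\subseteq M\times M$ with $(m,m)\in\mathcal{U}$ such that $G|_{\mathcal{V}'}:\mathcal{V}'\to\mathcal{U}$ is a diffeomorphism. Since $\mathcal{U}$ is open and contains $(m,m)$, I choose an open neighborhood $\mathcal{W}_m\subseteq M$ of $m$ small enough that $\mathcal{W}_m\times\mathcal{W}_m\subseteq\mathcal{U}$. Setting
\[
\mathcal{V}_m:=(G|_{\mathcal{V}'})^{-1}\bigl(\mathcal{W}_m\times\mathcal{W}_m\bigr),
\]
we have $0_m\in\mathcal{V}_m$ (because $G(0_m)=(m,m)\in\mathcal{W}_m\times\mathcal{W}_m$), $\mathcal{V}_m$ is open in $TM$, and the restriction $G|_{\mathcal{V}_m}:\mathcal{V}_m\to\mathcal{W}_m\times\mathcal{W}_m$ is a diffeomorphism, as required.
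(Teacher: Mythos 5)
Your proof is correct and takes essentially the same approach as the paper: both compute the differential of $G$ at the zero section, find the block-lower-triangular form $\begin{pmatrix} I & 0 \\ I & I \end{pmatrix}$, and invoke the inverse function theorem. The only notable difference is that you compute $dG_{0_m}$ intrinsically via the canonical splitting of $T_{0_m}(TM)$ rather than by first reducing to $M=\mathbb{R}^d$, and you spell out the final shrinking step (replacing the arbitrary open image $\mathcal{U}$ by a product neighborhood $\mathcal{W}_m\times\mathcal{W}_m$) which the paper leaves implicit in the phrase ``the result now follows by an application of the inverse function theorem.''
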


\begin{proof}
As this a local result we may assume that $M=\mathbb{R}^{d}$ and identify $TM$
with $M\times M.$ The function $G:TM\rightarrow M\times M$ then takes on the
form $G\left(  x,v\right)  =\left(  x,\bar{G}\left(  x,v\right)  \right)  $
where $\bar{G}\left(  x,0\right)  =x$ and $\left(  D_{2}\bar{G}\right)
\left(  x,0\right)  =I_{M}$ for all $x\in M.$ A simple computation then shows
\[
G^{\prime}\left(  x,0\right)  =\left[
\begin{array}
[c]{cc}%
I & 0\\
I & I
\end{array}
\right]  \text{ for all }x\in M.
\]
The result now follows by an application of the inverse function theorem.
\end{proof}

\begin{notation}
\label{not.2.31}If $\mathcal{W}$ is an open subset of $M$ and $\epsilon>0,$
let $\mathcal{U}\left(  \mathcal{W},\epsilon\right)  $ be the open subset of
$TM$ defined by
\[
\mathcal{U}\left(  \mathcal{W},\epsilon\right)  :=\left\{  v\in\pi^{-1}\left(
\mathcal{W}\right)  \subseteq TM:\left\vert v\right\vert _{g}<\epsilon
\right\}  .
\]

\end{notation}

\begin{theorem}
\label{the.2.32}Let $\mathcal{C}:=\bigcup\mathcal{U}\left(  \mathcal{W}%
,\epsilon\right)  $ where the union is taken over all open subsets
$\mathcal{W}\subseteq M$ and $\epsilon>0$ such that $\mathcal{U}\left(
\mathcal{W},\epsilon\right)  \subseteq D\left(  G\right)  $ and
$G|_{\mathcal{U}\left(  \mathcal{W},\epsilon\right)  }:\mathcal{U}\left(
\mathcal{W},\epsilon\right)  \rightarrow G\left(  \mathcal{U}\left(
\mathcal{W},\epsilon\right)  \right)  $ is a diffeomorphism. Then
$\mathcal{C}$ is an open subset of $TM$ such that $\mathcal{D}:=G\left(
\mathcal{C}\right)  $ is open in $M\times M,$ $G:\mathcal{C}\rightarrow
\mathcal{D}$ is a diffeomorphism,%
\[
\left\{  0_{m}:m\in M\right\}  \subseteq\mathcal{C}\subseteq\mathcal{C}^{\ast
}\text{\quad and\quad}\Delta^{M}=\left\{  \left(  m,m\right)  :m\in M\right\}
\subseteq\mathcal{D}.
\]

\end{theorem}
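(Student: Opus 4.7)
The plan is to verify the five claims in order: $\mathcal{C}$ is open, $\{0_m : m\in M\}\subseteq\mathcal{C}$, $\mathcal{C}\subseteq\mathcal{C}^{\ast}$, $G|_{\mathcal{C}}$ is a diffeomorphism onto its image, and $\Delta^M\subseteq\mathcal{D}$. The first is immediate since $\mathcal{C}$ is a union of open subsets of $TM$. For the last, once we have $0_m\in\mathcal{C}$ for every $m$, note that $G(0_m)=(m,m)$, so $\Delta^M\subseteq G(\mathcal{C})=\mathcal{D}$.

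For $\{0_m : m\in M\}\subseteq \mathcal{C}$, I would apply Proposition~\ref{pro.2.30} to each $m\in M$ to produce an open $\mathcal{V}_m\subseteq TM$ containing $0_m$ on which $G$ restricts to a diffeomorphism onto $\mathcal{W}_m\times\mathcal{W}_m$. Since $v\mapsto |v|_g$ is continuous on $TM$ and vanishes at $0_m$, I can choose a smaller open neighborhood $\mathcal{W}_m'\subseteq\mathcal{W}_m$ of $m$ and an $\epsilon>0$ such that $\mathcal{U}(\mathcal{W}_m',\epsilon)\subseteq\mathcal{V}_m$. Restrictions of diffeomorphisms to open subsets are diffeomorphisms onto their images, so this $\mathcal{U}(\mathcal{W}_m',\epsilon)$ qualifies for the union defining $\mathcal{C}$, and $0_m$ lies in it.

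The main obstacle is the inclusion $\mathcal{C}\subseteq\mathcal{C}^{\ast}$, which requires transferring the diffeomorphism property of $G$ on the two-dimensional family $\mathcal{U}(\mathcal{W},\epsilon)$ down to a single fiber. Fix $v_m\in\mathcal{U}(\mathcal{W},\epsilon)$ for some admissible pair, and consider the fiber $B_\epsilon(0_m)\subseteq T_mM$, which is contained in $\mathcal{U}(\mathcal{W},\epsilon)$ because $\pi_{TM}(B_\epsilon(0_m))=\{m\}\subseteq\mathcal{W}$. The exponential map $\exp_m^\nabla$ is obtained by projecting $G|_{B_\epsilon(0_m)}$ onto the second factor, so it is certainly smooth and injective (as $G$ is). To see $\exp_m^\nabla$ is a local diffeomorphism on $B_\epsilon(0_m)$, I would use that for each $v\in B_\epsilon(0_m)$ the vertical subspace $V_v\subseteq T_v(TM)$ is canonically identified with $T_mM$, that $d\pi_{TM}$ annihilates $V_v$, and therefore $dG_v$ maps $V_v$ isomorphically (since $dG_v$ is an isomorphism and $V_v$ has the right dimension) into $\{0\}\oplus T_{\exp_m^\nabla v}M$, whose second component is $(\exp_m^\nabla)_{\ast v}$. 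Hence $(\exp_m^\nabla)_{\ast v}$ is invertible, so $\exp_m^\nabla(B_\epsilon(0_m))$ is open and $\exp_m^\nabla$ restricts to a diffeomorphism. Thus $\epsilon\in\Lambda_m$, so $\epsilon\leq r_m$ and $v_m\in B_{r_m}(0_m)\subseteq \mathcal{C}^{\ast}$.

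Finally, to conclude that $G:\mathcal{C}\to\mathcal{D}$ is a diffeomorphism, I would observe the following. Smoothness is inherited from $G$. The image $\mathcal{D}=\bigcup G(\mathcal{U}(\mathcal{W},\epsilon))$ is open in $M\times M$ because each building block is the diffeomorphic image of an open set. Injectivity of $G|_{\mathcal{C}}$ follows from $\mathcal{C}\subseteq\mathcal{C}^{\ast}$ and the already-noted injectivity of $G^{\ast}$. For each $v_m\in\mathcal{C}$ the derivative $dG_{v_m}$ is invertible, being the derivative of a diffeomorphism on some neighborhood $\mathcal{U}(\mathcal{W},\epsilon)\ni v_m$. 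A bijective smooth map between manifolds of the same dimension whose derivative is invertible everywhere is a diffeomorphism, completing the proof.
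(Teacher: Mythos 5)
Your proof is correct and follows essentially the same route as the paper: Proposition~\ref{pro.2.30} gives local diffeomorphisms of the form $\mathcal{U}(\mathcal{W},\epsilon)$, these put $\{0_m\}$ inside $\mathcal{C}$, the fiber-ball argument gives $\mathcal{C}\subseteq\mathcal{C}^{\ast}$ and hence injectivity of $G|_{\mathcal{C}}$, and then $G|_{\mathcal{C}}$ is a surjective, injective local diffeomorphism onto the open set $\mathcal{D}$. The one place you go beyond the paper's terse exposition is the explicit vertical-subspace argument showing that $\exp_m^\nabla|_{B_\epsilon(0_m)}$ is a diffeomorphism for each $m\in\mathcal{W}$ (so that $\epsilon\in\Lambda_m$); the paper simply asserts ``from this it follows that $\mathcal{U}(\mathcal{W},\epsilon)\subseteq\mathcal{C}^{\ast}$,'' and your filling in of that step is a worthwhile addition.
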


\begin{proof}
According to Proposition \ref{pro.2.30}, for each $m\in M$ there exists an
open neighborhood $\mathcal{W}$ of $m\in M$ and $\epsilon>0$ so that
$\mathcal{U}\left(  \mathcal{W},\epsilon\right)  \subseteq D\left(  G\right)
$ and $G:\mathcal{U}\left(  \mathcal{W},\epsilon\right)  \rightarrow G\left(
\mathcal{U}\left(  \mathcal{W},\epsilon\right)  \right)  $ is a
diffeomorphism. From this it follows that $\left\{  0_{m}:m\in\mathcal{W}%
\right\}  \subseteq\mathcal{C}$ and $\mathcal{U}\left(  \mathcal{W}%
,\epsilon\right)  \subseteq\mathcal{C}^{\ast}.$ As $m\in M$ was arbitrary we
may conclude $\left\{  0_{m}:m\in M\right\}  \subseteq\mathcal{C}%
\subseteq\mathcal{C}^{\ast}.$ It is now easily verified that $G\left(
\mathcal{C}\right)  =\cup G\left(  \mathcal{U}\left(  \mathcal{W}%
,\epsilon\right)  \right)  $ is open, $G:\mathcal{C}\rightarrow G\left(
\mathcal{C}\right)  $ is a surjective local diffeomorphism and hence is a
diffeomorphism as $G|_{\mathcal{C}}$ is injective (since $G|_{\mathcal{C}%
^{\ast}}$ is injective).
\end{proof}

\begin{corollary}
\label{cor.2.33}Continuing the notation used in Theorem \ref{the.2.32}, we
have $\mathcal{D}$ is a diagonal domain and $\psi:=G|_{\mathcal{C}}%
^{-1}:\mathcal{D\rightarrow C}\subseteq TM$ is a logarithm. Moreover, if we
define
\[
U\left(  m,n\right)  :=//_{1}\left(  t\longrightarrow\exp^{\nabla}\left(
t\psi\left(  m,n\right)  \right)  \right)  ^{-1}:T_{n}M\rightarrow T_{m}M
\]
for all $\left(  m,n\right)  \in\mathcal{D},$ then $U$ is a parallelism on
$M.$
\end{corollary}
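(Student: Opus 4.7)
The plan is to unpack the corollary into its three assertions and dispatch each one using Theorem \ref{the.2.32} together with standard facts about the exponential map and parallel translation.

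First I would verify that $\mathcal{D}$ is a diagonal domain. Openness of $\mathcal{D}$ is immediate from Theorem \ref{the.2.32} (which says $G:\mathcal{C}\to\mathcal{D}$ is a diffeomorphism and $\mathcal{D}$ is open in $M\times M$). For the diagonal inclusion, Theorem \ref{the.2.32} provides $\{0_m:m\in M\}\subseteq\mathcal{C}$, and by definition of the exponential map $G(0_m)=(m,\exp_m^{\nabla}(0_m))=(m,m)$, hence $\Delta^{M}\subseteq G(\mathcal{C})=\mathcal{D}$.

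Next I would check that $\psi=G|_{\mathcal{C}}^{-1}$ satisfies the three conditions of Definition \ref{def.2.15}. Smoothness is free since $G|_{\mathcal{C}}$ is a diffeomorphism. Condition 1 ($\psi(m,n)\in T_mM$) holds because $G$ preserves first coordinates: the unique pre-image in $\mathcal{C}$ of $(m,n)$ must sit in the fiber $T_mM$. Condition 2 ($\psi(m,m)=0_m$) follows from $G(0_m)=(m,m)$ combined with injectivity of $G|_{\mathcal{C}}$. For Condition 3, observe that fixing the first slot, the map $n\mapsto\psi(m,n)$ is precisely the local inverse of $\exp_m^{\nabla}$ on a neighborhood of $m$. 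Since $(\exp_m^{\nabla})_{*0_m}=I_{T_mM}$, the inverse function theorem gives $\psi(m,\cdot)_{*m}=I_{T_mM}$.

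Third, I would verify that $U$ is a parallelism. For each $(m,n)\in\mathcal{D}$, the curve $\sigma_{m,n}(t):=\exp^{\nabla}(t\psi(m,n))$ satisfies $\sigma_{m,n}(0)=m$ and $\sigma_{m,n}(1)=n$, so parallel translation $//_1(\sigma_{m,n})$ is a linear isomorphism $T_mM\to T_nM$, and its inverse $U(m,n)$ maps $T_nM\to T_mM$ as required. For the boundary condition, $\psi(m,m)=0_m$ makes $\sigma_{m,m}$ the constant curve at $m$, and parallel translation along a constant curve is the identity, so $U(m,m)=I_m$. Smoothness of $U$ as a section over $\mathcal{D}$ reduces to smoothness of $(m,n)\mapsto //_1(\sigma_{m,n})$; this is the standard fact that solutions of the linear ODE $\nabla_{\dot{\sigma}}V=0$ depend smoothly on the curve $\sigma$, composed with the smooth dependence of $\sigma_{m,n}$ on $(m,n)$ (via smoothness of $\exp^{\nabla}$ and of $\psi$).

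Since Theorem \ref{the.2.32} did the real work in constructing the diffeomorphism $G:\mathcal{C}\to\mathcal{D}$ with $0_m\in\mathcal{C}$ for all $m$, there is no serious obstacle; the only mild point worth mentioning is the smoothness of parallel translation in the endpoints, which is a routine consequence of smooth parameter dependence for linear ODEs.
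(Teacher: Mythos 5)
Your proof is correct and follows essentially the same line as the paper's, which treats everything except the smoothness of $U$ as immediate from Theorem~\ref{the.2.32} and devotes its entire proof to that one point. The place where the paper does strictly more work is precisely the smoothness of $U$: rather than your informal appeal to ``solutions depend smoothly on the curve $\sigma$'' (which, taken literally, is an infinite-dimensional statement), the paper pulls the parallel-transport equation up to the frame bundle, rewrites it as the flow equation~(\ref{equ.2.21}) of the horizontal vector fields $B_a^{\nabla}$ with the velocity $a=O(\pi(v))^{-1}v$ as a finite-dimensional parameter, and exhibits $U(n,m)=F(\psi(m,n))O(m)^{-1}$ as a composition of manifestly smooth maps. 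Since you immediately compose with the smooth dependence of $\sigma_{m,n}$ on $(m,n)$, the intended parameter-dependence argument is clear; this is a difference in explicitness rather than a gap.
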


\begin{proof}
The only thing that remains to be proven is that $U\left(  m,n\right)  $ is
smoothly varying. This is a consequence of the fact that solutions to ordinary
differential equations depend smoothly on their starting points and parameter
in the vector fields. To be more explicit in this case, for $a\in
\mathbb{R}^{d}$ let $B_{a}^{\nabla}\left(  \mu\right)  =\dot{u}\left(
0\right)  $ where $u\left(  t\right)  =//_{t}\left(  \exp^{\nabla}\left(
\left(  \cdot\right)  \mu a\right)  \right)  \mu$ for $\mu$ in the frame
bundle $GL\left(  M\right)  $ over $M$, so that $B_{a}^{\nabla}$ are the
$\nabla$ -- horizontal vector fields. Now suppose that $w\in M$ is given and
$O\left(  m\right)  :\mathbb{R}^{d}\rightarrow T_{m}M$ is a local frame
defined for $m$ in an open neighborhood $\mathcal{W}$ of $w.$ For $v\in
\pi^{-1}\left(  \mathcal{W}\right)  \cap\mathcal{C}$ let $\gamma\left(
t\right)  =\exp^{\nabla}\left(  tv\right)  $ and $u\left(  t\right)
:=//_{t}\left(  \gamma\right)  O\left(  \pi\left(  v\right)  \right)  .$ We
then have
\begin{align*}
\dot{\gamma}\left(  t\right)   &  =//_{t}\left(  \gamma\right)  v=u\left(
t\right)  O\left(  \pi\left(  v\right)  \right)  ^{-1}v\text{ and}\\
\frac{\nabla u}{dt}  &  =0\text{ with }u\left(  0\right)  =O\left(  \pi\left(
v\right)  \right)  .
\end{align*}
These equations are equivalent to solving
\begin{equation}
\dot{u}\left(  t\right)  =B_{O\left(  \pi\left(  v\right)  \right)  ^{-1}%
v}^{\nabla}\left(  u\left(  t\right)  \right)  \text{ with }u\left(  0\right)
=O\left(  \pi\left(  v\right)  \right)  \label{equ.2.21}%
\end{equation}
in which case $\gamma\left(  t\right)  =\pi_{O\left(  M\right)  }\left(
u\left(  t\right)  \right)  $ where $\pi_{O\left(  M\right)  }$ is the
projection map from $O\left(  M\right)  $ to $M.$ We now define $F\left(
v\right)  :=u\left(  1\right)  $ provided $v\in\pi^{-1}\left(  \mathcal{W}%
\right)  \cap\mathcal{C}.$ It then follows that $F:\pi^{-1}\left(
\mathcal{W}\right)  \cap\mathcal{C}\rightarrow GL\left(  M\right)  $ is smooth
as the solutions to Eq. (\ref{equ.2.21}) depend smoothly on its starting point
and parameter. From this we learn for $\left(  m,n\right)  \in G\left(
\pi^{-1}\left(  \mathcal{W}\right)  \cap\mathcal{C}\right)  $ that
\[
U\left(  n,m\right)  =F\left(  \psi\left(  m,n\right)  \right)  O\left(
m\right)  ^{-1}%
\]
is a smooth function of $\left(  m,n\right)  .$
\end{proof}

\subsection{Controlled Rough Paths\label{sub.2.4}}

\begin{notation}
\label{not.2.34}Throughout the remainder of this paper, $\mathbf{y:=}\left(
y,y^{\dag}\right)  $ denotes a pair of continuous functions, $y\in C\left(
\left[  0,T\right]  ,M\right)  $ and $y^{\dag}\in C\left(  \left[  0,T\right]
,L\left(  W,TM\right)  \right)  ,$ such that $y_{s}^{\dag}\in L\left(
W,T_{y_{s}}M\right)  $ for all $s.$
\end{notation}

\begin{definition}
\label{def.2.35}Let $\left(  \psi,U\right)  $ be a gauge. The pair $\left(
y_{s},y_{s}^{\dag}\right)  $ is $\left(  \psi,U\right)  -$\textbf{rough path
controlled by} $\mathbf{X}$ if there exists a $C>0$ and $\delta>0$ such that

\begin{enumerate}
\item \label{Ite.12}%
\begin{equation}
\left\vert \psi\left(  y_{s},y_{t}\right)  -y_{s}^{\dag}x_{s,t}\right\vert
_{g}\leq C\omega\left(  s,t\right)  ^{2/p} \label{equ.2.22}%
\end{equation}
and

\item \label{Ite.13}%
\begin{equation}
\left\vert U\left(  y_{s},y_{t}\right)  y_{t}^{\dag}-y_{s}^{\dag}\right\vert
_{g}\leq C\omega\left(  s,t\right)  ^{1/p} \label{equ.2.23}%
\end{equation}
hold whenever $0\leq s\leq t\leq T$ and $\left\vert t-s\right\vert \leq\delta
$. Occasionally we will refer to $y_{s}$ as the path and $y_{s}^{\dag}$ as the
derivative process (or Gubinelli derivative).
\end{enumerate}
\end{definition}

\begin{remark}
In Definition \ref{def.2.35} and in the definitions that follow, we use the
convention that the $\delta$ is small enough to ensure that all of the
expressions are well defined (in particular here it is small enough to ensure
$\left(  y_{s},y_{t}\right)  \in\mathcal{D}$).
\end{remark}

\begin{remark}
\label{rem.2.37}Any path $z_{s}$ in Euclidean space naturally gives rise to a
two-parameter \textquotedblleft increment process,\textquotedblright\ namely
$z_{s,t}=z_{t}-z_{s}.$ If $\varphi$ is any function such that $\varphi\left(
z,\tilde{z}\right)  \approx\tilde{z}-z$, then it makes sense to define
$z_{s,t}^{\varphi}:=\varphi\left(  z_{s},z_{t}\right)  .$ This serves as
motivation for the following notation.
\end{remark}

\begin{notation}
\label{not.2.38}Given a gauge, $\mathcal{G}=\left(  \psi,U\right)  $, let
$y_{s,t}^{\psi}:=\psi\left(  y_{s,}y_{t}\right)  $ and $\left(  y^{\dag
}\right)  _{s,t}^{U}:=U\left(  y_{s},y_{t}\right)  y_{t}^{\dag}-y_{s}^{\dag}$.
These will be referred to as the $\mathcal{G-}$\textbf{local increment
processes of }$\left(  y,y^{\dag}\right)  .$
\end{notation}

\begin{remark}
\label{rem.2.39}With Notation \ref{not.2.38}, (\ref{equ.2.22}) becomes
$\left\vert y_{s,t}^{\psi}-y_{s}^{\dag}x_{s,t}\right\vert \leq C\omega\left(
s,t\right)  ^{2/p}$ and (\ref{equ.2.23}) becomes $\left\vert \left(  y^{\dag
}\right)  _{s,t}^{U}\right\vert \leq C\omega\left(  s,t\right)  ^{1/p}$.
\end{remark}

Definition \ref{def.2.35} gives one possible notion of a controlled rough path
on a manifold. We can also define such an object without having to provide a
metric or gauge by using charts on the manifold.

\begin{definition}
\label{def.2.40}The pair $\mathbf{y}_{s}=\left(  y_{s},y_{s}^{\dag}\right)  $
is a \textbf{chart-rough path controlled by} $\mathbf{X}$ if for every chart
$\phi$ on $M$ and every $\left[  a,b\right]  $ such that $y\left(  \left[
a,b\right]  \right)  \subseteq D\left(  \phi\right)  $ we have the existence
of a $C_{\phi,a,b}\geq0$ such that, for all $a\leq s\leq t\leq b$,

\begin{enumerate}
\item
\begin{equation}
\left\vert \phi\left(  y_{t}\right)  -\phi\left(  y_{s}\right)  -d\phi\circ
y_{s}^{\dag}x_{s,t}\right\vert \leq C_{\phi,a,b}\omega\left(  s,t\right)
^{2/p} \label{equ.2.24}%
\end{equation}
and

\item
\begin{equation}
\left\vert d\phi\circ y_{t}^{\dag}-d\phi\circ y_{s}^{\dag}\right\vert \leq
C_{\phi,a,b}\omega\left(  s,t\right)  ^{1/p} \label{equ.2.25}%
\end{equation}

\end{enumerate}

We will denote $C_{\phi,a,b}$ by $C_{\phi}$ when no confusion is likely to arise.
\end{definition}

\begin{notation}
\label{not.2.41}If $\left(  y_{s},y_{s}^{\dag}\right)  $ is a chart rough path
and $\phi$ is a chart as in Definition \ref{def.2.40}, we will write
$\phi_{\ast}\mathbf{y}_{s}$ to mean
\[
\phi_{\ast}\mathbf{y}_{s}\mathbf{:}=\phi_{\ast}\left(  y_{s},y_{s}^{\dag
}\right)  :=\left(  \phi\circ y_{s},d\phi\circ y_{s}^{\dag}\right)  .
\]

\end{notation}

Note that as long as $y$ remains away from the boundary of $D\left(
\phi\right)  $, then $\phi_{\ast}\mathbf{y}_{s}$ is a controlled rough path on
$\mathbb{R}^{d}$. Another way to think of this is that a chart controlled
rough path is one which pushes forward to a controlled rough path in
$\mathbb{R}^{d}$.

Before moving on, we'll make a few remarks.

\begin{remark}
\label{rem.2.42}If $y^{\dag}$ is any function satisfying the conditions in
either of Definitions \ref{def.2.35} or \ref{def.2.40}, then $s\rightarrow
y_{s}^{\dag}$ is automatically continuous. For example, if $\left(
y_{s},y_{s}^{\dag}\right)  $ satisfies the conditions of a $\left(
\psi,U\right)  -$rough path in Definition \ref{def.2.35}, then the function
$t\rightarrow U\left(  y_{s},y_{t}\right)  y_{t}^{\dag}$ is a continuous at
$s$ and therefore $t\rightarrow y_{t}^{\dag}=U\left(  y_{s},y_{t}\right)
^{-1}U\left(  y_{s},y_{t}\right)  y_{t}^{\dag}$ is continuous at $s.$
\end{remark}

\begin{remark}
\label{rem.2.43}If $M=\mathbb{R}^{d}$ and $\phi=I$ then the chart Definition
\ref{def.2.40} reduces to the usual Definition \ref{def.2.5} of controlled
rough paths. In this case, we identify all the tangent spaces with
$\mathbb{R}^{d}$ and forget the base point in the derivative process.
\end{remark}

\subsection{Chart and Gauge CRP\ Definitions are Equivalent\label{sub.2.5}}

\begin{theorem}
\label{the.2.44}Let $\mathbf{y:=}\left(  y,y^{\dag}\right)  $ be a pair of
continuous functions as in Notation \ref{not.2.34}, $M$ be a manifold, and
$\mathcal{G=}\left(  \psi,U\right)  $ be any gauge on $M$ . Then $\mathbf{y}$
is a chart controlled rough path (Definition \ref{def.2.40}) if and only if it
is a $\left(  \psi,U\right)  $-controlled rough path (Definition
\ref{def.2.35}).
\end{theorem}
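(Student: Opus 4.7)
The plan is to reduce both implications to a local comparison of two gauges expressed in a Euclidean chart, then apply the Taylor expansions of Theorem \ref{the.2.24}. First I localize using Lemma \ref{lem.2.6}: it suffices to verify the estimates on each piece of a sufficiently fine partition $0=t_0<\cdots<t_l=T$, chosen so that for each $i$, $y([t_i,t_{i+1}])$ lies in the domain of some chart $\phi_i$ and inside a precompact open neighborhood on which the ambient gauge $(\psi,U)$ is well-defined. This reduces matters to bounds on short intervals where every object of interest makes sense.

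Next, by Remark \ref{rem.2.22}, each chart $\phi$ induces a local gauge $(\psi^{\phi},U^{\phi})$ whose push-forward under $\phi$ is precisely the standard flat gauge of Example \ref{exa.2.18}. Unwinding Definition \ref{def.2.40}, the chart condition for $\phi$ is literally the $(\psi^{\phi},U^{\phi})$-controlled condition of Definition \ref{def.2.35}. Hence both directions of the theorem reduce to a single, more symmetric assertion: any two local gauges on $M$ determine the same notion of controlled rough path locally.

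To prove this local equivalence, I push everything forward through a chart and work on a convex open subset of $\mathbb{R}^d$. Theorem \ref{the.2.24} then gives, for each gauge $(\psi,U)$, the expansions
\[
\bar{\psi}(x,y)=y-x+A(x,y)(y-x)^{\otimes 2},\qquad \bar{U}(x,y)=I+B(x,y)(y-x),
\]
with $A,B$ smooth and bounded on the relevant neighborhood, and analogous expansions for a second gauge $(\tilde{\psi},\tilde{U})$. Consequently $\bar{\psi}(y_s,y_t)-\bar{\tilde{\psi}}(y_s,y_t)=O(|y_t-y_s|^2)$ and $\bar{U}(y_s,y_t)-\bar{\tilde{U}}(y_s,y_t)=O(|y_t-y_s|)$. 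Granted an a priori bound $|y_t-y_s|\le C\omega(s,t)^{1/p}$, these error terms have exactly the orders $\omega^{2/p}$ and $\omega^{1/p}$ respectively, so the $(\psi,U)$-estimates of (\ref{equ.2.22})--(\ref{equ.2.23}) transfer to $(\tilde{\psi},\tilde{U})$-estimates (and vice versa), proving the local equivalence.

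The main obstacle is establishing that a priori bound on $|y_t-y_s|$, which is not stated explicitly in Definition \ref{def.2.35}. I would handle it by a short bootstrap: the hypothesis $|\bar{\psi}(y_s,y_t)-y_s^{\dag}x_{s,t}|\le C\omega(s,t)^{2/p}$ combined with $\bar{\psi}(y_s,y_t)=(y_t-y_s)+A(y_s,y_t)(y_t-y_s)^{\otimes 2}$ gives
\[
|y_t-y_s|\le |y_s^{\dag}|\,|x_{s,t}|+C|y_t-y_s|^2+C\omega(s,t)^{2/p}.
\]
Continuity of $y$ on the compact interval $[0,T]$ forces $|y_t-y_s|$ to be uniformly small for $|t-s|\le\delta$, so the quadratic term is absorbed into the left-hand side, yielding the desired linear bound $|y_t-y_s|\le C'\omega(s,t)^{1/p}$. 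With this in hand, the comparison in the previous paragraph closes, and both directions of the theorem follow upon reassembling the local estimates via the sewing lemma.
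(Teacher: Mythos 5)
Your overall blueprint — reduce both implications to a local comparison of two gauges in Euclidean coordinates via the Taylor expansions of Theorem \ref{the.2.24}, supported by the bootstrap that converts the $\psi$-estimate into an a priori bound $|y_{s,t}|\le C\,\omega(s,t)^{1/p}$ — is exactly the content of the paper's Theorem \ref{the.2.47} and is correct. The gap is in the patching step, and it is asymmetric: your Lemma \ref{lem.2.6} / partition strategy works for the direction ``gauge $\Rightarrow$ chart'' but not for ``chart $\Rightarrow$ gauge.''

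For gauge $\Rightarrow$ chart, you fix a chart $\phi$ and $[a,b]$, partition $[a,b]$ finely, verify the flat estimates on each piece, and then Lemma \ref{lem.2.6} telescopes across partition points because the quantities $y_{s,t}$ and $y^{\dag}_{s,t}$ (in the chart) are genuinely additive. Fine. But for chart $\Rightarrow$ gauge you must produce a single pair $(C,\delta)$ so that
\begin{equation}
\left\vert \psi\left(y_{s},y_{t}\right)-y_{s}^{\dag}x_{s,t}\right\vert_{g}\le C\,\omega(s,t)^{2/p}\quad\text{and}\quad\left\vert U\left(y_{s},y_{t}\right)y_{t}^{\dag}-y_{s}^{\dag}\right\vert_{g}\le C\,\omega(s,t)^{1/p}
\end{equation}
hold \emph{whenever} $|t-s|\le\delta$. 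If you only verify these on the pieces $[t_i,t_{i+1}]$ of a partition, then pairs $(s,t)$ with $s<t_i<t$ and $|t-s|\le\delta$ are never covered, no matter how small $\delta$ is, because partition points are unavoidable. Moreover $\psi(y_s,y_t)$ and $U(y_s,y_t)y_t^{\dag}-y_s^{\dag}$ do not telescope the way $y_t-y_s$ does, so the Lemma \ref{lem.2.6} argument simply does not apply to these quantities; your final sentence invoking ``the sewing lemma'' for this direction is where the argument breaks.

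The missing ingredient is the interlaced-cover machinery of Definition \ref{def.2.49}, Corollary \ref{cor.2.50}, Lemma \ref{lem.2.51}, and Remark \ref{rem.2.52}. By Corollary \ref{cor.2.50} one replaces the partition with overlapping intervals $[a_i,b_i]$ with $y(a_{i+1})\neq y(b_i)$, hence $\omega(a_{i+1},b_i)>0$. Lemma \ref{lem.2.51} then observes that if $(s,t)$ is not contained in a single $[a_i,b_i]$, then $[s,t]\supseteq[a_{i^\ast+1},b_{i^\ast}]$ for some $i^\ast$, so $\omega(s,t)$ is bounded below by a positive constant $m$ and the desired inequality holds trivially from boundedness of the left-hand side; otherwise you are inside one piece and the local estimate applies. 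Without this (or some equivalent overlap argument) the chart $\Rightarrow$ gauge patching does not close. Everything else in your proposal — the reduction to $\mathbb{R}^d$, the symmetric reformulation as a comparison of two local gauges, the use of Theorem \ref{the.2.24}, and the bootstrap absorbing the quadratic term — matches the paper's route.
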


\begin{corollary}
\label{cor.2.45}We have the equality of sets%
\[
\left\{  \left(  \psi,U\right)  -\text{rough paths}\right\}  =\left\{  \left(
\tilde{\psi},\tilde{U}\right)  -\text{rough paths}\right\}
\]
for any gauges $\left(  \psi,U\right)  $ and $\left(  \tilde{\psi},\tilde
{U}\right)  $ on $M$.
\end{corollary}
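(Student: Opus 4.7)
The plan is to observe that this corollary is an essentially immediate consequence of Theorem \ref{the.2.44}, which has already been stated (and whose proof is presumably the real work). The key point is that the notion of a chart controlled rough path in Definition \ref{def.2.40} makes no reference to any gauge whatsoever; it depends only on the smooth structure of $M$ via the atlas of charts. Thus once Theorem \ref{the.2.44} is in hand, neither side of the claimed equality actually depends on the choice of gauge.

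Concretely, I would argue as follows. Let $\mathbf{y}=(y,y^{\dag})$ be any pair of continuous functions as in Notation \ref{not.2.34}. By Theorem \ref{the.2.44} applied to the gauge $(\psi,U)$, the pair $\mathbf{y}$ is a $(\psi,U)$-controlled rough path if and only if it is a chart controlled rough path. Applying Theorem \ref{the.2.44} a second time, now to the gauge $(\tilde\psi,\tilde U)$, the pair $\mathbf{y}$ is a $(\tilde\psi,\tilde U)$-controlled rough path if and only if it is a chart controlled rough path. Chaining these two biconditionals gives
\[
\mathbf{y}\text{ is }(\psi,U)\text{-controlled}\iff\mathbf{y}\text{ is a chart CRP}\iff\mathbf{y}\text{ is }(\tilde\psi,\tilde U)\text{-controlled},
\]
which is exactly the asserted equality of sets.

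Since this is nothing more than transitivity of the biconditional from Theorem \ref{the.2.44}, there is no real obstacle here; all of the substantive work is hidden inside that theorem. In particular, one does not need to exhibit explicit transformation formulas relating $\psi$ to $\tilde\psi$ or $U$ to $\tilde U$ (which would involve the structural Theorem \ref{the.2.24} and Taylor expansions). The only thing to double-check is that Theorem \ref{the.2.44} is stated uniformly in the gauge, i.e., that the same notion of chart controlled rough path is used on both sides; this is clear from Definition \ref{def.2.40}, which quantifies over all charts without reference to any $\psi$ or $U$.
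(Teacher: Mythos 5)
Your proposal is correct and is precisely the argument the paper intends: Corollary \ref{cor.2.45} is stated without a separate proof because it is an immediate consequence of Theorem \ref{the.2.44}, chaining the two biconditionals through the gauge-independent chart definition exactly as you describe.
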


\begin{notation}
\label{not.2.46}Let $CRP_{\mathbf{X}}\left(  M\right)  $ be the collection of
\textbf{controlled rough paths in }$M,$ i.e. pairs of functions $\mathbf{y}%
=\left(  y,y^{\dag}\right)  $ as in Notation \ref{not.2.34} which satisfy
either (and hence both) of Definitions \ref{def.2.35} or \ref{def.2.40}.
\end{notation}

We will prove Theorem \ref{the.2.44} after assembling a number of preliminary
results that will be needed in the proof and in the rest of the paper.

\subsubsection{Results Used in Proof of Theorem \ref{the.2.44}%
\label{sub.2.5.1}}

Our first result is a local version of Theorem \ref{the.2.44}.

\begin{theorem}
\label{the.2.47}Let $\mathcal{G=}\left(  \psi,U\right)  $ be a gauge on
$\mathbb{R}^{d}$, $\mathbf{z}=\left(  z,z^{\dag}\right)  \in C\left(  \left[
a,b\right]  ,\mathbb{R}^{d}\right)  \times C\left(  \left[  a,b\right]
,L\left(  W,\mathbb{R}^{d}\right)  \right)  $, and $\mathcal{W}$ be an open
convex set such that $z\left(  \left[  a,b\right]  \right)  \subseteq
\mathcal{W}$ and $\mathcal{W\times W}\subseteq D\left(  \mathcal{G}\right)  $.
Then $\mathbf{z}\in CRP_{\mathbf{X}}\left(  \mathbb{R}^{d}\right)  $ iff
$\mathbf{z}$ is a $\left(  \psi,U\right)  -$rough path controlled by
$\mathbf{X}$ with the choice $\delta:=$ $b-a$.
\end{theorem}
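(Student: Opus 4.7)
The plan is to reduce everything to the Taylor expansions provided by Theorem \ref{the.2.24}. On the convex open set $\mathcal{W}$, we have smooth $A,B$ such that
\[
\bar{\psi}(x,y) = (y-x) + A(x,y)(y-x)^{\otimes 2}, \qquad \bar{U}(x,y) = I + B(x,y)(y-x).
\]
Since $z$ is continuous on $[a,b]$, the image $z([a,b])$ is a compact subset of $\mathcal{W}$, so $A$, $B$ and $z^\dag$ are all bounded on the relevant sets; fix constants $M$ with $|A|,|B|_{op}\leq M$ there and $\|z^\dag\|_\infty<\infty$. I will show both implications by substituting these expansions directly into the difference between the corresponding estimates.

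For the forward direction, assume $\mathbf{z}\in CRP_{\mathbf{X}}(\mathbb{R}^d)$. First extract the Lipschitz-type bound $|z_t-z_s|\leq K\omega(s,t)^{1/p}$ from condition 1 of Definition \ref{def.2.5} together with $|x_{s,t}|\leq\omega(s,t)^{1/p}$ and the fact that $\omega(s,t)^{2/p}\leq\omega(a,b)^{1/p}\,\omega(s,t)^{1/p}$. Then write
\[
\psi(z_s,z_t) - z_s^\dag x_{s,t} = \bigl[z_t-z_s-z_s^\dag x_{s,t}\bigr] + A(z_s,z_t)(z_t-z_s)^{\otimes 2},
\]
whose norm is $O(\omega(s,t)^{2/p}) + M K^2\omega(s,t)^{2/p}$, verifying (\ref{equ.2.22}); similarly
\[
U(z_s,z_t)z_t^\dag - z_s^\dag = (z_t^\dag-z_s^\dag) + [B(z_s,z_t)(z_t-z_s)]z_t^\dag
\]
has norm $O(\omega(s,t)^{1/p}) + MK\|z^\dag\|_\infty\omega(s,t)^{1/p}$, giving (\ref{equ.2.23}).

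For the converse, assume the $(\psi,U)$-estimates on all of $[a,b]$. The main obstacle is that inverting the Taylor expansion of $\psi$ gives only the quadratic inequality
\[
|z_t-z_s| \leq |\psi(z_s,z_t)| + M|z_t-z_s|^2 \leq C_1\omega(s,t)^{1/p} + M|z_t-z_s|^2,
\]
since from (\ref{equ.2.22}) one has $|\psi(z_s,z_t)|\leq C_1\omega(s,t)^{1/p}$ after absorbing $\omega(s,t)^{2/p}$ as before. To bootstrap this into a linear bound, use uniform continuity of $z$ on $[a,b]$ to choose $\epsilon>0$ with $|z_t-z_s|\leq 1/(2M)$ whenever $|t-s|\leq\epsilon$; for such pairs the inequality forces $|z_t-z_s|\leq 2C_1\omega(s,t)^{1/p}$. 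For $|t-s|>\epsilon$, partition $[s,t]$ into at most $N:=\lceil(b-a)/\epsilon\rceil$ subintervals of length $\leq\epsilon$ and telescope: since $\omega$ is nondecreasing in $t$ and nonincreasing in $s$ (by superadditivity and $\omega(\cdot,\cdot)=0$ on the diagonal), each summand is bounded by $\omega(s,t)^{1/p}$, giving $|z_t-z_s|\leq 2C_1 N\omega(s,t)^{1/p}$. With this global linear bound $|z_t-z_s|\leq K\omega(s,t)^{1/p}$ in hand, the identities
\[
z_t-z_s-z_s^\dag x_{s,t} = \bigl[\psi(z_s,z_t)-z_s^\dag x_{s,t}\bigr] - A(z_s,z_t)(z_t-z_s)^{\otimes 2}
\]
and
\[
z_t^\dag - z_s^\dag = \bigl[U(z_s,z_t)z_t^\dag - z_s^\dag\bigr] - [B(z_s,z_t)(z_t-z_s)]z_t^\dag
\]
yield the standard CRP estimates of Definition \ref{def.2.5} immediately.
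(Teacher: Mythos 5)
Your proof is correct and follows essentially the same approach as the paper: both directions reduce to the Taylor expansions of Theorem \ref{the.2.24} with $A,B$ bounded on the compact image $z([a,b])$, the forward direction absorbs $\omega(s,t)^{2/p}\leq\omega(a,b)^{1/p}\omega(s,t)^{1/p}$ to get the increment bound, and the converse bootstraps the quadratic inequality via uniform continuity and a telescoping partition using the monotonicity of $\omega$. (One small nit: the monotonicity of $\omega$ follows from superadditivity together with nonnegativity, not from $\omega$ vanishing on the diagonal, but this does not affect the argument.)
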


\begin{proof}
Suppose $\mathbf{z\in}CRP_{\mathbf{X}}\left(  \mathbb{R}^{d}\right)  .$ By
Theorem \ref{the.2.24},%
\[
\bar{\psi}\left(  x,y\right)  =y-x+A\left(  x,y\right)  \left(  y-x\right)
^{\otimes2}\text{~}\forall~x,y\in\mathcal{W}.
\]
Clearly $A$ is bounded if it is restricted to $x,y$ in the convex hull of
$z\left(  \left[  a,b\right]  \right)  $ (which is compact and contained in
$\mathcal{W}$). Thus, for all such points, we have there exists a $C_{1}$ such
that%
\begin{equation}
\left\vert \bar{\psi}\left(  x,y\right)  -\left(  y-x\right)  \right\vert \leq
C_{1}\left\vert y-x\right\vert ^{2}.\label{equ.2.26}%
\end{equation}
Taking $y=z_{t}$ and $x=z_{s}$ in this inequality shows%
\begin{equation}
\left\vert \bar{\psi}\left(  z_{s},z_{t}\right)  -z_{s,t}\right\vert \leq
C_{1}\left\vert z_{t}-z_{s}\right\vert ^{2}.\label{equ.2.27}%
\end{equation}
Since $\mathbf{z\in}CRP_{\mathbf{X}}\left(  \mathbb{R}^{d}\right)  ,$ there
exists a $C_{2}$ such that
\begin{align}
\left\vert z_{s,t}-z_{s}^{\dag}x_{s,t}\right\vert  &  \leq C_{2}\omega\left(
s,t\right)  ^{2/p}\label{equ.2.28}\\
\left\vert z_{s,t}^{\dag}\right\vert  &  \leq C_{2}\omega\left(  s,t\right)
^{1/p}.\label{equ.2.29}%
\end{align}
By enlarging $C_{2}$ if necessary we may further conclude,%
\begin{equation}
\left\vert z_{s,t}\right\vert \leq C_{2}\omega\left(  s,t\right)
^{1/p}.\label{equ.2.30}%
\end{equation}
Using Eqs. (\ref{equ.2.28}) and (\ref{equ.2.30}) in Eq. (\ref{equ.2.27}) gives
the existence of a $C_{3}<\infty$ such that%
\[
\left\vert \bar{\psi}\left(  z_{s},z_{t}\right)  -z_{s}^{\dag}x_{s,t}%
\right\vert \leq C_{3}\omega\left(  s,t\right)  ^{2/p}.
\]
By Theorem \ref{the.2.24} once more, we have%
\begin{equation}
\bar{U}\left(  x,y\right)  =I+B\left(  x,y\right)  \left(  y-x\right)
.\label{equ.2.31}%
\end{equation}
As was the case for $A$, $B$ is bounded on the convex hull of $z\left(
\left[  a,b\right]  \right)  $ so that there exists a $C_{4}$ such that%
\begin{align*}
\left\vert \bar{U}\left(  z_{s},z_{t}\right)  z_{t}^{\dag}-z_{s}^{\dag
}\right\vert  &  \leq\left\vert z_{s,t}^{\dag}\right\vert +C_{4}\left\vert
z_{s,t}\right\vert \\
&  \leq\left(  C_{2}+C_{4}C_{2}\right)  \omega\left(  s,t\right)  ^{1/p}.
\end{align*}
Thus $\mathbf{z}$ is a $\left(  \psi,U\right)  -$rough path controlled by
$\mathbf{X}$ with the choice $\delta:=$ $b-a$ where our $C:=\max\left\{
C_{1},C_{2}\left(  1+C_{4}\right)  \right\}  .$

For the converse direction, suppose $\mathbf{z}$ is a $\left(  \psi,U\right)
-$rough path controlled by $\mathbf{X}$ with the choice $\delta:=$ $b-a$ as in
Definition \ref{def.2.35}. From Eq. (\ref{equ.2.26}) and the triangle
inequality we have%
\[
\left\vert y-x\right\vert \leq C_{1}\left\vert y-x\right\vert ^{2}+\left\vert
\bar{\psi}\left(  x,y\right)  \right\vert .
\]
Taking $x=z_{s}$ and $y=z_{t}$ in this inequality and using Definition
\ref{def.2.35} we may find $C_{2}<\infty$ such that%
\begin{align*}
\left\vert z_{s,t}\right\vert  &  \leq C_{1}\left\vert z_{s,t}\right\vert
^{2}+\left\vert \psi\left(  z_{s},z_{t}\right)  \right\vert \\
&  \leq C_{1}\left\vert z_{s,t}\right\vert ^{2}+C_{2}\omega\left(  s,t\right)
^{1/p}%
\end{align*}
for all $s\leq t$ in $\left[  a,b\right]  $. By the uniform continuity of $z$
on $\left[  a,b\right]  $, there exists $\epsilon>0$ such that $C_{1}%
\left\vert z_{s,t}\right\vert \leq\frac{1}{2}$ when $\left\vert t-s\right\vert
\leq\epsilon$ which combined with the previous inequality implies
\[
\left\vert z_{s,t}\right\vert \leq2C_{2}\omega\left(  s,t\right)  ^{1/p}\text{
when }\left\vert t-s\right\vert \leq\epsilon.
\]
For general $a\leq s\leq t\leq b$ we may write $z_{s,t}$ as a sum of at most
$n\leq\left(  b-a\right)  /\epsilon$ increments whose norms are bounded by
$2C_{2}\omega\left(  s,t\right)  ^{1/p}$ wherein we have repeatedly used the
estimate above along with the monotonicity of $\omega$ resulting from
superactivity. Thus we conclude, with $C_{3}:=2C_{2}\left(  b-a\right)
/\epsilon<\infty,$ that%
\[
\left\vert z_{s,t}\right\vert \leq C_{3}\omega\left(  s,t\right)
^{1/p}~\forall~s,t\in\left[  a,b\right]  .
\]
This estimate along with the inequality in Eq. (\ref{equ.2.26}) gives,%
\[
\left\vert \bar{\psi}\left(  z_{s,}z_{t}\right)  -z_{s,t}\right\vert \leq
C_{1}\left\vert z_{s,t}\right\vert ^{2}\leq C_{1}C_{3}^{2}\omega\left(
s,t\right)  ^{2/p}~\forall~s,t\in\left[  a,b\right]  .
\]
The previous inequality along with the assumption that $\mathbf{z}$ is a
$\left(  \psi,U\right)  -$rough path shows there exists $C_{4}<\infty$ such
that%
\[
\left\vert z_{s,t}-z_{s}^{\dag}x_{s,t}\right\vert \leq\left\vert z_{s,t}%
-\bar{\psi}\left(  z_{s,}z_{t}\right)  \right\vert +\left\vert \bar{\psi
}\left(  z_{s,}z_{t}\right)  -z_{s}^{\dag}x_{s,t}\right\vert \leq C_{4}%
\omega\left(  s,t\right)  ^{2/p}.
\]
From Eq. (\ref{equ.2.31}), there exists a $C_{5}$ such that
\[
\left\vert z_{s,t}^{\dag}\right\vert \leq\left\vert U\left(  z_{s}%
,z_{t}\right)  z_{t}^{\dag}-z_{s}^{\dag}\right\vert +C_{5}\left\vert
z_{s,t}\right\vert .
\]
This inequality along with the assumption that $\mathbf{z}$ is a $\left(
\psi,U\right)  -$rough path shows there exists $C_{6}<\infty$ such that
$\left\vert z_{s,t}^{\dag}\right\vert \leq C_{6}\omega\left(  s,t\right)
^{1/p}$ for all $a\leq s\leq t\leq b.$ Thus we have shown $\mathbf{z}\in
CRP_{\mathbf{X}}\left(  \mathbb{R}^{d}\right)  $.
\end{proof}

The rest of this section is now devoted to a number of \textquotedblleft
stitching\textquotedblright\ arguments which will be used to piece together a
number of local versions of Theorem \ref{the.2.44} over subintervals as
described in Theorem \ref{the.2.47} into the full global version as stated in
Theorem \ref{the.2.44}. For the rest of this section let $\mathcal{X}$ be a
topological space and $0\leq S<T<\infty.$

\begin{lemma}
\label{lem.2.48}If $y:\left[  S,T\right]  \rightarrow\mathcal{X}$ is
continuous and $y\left(  \left[  S,T\right]  \right)  \subseteq\bigcup
_{\alpha\in A}\mathcal{O}_{\alpha}$ where $\left\{  \mathcal{O}_{\alpha
}\right\}  _{\alpha\in A}$ is a collection of open subsets of $\mathcal{X}$,
then there exists a partition of $\left[  S,T\right]  $, $S=t_{0}<t_{1}%
<\ldots<t_{l}=T,$ and $\alpha_{i}\in A$ such that for all $i$ less than $l$,
we have
\[
y\left(  \left[  t_{i},t_{i+1}\right]  \right)  \subseteq\mathcal{O}%
_{\alpha_{i}}%
\]

\end{lemma}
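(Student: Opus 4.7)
The plan is to reduce to a classical compactness argument on $[S,T]$. First I would pull back the given cover of $y([S,T])$ through $y$: by the continuity of $y$, each set $y^{-1}(\mathcal{O}_{\alpha})$ is open in $[S,T]$, and by hypothesis these sets cover $[S,T]$.

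Next I would apply the Lebesgue number lemma to the compact metric space $[S,T]$ (with its standard metric inherited from $\mathbb{R}$) and the open cover $\{y^{-1}(\mathcal{O}_{\alpha})\}_{\alpha \in A}$. This yields a $\delta>0$ such that any subinterval of $[S,T]$ of diameter less than $\delta$ is contained in some single $y^{-1}(\mathcal{O}_{\alpha})$. Then I would pick any partition $S=t_{0}<t_{1}<\dots<t_{l}=T$ with $\max_i (t_{i+1}-t_i)<\delta$; for each $i$ there is an $\alpha_i\in A$ with $[t_i,t_{i+1}]\subseteq y^{-1}(\mathcal{O}_{\alpha_i})$, which is precisely the conclusion $y([t_i,t_{i+1}])\subseteq\mathcal{O}_{\alpha_i}$.

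There is no real obstacle here; the only point worth a sentence is that the Lebesgue number lemma is available because $[S,T]$ is a compact metric space. If one prefers to avoid that lemma, an equivalent route is: for each $t\in[S,T]$ pick $\alpha(t)\in A$ with $y(t)\in\mathcal{O}_{\alpha(t)}$ and an open interval $I_t\ni t$ (relative to $[S,T]$) with $y(I_t)\subseteq\mathcal{O}_{\alpha(t)}$ (possible by continuity), extract a finite subcover $\{I_{t_1},\dots,I_{t_N}\}$ by compactness, and then refine into a partition so that each subinterval lies inside some $I_{t_j}$. Either way the proof is a few lines.
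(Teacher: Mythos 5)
Your proof is correct, but it takes a genuinely different route from the paper's. You pull back the cover via $y$, invoke the Lebesgue number lemma for the compact metric space $[S,T]$ to obtain a uniform $\delta>0$, and then any partition of mesh less than $\delta$ works; your alternative (continuity gives a local interval for each $t$, extract a finite subcover, refine) is a minor variant of the same compactness idea. The paper instead runs a supremum argument: it sets $T^{\ast}:=\sup\{t\in[S,T]:\text{the conclusion holds on }[S,t]\}$, notes $T^{\ast}>S$, and shows $T^{\ast}<T$ is impossible by finding a single $\mathcal{O}_{\alpha^{\ast}}$ that absorbs $y$ on a small two-sided neighborhood of $T^{\ast}$ and then appending one more partition point past $T^{\ast}$ to a partition of $[S,T^{\ast}-\epsilon]$. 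The two proofs buy slightly different things: your version produces a \emph{uniform} mesh bound $\delta$ (occasionally useful downstream, though not needed here), whereas the paper's version is self-contained and does not appeal to the Lebesgue number lemma. Both are short and both are valid.
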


\begin{proof}
Define $T^{\ast}:=\sup\left\{  t:S\leq t\leq T,~\text{the conclusion of the
theorem holds for }\left[  S,t\right]  \right\}  $. Note that trivially
$T^{\ast}>S$. For sake of contradiction, suppose $T^{\ast}<T.$ Then there
exists an $\epsilon>0$ such that $T^{\ast}+\epsilon<T,$ $T^{\ast}-\epsilon>S$
and $y\left(  T^{\ast}-\epsilon,T^{\ast}+\epsilon\right)  \subset
\mathcal{O}_{\alpha^{\ast}}$ for some $\alpha^{\ast}.$ But the condition of
the theorem holds for $T^{\ast}-\epsilon$ for some partition $P$. By appending
$P$ with $T^{\ast}+\lambda\epsilon$ with $\lambda\in(-1,1]$ we have that
$T^{\ast}\geq T^{\ast}+\epsilon$ which is absurd. Thus, we must have that
$T^{\ast}=T.$
\end{proof}

\begin{definition}
\label{def.2.49}The set $\left\{  a_{i},b_{i}\right\}  _{i=0}^{l}%
\subset\left[  S,T\right]  $ is an \textbf{interlaced cover of }$\left[
S,T\right]  $ if $S=a_{0}<a_{1}<b_{0}<a_{2}<b_{1}<a_{3}<b_{2}<\ldots
<a_{l}<b_{l-1}<b_{l}\,=T$. Let $y:\left[  S,T\right]  \rightarrow\mathcal{X}$.
The set $\left\{  a_{i},b_{i}\right\}  _{i=0}^{l}$ is an \textbf{interlaced
cover for }$y$ if $\left\{  a_{i},b_{i}\right\}  _{i=0}^{l}\ $is an interlaced
cover of $\left[  S,T\right]  $ and $y\left(  a_{i+1}\right)  \neq y\left(
b_{i}\right)  $ for all $i$ less than $l.$

\end{definition}

\begin{figure}[ptbh]
\centering
\par
\psize{5.5in} %
\executeiffilenewer{\GraphicsDirectoryInterlacedPar.svg}{\GraphicsDirectory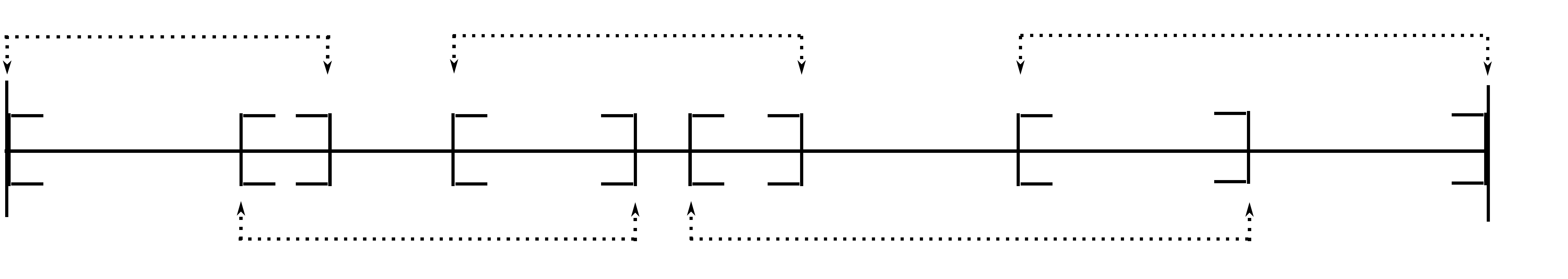}%
{inkscape -z -D --file=\GraphicsDirectoryInterlacedPar.svg --export-pdf=\GraphicsDirectoryInterlacedPar.pdf --export-latex}%
\input{\GraphicsDirectoryInterlacedPar.pdf_tex}%
 \caption{An interlaced cover of
$[S,T]$}%
\label{fig.1}%
\end{figure}

\begin{corollary}
\label{cor.2.50}Suppose $y:\left[  S,T\right]  \rightarrow\mathcal{X}$ is
continuous and $y\left(  \left[  S,T\right]  \right)  \subseteq\bigcup
_{\alpha\in A}\mathcal{O}_{\alpha}$ where $\left\{  \mathcal{O}_{\alpha
}\right\}  _{\alpha\in A}$ is a collection of open sets $\mathcal{O}_{\alpha}%
$. There exists an interlaced cover for $y$, $\left\{  a_{i},b_{i}\right\}
_{i=0}^{l}$ such that $y\left(  \left[  a_{i},b_{i}\right]  \right)
\subseteq\mathcal{O}_{\alpha_{i}}$. Note that for such a setup, this implies
$y\left(  \left[  a_{i+1,}b_{i}\right]  \right)  \subseteq\mathcal{O}%
_{\alpha_{i}}\cap\mathcal{O}_{\alpha_{i+1}}$
\end{corollary}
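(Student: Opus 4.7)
The plan is to first apply Lemma~\ref{lem.2.48} to produce an initial partition of $[S,T]$ subordinate to the cover $\{\mathcal{O}_{\alpha}\}$, then ``thicken'' this partition into an interlaced cover. The extra condition $y(a_{i+1})\neq y(b_i)$ will be engineered by insisting the initial partition be minimal.

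First I would invoke Lemma~\ref{lem.2.48} to obtain a partition $S=t_0<t_1<\cdots<t_{l+1}=T$ and indices $\alpha_0,\ldots,\alpha_l\in A$ with $y([t_i,t_{i+1}])\subseteq\mathcal{O}_{\alpha_i}$, chosen so that $l+1$ is as small as possible (no interior point may be deleted while preserving the covering property for some re-choice of indices). If $l=0$, taking $(a_0,b_0):=(S,T)$ gives a trivial interlaced cover for $y$, so I may assume $l\geq 1$. For each $1\leq i\leq l$ I would set $W_i:=y^{-1}(\mathcal{O}_{\alpha_{i-1}}\cap\mathcal{O}_{\alpha_i})$ and let $I_i=(p_i,q_i)$ denote the connected component of $W_i$ in $[S,T]$ containing $t_i$. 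Minimality then yields two structural facts: (a) $I_i\subseteq(t_{i-1},t_{i+1})$, since otherwise e.g.\ $t_{i+1}\in I_i$ would give $[t_i,t_{i+1}]\subseteq I_i\subseteq y^{-1}(\mathcal{O}_{\alpha_{i-1}})$ and, combined with $y([t_{i-1},t_i])\subseteq\mathcal{O}_{\alpha_{i-1}}$, permit deletion of $t_i$; and (b) $y$ is not constant on $I_i$, because if $y\equiv p$ on $I_i$ with $p=y(t_i)$, continuity at an endpoint of $I_i$ interior to $[S,T]$ together with maximality of connected components of the open set $W_i$ would force $p\notin\mathcal{O}_{\alpha_{i-1}}\cap\mathcal{O}_{\alpha_i}$, contradicting $p=y(t_i)\in\mathcal{O}_{\alpha_{i-1}}\cap\mathcal{O}_{\alpha_i}$, and the only alternative $I_i=[S,T]$ forces $y$ constant on $[S,T]$ and hence $l=0$. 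By essentially the same reasoning, if $[c_i,d_i]$ denotes the maximal closed subinterval of $[S,T]$ containing $t_i$ on which $y\equiv y(t_i)$, then $d_i<c_{i+1}$ for $1\leq i\leq l-1$ (otherwise $y$ is constant with common value in $\mathcal{O}_{\alpha_{i-1}}$ on $[t_i,t_{i+1}]$, again permitting deletion of $t_i$).

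With these facts in hand I would set $a_0:=S$, $b_l:=T$, and for each $1\leq i\leq l$ choose $(a_i,b_{i-1})$ to be either $(u_i,t_i)$ with $u_i\in(p_i,c_i)$ close to $c_i$ and $y(u_i)\neq y(t_i)$ (``left choice''), or $(t_i,v_i)$ with $v_i\in(d_i,q_i)$ close to $d_i$ and $y(v_i)\neq y(t_i)$ (``right choice''). Fact (b), together with the maximality of $[c_i,d_i]$, guarantees that at least one of these choices is available. The global interlaced ordering $a_0<a_1<b_0<\cdots<b_l$ then follows from (a), with the only delicate verification being the combination ``right choice at $i$, left choice at $i+1$'', where $b_{i-1}=v_i$ and $a_{i+1}=u_{i+1}$ are both in $(t_i,t_{i+1})$; here $d_i<c_{i+1}$ lets me take $v_i,u_{i+1}$ sufficiently close to $d_i,c_{i+1}$ respectively to ensure $v_i<u_{i+1}$. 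Finally, $y([a_i,b_i])\subseteq\mathcal{O}_{\alpha_i}$ drops out from the decomposition $[a_i,b_i]=[a_i,t_i]\cup[t_i,t_{i+1}]\cup[t_{i+1},b_i]$, because the outer pieces lie in $I_i$ and $I_{i+1}$, each mapping into $\mathcal{O}_{\alpha_i}$, while the middle piece does so by Lemma~\ref{lem.2.48}.

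The main obstacle is fact (b). Without minimality of the partition, $y$ could be locally constant at some $t_i$ on an interval large enough that every valid choice of $a_i,b_{i-1}$ near $t_i$ would satisfy $y(a_i)=y(b_{i-1})$. Minimality together with the maximality argument for connected components of $W_i$ is precisely what rules this out.
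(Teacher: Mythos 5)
Your proof is correct, but it takes a genuinely different and substantially longer route than the paper's. Both begin with the same preparatory step: prune the partition from Lemma~\ref{lem.2.48} (you insist on minimality, the paper on a greedy left-to-right deletion) so that $y\left(\left[t_i,t_{i+1}\right]\right)\not\subseteq\mathcal{O}_{\alpha_{i+1}}$. From there the arguments diverge. The paper works with a single stopping time per transition: it sets $T_i:=\inf\left\{t<b_i:y\left(\left[t,b_i\right]\right)\subseteq\mathcal{O}_{\alpha_{i+1}}\right\}$, notes that $y\left(T_i\right)\notin\mathcal{O}_{\alpha_{i+1}}$ while $y\left(b_i\right)\in\mathcal{O}_{\alpha_{i+1}}$ (so $y\left(T_i\right)\neq y\left(b_i\right)$), and then by continuity picks $a_{i+1}=T_i^{\ast}\in\left(T_i,b_i\right)$ with $y\left(T_i^{\ast}\right)\neq y\left(b_i\right)$ --- a one-line separation argument. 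You instead introduce, for each transition time $t_i$, the connected component $I_i$ of $y^{-1}\left(\mathcal{O}_{\alpha_{i-1}}\cap\mathcal{O}_{\alpha_i}\right)$ and the maximal constancy interval $\left[c_i,d_i\right]$, derive structural facts about them from minimality, and then run a four-case analysis over ``left/right'' placements of $a_i,b_{i-1}$ near $c_i$ or $d_i$, using $d_i<c_{i+1}$ to resolve the one conflicting pairing. This works, and it gives a more explicit geometric picture of why a nearby time with a distinct $y$-value exists, but it buys that at the cost of considerably heavier bookkeeping. The paper's stopping time $T_i$ collapses most of this machinery at once: the topological fact that $y\left(T_i\right)$ lies outside the open set containing $y\left(b_i\right)$ supplies the separation directly, so no analysis of maximal constancy intervals or of $I_i$ is needed.

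One minor wrinkle in your write-up: you write $I_i=\left(p_i,q_i\right)$ as though it were automatic that the connected component is a bounded open interval; a priori a component of an open subset of $\left[S,T\right]$ can be half-open at $S$ or $T$. Your fact~(a) ($I_i\subseteq\left(t_{i-1},t_{i+1}\right)$) does rule this out once it is established, but fact~(a) is proved after the notation $\left(p_i,q_i\right)$ is introduced, so the order of assertions should be adjusted. This is cosmetic; the argument is sound.
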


\begin{proof}
The first step will be a technical one to get rid of unnecessary endpoints.
Let $t_{i}^{\prime}$ and $\alpha_{i}^{\prime}$ be as given in Lemma
\ref{lem.2.48}. Then clearly $y\left(  t_{i}^{\prime}\right)  \in
\mathcal{O}_{\alpha_{i-1}^{\prime}}\cap\mathcal{O}_{\alpha_{i}^{\prime}}$ for
all $1\leq i<l^{\prime}$. Starting with $t_{1}^{\prime}$, we check if
$y\left(  \left[  t_{0}^{\prime},t_{1}^{\prime}\right]  \right)
\subseteq\mathcal{O}_{\alpha_{1}}$. In the case it is, we may renumber our
partition after removing $t_{1}^{\prime}$ and $\mathcal{O}_{\alpha_{0}%
^{\prime}}$ to get a new set of $t_{j}^{\prime}$ and $\alpha_{j}^{\prime}$
which still satisfy the result of the lemma. Continuing this process
inductively, we may assume that we have such a set $\left\{  t_{i},\alpha
_{i}\right\}  _{i=0}^{l}$ such that $y\left(  \left[  t_{i},t_{i+1}\right]
\right)  $ is not contained in $\mathcal{O}_{\alpha_{i+1}}$.

To construct the desired interlaced cover, we define $b_{i}:=t_{i+1}$ for all
$i\leq l:=l^{\prime}-1$ and $a_{0}:=S.$ Note for now that this means $y\left(
\left[  b_{i-1},b_{i}\right]  \right)  \subseteq\mathcal{O}_{\alpha_{i}}$.
Then we define the \textquotedblleft lower end\textquotedblright\ stopping
time $T_{i}$ for all $i>0$ by the formula%
\[
T_{i}:=\inf\left\{  t<b_{i}:y\left(  \left[  t,b_{i}\right]  \right)
\subseteq\mathcal{O}_{\alpha_{i+1}}\right\}  .
\]
By construction and because we refined our partition, $b_{i-1}\leq T_{i}%
<b_{i}.$ It is clear that $y\left(  T_{i}\right)  \neq y\left(  b_{i}\right)
$ by the continuity of $y$. Thus, there exists a time $T_{i}^{\ast}$ such that
$T_{i}<T_{i}^{\ast}$ and $y\left(  T_{i}^{\ast}\right)  \neq y\left(
b_{i}\right)  $. Define
\[
a_{i+i}:=T_{i}^{\ast}%
\]
for all $0<i<n.$ Since $y\left(  \left[  b_{i-1},b_{i}\right]  \right)
\subseteq\mathcal{O}_{\alpha_{i}}$ and $a_{i}>b_{i-1}$, we have that $y\left(
\left[  a_{i},b_{i}\right]  \right)  \subseteq\mathcal{O}_{\alpha_{i}}.$
\end{proof}

Since the following patching trick will be used multiple times in later
proofs, we will prove it here in more generality to avoid too much indexing
notation later.

\begin{lemma}
\label{lem.2.51}Let $\omega$ be a control and $\left\{  a_{i},b_{i}\right\}
_{i=0}^{l}$ be an interlaced cover of $\left[  S,T\right]  $ such that
$\omega\left(  a_{i+1},b_{i}\right)  >0$ for all $i<n.$ Let $\theta>0$ and
$F:D\rightarrow\lbrack0,\infty)$ be a bounded function such that
$D\subseteq\Delta_{\left[  S,T\right]  }$ and for each $1\leq i\leq l$ there
exists $C_{i}<\infty$ such that
\[
F\left(  s,t\right)  \leq C_{i}\omega\left(  s,t\right)  ^{\theta}\text{ for
all }\left(  s,t\right)  \in\Delta_{\left[  a_{i},b_{i}\right]  }\cap D.
\]
Then there exists a $\tilde{C}<\infty$ such that
\begin{equation}
F\left(  s,t\right)  \leq\tilde{C}\omega\left(  s,t\right)  ^{\theta}%
~\forall~\left(  s,t\right)  \in D. \label{equ.2.32}%
\end{equation}

\end{lemma}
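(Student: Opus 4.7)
The plan is to split the argument into two cases based on whether the pair $(s,t) \in D$ fits inside a single interval $[a_i,b_i]$ of the interlaced cover. Let $\delta_0 := \min_{0 \leq i < l}\omega(a_{i+1},b_i)$, which is strictly positive by hypothesis, and let $M := \sup_{(s,t) \in D} F(s,t)$, which is finite since $F$ is bounded. Set $\tilde{C} := \max\{C_1, \ldots, C_l,\, M/\delta_0^{\theta}\}$ (with the convention that if the inequality $F \leq C_0 \omega^\theta$ on $\Delta_{[a_0,b_0]} \cap D$ is also needed, we include $C_0$ in the maximum — in any case the argument below will only use the local bounds on overlapping subintervals that straddle the interlaced cover, so the endpoint interval is not special).

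First I would note that, because of the interlacing $a_0 < a_1 < b_0 < a_2 < b_1 < \cdots$, for each $r \in [S,T]$ there is a largest index $i(r)$ with $a_{i(r)} \leq r$; an easy check using $a_{i+1} < b_i$ from the interlacing shows that in fact $r \in [a_{i(r)},b_{i(r)}]$. Given $(s,t) \in D$, in \textbf{Case 1}, suppose $t \leq b_{i(s)}$. Then $(s,t) \in \Delta_{[a_{i(s)},b_{i(s)}]} \cap D$, so the local hypothesis gives $F(s,t) \leq C_{i(s)}\omega(s,t)^{\theta} \leq \tilde{C}\omega(s,t)^{\theta}$.

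In \textbf{Case 2}, we have $t > b_{i(s)}$. By maximality of $i(s)$, we also have $s < a_{i(s)+1}$. Combining with the interlacing inequality $a_{i(s)+1} < b_{i(s)}$ yields $s < a_{i(s)+1} < b_{i(s)} < t$, so the overlap subinterval $[a_{i(s)+1}, b_{i(s)}]$ is contained in $[s,t]$. By superadditivity (equivalently, monotonicity in the interval) of $\omega$,
\[
\omega(s,t) \;\geq\; \omega(a_{i(s)+1}, b_{i(s)}) \;\geq\; \delta_0 \;>\;0.
\]
Therefore
\[
F(s,t) \;\leq\; M \;=\; M \cdot \frac{\delta_0^{\theta}}{\delta_0^{\theta}} \;\leq\; \frac{M}{\delta_0^{\theta}}\,\omega(s,t)^{\theta} \;\leq\; \tilde{C}\,\omega(s,t)^{\theta},
\]
which is the required bound (\ref{equ.2.32}).

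The argument is essentially bookkeeping, so there is no real obstacle. The one point that requires attention is the geometric observation that if $[s,t]$ fails to be contained in any single $[a_i,b_i]$, then it must engulf at least one of the overlap intervals $[a_{i+1},b_i]$; this is exactly what the interlacing structure was designed to guarantee, and it is the reason the positivity assumption $\omega(a_{i+1},b_i) > 0$ is imposed. With that geometric fact in hand, superadditivity of $\omega$ together with the global boundedness of $F$ converts the trivial estimate $F \leq M$ into the desired $\omega^{\theta}$-bound on the remaining pairs.
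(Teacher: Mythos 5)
Your proof is correct and follows essentially the same strategy as the paper's: split into the case where $(s,t)$ fits inside a single $[a_i,b_i]$ (use the local bound) and the case where it does not, in which event $[s,t]$ must contain an overlap interval $[a_{i+1},b_i]$ so that superadditivity bounds $\omega(s,t)$ below by $\delta_0>0$, converting the global bound $F\leq M$ into the desired $\omega^\theta$ estimate. The only cosmetic differences are that you work out explicitly why the largest index $i(s)$ with $a_{i(s)}\leq s$ satisfies $s\in[a_{i(s)},b_{i(s)}]$, and you flag (correctly) that the statement's range ``$1\leq i\leq l$'' should include $i=0$ so that $C_0$ is available for the Case 1 analysis when $s\in[a_0,b_0]$ — the paper's own proof implicitly uses $C_0$ via its definition $C:=\max\{C_i:0\leq i\leq n\}$.
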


\begin{proof}
Let
\begin{align*}
m  &  :=\min\left\{  \omega\left(  a_{i+1},b_{i}\right)  ^{\theta}:0\leq
i<n\right\}  ,\\
C  &  :=\max\left\{  C_{i}:0\leq i\leq n\right\}  ,\text{ and}\\
M  &  :=\sup\left\{  F\left(  s,t\right)  :\left(  s,t\right)  \in D\right\}
<\infty
\end{align*}
and then define $\tilde{C}:=\max\left\{  \frac{M}{m},C\right\}  $. We claim
that Inequality (\ref{equ.2.32}) holds.

If there exists an $i$ such that $s,t\in\left[  a_{i},b_{i}\right]  \cap D$,
then (\ref{equ.2.32}) holds trivially. Otherwise, let $i^{\ast}$ be the
largest $i$ such that $s\in\left[  a_{i},b_{i}\right]  $. Then $s<a_{i^{\ast
}+1}$ and $t>b_{i^{\ast}}$. However this says that $\left[  s,t\right]
\supset\left[  a_{i^{\ast}+1},b_{i^{\ast}}\right]  $ so that
\[
F\left(  s,t\right)  \leq M=\frac{M}{m}m\leq\tilde{C}\omega\left(  a_{i^{\ast
}+1},b_{i^{\ast}}\right)  ^{\theta}\leq\tilde{C}\omega\left(  s,t\right)
^{\theta}.
\]

\end{proof}

\subsubsection{Proof of Theorem \ref{the.2.44}}

The recurring strategy here will be localize appropriately to work in the
$\mathbb{R}^{d}$ case so that we may apply Theorem \ref{the.2.47}.\ We must
choose these localizations carefully so that we may patch the estimates
together (with two different strategies) using the lemmas above. One method of
patching is a bit more involved than the other; therefore we will present it
more formally:

\begin{remark}
[Proof Strategy]\label{rem.2.52}Let $y:\left[  a,b\right]  \rightarrow M$ be
the first component of $\left(  y,y^{\dag}\right)  $ where $\left(  y,y^{\dag
}\right)  $ is either a $\left(  \psi,U\right)  -$ controlled rough path or
chart controlled rough path. Also suppose for each $m\in y\left(  \left[
a,b\right]  \right)  $, we are given an open neighborhood, $\mathcal{W}%
_{m}\subseteq M,$ of $m.$ By Corollary \ref{cor.2.50}, there exists an
interlaced cover for $y$, $\left\{  a_{i},b_{i}\right\}  _{i=1}^{l}$ and
$\left\{  m_{i}\right\}  _{i=1}^{l}$ such that $y\left(  \left[  a_{i}%
,b_{i}\right]  \right)  \subseteq\mathcal{W}_{m_{i}}$ and $\omega\left(
a_{i+1},b_{i}\right)  >0$. Thus, if $F:D\rightarrow\lbrack0,\infty)$ is a
bounded function such that $D\subseteq\Delta_{\lbrack a,b]},$ then in order to
prove that
\begin{equation}
F\left(  s,t\right)  \leq C\omega\left(  s,t\right)  ^{\theta}\text{ }%
\forall~\left(  s,t\right)  \in D, \label{equ.2.33}%
\end{equation}
it suffices to prove; for each $1\leq i\leq l$ there exists $C_{i}<\infty$
such that
\[
F\left(  s,t\right)  \leq C_{i}\omega\left(  s,t\right)  ^{\theta}\text{ for
all }\left(  s,t\right)  \in\Delta_{\left[  a_{i},b_{i}\right]  }\cap D.
\]
Therefore in attempting to prove an assertion in the form of Inequality
(\ref{equ.2.33}), we may assume, without loss of generality, that $y\left(
\left[  a,b\right]  \right)  \subseteq\mathcal{W}$ where the $\mathcal{W}$
will have nice properties dependent on our setting.
\end{remark}

The proof of Theorem \ref{the.2.44} will consist of two steps:

\begin{enumerate}
\item If gauge conditions of (\ref{equ.2.22}) and (\ref{equ.2.23}) hold for
some $C>0$ and $\delta>0$, then the chart conditions of (\ref{equ.2.24}) and
(\ref{equ.2.25}) hold. We will reduce this to the $\mathbb{R}^{d}$ case
immediately, then use Lemma \ref{lem.2.6} to patch the estimates together.

\item If the chart condition of (\ref{equ.2.24}) and (\ref{equ.2.25}) hold,
then gauge condition of (\ref{equ.2.22}) and (\ref{equ.2.23}) hold for an
appropriately chosen $\delta$. Here we will first show which local estimates
we need to satisfy to use Remark \ref{rem.2.52} and then reduce to the
$\mathbb{R}^{d}$ case.
\end{enumerate}

In simple terms, step 1 is \textquotedblleft localize then
patch\textquotedblright\ and step 2 is \textquotedblleft cut nicely, localize,
then patch\textquotedblright.

\begin{proof}
[Proof of Theorem \ref{the.2.44}]\textbf{Step 1: Definition \ref{def.2.35}
}$\implies$\textbf{ Definition \ref{def.2.40}.}

We'll first assume that the gauge definition holds, i.e. that there exists a
$\delta>0$ and a $C_{1}>0$ such that%
\begin{equation}
\left\vert \psi\left(  y_{s},y_{t}\right)  -y_{s}^{\dag}x_{s,t}\right\vert
_{g}\leq C_{1}\omega\left(  s,t\right)  ^{2/p} \label{equ.2.34}%
\end{equation}
and%
\[
\left\vert U\left(  y_{s},y_{t}\right)  y_{t}^{\dag}-y_{s}^{\dag}\right\vert
_{g}\leq C_{1}\omega\left(  s,t\right)  ^{1/p}%
\]
hold for all $0\leq s\leq t\leq T$ such that $\left\vert t-s\right\vert
\leq\delta$. Let $\phi$ be a chart on $M$ and let $\left[  a,b\right]  $ be
such that $y\left(  \left[  a,b\right]  \right)  \subseteq D\left(
\phi\right)  .$ If we define
\begin{align*}
\psi^{\phi}\left(  x,y\right)   &  :=\phi_{\ast}\psi\left(  \phi^{-1}\left(
x\right)  ,\phi^{-1}\left(  y\right)  \right) \\
U^{\phi}\left(  x,y\right)   &  :=\phi_{\ast}U\left(  \phi^{-1}\left(
x\right)  ,\phi^{-1}\left(  y\right)  \right)  \circ\left(  \phi_{\ast}%
^{-1}\right)  _{\phi\left(  y\right)  }\\
\left(  z_{s},z_{s}^{\dag}\right)   &  :=\phi_{\ast}\left(  \mathbf{y}%
_{s}\right)  =\left(  \phi\left(  y_{s}\right)  ,d\phi\circ y_{s}^{\dag
}\right)
\end{align*}
then it is clear that there exists a $C_{2}=C_{2}\left(  \phi_{\ast}\right)  $
such that
\begin{align}
\left\vert \bar{\psi}^{\phi}\left(  z_{s},z_{t}\right)  -z_{s}^{\dag}%
x_{s,t}\right\vert  &  \leq C_{2}\omega\left(  s,t\right)  ^{2/p}%
\label{equ.2.35}\\
\left\vert \bar{U}^{\phi}\left(  z_{s},z_{t}\right)  z_{t}^{\dag}-z_{s}^{\dag
}\right\vert  &  \leq C_{2}\omega\left(  s,t\right)  ^{1/p} \label{equ.2.36}%
\end{align}
for all $a\leq s\leq t\leq b$ such that $t-s\leq\delta$ where $\left(
\psi^{\phi},U^{\phi}\right)  $ is a local gauge on $\mathbb{R}^{d}$ and
$\left(  \bar{\psi}^{\phi},\bar{U}^{\phi}\right)  $ is consistent with
Notation \ref{not.2.23}. Thus $\left(  z,z^{\dag}\right)  $ is a $\left(
\psi^{\phi},U^{\phi}\right)  -$rough path controlled by $\mathbf{X}$. Finally
we need to use this information to show there exists a $C_{\phi,a,b}$ such
that%
\begin{equation}
\left\vert z_{t}-z_{s}-z_{s}^{\dag}x_{s,t}\right\vert \leq C_{\phi,a,b}%
\omega\left(  s,t\right)  ^{2/p}. \label{equ.2.37}%
\end{equation}
and%
\begin{equation}
\left\vert z_{t}^{\dag}-z_{s}^{\dag}\right\vert \leq C_{\phi,a,b}\omega\left(
s,t\right)  ^{1/p} \label{equ.2.38}%
\end{equation}
for all $s,t$ such that $a\leq s\leq t\leq b$.

In light of the Sewing Lemma \ref{lem.2.6} and Lemma \ref{lem.2.48}, we only
need to show that for each $u\in\left[  a,b\right]  $, the inequalities
(\ref{equ.2.37}) and (\ref{equ.2.38}) hold with $C_{\phi,a,b}$ replaced with
$C_{u}$ for all $s,t\in\left(  u-\delta_{u},u+\delta_{u}\right)  \cap\left[
a,b\right]  $ such that $s\leq t$ for some $\delta_{u}>0$.

For any $u\in\left[  a,b\right]  ,$ let $\mathcal{W}_{u}$ be an open convex
set of $z_{u}$ such that $\mathcal{W}_{u}\times\mathcal{W}_{u}\subseteq
D\left(  \psi^{\phi}\right)  $. We then choose $\delta_{u}>0$ to be such that
$z\left(  [u-\delta_{u},u+\delta_{u}]\cap\left[  a,b\right]  \right)
\subseteq\mathcal{W}_{u}$ and $2\delta_{u}\leq\delta.$ However, now we are in
the setting of Theorem \ref{the.2.47} and are therefore finished with this step.

\textbf{Step 2: Definition \ref{def.2.40} }$\implies$ \textbf{Definition
\ref{def.2.35} }

Suppose that the chart item (\ref{equ.2.24}) holds. We must prove that there
exists a $\delta,C>0$ such that%
\begin{align*}
\left\vert \psi\left(  y_{s},y_{t}\right)  -y_{s}^{\dag}x_{s,t}\right\vert
_{g}  &  \leq C\omega\left(  s,t\right)  ^{2/p}\\
\left\vert U\left(  y_{s},y_{t}\right)  y_{t}^{\dag}-y_{s}^{\dag}\right\vert
_{g}  &  \leq C\omega\left(  s,t\right)  ^{1/p}%
\end{align*}
for all $s\leq t$ such that $\left\vert t-s\right\vert \leq\delta$.

We choose $\delta$ such that $\left\vert t-s\right\vert \leq\delta$ for $0\leq
s\leq t\leq T$ implies that both $\left\vert \psi\left(  y_{s},y_{t}\right)
\right\vert _{g}$ and $\left\vert U\left(  y_{s},y_{t}\right)  \right\vert
_{g}$ make sense and are bounded. Around every point $m$ of $y\left(  \left[
0,T\right]  \right)  $, there exists an open $\mathcal{O}_{m}$ containing $m$
and such that $\mathcal{O}_{m}\times\mathcal{O}_{m}\subseteq\mathcal{D}$.
Additionally there exists a chart $\phi^{m}$ such that $m\in D\left(  \phi
^{m}\right)  $. By considering an open ball around $\phi^{m}\left(  m\right)
$ in $R\left(  \phi^{m}\right)  $ and shrinking the radius, we may assume that
$\mathcal{V}_{m}:=D\left(  \phi^{m}\right)  \subseteq\mathcal{O}_{m}$ and the
range, $\mathcal{W}_{m}:=\phi\left(  \mathcal{V}_{m}\right)  ,$ of $\phi^{m}$
is convex. Since $\left\{  \mathcal{V}_{m}\right\}  _{m\in y\left(  \left[
0,T\right]  \right)  }$ is an open cover of $y\left(  \left[  0,T\right]
\right)  $, we may use this cover along with $D=\left\{  \left(  s,t\right)
:0\leq s\leq t\leq T\text{ and }\left\vert t-s\right\vert \leq\delta\right\}
$ to employ the proof strategy in Remark \ref{rem.2.52}. We will do this
twice, with $F\left(  s,t\right)  =\left\vert \psi\left(  y_{s},y_{t}\right)
-y_{s}^{\dag}x_{s,t}\right\vert _{g}$ in the first iteration and $F\left(
s,t\right)  =\left\vert U\left(  y_{s},y_{t}\right)  y_{t}^{\dag}-y_{s}^{\dag
}\right\vert _{g}$ in the second; this will reduce us to considering the case
where there exists a single chart $\phi$ such that $y\left(  \left[
0,T\right]  \right)  \subseteq D\left(  \phi\right)  $, $D\left(  \phi\right)
\times D\left(  \phi\right)  \mathcal{\subseteq D}$ and $\mathcal{W}=R\left(
\phi\right)  $ is convex.

Now that we have reduced to a single chart $\phi$, we may define $\left(
\psi^{\phi},U^{\phi}\right)  $ and the path $\left(  z,z^{\dag}\right)  $ as
in Step 1. Then $z\left(  \left[  0,T\right]  \right)  \subseteq\mathcal{W}$
and $\mathcal{W\times W\subseteq}D\left(  \psi^{\phi}\right)  =D\left(
U^{\phi}\right)  .$ However, by Theorem \ref{the.2.47} we have that the proper
estimates hold because $\mathbf{z}$ is a $\left(  \psi^{\phi},U^{\phi}\right)
-$rough path controlled by $\mathbf{X}.$ Therefore, we are finished by
patching using Remark \ref{rem.2.52}.
\end{proof}

\begin{remark}
\label{rem.2.53}In the proof of Theorem \ref{the.2.44}, we would have been
able to show (and did so somewhat indirectly) that Inequality (\ref{equ.2.24})
implies Inequality (\ref{equ.2.22}) for some $\delta>0$. However, it is not
true in general that, for a fixed $\delta$, Inequality (\ref{equ.2.22})
implies Inequality (\ref{equ.2.24}). See Example \ref{exa.6.7} in the Appendix
for a counterexample.
\end{remark}

In situations in which we are given a covariant derivative $\nabla$ on a
manifold, by Example \ref{exa.2.19}, we have an equivalent definition:

\begin{example}
\label{exa.2.54}The pair $\left(  y_{s},y_{s}^{\dag}\right)  $ is an element
of $CRP_{\mathbf{X}}\left(  M\right)  $ if and only if there exists a $C$ such that

\begin{enumerate}
\item \label{Ite.18}%
\begin{equation}
\left\vert \left(  \exp_{y_{s}}^{\nabla}\right)  ^{-1}\left(  y_{t}\right)
-y_{s}^{\dag}x_{s,t}\right\vert _{g}\leq C\omega\left(  s,t\right)  ^{2/p}
\label{equ.2.39}%
\end{equation}

\item \label{Ite.19}%
\begin{equation}
\left\vert U_{y_{s},y_{t}}^{\nabla}y_{t}^{\dag}-y_{s}^{\dag}\right\vert
_{g}\leq C\omega\left(  s,t\right)  ^{1/p} \label{equ.2.40}%
\end{equation}
where $\left(  \exp_{m}^{\nabla}\right)  ^{-1}$ and $U_{n,m}^{\nabla}$ are
defined as in Example \ref{exa.2.19} and the inequalities hold when
$(y_{s},y_{t})$ are in the domain $\mathcal{D}$ as given in Theorem
\ref{the.2.32}. In particular, on a Riemannian manifold we can use this
definition with the Levi-Civita covariant derivative.
\end{enumerate}
\end{example}

Before providing yet another equivalent definition of controlled rough paths
on manifolds, we will present some examples.

\subsection{Examples of Controlled Rough Paths\label{sub.2.6}}

Recall $\mathbf{X}=\left(  x,\mathbb{X}\right)  $ is a weak-geometric rough
path with values in $W\oplus W^{\otimes2}$ where $W=\mathbb{R}^{k}$. The
results here will rely on basic approximations found in the Appendix, Section
\ref{sec.6}.

\begin{example}
\label{exa.2.55}Let $M^{d}\subseteq W$ be an embedded submanifold and for
every $m\in M^{d}$, let $P\left(  m\right)  $ be the orthogonal projection
onto the tangent space $T_{m}M$. Suppose $x_{s}\in M^{d}$ for all $s$ in
$\left[  0,T\right]  $. Then $\left(  x_{s},P(x_{s})\right)  $ $\in
CRP_{\mathbf{X}}\left(  M\right)  .$
\end{example}

\begin{proof}
We will use the gauge as given in Example \ref{exa.2.54} where the $\nabla$ is
the Levi-Civita covariant derivative from the induced metric from Euclidean
space. Verifying that $P\left(  x_{s}\right)  $ lives in the correct space is trivial.

Next, to show Inequality \ref{equ.2.39} is satisfied, we use item
\ref{Ite.29}. of Lemma \ref{lem.6.2} which says
\[
\exp_{m}^{-1}\left(  \tilde{m}\right)  =P\left(  m\right)  \left(  \tilde
{m}-m\right)  +O\left(  \left\vert \tilde{m}-m\right\vert ^{3}\right)  \text{
for all }m\in M^{d}.
\]
Letting $m=x_{s}$ and $\tilde{m}=x_{t}$, we are done.

Inequality (\ref{equ.2.40}) is also satisfied as a result of Lemma
\ref{lem.6.2} which says that $U_{\tilde{m},m}^{\nabla}=P\left(  m\right)
+O\left(  \left\vert \tilde{m}-m\right\vert \right)  .$ Thus%
\begin{align*}
P\left(  x_{t}\right)  -U_{x_{t},x_{s}}^{\nabla}P\left(  x_{s}\right)   &
\underset{^{1}}{\approx}P\left(  x_{t}\right)  -P\left(  x_{s}\right)
P\left(  x_{s}\right) \\
&  =P\left(  x_{t}\right)  -P\left(  x_{s}\right) \\
&  \underset{^{1}}{\approx}0
\end{align*}

\end{proof}

The next example will be proved in more generality in Section \ref{sub.4.2}.
However, we find it instructive to prove it without charts and in the embedded
context where the reader may be more comfortable.

\begin{example}
\label{exa.2.56}Let $f$ be a smooth function from $W$ to an embedded manifold
$\tilde{M}^{d}\subseteq\mathbb{R}^{\tilde{k}}.$ Then $\left(  f\left(
x_{s}\right)  ,f^{\prime}\left(  x_{s}\right)  \right)  $ $\in CRP_{\mathbf{X}%
}\left(  \tilde{M}\right)  $.
\end{example}

\begin{proof}
Again we will use the Levi-Civita covariant derivative $\tilde{\nabla}$ from
the embedded metric. First we note that $f^{\prime}\left(  x_{s}\right)  $
lives in the correct space as $R\left(  f\right)  \subseteq\tilde{M}^{d}$.

To show Inequality (\ref{equ.2.39}) holds one can use the fact that $\left(
f(\left(  x_{s}\right)  ,f^{\prime}\left(  x_{s}\right)  \right)  $ is a
controlled rough path in the embedded space or Taylor's Theorem to see that%
\[
f\left(  x_{t}\right)  -f\left(  x_{s}\right)  -f^{\prime}\left(
x_{s}\right)  \left(  x_{t}-x_{s}\right)  \underset{^{2}}{\approx}0
\]
which easily implies%
\[
P\left(  f\left(  x_{s}\right)  \right)  \left[  f\left(  x_{t}\right)
-f\left(  x_{s}\right)  -f^{\prime}\left(  x_{s}\right)  \left(  x_{t}%
-x_{s}\right)  \right]  \underset{^{2}}{\approx}0.
\]
But again by Lemma \ref{lem.6.2}%
\begin{align*}
&  P\left(  f\left(  x_{s}\right)  \right)  \left[  f\left(  x_{t}\right)
-f\left(  x_{s}\right)  -f^{\prime}\left(  x_{s}\right)  \left(  x_{t}%
-x_{s}\right)  \right] \\
&  \quad=P\left(  f\left(  x_{s}\right)  \right)  \left[  f\left(
x_{t}\right)  -f\left(  x_{s}\right)  \right]  -f^{\prime}\left(
x_{s}\right)  \left(  x_{t}-x_{s}\right) \\
&  \quad\underset{^{2}}{\approx}\left(  \exp_{f\left(  x_{s}\right)  }%
^{\tilde{\nabla}}\right)  ^{-1}\left(  f\left(  x_{t}\right)  \right)
-f^{\prime}\left(  x_{s}\right)  \left(  x_{t}-x_{s}\right)  .
\end{align*}
Thus%
\[
\left(  \exp_{f\left(  x_{s}\right)  }^{\tilde{\nabla}}\right)  ^{-1}\left(
f\left(  x_{t}\right)  \right)  -f^{\prime}\left(  x_{s}\right)  \left(
x_{t}-x_{s}\right)  \underset{^{2}}{\approx}0.
\]

Lastly to show Inequality (\ref{equ.2.40}), we have%
\[
f^{\prime}\left(  x_{t}\right)  -f^{\prime}\left(  x_{s}\right)
\underset{^{1}}{\approx}0
\]
and therefore
\begin{align*}
0  &  \underset{^{1}}{\approx}P\left(  f\left(  x_{t}\right)  \right)  \left[
f^{\prime}\left(  x_{t}\right)  -f^{\prime}\left(  x_{s}\right)  \right] \\
&  =f^{\prime}\left(  x_{t}\right)  -P\left(  f\left(  x_{t}\right)  \right)
f^{\prime}\left(  x_{s}\right) \\
&  \underset{^{1}}{\approx}f^{\prime}\left(  x_{t}\right)  -U_{f\left(
x_{t}\right)  ,f\left(  x_{s}\right)  }^{\tilde{\nabla}}f^{\prime}\left(
x_{s}\right)  ,
\end{align*}
wherein we have used $P\left(  f\left(  x_{t}\right)  \right)  f^{\prime
}\left(  x_{t}\right)  =f^{\prime}\left(  x_{t}\right)  $ in the second line
and Lemma \ref{lem.6.2} in the last. Thus $\left(  f\left(  x_{s}\right)
,f^{\prime}\left(  x_{s}\right)  \right)  $ $\in CRP_{\mathbf{X}}\left(
\tilde{M}\right)  $
\end{proof}

\subsection{Smooth Function Definition of CRP\label{sub.2.7}}

In the spirit of semi-martingales on manifolds [see for example \cite[Chapter
III]{Emery1989} or \cite{Hsu2002,Elworthy1982,Ikeda1989}], we can define
controlled rough paths on manifolds as elements which, when composed with any
smooth function, give rise to a one-dimensional controlled rough path on flat
space. More precisely we have the following theorem.

\begin{theorem}
\label{the.2.57}$\mathbf{y}=\left(  y,y^{\dag}\right)  \in CRP_{\mathbf{X}%
}\left(  M\right)  $ if and only if for every $f\in C^{\infty}\left(
M\right)  $,
\[
f_{\ast}\mathbf{y}=\left(  f\left(  y\right)  ,df\circ y^{\dag}\right)  \in
CRP_{\mathbf{X}}\left(  \mathbb{R}\right)  .
\]

\end{theorem}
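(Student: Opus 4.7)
\bigskip

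\noindent\textbf{Proof plan.} The proof splits into the two implications of the biconditional, and in both directions we will reduce to the chart characterization of manifold-valued controlled rough paths given in Definition \ref{def.2.40} (which is equivalent to the gauge definition by Theorem \ref{the.2.44}).

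\smallskip

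\noindent\emph{Forward direction.} Assume $\mathbf{y} \in CRP_{\mathbf{X}}(M)$, and fix an arbitrary $f \in C^{\infty}(M)$. The plan is to work locally in charts and then patch via Lemma \ref{lem.2.48} together with Remark \ref{rem.2.52}. Cover $y([0,T])$ by finitely many charts $\phi_{i}$, choose a partition $0=t_{0}<\cdots<t_{l}=T$ with $y([t_{i},t_{i+1}]) \subset D(\phi_{i})$, and set $z_s := \phi_{i}(y_s)$, $z_s^{\dag} := d\phi_{i} \circ y_{s}^{\dag}$. By the chart definition, $(z,z^{\dag})$ is a Euclidean-space controlled rough path on $[t_i,t_{i+1}]$. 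Writing $g := f \circ \phi_{i}^{-1}$, which is smooth on the compact set $\phi_{i}(y([t_i,t_{i+1}]))$, Taylor's theorem gives
\[
f(y_t)-f(y_s) = g(z_t)-g(z_s) = g'(z_s)(z_t-z_s) + O(|z_t-z_s|^{2}).
\]
Using $|z_t-z_s| \lesssim \omega(s,t)^{1/p}$, the chart estimate $z_t - z_s = z_s^{\dag} x_{s,t} + O(\omega(s,t)^{2/p})$, and the chain rule identity $g'(z_s) \circ z_s^{\dag} = df_{y_s} \circ y_s^{\dag}$, one obtains
\[
\bigl|f(y_t)-f(y_s)-(df \circ y_s^{\dag})x_{s,t}\bigr| \leq C_{i}\,\omega(s,t)^{2/p}.
\]
For the Gubinelli-derivative increment, split
\[
df \circ y_t^{\dag} - df \circ y_s^{\dag} = g'(z_t)(z_t^{\dag}-z_s^{\dag}) + \bigl(g'(z_t)-g'(z_s)\bigr) z_s^{\dag},
\]
and use boundedness of $g'$, Lipschitz control on $g'$, and the rough-path estimates for $z$ and $z^{\dag}$; both summands are $O(\omega(s,t)^{1/p})$. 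Applying the Sewing Lemma \ref{lem.2.6} (or the patching Lemma \ref{lem.2.51}) over the partition produces global estimates, so $f_*\mathbf{y} \in CRP_{\mathbf{X}}(\mathbb{R})$.

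\smallskip

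\noindent\emph{Reverse direction.} Assume $f_*\mathbf{y} \in CRP_{\mathbf{X}}(\mathbb{R})$ for every $f\in C^\infty(M)$; the goal is to verify Definition \ref{def.2.40}. Fix a chart $\phi = (\phi^1,\ldots,\phi^d)$ and an interval $[a,b]$ with $y([a,b]) \subset D(\phi)$. The main issue is that each coordinate $\phi^j$ is only smooth on $D(\phi)$, not on all of $M$, so we cannot directly plug it into the hypothesis. The fix is a standard bump-function extension: by compactness of $y([a,b])$ in $D(\phi)$, choose an open $V$ with $y([a,b]) \subset V \subset \overline{V} \subset D(\phi)$ and a bump function $\chi \in C_c^\infty(D(\phi))$ with $\chi \equiv 1$ on $\overline{V}$. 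Define $\tilde{\phi}^j := \chi \phi^j$, extended by zero outside $D(\phi)$; these are smooth functions on $M$. On $[a,b]$ we have $\tilde{\phi}^j(y_s) = \phi^j(y_s)$ and $d\tilde{\phi}^j_{y_s} = d\phi^j_{y_s}$, so $d\tilde{\phi}^j \circ y_s^{\dag} = d\phi^j \circ y_s^{\dag}$. By hypothesis, $(\tilde{\phi}^j(y), d\tilde{\phi}^j \circ y^{\dag}) \in CRP_{\mathbf{X}}(\mathbb{R})$, giving precisely the scalar estimates of Definition \ref{def.2.5} on $[a,b]$, hence the $j$-th component of the chart estimates \eqref{equ.2.24}--\eqref{equ.2.25}. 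Assembling the $d$ coordinates yields the required vector-valued bounds.

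\smallskip

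\noindent\emph{Main obstacle.} The forward direction is essentially the smooth-composition lemma for controlled rough paths in $\mathbb{R}^d$, which is mild but requires the local-to-global patching machinery already developed in the paper. The reverse direction is where the real subtlety lies: the hypothesis only supplies globally smooth scalar functions, while the chart definition requires information about local coordinates. The bump-function extension is the crucial device that bridges this gap, and verifying that it does not disturb the derivatives on the region of interest is the key technical point.
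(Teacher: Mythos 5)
Your proof is correct and follows essentially the same strategy as the paper: the forward direction is the chart-localize-then-patch argument (the paper defers it to Proposition \ref{pro.4.10}, which is the same computation carried out for a general target manifold), and the reverse direction uses exactly the paper's cutoff-function trick to globalize the local coordinates $\phi^i$. The one point worth being explicit about, which you do handle correctly, is choosing the bump function $\chi \equiv 1$ on an \emph{open} neighborhood $V$ of $y([a,b])$ rather than merely on the compact image, so that $d\tilde{\phi}^j$ and $d\phi^j$ genuinely agree along the path.
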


\begin{proof}
The proof that $\mathbf{y}\in CRP_{\mathbf{X}}\left(  M\right)  $ implies that
$f_{\ast}\mathbf{y}\in CRP_{\mathbf{X}}\left(  \mathbb{R}\right)  $ for every
$f\in C^{\infty}\left(  M\right)  $ will be deferred to the more general case
proved in Proposition \ref{pro.4.10} (in which case we consider the codomain
of $f$ to be a manifold $\tilde{M}$).

To prove the converse, let $\phi$ be a chart and $0\leq a<b\leq T$ be such
that $y\left(  \left[  a,b\right]  \right)  \subseteq D\left(  \phi\right)  $
and let $\mathcal{O}\subset M$ be an open set such that $\mathcal{\bar{O}}$ is
compact and
\[
y\left(  \left[  a,b\right]  \right)  \subseteq\mathcal{O\subseteq\bar{O}%
}\subseteq D\left(  \phi\right)  .
\]
Then by using a cutoff function we can manufacture global functions $f^{i}\in
C^{\infty}\left(  M\right)  $ which agree with the coordinates $\phi^{i}$ on
$\mathcal{O}$. The assumption that $f_{\ast}^{i}\mathbf{y\in}CRP_{\mathbf{X}%
}\left(  \left[  a,b\right]  ,\mathbb{R}\right)  $ is a controlled rough path
for $1\leq i\leq d$ then shows the inequalities in Eqs. (\ref{equ.2.24}) and
(\ref{equ.2.25}) of Definition \ref{def.2.40} hold.
\end{proof}

\section{Integration of Controlled One-Forms\label{sec.3}}

In the flat case, a controlled rough path with values in an appropriate
Euclidean spaces can be integrated against another controlled rough path (see
Theorem \ref{the.2.9}) provided their controlling rough path $\mathbf{X}$ is
the same. The integral in this case is another rough path controlled by
$\mathbf{X}$. We can do something similar on manifolds, though it will be
necessary to add some extra structure. As usual let $\mathbf{y}_{s}=\left(
y_{s},y_{s}^{\dag}\right)  $ be a controlled rough path on $M$ controlled by
$\mathbf{X}=\left(  x,\mathbb{X}\right)  \in W\mathcal{\oplus}W^{\otimes2}.$
Let $V$ be a Banach space.

\subsection{Controlled One-Forms Along a Rough Path\label{sub.3.1}}

Let $U$ be a parallelism on $M$.

\begin{definition}
\label{def.3.1}The pair $\left(  \alpha_{s},\alpha_{s}^{\dag}\right)  $ is a
$V$ -- valued $U-$controlled (rough) one-form along $y_{s}$ if

\begin{enumerate}
\item \label{Ite.20}$\alpha_{s}\in L\left(  T_{y_{s}}M,V\right)  $

\item \label{Ite.21}$\alpha_{s}^{\dag}\in L\left(  W\otimes T_{y_{s}%
}M,V\right)  $

\item \label{Ite.22}$\alpha_{t}\circ U\left(  y_{t},y_{s}\right)  -\alpha
_{s}-\alpha_{s}^{\dag}\left(  x_{s,t}\otimes\left(  \cdot\right)  \right)
\underset{^{2}}{\approx}0$

\item \label{Ite.23}$\alpha_{t}^{\dag}\circ\left(  I\otimes U\left(
y_{t},y_{s}\right)  \right)  -\alpha_{s}^{\dag}\underset{^{1}}{\approx}0$
\end{enumerate}

By items \ref{Ite.22} and \ref{Ite.23}, we mean these hold if $\left\vert
t-s\right\vert <\delta$ for some $\delta>0$ to ensure the expressions make sense.
\end{definition}

\begin{remark}
\label{rem.3.2}For the sake of clarity, by item \ref{Ite.22} of Definition
\ref{def.3.1}, we mean that if $s,t$ are close, then there exists a $C$ such
that%
\[
\left\vert \alpha_{t}\circ U\left(  y_{t},y_{s}\right)  -\alpha_{s}-\alpha
_{s}^{\dag}\left(  x_{s,t}\otimes\left(  \cdot\right)  \right)  \right\vert
_{g,op}\leq C\omega\left(  s,t\right)  ^{2/p}.
\]
For item \ref{Ite.23}, we mean for $s,t$ close, there exists a $C$ such that%
\[
\left\vert \alpha_{t}^{\dag}\circ\left(  w\otimes U\left(  y_{t},y_{s}\right)
\right)  -\alpha_{s}^{\dag}\left(  w\otimes\left(  \cdot\right)  \right)
\right\vert _{g,op}\leq C\left\vert w\right\vert \omega\left(  s,t\right)
^{1/p}%
\]
for all $w\in W$. By Corollary \ref{cor.6.4}, it does not matter which
Riemannian metric $g$ we choose here.
\end{remark}

\begin{notation}
\label{not.3.3}Let $CRP_{y}^{U}\left(  M,V\right)  $ denote those
$\boldsymbol{\alpha}_{s}:=\left(  \alpha_{s},\alpha_{s}^{\dag}\right)  $
satisfying Definition \ref{def.3.1}. We refer to $CRP_{y}^{U}\left(
M,V\right)  $ as a \textbf{space of }$U\mathbf{-}$\textbf{controlled one-forms
along} $\mathbf{y}.$
\end{notation}

\begin{remark}
\label{rem.3.4}If $M=\mathbb{R}^{d}$ and $U=I$ and we identify $T_{y_{s}}M$
with $\mathbb{R}^{d}$ then Definition \ref{def.3.1} reduces to the flat case
definition of a $L\left(  \mathbb{R}^{d},V\right)  $ -- valued rough path
controlled by $\mathbf{X}$.
\end{remark}

\begin{remark}
\label{rem.3.5}Note that \ref{Ite.22} and \ref{Ite.23}, of Definition
\ref{def.3.1} force continuity of both $\alpha_{s}$ and $\alpha_{s}^{\dag}$.
\end{remark}

We can take linear combinations of elements of $CRP_{y}^{U}\left(  M,V\right)
$ to form other elements in $CRP_{y}^{U}\left(  M,V\right)  .$ The following
proposition, whose simple proof is left to the reader, shows how to construct
more non-trivial examples of elements in $CRP_{y}^{U}\left(  M,V\right)  .$

\begin{proposition}
\label{pro.3.6}If $V$ and $\tilde{V}$ are Banach spaces, $\boldsymbol{\alpha
}\in CRP_{y}^{U}\left(  M,V\right)  $ and
\[
\mathbf{f}=\left(  f,f^{\dag}\right)  \in CRP_{\mathbf{X}}\left(
\operatorname{Hom}\left(  V,\tilde{V}\right)  \right)  ,
\]
then
\[
\left(  \mathbf{f}\boldsymbol{\alpha}\right)  _{s}:=\left(  f_{s}\alpha
_{s},f_{s}^{\dag}\alpha_{s}+f_{s}\alpha_{s}^{\dag}\right)  \in CRP_{y}%
^{U}\left(  M,\tilde{V}\right)  .
\]
where by $f_{s}^{\dag}\alpha_{s}$ we mean $f_{s}^{\dag}\left(  \left(
\cdot\right)  \otimes\alpha_{s}\left(  \cdot\right)  \right)  .$
\end{proposition}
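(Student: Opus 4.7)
The plan is to verify directly that $(\mathbf{f}\boldsymbol{\alpha})_s = (f_s\alpha_s,\; f_s^\dagger\alpha_s + f_s\alpha_s^\dagger)$ satisfies the four items of Definition~\ref{def.3.1}. Items~\ref{Ite.20} and~\ref{Ite.21} (type conditions) are immediate: $f_s\in L(V,\tilde V)$ composed with $\alpha_s\in L(T_{y_s}M,V)$ lies in $L(T_{y_s}M,\tilde V)$, and the analogous bookkeeping for the $\dagger$-component is equally trivial. Continuity in $s$ is inherited from continuity of $\mathbf{f}$ and $\boldsymbol{\alpha}$.

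For item~\ref{Ite.22} I would carry out a Leibniz-rule decomposition. Write
\[
f_t\alpha_t U(y_t,y_s) - f_s\alpha_s = (f_t-f_s)\,[\alpha_t U(y_t,y_s)] + f_s\,[\alpha_t U(y_t,y_s) - \alpha_s].
\]
From $\mathbf{f}\in CRP_{\mathbf X}$ we have $f_{s,t} = f_s^\dagger x_{s,t} + O(\omega^{2/p})$, and from Definition~\ref{def.3.1}(\ref{Ite.22}) together with $|x_{s,t}|\le\omega^{1/p}$ we have $\alpha_t U(y_t,y_s) = \alpha_s + O(\omega^{1/p})$ in operator norm. Substituting these expansions and collecting the $O(\omega^{1/p})\cdot O(\omega^{1/p}) = O(\omega^{2/p})$ cross term into the error, the first summand becomes $f_s^\dagger(x_{s,t}\otimes \alpha_s(\cdot))$ up to $\underset{^2}{\approx}0$. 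Applying Definition~\ref{def.3.1}(\ref{Ite.22}) directly to the second summand produces $f_s\,\alpha_s^\dagger(x_{s,t}\otimes \cdot)$ up to $\underset{^2}{\approx}0$. Adding yields exactly $(f_s^\dagger\alpha_s + f_s\alpha_s^\dagger)(x_{s,t}\otimes\cdot)$, as required.

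For item~\ref{Ite.23} I would split the difference into four pieces:
\[
[f_t^\dagger\alpha_t + f_t\alpha_t^\dagger]\circ(I\otimes U(y_t,y_s)) - [f_s^\dagger\alpha_s + f_s\alpha_s^\dagger]
\]
\[
= (f_t^\dagger - f_s^\dagger)\circ(I\otimes\alpha_t U(y_t,y_s)) + f_s^\dagger\circ(I\otimes[\alpha_t U(y_t,y_s)-\alpha_s])
\]
\[
\quad +\; (f_t-f_s)\,[\alpha_t^\dagger\circ(I\otimes U(y_t,y_s))] + f_s\,[\alpha_t^\dagger\circ(I\otimes U(y_t,y_s)) - \alpha_s^\dagger].
\]
Each of the four terms is bounded by $C\omega(s,t)^{1/p}$ in operator norm: the first and third by the $\underset{^1}{\approx}$ estimates on $f^\dagger$ and $f$ in $CRP_{\mathbf X}$; the second by Definition~\ref{def.3.1}(\ref{Ite.22}) combined with $|x_{s,t}|\leq \omega^{1/p}$; and the fourth by Definition~\ref{def.3.1}(\ref{Ite.23}). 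Summing gives $\underset{^1}{\approx}0$.

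There is really no main obstacle beyond careful bookkeeping; the proof is a direct transcription of the scalar product rule for controlled rough paths into the present bundle-valued setting. The only point that requires a moment's thought is the estimate $\alpha_t\circ U(y_t,y_s) = \alpha_s + O(\omega^{1/p})$, which is not literally one of the hypotheses but falls out of Definition~\ref{def.3.1}(\ref{Ite.22}) by absorbing the $\alpha_s^\dagger(x_{s,t}\otimes\cdot)$ term (of size $O(\omega^{1/p})$) into the $O(\omega^{1/p})$ bound; once this is noted, the remainder of the calculation is mechanical.
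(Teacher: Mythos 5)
Your proof is correct, and since the paper declares the proof ``left to the reader,'' the only sensible route is exactly the one you take: the Leibniz-style decomposition of the difference, termwise estimation from the two controlled-rough-path/controlled-one-form hypotheses, and boundedness of $f_s$, $f_s^\dagger$, $\alpha_t\circ U(y_t,y_s)$, and $\alpha_t^\dagger\circ(I\otimes U(y_t,y_s))$ on the compact time interval. The one auxiliary observation you flag --- that $\alpha_t\circ U(y_t,y_s)=\alpha_s+O(\omega^{1/p})$ follows from item~\ref{Ite.22} of Definition~\ref{def.3.1} by absorbing the $\alpha_s^\dagger(x_{s,t}\otimes\cdot)$ term --- is indeed the only non-mechanical step, and you handle it correctly.
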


Our next goal is to define an integral of$\ \boldsymbol{\alpha}_{s}$ along
$\mathbf{y}_{s}$. However, this integral will depend on a choice of
parallelism and for this reason we need to introduce the \textquotedblleft
compatibility tensor\textquotedblright\ which measures the difference between
two parallelisms.

\subsection{The Compatibility Tensors\label{sub.3.2}}

\begin{definition}
\label{def.3.7}The \textbf{compatibility tensor, }$S^{\tilde{U},U}\in
\Gamma\left(  T^{\ast}M\otimes T^{\ast}M\otimes TM\right)  ,$ of two
parallelisms $\tilde{U}$ and $U$ on $M$ is the defined by%
\[
S_{m}^{\tilde{U},U}:=d\left[  U\left(  \cdot,m\right)  ^{-1}\tilde{U}\left(
\cdot,m\right)  \right]  _{m}.
\]
In more detail if $v_{m},w_{m}\in T_{m}M,$ then
\[
S_{m}^{\tilde{U},U}\left[  v_{m}\otimes w_{m}\right]  =v_{m}\left[
x\rightarrow U\left(  x,m\right)  ^{-1}\tilde{U}\left(  x,m\right)
w_{m}\right]  .
\]

\end{definition}

\begin{remark}
\label{rem.3.8}There are actually multiple ways to define $S_{m}^{\tilde{U}%
,U}.$ For example, we have on simple tensors%
\begin{align}
S_{m}^{\tilde{U},U}\left(  v_{m}\otimes w_{m}\right)   &  =d\left[  U\left(
m,\cdot\right)  \tilde{U}\left(  m,\cdot\right)  ^{-1}w_{m}\right]  _{m}%
v_{m}\nonumber\\
&  =\left(  \nabla_{v_{m}}\left[  \tilde{U}\left(  \cdot,m\right)  -U\left(
\cdot,m\right)  \right]  \right)  w_{m}\nonumber\\
&  =\left(  \nabla_{v_{m}}\left[  U\left(  m,\cdot\right)  -\tilde{U}\left(
m,\cdot\right)  \right]  \right)  w_{m} \label{equ.3.1}%
\end{align}
where $\nabla$ is any covariant derivative on $M$. Similar to the proofs of
Corollary \ref{cor.2.29} above and Theorem \ref{the.3.15} below, the
identities in Eq. (\ref{equ.3.1}) are straightforward to prove by employing
charts to reduce them to Euclidean space identities.
\end{remark}

\begin{example}
\label{exa.3.9}If $\nabla$ and $\tilde{\nabla}$ are two covariant derivatives
on $TM,$ $U=U^{\nabla},$ $\tilde{U}=U^{\tilde{\nabla}},$ and $A\in\Omega
^{1}\left(  \operatorname*{End}\left(  TM\right)  \right)  $ such that
$\nabla=\tilde{\nabla}+A,$ then
\[
S_{m}^{\tilde{U},U}\left(  v_{m}\otimes w_{m}\right)  =A\left(  v_{m}\right)
w_{m}\in T_{m}M.
\]
Indeed,
\begin{align*}
v_{m}\left[  U\left(  \cdot,m\right)  ^{-1}\tilde{U}\left(  \cdot,m\right)
w_{m}\right]   &  =\nabla_{v_{m}}\left[  \tilde{U}\left(  \cdot,m\right)
w_{m}\right] \\
&  =\tilde{\nabla}_{v_{m}}\left[  \tilde{U}\left(  \cdot,m\right)
w_{m}\right]  +A\left(  v_{m}\right)  \tilde{U}\left(  m,m\right)  w_{m}\\
&  =0+A\left(  v_{m}\right)  w_{m}=A\left(  v_{m}\right)  w_{m}.
\end{align*}

\end{example}

\begin{example}
[Converse of Example \ref{exa.3.9}]\label{exa.3.10}If $U$ and $\tilde{U}$ are
two parallelisms on $M$ and $\nabla=\nabla^{U}$ and $\tilde{\nabla}%
=\nabla^{\tilde{U}}$ are the corresponding covariant derivatives on $TM$ (as
in Remark \ref{rem.2.20}), then
\[
\nabla_{v_{m}}=\tilde{\nabla}_{v_{m}}+S_{m}^{\tilde{U},U}\left(  v_{m}%
\otimes\left(  \cdot\right)  \right)  ~\forall~v_{m}\in T_{m}M.
\]
The verification is as follows. If $Y$ is a vector-field on $M$ and
$\sigma_{t}$ is such that $\dot{o}_{0}=v_{m}$, we have%
\begin{align*}
\nabla_{v_{m}}Y-\tilde{\nabla}_{v_{m}}Y  &  :=\frac{d}{dt}|_{0}\left[
U\left(  m,\sigma_{t}\right)  -\tilde{U}\left(  m,\sigma_{t}\right)  \right]
Y\left(  \sigma_{t}\right) \\
&  =\left(  \nabla_{v_{m}}\left[  U\left(  m,\cdot\right)  -\tilde{U}\left(
m,\cdot\right)  \right]  \right)  Y\left(  m\right)  +0\cdot\nabla_{v_{m}}Y\\
&  =S_{m}^{\tilde{U},U}\left(  v_{m}\otimes Y\left(  m\right)  \right)
\end{align*}
wherein we have used Eq. (\ref{equ.3.1}) for the last equality.
\end{example}

\begin{lemma}
\label{lem.3.11}If $U,\tilde{U},$ and $\hat{U}$ are three parallelisms, then%
\[
S^{\hat{U},U}=S^{\hat{U},\tilde{U}}+S^{\tilde{U},U}\text{ and }S^{\tilde{U}%
,U}=-S^{U,\tilde{U}}.
\]

\end{lemma}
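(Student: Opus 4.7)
The plan is to prove both identities directly from the definition $S_m^{\tilde U,U} = d[\,U(\cdot,m)^{-1}\tilde U(\cdot,m)\,]_m$ by recognizing the three expressions $U(\cdot,m)^{-1}\hat U(\cdot,m)$, $U(\cdot,m)^{-1}\tilde U(\cdot,m)$, and $\tilde U(\cdot,m)^{-1}\hat U(\cdot,m)$ as $\operatorname{End}(T_mM)$-valued maps on a neighborhood of $m$, all of which take the value $I_{T_mM}$ at $x=m$, and then invoking the product rule.

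For the first identity, I would write the trivial factorization
\[
U(x,m)^{-1}\hat U(x,m) \;=\; \bigl[U(x,m)^{-1}\tilde U(x,m)\bigr]\circ\bigl[\tilde U(x,m)^{-1}\hat U(x,m)\bigr]
\]
valid for $x$ in a neighborhood of $m$ small enough that $(x,m)$ lies in the common domain of $U,\tilde U,\hat U$. Setting $f(x) := U(x,m)^{-1}\tilde U(x,m)$, $g(x) := \tilde U(x,m)^{-1}\hat U(x,m)$, and $h(x) := f(x)g(x)$, we have $f(m)=g(m)=I_{T_mM}$ by the defining property $U(m,m) = I_m$ of a parallelism. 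The smooth product rule then yields
\[
dh_m(v_m) \;=\; df_m(v_m)\cdot g(m) + f(m)\cdot dg_m(v_m) \;=\; df_m(v_m) + dg_m(v_m),
\]
which, evaluated on $w_m \in T_mM$, is exactly the asserted identity $S_m^{\hat U,U} = S_m^{\hat U,\tilde U}+S_m^{\tilde U,U}$.

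For the second identity I would take $\hat U = U$ in the first identity. Then $U(\cdot,m)^{-1}\hat U(\cdot,m)\equiv I_{T_mM}$ is constant, so $S_m^{U,U}=0$, and the sum identity immediately gives $S_m^{\tilde U,U} = -S_m^{U,\tilde U}$. (Equivalently, one could observe that $g = f^{-1}$ with $f(m)=I$, so the standard formula $d(f^{-1})_m = -f(m)^{-1}df_m f(m)^{-1}$ collapses to $dg_m = -df_m$.)

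There is essentially no obstacle here beyond the minor bookkeeping of verifying the product-rule computation in $\operatorname{End}(T_mM)$; as in Remark \ref{rem.3.8} and Corollary \ref{cor.2.29}, one can always pass to a chart and do the computation in Euclidean space if desired, but the product rule at a point where both factors equal $I$ is intrinsic and requires no such reduction. The only point worth flagging is that the derivative $dh_m$ lives in $L(T_mM,\operatorname{End}(T_mM))\cong L(T_mM\otimes T_mM,T_mM)$, so the equality of derivatives is precisely the equality of the tensors $S_m^{\cdot,\cdot}$ viewed as elements of $T^\ast_mM\otimes T^\ast_mM\otimes T_mM$, as in Definition \ref{def.3.7}.
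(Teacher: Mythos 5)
Your proof is correct and follows essentially the same route as the paper's: factor $U(\cdot,m)^{-1}\hat U(\cdot,m)$ as $[U(\cdot,m)^{-1}\tilde U(\cdot,m)][\tilde U(\cdot,m)^{-1}\hat U(\cdot,m)]$ and apply the product rule at $x=m$, where both factors equal $I_{T_mM}$. For the antisymmetry, the paper differentiates the inverse directly (your parenthetical alternative), whereas your primary argument deduces it as the special case $\hat U = U$ of the cocycle identity; both are immediate and equivalent.
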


\begin{proof}
For $v_{m},w_{m}\in T_{m}M,$ an application of the product rules shows%
\begin{align*}
S_{m}^{\hat{U},U}\left(  v_{m}\otimes w_{m}\right)   &  =v_{m}\left[  U\left(
\cdot,m\right)  ^{-1}\hat{U}\left(  \cdot,m\right)  w_{m}\right] \\
&  =v_{m}\left[  \left[  U\left(  \cdot,m\right)  ^{-1}\tilde{U}\left(
\cdot,m\right)  \right]  \left[  \tilde{U}\left(  \cdot,m\right)  ^{-1}\hat
{U}\left(  \cdot,m\right)  \right]  w_{m}\right] \\
&  =S_{m}^{\hat{U},\tilde{U}}\left(  v_{m}\otimes w_{m}\right)  +S_{m}%
^{\tilde{U},U}\left(  v_{m}\otimes w_{m}\right)  .
\end{align*}
Similarly,%
\begin{align*}
S^{U,\tilde{U}}\left[  v_{m}\otimes\left(  \cdot\right)  \right]   &
=v_{m}\left[  \tilde{U}\left(  \cdot,m\right)  ^{-1}U\left(  \cdot,m\right)
\right] \\
&  =v_{m}\left[  U\left(  \cdot,m\right)  ^{-1}\tilde{U}\left(  \cdot
,m\right)  \right]  ^{-1}\\
&  =-v_{m}\left[  U\left(  \cdot,m\right)  ^{-1}\tilde{U}\left(
\cdot,m\right)  \right] \\
&  =-S^{\tilde{U},U}\left[  v_{m}\otimes\left(  \cdot\right)  \right]  .
\end{align*}

\end{proof}

\begin{notation}
\label{not.3.12}If $\mathcal{G}:=\left(  \psi,U\right)  $ is a gauge, we let
$S^{\mathcal{G}}:=S^{\psi_{\ast},U}$ be the compatibility tensor between
$U^{\psi}$ and $U,$ where $U^{\psi}\left(  m,n\right)  :=\psi\left(
m,\cdot\right)  _{\ast n}$ as in Remark \ref{rem.2.21}.
\end{notation}

If we have a covariant derivative $\nabla$ on $M$, then as in Example
\ref{exa.2.19} we have the choice of gauge $\mathcal{G=}\left(  \psi,U\right)
=\left(  \left(  \exp^{\nabla}\right)  ^{-1},U^{\nabla}\right)  $. In this
case, the tensor $S_{m}^{\mathcal{G}}$ is a more familiar object.

\begin{lemma}
\label{lem.3.13}If $\psi=\left(  \exp^{\nabla}\right)  ^{-1}$ and
$U=U^{\nabla}$, then
\[
S_{m}^{\mathcal{G}}=\frac{1}{2}T_{m}^{\nabla}%
\]
where $T^{\nabla}$ is the Torsion tensor of $\nabla.$
\end{lemma}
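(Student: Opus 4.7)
My plan is to reduce the identity to a pointwise coordinate computation. Both $S_m^{\mathcal{G}}$ and $T_m^{\nabla}$ are tensors in $T_m^{\ast} M \otimes T_m^{\ast} M \otimes T_m M$, so fixing $m \in M$ and choosing a chart around it, it suffices to expand $\bar{U}^{\psi}(n,m) - \bar{U}^{\nabla}(n,m)$ to first order in $(n-m)$ and read off the linear coefficient. The key formula is the middle expression in Remark \ref{rem.3.8},
\[
S_m^{U^{\psi}, U^{\nabla}}(v_m \otimes w_m) = \bigl(\nabla_{v_m}[U^{\psi}(\cdot, m) - U^{\nabla}(\cdot, m)]\bigr) w_m,
\]
which, since $U^{\psi}(m,m) = I = U^{\nabla}(m,m)$, reduces to an ordinary partial derivative in any chart (the section vanishes at $n=m$, so its covariant derivative there does not depend on the connection).

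I would then compute the two first-order expansions separately. For $\bar{U}^{\nabla}(n,m) w$, which is parallel transport of $w \in T_m M$ from $m$ to $n$ along the geodesic joining them, integrating the parallel transport ODE $\dot{w}^k + \Gamma^k_{ij}(\sigma_t) \dot{\sigma}^i w^j = 0$ for one unit of time gives
\[
\bar{U}^{\nabla}(n,m) w = w - \Gamma^k_{ij}(m)(n-m)^i w^j e_k + O(|n-m|^2),
\]
which involves the \emph{full}, possibly non-symmetric Christoffel symbols. For $\bar{U}^{\psi}(n,m) = (\exp_n^{-1})_{\ast m}$, I first expand $\bar{\psi}^{\nabla}(n,m') = \exp_n^{-1}(m')$ via the geodesic equation; because only $\dot{\sigma}^i \dot{\sigma}^j$ appears there, only the symmetric part $\Gamma^k_{(ij)}$ enters, yielding
\[
\bar{\psi}^{\nabla}(n,m') = (m'-n) + \tfrac{1}{2} \Gamma^k_{(ij)}(n)(m'-n)^i(m'-n)^j e_k + O(|m'-n|^3),
\]
and differentiating in $m'$ at $m'=m$ gives
\[
\bar{U}^{\psi}(n,m) w = w - \Gamma^k_{(ij)}(m)(n-m)^i w^j e_k + O(|n-m|^2).
\]

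Subtracting the two expansions and using $\Gamma^k_{ij} - \Gamma^k_{(ij)} = \Gamma^k_{[ij]} = \tfrac{1}{2} T^k_{ij}$ yields
\[
\bar{U}^{\psi}(n,m) w - \bar{U}^{\nabla}(n,m) w = \tfrac{1}{2} T^k_{ij}(m)(n-m)^i w^j e_k + O(|n-m|^2),
\]
and differentiating in $n$ at $n=m$ in direction $v_m$ produces $S_m^{\mathcal{G}}(v_m \otimes w_m) = \tfrac{1}{2} T_m^{\nabla}(v_m, w_m)$, as required. I expect the main subtlety to be the contrasting role of the full versus symmetric Christoffel symbols in the two expansions: the parallel transport ODE senses all of $\Gamma$, whereas the geodesic (and hence $\psi$) only senses its symmetric part, and it is precisely this mismatch that produces the torsion with its correct factor of $\tfrac{1}{2}$.
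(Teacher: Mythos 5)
Your proof is correct and follows essentially the same route as the paper: reduce to a chart, use the covariant-derivative formula for $S^{\tilde U,U}$ from Remark \ref{rem.3.8}, observe that the connection term of $\nabla$ drops out because $U^{\psi}(\cdot,m)-U^{\nabla}(\cdot,m)$ vanishes at $n=m$, and compare the linear terms in the Taylor expansions of $\bar U^{\nabla}$ and $\bar U^{\psi}$. The only cosmetic differences are that you use the $U(\cdot,m)$ form of $S$ (the second identity in Eq.~(\ref{equ.3.1})) rather than the $U(m,\cdot)$ form the paper uses, and that you derive the two expansions directly from the parallel-transport and geodesic ODEs, whereas the paper cites the prepackaged Appendix identities Eq.~(\ref{equ.6.10}) and Corollary \ref{cor.6.6} for the same facts.
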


\begin{proof}
By transferring the covariant derivative and functions using charts, we may
assume we are working on Euclidean space. In this case, by Eq. (\ref{equ.6.10}%
) and Corollary \ref{cor.6.6}, we have%
\begin{align*}
S_{m}^{\mathcal{G}}\left(  \left(  m,v\right)  \otimes\left(  m,w\right)
\right)   &  =\left(  \nabla_{\left(  m,v\right)  }\left[  U_{m,\cdot}%
^{\nabla}-\left(  \exp_{m}^{\nabla}\right)  _{\ast\cdot}^{-1}\right]  \right)
w\\
&  =\left[  \partial_{\left(  m,v\right)  }+A_{m}\left\langle v\right\rangle
\right]  \left[  U_{m,\cdot}^{\nabla}-\left(  \exp_{m}^{\nabla}\right)
_{\ast\cdot}^{-1}\right]  w\\
&  =\left(  U_{m,\cdot}^{\nabla}\right)  ^{\prime}\left(  m\right)  \left[
v\otimes w\right]  -\left(  \left(  \exp_{m}^{\nabla}\right)  ^{-1}\right)
^{^{\prime\prime}}\left(  m\right)  \left[  v\otimes w\right]  +A_{m}%
\left\langle v\right\rangle \left\langle w\right\rangle -A_{m}\left\langle
v\right\rangle \left\langle w\right\rangle \\
&  =A_{m}\left\langle v\right\rangle \left\langle w\right\rangle -\frac{1}%
{2}A_{m}\left\langle v\right\rangle \left\langle w\right\rangle -\frac{1}%
{2}A_{m}\left\langle w\right\rangle \left\langle v\right\rangle \\
&  =\frac{1}{2}\left[  A_{m}\left\langle v\right\rangle \left\langle
w\right\rangle -A_{m}\left\langle w\right\rangle \left\langle v\right\rangle
\right] \\
&  =\frac{1}{2}T_{m}^{\nabla}\left(  \left(  m,v\right)  \otimes\left(
m,w\right)  \right)  .
\end{align*}

\end{proof}

Here is one last example of a gauge and its compatibility tensor.

\begin{proposition}
\label{pro.3.14}Let $G$ be a Lie group and $\nabla$ be the left covariant
derivative on $TG$ uniquely determined by requiring the left invariant vector
fields to be covariantly constant, i.e. $\nabla\tilde{A}=0$ for all
$A\in\mathfrak{g}.$ Then for $g$ near $k,$%
\begin{equation}
U^{\nabla}\left(  g,k\right)  =//\left(  k\rightarrow g\right)  =L_{gk^{-1}%
\ast}, \label{equ.3.2}%
\end{equation}
and
\begin{equation}
\psi^{\nabla}\left(  k,g\right)  =\left(  \exp_{k}^{\nabla}\right)
^{-1}\left(  g\right)  =k\cdot\log\left(  k^{-1}g\right)  \label{equ.3.3}%
\end{equation}
where $L_{g}:G\rightarrow G$ is left multiplication by $g\in G$ and $\log$ is
the local inverse of the map $A\rightarrow e^{A}.$ Moreover the compatibility
tensor for this gauge is given by
\begin{equation}
S\left(  \xi_{g},\eta_{g}\right)  =-\frac{1}{2}L_{g\ast}\left[  \theta\left(
\xi_{g}\right)  ,\theta\left(  \eta_{g}\right)  \right]  ~\text{ for all }%
\xi_{g},\eta_{g}\in T_{g}G \label{equ.3.4}%
\end{equation}
where $\theta$ is the Maurer-Cartan form on $G$ defined by $\theta\left(
\xi\right)  :=L_{g^{-1}\ast}\xi\in\mathfrak{g}:=T_{e}G$ for all $\xi\in
T_{g}G.$
\end{proposition}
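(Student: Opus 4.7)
The plan is to first establish the parallel-translation and geodesic formulas directly from the defining property $\nabla\tilde{A}=0$, and then to reduce the compatibility tensor to the torsion of $\nabla$ via Lemma~\ref{lem.3.13}.

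For Eq.~(\ref{equ.3.2}), given any smooth curve $\sigma:[0,1]\to G$ with $\sigma(0)=k$ and $\sigma(1)=g$, I would decompose an arbitrary $v_k\in T_kG$ as $v_k=L_{k*}A$ with $A=\theta(v_k)\in\mathfrak{g}$ and consider the vector field $V(t):=\tilde{A}_{\sigma(t)}=L_{\sigma(t)*}A$ along $\sigma$. Since $V$ is the restriction of the left-invariant field $\tilde{A}$ to $\sigma$, the hypothesis $\nabla\tilde{A}=0$ gives $\frac{\nabla V}{dt}=(\nabla_{\dot\sigma}\tilde{A})\circ\sigma=0$, so $V$ is the parallel translate of $v_k$ along $\sigma$. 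At $t=1$ this yields $//_1(\sigma)v_k=L_{g*}A=L_{gk^{-1}*}v_k$, and since the answer is independent of $\sigma$ this is precisely $U^\nabla(g,k)v_k$.

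For Eq.~(\ref{equ.3.3}), the curve $\gamma(t):=ke^{tA}$ satisfies $\dot\gamma(t)=L_{ke^{tA}*}A=\tilde{A}_{\gamma(t)}$, hence $\nabla_{\dot\gamma}\dot\gamma=(\nabla_{\tilde{A}}\tilde{A})\circ\gamma=0$, so $\gamma$ is the $\nabla$-geodesic with $\gamma(0)=k$ and $\dot\gamma(0)=L_{k*}A$. Consequently $\exp_k^\nabla(L_{k*}A)=ke^A$, and inverting (for $g$ close enough to $k$ that $\log(k^{-1}g)$ is defined) gives $(\exp_k^\nabla)^{-1}(g)=L_{k*}\log(k^{-1}g)$, which is the precise meaning of $k\cdot\log(k^{-1}g)$ in the theorem.

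Finally, for Eq.~(\ref{equ.3.4}), by Lemma~\ref{lem.3.13} we have $S_g^{\mathcal{G}}=\tfrac12 T_g^\nabla$, so it suffices to compute the torsion. For $A,B\in\mathfrak{g}$, $\nabla_{\tilde{A}}\tilde{B}=\nabla_{\tilde{B}}\tilde{A}=0$ while $[\tilde{A},\tilde{B}]=\widetilde{[A,B]}$, giving $T^\nabla(\tilde{A},\tilde{B})=-\widetilde{[A,B]}$; evaluated at $g$ this reads $T^\nabla(\tilde{A}_g,\tilde{B}_g)=-L_{g*}[A,B]$. Any tangent vectors $\xi_g,\eta_g\in T_gG$ may be written as $\tilde{A}_g,\tilde{B}_g$ with $A=\theta(\xi_g)$ and $B=\theta(\eta_g)$, and tensoriality of $T^\nabla$ gives $T^\nabla(\xi_g,\eta_g)=-L_{g*}[\theta(\xi_g),\theta(\eta_g)]$, whence Eq.~(\ref{equ.3.4}). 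I do not anticipate a serious obstacle: the only subtleties are keeping track of left-vs-right conventions and remembering that Lemma~\ref{lem.3.13} converts the compatibility calculation (which would otherwise require differentiating $U\circ\psi_*^{-1}$) into a torsion calculation that, for this connection, is essentially a one-line Lie-algebra computation.
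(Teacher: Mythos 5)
Your proposal is correct and follows the paper's own strategy: left-invariant vector fields are $\nabla$-parallel along any curve (giving Eq.~(\ref{equ.3.2})), their integral curves are geodesics (giving Eq.~(\ref{equ.3.3})), and the torsion computation together with Lemma~\ref{lem.3.13} gives Eq.~(\ref{equ.3.4}). The only stylistic difference is that you directly exhibit the left-invariant extension of each initial tangent vector as a parallel field, whereas the paper first characterizes \emph{all} parallel fields along a curve as those with constant Maurer--Cartan image and then reads off the formula; the underlying content is the same.
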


\begin{proof}
The torsion of $\nabla$ is given by%
\[
T\left(  \tilde{A},\tilde{B}\right)  =\nabla_{\tilde{A}}\tilde{B}%
-\nabla_{\tilde{B}}\tilde{A}-\left[  \tilde{A},\tilde{B}\right]
=-\widetilde{\left[  A,B\right]  }%
\]
or equivalently as%
\[
T\left(  \xi_{g},\eta_{g}\right)  =-L_{g\ast}\left[  \theta\left(  \xi
_{g}\right)  ,\theta\left(  \eta_{g}\right)  \right]  \text{ for all }\xi
_{g},\eta_{g}\in T_{g}G.
\]
Eq. (\ref{equ.3.4}) follows from the above formula along with the result in
Lemma \ref{lem.3.13}.

If $\xi\left(  t\right)  $ is a path $TG$ above $\sigma\left(  t\right)  \in
G$ it may be written as $\xi\left(  t\right)  =L_{\sigma\left(  t\right)
\ast}\theta\left(  \xi\left(  t\right)  \right)  .$ Since $L_{\sigma\left(
t\right)  \ast}$ is parallel translation, it follows that%
\[
\frac{\nabla\xi\left(  t\right)  }{dt}=L_{\sigma\left(  t\right)  \ast}%
\frac{d}{dt}\theta\left(  \xi\left(  t\right)  \right)  .
\]
Thus $\xi\left(  t\right)  \in TG$ is parallel iff $\theta\left(  \xi\left(
t\right)  \right)  $ is constant for all $t.$ If $\sigma$ is a general curve
in $G$, we may conclude
\[
//\left(  \sigma|_{\left[  s,t\right]  }\right)  =L_{\sigma\left(  t\right)
\ast}L_{\sigma\left(  s\right)  ^{-1}\ast}=L_{\sigma\left(  t\right)
\sigma\left(  s\right)  ^{-1}\ast}%
\]
and therefore $U^{\nabla}$ is given as in Eq. (\ref{equ.3.2}).

A curve $\sigma\left(  t\right)  \in G$ is a geodesic iff $\dot{\sigma}\left(
t\right)  $ is parallel iff $\theta\left(  \dot{\sigma}\left(  t\right)
\right)  =A$ for some $A\in\mathfrak{g.}$ That is $\dot{\sigma}\left(
t\right)  =\tilde{A}\left(  \sigma\left(  t\right)  \right)  $ with
$\sigma\left(  0\right)  =k\in G.$ The solution to this equation is
$\sigma\left(  t\right)  =ke^{tA}$ and hence we have shown that $\exp
_{k}^{\nabla}\left(  k\cdot A\right)  =ke^{A}.$ So setting $g=ke^{A}$ and
solving for $A$ gives $A=\log\left(  k^{-1}g\right)  $ and the formula for
$\psi^{\nabla}$ in Eq. (\ref{equ.3.3}) now follows.
\end{proof}

The last three results of this subsection show how the compatibility tensor
allows us to compare two different parallelisms and two different logarithms
on $M.$

\begin{theorem}
\label{the.3.15}Suppose that $U$ and $\tilde{U}$ are two parallelisms on $M$
and $\psi$ is a logarithm on $M,$ then
\begin{equation}
U\left(  m,n\right)  \tilde{U}\left(  m,n\right)  ^{-1}=_{2}I+S_{m}^{\tilde
{U},U}\left(  \psi\left(  m,n\right)  \otimes\left(  \cdot\right)  \right)  .
\label{equ.3.5}%
\end{equation}

\end{theorem}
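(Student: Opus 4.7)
The plan is to observe that the $=_2$ relation is local and (by Corollary \ref{cor.2.28}) insensitive to the choice of Riemannian metric and to the logarithm used to measure the distance between $m$ and $n$. Using charts on small neighborhoods of each point of $M$, I would therefore reduce at once to the case of an open convex set in $\mathbb{R}^d$ equipped with its standard flat metric, and it suffices to establish
\[
\bar{U}(m,n)\,\bar{\tilde U}(m,n)^{-1}-I-S_m^{\tilde U,U}\bigl(\bar{\psi}(m,n)\otimes(\cdot)\bigr)=O\bigl(|n-m|^{2}\bigr)
\]
as an endomorphism of $\mathbb{R}^d$. Both sides of the claimed identity are smooth in $(m,n)$ and coincide on the diagonal $\{n=m\}$, so the content is a matching of their first-order Taylor coefficients in $n$ at $n=m$.

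Next I would expand the two factors using Theorem \ref{the.2.24}: writing $\bar{U}(m,n)=I+B(m,n)(n-m)$ and $\bar{\tilde U}(m,n)=I+\tilde B(m,n)(n-m)$ and then inverting the second factor by a Neumann series modulo second order, I get
\[
\bar{U}(m,n)\,\bar{\tilde U}(m,n)^{-1}=I+\bigl[B(m,m)-\tilde B(m,m)\bigr](n-m)+O\bigl(|n-m|^{2}\bigr),
\]
where $B(m,m)=D_{2}\bar{U}(m,m)$ and $\tilde B(m,m)=D_{2}\bar{\tilde U}(m,m)$ by Theorem \ref{the.2.24}. The first-order coefficient is to be identified with the compatibility tensor. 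Here I would invoke the second form of $S_m^{\tilde U,U}$ given in Remark \ref{rem.3.8}, namely
\[
S_m^{\tilde U,U}(v_m\otimes w_m)=d\bigl[U(m,\cdot)\,\tilde U(m,\cdot)^{-1}w_m\bigr]_{m}\,v_m,
\]
and differentiate the product $\bar{U}(m,\cdot)\bar{\tilde U}(m,\cdot)^{-1}$ at $n=m$ by the product rule to get exactly $\bigl[D_{2}\bar{U}(m,m)-D_{2}\bar{\tilde U}(m,m)\bigr]$.

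Combining the previous two displays produces
\[
\bar{U}(m,n)\,\bar{\tilde U}(m,n)^{-1}=I+S_m^{\tilde U,U}\bigl((n-m)\otimes(\cdot)\bigr)+O\bigl(|n-m|^{2}\bigr).
\]
To finish, I would swap $n-m$ for $\bar\psi(m,n)$ using Eq.~(\ref{equ.2.14}) of Theorem \ref{the.2.24}, which gives $\bar\psi(m,n)-(n-m)=O(|n-m|^{2})$; bilinearity of $S_m^{\tilde U,U}$ and boundedness of its coefficient on compact neighborhoods absorb the difference into the $O(|n-m|^{2})$ remainder. The main obstacle is simply keeping the several sign and slot conventions straight (first-versus-second argument of $U$ in the definition of $S$, and the inverse $\tilde U(m,n)^{-1}$ versus $\tilde U(n,m)$ which agree only to order two by Corollary \ref{cor.2.29}); the analytical content is a routine second-order Taylor expansion on a chart.
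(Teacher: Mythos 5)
Your proposal is correct and follows essentially the same route as the paper: reduce to Euclidean space via charts and Corollary \ref{cor.2.28}, expand $\bar{U}$ and $\bar{\tilde U}$ to first order via Theorem \ref{the.2.24}, invert $\bar{\tilde U}$ by a truncated Neumann series, identify the resulting linear coefficient $D_{2}\bar{U}(m,m)-D_{2}\bar{\tilde U}(m,m)$ with $S_{m}^{\tilde U,U}$ via Remark \ref{rem.3.8}, and finish by swapping $n-m$ for $\bar\psi(m,n)$. The only cosmetic difference is that you derive Eq.~(\ref{equ.3.7}) from the first (product-rule) formula in Remark \ref{rem.3.8} while the paper cites the covariant-derivative form Eq.~(\ref{equ.3.1}), which collapses to the same identity in the flat case.
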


\begin{proof}
By using charts it suffices to prove the theorem when $M=\mathbb{R}^{d}.$ By
Taylor's theorem (see Theorem \ref{the.2.24}),
\begin{align*}
U\left(  m,n\right)   &  =_{2}I+\left[  \left(  D_{2}U\right)  \left(
m,m\right)  \left(  n-m\right)  \right]  \text{ and}\\
\tilde{U}\left(  m,n\right)   &  =_{2}I+\left[  \left(  D_{2}\tilde{U}\right)
\left(  m,m\right)  \left(  n-m\right)  \right]
\end{align*}
and therefore
\begin{align}
U\left(  m,n\right)  \tilde{U}\left(  m,n\right)  ^{-1}  &  =_{2}\left(
I+\left[  \left(  D_{2}U\right)  \left(  m,m\right)  \left(  n-m\right)
\right]  \right)  \left(  I-\left[  \left(  D_{2}\tilde{U}\right)  \left(
m,m\right)  \left(  n-m\right)  \right]  \right) \nonumber\\
&  =_{2}I+\left[  \left(  \left(  D_{2}U\right)  \left(  m,m\right)  -\left(
D_{2}\tilde{U}\right)  \left(  m,m\right)  \right)  \left(  n-m\right)
\right]  . \label{equ.3.6}%
\end{align}
However, by Eq. (\ref{equ.3.1}) we have%
\begin{equation}
S_{m}^{\tilde{U},U}=\left(  D_{2}U\right)  \left(  m,m\right)  -\left(
D_{2}\tilde{U}\right)  \left(  m,m\right)  . \label{equ.3.7}%
\end{equation}
Using this identity back in Eq. (\ref{equ.3.6}) shows%
\[
U\left(  m,n\right)  \tilde{U}\left(  m,n\right)  ^{-1}=_{2}I+S_{m}^{\tilde
{U},U}\left(  \left[  n-m\right]  _{m}\otimes\left(  \cdot\right)  \right)
\]
from which Eq. (\ref{equ.3.5}) follows because $\psi\left(  m,n\right)
=_{2}\left[  n-m\right]  _{m}.$
\end{proof}

\begin{corollary}
\label{cor.3.16}If $\mathcal{G=}\left(  \psi,U\right)  $ is a gauge on $M,$
then
\begin{equation}
\psi\left(  n,\cdot\right)  _{\ast m}=_{2}U\left(  n,m\right)  \left[
I+S_{m}^{\mathcal{G}}\left(  \psi\left(  m,n\right)  \otimes\left(
\cdot\right)  \right)  \right]  . \label{equ.3.8}%
\end{equation}

In particular%
\begin{equation}
\psi\left(  y_{t},\cdot\right)  _{\ast y_{s}}\underset{^{2}}{\approx}U\left(
y_{t},y_{s}\right)  \left[  I+S_{y_{s}}^{\mathcal{G}}\left(  \psi\left(
y_{s},y_{t}\right)  \otimes\left(  \cdot\right)  \right)  \right]  .
\label{equ.3.9}%
\end{equation}

\end{corollary}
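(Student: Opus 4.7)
The plan is to apply Theorem \ref{the.3.15} with the second parallelism $\tilde{U}$ taken to be $U^{\psi}$, where $U^{\psi}(n,m) := \psi(n,\cdot)_{\ast m}$ as in Remark \ref{rem.2.21}. By Notation \ref{not.3.12} the corresponding compatibility tensor $S^{U^{\psi},U}$ is exactly $S^{\mathcal{G}}$, so Theorem \ref{the.3.15} delivers
\[
U(m,n)\, U^{\psi}(m,n)^{-1} \;=_{2}\; I + S_{m}^{\mathcal{G}}\!\left(\psi(m,n)\otimes(\cdot)\right).
\]
The remaining task is to convert this into the asserted identity, which is phrased in terms of $U(n,m)$ and $U^{\psi}(n,m)$ (with the arguments reversed). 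The bridge is Corollary \ref{cor.2.29}, which says that any parallelism is approximately self-inverting to order two.

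First, I apply Corollary \ref{cor.2.29} to the parallelism $U^{\psi}$ to get $U^{\psi}(m,n)^{-1} =_{2} U^{\psi}(n,m)$. Since $U(m,n)$ is locally bounded, multiplying on the left preserves the $=_{2}$ relation and produces
\[
U(m,n)\, U^{\psi}(n,m) \;=_{2}\; I + S_{m}^{\mathcal{G}}\!\left(\psi(m,n)\otimes(\cdot)\right).
\]
Next I multiply both sides on the left by $U(n,m)$ and invoke Corollary \ref{cor.2.29} applied to $U$ itself to replace $U(n,m)\,U(m,n)$ with $I$ up to order two. Since $U^{\psi}(n,m)$ is locally bounded, this gives
\[
U^{\psi}(n,m) \;=_{2}\; U(n,m)\!\left[I + S_{m}^{\mathcal{G}}\!\left(\psi(m,n)\otimes(\cdot)\right)\right],
\]
which is precisely (\ref{equ.3.8}) after recalling $U^{\psi}(n,m)=\psi(n,\cdot)_{\ast m}$.

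For the pathwise statement (\ref{equ.3.9}), I substitute $m=y_{s}$ and $n=y_{t}$ in (\ref{equ.3.8}). The only nontrivial point is converting a $=_{2}$ estimate, which controls the error by $d_{g}(y_{s},y_{t})^{2}$, into an $\underset{^{2}}{\approx}$ estimate, which controls it by $\omega(s,t)^{2/p}$. This requires the bound $d_{g}(y_{s},y_{t}) \lesssim \omega(s,t)^{1/p}$ for $s,t$ close, which follows from the controlled rough path property: in a local chart $\phi$ around $y_{s}$, Theorem \ref{the.2.44} gives $|\phi(y_{t})-\phi(y_{s})| \leq |d\phi\circ y_{s}^{\dag}x_{s,t}| + O(\omega(s,t)^{2/p}) = O(\omega(s,t)^{1/p})$, and local equivalence of Riemannian metrics (Corollary \ref{cor.6.4}) transfers the bound to $d_{g}$. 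The main obstacle is bookkeeping: one must keep careful track of how $=_{2}$ relations behave under multiplication by locally bounded operators and under the approximate inversion of parallelisms, but no genuinely new estimate is needed beyond Theorem \ref{the.3.15} and Corollary \ref{cor.2.29}.
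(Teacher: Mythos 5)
Your proof is correct and follows essentially the same route as the paper: apply Theorem \ref{the.3.15} with $\tilde{U}=U^{\psi}$, then use Corollary \ref{cor.2.29} twice (once for $U^{\psi}$, once for $U$) to flip the argument order and invert, and finally patch for the pathwise statement. The paper leaves the conversion to (\ref{equ.3.9}) at \textquotedblleft follows by patching\textquotedblright; your explicit observation that $d_{g}(y_{s},y_{t})\lesssim\omega(s,t)^{1/p}$ makes the same step concrete and is the right justification.
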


\begin{proof}
Theorem \ref{the.3.15} implies
\[
U\left(  m,n\right)  \psi\left(  m,\cdot\right)  _{\ast n}^{-1}=_{2}%
I+S_{m}^{\mathcal{G}}\left(  \psi\left(  m,n\right)  \otimes\left(
\cdot\right)  \right)
\]
while Corollary \ref{cor.2.29} shows,%
\[
U\left(  m,n\right)  ^{-1}=_{2}U\left(  n,m\right)  \text{ and }\psi\left(
m,\cdot\right)  _{\ast n}^{-1}=_{2}\psi\left(  n,\cdot\right)  _{\ast m}.
\]
Eq. (\ref{equ.3.8}) now easily follows from the last two displayed equations.
The second statement follows by patching.
\end{proof}

Lastly we may use the compatibility tensor to compare two logarithms.

\begin{proposition}
\label{pro.3.17}Suppose that $\psi$ and $\tilde{\psi}$ are two logarithms on a
manifold $M.$ Then the compatibility tensor, $S^{\psi_{\ast},\tilde{\psi
}_{\ast}}$ is symmetric and
\begin{equation}
\psi\left(  m,n\right)  -\tilde{\psi}\left(  m,n\right)  =_{3}\frac{1}{2}%
S_{m}^{\tilde{\psi}_{\ast},\psi_{\ast},}\left(  \psi\left(  m,n\right)
\otimes\psi\left(  m,n\right)  \right)  . \label{equ.3.10}%
\end{equation}

\end{proposition}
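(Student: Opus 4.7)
The plan is to reduce to a local Euclidean computation, exactly as was done in Theorem \ref{the.3.15}, and then to read off the identity (\ref{equ.3.10}) from the third order Taylor expansion of $\bar{\psi}$ provided by Theorem \ref{the.2.24}. Since both sides of (\ref{equ.3.10}) are local in nature and well-defined up to $=_3$, Corollary \ref{cor.2.28} lets me pull everything back via a chart, identify tangent vectors with elements of $\mathbb{R}^d$, and work with the \emph{overlined} quantities $\bar\psi,\bar{\tilde\psi}$ on an open convex neighborhood.

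First I would apply Theorem \ref{the.2.24} (specifically Eq.~(\ref{equ.2.17})) to both logarithms to get
\[
\bar\psi(x,y)-\bar{\tilde\psi}(x,y) \;=_3\; \tfrac{1}{2}\bigl[(D_2^2\bar\psi)(x,x)-(D_2^2\bar{\tilde\psi})(x,x)\bigr](y-x)^{\otimes 2},
\]
since the linear terms $(y-x)$ cancel and the cubic remainders are absorbed into $=_3$. Next I would identify the bracketed coefficient with the compatibility tensor. Since $U^{\psi}(n,m)=\psi(n,\cdot)_{\ast m}$ corresponds in coordinates to $\bar U^{\psi}(x,y)=D_2\bar\psi(x,y)$, Theorem \ref{the.2.24} gives $D_2\bar U^{\psi}(x,x)=(D_2^2\bar\psi)(x,x)$, and similarly for $\tilde\psi$. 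Eq.~(\ref{equ.3.7}) in the proof of Theorem \ref{the.3.15} then yields
\[
S_m^{\tilde\psi_\ast,\psi_\ast} \;=\; (D_2^2\bar\psi)(x,x)-(D_2^2\bar{\tilde\psi})(x,x).
\]
Substituting this back and then replacing $(y-x)^{\otimes 2}$ by $\bar\psi(m,n)^{\otimes 2}$ (which is legitimate modulo $O(|y-x|^3)$ because $\bar\psi(x,y)=(y-x)+O(|y-x|^2)$ by Eq.~(\ref{equ.2.14}), so $\bar\psi(x,y)^{\otimes 2}=(y-x)^{\otimes 2}+O(|y-x|^3)$) produces exactly (\ref{equ.3.10}).

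For the symmetry statement I would simply note that in coordinates the coefficient $(D_2^2\bar\psi)(x,x)$ is the second derivative of a smooth map in its second slot, hence is a symmetric bilinear form $\mathbb{R}^d\times\mathbb{R}^d\to\mathbb{R}^d$ by equality of mixed partials; the same holds for $\bar{\tilde\psi}$, so their difference $S_m^{\tilde\psi_\ast,\psi_\ast}$ is symmetric as well. (This symmetry is a special feature of the $\psi_\ast,\tilde\psi_\ast$ case and would not hold for arbitrary parallelisms.)

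I do not expect any real obstacle here beyond the usual bookkeeping: the only things to watch are (i) the sign/order convention in $S^{\tilde U,U}$, which is pinned down by Eq.~(\ref{equ.3.7}), and (ii) the legitimacy of substituting $\bar\psi(m,n)^{\otimes 2}$ for $(y-x)^{\otimes 2}$ inside an $=_3$ relation, which follows immediately from the quadratic estimate (\ref{equ.2.14}) applied twice. Everything else is a direct Taylor comparison in $\mathbb{R}^d$, patched back to $M$ by Corollary \ref{cor.2.28}.
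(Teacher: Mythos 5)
Your proposal is correct and follows the same route as the paper: reduce to $\mathbb{R}^{d}$, read off $S_{x}^{\tilde\psi_{\ast},\psi_{\ast}}=\psi_{x}^{\prime\prime}(x)-\tilde\psi_{x}^{\prime\prime}(x)$ from Eq.~(\ref{equ.3.7}) with $U=\psi_{\ast}$ and $\tilde U=\tilde\psi_{\ast}$, deduce symmetry from equality of mixed partials, and compare second-order Taylor expansions, substituting $\psi(x,y)^{\otimes 2}$ for $(y-x)^{\otimes 2}$ up to $O(|y-x|^{3})$. The paper's argument is essentially identical, just slightly more compressed.
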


\begin{proof}
As usual it suffices to prove this result when $M=\mathbb{R}^{d}$ in which
case we omit the base points of tangent vectors. From Eq. (\ref{equ.3.7}) with
$U\left(  x,y\right)  =\psi_{x}^{\prime}\left(  y\right)  $ and $\tilde
{U}\left(  x,y\right)  =\tilde{\psi}_{x}^{\prime}\left(  y\right)  ,$ we see
that
\begin{equation}
S_{x}^{\tilde{\psi}_{\ast},\psi_{\ast}}=\psi_{x}^{\prime\prime}\left(
x\right)  -\tilde{\psi}_{x}^{\prime\prime}\left(  x\right)  \label{equ.3.11}%
\end{equation}
which is symmetric since mixed partial derivatives commute. Then by Taylor's
theorem and Eq. (\ref{equ.3.11}),%
\begin{align*}
\psi\left(  x,y\right)  -\tilde{\psi}\left(  x,y\right)   &  =\frac{1}%
{2}\left[  \psi_{x}^{\prime\prime}\left(  x\right)  -\tilde{\psi}_{x}%
^{\prime\prime}\left(  x\right)  \right]  \left(  y-x\right)  ^{\otimes
2}+O\left(  \left\vert y-x\right\vert ^{3}\right) \\
&  =\frac{1}{2}S_{x}^{\tilde{\psi}_{\ast},\psi_{\ast}}\left(  \psi\left(
x,y\right)  ^{\otimes2}\right)  +O\left(  \left\vert y-x\right\vert
^{3}\right)  ,
\end{align*}
wherein we have also used $\left(  y-x\right)  ^{\otimes2}=_{3}\psi\left(
x,y\right)  ^{\otimes2}.$
\end{proof}

\begin{remark}
\label{rem.3.18}If $\nabla$ is any covariant derivative on $TM,$ then
\[
S_{m}^{\tilde{\psi}_{\ast},\psi_{\ast}}=\left[  \nabla d\left(  \psi\left(
m,\cdot\right)  -\tilde{\psi}\left(  m,\cdot\right)  \right)  \right]
_{m}=\mathrm{Hess}_{m}^{\nabla}\left(  \psi_{m}-\tilde{\psi}_{m}\right)
\]
where $\mathrm{Hess}_{m}^{\nabla}f:=\left[  \nabla df\right]  _{m}$. By
choosing $\nabla$ to be Torsion free we again see that $S_{m}^{\tilde{\psi
}_{\ast},\psi_{\ast}}$ is a symmetric tensor.
\end{remark}

\subsection{$U$ -- Controlled Rough Integration\label{sub.3.3}}

Our next goal is to construct \textquotedblleft the\textquotedblright%
\ integral, $\int\left\langle \boldsymbol{\alpha},d\mathbf{y}\right\rangle ,$
where $\mathbf{y}\in CRP_{\mathbf{X}}\left(  M\right)  $ and
$\boldsymbol{\alpha}\in CRP_{y}^{U}\left(  M,V\right)  .$ We begin with the
following proposition in the smooth category which is meant to motivate the
definitions to come.

\begin{proposition}
\label{pro.3.19}Assume (in this proposition only) that all functions,
$\mathbf{y}_{s},$ $\boldsymbol{\alpha}_{s},$ and $x_{s}$ are smooth, $p=1,$
and $\omega\left(  s,t\right)  =\left\vert t-s\right\vert .$ Further assume
$\mathbf{y}$ (respectively $\boldsymbol{\alpha}$) still satisfy the estimates
of being controlled rough path (along $\mathbf{y).}$ Then%
\begin{equation}
\int_{s}^{t}\alpha_{\tau}\dot{y}_{\tau}d\tau=\alpha_{s}\left[  \psi\left(
y_{s},y_{t}\right)  +S_{y_{s}}^{\mathcal{G}}\left(  y_{s}^{\dag}\otimes
y_{s}^{\dag}\mathbb{X}_{s,t}\right)  \right]  +\alpha_{s}^{\dag}\left(
I\otimes y_{s}^{\dag}\right)  \mathbb{X}_{s,t}+O\left(  \left(  t-s\right)
^{3}\right)  . \label{equ.3.12}%
\end{equation}

\end{proposition}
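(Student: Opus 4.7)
Since all functions are smooth and both sides of the claimed identity vanish at $t=s$, it suffices to work in a chart (identifying each $T_{y_\tau}M$ with $\mathbb{R}^d$) and match the Taylor expansions of both sides in $t$ about $s$ through order $(t-s)^2$. The $(t-s)^1$-coefficients agree trivially: both equal $\alpha_s \dot y_s$, using $\psi(y_s,\cdot)_{*y_s}=I$, $\mathbb{X}_{s,s}=0$, and $\dot y_s = y_s^\dag\dot x_s$ (the last coming from differentiating the controlled-rough-path relation $\psi(y_s,y_t) - y_s^\dag x_{s,t} = O((t-s)^2)$ at $t=s$). The entire content therefore lies in the $(t-s)^2$-coefficient.

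For the LHS, smoothness of the integrand gives $\int_s^t \alpha_\tau\dot y_\tau\,d\tau = \alpha_s\dot y_s(t-s) + \tfrac{1}{2}(\dot\alpha_s\dot y_s + \alpha_s\ddot y_s)(t-s)^2 + O((t-s)^3)$. The non-routine step is to compute $\dot\alpha_s$: differentiating the controlled one-form condition $\alpha_\tau\bar U(y_\tau,y_s) = \alpha_s + \alpha_s^\dag(x_{s,\tau}\otimes(\cdot)) + O((\tau-s)^2)$ at $\tau=s$, and using $D_1\bar U(m,m) = -D_2\bar U(m,m)$ (obtained from $\bar U(m,m)\equiv I$) together with $\dot y_s = y_s^\dag\dot x_s$, yields $\dot\alpha_s\dot y_s = \alpha_s^\dag(\dot x_s\otimes y_s^\dag\dot x_s) + \alpha_s[D_2\bar U(y_s,y_s)](\dot y_s,\dot y_s)$. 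For the RHS, Theorem~\ref{the.2.24} combined with $y_{s,t} = \dot y_s(t-s) + \tfrac{1}{2}\ddot y_s(t-s)^2 + O((t-s)^3)$ gives $\psi(y_s,y_t) = \dot y_s(t-s) + \tfrac{1}{2}[\ddot y_s + (D_2^2\bar\psi)(y_s,y_s)(\dot y_s,\dot y_s)](t-s)^2 + O((t-s)^3)$, while $\mathbb{X}_{s,t} = \tfrac{1}{2}\dot x_s\otimes\dot x_s(t-s)^2 + O((t-s)^3)$ converts the $S^{\mathcal{G}}$- and $\alpha^\dag$-contributions into $\tfrac{1}{2}\alpha_s S^{\mathcal{G}}_{y_s}(\dot y_s,\dot y_s)(t-s)^2$ and $\tfrac{1}{2}\alpha_s^\dag(\dot x_s\otimes y_s^\dag\dot x_s)(t-s)^2$ modulo $O((t-s)^3)$.

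Subtracting the two expansions, the $\alpha_s\ddot y_s$ and $\alpha_s^\dag(\dot x_s\otimes y_s^\dag\dot x_s)$ contributions cancel and the residual $(t-s)^2$-coefficient reduces to $\tfrac{1}{2}\alpha_s\bigl[(D_2^2\bar\psi)(y_s,y_s) + S^{\mathcal{G}}_{y_s} - D_2\bar U(y_s,y_s)\bigr](\dot y_s,\dot y_s)$. The bracketed expression vanishes by Eq.~\eqref{equ.3.7} (with $\tilde U = U^\psi$, noting $D_2 U^\psi(m,m) = D_2^2\bar\psi(m,m)$), which identifies $S^{\mathcal{G}}_m = D_2\bar U(m,m) - D_2^2\bar\psi(m,m)$. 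The main obstacle is pure bookkeeping: tracking which slot of $\bar U$, $\bar\psi$ is being differentiated, and recognising $S^{\mathcal{G}}$ as the precise obstruction measuring the manifold's departure from the flat Euclidean second-order expansion of Remark~\ref{rem.2.10}.
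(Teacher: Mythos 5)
Your proof is correct, but it takes a genuinely different route from the paper's. The paper manipulates the integral $\int_s^t\alpha_\tau\dot y_\tau\,d\tau$ directly: it inserts $U(y_\tau,y_s)U(y_\tau,y_s)^{-1}$, invokes the controlled one-form condition to expand $\alpha_\tau U(y_\tau,y_s)$, uses Corollary \ref{cor.2.29} to replace $U(y_\tau,y_s)^{-1}$ by $U(y_s,y_\tau)$, and then splits into two sub-integrals $A$ and $B$; the $B$-term goes by parallel transport of $y^\dag$, while the $A$-term is handled by writing $\dot y_\tau=\psi(y_s,\cdot)_{\ast y_\tau}^{-1}\frac{d}{d\tau}\psi(y_s,y_\tau)$ and invoking Theorem \ref{the.3.15} to produce the compatibility tensor. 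This decomposition is structurally a preview of the almost-additivity argument in Theorem \ref{the.3.26}. You instead take the infinitesimal route: pass to a chart, Taylor expand both sides in $t$ about $s$ through order $(t-s)^2$, and match coefficients — the only nontrivial inputs being the derivative $\dot\alpha_s=\alpha_s^\dag(\dot x_s\otimes(\cdot))+\alpha_s D_2\bar U(y_s,y_s)(\dot y_s,\cdot)$ extracted from the controlled one-form condition together with $D_1\bar U(m,m)=-D_2\bar U(m,m)$, and the identification $S^{\mathcal{G}}_m=D_2\bar U(m,m)-D_2^2\bar\psi(m,m)$ from Eq.~\eqref{equ.3.7}. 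Your approach is more elementary (no need for Theorem \ref{the.3.15} or the $A/B$ decomposition) and makes transparent that the compatibility tensor is exactly the second-order obstruction measured at $s$, at the cost of being less directly adaptable to the genuinely rough setting where Taylor remainders are unavailable and one must argue via almost-additivity as in Theorem \ref{the.3.26}. Both proofs are valid for this smooth warm-up.
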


\begin{proof}
Our assumptions give,%
\begin{align*}
&  \psi\left(  y_{s},y_{t}\right)  =y_{s}^{\dag}x_{s,t}+O\left(  \left(
t-s\right)  ^{2}\right)  \implies\dot{y}_{s}=y_{s}^{\dag}\dot{x}_{s},\\
&  \alpha_{t}U\left(  y_{t},y_{s}\right)  =\alpha_{s}+\alpha_{s}^{\dag}%
x_{s,t}+O\left(  \left(  t-s\right)  ^{2}\right)  ,\\
&  U\left(  y_{s},y_{t}\right)  y_{t}^{\dag}=y_{s}+O\left(  t-s\right)
,\text{ and }\\
&  \alpha_{t}^{\dag}\left(  I\otimes U\left(  y_{t},y_{s}\right)  \right)
=\alpha_{s}^{\dag}+O\left(  t-s\right)  .
\end{align*}
We start with the identity,
\begin{align}
\int_{s}^{t}\alpha_{\tau}\dot{y}_{\tau}d\tau &  =\int_{s}^{t}\alpha_{\tau
}U\left(  y_{\tau},y_{s}\right)  U\left(  y_{\tau},y_{s}\right)  ^{-1}\dot
{y}_{\tau}d\tau\nonumber\\
&  =\int_{s}^{t}\left[  \alpha_{s}+\alpha_{s}^{\dag}x_{s,\tau}+O\left(
\left(  \tau-s\right)  ^{2}\right)  \right]  U\left(  y_{\tau},y_{s}\right)
^{-1}\dot{y}_{\tau}d\tau\nonumber\\
&  =\int_{s}^{t}\alpha_{s}U\left(  y_{\tau},y_{s}\right)  ^{-1}\dot{y}_{\tau
}d\tau+\int_{s}^{t}\alpha_{s}^{\dag}x_{s,\tau}U\left(  y_{\tau},y_{s}\right)
^{-1}\dot{y}_{\tau}d\tau+O\left(  \left(  t-s\right)  ^{3}\right) \nonumber\\
&  =\int_{s}^{t}\alpha_{s}U\left(  y_{s},y_{\tau}\right)  \dot{y}_{\tau}%
d\tau+\int_{s}^{t}\alpha_{s}^{\dag}x_{s,\tau}U\left(  y_{s},y_{\tau}\right)
\dot{y}_{\tau}d\tau+O\left(  \left(  t-s\right)  ^{3}\right)
.\label{equ.3.13}\\
&  =:A+B+O\left(  \left(  t-s\right)  ^{3}\right)  \label{equ.3.14}%
\end{align}
wherein we have used Corollary \ref{cor.2.29} in order to show it is
permissible to replace $U\left(  y_{\tau},y_{s}\right)  ^{-1}$ by $U\left(
y_{s},y_{\tau}\right)  $ above. The $B$ term is then easily estimated as%
\begin{align*}
B  &  =\int_{s}^{t}\alpha_{s}^{\dag}x_{s,\tau}U\left(  y_{s},y_{\tau}\right)
\dot{y}_{\tau}d\tau=\int_{s}^{t}\alpha_{s}^{\dag}x_{s,\tau}U\left(
y_{s},y_{\tau}\right)  y_{\tau}^{\dag}\dot{x}_{\tau}d\tau\\
&  =\int_{s}^{t}\alpha_{s}^{\dag}x_{s,\tau}y_{s}^{\dag}\dot{x}_{\tau}%
d\tau+O\left(  \left(  t-s\right)  ^{3}\right)  =\alpha_{s}^{\dag}\left(
I\otimes y_{s}^{\dag}\right)  \mathbb{X}_{s,t}+O\left(  \left(  t-s\right)
^{3}\right)  .
\end{align*}
The estimate of the $A$ term to order $O\left(  \left(  t-s\right)
^{3}\right)  $ requires more care. For this term we use
\[
\frac{d}{dt}\psi\left(  y_{s},y_{t}\right)  =\psi\left(  y_{s},\cdot\right)
_{\ast y_{t}}\dot{y}_{t}\implies\dot{y}_{t}=\psi\left(  y_{s},\cdot\right)
_{\ast y_{t}}^{-1}\frac{d}{dt}\psi\left(  y_{s},y_{t}\right)
\]
and (from Theorem \ref{the.3.15}) that
\[
U\left(  y_{s},y_{\tau}\right)  \psi\left(  y_{s},\cdot\right)  _{\ast
y_{\tau}}^{-1}=_{2}I+S_{y_{s}}^{\mathcal{G}}\left(  \psi\left(  y_{s},y_{\tau
}\right)  \otimes\left(  \cdot\right)  \right)
\]
in order to conclude,%
\begin{align*}
A  &  :=\int_{s}^{t}\alpha_{s}U\left(  y_{s},y_{\tau}\right)  \dot{y}_{\tau
}d\tau=\int_{s}^{t}\alpha_{s}U\left(  y_{s},y_{\tau}\right)  \psi\left(
y_{s},\cdot\right)  _{\ast y_{\tau}}^{-1}\frac{d}{d\tau}\psi\left(
y_{s},y_{\tau}\right)  d\tau\\
&  =\int_{s}^{t}\alpha_{s}\left[  I+S_{y_{s}}^{\mathcal{G}}\left(  \psi\left(
y_{s},y_{\tau}\right)  \otimes\left(  \cdot\right)  \right)  \right]  \frac
{d}{d\tau}\psi\left(  y_{s},y_{\tau}\right)  d\tau+O\left(  \left\vert
t-s\right\vert ^{3}\right) \\
&  =\alpha_{s}\left(  \psi\left(  y_{s},y_{t}\right)  \right)  +\alpha_{s}%
\int_{s}^{t}S_{y_{s}}^{\mathcal{G}}\left(  \psi\left(  y_{s},y_{\tau}\right)
\otimes\frac{d}{d\tau}\psi\left(  y_{s},y_{\tau}\right)  \right)
d\tau+O\left(  \left\vert t-s\right\vert ^{3}\right) \\
&  =\alpha_{s}\left(  \psi\left(  y_{s},y_{t}\right)  \right)  +\alpha_{s}%
\int_{s}^{t}S_{y_{s}}^{\mathcal{G}}\left(  y_{s}^{\dag}x_{s,\tau}\otimes
y_{s}^{\dag}\dot{x}_{\tau}\right)  d\tau+O\left(  \left\vert t-s\right\vert
^{3}\right) \\
&  =\alpha_{s}\left(  \psi\left(  y_{s},y_{t}\right)  \right)  +\alpha
_{s}S_{y_{s}}^{\mathcal{G}}\left(  y_{s}^{\dag}\otimes y_{s}^{\dag}%
\mathbb{X}_{s,t}\right)  +O\left(  \left\vert t-s\right\vert ^{3}\right)  .
\end{align*}
Putting this all together proves Eq. (\ref{equ.3.12}).
\end{proof}

The following definition is motivated by the right hand side of Eq.
(\ref{equ.3.12}).

\begin{definition}
[$\left(  \mathcal{G},\mathbf{y}\right)  $-- integrator]\label{def.3.20}Given
a gauge $\mathcal{G}:=\left(  \psi,U\right)  $ and $\mathbf{y}\in
CRP_{\mathbf{X}}\left(  M\right)  ,$ the $\left(  \mathcal{G},\mathbf{y}%
\right)  $\textbf{-- integrator} is the increment process;%
\[
\mathbf{y}_{s,t}^{\mathcal{G}}:=\left(  \psi\left(  y_{s},y_{t}\right)
+S_{y_{s}}^{\mathcal{G}}\left(  y_{s}^{\dag\otimes2}\mathbb{X}_{s,t}\right)
,\left(  I\otimes y_{s}^{\dag}\right)  \mathbb{X}_{s,t}\right)  \in T_{y_{s}%
}M\times\left[  W\otimes T_{y_{s}}M\right]  .
\]
Moreover, for $\boldsymbol{\alpha}\in CRP_{y}^{U}\left(  M,V\right)  $ (see
Notation \ref{not.3.3}) let%
\begin{equation}
\tilde{z}_{s,t}:=\left\langle \boldsymbol{\alpha}_{s},\mathbf{y}%
_{s,t}^{\mathcal{G}}\right\rangle =\alpha_{s}\left(  \psi\left(  y_{s}%
,y_{t}\right)  +S_{y_{s}}^{\mathcal{G}}\left(  y_{s}^{\dag\otimes2}%
\mathbb{X}_{s,t}\right)  \right)  +\alpha_{s}^{\dag}\left(  I\otimes
y_{s}^{\dag}\right)  \mathbb{X}_{s,t} \label{equ.3.15}%
\end{equation}
which is defined for $\left(  s,t\right)  \in\Delta_{\left[  0,T\right]  }$
with $\left\vert t-s\right\vert <\delta$ for some sufficiently small
$\delta>0.$
\end{definition}

Recall that a two-parameter function $F:\Delta_{\left[  0,T\right]
}\longrightarrow V$ is an almost additive functional if there exists a
$\theta>1$, a control $\tilde{\omega}\left(  s,t\right)  $ and a $C>0$ such
that
\begin{equation}
\left\vert F_{s,u}-F_{s,t}-F_{t,u}\right\vert \leq C\tilde{\omega}\left(
s,t\right)  ^{\theta} \label{equ.3.16}%
\end{equation}
for all $0\leq s\leq t\leq u\leq T$.

\begin{theorem}
\label{the.3.21}Let $\mathcal{G}:=\left(  \psi,U\right)  $ be a gauge,
$\boldsymbol{\alpha}\in CRP_{y}^{U}\left(  M,V\right)  ,$ and $\tilde{z}%
_{s,t}$ be as in Definition \ref{def.3.20}. Then there exists a unique
$\mathbf{z=}\left(  z,z^{\dag}\right)  \mathbf{\in}CRP_{\mathbf{X}}\left(
V\right)  $ such that $z_{0}=0,$ $z_{s,t}\underset{^{3}}{\approx}\tilde
{z}_{s,t},$ and $z_{s}^{\dag}=\alpha_{s}\circ y_{s}^{\dag}.$ We denote this
unique controlled rough path by $\int\left\langle \boldsymbol{\alpha
}\mathbf{,}d\mathbf{y}^{\mathcal{G}}\right\rangle ,$ i.e.
\[
\int_{s}^{t}\left\langle \boldsymbol{\alpha}\mathbf{,}d\mathbf{y}%
^{\mathcal{G}}\right\rangle :=\left[  \int\left\langle \boldsymbol{\alpha
}\mathbf{,}d\mathbf{y}^{\mathcal{G}}\right\rangle \right]  _{s,t}%
^{1}\underset{^{3}}{\approx}\left\langle \boldsymbol{\alpha}_{s}%
,\mathbf{y}_{s,t}^{\mathcal{G}}\right\rangle \text{ and }\left[
\int\left\langle \boldsymbol{\alpha}\mathbf{,}d\mathbf{y}^{\mathcal{G}%
}\right\rangle \right]  _{s}^{\dag}=\alpha_{s}\circ y_{s}^{\dag}.
\]

\end{theorem}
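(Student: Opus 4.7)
The plan is to construct $z$ via a sewing-lemma argument applied to the two-parameter expression $\tilde{z}_{s,t}=\langle\boldsymbol{\alpha}_{s},\mathbf{y}_{s,t}^{\mathcal{G}}\rangle$, and then verify that the pair $(z,\alpha\circ y^{\dagger})$ is a $V$--valued controlled rough path. So the first order of business is to establish almost additivity of $\tilde z$: there exists $C<\infty$ and $\delta>0$ with
\[
|\tilde{z}_{s,u}-\tilde{z}_{s,t}-\tilde{z}_{t,u}|\leq C\,\omega(s,u)^{3/p}
\]
whenever $0\le s\le t\le u\le T$ and $u-s\le\delta$. Because $3/p>1$, a standard sewing argument (just as in the proof of Theorem~\ref{the.2.7}/Theorem~\ref{the.2.9}) then produces a unique continuous $z:[0,T]\to V$ with $z_{0}=0$ and $z_{s,t}\underset{^{3}}{\approx}\tilde{z}_{s,t}$; uniqueness will follow from a routine telescoping/sewing estimate, together with the observation that any two candidates differ by a path whose increments are $O(\omega^{3/p})$ and which vanishes at $0$.

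The main obstacle is the almost-additivity computation, which is where the compatibility tensor was built to play its role. I would first expand
\[
\tilde{z}_{t,u}=\alpha_{t}\bigl(\psi(y_{t},y_{u})\bigr)+\alpha_{t}S_{y_{t}}^{\mathcal{G}}\bigl(y_{t}^{\dagger\otimes 2}\mathbb{X}_{t,u}\bigr)+\alpha_{t}^{\dagger}(I\otimes y_{t}^{\dagger})\mathbb{X}_{t,u},
\]
and systematically replace every object based at $y_{t}$ by its ``parallel-transport-to-$y_{s}$'' version plus a lower-order correction, using: Corollary~\ref{cor.2.29} to swap $U(y_{t},y_{s})^{-1}$ and $U(y_{s},y_{t})$; the one-form conditions \ref{Ite.22}--\ref{Ite.23} of Definition~\ref{def.3.1} to rewrite $\alpha_{t}$ and $\alpha_{t}^{\dagger}$ in terms of $\alpha_{s},\alpha_{s}^{\dagger}$; and the controlled-path conditions (\ref{equ.2.22})--(\ref{equ.2.23}) to rewrite $y_{t}^{\dagger}$ in terms of $y_{s}^{\dagger}$. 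At this point the $\psi(y_{t},y_{u})$ term is handled via Corollary~\ref{cor.3.16} (Eq.~(\ref{equ.3.9})), which after transport reads
\[
U(y_{s},y_{t})\,\psi(y_{t},y_{u})\underset{^{2}}{\approx}\psi(y_{s},y_{u})-\psi(y_{s},y_{t})-S_{y_{s}}^{\mathcal{G}}\bigl(\psi(y_{s},y_{t})\otimes\psi(y_{t},y_{u})\bigr)
\]
(after using the smoothness of $\psi$ to push the differential relation across $s,t,u$), which is exactly the identity that feeds the $\alpha_{s}S_{y_{s}}^{\mathcal{G}}(\cdot)$ term. Finally, Chen's identity (\ref{equ.2.2}) decomposes $\mathbb{X}_{s,u}$ into $\mathbb{X}_{s,t}+\mathbb{X}_{t,u}+x_{s,t}\otimes x_{t,u}$, and the cross term $x_{s,t}\otimes x_{t,u}$ combines with the leading-order expansion to absorb the $\alpha_{s}^{\dagger}(x_{s,t}\otimes y_{s}^{\dagger}x_{t,u})$-style contributions; bookkeeping the error orders (each swap costs either a factor $\omega^{1/p}$ or $\omega^{2/p}$, paired against factors that are themselves $\omega^{1/p}$ or $\omega^{2/p}$) then yields the desired $\omega^{3/p}$ bound. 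This is essentially the rough-path upgrade of the smooth calculation carried out in Proposition~\ref{pro.3.19}, and it is the step I expect to require the most careful accounting.

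Once $z$ exists and satisfies $z_{s,t}\underset{^{3}}{\approx}\tilde{z}_{s,t}$, the remaining work is to define $z_{s}^{\dagger}:=\alpha_{s}\circ y_{s}^{\dagger}$ and verify $(z,z^{\dagger})\in CRP_{\mathbf{X}}(V)$ using Definition~\ref{def.2.5}. For the first condition,
\[
z_{s,t}-z_{s}^{\dagger}x_{s,t}\underset{^{2}}{\approx}\tilde{z}_{s,t}-\alpha_{s}y_{s}^{\dagger}x_{s,t}=\alpha_{s}\bigl(\psi(y_{s},y_{t})-y_{s}^{\dagger}x_{s,t}\bigr)+\alpha_{s}S_{y_{s}}^{\mathcal{G}}(y_{s}^{\dagger\otimes 2}\mathbb{X}_{s,t})+\alpha_{s}^{\dagger}(I\otimes y_{s}^{\dagger})\mathbb{X}_{s,t},
\]
each summand being $O(\omega^{2/p})$ by (\ref{equ.2.22}) and the bound $|\mathbb{X}_{s,t}|\le\omega^{2/p}$. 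For the second condition, $z_{t}^{\dagger}-z_{s}^{\dagger}=\alpha_{t}y_{t}^{\dagger}-\alpha_{s}y_{s}^{\dagger}$ is controlled by inserting $U(y_{t},y_{s})U(y_{s},y_{t})=_{2}I$ (Corollary~\ref{cor.2.29}) and splitting into $(\alpha_{t}\circ U(y_{t},y_{s})-\alpha_{s})y_{s}^{\dagger}+\alpha_{t}(y_{t}^{\dagger}-U(y_{t},y_{s})^{-1}y_{s}^{\dagger})$, each of size $\omega^{1/p}$ by Definition~\ref{def.3.1}\ref{Ite.22} and (\ref{equ.2.23}) respectively. The uniqueness clause (both of $z$ and of the Gubinelli derivative $z^{\dagger}$, the latter being forced since $z_{s,t}\underset{^{2}}{\approx}z_{s}^{\dagger}x_{s,t}$ pins down $z_{s}^{\dagger}$ in the usual way) is then immediate from the sewing uniqueness. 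Patching the local ($|t-s|\le\delta$) estimates to the global interval via Lemma~\ref{lem.2.6} completes the construction.
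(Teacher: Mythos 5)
Your overall plan --- establish almost additivity of $\tilde z_{s,t}$, apply a sewing argument to obtain the unique additive functional $z$, and then verify the controlled-rough-path conditions for $(z,\alpha\circ y^{\dag})$ --- is exactly the route the paper takes: the paper states the sewing step in Theorem~\ref{the.3.21} and defers the almost additivity to Theorem~\ref{the.3.26}, whose proof uses the same ingredients you cite (Corollary~\ref{cor.2.29}, Corollary~\ref{cor.3.16}, the defining estimates of $CRP_{y}^{U}(M,V)$ and $CRP_{\mathbf{X}}(M)$, Chen's identity, and an ``almost additivity for logarithms'' fact that the paper isolates as Lemma~\ref{lem.3.28}/Corollary~\ref{cor.3.29}). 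Your verification of the two conditions of Definition~\ref{def.2.5} at the end is correct and is in fact slightly more explicit than what the paper records.

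The one concrete slip is in the identity you quote for the $\psi(y_{t},y_{u})$ term, and it occurs in precisely the step you flagged as requiring the most careful accounting. You wrote
\[
U(y_{s},y_{t})\psi(y_{t},y_{u})\underset{^{2}}{\approx}\psi(y_{s},y_{u})-\psi(y_{s},y_{t})-S_{y_{s}}^{\mathcal{G}}\bigl(\psi(y_{s},y_{t})\otimes\psi(y_{t},y_{u})\bigr),
\]
but the compatibility-tensor correction must enter with a \emph{plus} sign, and its second slot must carry $\psi(y_{s},y_{u})-\psi(y_{s},y_{t})$ (an element of $T_{y_{s}}M$, so the expression typechecks), i.e.\
\[
U(y_{s},y_{t})\psi(y_{t},y_{u})\underset{^{3}}{\approx}\bigl[\psi(y_{s},y_{u})-\psi(y_{s},y_{t})\bigr]+S_{y_{s}}^{\mathcal{G}}\Bigl(\psi(y_{s},y_{t})\otimes\bigl[\psi(y_{s},y_{u})-\psi(y_{s},y_{t})\bigr]\Bigr).
\]
You can check this in a chart: with $\bar\psi(x,y)=(y-x)+\tfrac{1}{2}Q_{x}(y-x)^{\otimes2}+O\bigl(\left|y-x\right|^{3}\bigr)$, $\bar U(x,y)=I+R_{x}(y-x)+O\bigl(\left|y-x\right|^{2}\bigr)$, $a:=y_{t}-y_{s}$, $b:=y_{u}-y_{t}$, the left side is $b+R_{y_{s}}(a)b+\tfrac{1}{2}Q_{y_{s}}b^{\otimes2}$ while $\bar\psi(y_{s},y_{u})-\bar\psi(y_{s},y_{t})=b+Q_{y_{s}}(a\otimes b)+\tfrac{1}{2}Q_{y_{s}}b^{\otimes2}$, and since $S_{y_{s}}^{\mathcal{G}}(a\otimes b)=R_{y_{s}}(a)b-Q_{y_{s}}(a\otimes b)$ the difference is exactly $+S_{y_{s}}^{\mathcal{G}}(a\otimes b)$. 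The sign is not cosmetic: with the plus sign the $\alpha_{s}S_{y_{s}}^{\mathcal{G}}$ and $\alpha_{s}^{\dag}$ contributions from $\tilde z_{s,t}$, $\tilde z_{t,u}$, and the $\psi$-expansion each assemble under Chen's identity into $\mathbb{X}_{s,t}+\mathbb{X}_{t,u}+x_{s,t}\otimes x_{t,u}=\mathbb{X}_{s,u}$, whereas with your minus sign a residual term $-2\,\alpha_{s}S_{y_{s}}^{\mathcal{G}}\bigl(y_{s}^{\dag}x_{s,t}\otimes y_{s}^{\dag}x_{t,u}\bigr)$ survives, which is of order $\omega^{2/p}$ and destroys almost additivity. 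Fix the sign and the argument closes.
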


\begin{proof}
By Theorem \ref{the.3.26} below, $\tilde{z}_{s,t}:=\left\langle
\boldsymbol{\alpha}_{s},\mathbf{y}_{s,t}^{\mathcal{G}}\right\rangle $ is an
almost additive functional and therefore by Lyons \cite[Theorem 3.3.1]%
{lyons-98} there exists a unique additive functional $z_{s,t}$ such that
$z_{s,t}\underset{^{3}}{\approx}\tilde{z}_{s,t}.$ Moreover,
\[
z_{s,t}\underset{^{3}}{\approx}\tilde{z}_{s,t}\underset{^{2}}{\approx}%
\alpha_{s}\left(  \psi\left(  y_{s},y_{t}\right)  \right)  \underset{^{2}%
}{\approx}\alpha_{s}\left(  y_{s}^{\dag}x_{s,t}\right)
\]
which shows that $\mathbf{z}_{s}:=\left(  z_{s},\alpha_{s}\circ y_{s}^{\dag
}\right)  $ is indeed a controlled rough path with values in $V.$
\end{proof}

\begin{example}
\label{exa.3.22}In the case that $U=U^{\psi}$ so that
\[
\alpha_{t}\circ\left(  \psi_{y_{t}}\right)  _{\ast y_{s}}-\alpha_{s}%
-\alpha_{s}^{\dag}\left(  x_{s,t}\otimes\left(  \cdot\right)  \right)
\underset{^{2}}{\approx}0
\]
we have that $\mathbf{y}_{s,t}^{\mathcal{G}}:=\left(  \psi\left(  y_{s}%
,y_{t}\right)  ,\left(  I\otimes y_{s}^{\dag}\right)  \mathbb{X}_{s,t}\right)
$ and so%
\[
\int_{s}^{t}\left\langle \boldsymbol{\alpha},d\mathbf{y}^{\mathcal{G}^{\psi}%
}\right\rangle \underset{^{3}}{\approx}\alpha_{s}\left(  \psi\left(
y_{s},y_{t}\right)  \right)  +\alpha_{s}^{\dag}\left(  I\otimes y_{s}^{\dag
}\right)  \mathbb{X}_{s,t}.
\]

\end{example}

\begin{example}
If $\mathcal{G}^{\nabla}=\left(  \left(  \exp^{\nabla}\right)  ^{-1}%
,U^{\nabla}\right)  $, then by Lemma \ref{lem.3.13}, we have that%
\[
\int_{s}^{t}\left\langle \boldsymbol{\alpha},d\mathbf{y}^{\mathcal{G}^{\nabla
}}\right\rangle \underset{^{3}}{\approx}\alpha_{s}\left(  \exp_{y_{s}}%
^{-1}\left(  y_{t}\right)  \right)  +\alpha_{s}^{\dag}\left(  I\otimes
y_{s}^{\dag}\right)  \mathbb{X}_{s,t}+\alpha_{s}\left(  \frac{1}{2}T_{y_{s}%
}^{\nabla}\circ y_{s}^{\dag\otimes2}\mathbb{X}_{s,t}\right)  .
\]

\end{example}

If $\mathbf{f},\boldsymbol{\alpha}$, and $\mathbf{f}\boldsymbol{\alpha}$ $\in
CRP_{y}^{U}\left(  M,\tilde{V}\right)  $ are as in Proposition \ref{pro.3.6},
then the following expected associativity property holds.

\begin{theorem}
[Associativity Theorem I]\label{the.3.24}Let us continue the notation in
Theorem \ref{the.3.21}. If $\mathbf{f}$ and $\mathbf{f}\boldsymbol{\alpha
}:=\left(  f_{s}\alpha_{s},f_{s}^{\dag}\left(  I\otimes\alpha_{s}\right)
+f_{s}\alpha_{s}^{\dag}\right)  $ are as in Proposition \ref{pro.3.6} and
$\mathbf{z}=\left(  z,z^{\dag}\right)  =\int\left\langle \boldsymbol{\alpha
},d\mathbf{y}^{\mathcal{G}}\right\rangle ,$ then%
\[
\int\left\langle \mathbf{f},d\mathbf{z}\right\rangle =\int\left\langle
\mathbf{f}\boldsymbol{\alpha},d\mathbf{y}^{\mathcal{G}}\right\rangle ,
\]
or in other words,
\[
\int\left\langle \mathbf{f},d\int\left\langle \boldsymbol{\alpha}%
,d\mathbf{y}^{\mathcal{G}}\right\rangle \right\rangle =\int\left\langle
\mathbf{f}\boldsymbol{\alpha},d\mathbf{y}^{\mathcal{G}}\right\rangle .
\]

\end{theorem}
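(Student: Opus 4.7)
The plan is to invoke the uniqueness clause of Theorem \ref{the.3.21}. Both integrals in question are elements of $CRP_{\mathbf{X}}(\tilde V)$ that vanish at $0$, so it suffices to verify that (a) their Gubinelli derivatives coincide, and (b) each one admits the same local $\underset{^{3}}{\approx}$ expansion as the almost additive functional $\tilde z_{s,t}=\left\langle (\mathbf{f}\boldsymbol{\alpha})_s,\mathbf{y}^{\mathcal{G}}_{s,t}\right\rangle$ attached to $\mathbf{f}\boldsymbol{\alpha}\in CRP_{y}^{U}(M,\tilde V)$ by Proposition \ref{pro.3.6}.

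First I would verify the Gubinelli derivatives match. Writing $\mathbf{z}=(z,z^{\dag})=\int\langle\boldsymbol{\alpha},d\mathbf{y}^{\mathcal{G}}\rangle$, Theorem \ref{the.3.21} gives $z_s^{\dag}=\alpha_s\circ y_s^{\dag}$, and then Theorem \ref{the.2.9} applied to $\mathbf{f}$ and $\mathbf{z}$ yields that the derivative process of $\int\langle\mathbf{f},d\mathbf{z}\rangle$ is $f_s\circ z_s^{\dag}=f_s\alpha_s\circ y_s^{\dag}=(f_s\alpha_s)\circ y_s^{\dag}$, which is exactly the derivative process of $\int\langle\mathbf{f}\boldsymbol{\alpha},d\mathbf{y}^{\mathcal{G}}\rangle$.

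Next I would carry out the main computation. By Theorem \ref{the.2.9},
\[
\int_s^t\langle\mathbf{f},d\mathbf{z}\rangle\underset{^{3}}{\approx}f_s\,z_{s,t}+f_s^{\dag}(I\otimes z_s^{\dag})\mathbb{X}_{s,t}=f_s\,z_{s,t}+f_s^{\dag}(I\otimes\alpha_s)(I\otimes y_s^{\dag})\mathbb{X}_{s,t},
\]
using the elementary identity $(I\otimes\alpha_s y_s^{\dag})=(I\otimes\alpha_s)(I\otimes y_s^{\dag})$. Substituting the expansion
\[
z_{s,t}\underset{^{3}}{\approx}\alpha_s\!\left[\psi(y_s,y_t)+S^{\mathcal{G}}_{y_s}\bigl(y_s^{\dag\otimes 2}\mathbb{X}_{s,t}\bigr)\right]+\alpha_s^{\dag}(I\otimes y_s^{\dag})\mathbb{X}_{s,t}
\]
from Theorem \ref{the.3.21} and using boundedness of $f_s$ on $[0,T]$ to preserve the $\underset{^{3}}{\approx}$ relation upon left multiplication, I obtain
\[
\int_s^t\langle\mathbf{f},d\mathbf{z}\rangle\underset{^{3}}{\approx}(f_s\alpha_s)\!\left[\psi(y_s,y_t)+S^{\mathcal{G}}_{y_s}\bigl(y_s^{\dag\otimes 2}\mathbb{X}_{s,t}\bigr)\right]+\bigl[f_s\alpha_s^{\dag}+f_s^{\dag}(I\otimes\alpha_s)\bigr](I\otimes y_s^{\dag})\mathbb{X}_{s,t}.
\]
The right-hand side is precisely $\left\langle(\mathbf{f}\boldsymbol{\alpha})_s,\mathbf{y}^{\mathcal{G}}_{s,t}\right\rangle$ under the formula in Proposition \ref{pro.3.6} and Definition \ref{def.3.20}, so $\int\langle\mathbf{f},d\mathbf{z}\rangle$ satisfies the defining $\underset{^{3}}{\approx}$ of Theorem \ref{the.3.21} for the data $(\mathbf{f}\boldsymbol{\alpha},\mathbf{y})$.

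Having verified both the derivative process and the local approximation, the uniqueness in Theorem \ref{the.3.21} forces $\int\langle\mathbf{f},d\mathbf{z}\rangle=\int\langle\mathbf{f}\boldsymbol{\alpha},d\mathbf{y}^{\mathcal{G}}\rangle$. There is essentially no serious obstacle here, only a careful identification of tensor-product/composition rearrangements (for instance $f_s^{\dag}(I\otimes\alpha_s y_s^{\dag})=f_s^{\dag}(I\otimes\alpha_s)(I\otimes y_s^{\dag})$); the bulk of the work is done by Theorems \ref{the.2.9} and \ref{the.3.21}, together with Proposition \ref{pro.3.6} which isolates the algebraic definition of $\mathbf{f}\boldsymbol{\alpha}$.
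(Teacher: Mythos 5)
Your proposal is correct and follows essentially the same route as the paper: expand both integrals to order $\underset{^{3}}{\approx}$, observe that the algebraic rearrangement (using $f_s^{\dag}(I\otimes\alpha_s y_s^{\dag})=f_s^{\dag}(I\otimes\alpha_s)(I\otimes y_s^{\dag})$) identifies the two local germs, check the Gubinelli derivatives agree, and conclude by uniqueness of the additive functional. The only cosmetic difference is that you run the chain of approximations from $\int\langle\mathbf{f},d\mathbf{z}\rangle$ toward $\langle(\mathbf{f}\boldsymbol{\alpha})_s,\mathbf{y}^{\mathcal{G}}_{s,t}\rangle$, whereas the paper starts from $\int\langle\mathbf{f}\boldsymbol{\alpha},d\mathbf{y}^{\mathcal{G}}\rangle$, but the computation is identical.
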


\begin{proof}
We have the approximations
\begin{align*}
\left[  \int\left\langle \mathbf{f}\boldsymbol{\alpha},d\mathbf{y}%
^{\mathcal{G}}\right\rangle \right]  _{s,t}^{1}  &  \underset{^{3}}{\approx
}f_{s}\alpha_{s}\left(  \psi\left(  y_{s,}y_{t}\right)  +\mathcal{S}_{y_{s}%
}^{\mathcal{G}}\left(  y_{s}^{\dag\otimes2}\mathbb{X}_{s,t}\right)  \right)
+\left[  \left(  f_{s}^{\dag}\left(  I\otimes\alpha_{s}\right)  +f_{s}%
\alpha_{s}^{\dag}\right)  \right]  \left(  I\otimes y_{s}^{\dag}\right)
\mathbb{X}_{s,t}\\
&  =f_{s}\left(  \alpha_{s}\left(  \psi\left(  y_{s,}y_{t}\right)
+\mathcal{S}_{y_{s}}^{\mathcal{G}}\left(  y_{s}^{\dag\otimes2}\mathbb{X}%
_{s,t}\right)  \right)  +\alpha_{s}^{\dag}\left(  I\otimes y_{s}^{\dag
}\right)  \mathbb{X}_{s,t}\right)  +f_{s}^{\dag}\left(  I\otimes\alpha
_{s}y_{s}^{\dag}\right)  \mathbb{X}_{s,t}\\
&  \underset{^{3}}{\approx}f_{s}\left(  z_{s,t}\right)  +f_{s}^{\dag}\left(
I\otimes z_{s}^{\dag}\right)  \mathbb{X}_{s,t}\\
&  \underset{^{3}}{\approx}\left[  \int\left\langle \mathbf{f},d\mathbf{z}%
\right\rangle \right]  _{s,t}^{1}.
\end{align*}
As the first and last terms of this equation are additive functionals, they
must be equal.

Secondly%
\[
\left[  \int\left\langle \mathbf{f}\boldsymbol{\alpha},d\mathbf{y}%
^{\mathcal{G}}\right\rangle \right]  _{s}^{\dag}=f_{s}\alpha_{s}\left(
y_{s}^{\dag}\right)  =f_{s}z_{s}^{\dag}=\left[  \int\left\langle
\mathbf{f},d\mathbf{z}\right\rangle \right]  _{s}^{\dag}.
\]
Thus, the two controlled rough paths are equal.
\end{proof}

\begin{remark}
\label{rem.3.25}The $\left(  \mathcal{G},\mathbf{y}\right)  $-- integrator
$\mathbf{y}_{s,t}^{\mathcal{G}}$ is helpful in easing notation so that the
integral is simply written $\int_{s}^{t}\left\langle \boldsymbol{\alpha
},d\mathbf{y}^{\mathcal{G}}\right\rangle .$ A more honest notation for this
integral would be
\[
\int_{s}^{t}\left\langle \left(  \alpha,\alpha^{\dag}\right)  ,d\left(
y^{\psi},\mathbb{X}\right)  \right\rangle _{\mathcal{S}_{y^{\dag}%
}^{\mathcal{G}}}%
\]
where $\mathcal{S}_{y^{\dag}}^{\mathcal{G}}\left(  s\right)  $ is the block
matrix defined by
\[
\mathcal{S}_{y^{\dag}}^{\mathcal{G}}\left(  s\right)  :=%
\begin{pmatrix}
I & S_{y_{s}}^{\mathcal{G}}\circ\left(  y_{s}^{\dag}\right)  ^{\otimes2}\\
0 & I\otimes y_{s}^{\dag}%
\end{pmatrix}
\]
and $\left\langle \cdot,\cdot\right\rangle _{\mathcal{S}_{y^{\dag}%
}^{\mathcal{G}}}$ is the \textquotedblleft inner product\textquotedblright%
\ given by the matrix $\mathcal{S}_{y^{\dag}}^{\mathcal{G}}$. When $s$ is
close to $t$, we have%
\begin{align*}
\int_{s}^{t}\left\langle \left(  \alpha,\alpha^{\dag}\right)  ,d\left(
y^{\psi},\mathbb{X}\right)  \right\rangle _{\mathcal{S}_{y^{\dag}%
}^{\mathcal{G}}}  &  \approx\left(  \alpha_{s},\alpha_{s}^{\dag}\right)
\begin{pmatrix}
I & S_{y_{s}}^{\mathcal{G}}\circ\left(  y_{s}^{\dag}\right)  ^{\otimes2}\\
0 & I\otimes y_{s}^{\dag}%
\end{pmatrix}%
\begin{pmatrix}
y_{s,t}^{\psi}\\
\mathbb{X}_{s,t}%
\end{pmatrix}
\\
&  =\alpha_{s}\left(  \psi\left(  y_{s},y_{t}\right)  +S_{y_{s}}^{\mathcal{G}%
}\left(  y_{s}^{\dag\otimes2}\mathbb{X}_{s,t}\right)  \right)  +\alpha
_{s}^{\dag}\left(  I\otimes y_{s}^{\dag}\right)  \mathbb{X}_{s,t}.
\end{align*}

\end{remark}

\subsection{Almost Additivity Result\label{sub.3.4}}

The following theorem was the key ingredient in the proof of Theorem
\ref{the.3.21} on the existence of rough path integration in the manifold setting.

\begin{theorem}
[Almost Additivity]\label{the.3.26}If $\mathcal{G}:=\left(  \psi,U\right)  $
is a gauge and $\boldsymbol{\alpha}\in CRP_{y}^{U}\left(  M,V\right)  ,$ then
$\tilde{z}_{s,t}\in V$ defined as in Definition \ref{def.3.20} is an almost
additive functional.
\end{theorem}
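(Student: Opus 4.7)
The plan is to show directly that $E_{s,t,u}:=\tilde z_{s,u}-\tilde z_{s,t}-\tilde z_{t,u}$ satisfies $|E_{s,t,u}|\leq C\omega(s,u)^{3/p}$ for $s\leq t\leq u$ in a short enough subinterval. Using the patching strategy of Remark \ref{rem.2.52} (with the interlaced-cover Lemma \ref{lem.2.51}) it suffices to prove this locally, so I would first choose a chart $\phi$ around an arbitrary point of $y([0,T])$ and transfer $\psi$, $U$, $S^{\mathcal{G}}$, $\alpha$, $\alpha^\dag$, $y$, $y^\dag$ to $\mathbb R^d$; this makes the Taylor estimates of Theorem \ref{the.2.24} and the local equivalence of Riemannian metrics available, and by Theorem \ref{the.2.44} the controlled rough path estimates survive the transfer.

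Next I would plug Chen's identity $\mathbb X_{s,u}=\mathbb X_{s,t}+\mathbb X_{t,u}+x_{s,t}\otimes x_{t,u}$ into $\tilde z_{s,u}$ and split $E_{s,t,u}$ into three pieces:
\begin{align*}
\mathrm{(A)}\quad &\alpha_s\bigl[\psi(y_s,y_u)-\psi(y_s,y_t)\bigr]-\alpha_t\psi(y_t,y_u),\\
\mathrm{(B)}\quad &\alpha_s S_{y_s}^{\mathcal G}\bigl(y_s^{\dag\otimes2}[\mathbb X_{t,u}+x_{s,t}\otimes x_{t,u}]\bigr)-\alpha_t S_{y_t}^{\mathcal G}\bigl(y_t^{\dag\otimes2}\mathbb X_{t,u}\bigr),\\
\mathrm{(C)}\quad &\alpha_s^\dag(I\otimes y_s^\dag)[\mathbb X_{t,u}+x_{s,t}\otimes x_{t,u}]-\alpha_t^\dag(I\otimes y_t^\dag)\mathbb X_{t,u}.
\end{align*}
For piece (C), I apply item 4 of Definition \ref{def.3.1} and the rough-path estimate $U(y_t,y_s)y_s^\dag\approx_1 y_t^\dag$ (which follows from item 2 of Definition \ref{def.2.35} together with Corollary \ref{cor.2.29}) to swap $(\alpha_t^\dag,y_t^\dag)$ for $(\alpha_s^\dag,y_s^\dag)$ up to $\omega(s,t)^{1/p}$; since $\mathbb X_{t,u}$ is $\omega(t,u)^{2/p}$, the error is $\omega(s,u)^{3/p}$, and piece (C) collapses to $\alpha_s^\dag(I\otimes y_s^\dag)(x_{s,t}\otimes x_{t,u})$ modulo the required order. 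An analogous argument handles piece (B): $\alpha_t$ is replaced by $\alpha_s\circ U(y_s,y_t)$ (item 3), $S_{y_t}^{\mathcal G}\circ y_t^{\dag\otimes 2}$ is replaced by $S_{y_s}^{\mathcal G}\circ y_s^{\dag\otimes 2}$ using smoothness of $S^{\mathcal G}$ and $U(y_t,y_s)y_t^\dag\approx_1 y_s^\dag$; the $\mathbb X_{t,u}$ contributions cancel and the only surviving term is $\alpha_s\bigl[S_{y_s}^{\mathcal G}(y_s^{\dag\otimes 2}(x_{s,t}\otimes x_{t,u}))\bigr]$, which I expect to absorb the non-cancelling remainder of (A).

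Piece (A) is the heart of the argument. Here I would establish the Taylor identity (in the chart, via Theorem \ref{the.2.24})
\[
\psi(y_s,y_u)-\psi(y_s,y_t)=\psi(y_s,\cdot)_{*y_t}\bigl[\psi(y_t,y_u)\bigr]+O\bigl(|\psi(y_t,y_u)|^2\bigr),
\]
and then invoke Corollary \ref{cor.3.16} to rewrite $\psi(y_s,\cdot)_{*y_t}=U(y_s,y_t)\bigl[I+S_{y_t}^{\mathcal G}(\psi(y_t,y_s)\otimes\cdot)\bigr]$ up to second order. Using item 3 of Definition \ref{def.3.1} to replace $\alpha_s\circ U(y_s,y_t)$ with $\alpha_t-\alpha_s^\dag(x_{s,t}\otimes\cdot)$ (modulo $\omega(s,t)^{2/p}$), the $\alpha_t\psi(y_t,y_u)$ contributions in (A) cancel exactly. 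What remains on the (A) side is $-\alpha_s^\dag(x_{s,t}\otimes y_s^\dag x_{t,u})$ together with a term $-\alpha_s S_{y_s}^{\mathcal G}(y_s^{\dag}x_{s,t}\otimes y_s^\dag x_{t,u})$ produced by the Corollary \ref{cor.3.16} expansion. The former kills the extra contribution from (C), and the latter is exactly $\alpha_s S_{y_s}^{\mathcal G}(y_s^{\dag\otimes 2}(x_{s,t}\otimes x_{t,u}))$ (up to sign), pairing with the residual term from (B).

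The main obstacle is the careful bookkeeping of the second-order Taylor remainder of the logarithm difference and its identification, via the compatibility tensor and Corollary \ref{cor.3.16}, with the $S^{\mathcal G}$-terms appearing in (B). This is the rough-path shadow of the smooth computation in Proposition \ref{pro.3.19}; one must keep track of exactly which $\omega$-power each remainder carries (noting that $\psi(y_t,y_u),\,x_{t,u}$ are $\omega(t,u)^{1/p}$, $\mathbb X_{t,u}$ is $\omega(t,u)^{2/p}$, and $y_s^\dag x_{s,t}-\psi(y_s,y_t)$ is $\omega(s,t)^{2/p}$) and then use superadditivity of $\omega$ to bundle every error into $\omega(s,u)^{3/p}$, which is summable since $p<3$.
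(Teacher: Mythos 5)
Your overall plan---reduce to a chart via the patching machinery of Remark~\ref{rem.2.52}, apply Chen's identity, and replace time-$t$ quantities by time-$s$ quantities up to controlled errors---is the same route the paper takes; you have simply organized the computation as a direct estimate of the discrepancy $E_{s,t,u}$ instead of converting $\tilde z_{t,u}$ into time-$s$ data and then adding $\tilde z_{s,t}$. Your pieces (B) and (C) correspond to Eqs.~(\ref{equ.3.21}) and (\ref{equ.3.20}) of the paper (and what you call ``smoothness of $S^{\mathcal G}$'' in (B) is precisely Lemma~\ref{lem.3.27}, the approximate intertwining of $S^{\mathcal G}$ with $U$), while piece (A) corresponds to Eq.~(\ref{equ.3.19}).

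The stated justification of piece (A), however, has a genuine gap. You propose the Taylor identity
\[
\psi(y_s,y_u)-\psi(y_s,y_t)=\psi(y_s,\cdot)_{*y_t}\bigl[\psi(y_t,y_u)\bigr]+O\bigl(|\psi(y_t,y_u)|^2\bigr).
\]
As written, the error is $O\bigl(\omega(t,u)^{2/p}\bigr)\le O\bigl(\omega(s,u)^{2/p}\bigr)$, and once it is hit by the bounded but non-vanishing $\alpha_s$, piece (A) carries an uncontrolled second-order error, so the argument does not close. The fact that this error is actually third order, i.e.\
\[
\psi(y_s,y_u)-\psi(y_s,y_t)-\psi(y_s,\cdot)_{*y_t}\bigl[\psi(y_t,y_u)\bigr]=O\bigl(\omega(s,u)^{3/p}\bigr),
\]
is precisely the content of Lemma~\ref{lem.3.28} and Corollary~\ref{cor.3.29}. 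It requires expanding to third order and noticing that the two quadratic (Hessian-type) terms cancel, because $\psi''_{y_s}(y_t)$ and $\psi''_{y_t}(y_t)$ agree to first order in $|y_s-y_t|$ while $\psi(y_s,\cdot)_{*y_t}$ is the identity to first order. This ``almost-cocycle'' property of logarithms is the technical heart of the theorem, and a naive first-order Taylor expansion cannot produce it; your write-up must either invoke Corollary~\ref{cor.3.29} directly or carry the expansion to third order and exhibit the cancellation.

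Two smaller issues: the substitution ``replace $\alpha_s\circ U(y_s,y_t)$ with $\alpha_t-\alpha_s^\dag(x_{s,t}\otimes\cdot)$'' does not type-check, since $\alpha_s^\dag$ acts on $W\otimes T_{y_s}M$ while the argument here lives in $T_{y_t}M$; the correct form is $\alpha_s\circ U(y_s,y_t)\approx_2\alpha_t-\alpha_s^\dag\bigl(x_{s,t}\otimes U(y_s,y_t)(\cdot)\bigr)$, where the extra $U(y_s,y_t)$ factor contributes only at third order but should be tracked. In piece (B) you also quote ``$U(y_t,y_s)y_t^\dag\approx_1 y_s^\dag$''; the orientation in Definition~\ref{def.2.35} is $U(y_s,y_t)y_t^\dag\approx_1 y_s^\dag$.
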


The proof of Theorem \ref{the.3.26} will be given after Corollary
\ref{cor.3.29} which states that logarithms are \textquotedblleft almost
additive.\textquotedblright\ \ We first need a couple of lemmas. Recall from
Definition \ref{def.2.15} that $\psi_{x}=\psi\left(  x,\cdot\right)  $.

\begin{lemma}
\label{lem.3.27}If $U,\tilde{U}$ are two parallelisms on $M$, then%
\[
S_{y_{t}}^{\tilde{U},U}\circ U\left(  y_{t},y_{s}\right)  ^{\otimes
2}\underset{^{1}}{\approx}U\left(  y_{t},y_{s}\right)  \circ S_{y_{s}}%
^{\tilde{U},U}.
\]

\end{lemma}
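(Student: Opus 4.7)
The plan is to prove the stronger, pointwise-on-$M$ statement
\[
F(m,n) := S^{\tilde U,U}_{m}\circ U(m,n)^{\otimes 2} \;=_{1}\; U(m,n)\circ S^{\tilde U,U}_{n} =: G(m,n)
\]
in the sense of Definition \ref{def.2.27}, and then specialize to $m=y_t$, $n=y_s$. The desired $\underset{^{1}}{\approx}$ statement along the path will follow by combining $F=_{1}G$ with the standard bound $d_{g}(y_s,y_t)\leq C\omega(s,t)^{1/p}$, which is a consequence of $\mathbf{y}\in CRP_{\mathbf{X}}(M)$ (apply (\ref{equ.2.22}), boundedness of $y^{\dag}$ on $[0,T]$, and the local equivalence of $d_{g}$ with any logarithm provided by Corollary \ref{cor.2.28}).

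To establish $F=_{1}G$, I follow the localization strategy of Corollary \ref{cor.2.28}: pull everything back through a chart so as to work on a convex open set in $\mathbb{R}^{d}$. Theorem \ref{the.2.24} then gives $\bar U(m,n)=I+B(m,n)(n-m)$, so in particular $\bar U(m,n)=I+O(|n-m|)$, while the identity (\ref{equ.3.7}) displays $S^{\tilde U,U}_{m}=(D_{2}U)(m,m)-(D_{2}\tilde U)(m,m)$ as a smooth function of the basepoint $m$. Smoothness on a precompact neighborhood then yields $S^{\tilde U,U}_{n}-S^{\tilde U,U}_{m}=O(|n-m|)$.

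The core algebra is short. Writing $E:=\bar U(m,n)-I=O(|n-m|)$, we have
\begin{align*}
F(m,n)-G(m,n)
&= S^{\tilde U,U}_{m}\circ(I+E)^{\otimes 2}-(I+E)\circ S^{\tilde U,U}_{n}\\
&= \bigl(S^{\tilde U,U}_{m}-S^{\tilde U,U}_{n}\bigr) + S^{\tilde U,U}_{m}\circ(E\otimes I+I\otimes E+E\otimes E) - E\circ S^{\tilde U,U}_{n},
\end{align*}
and each summand on the right is $O(|n-m|)$ by the previous observations, establishing $F=_{1}G$.

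Finally, to pass from the local-in-chart estimate to the global estimate along $y\colon[0,T]\to M$, I invoke the patching device of Remark \ref{rem.2.52}. The main obstacle is purely bookkeeping, namely ensuring that the Euclidean operator-norm bounds survive the chart identifications between the intrinsic expressions $S^{\tilde U,U}_{y_t}\circ U(y_t,y_s)^{\otimes 2}$, $U(y_t,y_s)\circ S^{\tilde U,U}_{y_s}$ and their chart representatives; this is exactly what Corollary \ref{cor.2.28} is designed to handle, so no real new content beyond the short algebraic step above is needed.
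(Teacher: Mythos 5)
Your proposal is correct and follows essentially the same route as the paper's proof: localize to $\mathbb{R}^{d}$ via charts (the usual patching), then observe that in Euclidean coordinates $\bar U(m,n)=I+O(|n-m|)$ and $S_{n}^{\tilde U,U}=S_{m}^{\tilde U,U}+O(|n-m|)$ by smoothness, from which the identity follows by elementary algebra. The only difference is that the paper calls the final step ``trivial'' while you write out the expansion $F-G=(S_m-S_n)+S_m\circ(E\otimes I+I\otimes E+E\otimes E)-E\circ S_n$ explicitly, which is a perfectly sound way to fill in that step.
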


\begin{proof}
By the usual patching arguments it suffices to prove this lemma for
$M=\mathbb{R}^{d}.$ In the Euclidean space setting the identity is trivial to
prove since $U\left(  n,m\right)  =_{1}I$ and $S_{n}^{\tilde{U},U}=_{1}%
S_{m}^{\tilde{U},U}.$
\end{proof}

\begin{lemma}
\label{lem.3.28}Let $K$ be a compact, convex set in $\mathbb{R}^{d}.$ If
$\psi$ is a logarithm with domain $\mathcal{D}$ and $K\times K\subseteq
\mathcal{D}$, then there exists a $C_{K}$ such that%
\[
\left\vert \psi_{y}^{\prime}\left(  x\right)  \psi\left(  x,y\right)
+\psi\left(  y,z\right)  -\psi_{y}^{\prime}\left(  x\right)  \psi\left(
x,z\right)  \right\vert \leq C_{K}\max\left\{  \left\vert \psi\left(
x,y\right)  \right\vert ,\left\vert \psi\left(  y,z\right)  \right\vert
,\left\vert \psi\left(  x,z\right)  \right\vert \right\}  ^{3}%
\]
for all $x,y,z$ $\in K$.
\end{lemma}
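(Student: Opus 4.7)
The plan is to pass to the Euclidean representative via Notation~\ref{not.2.23}, Taylor expand using Theorem~\ref{the.2.24}, and exploit a precise second-order cancellation. Writing $\psi(a,b) = [\bar\psi(a,b)]_a$ and $\psi_y'(x) = D_2\bar\psi(y,x)$, the quantity to be estimated is
\[
F := \bar\psi(y,z) - D_2\bar\psi(y,x)\bigl[\bar\psi(x,z) - \bar\psi(x,y)\bigr].
\]
Since $\bar\psi(a,b) = (b-a) + O(|b-a|^2)$ on $K$ (Theorem~\ref{the.2.24}) and $K$ is compact, $|\psi(a,b)|$ and $|b-a|$ are uniformly comparable on $K$, so it suffices to produce $C_K$ with $|F| \le C_K\delta^3$ for all $x,y,z \in K$, where $\delta := \max\{|y-x|,|z-y|,|z-x|\}$.

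First I would symmetrize the outer factor. Apply Corollary~\ref{cor.2.29} to the parallelism $U^\psi$ of Remark~\ref{rem.2.21}, which in coordinates reads $U^\psi(n,m) = D_2\bar\psi(n,m)$; this yields $D_2\bar\psi(y,x)^{-1} = D_2\bar\psi(x,y) + O(|x-y|^2)$. Since $|\bar\psi(x,z) - \bar\psi(x,y)| = O(\delta)$, I may replace $D_2\bar\psi(y,x)$ with $[D_2\bar\psi(x,y)]^{-1}$ in the formula for $F$ at the cost of an $O(\delta^3)$ error, reducing matters to the bound
\[
\bar\psi(y,z) - [D_2\bar\psi(x,y)]^{-1}\bigl[\bar\psi(x,z) - \bar\psi(x,y)\bigr] = O(\delta^3).
\]

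Next comes the key computation. Set $B(m) := D_2^2\bar\psi(m,m)$, which is symmetric. Theorem~\ref{the.2.24} yields $\bar\psi(a,b) = (b-a) + \tfrac12 B(a)(b-a)^{\otimes 2} + O(|b-a|^3)$ and $D_2\bar\psi(x,y) = I + B(x)(y-x) + O(|y-x|^2)$, so by Neumann inversion $[D_2\bar\psi(x,y)]^{-1} = I - B(x)(y-x) + O(|y-x|^2)$. Using the polarization identity $(z-x)^{\otimes 2} - (y-x)^{\otimes 2} = (z-y)^{\otimes 2} + (y-x)\otimes(z-y) + (z-y)\otimes(y-x)$ and symmetry of $B(x)$,
\[
\bar\psi(x,z) - \bar\psi(x,y) = (z-y) + \tfrac12 B(x)(z-y)^{\otimes 2} + B(x)\bigl((y-x),(z-y)\bigr) + O(\delta^3).
\]
Multiplying through by $I - B(x)(y-x)$ and keeping terms of order $\le 2$, the piece $-B(x)(y-x)$ acting on $(z-y)$ produces $-B(x)((y-x),(z-y))$, exactly cancelling the mixed term; all remaining cross-products are $O(\delta^3)$. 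Thus $[D_2\bar\psi(x,y)]^{-1}[\bar\psi(x,z)-\bar\psi(x,y)] = (z-y) + \tfrac12 B(x)(z-y)^{\otimes 2} + O(\delta^3)$. Since $B(y) = B(x) + O(|y-x|)$, I likewise get $\bar\psi(y,z) = (z-y) + \tfrac12 B(x)(z-y)^{\otimes 2} + O(\delta^3)$, and subtraction gives the desired bound.

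The main obstacle is the exact second-order cancellation: the $(y-x)$-linear correction in $[D_2\bar\psi(x,y)]^{-1}$ must precisely remove the mixed term $B(x)((y-x),(z-y))$ coming from differencing $\bar\psi(x,z)-\bar\psi(x,y)$. This relies on both the symmetry of $B(x)$ (symmetry of mixed partials of $\bar\psi$ in the second variable) and the polarization identity above. Once this is observed, uniformity of $C_K$ is immediate from compactness of $K$, which bounds the $C^3$-norm of $\bar\psi$ on $K\times K$, and the earlier $O(\delta^3)$ symmetrization error is absorbed into the final constant.
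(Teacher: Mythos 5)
Your proof is correct and follows essentially the same route as the paper's: pass to the Euclidean representative via Notation~\ref{not.2.23}, Taylor expand via Theorem~\ref{the.2.24}, and show the first-order correction in $\psi_y'(x)$ cancels the cross term coming from $\bar\psi(x,z)-\bar\psi(x,y)$ by symmetry of the second derivative, leaving exactly $\bar\psi(y,z)$ to third order. The one variation is cosmetic: you first trade $D_2\bar\psi(y,x)$ for $[D_2\bar\psi(x,y)]^{-1}$ via Corollary~\ref{cor.2.29} and then expand around $x$, whereas the paper simply Taylor expands $\psi_y'(x)$ directly about the base point $y$ and uses $\psi_y''(y)$ throughout. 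Your detour is harmless (and the replacement error is indeed $O(\delta^3)$ after multiplying by the $O(\delta)$ factor), but it is an extra step you could have skipped by expanding $D_2\bar\psi(y,x)=I+B(y)(x-y)+O(\delta^2)$ directly and then swapping $B(y)$ for $B(x)$ at the end. One small caution if you keep the detour: Corollary~\ref{cor.2.29} as stated gives a local $=_2$ bound, so to get a single constant $C_K$ you should note that on the compact convex $K$ the estimate holds uniformly, which follows from the uniform $C^2$ bound on $\bar\psi$ over $K\times K$ (or from a finite covering argument); you implicitly use this when you say the earlier error is absorbed into the final constant.
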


\begin{proof}
We will use the notation $\left\vert x,y,z\right\vert :=\max\left\{
\left\vert y-x\right\vert ,\left\vert z-y\right\vert ,\left\vert
z-x\right\vert \right\}  $ and write $f\left(  x,y,z\right)  =_{k}g\left(
x,y,z\right)  $ iff $f\left(  x,y,z\right)  =g\left(  x,y,z\right)  +O\left(
\left\vert x,y,z\right\vert ^{k}\right)  .$ Since $\psi$ is zero on the
diagonal and $\psi_{y}^{\prime}\left(  y\right)  =id$ for all $y,$ it follow
from Taylor's theorem (or see Theorem \ref{the.2.24}) that%
\begin{align}
\psi_{y}^{\prime}\left(  x\right)   &  =_{2}id+\psi_{y}^{\prime\prime}\left(
y\right)  \left(  x-y\right)  \text{ and }\nonumber\\
\psi\left(  x,y\right)   &  =_{3}\left(  y-x\right)  +\frac{1}{2}\psi
_{x}^{\prime\prime}\left(  x\right)  \left(  y-x\right)  ^{\otimes
2}\nonumber\\
&  =_{3}\left(  y-x\right)  +\frac{1}{2}\psi_{y}^{\prime\prime}\left(
y\right)  \left(  y-x\right)  ^{\otimes2}. \label{equ.3.17}%
\end{align}
from these approximations we learn,%
\[
\psi\left(  x,y\right)  -\psi\left(  x,z\right)  =_{3}y-z+\frac{1}{2}\psi
_{y}^{\prime\prime}\left(  y\right)  \left[  \left(  y-x\right)  ^{\otimes
2}-\left(  z-x\right)  ^{\otimes2}\right]
\]
and
\begin{align*}
\psi_{y}^{\prime}\left(  x\right)  \psi\left(  x,y\right)   &  -\psi
_{y}^{\prime}\left(  x\right)  \psi\left(  x,z\right) \\
&  =_{3}\left[  id+\psi_{y}^{\prime\prime}\left(  y\right)  \left(
x-y\right)  \otimes\left(  \cdot\right)  \right]  \left(  \psi\left(
x,y\right)  -\psi\left(  x,z\right)  \right) \\
&  =_{3}y-z+\frac{1}{2}\psi_{y}^{\prime\prime}\left(  y\right)  \left[
\left(  y-x\right)  ^{\otimes2}-\left(  z-x\right)  ^{\otimes2}\right]
+\psi_{y}^{\prime\prime}\left(  y\right)  \left[  \left(  x-y\right)
\otimes\left(  y-z\right)  \right]  .
\end{align*}
As simple calculation now shows, with $a=y-x$ and $b=y-z,$ that%
\[
\frac{1}{2}\left[  \left(  y-x\right)  ^{\otimes2}-\left(  z-x\right)
^{\otimes2}\right]  +\left(  x-y\right)  \otimes\left(  y-z\right)  =-\frac
{1}{2}\left[  b^{\otimes2}+b\otimes a-a\otimes b\right]  .
\]
Since $\psi_{y}^{\prime\prime}\left(  y\right)  a\otimes b=\psi_{y}%
^{\prime\prime}\left(  y\right)  b\otimes a$ (mixed partial derivatives
commute), the last two displayed equations give
\begin{align*}
\psi_{y}^{\prime}\left(  x\right)  \psi\left(  x,y\right)  -\psi_{y}^{\prime
}\left(  x\right)  \psi\left(  x,z\right)   &  =_{3}y-z-\frac{1}{2}\psi
_{y}^{\prime\prime}\left(  y\right)  b^{\otimes2}\\
&  =-\left[  \left(  z-y\right)  +\frac{1}{2}\psi_{y}^{\prime\prime}\left(
y\right)  \left(  z-y\right)  ^{\otimes2}\right]  =_{3}-\psi\left(
y,z\right)  .
\end{align*}
The bounds derived above are uniform over a compact set $K$. Because of Eq.
(\ref{equ.3.17}), we may replace $O\left(  \left\vert x,y,z\right\vert
^{3}\right)  \,$with $O\left(  \max\left\{  \left\vert \psi\left(  x,y\right)
\right\vert ,\left\vert \psi\left(  y,z\right)  \right\vert ,\left\vert
\psi\left(  x,z\right)  \right\vert \right\}  ^{3}\right)  $.
\end{proof}

\begin{corollary}
\label{cor.3.29}If $\left(  y_{s},y_{s}^{\dag}\right)  $ is a controlled rough
path and $\psi$ is a logarithm, there exists $C_{\psi},$ $\delta_{\psi}>0$
such that if $0\leq s\leq t\leq u\leq T$ and $u-s\,\leq\delta_{\psi}$, then
\[
\left\vert \psi\left(  y_{t},y_{u}\right)  -\psi\left(  y_{t},\cdot\right)
_{\ast y_{s}}\left[  \psi\left(  y_{s},y_{u}\right)  -\psi\left(  y_{s}%
,y_{t}\right)  \right]  \right\vert _{g}\leq C_{\psi}\omega\left(  s,u\right)
^{3/p}%
\]

\end{corollary}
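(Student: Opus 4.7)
The plan is to recognize the quantity in question as exactly the left-hand side of Lemma 3.28 evaluated at $(x,y,z)=(y_s,y_t,y_u)$, and then to convert the resulting bound in terms of $|\psi(\cdot,\cdot)|$-increments into an $\omega$-bound using the controlled rough path hypothesis on $\mathbf{y}$. Since Lemma 3.28 is a Euclidean, convex-domain statement, I would first localize via the standard patching scheme of Remark 2.52.

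First, for each $w\in y([0,T])$ choose a chart $\phi_w$ whose image is a convex open subset of $\mathbb{R}^d$ and such that $\phi_w(D(\phi_w))\times\phi_w(D(\phi_w))$ maps into $D(\psi)$ under $\phi_w^{-1}$. By Corollary 2.50 I can cover $y([0,T])$ by finitely many such chart neighborhoods forming an interlaced cover $\{a_i,b_i\}_{i=0}^{l}$ for $y$, so that $y([a_i,b_i])\subseteq D(\phi_i)$. Thanks to Lemma 2.51 it suffices to prove the inequality for all $s\le t\le u$ lying in a single interval $[a_i,b_i]$, with the constant depending on $i$; the global constant is then obtained by patching. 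On each such piece, push $\psi$ forward by $\phi_i$ to obtain a logarithm $\psi^{\phi_i}$ on a convex open subset $K\subseteq\mathbb{R}^d$ to which Lemma 3.28 applies. Corollary 2.28 guarantees that the manifold norm $|\cdot|_g$ is comparable to the Euclidean norm in the chart, up to a local constant, so it is enough to establish the estimate for the pushed-forward objects.

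Working in $K\subseteq\mathbb{R}^d$ and applying Lemma 3.28 with $(x,y,z)=(y_s,y_t,y_u)$, I obtain
\[
\bigl|\psi_{y_t}'(y_s)\,\psi(y_s,y_t)+\psi(y_t,y_u)-\psi_{y_t}'(y_s)\,\psi(y_s,y_u)\bigr|\le C_K\,M_{s,t,u}^{3},
\]
where $M_{s,t,u}:=\max\{|\psi(y_s,y_t)|,|\psi(y_t,y_u)|,|\psi(y_s,y_u)|\}$. Factoring $\psi_{y_t}'(y_s)=\psi(y_t,\cdot)_{*y_s}$ from the first and third terms and combining with $\psi(y_t,y_u)$, the left-hand side is precisely $|\psi(y_t,y_u)-\psi(y_t,\cdot)_{*y_s}[\psi(y_s,y_u)-\psi(y_s,y_t)]|$. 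The remaining step is to estimate $M_{s,t,u}$. Since $\mathbf{y}$ is a controlled rough path, the defining inequality (2.22) combined with continuity of $y^{\dag}$ on $[0,T]$ (hence boundedness of $|y_s^\dag|$) gives a constant $C'$ such that $|\psi(y_\sigma,y_\tau)|\le |y_\sigma^\dag x_{\sigma,\tau}|+C'\omega(\sigma,\tau)^{2/p}\le C''\omega(\sigma,\tau)^{1/p}$ whenever $|\tau-\sigma|\le\delta$ (shrinking $\delta_\psi$ as needed and using $\omega(\sigma,\tau)^{1/p}\le\omega(0,T)^{1/p}$ to absorb the square term). Applied three times and combined with superadditivity $\omega(\sigma,\tau)\le\omega(s,u)$, this yields $M_{s,t,u}\le C''\omega(s,u)^{1/p}$, so $M_{s,t,u}^{3}\le (C'')^3\omega(s,u)^{3/p}$, which is the required bound.

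The main obstacle is really just bookkeeping: ensuring that the chart push-forward of $\psi$ is again a logarithm on a convex domain so that Lemma 3.28 applies, and that the chosen $\delta_\psi$ is uniform across the finitely many chart pieces in the interlaced cover. Both issues are handled by the patching machinery (Lemma 2.51 and Remark 2.52) already set up in the paper, so the argument is essentially reduced to combining Lemma 3.28 with the controlled rough path increment bound.
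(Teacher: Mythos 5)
Your proposal is correct and follows essentially the same route as the paper: patch via the interlaced-cover machinery of Remark 2.52 and (a three-variable variant of) Lemma 2.51, push $\psi$ and the metric forward to a convex Euclidean set, apply Lemma 3.28 at $(x,y,z)=(y_s,y_t,y_u)$, and upgrade the $\max|\psi|^3$ bound to $\omega(s,u)^{3/p}$ using $|\psi(y_\sigma,y_\tau)|\le C\omega(\sigma,\tau)^{1/p}$ together with monotonicity of $\omega$. You fill in the $M_{s,t,u}$ estimate more explicitly than the paper does (which just cites the fact), but the argument is the same.
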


\begin{proof}
Around every point in $y\left(  \left[  0,T\right]  \right)  ,$ using our
usual techniques, we can find a neighborhood $\mathcal{W}$ such that
$\mathcal{W}\times\mathcal{W}\subseteq\mathcal{D}$ and maps to a convex open
set by a chart. We can then use Remark \ref{rem.2.52} with a slightly modified
version (which includes three variables instead of two) of Lemma
\ref{lem.2.51} to create a global estimate. We can then choose a $\delta$ such
that $u-s\leq\delta$ forces the path to lie within one of these sets
$\mathcal{W}.$ Therefore, it suffices to prove the estimate locally. However,
we can push forward the metric and $\psi$ to a convex set on Euclidean space.
The rest follows from the Lemma \ref{lem.3.28} and the fact that $\left\vert
\psi\left(  y_{s},y_{t}\right)  \right\vert \leq C\omega\left(  s,t\right)
^{1/p}$ for all $\left\vert t-s\right\vert \leq\delta$ for some $C<\infty$ and
$\delta>0.$
\end{proof}

\subsection{Proof of Theorem \ref{the.3.26}}

\begin{proof}
[Proof of Theorem \ref{the.3.26}]Let $0\leq s\leq t\leq u\leq T$ . Throughout
this proof, we will use the notation $\underset{^{i}}{\approx}$ with respect
to the times $s$ and $u.$ To prove the statement, we need to show $\tilde
{z}_{s,t}+\tilde{z}_{t,u}\underset{^{3}}{\approx}\tilde{z}_{s,u}.$ We begin by
working on the three terms for $\tilde{z}_{t,u}$ in the following equation%
\begin{equation}
\tilde{z}_{t,u}=\alpha_{t}\left(  \psi\left(  y_{t},y_{u}\right)  \right)
+\alpha_{t}^{\dag}\left(  I\otimes y_{t}^{\dag}\right)  \mathbb{X}%
_{t,u}+\alpha_{t}\left(  S_{y_{t}}^{\mathcal{G}}\circ y_{t}^{\dag\otimes
2}\mathbb{X}_{t,u}\right)  . \label{equ.3.18}%
\end{equation}
Using Corollary \ref{cor.3.29} followed by Corollary \ref{cor.3.16} we find
\begin{align*}
\alpha_{t}\left(  \psi\left(  y_{t},y_{u}\right)  \right)   &  \underset{^{3}%
}{\approx}\alpha_{t}\psi\left(  y_{t},\cdot\right)  _{\ast y_{s}}\left[
\psi\left(  y_{s},y_{u}\right)  -\psi\left(  y_{s},y_{t}\right)  \right] \\
&  \underset{^{3}}{\approx}\alpha_{t}U\left(  y_{t},y_{s}\right)  \left[
I+S_{y_{s}}^{\mathcal{G}}\left(  \psi\left(  y_{s},y_{t}\right)
\otimes\left(  \cdot\right)  \right)  \right]  \left[  \psi\left(  y_{s}%
,y_{u}\right)  -\psi\left(  y_{s},y_{t}\right)  \right] \\
&  \underset{^{3}}{\approx}\left[  \alpha_{s}+\alpha_{s}^{\dag}x_{s,t}%
\otimes\left(  \cdot\right)  \right]  \left[  I+S_{y_{s}}^{\mathcal{G}}\left(
\psi\left(  y_{s},y_{t}\right)  \otimes\left(  \cdot\right)  \right)  \right]
\left[  \psi\left(  y_{s},y_{u}\right)  -\psi\left(  y_{s},y_{t}\right)
\right] \\
&  \underset{^{3}}{\approx}\alpha_{s}\left[  I+S_{y_{s}}^{\mathcal{G}}\left(
\psi\left(  y_{s},y_{t}\right)  \otimes\left(  \cdot\right)  \right)  \right]
\left[  \psi\left(  y_{s},y_{u}\right)  -\psi\left(  y_{s},y_{t}\right)
\right] \\
&  \quad+\alpha_{s}^{\dag}x_{s,t}\otimes\left[  \psi\left(  y_{s}%
,y_{u}\right)  -\psi\left(  y_{s},y_{t}\right)  \right]  .
\end{align*}
Combining this equation with the estimates
\[
\psi\left(  y_{s},y_{t}\right)  \underset{^{2}}{\approx}y_{s}^{\dag}%
x_{s,t}\text{ and }\psi\left(  y_{s},y_{u}\right)  -\psi\left(  y_{s}%
,y_{t}\right)  \underset{^{2}}{\approx}y_{s}^{\dag}\left[  x_{s,u}%
-x_{s,t}\right]  =y_{s}^{\dag}x_{t,u},
\]
then shows,%
\begin{equation}
\alpha_{t}\left(  \psi\left(  y_{t},y_{u}\right)  \right)  \underset{^{3}%
}{\approx}\alpha_{s}\left[  \psi\left(  y_{s},y_{u}\right)  -\psi\left(
y_{s},y_{t}\right)  \right]  +\alpha_{s}\left(  y_{s}^{\dag}\right)
^{\otimes2}x_{s,t}\otimes x_{t,u}+\alpha_{s}^{\dag}\left(  I\otimes
y_{s}^{\dag}\right)  x_{s,t}\otimes x_{t,u}. \label{equ.3.19}%
\end{equation}
By the definitions of $CRP_{\mathbf{X}}\left(  M\right)  $ and $CRP_{y}%
^{U}\left(  M,V\right)  $ we have%
\begin{align}
\alpha_{t}^{\dag}\left(  I\otimes y_{t}^{\dag}\right)  \mathbb{X}%
_{t,u}\underset{^{3}}{\approx}  &  ~\alpha_{t}^{\dag}\left(  I\otimes U\left(
y_{t},y_{s}\right)  y_{s}^{\dag}\right)  \mathbb{X}_{t,u}\nonumber\\
=  &  ~\alpha_{t}^{\dag}\left(  I\otimes U\left(  y_{t},y_{s}\right)  \right)
\left(  I\otimes y_{s}^{\dag}\right)  \mathbb{X}_{t,u}\underset{^{3}}{\approx
}\alpha_{s}^{\dag}\left(  I\otimes y_{s}^{\dag}\right)  \mathbb{X}_{t,u}.
\label{equ.3.20}%
\end{align}
Lastly by the definitions of $CRP_{\mathbf{X}}\left(  M\right)  $ and
$CRP_{y}^{U}\left(  M,V\right)  $ along with Lemma \ref{lem.3.27} with
$\tilde{U}\left(  m,n\right)  =\left(  \psi_{m}\right)  _{\ast n}$, we have
and%
\begin{align}
\alpha_{t}\left(  S_{y_{t}}^{\mathcal{G}}\circ y_{t}^{\dag\otimes2}%
\mathbb{X}_{t,u}\right)  \underset{^{3}}{\approx}  &  ~\alpha_{t}\left(
S_{y_{t}}^{\mathcal{G}}\circ U\left(  y_{t},y_{s}\right)  ^{\otimes2}\circ
y_{s}^{\dag\otimes2}\mathbb{X}_{t,u}\right) \nonumber\\
\underset{^{3}}{\approx}  &  ~\alpha_{t}\left(  U\left(  y_{t},y_{s}\right)
\circ S_{y_{s}}^{\mathcal{G}}\circ y_{s}^{\dag\otimes2}\mathbb{X}%
_{t,u}\right)  \underset{^{3}}{\approx}\alpha_{s}\left(  S_{y_{s}%
}^{\mathcal{G}}\circ y_{s}^{\dag\otimes2}\mathbb{X}_{t,u}\right)  .
\label{equ.3.21}%
\end{align}
Adding together Eqs. (\ref{equ.3.19}) -- (\ref{equ.3.21}) to%
\[
\tilde{z}_{s,t}=\alpha_{s}\left(  \psi\left(  y_{s},y_{t}\right)  \right)
+\alpha_{s}^{\dag}\left(  I\otimes y_{s}^{\dag}\right)  \mathbb{X}%
_{s,t}+\alpha_{s}\left(  S_{y_{s}}^{\mathcal{G}}\circ y_{s}^{\dag\otimes
2}\mathbb{X}_{s,t}\right)
\]
while making use Chen's identity in Eq. (\ref{equ.2.2}) shows%
\[
\tilde{z}_{s,t}+\tilde{z}_{t,u}\underset{^{3}}{\approx}\alpha_{s}\left(
\psi\left(  y_{s},y_{u}\right)  \right)  +\alpha_{s}^{\dag}\left(  I\otimes
y_{s}^{\dag}\right)  \mathbb{X}_{s,u}+\alpha_{s}\left(  S_{y_{s}}%
^{\mathcal{G}}\circ y_{s}^{\dag\otimes2}\mathbb{X}_{s,u}\right)  =\tilde
{z}_{s,u}.
\]

\end{proof}

\subsection{A Map from $CRP_{y}^{U}\left(  M,V\right)  $ to $CRP_{y}%
^{\tilde{U}}\left(  M,V\right)  $\label{sub.3.6}}

Suppose that $\mathcal{G}=\left(  \psi,U\right)  $ and $\mathcal{\tilde{G}%
}=\left(  \tilde{\psi},\tilde{U}\right)  $ are two gauges on $M.$ Generally,
if $\boldsymbol{\alpha}\mathbf{:=}\left(  \alpha,\alpha^{\dag}\right)  \in
CRP_{y}^{U}\left(  M,V\right)  $, there is no reason to expect it also to be
an element of $CRP_{y}^{\tilde{U}}\left(  M,V\right)  $. However, the main
theorem [Theorem \ref{the.3.32}] of this section shows there is a
\textquotedblleft natural\textquotedblright\ bijection between $CRP_{y}%
^{U}\left(  M,V\right)  $ and $CRP_{y}^{\tilde{U}}\left(  M,V\right)  $ which
preserves the notions of integration. The following proposition is needed in
the proof of Theorem \ref{the.3.32} and moreover motivates the statement of
the theorem.

\begin{proposition}
\label{pro.3.30}If $\mathcal{G}=\left(  \psi,U\right)  $ and $\mathcal{\tilde
{G}}=\left(  \tilde{\psi},\tilde{U}\right)  $ are two gauges on $M$ and
$\mathbf{y}=\left(  y,y^{\dag}\right)  \in CRP_{\mathbf{X}}\left(  M\right)
,$ then%
\begin{equation}
\mathbf{y}_{s,t}^{\mathcal{G}}\underset{3}{\approx}\mathbf{y}_{s,t}%
^{\mathcal{\tilde{G}}}+\left(  S_{y_{s}}^{\tilde{U},U}\left(  \left(
y_{s}^{\dag}\right)  ^{\otimes2}\mathbb{X}_{s,t}\right)  ,0\right)  ,
\label{equ.3.22}%
\end{equation}
where $\mathbf{y}_{s,t}^{\mathcal{G}}$ and $\mathbf{y}_{s,t}^{\mathcal{\tilde
{G}}}$ are as in Definition \ref{def.3.20}.
\end{proposition}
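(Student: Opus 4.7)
The plan is to compare the two sides component-by-component. Since the second component of $\mathbf{y}_{s,t}^{\mathcal{G}}$ depends only on $y_{s}^{\dag}$ and $\mathbb{X}_{s,t}$ (not on the gauge), the second components of $\mathbf{y}_{s,t}^{\mathcal{G}}$ and $\mathbf{y}_{s,t}^{\tilde{\mathcal{G}}}$ are identically equal, and the correction vector on the right has a $0$ in the second slot. So everything reduces to proving
\[
\psi(y_{s},y_{t})+S_{y_{s}}^{\psi_{\ast},U}\bigl(y_{s}^{\dag\otimes2}\mathbb{X}_{s,t}\bigr)\;\underset{3}{\approx}\;\tilde{\psi}(y_{s},y_{t})+S_{y_{s}}^{\tilde{\psi}_{\ast},\tilde{U}}\bigl(y_{s}^{\dag\otimes2}\mathbb{X}_{s,t}\bigr)+S_{y_{s}}^{\tilde{U},U}\bigl(y_{s}^{\dag\otimes2}\mathbb{X}_{s,t}\bigr),
\]
where I have unpacked $S^{\mathcal{G}}=S^{\psi_{\ast},U}$ and $S^{\tilde{\mathcal{G}}}=S^{\tilde{\psi}_{\ast},\tilde{U}}$ via Notation \ref{not.3.12}.

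Next I would simplify the compatibility-tensor combination using the algebraic identities in Lemma \ref{lem.3.11}. Indeed, $S^{\tilde{\psi}_{\ast},\tilde{U}}+S^{\tilde{U},U}=S^{\tilde{\psi}_{\ast},U}$ and $S^{\tilde{\psi}_{\ast},U}-S^{\psi_{\ast},U}=S^{\tilde{\psi}_{\ast},U}+S^{U,\psi_{\ast}}=S^{\tilde{\psi}_{\ast},\psi_{\ast}}$. So the target collapses to
\[
\psi(y_{s},y_{t})-\tilde{\psi}(y_{s},y_{t})\;\underset{3}{\approx}\;S_{y_{s}}^{\tilde{\psi}_{\ast},\psi_{\ast}}\bigl(y_{s}^{\dag\otimes2}\mathbb{X}_{s,t}\bigr).
\]

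To handle this, apply Proposition \ref{pro.3.17}, which gives
\[
\psi(y_{s},y_{t})-\tilde{\psi}(y_{s},y_{t})=_{3}\tfrac{1}{2}\,S_{y_{s}}^{\tilde{\psi}_{\ast},\psi_{\ast}}\bigl(\psi(y_{s},y_{t})^{\otimes2}\bigr),
\]
and then use the controlled-rough-path estimate $\psi(y_{s},y_{t})\underset{2}{\approx}y_{s}^{\dag}x_{s,t}$ to replace $\psi(y_{s},y_{t})^{\otimes2}$ by $y_{s}^{\dag\otimes2}(x_{s,t}\otimes x_{s,t})$ up to order $3$. The remaining step, which is the key observation tying everything together, is that Proposition \ref{pro.3.17} also asserts that $S_{y_{s}}^{\tilde{\psi}_{\ast},\psi_{\ast}}$ is \emph{symmetric}; combined with the weak-geometric assumption $\operatorname{sym}(\mathbb{X}_{s,t})=\tfrac{1}{2}x_{s,t}\otimes x_{s,t}$, this allows one to convert
\[
\tfrac{1}{2}\,S_{y_{s}}^{\tilde{\psi}_{\ast},\psi_{\ast}}\bigl(y_{s}^{\dag\otimes2}(x_{s,t}\otimes x_{s,t})\bigr)=S_{y_{s}}^{\tilde{\psi}_{\ast},\psi_{\ast}}\bigl(y_{s}^{\dag\otimes2}\mathbb{X}_{s,t}\bigr),
\]
since a symmetric bilinear form composed with $y_{s}^{\dag\otimes2}$ only ``sees'' the symmetric part of its tensor argument.

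I expect the main obstacle to be not the symmetry/algebra step but rather bookkeeping the order-$3$ errors cleanly: the substitution $\psi(y_{s},y_{t})\mapsto y_{s}^{\dag}x_{s,t}$ is valid only up to order $2$, so squaring it produces an order-$3$ error that must be absorbed; additionally one must justify that $S_{y_{s}}^{\tilde{\psi}_{\ast},\psi_{\ast}}$ is bounded uniformly in $s$ on the compact image of $y$, which is automatic from smoothness but needs to be noted. Once these book-keeping points are verified—chasing all errors through Notation \ref{not.2.3}—the identity follows from assembling the three ingredients above: Lemma \ref{lem.3.11}, Proposition \ref{pro.3.17}, and the weak-geometric symmetry.
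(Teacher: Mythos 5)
Your proof is correct and follows essentially the same route as the paper: both reduce the claim to the identity $\psi(y_s,y_t)-\tilde\psi(y_s,y_t)\underset{3}{\approx}S_{y_s}^{\tilde\psi_*,\psi_*}\bigl(y_s^{\dag\otimes2}\mathbb{X}_{s,t}\bigr)$ and then close the gap using Proposition \ref{pro.3.17}, the weak-geometric symmetry of $\mathbb{X}$, and the cocycle/antisymmetry identities of Lemma \ref{lem.3.11}. The only difference is cosmetic: you massage the compatibility tensors first and invoke Proposition \ref{pro.3.17} last, whereas the paper does these two steps in the opposite order.
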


\begin{proof}
From Proposition \ref{pro.3.17},%
\begin{align*}
\psi\left(  y_{s},y_{t}\right)  -\tilde{\psi}\left(  y_{s},y_{t}\right)   &
\underset{3}{\approx}\frac{1}{2}S_{y_{s}}^{\tilde{\psi}_{\ast},\psi_{\ast}%
}\left(  \psi\left(  y_{s},y_{t}\right)  \otimes\psi\left(  y_{s}%
,y_{t}\right)  \right) \\
&  \underset{3}{\approx}\frac{1}{2}S_{y_{s}}^{\tilde{\psi}_{\ast},\psi_{\ast}%
}\left(  \left(  y_{s}^{\dag}\otimes y_{s}^{\dag}\right)  \left[
x_{s,t}\otimes x_{s,t}\right]  \right)  =S_{y_{s}}^{\tilde{\psi}_{\ast}%
,\psi_{\ast}}\left(  \left(  y_{s}^{\dag}\right)  ^{\otimes2}\mathbb{X}%
_{s,t}\right)
\end{align*}
wherein we have used $S_{y_{s}}^{\tilde{\psi}_{\ast},\psi_{\ast}}$ is
symmetric and $\mathbf{X}=\left(  x,\mathbb{X}\right)  $ is a weak-geometric
rough path for the last equality. Making use of this estimate it now follows
that%
\begin{align}
\mathbf{y}_{s,t}^{\mathcal{G}}-\mathbf{y}_{s,t}^{\mathcal{\tilde{G}}}  &
=\left(  \psi\left(  y_{s},y_{t}\right)  -\tilde{\psi}\left(  y_{s}%
,y_{t}\right)  +\left(  S_{y_{s}}^{\mathcal{G}}-S_{y_{s}}^{\mathcal{\tilde{G}%
}}\right)  \left(  \left(  y_{s}^{\dag}\right)  ^{\otimes2}\mathbb{X}%
_{s,t}\right)  ,0\right) \nonumber\\
&  \underset{3}{\approx}\left(  \left(  S_{y_{s}}^{\tilde{\psi}_{\ast}%
,\psi_{\ast}}+S_{y_{s}}^{\mathcal{G}}-S_{y_{s}}^{\mathcal{\tilde{G}}}\right)
\left(  \left(  y_{s}^{\dag}\right)  ^{\otimes2}\mathbb{X}_{s,t}\right)
,0\right)  . \label{equ.3.23}%
\end{align}
On the other hand, by Lemma \ref{lem.3.11},%
\begin{align*}
S^{\tilde{\psi}_{\ast},\psi_{\ast}}  &  =S^{\tilde{\psi}_{\ast},\tilde{U}%
}+S^{\tilde{U},\psi_{\ast}}=S^{\tilde{\psi}_{\ast},\tilde{U}}+S^{\tilde{U}%
,U}+S^{U,\psi_{\ast}}\\
&  =S^{\mathcal{\tilde{G}}}-S^{\mathcal{G}}+S^{\tilde{U},U}%
\end{align*}
which combined with Eq. (\ref{equ.3.23}) gives Eq. (\ref{equ.3.22}).
\end{proof}

\begin{corollary}
\label{cor.3.31}The integral, $\int\left\langle \boldsymbol{\alpha
},d\mathbf{y}^{\mathcal{G}}\right\rangle $ only depends on the choice of
parallelism $U$ and not on the logarithm used to make the gauge $\mathcal{G}%
=\left(  \psi,U\right)  .$
\end{corollary}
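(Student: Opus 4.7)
The plan is to compare the $\left(\mathcal{G},\mathbf{y}\right)$-- and $\left(\tilde{\mathcal{G}},\mathbf{y}\right)$-- integrators of Definition \ref{def.3.20} when $\mathcal{G}=\left(\psi,U\right)$ and $\tilde{\mathcal{G}}=\left(\tilde{\psi},U\right)$ share the \emph{same} parallelism $U$, and then to invoke the uniqueness statement in Theorem \ref{the.3.21}.

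First I would observe that $S^{U,U}\equiv 0$: directly from Definition \ref{def.3.7},
\[
S_m^{U,U}=d\left[U\left(\cdot,m\right)^{-1}U\left(\cdot,m\right)\right]_m=d\left[I\right]_m=0.
\]
Applying Proposition \ref{pro.3.30} with $\tilde{U}=U$ therefore yields
\[
\mathbf{y}_{s,t}^{\mathcal{G}}\underset{3}{\approx}\mathbf{y}_{s,t}^{\tilde{\mathcal{G}}}.
\]
Pairing this with the (locally bounded) $\boldsymbol{\alpha}_s$ gives
\[
\left\langle\boldsymbol{\alpha}_s,\mathbf{y}_{s,t}^{\mathcal{G}}\right\rangle\underset{3}{\approx}\left\langle\boldsymbol{\alpha}_s,\mathbf{y}_{s,t}^{\tilde{\mathcal{G}}}\right\rangle,
\]
so the two candidate almost-additive functionals $\tilde{z}_{s,t}^{\mathcal{G}}$ and $\tilde{z}_{s,t}^{\tilde{\mathcal{G}}}$ constructed in Definition \ref{def.3.20} differ by an $O(\omega(s,t)^{3/p})$ term.

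Next I would note that the prospective Gubinelli derivative prescribed by Theorem \ref{the.3.21}, namely $\alpha_s\circ y_s^{\dag}$, manifestly depends only on $\boldsymbol{\alpha}$ and $\mathbf{y}$, not on $\psi$ or $\tilde\psi$. Thus both integrals $\int\left\langle\boldsymbol{\alpha},d\mathbf{y}^{\mathcal{G}}\right\rangle$ and $\int\left\langle\boldsymbol{\alpha},d\mathbf{y}^{\tilde{\mathcal{G}}}\right\rangle$ are additive functionals starting at $0$, both having $\alpha_s\circ y_s^{\dag}$ as Gubinelli derivative, and both being $\underset{3}{\approx}$ to the same common quantity (to within an $O(\omega(s,t)^{3/p})$ error). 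By the uniqueness half of Theorem \ref{the.3.21} (itself an application of Lyons' sewing principle), an additive functional determined up to order $3/p$ by its local expansion is unique, hence the two integrals coincide.

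There is no real obstacle here beyond bookkeeping: the only nontrivial input is Proposition \ref{pro.3.30}, which has already been established, and the vanishing $S^{U,U}=0$ is immediate. If one prefers a fully self-contained argument avoiding Proposition \ref{pro.3.30}, one can instead appeal directly to Proposition \ref{pro.3.17}, which gives $\psi\left(y_s,y_t\right)-\tilde{\psi}\left(y_s,y_t\right)=_{3}\tfrac{1}{2}S_{y_s}^{\tilde{\psi}_{\ast},\psi_{\ast}}\!\left(\psi\left(y_s,y_t\right)^{\otimes 2}\right)$; combined with the identity $S^{\mathcal{G}}-S^{\tilde{\mathcal{G}}}=S^{\psi_{\ast},U}-S^{\tilde{\psi}_{\ast},U}=S^{\psi_{\ast},\tilde{\psi}_{\ast}}=-S^{\tilde{\psi}_{\ast},\psi_{\ast}}$ from Lemma \ref{lem.3.11}, the $\psi$- and $\tilde{\psi}$-contributions in the definition of $\tilde z_{s,t}$ cancel modulo $O(\omega(s,t)^{3/p})$, yielding the same conclusion.
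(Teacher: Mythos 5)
Your proposal is correct and follows essentially the same route as the paper: apply Proposition \ref{pro.3.30} with $\tilde{U}=U$ to get $\mathbf{y}_{s,t}^{\mathcal{G}}\underset{3}{\approx}\mathbf{y}_{s,t}^{\tilde{\mathcal{G}}}$, then invoke uniqueness of the sewn additive functional from Theorem \ref{the.3.21}. Your extra remarks (that $S^{U,U}=0$ makes the correction term in Eq.~(\ref{equ.3.22}) vanish, and that the Gubinelli derivative $\alpha_s\circ y_s^{\dag}$ is manifestly $\psi$-independent) simply make explicit what the paper's two-line proof leaves implicit, and your alternative route through Proposition \ref{pro.3.17} and Lemma \ref{lem.3.11} just unwinds the proof of Proposition \ref{pro.3.30} in this special case; both are fine.
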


\begin{proof}
From Proposition \ref{pro.3.30} with $U=\tilde{U},$ it follows that
\[
\int_{s}^{t}\left\langle \boldsymbol{\alpha},d\mathbf{y}^{\mathcal{G}%
}\right\rangle \underset{3}{\approx}\left\langle \boldsymbol{\alpha}%
_{s},\mathbf{y}_{s,t}^{\mathcal{G}}\right\rangle \underset{3}{\approx
}\left\langle \boldsymbol{\alpha}_{s},\mathbf{y}_{s,t}^{\mathcal{\tilde{G}}%
}\right\rangle \underset{3}{\approx}\int_{s}^{t}\left\langle
\boldsymbol{\alpha},d\mathbf{y}^{\mathcal{\tilde{G}}}\right\rangle
\]
from which it follows that the two additive functionals, $\int\left\langle
\boldsymbol{\alpha},d\mathbf{y}^{\mathcal{G}}\right\rangle $ and
$\int\left\langle \boldsymbol{\alpha},d\mathbf{y}^{\mathcal{\tilde{G}}%
}\right\rangle ,$ must be equal.
\end{proof}

If $\boldsymbol{\alpha}=\left(  \alpha,\alpha^{\dag}\right)  \in CRP_{y}%
^{U}\left(  M,V\right)  $ and $U\neq\tilde{U},$ then%
\begin{equation}
\left\langle \boldsymbol{\alpha}_{s},\mathbf{y}_{s,t}^{\mathcal{G}%
}\right\rangle \underset{3}{\approx}\left\langle \boldsymbol{\alpha}%
_{s},\mathbf{y}_{s,t}^{\mathcal{\tilde{G}}}+\left(  S_{y_{s}}^{\tilde{U}%
,U}\left(  \left(  y_{s}^{\dag}\right)  ^{\otimes2}\mathbb{X}_{s,t}\right)
,0\right)  \right\rangle =\left\langle \boldsymbol{\tilde{\alpha}}%
_{s},\mathbf{y}_{s,t}^{\mathcal{\tilde{G}}}\right\rangle \label{equ.3.24}%
\end{equation}
where $\boldsymbol{\tilde{\alpha}}_{s}$ is defined in Eq. (\ref{equ.3.25})
below. The identity in Eq. (\ref{equ.3.24}) suggests the following theorem.

\begin{theorem}
\label{the.3.32}The map%
\begin{equation}
\boldsymbol{\alpha}_{s}=\left(  \alpha_{s},\alpha_{s}^{\dag}\right)
\longrightarrow\boldsymbol{\tilde{\alpha}}_{s}:=\left(  \tilde{\alpha}%
_{s},\tilde{\alpha}_{s}^{\dag}\right)  :=\left(  \alpha_{s},\alpha_{s}^{\dag
}+\alpha_{s}S_{y_{s}}^{\tilde{U},U}y_{s}^{\dag}\otimes I\right)
\label{equ.3.25}%
\end{equation}
is a bijection from $CRP_{y}^{U}\left(  M,V\right)  $ to $CRP_{y}^{\tilde{U}%
}\left(  M,V\right)  $ such that
\begin{equation}
\int\left\langle \boldsymbol{\alpha},d\mathbf{y}^{\mathcal{G}}\right\rangle
=\int\left\langle \boldsymbol{\tilde{\alpha}},d\mathbf{y}^{\mathcal{\tilde{G}%
}}\right\rangle . \label{equ.3.26}%
\end{equation}

\end{theorem}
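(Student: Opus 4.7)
The plan is to verify three things in order: (a) that $\boldsymbol{\tilde{\alpha}}\in CRP_y^{\tilde U}(M,V)$ whenever $\boldsymbol{\alpha}\in CRP_y^U(M,V)$, (b) that the assignment is a bijection, with inverse obtained by interchanging the roles of $U$ and $\tilde U$, and (c) that the two integrals in (\ref{equ.3.26}) agree. Part (c) is essentially already in hand via (\ref{equ.3.24}) combined with the uniqueness clause in Theorem \ref{the.3.21}, so the real work is in (a).

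For (a), items \ref{Ite.20} and \ref{Ite.21} of Definition \ref{def.3.1} hold trivially: $\tilde\alpha_s=\alpha_s:T_{y_s}M\to V$, and $\alpha_s S_{y_s}^{\tilde U,U}(y_s^{\dag}\cdot\otimes\cdot)$ is by construction an element of $L(W\otimes T_{y_s}M,V)$. For item \ref{Ite.22}, I would expand $\alpha_t\circ\tilde U(y_t,y_s)$ by first writing $\tilde U(y_t,y_s)=U(y_t,y_s)\bigl[U(y_t,y_s)^{-1}\tilde U(y_t,y_s)\bigr]$ and using Theorem \ref{the.3.15} (together with Corollary \ref{cor.2.29} to swap base points) to get
\[
\tilde U(y_t,y_s)\underset{^{2}}{\approx}U(y_t,y_s)-S^{\tilde U,U}_{y_t}\bigl(\psi(y_t,y_s)\otimes U(y_t,y_s)(\cdot)\bigr).
\]
Plugging in $\psi(y_t,y_s)\underset{^{2}}{\approx}-U(y_t,y_s)y_s^{\dag}x_{s,t}$ and the $U$-controlled estimate $\alpha_t\circ U(y_t,y_s)\underset{^{2}}{\approx}\alpha_s+\alpha_s^{\dag}(x_{s,t}\otimes\cdot)$ produces exactly the $\alpha_s S^{\tilde U,U}_{y_s}(y_s^{\dag}x_{s,t}\otimes\cdot)$ correction absorbed into $\tilde\alpha_s^{\dag}$. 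For item \ref{Ite.23}, I would split $\tilde\alpha_t^{\dag}\circ(I\otimes\tilde U(y_t,y_s))-\tilde\alpha_s^{\dag}$ as $\bigl[\alpha_t^{\dag}\circ(I\otimes\tilde U(y_t,y_s))-\alpha_s^{\dag}\bigr]+\bigl[\alpha_t S^{\tilde U,U}_{y_t}y_t^{\dag}\otimes\tilde U(y_t,y_s)-\alpha_s S^{\tilde U,U}_{y_s}y_s^{\dag}\otimes I\bigr]$; the first bracket is $\underset{^{1}}{\approx}0$ after replacing $\tilde U$ by $U$ (incurring an $O_1$ error through $\tilde U(y_t,y_s)-U(y_t,y_s)$) and using the original item \ref{Ite.23} for $\boldsymbol{\alpha}$, while the second bracket is $\underset{^{1}}{\approx}0$ because $\alpha$ is $1/p$-H\"older (a consequence of item \ref{Ite.22} for $\boldsymbol{\alpha}$), $y^{\dag}$ is $1/p$-H\"older, and $S^{\tilde U,U}$ is smooth along the continuous path $y$.

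Part (b) is then a purely algebraic check. By Lemma \ref{lem.3.11}, $S^{U,\tilde U}=-S^{\tilde U,U}$, so applying (\ref{equ.3.25}) once with $(U,\tilde U)$ and once with $(\tilde U,U)$ gives $\alpha_s^{\dag}+\alpha_s S^{\tilde U,U}_{y_s}y_s^{\dag}\otimes I+\alpha_s S^{U,\tilde U}_{y_s}y_s^{\dag}\otimes I=\alpha_s^{\dag}$, so the map is a two-sided inverse of the corresponding map $CRP_y^{\tilde U}(M,V)\to CRP_y^U(M,V)$.

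Finally, for (c), the integral $\int\langle\boldsymbol{\alpha},d\mathbf{y}^{\mathcal G}\rangle$ is characterized (by Theorem \ref{the.3.21}) as the unique $V$-valued controlled rough path with first increment $\underset{^{3}}{\approx}\langle\boldsymbol{\alpha}_s,\mathbf{y}_{s,t}^{\mathcal G}\rangle$ and Gubinelli derivative $\alpha_s\circ y_s^{\dag}$; the analogous statement holds for $\int\langle\boldsymbol{\tilde\alpha},d\mathbf{y}^{\mathcal{\tilde G}}\rangle$ with derivative $\tilde\alpha_s\circ y_s^{\dag}=\alpha_s\circ y_s^{\dag}$. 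Proposition \ref{pro.3.30} and the computation in (\ref{equ.3.24}) show that the two almost-additive functionals $\langle\boldsymbol{\alpha}_s,\mathbf{y}_{s,t}^{\mathcal G}\rangle$ and $\langle\boldsymbol{\tilde\alpha}_s,\mathbf{y}_{s,t}^{\mathcal{\tilde G}}\rangle$ differ by $\underset{^{3}}{\approx}0$, so their Lyons-sewn additive versions coincide, yielding (\ref{equ.3.26}). The main obstacle is the bookkeeping in part (a), item \ref{Ite.22}: keeping track of where each term is anchored and confirming that the correction $S^{\tilde U,U}$ introduces through Theorem \ref{the.3.15} is \emph{exactly} the one built into the definition (\ref{equ.3.25}); everything else is a bookkeeping consequence of Corollary \ref{cor.2.29}, the definition of a controlled rough path, and Lemma \ref{lem.3.11}.
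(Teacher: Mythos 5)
Your proposal is correct and follows the paper's argument quite closely: Theorem \ref{the.3.15} with Corollary \ref{cor.2.29} for item \ref{Ite.22}, Lemma \ref{lem.3.11} for bijectivity, and Proposition \ref{pro.3.30} together with the uniqueness clause in Theorem \ref{the.3.21} for Eq.\ (\ref{equ.3.26}). The one spot worth sharpening is the second bracket in your item~\ref{Ite.23} check: ``$S^{\tilde U,U}$ is smooth along $y$'' is not on its own enough, since $\alpha_t S^{\tilde U,U}_{y_t}\bigl(y_t^{\dag}\cdot\otimes\tilde U(y_t,y_s)\cdot\bigr)$ and $\alpha_s S^{\tilde U,U}_{y_s}\bigl(y_s^{\dag}\cdot\otimes\cdot\bigr)$ live over different base points; the precise tool is the approximate intertwining $S^{\tilde U,U}_{y_t}\circ U(y_t,y_s)^{\otimes 2}\underset{^{1}}{\approx}U(y_t,y_s)\circ S^{\tilde U,U}_{y_s}$ of Lemma~\ref{lem.3.27}, used together with $y_t^{\dag}\underset{^{1}}{\approx}U(y_t,y_s)y_s^{\dag}$ and $\alpha_t\circ U(y_t,y_s)\underset{^{1}}{\approx}\alpha_s$ (exactly as in the paper's proof), and the same lemma is also what lets you pass from your $S_{y_t}$-anchored expansion of $\tilde U(y_t,y_s)$ to the $S_{y_s}$-anchored correction in (\ref{equ.3.25}) during your item~\ref{Ite.22} check --- the paper sidesteps this extra step by applying Theorem~\ref{the.3.15} at $(y_s,y_t)$ and flipping with Corollary~\ref{cor.2.29} so that the anchor is at $y_s$ from the outset.
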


\begin{proof}
The only thing that is really left to prove here is the assertion that
$\boldsymbol{\tilde{\alpha}}\in CRP_{y}^{\tilde{U}}\left(  M,V\right)  .$
First we prove that item \ref{Ite.22} of Definition \ref{def.3.1} holds for
$\boldsymbol{\tilde{\alpha}}$.

From Theorem \ref{the.3.15} with $m=y_{s}$ and $n=y_{t},$ we find%
\[
U\left(  y_{s},y_{t}\right)  \tilde{U}\left(  y_{s},y_{t}\right)
^{-1}\underset{^{2}}{\approx}I+S_{y_{s}}^{\tilde{U},U}\left(  \psi\left(
y_{s},y_{t}\right)  \otimes\left(  \cdot\right)  \right)
\]
and then combining this result with Corollary \ref{cor.2.29} shows%
\begin{equation}
\tilde{U}\left(  y_{t},y_{s}\right)  \underset{^{2}}{\approx}U\left(
y_{t},y_{s}\right)  \left[  I+S_{y_{s}}^{\tilde{U},U}\left(  \psi\left(
y_{s},y_{t}\right)  \otimes\left(  \cdot\right)  \right)  \right]  .
\label{equ.3.27}%
\end{equation}
From this equation and the fact that $\boldsymbol{\alpha}\mathbf{\in}%
CRP_{y}^{U}\left(  M,V\right)  ,$ we learn%
\begin{align*}
\alpha_{t}\tilde{U}\left(  y_{t},y_{s}\right)  -\alpha_{s}\underset{^{2}%
}{\approx}  &  ~\alpha_{t}U\left(  y_{t},y_{s}\right)  \left[  I+S_{y_{s}%
}^{\tilde{U},U}\left(  \psi\left(  y_{s},y_{t}\right)  \otimes\left(
\cdot\right)  \right)  \right]  -\alpha_{s}\\
\underset{^{2}}{\approx}  &  ~\left(  \alpha_{s}+\alpha_{s}^{\dag}%
x_{s,t}\right)  \left[  I+S_{y_{s}}^{\tilde{U},U}\left(  \psi\left(
y_{s},y_{t}\right)  \otimes\left(  \cdot\right)  \right)  \right]  -\alpha
_{s}\\
\underset{^{2}}{\approx}  &  ~\alpha_{s}^{\dag}x_{s,t}+\alpha_{s}S_{y_{s}%
}^{\tilde{U},U}\left(  y_{s}^{\dag}x_{s,t}\otimes\left(  \cdot\right)
\right)  =\tilde{\alpha}_{s}^{\dag}\left(  x_{s,t}\otimes\left(  \cdot\right)
\right)
\end{align*}
as desired.

Next we check item \ref{Ite.23} of Definition \ref{def.3.1}. We are given%
\begin{align*}
0\underset{^{1}}{\approx}  &  ~\alpha_{t}^{\dag}\circ\left(  I\otimes U\left(
y_{t},y_{s}\right)  \right)  -\alpha_{s}^{\dag}\\
=  &  ~\tilde{\alpha}_{t}^{\dag}\circ\left(  I\otimes\tilde{U}\left(
y_{t},y_{s}\right)  \right)  -\tilde{\alpha}_{s}^{\dag}\\
&  -\alpha_{t}\circ S_{y_{t}}^{\tilde{U},U}\circ\left(  y_{t}^{\dag}\otimes
U\left(  y_{t},y_{s}\right)  \right)  +\alpha_{s}\circ S_{y_{s}}^{\tilde{U}%
,U}\circ\left(  y_{s}^{\dag}\otimes I\right)
\end{align*}
wherein we have used that $U\left(  y_{s},y_{t}\right)  \underset{^{1}%
}{\approx}\tilde{U}\left(  y_{s},y_{t}\right)  $ (for example, see Eq.
(\ref{equ.3.27})). We therefore must show the last line is approximately $0$.
However, by Lemma \ref{lem.3.27}, we have $S_{y_{t}}^{\tilde{U},U}\circ
U\left(  y_{t},y_{s}\right)  ^{\otimes2}\underset{^{1}}{\approx}U\left(
y_{t},y_{s}\right)  \circ S_{y_{t}}^{\tilde{U},U}.$ Thus%
\begin{align*}
&  \alpha_{t}\circ S_{y_{t}}^{\tilde{U},U}\circ\left(  y_{t}^{\dag}\otimes
U\left(  y_{t},y_{s}\right)  \right)  -\alpha_{s}\circ S_{y_{s}}^{\tilde{U}%
,U}\circ\left(  y_{s}^{\dag}\otimes I\right) \\
&  \quad\underset{^{1}}{\approx}\alpha_{t}\circ S_{y_{t}}^{\tilde{U},U}%
\circ\left(  U\left(  y_{t},y_{s}\right)  y_{s}^{\dag}\otimes U\left(
y_{t},y_{s}\right)  \right)  -\alpha_{s}\circ S_{y_{s}}^{\tilde{U},U}%
\circ\left(  y_{s}^{\dag}\otimes I\right) \\
&  \quad\underset{^{1}}{\approx}\left[  \alpha_{t}\circ U\left(  y_{t}%
,y_{s}\right)  -\alpha_{s}\right]  \left[  S_{y_{s}}^{\tilde{U},U}\circ\left(
y_{s}^{\dag}\otimes I\right)  \right]  \underset{^{1}}{\approx}0.
\end{align*}

\end{proof}

\section{Integrating One-Forms Along a CRP\label{sec.4}}

\begin{lemma}
\label{lem.4.1}Let $V$ be a Banach space and $U$ be a parallelism on $M.$ If
$\alpha\in\Omega^{1}\left(  M,V\right)  $ is a $V$ -- valued smooth one-form
on $M,$ then
\[
\alpha_{n}\circ U\left(  n,m\right)  -\alpha_{m}=_{2}\nabla_{\psi\left(
m,n\right)  }^{U}\alpha
\]
where $\nabla^{U}$ is the covariant derivative defined in Remark
\ref{rem.2.20}.
\end{lemma}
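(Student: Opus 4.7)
The plan is to reduce the statement to a direct Taylor computation in a chart. Since the assertion is a local $=_2$ estimate, Corollary~\ref{cor.2.28} lets me work around any fixed point $w\in M$ in a chart that carries the situation onto a convex open subset $\mathcal{V}\subseteq\mathbb{R}^d$ equipped with its flat Euclidean metric. Using Notation~\ref{not.2.23} for the coordinate forms of $U$ and $\alpha$, it then suffices to establish
\[
\bar\alpha(n)\circ\bar U(n,m)-\bar\alpha(m) = \nabla^U_{\psi(m,n)}\alpha + O(|n-m|^2)
\]
uniformly for $m,n\in\mathcal{V}$.

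For the left-hand side I would Taylor expand each factor at the basepoint $m$. Theorem~\ref{the.2.24} applied to $U$, together with $\bar U(m,m)=I$, yields
\[
\bar U(n,m) = I + B(n,m)(m-n) = I - (D_2\bar U)(m,m)(n-m) + O(|n-m|^2),
\]
and smoothness of $\alpha$ gives $\bar\alpha(n)=\bar\alpha(m)+\bar\alpha'(m)(n-m)+O(|n-m|^2)$. Multiplying these expansions and subtracting $\bar\alpha(m)$ leaves the key identity
\[
\bar\alpha(n)\circ\bar U(n,m)-\bar\alpha(m) = \bar\alpha'(m)(n-m) - \bar\alpha(m)\circ(D_2\bar U)(m,m)(n-m) + O(|n-m|^2).
\]

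It remains to recognise this first-order term as $\nabla^U_{n-m}\alpha$; once that is done, Theorem~\ref{the.2.24} gives $\bar\psi(m,n)=(n-m)+O(|n-m|^2)$, so replacing $n-m$ by $\psi(m,n)$ costs only an additional $O(|n-m|^2)$. The covariant derivative on $V$-valued $1$-forms is induced from the tangent-bundle connection of Remark~\ref{rem.2.20} by the Leibniz rule $(\nabla^U_v\alpha)(W)=v[\alpha(W)]-\alpha(\nabla^U_v W)$. Evaluating on the constant vector field $W\equiv w$ and using the expansion of $\bar U$ above gives
\[
\nabla^U_v W\big|_{m} = \frac{d}{dt}\bigg|_{0}\bar U(m,\sigma_t)w = (D_2\bar U)(m,m)(v)\cdot w,
\]
so that $(\nabla^U_v\alpha)(w) = \bar\alpha'(m)(v)w - \bar\alpha(m)\circ(D_2\bar U)(m,m)(v)\cdot w$, which matches the previous display coefficient by coefficient with $v=n-m$.

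The computation is essentially mechanical and I do not foresee any deeper obstacle. The only care required is the sign convention arising from expanding $\bar U(n,m)$ (rather than $\bar U(m,n)$) around the basepoint $m$, and the need to unpack the induced connection on $T^{\ast}M\otimes V$ via the Leibniz rule; neither of these steps uses anything beyond Theorem~\ref{the.2.24} and the definition in Remark~\ref{rem.2.20}.
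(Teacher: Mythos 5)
Your argument is correct and arrives at the same conclusion by a closely related but distinct route from the paper. The paper first establishes, from the Leibniz rule, the intrinsic identity $\nabla^{U}_{v_{m}}\alpha=\frac{d}{dt}\big|_{0}\bigl[\alpha\circ U(m,\sigma_{t})^{-1}\bigr]$, then invokes Corollary~\ref{cor.2.29} to rewrite this as $\frac{d}{dt}\big|_{0}\bigl[\alpha\circ U(\sigma_{t},m)\bigr]$, and finally applies a single first-order Taylor expansion to the smooth map $n\mapsto\alpha_{n}\circ U(n,m)$ in the chart; the derivative term is then recognized verbatim as $\nabla^{U}_{(n-m)_{m}}\alpha$. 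You instead Taylor-expand $\bar\alpha$ and $\bar U(n,m)$ separately using Theorem~\ref{the.2.24}, multiply, and then match the resulting first-order coefficient against an explicit coordinate computation of $\nabla^{U}_{v}\alpha$ on a constant vector field via the Leibniz rule and Remark~\ref{rem.2.20}. Your route bypasses Corollary~\ref{cor.2.29} entirely, at the cost of carrying two expansions and tracking the sign flip between $\bar U(n,m)$ and $\bar U(m,n)$ (which you correctly flag). The paper's version is a bit slicker, since the identification of the Taylor coefficient with the covariant derivative is immediate once Eq.~(\ref{equ.4.2}) is in hand, but both are valid and of comparable length. One small caveat in your write-up: when you compute $\nabla^{U}_{v}W\big|_{m}=\frac{d}{dt}\big|_{0}\bar U(m,\sigma_{t})w$, you say this uses "the expansion of $\bar U$ above," but the expansion you displayed was for $\bar U(n,m)$, which carries the opposite sign; you want the expansion of $\bar U(m,\cdot)$ here, and the $+$ sign in $(D_{2}\bar U)(m,m)(v)\cdot w$ is the correct one for that. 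This is only an expository slip, not a gap in the argument.
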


\begin{proof}
By definition, $\nabla_{v_{m}}^{U}\alpha$ is determined by the product rule,
\begin{equation}
v_{m}\left[  \alpha\left(  Y\right)  \right]  =\left(  \nabla_{v_{m}}%
^{U}\alpha\right)  \left(  Y\left(  m\right)  \right)  +\alpha_{m}\left(
\nabla_{v_{m}}^{U}Y\right)  . \label{equ.4.1}%
\end{equation}
However, we may also write
\begin{align*}
v_{m}\left[  \alpha\left(  Y\right)  \right]   &  =\frac{d}{dt}|_{0}%
\alpha\left(  U\left(  m,\sigma_{t}\right)  ^{-1}U\left(  m,\sigma_{t}\right)
Y\left(  \sigma_{t}\right)  \right) \\
&  =\frac{d}{dt}|_{0}\alpha\left(  U\left(  m,\sigma_{t}\right)  ^{-1}Y\left(
m\right)  \right)  +\alpha_{m}\left(  \nabla_{v_{m}}^{U}Y\right)
\end{align*}
where $\sigma_{t}$ is such that $\dot{\sigma}_{0}=v_{m}$. Combining the last
two facts shows that%
\begin{equation}
\nabla_{v_{m}}^{U}\alpha=\frac{d}{dt}|_{0}\left[  \alpha\circ U\left(
m,\sigma_{t}\right)  ^{-1}\right]  . \label{equ.4.2}%
\end{equation}
By Corollary \ref{cor.2.29}, we may alternatively write Eq. (\ref{equ.4.2}) as%
\[
\nabla_{v_{m}}^{U}\alpha=\frac{d}{dt}|_{0}\left[  \alpha\circ U\left(
\sigma_{t},m\right)  \right]  .
\]

To prove the lemma, we note this is a local result and we therefore may assume
$M=\mathbb{R}^{d}.$ Then by Taylor's theorem,%
\begin{align*}
\alpha_{n}\circ U\left(  n,m\right)   &  =\alpha_{m}+D\left[  \alpha_{\left(
\cdot\right)  }\circ U\left(  \cdot,m\right)  \right]  \left(  m\right)
\left(  n-m\right)  +O\left(  \left\vert n-m\right\vert ^{2}\right) \\
&  =\alpha_{m}+\nabla_{\left(  n-m\right)  _{m}}^{U}\alpha+O\left(  \left\vert
n-m\right\vert ^{2}\right) \\
&  =\alpha_{m}+\nabla_{\psi\left(  m,n\right)  }^{U}\alpha+O\left(  \left\vert
\psi\left(  m,n\right)  \right\vert ^{2}\right)  .
\end{align*}

\end{proof}

Suppose that $\alpha\in\Omega^{1}\left(  M,V\right)  $ is a $V$ -- valued
one-form and $U$ is a parallelism on $M.$ We wish to take $\alpha_{s}%
^{U}=\alpha_{y_{s}}:=\alpha|_{T_{y_{s}M}}.$ Making use of Lemma \ref{lem.4.1},
we find%
\begin{equation}
\alpha_{t}^{U}\circ U\left(  y_{t},y_{s}\right)  -\alpha_{s}\underset{^{2}%
}{\approx}\nabla_{\psi\left(  y_{s},y_{t}\right)  }^{U}\alpha\underset{^{2}%
}{\approx}\nabla_{y_{s}^{\dag}x_{s,t}}^{U}\alpha\label{equ.4.3}%
\end{equation}
and this computation suggests the following proposition.

\begin{proposition}
\label{pro.4.2}Suppose that $\alpha\in\Omega^{1}\left(  M,V\right)  $ is a $V$
-- valued one-form and $U$ is a parallelism on $M,$ then
\[
\boldsymbol{\alpha}_{s}^{\left(  y,U\right)  }:=\left(  \alpha_{y_{s}}%
,\alpha_{s}^{\dag\left(  y,U\right)  }\right)  :=\left(  \alpha|_{T_{y_{s}M}%
},\nabla_{y_{s}^{\dag}\left(  \cdot\right)  }^{U}\alpha\right)  \in
CRP_{y}^{U}\left(  M,V\right)  .
\]

\end{proposition}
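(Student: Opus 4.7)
The plan is to verify the four conditions of Definition \ref{def.3.1} for $\boldsymbol{\alpha}_s^{(y,U)} = (\alpha_{y_s}, \alpha_s^{\dag(y,U)})$, where the second component is interpreted by $\alpha_s^{\dag(y,U)}(w\otimes v) = (\nabla^U_{y_s^\dag w}\alpha)_{y_s}(v)$ for $w\in W$, $v\in T_{y_s}M$. Conditions 1 and 2 are immediate: $\alpha_{y_s} = \alpha|_{T_{y_s}M}$ lies in $L(T_{y_s}M,V)$, while the smoothness of $\nabla^U\alpha$ together with continuity of $s\mapsto y_s^{\dag}$ (see Remark \ref{rem.2.42}) ensures that $s\mapsto \alpha_s^{\dag(y,U)}$ is a continuous $L(W\otimes T_{y_s}M,V)$-valued function.

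For condition 3, apply Lemma \ref{lem.4.1} with $m=y_s,\,n=y_t$ and patch the resulting local estimates via Remark \ref{rem.2.52} to obtain
\[
\alpha_{y_t}\circ U(y_t,y_s) - \alpha_{y_s} \underset{^{2}}{\approx} \nabla^U_{\psi(y_s,y_t)}\alpha.
\]
Since $\mathbf{y}\in CRP_{\mathbf{X}}(M)$, Inequality (\ref{equ.2.22}) gives $\psi(y_s,y_t) - y_s^{\dag}x_{s,t}\underset{^{2}}{\approx}0$, and because $\nabla^U\alpha$ is smooth (hence bounded on the compact set $y([0,T])$), we may substitute $y_s^{\dag}x_{s,t}$ into the linear slot of $\nabla^U\alpha$ at an $O(\omega(s,t)^{2/p})$ cost, which yields
\[
\alpha_{y_t}\circ U(y_t,y_s) - \alpha_{y_s} \underset{^{2}}{\approx} (\nabla^U_{y_s^{\dag}x_{s,t}}\alpha)_{y_s} = \alpha_s^{\dag(y,U)}(x_{s,t}\otimes\cdot),
\]
as required.

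For condition 4, set $\beta := \nabla^U\alpha \in \Gamma(L(TM\otimes TM,V))$; item 4 amounts to bounding
$\beta_{y_t}(y_t^{\dag}w\otimes U(y_t,y_s)v) - \beta_{y_s}(y_s^{\dag}w\otimes v)$
uniformly by $C|w||v|\omega(s,t)^{1/p}$. Inserting $U(y_t,y_s)y_s^{\dag}$ in place of $y_t^{\dag}$ in the first term produces the telescoping decomposition
\[
\beta_{y_t}\bigl((y_t^{\dag} - U(y_t,y_s)y_s^{\dag})w \otimes U(y_t,y_s)v\bigr) + \bigl[\beta_{y_t}\circ U(y_t,y_s)^{\otimes 2} - \beta_{y_s}\bigr](y_s^{\dag}w\otimes v).
\]
The first summand is $O(|w||v|\omega(s,t)^{1/p})$ by Inequality (\ref{equ.2.23}). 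For the second, a local chart computation analogous to the proof of Lemma \ref{lem.4.1}, but carried out only to first order, gives $\beta_n\circ U(n,m)^{\otimes 2} - \beta_m =_1 0$ for any smooth $L(TM\otimes TM,V)$-valued tensor field $\beta$; patching via Remark \ref{rem.2.52} along with the bounds $|y_s^{\dag}w|_g\leq C|w|$ and continuity of $|v|_g$ then controls the remaining factors.

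The main obstacle is condition 4, but since only a first-order estimate is required it reduces to the Lipschitz continuity of a smooth $V$-valued function on a compact neighborhood of the diagonal in $M\times M$ --- essentially a repetition of the Lemma \ref{lem.4.1} argument without the second-order Taylor remainder, with no new analytic input beyond the two CRP estimates \eqref{equ.2.22}--\eqref{equ.2.23}.
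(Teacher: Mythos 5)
Your proof is correct and takes essentially the same route as the paper's: condition 3 via Lemma \ref{lem.4.1}, and condition 4 by replacing $y_t^\dag$ with $U(y_t,y_s)y_s^\dag$ and then applying a first-order Taylor estimate to a smooth tensor field --- precisely the paper's auxiliary map $F(n)=\left(\nabla^U_{U(n,m)v_m}\alpha\right)U(n,m)$, which in your notation is $\beta_n\circ U(n,m)^{\otimes 2}$ acting on $v_m\otimes(\cdot)$. The one step you leave implicit is that converting the bound in Eq. (\ref{equ.2.23}) (on $U(y_s,y_t)y_t^\dag - y_s^\dag$) into a bound on $y_t^\dag - U(y_t,y_s)y_s^\dag$ also requires Corollary \ref{cor.2.29}, which the paper cites explicitly at that point.
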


\begin{proof}
In light of how $\boldsymbol{\alpha}_{s}^{y,U}$ has been defined and of Eq.
(\ref{equ.4.3}), we need only verify Item \ref{Ite.23} in Definition
\ref{def.3.1} is satisfied. To this end, suppose that $w\in W,$ then%
\begin{align}
\alpha_{t}^{\dag\left(  y,U\right)  }\circ\left(  I\otimes U\left(
y_{t},y_{s}\right)  \right)  \left(  w\otimes\left(  \cdot\right)  \right)
&  =\left(  \nabla_{y_{t}^{\dag}w}^{U}\alpha\right)  U\left(  y_{t}%
,y_{s}\right) \nonumber\\
&  \underset{^{1}}{\approx}\left(  \nabla_{U\left(  y_{t},y_{s}\right)
y_{s}^{\dag}w}^{U}\alpha\right)  U\left(  y_{t},y_{s}\right)  \label{equ.4.4}%
\end{align}
wherein we have used Inequality (\ref{equ.2.23}) along with Corollary
\ref{cor.2.29} in the last line. Since for $v_{m}\in T_{m}M$ the function
$F\left(  n\right)  :=\left(  \nabla_{U\left(  n,m\right)  v_{m}}^{U}%
\alpha\right)  U\left(  n,m\right)  \in L\left(  T_{m}M,V\right)  $ is smooth,
it follows by Taylor's theorem that $F\left(  n\right)  =_{1}F\left(
m\right)  $ which translates to%
\[
\left(  \nabla_{U\left(  n,m\right)  v_{m}}^{U}\alpha\right)  U\left(
n,m\right)  =_{1}\nabla_{v_{m}}^{U}\alpha.
\]
Taking $m=y_{s},$ $n=y_{t},$ and $v_{m}=y_{s}^{\dag}w$ in this estimates shows%
\[
\left(  \nabla_{U\left(  y_{t},y_{s}\right)  y_{s}^{\dag}w}^{U}\alpha\right)
U\left(  y_{t},y_{s}\right)  \underset{^{1}}{\approx}\nabla_{y_{s}^{\dag}%
w}^{U}\alpha
\]
which combined with Eq. (\ref{equ.4.4}) completes the proof.
\end{proof}

\begin{theorem}
\label{the.4.3}If $\alpha\in\Omega^{1}\left(  M,V\right)  $ is a $V$ -- valued
one-form, then the integral $\int\left\langle \boldsymbol{\alpha}^{\left(
y,U\right)  },d\mathbf{y}^{\mathcal{G}}\right\rangle $ is independent of any
choice of gauge $\mathcal{G}=\left(  \psi,U\right)  $ on $M.$ In the future we
denote this integral more simply as $\int\left\langle \boldsymbol{\alpha
}\mathbf{,}d\mathbf{y}\right\rangle .$
\end{theorem}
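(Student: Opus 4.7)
The plan is to leverage Corollary \ref{cor.3.31} and Theorem \ref{the.3.32} to reduce gauge independence to a compatibility identity relating $\nabla^{U}\alpha$ and $\nabla^{\tilde U}\alpha$ via the compatibility tensor. First, Corollary \ref{cor.3.31} already tells us that $\int\langle \boldsymbol{\alpha}^{(y,U)}, d\mathbf{y}^{\mathcal{G}}\rangle$ depends on $\mathcal{G}=(\psi,U)$ only through $U$. So it suffices to fix two parallelisms $U$ and $\tilde U$ (with accompanying logarithms to form gauges $\mathcal{G}$ and $\tilde{\mathcal{G}}$) and prove
\[
\int\langle \boldsymbol{\alpha}^{(y,U)},d\mathbf{y}^{\mathcal{G}}\rangle
=\int\langle \boldsymbol{\alpha}^{(y,\tilde U)},d\mathbf{y}^{\tilde{\mathcal{G}}}\rangle.
\]

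Next, I would invoke Theorem \ref{the.3.32}: the bijection $\boldsymbol{\alpha}\mapsto\tilde{\boldsymbol{\alpha}}$ between $CRP_y^U(M,V)$ and $CRP_y^{\tilde U}(M,V)$ defined by
\[
\tilde{\boldsymbol{\alpha}}_s
=\bigl(\alpha_s,\;\alpha_s^{\dag}+\alpha_s\circ S_{y_s}^{\tilde U,U}(y_s^{\dag}\otimes I)\bigr)
\]
preserves the integral. Applying this to $\boldsymbol{\alpha}^{(y,U)}_s=(\alpha_{y_s},\nabla^U_{y_s^{\dag}(\cdot)}\alpha)$, we obtain a pair whose first component is $\alpha_{y_s}$ and whose second component sends $w\otimes v_{y_s}$ to
\[
(\nabla^U_{y_s^{\dag}w}\alpha)(v_{y_s})
+\alpha_{y_s}\bigl[S_{y_s}^{\tilde U,U}(y_s^{\dag}w\otimes v_{y_s})\bigr].
\]
Thus the proof reduces to showing that this equals $(\nabla^{\tilde U}_{y_s^{\dag}w}\alpha)(v_{y_s})$; equivalently, that for every $v_m,w_m\in T_mM$,
\begin{equation}\label{equ.crucial.pt}
(\nabla^{\tilde U}_{v_m}\alpha)(w_m)
=(\nabla^U_{v_m}\alpha)(w_m)
+\alpha_m\bigl[S_m^{\tilde U,U}(v_m\otimes w_m)\bigr].
\end{equation}

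The identity \eqref{equ.crucial.pt} is the main (only) technical step, but it is essentially a routine Leibniz-rule computation, which I would execute as follows. For any vector field $Y$ with $Y(m)=w_m$, the product rule gives $v_m[\alpha(Y)]=(\nabla^U_{v_m}\alpha)(Y(m))+\alpha_m(\nabla^U_{v_m}Y)$ and similarly for $\tilde U$; subtracting the two and applying Example \ref{exa.3.10}, which asserts
\[
\nabla^U_{v_m}Y-\nabla^{\tilde U}_{v_m}Y
=S_m^{\tilde U,U}(v_m\otimes Y(m)),
\]
yields exactly \eqref{equ.crucial.pt}. Substituting back, $\tilde{\boldsymbol{\alpha}}=\boldsymbol{\alpha}^{(y,\tilde U)}$, so Theorem \ref{the.3.32} gives the desired equality of integrals. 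Combined with Corollary \ref{cor.3.31} for the $\psi$-dependence, this establishes that $\int\langle \boldsymbol{\alpha}^{(y,U)},d\mathbf{y}^{\mathcal{G}}\rangle$ depends on neither $\psi$ nor $U$, justifying the notation $\int\langle\alpha,d\mathbf{y}\rangle$.

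If there is any obstacle at all, it is bookkeeping: keeping straight which components of the pair $\boldsymbol{\alpha}^{\dag}$ act in $W$ versus in $TM$, and matching the symmetrizations implicit in the transformation formula of Theorem \ref{the.3.32} to the tensorial action of $S^{\tilde U,U}$. But no new analytic estimates are required — the whole argument is algebraic once Theorem \ref{the.3.32} and Example \ref{exa.3.10} are in hand.
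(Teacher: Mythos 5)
Your proposal is correct and follows essentially the same route as the paper: reduce via Corollary \ref{cor.3.31} and Theorem \ref{the.3.32} to the pointwise identity $\nabla^{\tilde U}_{v_m}\alpha = \nabla^{U}_{v_m}\alpha + \alpha_m\circ S_m^{\tilde U,U}(v_m\otimes\cdot)$, then derive this from Example \ref{exa.3.10} and the Leibniz rule. The paper compresses the last step into the phrase ``when translated to the language of forms (see Eq.\ (\ref{equ.4.1}))''; you have merely unpacked that translation, which is fine.
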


\begin{proof}
Suppose that $U$ and $\tilde{U}$ are two parallelisms. According to Theorem
\ref{the.3.32} it suffices to show
\begin{equation}
\alpha_{s}^{\dag\left(  y,\tilde{U}\right)  }=\alpha_{s}^{\dag\left(
y,U\right)  }+\alpha_{y_{s}}S_{y_{s}}^{\tilde{U},U}\left[  y_{s}^{\dag}\otimes
I\right]  . \label{equ.4.5}%
\end{equation}
We will see that Eq. (\ref{equ.4.5}) is a fairly direct consequence of Example
\ref{exa.3.10} which, when translated to the language of forms (see Eq.
(\ref{equ.4.1})), states%
\begin{equation}
\nabla_{v_{m}}\alpha=\tilde{\nabla}_{v_{m}}\alpha-\alpha\circ S_{m}^{\tilde
{U},U}\left(  v_{m}\otimes\left(  \cdot\right)  \right)  . \label{equ.4.6}%
\end{equation}
So for $w\in W,$ we have%
\begin{align*}
\alpha_{s}^{\dag\left(  y,\tilde{U}\right)  }w  &  =\tilde{\nabla}%
_{y_{s}^{\dag}w}\alpha=\nabla_{y_{s}^{\dag}w}\alpha+\alpha_{y_{s}}%
S_{m}^{\tilde{U},U}\left(  y_{s}^{\dag}w\otimes\left(  \cdot\right)  \right)
\\
&  =\alpha_{s}^{\dag\left(  y,U\right)  }w+\alpha_{y_{s}}S_{m}^{\tilde{U}%
,U}\left(  y_{s}^{\dag}w\otimes\left(  \cdot\right)  \right)
\end{align*}
which proves Eq. (\ref{equ.4.5}).
\end{proof}

Let us now record a number of possible different expressions for computing
$\int_{s}^{t}\alpha\left(  d\mathbf{y}\right)  $ depending on the choice of
gauge we make.

\begin{proposition}
\label{pro.4.4}Let $\mathcal{G=}\left(  \psi,U\right)  $ be a gauge. There
exists a $\delta>0$ such that for $s<t$ and $t-s<\delta,$ the approximation
\[
\left[  \int\alpha\left(  d\mathbf{y}\right)  \right]  _{s,t}^{1}%
\underset{^{3}}{\approx}\alpha_{y_{s}}\left(  \psi\left(  y_{s},y_{t}\right)
\right)  +\left[  \left(  \nabla_{\left(  \cdot\right)  }^{U}\alpha\right)
_{y_{s}}+\alpha_{y_{s}}\circ S_{y_{s}}^{\mathcal{G}}\right]  \circ y_{s}%
^{\dag\otimes2}\mathbb{X}_{s,t}%
\]
holds.
\end{proposition}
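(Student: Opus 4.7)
The plan is to read this formula off directly from the construction of the integral in Theorem \ref{the.3.21} specialized to the controlled one-form $\boldsymbol{\alpha}^{(y,U)} = (\alpha_{y_s}, \nabla_{y_s^{\dag}(\cdot)}^{U}\alpha)$ built in Proposition \ref{pro.4.2}. By Theorem \ref{the.4.3} the integral $\int\alpha(d\mathbf{y})$ does not depend on the gauge used in its construction, so in particular we may evaluate it using the specific gauge $\mathcal{G}=(\psi,U)$ appearing in the statement; this is what makes the right-hand side gauge-dependent while the left-hand side is not.

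First I would invoke Theorem \ref{the.3.21} applied to $\boldsymbol{\alpha}^{(y,U)}$ to get, for all $s<t$ with $t-s$ smaller than some $\delta>0$,
\[
\left[\int\langle \boldsymbol{\alpha}^{(y,U)},d\mathbf{y}^{\mathcal{G}}\rangle\right]_{s,t}^{1}\underset{^{3}}{\approx}\langle \boldsymbol{\alpha}_{s}^{(y,U)},\mathbf{y}_{s,t}^{\mathcal{G}}\rangle,
\]
and then expand the right-hand side using Definition \ref{def.3.20}. Writing out $\mathbf{y}_{s,t}^{\mathcal{G}}$ yields
\[
\alpha_{y_{s}}\!\left(\psi(y_{s},y_{t})+S_{y_{s}}^{\mathcal{G}}(y_{s}^{\dag\otimes 2}\mathbb{X}_{s,t})\right)+\alpha_{s}^{\dag(y,U)}(I\otimes y_{s}^{\dag})\mathbb{X}_{s,t}.
\]

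Next I would identify the last term using the explicit form of the Gubinelli derivative from Proposition \ref{pro.4.2}, namely $\alpha_{s}^{\dag(y,U)}(w\otimes v_{y_{s}})=(\nabla_{y_{s}^{\dag}w}^{U}\alpha)(v_{y_{s}})$ for $w\in W$ and $v_{y_{s}}\in T_{y_{s}}M$. Under the identification in Remark \ref{rem.2.8} and the convention in Notation \ref{not.2.11}, this gives on a simple tensor $w\otimes\tilde{w}\in W\otimes W$
\[
\alpha_{s}^{\dag(y,U)}(I\otimes y_{s}^{\dag})(w\otimes\tilde{w})=(\nabla_{y_{s}^{\dag}w}^{U}\alpha)(y_{s}^{\dag}\tilde{w})=\bigl[(\nabla_{(\cdot)}^{U}\alpha)_{y_{s}}\circ y_{s}^{\dag\otimes 2}\bigr](w\otimes\tilde{w}),
\]
and by linearity the identity persists with $\mathbb{X}_{s,t}$ in place of $w\otimes\tilde{w}$. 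Collecting the two terms that carry a factor of $y_{s}^{\dag\otimes 2}\mathbb{X}_{s,t}$ then produces exactly the displayed formula of the proposition.

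The main obstacle here is purely notational: one must carefully track the tensor-product identifications introduced in Remark \ref{rem.2.8} and Notation \ref{not.2.11} and verify that $(\nabla^{U}_{(\cdot)}\alpha)_{y_{s}}\circ y_{s}^{\dag\otimes 2}$ is the correct way to rewrite $\alpha_{s}^{\dag(y,U)}\circ(I\otimes y_{s}^{\dag})$ acting on $\mathbb{X}_{s,t}$. No new analytic estimates are needed beyond those baked into Theorem \ref{the.3.21}; the statement is a direct rewriting of its conclusion, so $\delta$ is the same as the $\delta$ guaranteed by that theorem (and by Definition \ref{def.3.20}).
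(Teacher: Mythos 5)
Your proof is correct and is exactly the argument the paper has in mind (the paper states Proposition \ref{pro.4.4} without proof precisely because it is the direct expansion you give): compute $\int\alpha(d\mathbf{y})$ as $\int\langle\boldsymbol{\alpha}^{(y,U)},d\mathbf{y}^{\mathcal{G}}\rangle$ with the gauge $\mathcal{G}=(\psi,U)$, expand $\langle\boldsymbol{\alpha}_s^{(y,U)},\mathbf{y}_{s,t}^{\mathcal{G}}\rangle$ via Definition \ref{def.3.20}, and then recognize $\alpha_s^{\dag(y,U)}\circ(I\otimes y_s^{\dag})$ as $(\nabla^U_{(\cdot)}\alpha)_{y_s}\circ y_s^{\dag\otimes 2}$ using Proposition \ref{pro.4.2}. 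Your identification of the last term is carried out carefully and correctly; the only small remark is that invoking Theorem \ref{the.4.3} is slightly more than is needed, since by Theorem \ref{the.3.21} the displayed approximation holds for this particular $\mathcal{G}$ by construction, and Theorem \ref{the.4.3} merely guarantees the left-hand side can be labeled $\int\alpha(d\mathbf{y})$ without reference to $\mathcal{G}$.
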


In the case that we take $U=U^{\psi},$ we get a slightly simpler formula.

\begin{corollary}
\label{cor.4.5}Let $\psi$ be a logarithm. There exists a $\delta>0$ such that
for $s<t$ and $t-s<\delta,$ the approximation%
\[
\left[  \int\alpha\left(  d\mathbf{y}\right)  \right]  _{s,t}^{1}%
\underset{^{3}}{\approx}\alpha_{y_{s}}\left(  \psi\left(  y_{s},y_{t}\right)
\right)  +d\left(  \alpha_{\left(  \cdot\right)  }\circ\left(  \psi_{\left(
\cdot\right)  }\right)  _{\ast y_{s}}\right)  _{y_{s}}\circ y_{s}^{\dag
\otimes2}\mathbb{X}_{s,t}%
\]
holds.
\end{corollary}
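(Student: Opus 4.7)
The plan is to apply Proposition \ref{pro.4.4} to the specific gauge $\mathcal{G}^{\psi} := (\psi, U^{\psi})$ constructed from the given logarithm as in Remark \ref{rem.2.21}, where $U^{\psi}(n,m) := \psi(n,\cdot)_{\ast m}$. With this choice the formula in Proposition \ref{pro.4.4} will simplify in two ways: the compatibility tensor piece will disappear and the covariant-derivative piece will reduce to the plain differential appearing in the corollary. Theorem \ref{the.4.3} guarantees that the integral $\int \alpha(d\mathbf{y})$ is independent of the parallelism used to compute it, so we are free to specialize to $\mathcal{G}^{\psi}$.

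First I would observe that the compatibility tensor term vanishes. Indeed, by Notation \ref{not.3.12} we have $S^{\mathcal{G}^{\psi}} = S^{\psi_{\ast}, U^{\psi}}$, and by the very definition of $U^{\psi}$ this is $S^{\psi_{\ast}, \psi_{\ast}} = 0$. Hence
\[
\alpha_{y_{s}} \circ S_{y_{s}}^{\mathcal{G}^{\psi}} \circ y_{s}^{\dag\otimes 2}\mathbb{X}_{s,t} = 0.
\]

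Next I would identify the term $(\nabla^{U^{\psi}}_{(\cdot)} \alpha)_{y_{s}}$. From the proof of Lemma \ref{lem.4.1} we have the formula (obtained from Eq.~\eqref{equ.4.2} combined with Corollary \ref{cor.2.29})
\[
\nabla_{v_m}^{U} \alpha = \frac{d}{dt}\bigg|_{0} \bigl[\alpha \circ U(\sigma_t, m)\bigr],
\]
valid for any smooth curve $\sigma$ with $\dot{\sigma}(0) = v_m$. Specializing to $U = U^{\psi}$ and $m = y_s$, and using $U^{\psi}(\sigma_t, y_s) = (\psi_{\sigma_t})_{\ast y_s}$, we obtain
\[
\nabla_{v_{y_s}}^{U^{\psi}} \alpha = \frac{d}{dt}\bigg|_{0} \bigl[\alpha_{\sigma_t} \circ (\psi_{\sigma_t})_{\ast y_s}\bigr] = d\bigl(\alpha_{(\cdot)} \circ (\psi_{(\cdot)})_{\ast y_s}\bigr)_{y_s}(v_{y_s}).
\]
Viewed as an element of $L(T_{y_s}M \otimes T_{y_s}M, V)$, this is precisely $d(\alpha_{(\cdot)} \circ (\psi_{(\cdot)})_{\ast y_s})_{y_s}$.

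Substituting these two simplifications into the expression supplied by Proposition \ref{pro.4.4} immediately yields the claimed approximation. There is no real obstacle here: the entire content of the corollary is the identification of the covariant derivative $\nabla^{U^{\psi}}\alpha$ with the naive differential of $m \mapsto \alpha_m \circ (\psi_m)_{\ast y_s}$, which is direct from the definition of $U^{\psi}$ via the chain rule, together with the vanishing of $S^{\mathcal{G}^{\psi}}$.
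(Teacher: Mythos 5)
Your proof is correct and is precisely the paper's intended argument: Corollary \ref{cor.4.5} is stated as the specialization of Proposition \ref{pro.4.4} to the gauge $\mathcal{G}^{\psi}=(\psi,U^{\psi})$, where $S^{\mathcal{G}^{\psi}}=S^{U^{\psi},U^{\psi}}=0$ kills the compatibility term and the formula $\nabla^{U^{\psi}}_{v_{y_s}}\alpha=\frac{d}{dt}|_{0}[\alpha\circ U^{\psi}(\sigma_t,y_s)]$ from the proof of Lemma \ref{lem.4.1} identifies $(\nabla^{U^{\psi}}_{(\cdot)}\alpha)_{y_s}$ with $d(\alpha_{(\cdot)}\circ(\psi_{(\cdot)})_{\ast y_s})_{y_s}$.
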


\begin{example}
\label{exa.4.6}Let $\nabla$ be a covariant derivative on $M.$ There exists a
$\delta>0$ such that for $s<t$ and $t-s<\delta,$ the approximation%
\[
\left[  \int\alpha\left(  d\mathbf{y}\right)  \right]  _{s,t}^{1}%
\underset{^{3}}{\approx}\alpha_{y_{s}}\left(  \left(  \exp_{y_{s}}^{\nabla
}\right)  ^{-1}\left(  y_{t}\right)  \right)  +\left[  \left(  \nabla
\alpha\right)  _{y_{s}}+\frac{1}{2}\alpha_{y_{s}}\circ T_{y_{s}}^{\nabla
}\right]  \circ y_{s}^{\dag\otimes2}\mathbb{X}_{s,t}%
\]
holds. Indeed this follows immediately from Proposition \ref{pro.4.4}, Lemma
\ref{lem.3.13}, and the fact that
\begin{align*}
\left(  \nabla\alpha\right)  _{y_{s}}\left(  v_{m},w_{m}\right)  :=  &
~v_{m}\left[  \alpha\left(  \boldsymbol{W}\right)  \right]  -\alpha\left(
\nabla_{v_{m}}\boldsymbol{W}\right) \\
=  &  ~d\left(  \alpha_{\left(  \cdot\right)  }\circ\boldsymbol{W}\left(
\cdot\right)  \right)  _{y_{s}}\left(  v_{m}\right)  -\alpha\left(
\nabla_{v_{m}}\boldsymbol{W}\right)
\end{align*}
where $\boldsymbol{W}$ is any vector field such that $\boldsymbol{W}\left(
m\right)  =w_{m}$. Choosing $\boldsymbol{W}=U^{\nabla}\left(  \cdot,m\right)
w_{m}$, we have
\[
\nabla_{v_{m}}\boldsymbol{W}=\nabla_{v_{m}}U^{\nabla}\left(  \cdot,m\right)
w_{m}=0
\]
by the definition of parallel translation.
\end{example}

\subsection{Integration of a One-Form Using Charts\label{sub.4.1}}

It is easy to see that by independence of gauges, the integral of a one-form
along $\left(  y_{s},y_{s}^{\dag}\right)  $ is an object which we only need to
compute locally. As mentioned in Remark \ref{rem.2.22} we have an example of a
local gauge by using a chart. Plugging this formula into the integral
approximation from Corollary \ref{cor.4.5}, we get the following.

\begin{corollary}
\label{cor.4.7}Let $\phi$ be a chart on $M$. For all $a,b\in\left[
0,T\right]  $ such that $y\left[  a,b\right]  \subset D\left(  \phi\right)  $,
we have the approximation%
\begin{equation}
\left[  \int\alpha\left(  d\mathbf{y}\right)  \right]  _{s,t}^{1}%
\underset{^{3}}{\approx}\alpha_{y_{s}}\left(  \left(  d\phi_{y_{s}}\right)
^{-1}\left[  \phi\left(  y_{t}\right)  -\phi\left(  y_{s}\right)  \right]
\right)  +d\left(  \alpha_{\left(  \cdot\right)  }\circ\left(  d\phi_{\left(
\cdot\right)  }\right)  ^{-1}d\phi_{y_{s}}\right)  _{y_{s}}\circ y_{s}%
^{\dag\otimes2}\mathbb{X}_{s,t} \label{equ.4.7}%
\end{equation}
holds for all $s<t$ $\in\left[  a,b\right]  $.
\end{corollary}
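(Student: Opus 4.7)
The plan is to apply Corollary \ref{cor.4.5} to the \emph{chart-induced} (local) logarithm
\[
\psi^{\phi}(m,n) := (d\phi_{m})^{-1}\bigl[\phi(n)-\phi(m)\bigr]
\]
from Remark \ref{rem.2.22} and then read off the two terms on the right-hand side. Since $y([a,b]) \subset D(\phi)$, there is a $\delta_{\phi,a,b} > 0$ such that $(y_s, y_t) \in D(\phi) \times D(\phi) = D(\psi^{\phi})$ whenever $s,t \in [a,b]$ with $t-s < \delta_{\phi,a,b}$. Thus all of the terms appearing in the statement of Corollary \ref{cor.4.5} with $\psi = \psi^{\phi}$ make sense for such $(s,t)$, which is enough because the $\underset{3}{\approx}$ relation in Corollary \ref{cor.4.5} is a purely local statement (and Theorem \ref{the.4.3} guarantees the integral is gauge-independent, so the local chart gauge may be used on the subinterval $[a,b]$).

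Next I would compute the two ingredients appearing in Corollary \ref{cor.4.5}:
\begin{enumerate}
\item From the definition, $\psi^{\phi}(y_s, y_t) = (d\phi_{y_s})^{-1}[\phi(y_t) - \phi(y_s)]$, which matches the first term in Eq.~(\ref{equ.4.7}).
\item For the second term, I need the push-forward $\psi^{\phi}(m,\cdot)_{*n}: T_n M \to T_m M$. Differentiating $n \mapsto (d\phi_m)^{-1}[\phi(n) - \phi(m)]$ gives
\[
\psi^{\phi}(m,\cdot)_{*n}(v_n) = (d\phi_m)^{-1} d\phi_n(v_n),
\]
so $\psi^{\phi}(m,\cdot)_{*n} = (d\phi_m)^{-1} \circ d\phi_n$. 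Taking $n = y_s$ yields $(\psi^{\phi}_{(\cdot)})_{*y_s} = (d\phi_{(\cdot)})^{-1} \circ d\phi_{y_s}$, matching exactly the expression differentiated on the right of Eq.~(\ref{equ.4.7}).
\end{enumerate}

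Substituting (1) and (2) into the formula of Corollary \ref{cor.4.5} immediately produces Eq.~(\ref{equ.4.7}) for $t - s$ smaller than some $\delta > 0$. The only conceptual subtlety — really a formality — is justifying use of the chart-induced local logarithm in a corollary nominally phrased for (global) logarithms. This is handled either by extending $\psi^\phi$ to a global logarithm via a partition-of-unity / cut-off argument off a compact neighborhood of $y([a,b])$ (the two versions agree near the path, so the approximation is unchanged), or by observing that the derivation of Corollary \ref{cor.4.5} is entirely local and rests only on having $\psi$ defined on a neighborhood of the diagonal of $y([a,b])$. Either way, no further work is needed, so the main obstacle is really only bookkeeping: matching notation to confirm that the ``derivative-of-$\alpha$-composed-with-the-logarithm-differential'' term produced by Corollary \ref{cor.4.5} literally coincides with the chart-wise expression $d(\alpha_{(\cdot)} \circ (d\phi_{(\cdot)})^{-1} d\phi_{y_s})_{y_s}$ in Eq.~(\ref{equ.4.7}).
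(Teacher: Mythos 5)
Your proof is correct and follows essentially the same route the paper takes: invoke gauge-independence (Theorem \ref{the.4.3}), specialize Corollary \ref{cor.4.5} to the chart-induced local logarithm $\psi^{\phi}$ of Remark \ref{rem.2.22}, and verify that $\psi^{\phi}(y_s,y_t) = (d\phi_{y_s})^{-1}[\phi(y_t)-\phi(y_s)]$ and $(\psi^{\phi}_{(\cdot)})_{*y_s} = (d\phi_{(\cdot)})^{-1}\circ d\phi_{y_s}$. The paper states this as a one-line consequence of Corollary \ref{cor.4.5}; your version just makes the bookkeeping explicit.
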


Although this formula looks a bit complicated, it may be reduced to something
that makes more sense. First, note that
\[
\alpha_{m}\circ\left(  d\phi_{m}\right)  ^{-1}=\left[  \left(  \phi
^{-1}\right)  ^{\ast}\alpha\right]  _{\phi\left(  m\right)  }.
\]
Thus we can reduce the right hand side Eq. (\ref{equ.4.7}) to%
\begin{align*}
&  \left[  \left(  \phi^{-1}\right)  ^{\ast}\alpha\right]  _{\phi\left(
y_{s}\right)  }\left(  \phi\left(  y_{t}\right)  -\phi\left(  y_{s}\right)
\right)  +d\left(  \left[  \left(  \phi^{-1}\right)  ^{\ast}\alpha\right]
_{\phi\left(  \cdot\right)  }d\phi_{y_{s}}\right)  _{y_{s}}\circ y_{s}%
^{\dag\otimes2}\mathbb{X}_{s,t}\\
&  =\left[  \left(  \phi^{-1}\right)  ^{\ast}\alpha\right]  _{\phi\left(
y_{s}\right)  }\left(  \phi\left(  y_{t}\right)  -\phi\left(  y_{s}\right)
\right)  +\left[  \left(  \phi^{-1}\right)  ^{\ast}\alpha\right]
_{\phi\left(  y_{s}\right)  }^{\prime}\left[  d\phi_{y_{s}}\circ y_{s}^{\dag
}\right]  ^{\otimes2}\mathbb{X}_{s,t}.
\end{align*}
Now, if we recall Notation \ref{not.2.41}, we see that this is approximately
equal to another rough integral. More precisely%
\[
\left[  \int\alpha\left(  d\mathbf{y}\right)  \right]  _{s,t}^{1}%
\underset{^{3}}{\approx}\left[  \int\left(  \phi^{-1}\right)  ^{\ast}%
\alpha\left(  d\phi_{\ast}\mathbf{y}\right)  \right]  _{s,t}^{1}.
\]
However, additive functionals are unique up to this order, so in fact
\[
\left[  \int\alpha\left(  d\mathbf{y}\right)  \right]  _{s,t}^{1}=\left[
\int\left(  \phi^{-1}\right)  ^{\ast}\alpha\left(  d\phi_{\ast}\mathbf{y}%
\right)  \right]  _{s,t}^{1}%
\]
which is a relation which should hold under any reasonable integral. This is
summarized in the following theorem which gives us an alternative way of
defining this integral.

\begin{theorem}
\label{the.4.8}The integral, $\int\alpha\left(  d\mathbf{y}\right)  ,$ is the
unique $V$ -- valued rough path controlled by $\mathbf{X}$ on $\left[
0,T\right]  $ starting at $0$ determined by

\begin{enumerate}
\item $\left[  \int\alpha\left(  d\mathbf{y}\right)  \right]  _{s,t}%
^{1}=\left[  \int\left(  \left(  \phi^{-1}\right)  ^{\ast}\alpha\right)
\left(  d\phi_{\ast}\mathbf{y}\right)  \right]  _{s,t}^{1}$ for any chart and
$s<t\in\left[  0,T\right]  $ such that $y\left(  \left[  s,t\right]  \right)
\subset D\left(  \phi\right)  $

\item $\left[  \int\alpha\left(  d\mathbf{y}\right)  \right]  _{s}^{\dag
}=\alpha_{y_{s}}\circ y_{s}^{\dag}.$
\end{enumerate}

[See Theorem \ref{the.4.15} below for a more general version of this theorem.]
\end{theorem}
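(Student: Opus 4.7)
The plan is to prove existence and uniqueness separately, both following naturally from the material already developed.

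For \textbf{existence}, I would take $\mathbf{z} := \int\alpha(d\mathbf{y})$ as constructed in Theorem \ref{the.4.3}. Property (2) is immediate from the construction in Theorem \ref{the.3.21}, which already guarantees $z_s^{\dag} = \alpha_{y_s}\circ y_s^{\dag}$. For property (1), fix a chart $\phi$ and an interval $[s,t]$ with $y([s,t])\subseteq D(\phi)$. Since Theorem \ref{the.4.3} asserts gauge-independence, I may compute the integral using the local gauge $\mathcal{G}^{\phi} = (\psi^{\phi}, U^{\phi})$ obtained by pulling back the flat gauge on $\mathbb{R}^d$ via $\phi$ (Remark \ref{rem.2.22}). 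Corollary \ref{cor.4.7} then supplies an explicit third-order approximation of $[\int\alpha(d\mathbf{y})]_{s,t}^{1}$, and the unraveling performed in the discussion immediately following that corollary rewrites this approximation as
\[
[(\phi^{-1})^{\ast}\alpha]_{\phi(y_s)}(\phi(y_t)-\phi(y_s)) + [(\phi^{-1})^{\ast}\alpha]'_{\phi(y_s)}[d\phi_{y_s}\circ y_s^{\dag}]^{\otimes 2}\mathbb{X}_{s,t}.
\]
This is precisely the flat-space second-order expression from Theorem \ref{the.2.9} for the increment of $\int((\phi^{-1})^{\ast}\alpha)(d\phi_{\ast}\mathbf{y})$ along the controlled rough path $\phi_{\ast}\mathbf{y} = (\phi\circ y, d\phi\circ y^{\dag})$. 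Hence the two increment processes are each $\underset{^3}{\approx}$ to a common almost-additive functional, so by Lyons' uniqueness of additive functionals (the engine behind Theorem \ref{the.3.21}) they agree exactly: $z_{s,t} = [\int ((\phi^{-1})^{\ast}\alpha)(d\phi_{\ast}\mathbf{y})]_{s,t}^{1}$.

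For \textbf{uniqueness}, suppose $\mathbf{\tilde{z}} = (\tilde z, \tilde z^{\dag})\in CRP_{\mathbf{X}}(V)$ with $\tilde z_0 = 0$ also satisfies (1) and (2). Property (2) immediately gives $\tilde z^{\dag} = z^{\dag}$. For the path itself, use compactness of $y([0,T])$: the open cover $\{y^{-1}(D(\phi))\}$, as $\phi$ ranges over the charts of $M$, together with Lemma \ref{lem.2.48}, produces a partition $0 = t_0 < t_1 < \cdots < t_l = T$ and charts $\phi_i$ with $y([t_i, t_{i+1}])\subseteq D(\phi_i)$. Applying property (1) with chart $\phi_i$ on $[t_i, t_{i+1}]$ to both $\mathbf{z}$ and $\mathbf{\tilde z}$ yields $\tilde z_{t_i, t_{i+1}} = z_{t_i, t_{i+1}}$ for each $i$. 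Telescoping (using $\tilde z_0 = z_0 = 0$) gives $\tilde z_{t_i} = z_{t_i}$ at all partition points, and the same argument applied to $[0,t]$ for arbitrary $t$ (refining the partition to include $t$) gives $\tilde z = z$ on all of $[0,T]$.

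The \textbf{main obstacle} is the bookkeeping step in the existence part: one must verify that the second-order term
\[
d\left(\alpha_{(\cdot)}\circ (d\phi_{(\cdot)})^{-1}\, d\phi_{y_s}\right)_{y_s}\circ y_s^{\dag\otimes 2}\mathbb{X}_{s,t}
\]
appearing in Corollary \ref{cor.4.7} matches the term $[(\phi^{-1})^{\ast}\alpha]'_{\phi(y_s)}[d\phi_{y_s}\circ y_s^{\dag}]^{\otimes 2}\mathbb{X}_{s,t}$ coming from the flat integrator for $\phi_{\ast}\mathbf{y}$. This amounts to a chain-rule computation recognizing $\alpha_m \circ (d\phi_m)^{-1} = [(\phi^{-1})^{\ast}\alpha]_{\phi(m)}$ and evaluating the derivative at $m = y_s$; it is routine once the identifications are in place, and is essentially carried out in the paragraph preceding the theorem statement.
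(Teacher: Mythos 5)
Your proposal is correct and mirrors the paper's own reasoning: the existence half is exactly the gauge-independence plus Corollary \ref{cor.4.7} plus the unraveling computation in the paragraph preceding the theorem statement, capped off by uniqueness of additive functionals. The uniqueness half, which the paper leaves implicit, is the natural chart-patching/telescoping argument and fills that small gap cleanly.
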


The next theorem with our current toolset can now be proved in two different
ways. We can reduce the result to a special case of Theorem \ref{the.3.24} or,
by using the chart definitions of integration along a one-form, can reduce it
to its validity in the flat case. The first method is quick but may hide the
concept of what is happening. We therefore provide both proofs.

\begin{theorem}
[Associativity Theorem II]\label{the.4.9}Suppose that $y\in CRP\left(
M\right)  ,$ $\alpha\in\Omega^{1}\left(  M,V\right)  ,$ and $K:M\rightarrow
L\left(  V,\tilde{V}\right)  $ is a smooth function so that $K\alpha\in
\Omega^{1}\left(  M,\tilde{V}\right)  .$ If $\mathbf{z}=\int\alpha\left(
d\mathbf{y}\right)  \in CRP\left(  V\right)  ,$ then%
\[
\int\left(  K\alpha\right)  \left(  d\mathbf{y}\right)  =\int\left\langle
K_{\ast}\left(  \mathbf{y}\right)  ,d\mathbf{z}\right\rangle \quad\left(
=:\int\left\langle K_{\ast}\left(  \mathbf{y}\right)  ,d\int\alpha\left(
d\mathbf{y}\right)  \right\rangle \right)  ,
\]
where $K_{\ast}\left(  \mathbf{y}\right)  =\left(  K\left(  y\right)  ,K_{\ast
y}y^{\dag}\right)  \in CRP_{\mathbf{X}}\left(  \operatorname{Hom}\left(
V,V^{\prime}\right)  \right)  .$
\end{theorem}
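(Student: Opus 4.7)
The plan is to reduce the claim to Theorem \ref{the.3.24} (Associativity I) by exhibiting $\int\langle K_{\ast}(\mathbf{y}),d\mathbf{z}\rangle$ as an integral of a product controlled one-form built out of $K_{\ast}(\mathbf{y})$ and $\boldsymbol{\alpha}^{(y,U)}$. Fix any parallelism $U$ on $M$ and pair it with any companion logarithm $\psi$ to form a gauge $\mathcal{G}=(\psi,U)$. By Proposition \ref{pro.4.2} we have $\boldsymbol{\alpha}^{(y,U)}\in CRP_{y}^{U}(M,V)$ and $(K\alpha)^{(y,U)}\in CRP_{y}^{U}(M,\tilde{V})$, so Theorem \ref{the.4.3} lets us compute
\[
\mathbf{z}=\int\alpha(d\mathbf{y})=\int\langle\boldsymbol{\alpha}^{(y,U)},d\mathbf{y}^{\mathcal{G}}\rangle\quad\text{and}\quad\int(K\alpha)(d\mathbf{y})=\int\langle(K\alpha)^{(y,U)},d\mathbf{y}^{\mathcal{G}}\rangle.
\]

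First I would verify that $\mathbf{f}:=K_{\ast}(\mathbf{y})=(K\circ y,\,dK\circ y^{\dag})$ is a genuine $\operatorname{Hom}(V,\tilde{V})$-valued controlled rough path in the sense of Definition \ref{def.2.5}. Since $\operatorname{Hom}(V,\tilde{V})$ is a Banach space, all tangent spaces are canonically identified and the two required estimates fall out of a Taylor expansion of $K$ in a chart on $M$ combined with the chart characterization of $\mathbf{y}$ given by Definition \ref{def.2.40} and Theorem \ref{the.2.44}; this amounts to the push-forward construction (Definition \ref{def.4.11}) specialized to the case of a smooth map into a Banach space.

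The essential step is to prove the pointwise identity
\[
\mathbf{f}\,\boldsymbol{\alpha}^{(y,U)}=(K\alpha)^{(y,U)}\quad\text{in }CRP_{y}^{U}(M,\tilde{V}),
\]
where the product on the left is the one from Proposition \ref{pro.3.6}. Agreement of the first components is tautological: $K(y_s)\,\alpha|_{T_{y_s}M}=(K\alpha)|_{T_{y_s}M}$. For the Gubinelli-derivative components, applying the defining Leibniz rule (\ref{equ.4.1}) to the product $K\alpha$, and using that $K$ is a Banach-space-valued function so that its covariant derivative reduces to $dK$, yields
\[
(\nabla^{U}_{v}(K\alpha))(w_m)=dK(v)\,\alpha(w_m)+K(m)\,(\nabla^{U}_{v}\alpha)(w_m)\qquad (v,w_m\in T_mM).
\]
Setting $m=y_s$ and $v=y_s^{\dag}w$ and comparing with the formula for $\mathbf{f}\boldsymbol{\alpha}^{(y,U)}$ from Proposition \ref{pro.3.6} (so that $f_s^{\dag}(w)=dK_{y_s}(y_s^{\dag}w)$ and $\alpha_s^{\dag}(w\otimes\cdot)=\nabla^{U}_{y_s^{\dag}w}\alpha$) gives the desired equality term by term. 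I expect this Leibniz-rule identification to be the main obstacle, not because it is difficult but because it is where the definition of $\boldsymbol{\alpha}^{(y,U)}$ through $\nabla^U$ and the definition of the push-forward Gubinelli derivative $dK\circ y^{\dag}$ must be seen to interlock.

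With the product identity in hand, Theorem \ref{the.3.24} applied to $\mathbf{f}$ and $\boldsymbol{\alpha}^{(y,U)}$ gives
\[
\int\langle K_{\ast}(\mathbf{y}),d\mathbf{z}\rangle=\int\langle\mathbf{f}\,\boldsymbol{\alpha}^{(y,U)},d\mathbf{y}^{\mathcal{G}}\rangle=\int\langle(K\alpha)^{(y,U)},d\mathbf{y}^{\mathcal{G}}\rangle=\int(K\alpha)(d\mathbf{y}),
\]
which is the claim. As an alternative, one could instead invoke Theorem \ref{the.4.8} to reduce both sides, locally in a chart $\phi$, to Banach-space rough integrals against $\phi_{\ast}\mathbf{y}$ and then appeal to flat-space associativity (a consequence of Theorem \ref{the.2.9}); this chart-based route sidesteps $\nabla^{U}$ entirely but obscures the manifold geometry that Theorem \ref{the.3.24} already encapsulates.
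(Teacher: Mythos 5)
Your proposal reproduces the paper's ``Method 1'' essentially verbatim: fixing a gauge, verifying $K_{\ast}(\mathbf{y})$ is a flat controlled rough path (the paper defers this to Proposition \ref{pro.4.10}), establishing the product identity $\mathbf{f}\,\boldsymbol{\alpha}^{(y,U)}=(K\alpha)^{(y,U)}$ via the $\nabla^{U}$-Leibniz rule, and concluding by Theorem \ref{the.3.24}. Your ``alternative'' chart-based route is also exactly the paper's ``Method 2,'' so the proposal is correct and takes the same approach(es).
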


\begin{proof}
\textbf{Method 1:} Letting $\mathcal{G}=\left(  \psi,U\right)  $ be any gauge,
we define $\mathbf{f}:=\left(  f,f^{\dag}\right)  \in CRP_{\mathbf{X}}\left(
\operatorname{Hom}\left(  V,\tilde{V}\right)  \right)  $ by the formula%
\[
f_{s}:=K\left(  y_{s}\right)  \quad\text{and\quad}f_{s}^{\dag}:=K_{\ast y_{s}%
}y_{s}^{\dag}%
\]
and $\boldsymbol{\alpha}$$^{\left(  y,U\right)  }$ as in Proposition
\ref{pro.4.2} (see Proposition \ref{pro.4.10} below to see why $\mathbf{f}\in
CRP_{\mathbf{X}}\left(  \operatorname{Hom}\left(  V,\tilde{V}\right)  \right)
$). Then by Theorem \ref{the.3.24}, we have%
\begin{equation}
\int\left\langle \mathbf{f}\boldsymbol{\alpha}^{\left(  y,U\right)
},d\mathbf{y}^{\mathcal{G}}\right\rangle =\int\left\langle \mathbf{f}%
,d\mathbf{z}\right\rangle \label{equ.4.8}%
\end{equation}
where $\mathbf{z}=\int\left\langle \boldsymbol{\alpha}^{\left(  y,U\right)
},d\mathbf{y}^{\mathcal{G}}\right\rangle =\int\alpha\left(  d\mathbf{y}%
\right)  $. The right hand side in Equation (\ref{equ.4.8}) is simply
$\int\left\langle K_{\ast}\left(  \mathbf{y}\right)  ,d\mathbf{z}\right\rangle
$ while the $\mathbf{f}$$\boldsymbol{\alpha}$$^{\left(  y,U\right)  }$ term on
the left hand side can be recognized as $\left(  K\alpha\right)  ^{\left(
y,U\right)  }.$ Indeed, by the product rule with $\nabla^{U}$, we have%
\begin{align*}
\left(  K\alpha\right)  _{s}^{\left(  y,U\right)  }  &  =\left(  K\left(
y_{s}\right)  \alpha|_{T_{y_{s}}M},\nabla_{y_{s}^{\dag}\left(  \cdot\right)
}^{U}\left[  K\left(  \cdot\right)  \alpha\right]  \right) \\
&  =\left(  K\alpha|_{T_{y_{s}}M},K_{\ast y_{s}}y_{s}^{\dag}\alpha+K\left(
y_{s}\right)  \nabla_{y_{s}^{\dag}\left(  \cdot\right)  }^{U}\alpha\right) \\
&  =\left(  f_{s}\alpha_{s},f_{s}^{\dag}\alpha+f_{s}\alpha_{s}^{\dag\left(
y,U\right)  }\right) \\
&  =\mathbf{f}\boldsymbol{\alpha}^{\left(  y,U\right)  }.
\end{align*}
Thus%
\[
\int\left(  K\alpha\right)  \left(  d\mathbf{y}\right)  :=\int\left\langle
\left(  K\alpha\right)  _{s}^{\left(  y,U\right)  },d\mathbf{y}^{\mathcal{G}%
}\right\rangle =\int\left\langle \mathbf{f}\boldsymbol{\alpha}^{\left(
y,U\right)  },d\mathbf{y}^{\mathcal{G}}\right\rangle =\int\left\langle
K_{\ast}\left(  \mathbf{y}\right)  ,d\mathbf{z}\right\rangle \mathbf{.}%
\]

\textbf{Method 2: }By a simple patching argument, this is really a local
result and hence using the chart definitions of integration it suffices to
check this result in the case $M$ is an open subset of $\mathbb{R}^{d}.$ First
we check the derivative processes. From the definitions we have%
\[
z_{s}^{\dag}=\alpha_{y_{s}}\circ y_{s}^{\dag}\text{\quad and\quad}\left[
\int\left(  K\alpha\right)  \left(  d\mathbf{y}\right)  \right]  _{s}^{\dag
}=\left(  K\alpha\right)  _{y_{s}}\circ y_{s}^{\dag}=K\left(  y_{s}\right)
\alpha_{y_{s}}\circ y_{s}^{\dag}=K\left(  y_{s}\right)  z_{s}^{\dag}.
\]
Thus
\[
\left[  \int\left(  K\alpha\right)  \left(  d\mathbf{y}\right)  \right]
_{s}^{\dag}=K\left(  y_{s}\right)  z_{s}^{\dag}.
\]
On the other hand
\[
\left[  \int\left\langle K_{\ast}\left(  \mathbf{y}\right)  ,d\mathbf{z}%
\right\rangle \right]  _{s}^{\dag}=\left[  K\left(  y\right)  \right]
_{s}z_{s}^{\dag}=K\left(  y_{s}\right)  z_{s}^{\dag}%
\]

Similarly for the paths%
\[
z_{s,t}\underset{^{3}}{\approx}\alpha\left(  y_{s,t}\right)  +\alpha_{y_{s}%
}^{\prime}y_{s}^{\dag\otimes2}\mathbb{X}_{s,t}.
\]
and so%
\begin{align*}
\left[  \int\left(  K\alpha\right)  \left(  d\mathbf{y}\right)  \right]
_{s,t}^{1}  &  \underset{^{3}}{\approx}\left(  K\alpha\right)  _{y_{s}}%
y_{s,t}+\left(  K\alpha\right)  _{y_{s}}^{\prime}y_{s}^{\dag\otimes
2}\mathbb{X}_{s,t}\\
&  =K\left(  y_{s}\right)  \alpha_{y_{s}}y_{s,t}+K\left(  y_{s}\right)
\alpha_{y_{s}}^{\prime}y_{s}^{\dag\otimes2}\mathbb{X}_{s,t}+\left[  K_{y_{s}%
}^{\prime}\left(  y_{s}^{\dag}\left(  \cdot\right)  \otimes\alpha y_{s}^{\dag
}\left(  \cdot\right)  \right)  \right]  \mathbb{X}_{s,t}\\
&  \underset{^{3}}{\approx}K\left(  y_{s}\right)  z_{s,t}+K_{y_{s}}^{\prime
}\left(  y_{s}^{\dag}\otimes z_{s}^{\dag}\right)  \mathbb{X}_{s,t}.
\end{align*}
On the other hand
\begin{align*}
\left[  \int\left\langle K_{\ast}\left(  \mathbf{y}\right)  ,d\mathbf{z}%
\right\rangle \right]  _{s,t}^{1}  &  \underset{^{3}}{\approx}K\left(
y_{s}\right)  z_{s,t}+\left[  K_{\ast}\left(  \mathbf{y}\right)  \right]
_{s}^{\dag}z_{s}^{\dag}\mathbb{X}_{s,t}\\
&  =K\left(  y_{s}\right)  z_{s,t}+K_{y_{s}}^{\prime}\left(  y_{s}^{\dag
}\otimes z_{s}^{\dag}\right)  \mathbb{X}_{s,t}.
\end{align*}
Comparing these expressions completes the proof.
\end{proof}

\subsection{Push-forwards of Controlled Rough Paths\label{sub.4.2}}

Let $M=M^{d}$ and $\tilde{M}=\tilde{M}^{\tilde{d}^{\prime}}$ be manifolds. Let
$f:M\rightarrow\tilde{M}$ be smooth and suppose $\mathbf{y}_{s}=\left(
y_{s},y_{s}^{\dag}\right)  $ $\in CRP_{\mathbf{X}}\left(  M\right)  $. In
Definition \ref{def.4.11} below, we are going to give a definition of the
push-forward of $\mathbf{y}$ by $f$ which generalizes Example \ref{exa.2.56}.

\begin{proposition}
\label{pro.4.10}The pair $\left(  f\left(  y_{s}\right)  ,f_{\ast}\circ
y_{s}^{\dag}\right)  $ is an element of $CRP_{\mathbf{X}}\left(  \tilde
{M}\right)  $.
\end{proposition}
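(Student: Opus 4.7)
The plan is to verify the chart definition (Definition \ref{def.2.40}) for $(f \circ y, f_\ast \circ y^\dag)$, using the equivalence provided by Theorem \ref{the.2.44} so that it suffices to check the two pointwise estimates on each chart. Fix a chart $\tilde{\phi}$ on $\tilde{M}$ and an interval $[a,b]$ with $f(y([a,b])) \subseteq D(\tilde{\phi})$. Since $y([a,b])$ is compact and contained in the open set $f^{-1}(D(\tilde{\phi}))$, the covering/patching strategy of Remark \ref{rem.2.52} together with Lemma \ref{lem.2.51} reduces the problem to proving the two required inequalities on each piece of an interlaced cover where $y$ lies inside a single chart $\phi$ on $M$ with $f(D(\phi)) \subseteq D(\tilde{\phi})$ (shrinking $D(\phi)$ and its image to have compact closure inside $D(\tilde{\phi})$ if necessary).

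On such a piece, I would set $F := \tilde{\phi} \circ f \circ \phi^{-1}$, which is a smooth map between open subsets of $\mathbb{R}^d$ and $\mathbb{R}^{\tilde{d}}$, and define $z_s := \phi(y_s)$ and $z_s^\dag := d\phi \circ y_s^\dag$. By hypothesis $\mathbf{y} \in CRP_{\mathbf{X}}(M)$, so Definition \ref{def.2.40} applied to the chart $\phi$ gives a constant $C_\phi$ such that
\[
|z_t - z_s - z_s^\dag x_{s,t}| \le C_\phi\, \omega(s,t)^{2/p}, \qquad |z_t^\dag - z_s^\dag| \le C_\phi\, \omega(s,t)^{1/p},
\]
and combining these yields $|z_t - z_s| \lesssim \omega(s,t)^{1/p}$. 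By the chain rule, $\tilde{\phi} \circ f \circ y = F \circ z$ and $d\tilde{\phi} \circ f_\ast \circ y_s^\dag = F'(z_s) \circ z_s^\dag$, so the two inequalities (\ref{equ.2.24}), (\ref{equ.2.25}) for the push-forward under the chart $\tilde{\phi}$ become
\begin{align*}
&|F(z_t) - F(z_s) - F'(z_s)\, z_s^\dag x_{s,t}| \lesssim \omega(s,t)^{2/p},\\
&|F'(z_t)\, z_t^\dag - F'(z_s)\, z_s^\dag| \lesssim \omega(s,t)^{1/p}.
\end{align*}

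For the first, Taylor's theorem applied to $F$ on the compact set $z([a,b])$ gives $F(z_t) - F(z_s) = F'(z_s)(z_t - z_s) + O(|z_t - z_s|^2)$, and then substituting $z_t - z_s = z_s^\dag x_{s,t} + O(\omega(s,t)^{2/p})$ and using the local boundedness of $F'$ yields the estimate. For the second, split
\[
F'(z_t)\, z_t^\dag - F'(z_s)\, z_s^\dag = F'(z_t)(z_t^\dag - z_s^\dag) + (F'(z_t) - F'(z_s))\, z_s^\dag,
\]
and bound each term using boundedness of $F'$, Lipschitz continuity of $F'$ (from $F \in C^\infty$ on the relevant compact), boundedness of $z_s^\dag$, and the two displayed $CRP_{\mathbf{X}}$ estimates above. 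Patching the resulting local bounds via Lemma \ref{lem.2.51} and invoking Theorem \ref{the.2.44} one more time delivers $(f \circ y, f_\ast \circ y^\dag) \in CRP_{\mathbf{X}}(\tilde{M})$.

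There is no substantial obstacle here: the Euclidean Taylor estimate is routine, and the only real work is the bookkeeping for the interlaced covering argument — which is exactly the setup that Remark \ref{rem.2.52} is designed to handle. The only item to be mildly careful about is ensuring, when covering $y([a,b])$, that each chart $\phi$ on $M$ is chosen with $f(D(\phi)) \subseteq D(\tilde{\phi})$ so that $F$ is defined on all of $\phi(D(\phi))$; this is possible because $f^{-1}(D(\tilde{\phi}))$ is open and contains $y([a,b])$.
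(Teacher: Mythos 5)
Your proposal is correct and follows essentially the same route as the paper: reduce via the Remark \ref{rem.2.52} patching strategy to the case of a single chart $\phi$ on $M$ with $f(D(\phi)) \subseteq D(\tilde{\phi})$, set $F = \tilde{\phi} \circ f \circ \phi^{-1}$ and $(z, z^\dag) = \phi_\ast \mathbf{y}$, and observe that the required inequalities become the statement that $(F(z_s), F'(z_s) z_s^\dag)$ is a controlled rough path in $\mathbb{R}^{\tilde{d}}$, which follows from Taylor's theorem. The paper leaves the Taylor step as a routine verification; you fill it in explicitly, but the argument is the same.
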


\begin{proof}
Suppose $\tilde{\phi}$ is a chart on $\tilde{M}$ such that $f\circ y\left(
\left[  a,b\right]  \right)  \subseteq D\left(  \tilde{\phi}\right)  $. We
must show that
\begin{equation}
\left\vert \tilde{\phi}\circ f\left(  y_{t}\right)  -\tilde{\phi}\circ
f\left(  y_{s}\right)  -d\tilde{\phi}\circ f_{\ast}y_{s}^{\dag}x_{s,t}%
\right\vert \leq C_{\tilde{\phi},a,b}\omega\left(  s,t\right)  ^{2/p}
\label{equ.4.9}%
\end{equation}
and%
\begin{equation}
\left\vert d\tilde{\phi}\circ f_{\ast}y_{t}^{\dag}-d\tilde{\phi}\circ f_{\ast
}y_{s}^{\dag}\right\vert \leq C_{\tilde{\phi},a,b}\omega\left(  s,t\right)
^{1/p} \label{equ.4.10}%
\end{equation}
hold for some $C_{\tilde{\phi},a,b}$ for all $s\leq t$ in $\left[  a,b\right]
$. We can again use our proof strategy outlined in Remark \ref{rem.2.52} to
treat this problem in nice neighborhoods. We leave it to the reader to follow
the pattern of earlier proofs to see that we can assume without loss of
generality that there is a chart $\phi$ on $M$ such that $y\left(  \left[
a,b\right]  \right)  \subseteq D\left(  \phi\right)  $ and $R\left(
\phi\right)  $ is convex. Which these simplifications, we note that $\left(
z_{s},z_{s}^{\dag}\right)  :=\left(  \phi\left(  y_{s}\right)  ,d\phi\circ
y_{s}^{\dag}\right)  $ is a controlled rough path on $R\left(  \phi\right)  $
and the function $F:=\tilde{\phi}\circ f\circ\phi^{-1}:R\left(  \phi\right)
\rightarrow R\left(  \tilde{\phi}\right)  $ is a map between Euclidean spaces.
Therefore Inequalities (\ref{equ.4.9}) and (\ref{equ.4.10}) reduce to the fact
that the pair $\left(  F\left(  z_{s}\right)  ,F^{\prime}\left(  z_{s}\right)
\circ z_{s}^{\dag}\right)  $ is a controlled rough path in $\mathbb{R}%
^{\tilde{d}}$ (which is trivial by applying Taylor's theorem after we check
that we get the correct terms); indeed, by a simple computation, we have
\begin{align*}
F^{\prime}\left(  z_{s}\right)  \circ z_{s}^{\dag}  &  =d\tilde{\phi}\circ
f_{\ast}\circ\left(  d\phi^{-1}\right)  _{z_{s}}\circ d\phi_{y_{s}}\circ
y_{s}^{\dag}\\
&  =d\tilde{\phi}\circ f_{\ast}\circ\left(  d\phi_{y_{s}}\right)  ^{-1}\circ
d\phi_{y_{s}}\circ y_{s}^{\dag}\\
&  =d\tilde{\phi}\circ f_{\ast}y_{s}^{\dag}%
\end{align*}
and clearly $F\left(  z_{s}\right)  =\tilde{\phi}\circ f\left(  y_{s}\right)
$.
\end{proof}

\begin{definition}
\label{def.4.11}The \textbf{push-forward} of $\mathbf{y}$ denoted by $f_{\ast
}\mathbf{y}$ or $f_{\ast}\left(  y,y^{\dag}\right)  $ is the rough path
controlled by $\mathbf{X}$ with path $f\left(  y_{s}\right)  $ and derivative
process $f_{\ast}\circ y_{s}^{\dag}$. If $\tilde{M}=\mathbb{R}^{\tilde{d}}$,
we will abuse notation an write $f_{\ast}\mathbf{y}_{s}$ to mean $\left(
f\left(  y_{s}\right)  ,df\circ y_{s}^{\dag}\right)  $ (i.e. we forget the
base point on the derivative process).
\end{definition}

\begin{remark}
\label{rem.4.12}The push-forward operation on elements in $CRP_{\mathbf{X}%
}\left(  M\right)  $ is clearly covariant, i.e. if $f:M\rightarrow N$ and
$g:N\rightarrow P$ are two smooth maps of manifolds, $M,N,$ and $P,$ then
$\left(  g\circ f\right)  _{\ast}\left(  \mathbf{y}\right)  =g_{\ast}\left(
f_{\ast}\left(  \mathbf{y}\right)  \right)  .$
\end{remark}

This definition is consistent with how we defined the integral of a one-form
along a controlled rough path in the sense that we have a fundamental theorem
of calculus. Let $V$ be a Banach space.

\begin{theorem}
\label{the.4.13} Let $\mathbf{y}_{s}=\left(  y_{s},y_{s}^{\dag}\right)  $ $\in
CRP_{\mathbf{X}}\left(  M\right)  $ and $f$ be a smooth function from $M$ to
$V$. Then%
\[
f\left(  y_{s}\right)  -f\left(  y_{0}\right)  =\left[  \int df\left[
d\mathbf{y}\right]  \right]  _{0,s}^{1}%
\]
where $df$ is interpreted as a one-form. Since we have $df\circ y_{s}^{\dag
}=\left[  \int df\left[  d\mathbf{y}\right]  \right]  _{s}^{\dag}$ we have the
equality%
\[
f_{\ast}\left(  y,y^{\dag}\right)  -\left(  f\left(  y_{0}\right)  ,0\right)
=\int df\left(  d\mathbf{y}\right)  .
\]

\end{theorem}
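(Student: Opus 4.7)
The plan is to show that $\mathbf{z}:=\int df(d\mathbf{y})$ and $\boldsymbol{\sigma}:=f_\ast(y,y^\dag)-(f(y_0),0)$ are the same element of $CRP_{\mathbf{X}}(V)$, by verifying separately that their derivative processes agree and that their path components agree. The derivative processes are immediate: Theorem \ref{the.3.21} gives $z_s^\dag = df\circ y_s^\dag$, and by Definition \ref{def.4.11} the push-forward has derivative $f_\ast\circ y_s^\dag = df\circ y_s^\dag$. So the whole content is the equality of paths, $[\int df(d\mathbf{y})]^1_{0,s}=f(y_s)-f(y_0)$.

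The path equality is additive in $s$ on both sides (the integral is additive by construction; $f(y_s)-f(y_0)$ telescopes), so it suffices to establish it locally. Fix a chart $\phi$ and an interval $[s,t]$ with $y([s,t])\subseteq D(\phi)$, and set $g:=f\circ\phi^{-1}$ and $\mathbf{z}:=\phi_\ast\mathbf{y}$, which by Proposition \ref{pro.4.10} is a flat controlled rough path in $R(\phi)\subseteq\mathbb{R}^d$. The chart characterization of Theorem \ref{the.4.8} then reduces the identity to the flat-space claim
\[
\bigl[\textstyle\int dg(d\mathbf{z})\bigr]^1_{s,t}=g(z_t)-g(z_s).
\]

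To prove this flat identity I would compare both sides to the almost-additive functional supplied by Theorem \ref{the.3.21}. With the trivial parallelism $U=I$ on $\mathbb{R}^d$, Proposition \ref{pro.4.2} gives the controlled one-form $(dg|_{z_s},\nabla^I_{z_s^\dag(\cdot)}dg) = (g'(z_s),g''(z_s)(z_s^\dag\cdot,\cdot))$, and Theorem \ref{the.3.21} yields
\[
\bigl[\textstyle\int dg(d\mathbf{z})\bigr]^1_{s,t}\underset{^3}{\approx}g'(z_s)z_{s,t}+g''(z_s)\bigl((z_s^\dag\otimes z_s^\dag)\mathbb{X}_{s,t}\bigr).
\]
On the other hand, Taylor's theorem applied to $g$ gives $g(z_t)-g(z_s)=g'(z_s)z_{s,t}+\tfrac12 g''(z_s)(z_{s,t},z_{s,t})+O(|z_{s,t}|^3)$. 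Using $z_{s,t}\underset{^2}{\approx}z_s^\dag x_{s,t}$, the symmetry of $g''$, and the weak-geometricity relation $\operatorname{sym}(\mathbb{X}_{s,t})=\tfrac12 x_{s,t}\otimes x_{s,t}$, the quadratic term rewrites as $g''(z_s)((z_s^\dag\otimes z_s^\dag)\mathbb{X}_{s,t})$ up to an $O(\omega(s,t)^{3/p})$ error. Hence $g(z_t)-g(z_s)$ and $\int dg(d\mathbf{z})$ are both additive functionals that $\underset{^3}{\approx}$ the same almost-additive functional, so by the uniqueness part of Lyons' sewing lemma they coincide.

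The main (really the only) obstacle is the bookkeeping in the last paragraph: correctly identifying $\alpha_s^\dag$ for the one-form $dg$ in flat space so that the $\mathbb{X}_{s,t}$-term matches the Taylor quadratic, which is where symmetry of the Hessian and weak-geometricity are both essential. Once that computation is done, the global result follows by piecing together charts along $y([0,T])$ using additivity.
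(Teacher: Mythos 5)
Your proposal is correct, and it takes a genuinely different route from the paper's. The authors work directly on the manifold: they exploit the already-established gauge independence (Theorem \ref{the.4.3}) to choose the Levi-Civita gauge $\mathcal{G}^{\nabla}$, for which the torsion correction term in Example \ref{exa.4.6} vanishes; they then invoke the symmetry of $\nabla df$, weak-geometricity, and Taylor's theorem \emph{on the manifold} (Theorem \ref{the.6.1}) to identify the almost-additive functional with $f(y_t)-f(y_s)$. You instead localize via Theorem \ref{the.4.8} and do the computation in a chart, using the trivial parallelism, the symmetry of the ordinary Hessian $g''$, weak-geometricity, and flat Taylor's theorem. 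Both arguments hinge on exactly the same two ingredients (a symmetric second-order term and $\operatorname{sym}\mathbb{X}_{s,t}=\tfrac12 x_{s,t}^{\otimes2}$) and both close by uniqueness of the additive functional. The paper's route is cleaner once the Riemannian machinery is on the table (and the authors explicitly flag this as an instructive choice); yours is more elementary, avoiding the manifold Taylor theorem at the cost of one more layer of chart-and-patch bookkeeping, which in this paper is already fully developed. One small remark: your reduction is slightly overqualified in that you don't really need the full uniqueness part of the sewing lemma — since you already know $\int dg(d\mathbf{z})$ and $g(z_t)-g(z_s)$ are both honest additive functionals and they $\underset{^{3}}{\approx}$ one another, they coincide outright; but that is exactly what the paper does as well.
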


\begin{proof}
Although there are ways to do this proof without much machinery, we find it
more instructive to work on a Riemannian manifold with the Levi-Civita
covariant derivative. Since we have proved that the integral is independent of
choice of metric, it does not matter which one we pick. With this in mind, we
have the approximation
\[
\left[  \int df\left[  d\mathbf{y}\right]  \right]  _{s,t}^{1}\underset{^{3}%
}{\approx}df_{y_{s}}\left(  \exp_{y_{s}}^{-1}\left(  y_{t}\right)  \right)
+\left(  \nabla df\right)  _{y_{s}}\left[  y_{s}^{\dag\otimes2}\mathbb{X}%
_{s,t}\right]
\]
and as $\nabla df$ is symmetric, it follows that%
\begin{align*}
\left[  \int df\left[  d\mathbf{y}\right]  \right]  _{s,t}^{1}  &
\underset{^{3}}{\approx}df_{y_{s}}\left(  \exp_{y_{s}}^{-1}\left(
y_{t}\right)  \right)  +\frac{1}{2}\left(  \nabla df\right)  _{y_{s}}\left[
y_{s}^{\dag\otimes2}\left(  x_{s,t}\otimes x_{s,t}\right)  \right] \\
&  \underset{^{3}}{\approx}df_{y_{s}}\left(  \exp_{y_{s}}^{-1}\left(
y_{t}\right)  \right)  +\frac{1}{2}\left(  \nabla df\right)  _{y_{s}}\left[
\exp_{y_{s}}^{-1}\left(  y_{t}\right)  ^{\otimes2}\right] \\
&  \underset{^{3}}{\approx}f\left(  y_{t}\right)  -f\left(  y_{s}\right)  .
\end{align*}
The last approximation above follows from Taylor's Theorem on manifolds
(Theorem \ref{the.6.1} in the Appendix). Note here that $f\left(
y_{t}\right)  -f\left(  y_{s}\right)  $ is additive so that
\[
\left[  \int df\left[  d\mathbf{y}\right]  \right]  _{s,t}^{1}=f\left(
y_{t}\right)  -f\left(  y_{s}\right)  .
\]

\end{proof}

\begin{remark}
\label{rem.4.14}If $M\subseteq$ is an embedded submanifold of $W=\mathbb{R}%
^{k}$, $\left(  y_{s},y_{s}^{\dag}\right)  $ $\in CRP_{\mathbf{X}}\left(
M\right)  $, $I:M\rightarrow W$ denotes the identity (or embedding) map, and
$\left(  z_{s},z_{s}^{\dag}\right)  :=I_{\ast}\left(  y_{s},y_{s}^{\dag
}\right)  ,$ then we have
\[
z_{s}=y_{s}\text{\quad and\quad}z_{s}^{\dag}=\pi_{2}\circ y_{s}^{\dag}%
\]
where $\pi_{2}$ is the projection of the tangent vector component (i.e. it
forgets the base point). We can associate to it a unique rough path $\left(
y,\mathbb{Y}\right)  $ in $W$ such that
\[
\left(  z_{s}^{\dag}\otimes z_{s}^{\dag}\right)  \mathbb{X}_{s,t}%
\underset{^{3}}{\approx}\mathbb{Y}_{s,t}.
\]
In this case, this is a rough path in the embedded sense (See \cite{CDL13})
since
\[
\left[  I\left(  y_{s}\right)  \otimes Q\left(  y_{s}\right)  \right]  \left[
\mathbb{Y}\right]  _{s,t}\underset{^{3}}{\approx}\left[  I\left(
y_{s}\right)  \otimes Q\left(  y_{s}\right)  \right]  \left[  z_{s}^{\dag
}\otimes z_{s}^{\dag}\right]  \mathbb{X}_{s,t}=0
\]
as $Q\left(  y_{s}\right)  \circ z_{s}^{\dag}=0$ where $Q=I-P$ and $P\left(
x\right)  $ is orthogonal projection onto the tangent space at $x$.
\end{remark}

Lastly, we have a relation between push-forwards of paths and pull-backs of one-forms.

\begin{theorem}
[Push me-Pull me]\label{the.4.15}Let $f:M\rightarrow\tilde{M}$, let
$\mathbf{y}_{s}=\left(  y_{s},y_{s}^{\dag}\right)  \in CRP_{\mathbf{X}}\left(
M\right)  $ and let $\tilde{\alpha}\in\Omega^{1}\left(  \tilde{M},V\right)  $.
Then%
\begin{equation}
\left[  \int f^{\ast}\alpha\left(  d\mathbf{y}\right)  \right]  ^{1}=\left[
\int\alpha\left(  d\left(  f_{\ast}\mathbf{y}\right)  \right)  \right]  ^{1}.
\label{equ.4.11}%
\end{equation}
Moreover%
\[
\int f^{\ast}\alpha\left(  d\mathbf{y}\right)  =\int\alpha\left(  d\left(
f_{\ast}\mathbf{y}\right)  \right)
\]

\end{theorem}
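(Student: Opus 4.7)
The plan is to reduce the identity to a purely Euclidean statement via the chart characterization of the integral (Theorem \ref{the.4.8}) and then verify it by direct Taylor expansion, with the symmetry of second derivatives mating up with the weak-geometric condition on $\mathbf{X}$.

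First, I would dispose of the derivative-process part of the second claim. By definition,
\[
\left[\int \alpha\left(d(f_{\ast}\mathbf{y})\right)\right]_{s}^{\dag} = \alpha_{f(y_{s})}\circ (f_{\ast}\mathbf{y})_{s}^{\dag} = \alpha_{f(y_{s})}\circ f_{\ast y_{s}}\circ y_{s}^{\dag} = (f^{\ast}\alpha)_{y_{s}}\circ y_{s}^{\dag} = \left[\int f^{\ast}\alpha(d\mathbf{y})\right]_{s}^{\dag},
\]
so it suffices to prove the path identity (\ref{equ.4.11}); the final statement then follows because both controlled rough paths start at $0$ and have identical paths and identical derivative processes.

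Next, I would localize. Both sides of (\ref{equ.4.11}) are additive functionals, so by uniqueness (Lyons' sewing) it suffices to match their order-$3/p$ approximations on an interval $[s,t]$ contained in the domain of some pair of charts $\phi$ on $M$ and $\tilde{\phi}$ on $\tilde{M}$ with $y([s,t])\subseteq D(\phi)$ and $f(y([s,t]))\subseteq D(\tilde{\phi})$; the global identity then patches together by Remark \ref{rem.2.52}. Set
\[
F:=\tilde{\phi}\circ f\circ\phi^{-1},\qquad \tilde{\alpha}:=(\tilde{\phi}^{-1})^{\ast}\alpha,\qquad \mathbf{z}:=\phi_{\ast}\mathbf{y}.
\]
Since $\tilde{\phi}_{\ast}(f_{\ast}\mathbf{y})=F_{\ast}\mathbf{z}$ and $(\phi^{-1})^{\ast}(f^{\ast}\alpha)=F^{\ast}\tilde{\alpha}$, Theorem \ref{the.4.8} reduces the claim to the Euclidean identity
\begin{equation}\label{equ.pm1}
\left[\int F^{\ast}\tilde{\alpha}(d\mathbf{z})\right]_{s,t}^{1} \;=\; \left[\int \tilde{\alpha}\left(dF_{\ast}\mathbf{z}\right)\right]_{s,t}^{1}.
\end{equation}

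Now I would evaluate both sides of (\ref{equ.pm1}) using the flat version of the gauge-integration formula (Corollary \ref{cor.4.5} with the standard gauge, equivalently Theorem \ref{the.2.9}). Writing $\underset{3}{\approx}$ for equality up to $O(\omega(s,t)^{3/p})$, the left side is
\[
(F^{\ast}\tilde{\alpha})_{z_{s}}(z_{s,t}) + (F^{\ast}\tilde{\alpha})'_{z_{s}}\circ(z_{s}^{\dag})^{\otimes 2}\mathbb{X}_{s,t},
\]
while the right side, together with $(F_{\ast}\mathbf{z})^{\dag}_{s}=F'(z_{s})z_{s}^{\dag}$, becomes
\[
\tilde{\alpha}_{F(z_{s})}\bigl(F(z_{t})-F(z_{s})\bigr) + \tilde{\alpha}'_{F(z_{s})}\circ\bigl(F'(z_{s})z_{s}^{\dag}\bigr)^{\otimes 2}\mathbb{X}_{s,t}.
\]
The first terms match using Taylor's theorem
\[
F(z_{t})-F(z_{s}) \;\underset{3}{\approx}\; F'(z_{s})z_{s,t} + \tfrac{1}{2}F''(z_{s})(z_{s,t})^{\otimes 2},
\]
together with $(F^{\ast}\tilde{\alpha})_{z_{s}} = \tilde{\alpha}_{F(z_{s})}\circ F'(z_{s})$; the second terms match using the product rule
\[
(F^{\ast}\tilde{\alpha})'_{z_{s}}(w)(v) = \tilde{\alpha}'_{F(z_{s})}\bigl(F'(z_{s})w\bigr)\bigl(F'(z_{s})v\bigr) + \tilde{\alpha}_{F(z_{s})}\bigl(F''(z_{s})(w,v)\bigr).
\]
The remaining task is to reconcile the leftover $F''$ contributions on the two sides.

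The main obstacle, and the step where I would be most careful, is precisely this $F''$-reconciliation. The $F''$ term contributed by the right-hand side comes multiplied by $(z_{s,t})^{\otimes 2}\underset{3}{\approx}(z_{s}^{\dag}x_{s,t})^{\otimes 2}$, whereas the $F''$ term from the product rule on the left is contracted against $(z_{s}^{\dag})^{\otimes 2}\mathbb{X}_{s,t}$. Because $F''(z_{s})$ is symmetric in its two arguments (equality of mixed partials), only the symmetric part of $\mathbb{X}_{s,t}$ survives the contraction, and here the weak-geometric hypothesis is crucial: $\operatorname{sym}(\mathbb{X}_{s,t})=\tfrac{1}{2}x_{s,t}\otimes x_{s,t}$, so
\[
F''(z_{s})\circ(z_{s}^{\dag})^{\otimes 2}\mathbb{X}_{s,t} = \tfrac{1}{2}F''(z_{s})(z_{s}^{\dag}x_{s,t})^{\otimes 2},
\]
which exactly cancels the corresponding term on the right. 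Combining these observations yields (\ref{equ.pm1}), hence (\ref{equ.4.11}); together with the derivative-process computation above, this proves the full equality of controlled rough paths $\int f^{\ast}\alpha(d\mathbf{y})=\int\alpha\bigl(d(f_{\ast}\mathbf{y})\bigr)$.
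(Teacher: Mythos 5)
Your proof is correct and follows the same overall route as the paper: reduce via charts and Theorem \ref{the.4.8} to a Euclidean identity, then patch globally. The one genuine difference is the treatment of the Euclidean step. The paper's proof, after the chain of pull-back/push-forward manipulations, simply cites the appendix of \cite{CDL13} for the flat-space version of Eq. (\ref{equ.4.11}); you instead carry out the Taylor-expansion verification explicitly, applying the product rule to $(F^{\ast}\tilde{\alpha})'$, expanding $F(z_{t})-F(z_{s})$ to second order, and using $\operatorname{sym}(\mathbb{X}_{s,t})=\tfrac{1}{2}x_{s,t}\otimes x_{s,t}$ together with the symmetry of $F''$ to reconcile the leftover $F''$ contributions. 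This makes the argument self-contained and makes visible exactly where the weak-geometric hypothesis enters, at the cost of some length; the paper's version is shorter but delegates the crux to a reference. You also separately verified the equality of derivative processes, which the paper dismisses as trivial; that check is correct and again makes the proof more complete.
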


\begin{proof}
This is a statement we only have to prove locally. Indeed for each
$s\in\left[  0,T\right]  $, there are charts $\phi^{s}$ and $\tilde{\phi}^{s}$
on $M$ and $\tilde{M}$ respectively such that $y_{s}\in D\left(  \phi
^{s}\right)  $ and $f\left(  y_{s}\right)  \in D\left(  \tilde{\phi}%
^{s}\right)  $ which are open. We take $\mathcal{U}_{s}:=$ $f^{-1}\left(
D\left(  \tilde{\phi}^{s}\right)  \right)  \cap D\left(  \phi^{s}\right)  $
and shrink it if necessary so that $\mathcal{V}_{s}=\phi\left(  \mathcal{U}%
_{s}\right)  $ is convex. Thus if we can prove that Eq. (\ref{equ.4.11}) holds
whenever $y\left(  \left[  a,b\right]  \right)  $ $\subseteq\mathcal{U}$ such
that $\phi\left(  \mathcal{U}\right)  $ is convex and such that $f\left(
y\left(  \left[  a,b\right]  \right)  \right)  \subseteq D\left(  \tilde{\phi
}\right)  $, we will be done. We do this now:

By Theorem \ref{the.4.8}, the fact that pull-backs are contravariant, and that
push-forwards are covariant, we have%
\begin{align*}
\left[  \int f^{\ast}\alpha\left(  d\mathbf{y}\right)  \right]  _{s,t}^{1}  &
=\left[  \int\left(  \phi^{-1}\right)  ^{\ast}f^{\ast}\alpha\left(
d\phi_{\ast}\mathbf{y}\right)  \right]  _{s,t}^{1}\\
&  =\left[  \int\left(  f\circ\phi^{-1}\right)  ^{\ast}\alpha\left(
d\phi_{\ast}\mathbf{y}\right)  \right]  _{s,t}^{1}\\
&  =\left[  \int\left(  \tilde{\phi}^{-1}\circ\tilde{\phi}\circ f\circ
\phi^{-1}\right)  ^{\ast}\alpha\left(  d\phi_{\ast}\mathbf{y}\right)  \right]
_{s,t}^{1}\\
&  =\left[  \int\left(  \tilde{\phi}\circ f\circ\phi^{-1}\right)  ^{\ast
}\left(  \left(  \tilde{\phi}^{-1}\right)  ^{\ast}\alpha\right)  \left(
d\phi_{\ast}\mathbf{y}\right)  \right]  _{s,t}^{1}\\
&  =\left[  \int\left(  \tilde{\phi}^{-1}\right)  ^{\ast}\alpha\left(
d\left(  \left(  \tilde{\phi}\circ f\circ\phi^{-1}\right)  _{\ast}\phi_{\ast
}\mathbf{y}\right)  \right)  \right]  _{s,t}^{1}%
\end{align*}
where the last step is just Eq. (\ref{equ.4.11}) on Euclidean space. This is a
simple computation (for example, see the appendix of \cite{CDL13}). Thus, we
have%
\begin{align*}
\left[  \int f^{\ast}\alpha\left(  d\mathbf{y}\right)  \right]  _{s,t}^{1}  &
=\left[  \int\left(  \tilde{\phi}^{-1}\right)  ^{\ast}\alpha\left(  d\left(
\left(  \tilde{\phi}\circ f\circ\phi^{-1}\right)  _{\ast}\phi_{\ast}%
\mathbf{y}\right)  \right)  \right]  _{s,t}^{1}\\
&  =\left[  \int\left(  \tilde{\phi}^{-1}\right)  ^{\ast}\alpha\left(
d\left(  \tilde{\phi}_{\ast}\left(  f_{\ast}\mathbf{y}\right)  \right)
\right)  \right]  _{s,t}^{1}\\
&  =\left[  \int\alpha\left(  d\left(  f_{\ast}\mathbf{y}\right)  \right)
\right]  _{s,t}^{1}.
\end{align*}
The fact that
\[
\left[  \int f^{\ast}\alpha\left(  d\mathbf{y}\right)  \right]  ^{\dag
}=\left[  \int\alpha\left(  d\left(  f_{\ast}\mathbf{y}\right)  \right)
\right]  ^{\dag}%
\]
is trivial.
\end{proof}

\section{Rough Differential Equations}

Before discussing rough differential equations on a manifold, we will give an
equivalent condition for a controlled rough path $\mathbf{z}\in
CRP_{\mathbf{X}}\left(  \mathbb{R}^{d}\right)  $ to satisfy the RDE
approximation on a compact interval in the flat case using logarithms.

For the next proposition, let $\psi$ be a logarithm on $\mathbb{R}^{d}$ such
that $\psi\left(  x,y\right)  =\left(  x,\bar{\psi}\left(  x,y\right)
\right)  .$

\begin{proposition}
\label{pro.5.1}Let $z:\left[  a,b\right]  \rightarrow\mathbb{R}^{d}$ be a path
and let $\mathcal{W\subseteq\mathbb{R}}^{d}$ be an open convex set such that
$z\left(  \left[  a,b\right]  \right)  \subseteq\mathcal{W}$ and
$\mathcal{W\times W}\subseteq D\left(  \psi\right)  .$ Then
\begin{equation}
z_{s,t}\underset{^{3}}{\approx}F_{x_{s,t}}\left(  z_{s}\right)  +\left(
\partial_{F_{w}\left(  z_{s}\right)  }F_{\tilde{w}}\right)  \left(
z_{s}\right)  |_{w\otimes\tilde{w}=\mathbb{X}_{s,t}} \label{equ.5.1}%
\end{equation}
if and only if%
\begin{equation}
\bar{\psi}\left(  z_{s},z_{t}\right)  \underset{^{3}}{\approx}F_{x_{s,t}%
}\left(  z_{s}\right)  +\left(  \partial_{F_{w}\left(  z_{s}\right)  }\left[
\bar{\psi}_{z_{s}}^{\prime}\left(  \cdot\right)  F_{\tilde{w}}\left(
\cdot\right)  \right]  \right)  \left(  z_{s}\right)  |_{w\otimes\tilde
{w}=\mathbb{X}_{s,t}} \label{equ.5.2}%
\end{equation}

\end{proposition}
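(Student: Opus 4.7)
The plan is to establish the equivalence by Taylor expanding $\bar\psi$ in its second argument, then using the weak-geometric identity $\operatorname{sym}(\mathbb{X}_{s,t})=\tfrac12 x_{s,t}\otimes x_{s,t}$ together with the symmetry of $D_2^2\bar\psi$ (mixed partials commute). The entire argument is bilinear bookkeeping except for one a priori size control on $z_{s,t}$.

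First I would apply Theorem~\ref{the.2.24} on the convex set $\mathcal{W}$ to get a constant $C_1$ with
\begin{equation*}
\bar\psi(z_s,z_t)=z_{s,t}+\tfrac12(D_2^2\bar\psi)(z_s,z_s)(z_{s,t})^{\otimes 2}+R_{s,t},\qquad |R_{s,t}|\leq C_1|z_{s,t}|^3.
\end{equation*}
Under either (\ref{equ.5.1}) or (\ref{equ.5.2}) one obtains $|z_{s,t}|\leq C\omega(s,t)^{1/p}$ on small subintervals: for (\ref{equ.5.1}) directly, since $|F_{x_{s,t}}(z_s)|$ and $|(\partial_{F_w(z_s)}F_{\tilde w})(z_s)|_{w\otimes\tilde w=\mathbb{X}_{s,t}}|$ are of orders $\omega^{1/p}$ and $\omega^{2/p}$ respectively; for (\ref{equ.5.2}), first deduce $|\bar\psi(z_s,z_t)|\leq C\omega^{1/p}$, and then invert the Taylor relation $z_{s,t}=\bar\psi(z_s,z_t)-A(z_s,z_t)(z_{s,t})^{\otimes 2}$ from Eq.~(\ref{equ.2.14}) by a short bootstrap, using uniform continuity of $z$ on $[a,b]$ to absorb the quadratic correction. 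In particular $R_{s,t}\underset{^{3}}{\approx}0$, and in both cases $z_{s,t}-F_{x_{s,t}}(z_s)=O(\omega(s,t)^{2/p})$.

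Next, the polarization identity $(z_{s,t})^{\otimes 2}-(F_{x_{s,t}}(z_s))^{\otimes 2}=(z_{s,t}-F_{x_{s,t}}(z_s))\otimes z_{s,t}+F_{x_{s,t}}(z_s)\otimes(z_{s,t}-F_{x_{s,t}}(z_s))$ is of size $\omega^{3/p}$, whence
\begin{equation*}
\bar\psi(z_s,z_t)\underset{^{3}}{\approx}z_{s,t}+\tfrac12(D_2^2\bar\psi)(z_s,z_s)(F_{x_{s,t}}(z_s))^{\otimes 2}.
\end{equation*}
Because $(D_2^2\bar\psi)(z_s,z_s)$ is symmetric, the bilinear map $M:=(D_2^2\bar\psi)(z_s,z_s)\circ(F(z_s)\otimes F(z_s))$ is symmetric, so $M(x_{s,t}\otimes x_{s,t})=2M(\operatorname{sym}\mathbb{X}_{s,t})=2M(\mathbb{X}_{s,t})$ by the weak-geometric hypothesis. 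This collapses the quadratic term to $(D_2^2\bar\psi)(z_s,z_s)(F(z_s)\otimes F(z_s))(\mathbb{X}_{s,t})$.

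Finally, a product-rule computation on $y\mapsto\bar\psi'_{z_s}(y)F_{\tilde w}(y)$, using $\bar\psi'_{z_s}(z_s)=I$ and $\bar\psi''_{z_s}(z_s)=(D_2^2\bar\psi)(z_s,z_s)$, yields
\begin{equation*}
\bigl(\partial_{F_w(z_s)}[\bar\psi'_{z_s}(\cdot)F_{\tilde w}(\cdot)]\bigr)(z_s)=(D_2^2\bar\psi)(z_s,z_s)(F_w(z_s)\otimes F_{\tilde w}(z_s))+(\partial_{F_w(z_s)}F_{\tilde w})(z_s).
\end{equation*}
Summing on $w\otimes\tilde w=\mathbb{X}_{s,t}$ shows that the right hand sides of (\ref{equ.5.2}) and (\ref{equ.5.1}) differ by exactly $(D_2^2\bar\psi)(z_s,z_s)(F(z_s)\otimes F(z_s))(\mathbb{X}_{s,t})$, which matches the difference between their left hand sides up to an $\underset{^{3}}{\approx}0$ term by the first two displays, so (\ref{equ.5.1}) $\iff$ (\ref{equ.5.2}). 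The main obstacle is the a priori $O(\omega^{1/p})$ control on $z_{s,t}$ from hypothesis (\ref{equ.5.2}) alone, which powers the bootstrap above; the rest is careful but routine algebraic identification.
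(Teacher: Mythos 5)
Your proof is correct and rests on the same ingredients as the paper's (the third-order Taylor expansion of $\bar\psi$ from Theorem~\ref{the.2.24}, the bootstrap argument for the a priori bound $|z_{s,t}|\lesssim\omega(s,t)^{1/p}$, symmetry of $\bar\psi_{z_s}^{\prime\prime}(z_s)$ together with the weak-geometric identity to replace $\tfrac12 x_{s,t}^{\otimes 2}$ by $\mathbb{X}_{s,t}$, and the product-rule identity in Eq.~(\ref{equ.5.5})). The only difference is cosmetic: you show $\mathrm{LHS}(\ref{equ.5.2})-\mathrm{LHS}(\ref{equ.5.1})\underset{^{3}}{\approx}\mathrm{RHS}(\ref{equ.5.2})-\mathrm{RHS}(\ref{equ.5.1})$ symmetrically (once the a priori bound is in hand for whichever hypothesis holds), whereas the paper argues the two implications in sequence by substituting into the Taylor expansion; this is a slightly cleaner presentation of the identical argument rather than a genuinely different route.
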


\begin{proof}
If $z_{\cdot}$ satisfies Eq. (\ref{equ.5.1}), then from Eq. (\ref{equ.2.17})
of Theorem \ref{the.2.24} with $y=z_{t}$ and $x=z_{s}$ we find,%
\begin{align}
\bar{\psi}\left(  z_{s,}z_{t}\right)   &  =z_{s,t}+\frac{1}{2}\bar{\psi}%
_{x}^{\prime\prime}\left(  x\right)  \left(  z_{s,t}\right)  ^{\otimes
2}+C\left(  z_{s,}z_{t}\right)  \left(  z_{s,t}\right)  ^{\otimes
3}\label{equ.5.3}\\
&  \underset{^{3}}{\approx}F_{x_{s,t}}\left(  z_{s}\right)  +\left(
\partial_{F_{w}\left(  z_{s}\right)  }F_{\tilde{w}}\right)  \left(
z_{s}\right)  |_{w\otimes\tilde{w}=\mathbb{X}_{s,t}}+\frac{1}{2}\bar{\psi
}_{z_{s}}^{\prime\prime}\left(  z_{s}\right)  \left[  F_{x_{s,t}}\left(
z_{s}\right)  \right]  ^{\otimes2}, \label{equ.5.4}%
\end{align}
wherein $C$ is a smooth function and we have made use of the fact that
$z_{s,t}\underset{^{1}}{\approx}0$. By the product rule and the fact that
$\psi$ is a logarithm it follows that%
\begin{align}
\left(  \partial_{F_{w}\left(  z_{s}\right)  }\left[  \bar{\psi}_{z_{s}%
}^{\prime}\left(  \cdot\right)  F_{\tilde{w}}\left(  \cdot\right)  \right]
\right)  \left(  z_{s}\right)   &  =\bar{\psi}_{z_{s}}^{\prime\prime}\left(
z_{s}\right)  F_{w}\left(  z_{s}\right)  \otimes F_{\tilde{w}}\left(
z_{s}\right)  +\bar{\psi}_{z_{s}}^{\prime}\left(  z_{s}\right)  \left(
\partial_{F_{w}\left(  z_{s}\right)  }F_{\tilde{w}}\right)  \left(
z_{s}\right) \nonumber\\
&  =\bar{\psi}_{z_{s}}^{\prime\prime}\left(  z_{s}\right)  F_{w}\left(
z_{s}\right)  \otimes F_{\tilde{w}}\left(  z_{s}\right)  +\left(
\partial_{F_{w}\left(  z_{s}\right)  }F_{\tilde{w}}\right)  \left(
z_{s}\right)  . \label{equ.5.5}%
\end{align}
Since $\mathbf{X}$ is a weak-geometric rough path and $\bar{\psi}_{z_{s}%
}^{\prime\prime}\left(  z_{s}\right)  $ is symmetric, we also have,%
\[
\bar{\psi}_{z_{s}}^{\prime\prime}\left(  z_{s}\right)  F_{w}\left(
z_{s}\right)  \otimes F_{\tilde{w}}\left(  z_{s}\right)  |_{|_{w\otimes
\tilde{w}=\mathbb{X}_{s,t}}}=\frac{1}{2}\bar{\psi}_{z_{s}}^{\prime\prime
}\left(  z_{s}\right)  \left[  F_{x_{s,t}}\left(  z_{s}\right)  \right]
^{\otimes2},
\]
which combined with Eq. (\ref{equ.5.5}) shows,%
\begin{align}
&  \left(  \partial_{F_{w}\left(  z_{s}\right)  }\left[  \bar{\psi}_{z_{s}%
}^{\prime}\left(  \cdot\right)  F_{\tilde{w}}\left(  \cdot\right)  \right]
\right)  \left(  z_{s}\right)  |_{w\otimes\tilde{w}=\mathbb{X}_{s,t}%
}\label{equ.5.6}\\
&  \quad=\left(  \partial_{F_{w}\left(  z_{s}\right)  }F_{\tilde{w}}\right)
\left(  z_{s}\right)  |_{w\otimes\tilde{w}=\mathbb{X}_{s,t}}+\frac{1}{2}%
\bar{\psi}_{z_{s}}^{\prime\prime}\left(  z_{s}\right)  \left[  F_{x_{s,t}%
}\left(  z_{s}\right)  \right]  ^{\otimes2}. \label{equ.5.7}%
\end{align}
Equation (\ref{equ.5.2}) now follows directly from Eqs. (\ref{equ.5.4}) and
(\ref{equ.5.6}).

Conversely, now assume that Eq. (\ref{equ.5.2}) holds. From Eq. (\ref{equ.5.2}%
) and the fact that $\mathbf{X}$ is a rough path there exists $C_{1}<\infty$
such that $\left\vert \bar{\psi}\left(  z_{s},z_{t}\right)  \right\vert \leq
C_{1}\omega\left(  s,t\right)  ^{1/p}.$ Combining this observation with Eq.
(\ref{equ.5.3}) easily implies $z_{s,t}\underset{^{1}}{\approx}0.$ Indeed, by
uniform continuity, there exists a $\delta>0$ such that if $\left\vert
t-s\right\vert \leq\delta$, we have
\begin{align*}
\left\vert z_{s,t}\right\vert  &  \leq\left\vert \bar{\psi}\left(  z_{s}%
,z_{t}\right)  \right\vert +\left\vert \frac{1}{2}\psi_{z_{s}}^{\prime\prime
}\left(  z_{s}\right)  \left(  z_{s,t}\right)  ^{\otimes2}+C\left(
z_{s},z_{t}\right)  \left(  z_{s,t}\right)  ^{\otimes3}\right\vert \\
&  \leq C_{1}\omega\left(  s,t\right)  ^{1/p}+\frac{1}{2}\left\vert
z_{s,t}\right\vert .
\end{align*}
By using an argument similar to the proof of Theorem \ref{the.2.47} we can
bootstrap these local inequalities to prove the existence of a $C_{2}<\infty$
such that $\left\vert z_{s,t}\right\vert \leq C_{2}\omega\left(  s,t\right)
^{1/p}$ for $a\leq s\leq t\leq b$.

From Eqs. (\ref{equ.5.3}) and (\ref{equ.5.2}),%
\begin{align*}
z_{s,t}  &  =\bar{\psi}\left(  z_{s},z_{t}\right)  -\frac{1}{2}\bar{\psi
}_{z_{s}}^{\prime\prime}\left(  z_{s}\right)  \left(  \psi\left(  z_{s}%
,z_{t}\right)  \right)  ^{\otimes2}+C\left(  z_{s},z_{t}\right)  \left(
z_{s,t}\right)  ^{\otimes3}\\
&  \underset{^{3}}{\approx}F_{x_{s,t}}\left(  z_{s}\right)  +\left(
\partial_{F_{w}\left(  z_{s}\right)  }\left[  \bar{\psi}_{z_{s}}^{\prime
}\left(  \cdot\right)  F_{\tilde{w}}\left(  \cdot\right)  \right]  \right)
\left(  z_{s}\right)  |_{w\otimes\tilde{w}=\mathbb{X}_{s,t}}-\frac{1}{2}%
\bar{\psi}_{z_{s}}^{\prime\prime}\left(  z_{s}\right)  \left(  F_{x_{s,t}%
}\left(  z_{s}\right)  \right)  ^{\otimes2}\\
&  =F_{x_{s,t}}\left(  z_{s}\right)  +\left(  \partial_{F_{w}\left(
z_{s}\right)  }F_{\tilde{w}}\right)  \left(  z_{s}\right)  ,
\end{align*}
wherein we have used Eq. (\ref{equ.5.6}) for the last equality.
\end{proof}

\subsection{RDEs on a Manifold\label{sub.5.1}}

We now move to the manifold case. Let $F:M\rightarrow L\left(  W,TM\right)  $
be smooth such that $F\left(  m\right)  \in L\left(  W,T_{m}M\right)  .$
Alternatively we can think of $F:W\rightarrow\Gamma\left(  TM\right)  $ where
the map $w\rightarrow F_{w}\left(  \cdot\right)  $ is linear. We wish to give
meaning to the differential equation
\begin{equation}
d\mathbf{y}_{t}=F_{d\mathbf{X}_{t}}\left(  y_{t}\right)  \label{equ.5.8}%
\end{equation}
with initial condition $y_{0}=\bar{y}_{0}.$ To do this, first recall that any
vector field can be transferred to Euclidean space by using charts. If
$\mathcal{U}\subseteq D\left(  \phi\right)  $ where $\phi$ is a chart and
$\mathcal{V}:=\phi\left(  \mathcal{U}\right)  $ then
\[
F^{\phi}:=d\phi\circ\left(  F\circ\phi^{-1}\right)
\]
is a vector field on $\mathcal{V}$ (which does not carry the base point). If
$\mathbf{y}_{t}$ is to \textquotedblleft solve\textquotedblright%
\ (\ref{equ.5.8}) then $\mathbf{z}_{t}:=\phi_{\ast}\mathbf{y}_{t}$ should
solve the differential equation
\begin{equation}
d\mathbf{z}_{t}=F_{d\mathbf{X}_{t}}^{\phi}\left(  z_{t}\right)  .
\label{equ.5.9}%
\end{equation}

In the Euclidean case, Equation (\ref{equ.5.9}) is satisfied if%
\begin{align}
z_{t} &  \underset{^{3}}{\approx}z_{s}+F_{x_{s,t}}^{\phi}\left(  z_{s}\right)
+\left(  \partial_{F_{w}^{\phi}\left(  z_{s}\right)  }F_{\tilde{w}}^{\phi
}\right)  \left(  z_{s}\right)  |_{w\otimes\tilde{w}=\mathbb{X}_{s,t}%
}\label{equ.5.10}\\
z_{s}^{\dag} &  =F_{\left(  \cdot\right)  }^{\phi}\left(  z_{s}\right)
\nonumber
\end{align}
By writing out Equation (\ref{equ.5.10}) we have%
\begin{align}
\phi\left(  y_{t}\right)   &  \underset{^{3}}{\approx}\phi\left(
y_{s}\right)  +d\phi\circ F_{x_{s,t}}\left(  y_{s}\right)  +\left(
\partial_{d\phi\circ F_{w}\left(  y_{s}\right)  }d\phi\circ\left(
F_{\tilde{w}}\circ\phi^{-1}\right)  \right)  \left(  \phi\left(  y_{s}\right)
\right)  |_{w\otimes\tilde{w}=\mathbb{X}_{s,t}}\nonumber\\
&  =\phi\left(  y_{s}\right)  +d\phi\circ F_{x_{s,t}}\left(  y_{s}\right)
+F_{w}\left(  y_{s}\right)  \left[  d\phi\circ F_{\tilde{w}}\right]
|_{w\otimes\tilde{w}=\mathbb{X}_{s,t}}.\label{equ.5.11}%
\end{align}
We note that $F$ is linear with its range in the algebra of differential
operators, we can extend it uniquely to $\mathcal{F}$ which acts on the tensor
algebra $T\left(  \mathbb{R}^{n}\right)  $. In that case, we may write
(\ref{equ.5.11}) more concisely as%
\begin{equation}
\phi\left(  y_{t}\right)  \underset{^{3}}{\approx}\phi\left(  y_{s}\right)
+\left(  \mathcal{F}_{\mathbf{X}_{s,t}}\phi\right)  \left(  y_{s}\right)
.\label{equ.5.12}%
\end{equation}
This approximation will be satisfied for our solution to a rough differential
equation on a manifold. However, we will opt to define our solution in a
coordinate-free but equivalent way:

\begin{definition}
\label{def.5.2}$\mathbf{y=}\left(  y,y^{\dag}\right)  $ on $I_{0}=\left[
0,T\right]  $ or $[0,T)$ solves (\ref{equ.5.8}) if $y_{s}^{\dag}=F_{\left(
\cdot\right)  }\left(  y_{s}\right)  $ and for every $f\in C^{\infty}\left(
M\right)  $ and $\left[  a,b\right]  \subseteq I_{0}$, the approximation
\[
f\left(  y_{t}\right)  -f\left(  y_{s}\right)  \underset{^{3}}{\approx}\left(
\mathcal{F}_{\mathbf{X}_{s,t}}f\right)  \left(  y_{s}\right)
\]
holds for $a\leq s\leq t\leq b.$

If in addition $y_{0}=\bar{y}_{0}$, we say $\mathbf{y}$ solves (\ref{equ.5.8})
with initial condition $y_{0}=\bar{y}_{0}.$
\end{definition}

While this is an intuitive definition, there are many workable
characterizations of solving a rough differential equation. Before presenting
a few more, we note that if $\alpha\in\Omega^{1}\left(  M,V\right)  $ and
$F:M\rightarrow L\left(  W,TM\right)  $ is smooth, then the composition
$\alpha\circ F_{\left(  \cdot\right)  }$ is a smooth map from $M$ to $V$.
Given $\mathbf{y}\in CRP_{\mathbf{X}}\left(  M\right)  $, we can then define
the push-forward $\left[  \alpha\circ F_{\left(  \cdot\right)  }\right]
_{\ast}\mathbf{y}\in CRP_{\mathbf{X}}\left(  L\left(  W,V\right)  \right)  $.
Recall from Theorem \ref{the.2.7} that we can define the integral increment%
\[
\int_{s}^{t}\left\langle \left(  \left[  \alpha\circ F_{\left(  \cdot\right)
}\right]  _{\ast}\mathbf{y}\right)  _{\tau},d\mathbf{X}_{\tau}\right\rangle .
\]
With this idea in mind, we now give other characterizations of solving Eq.
(\ref{equ.5.8}).

\begin{theorem}
\label{the.5.3}Let $y$ be a path in $M$ on $I_{0}$ with $y_{s}^{\dag}%
=F_{\cdot}\left(  y_{s}\right)  .$ Let $\mathbf{y}=\left(  y,y^{\dag}\right)
\in CRP_{\mathbf{X}}\left(  M\right)  $. The following are equivalent.

\begin{enumerate}
\item For every chart $\phi$ with $a,b\in I_{0}$ such that $y\left(  \left[
a,b\right]  \right)  \subseteq D\left(  \phi\right)  $ the approximation
\begin{equation}
\phi\left(  y_{t}\right)  \underset{^{3}}{\approx}\phi\left(  y_{s}\right)
+d\phi\circ F_{x_{s,t}}\left(  y_{s}\right)  +F_{w}\left(  y_{s}\right)
\left[  d\phi\circ F_{\tilde{w}}\right]  |_{w\otimes\tilde{w}=\mathbb{X}%
_{s,t}} \label{equ.5.13}%
\end{equation}
holds $a\leq s\leq t\leq b$; that is%
\[
\phi\left(  y_{t}\right)  -\phi\left(  y_{s}\right)  =\int_{s}^{t}\left\langle
\left(  \left[  d\phi\circ F_{\left(  \cdot\right)  }\right]  _{\ast
}\mathbf{y}\right)  _{\tau},d\mathbf{X}_{\tau}\right\rangle
\]
for $a\leq s\leq t\leq b$.

\item If $V$ is a Banach space, $\alpha\in\Omega^{1}\left(  M,V\right)  $, and
$\left[  a,b\right]  $ is such that $\left[  a,b\right]  \subseteq I_{0}$ then%
\[
\int_{s}^{t}\alpha\left(  d\mathbf{y}\right)  \underset{^{3}}{\approx}%
\alpha\left(  F_{x_{s,t}}\left(  y_{s}\right)  \right)  +F_{w}\left(
y_{s}\right)  \left[  \alpha\circ F_{\tilde{w}}\right]  |_{w\otimes\tilde
{w}=\mathbb{X}_{s,t}}%
\]
for $a\leq s\leq t\leq b$; that is
\[
\int_{s}^{t}\alpha\left(  d\mathbf{y}\right)  =\int_{s}^{t}\left\langle
\left(  \left[  \alpha\circ F_{\left(  \cdot\right)  }\right]  _{\ast
}\mathbf{y}\right)  _{\tau},d\mathbf{X}_{\tau}\right\rangle
\]
for $a\leq s\leq t\leq b.$

\item $\mathbf{y}$ solves (\ref{equ.5.8}); that is%
\[
f\left(  y_{t}\right)  -f\left(  y_{s}\right)  =\int_{s}^{t}\left\langle
\left(  \left[  df\circ F_{\left(  \cdot\right)  }\right]  _{\ast}%
\mathbf{y}\right)  _{\tau},d\mathbf{X}_{\tau}\right\rangle
\]
for every $f\in C^{\infty}\left(  M\right)  $
\end{enumerate}
\end{theorem}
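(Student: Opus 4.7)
I will establish the cyclic chain $(2) \Rightarrow (3) \Rightarrow (1) \Rightarrow (2)$. The first two implications are essentially specializations plus bookkeeping, while the last one is the substantive direction that exploits the chart-based description of $\int\alpha(d\mathbf{y})$ together with an Euclidean computation.

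\textbf{Step 1: $(2)\Rightarrow (3)$.} Given $f\in C^\infty(M)$, apply condition $(2)$ to the one-form $\alpha := df \in \Omega^1(M,\mathbb{R})$. By the fundamental theorem of calculus for rough paths (Theorem \ref{the.4.13}), $f(y_t)-f(y_s) = \left[\int df(d\mathbf{y})\right]_{s,t}^1$. Under the identifications $df(F_w(y_s)) = (F_w f)(y_s)$ and $F_w(y_s)[df\circ F_{\tilde w}] = F_w(y_s)[F_{\tilde w}f] = (F_w F_{\tilde w} f)(y_s)$, the right-hand side of $(2)$ becomes precisely $(\mathcal{F}_{\mathbf{X}_{s,t}}f)(y_s)$, recovering Definition \ref{def.5.2}.

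\textbf{Step 2: $(3)\Rightarrow (1)$.} Fix a chart $\phi$ and $[a,b]\subseteq I_0$ with $y([a,b])\subseteq D(\phi)$. Choose an open $\mathcal{O}$ with compact closure such that $y([a,b])\subseteq\mathcal{O}\subseteq\overline{\mathcal{O}}\subseteq D(\phi)$, and use a cutoff to produce $f^i\in C^\infty(M)$ that agree with the coordinate functions $\phi^i$ on $\mathcal{O}$, for $1\le i\le d$. Applying $(3)$ to each $f^i$ on $[a,b]$ gives $\phi^i(y_t)-\phi^i(y_s)\underset{^3}{\approx}(\mathcal{F}_{\mathbf{X}_{s,t}}\phi^i)(y_s)$. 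Collecting the components into a single vector equation and using $F_w\phi^i(y_s) = (d\phi\circ F_w)^i(y_s)$ yields exactly the estimate $(1)$.

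\textbf{Step 3: $(1)\Rightarrow(2)$.} This is the main obstacle. The idea is to localize in time so that $y([a,b])$ lies in the domain of a single chart $\phi$; this is legitimate because by Lemma \ref{lem.2.6} and the proof strategy of Remark \ref{rem.2.52} both sides of $(2)$ are almost additive functionals, so the global estimate follows from local ones. On such an interval, Theorem \ref{the.4.8} gives
\begin{equation*}
\left[\int\alpha(d\mathbf{y})\right]_{s,t}^1 = \left[\int(\phi^{-1})^*\alpha\,(d\phi_*\mathbf{y})\right]_{s,t}^1,
\end{equation*}
reducing the question to the Euclidean setting. Writing $\mathbf{z}=\phi_*\mathbf{y}$, $\tilde\alpha=(\phi^{-1})^*\alpha$, and $F^\phi_w:=d\phi\circ F_w\circ\phi^{-1}$, the hypothesis $(1)$ translates to the flat RDE approximation $z_{s,t}\underset{^3}{\approx} F^\phi_{x_{s,t}}(z_s)+(\partial_{F^\phi_w(z_s)}F^\phi_{\tilde w})(z_s)|_{w\otimes\tilde w=\mathbb{X}_{s,t}}$, and $z_s^\dag = F^\phi_\cdot(z_s)$. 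Applying the flat integration formula of Theorem \ref{the.2.9},
\begin{equation*}
\left[\int\tilde\alpha(d\mathbf{z})\right]_{s,t}^1 \underset{^3}{\approx} \tilde\alpha_{z_s}(z_{s,t}) + \tilde\alpha'_{z_s}(z_s^\dag\otimes z_s^\dag)\mathbb{X}_{s,t},
\end{equation*}
substituting the expansion of $z_{s,t}$, and invoking the product rule
\begin{equation*}
\tilde\alpha_{z_s}\circ(\partial_{F^\phi_w(z_s)}F^\phi_{\tilde w})(z_s) + \tilde\alpha'_{z_s}(F^\phi_w(z_s)\otimes F^\phi_{\tilde w}(z_s)) = \partial_{F^\phi_w(z_s)}\bigl[\tilde\alpha\circ F^\phi_{\tilde w}\bigr](z_s)
\end{equation*}
identifies the result with $\alpha(F_{x_{s,t}}(y_s)) + F_w(y_s)[\alpha\circ F_{\tilde w}]|_{w\otimes\tilde w=\mathbb{X}_{s,t}}$ after translating back via $\phi^{-1}$. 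The main care required is keeping track that $\tilde\alpha_{z_s}\circ F^\phi_w(z_s)=\alpha_{y_s}(F_w(y_s))$ and that the composition $\tilde\alpha\circ F^\phi$ pulls back correctly to $\alpha\circ F$, but this is purely formal once the chart reduction has been made.
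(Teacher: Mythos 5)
Your proof is correct and follows essentially the same route as the paper's: the cycle of implications is the same (you state it as $(2)\Rightarrow(3)\Rightarrow(1)\Rightarrow(2)$, the paper as $(1)\Rightarrow(2)\Rightarrow(3)\Rightarrow(1)$, which is the same cycle), the $(2)\Rightarrow(3)$ step uses $\alpha=df$ and Theorem \ref{the.4.13}, the $(3)\Rightarrow(1)$ step uses cutoff coordinate functions exactly as in Theorem \ref{the.2.57}, and the $(1)\Rightarrow(2)$ step localizes via Remark \ref{rem.2.52} to a single chart and then reduces to a flat computation. The only cosmetic difference is that you inline the Euclidean product-rule computation that the paper defers to Theorem \ref{the.6.9} in the Appendix.
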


\begin{proof}
We will only prove the approximations in each case, that is the first
statement of each item. The second statements are immediate from the definitions.

$\left(  1\implies2\right)  $ We assume that $\mathbf{y}$ satisfies the
approximation in Eq. (\ref{equ.5.13}) for any chart. Let $\left[  a,b\right]
\subseteq I_{0}$ be given. For every $m\in y\left(  \left[  a,b\right]
\right)  $, we have there exists a chart $\phi^{m}$ with open domain
$\mathcal{V}_{m}:=D\left(  \phi^{m}\right)  $ containing $m$ whose range
$R\left(  \phi^{m}\right)  $ is convex. We may now use our patching strategy
outlined in Remark \ref{rem.2.52} with the cover $\left\{  V_{m}\right\}
_{m\in y\left(  \left[  a,b\right]  \right)  }$ applied to the function%
\[
\left(  s,t\right)  \longrightarrow\int_{s}^{t}\alpha\left(  d\mathbf{y}%
\right)  -\alpha\left(  F_{x_{s,t}}\left(  y_{s}\right)  \right)
-F_{w}\left(  y_{s}\right)  \left[  \alpha\circ F_{\tilde{w}}\right]
|_{w\otimes\tilde{w}=\mathbb{X}_{s,t}}%
\]
to reduce to the case where $y\left(  \left[  a,b\right]  \right)  $ is
contained in the domain of a single chart.

With this reduction, we can further reduce to the flat case by defining
$\mathbf{z}_{t}:=\left(  \phi\left(  y_{t}\right)  ,F_{\cdot}\left(
y_{s}\right)  \right)  $ and $F^{\phi}:=d\phi\left(  F\circ\phi^{-1}\right)  $
and showing
\begin{align*}
&  \int_{s}^{t}\alpha\left(  d\mathbf{y}\right)  -\alpha\left(  F_{x_{s,t}%
}\left(  y_{s}\right)  \right)  -F_{w}\left(  y_{s}\right)  \left[
\alpha\circ F_{\tilde{w}}\right]  |_{w\otimes\tilde{w}=\mathbb{X}_{s,t}}\\
&  \quad=\int_{s}^{t}\left(  \left(  \phi^{-1}\right)  ^{\ast}\alpha\right)
\left(  d\mathbf{z}\right)  -\left(  \left(  \phi^{-1}\right)  ^{\ast}%
\alpha\right)  _{z_{s}}\left(  F_{x_{s,t}}^{\phi}\left(  z_{s}\right)
\right)  -\left(  \partial_{F_{w}^{\phi}\left(  z_{s}\right)  }\left[  \left(
\phi^{-1}\right)  ^{\ast}\alpha\circ F_{\tilde{w}}^{\phi}\right]  \right)
\left(  z_{s}\right)  |_{w\otimes\tilde{w}=\mathbb{X}_{s,t}}.
\end{align*}
The above equality is true due to the following three identities:%
\begin{align}
\int_{s}^{t}\alpha\left(  d\mathbf{y}\right)   &  =\int_{s}^{t}\left(  \left(
\phi^{-1}\right)  ^{\ast}\alpha\right)  \left(  d\mathbf{z}\right)
,\label{equ.5.14}\\
\alpha\left(  F_{x_{s,t}}\left(  y_{s}\right)  \right)   &  =\left(  \left(
\phi^{-1}\right)  ^{\ast}\alpha\right)  _{z_{s}}\left(  F_{x_{s,t}}^{\phi
}\left(  z_{s}\right)  \right)  ,\text{ and}\label{equ.5.15}\\
F_{w}\left(  y_{s}\right)  \left[  \alpha\circ F_{\tilde{w}}\right]   &
=\left(  \partial_{F_{w}^{\phi}\left(  z_{s}\right)  }\left[  \left(
\phi^{-1}\right)  ^{\ast}\alpha\circ F_{\tilde{w}}^{\phi}\right]  \right)
\left(  z_{s}\right)  . \label{equ.5.16}%
\end{align}
\qquad\qquad Equation (\ref{equ.5.14}) is true by Theorem \ref{the.4.15}. The
differential geometric identities in Eqs. (\ref{equ.5.15}) and (\ref{equ.5.16})
are simply a matter of unwinding the definitions.
$\left(  2\implies3\right)  $ By letting $\alpha=df$ and using Theorem
\ref{the.4.13}, we have
\begin{align*}
&  f\left(  y_{t}\right)  -f\left(  y_{s}\right)  =\int_{s}^{t}df\left(
d\mathbf{y}\right) \\
&  \quad\underset{^{3}}{\approx}df\left(  F_{x_{s,t}}\left(  y_{s}\right)
\right)  +F_{w}\left(  y_{s}\right)  \left[  df\circ F_{\tilde{w}}\right]
|_{w\otimes\tilde{w}=\mathbb{X}_{s,t}}\\
&  \quad=\left(  \mathcal{F}_{\mathbf{X}_{s,t}}f\right)  \left(  y_{s}\right)
\end{align*}

$\left(  3\implies1\right)  $ We leave it to the reader to work through the
details of this step which follow exactly as in the proof of Theorem
\ref{the.2.57} by letting $f^{i}$ be the coordinates of $\phi.$
\end{proof}

By Theorem \ref{the.6.9} in the Appendix, we see that a solution to a rough
differential equation in flat space does actually satisfy Eq. (\ref{equ.5.8}).
Moreover, we immediately get local existence of solutions:

\begin{theorem}
\label{the.5.4}Let $F:W\rightarrow\Gamma\left(  TM\right)  $ be linear and let
$\bar{y}_{0}$ be a point in $M$. There exists a local in time solution to the
differential Eq. (\ref{equ.5.8}) with initial condition $y_{0}=\bar{y}_{0}.$
\end{theorem}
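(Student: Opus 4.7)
The plan is to reduce the problem to the Euclidean case via a single chart around $\bar{y}_0$ and then push the flat solution forward to the manifold, verifying the RDE via the characterizations in Theorem \ref{the.5.3}. First, pick a chart $\phi$ with $\bar{y}_0\in D(\phi)$ and, after shrinking if necessary, assume $D(\phi)$ has compact closure and $\mathcal{V}:=\phi(D(\phi))$ is an open convex subset of $\mathbb{R}^d$. Transfer the coefficient by setting
\[
F^\phi_w(z):=d\phi\left(F_w(\phi^{-1}(z))\right)\in\mathbb{R}^d,\qquad w\in W,\ z\in\mathcal{V},
\]
so that $w\mapsto F^\phi_w$ is linear from $W$ into the smooth vector fields on $\mathcal{V}$.

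Next, invoke the flat-space existence theory for RDEs (Lyons; see Subsection \ref{sub.6.5} in the Appendix) applied to $F^\phi$ with initial point $z_0=\phi(\bar{y}_0)\in\mathcal{V}$. This produces a time $T^\ast>0$ and a controlled rough path $\mathbf{z}=(z,z^\dagger)\in CRP_{\mathbf{X}}([0,T^\ast),\mathbb{R}^d)$ with $z^\dagger_s=F^\phi_{(\cdot)}(z_s)$ satisfying
\[
z_{s,t}\underset{^{3}}{\approx}F^\phi_{x_{s,t}}(z_s)+\bigl(\partial_{F^\phi_w(z_s)}F^\phi_{\tilde w}\bigr)(z_s)\big|_{w\otimes\tilde w=\mathbb{X}_{s,t}}.
\]
By continuity of $z$ and openness of $\mathcal{V}$, after possibly shrinking $T^\ast$ we may assume $z([0,T^\ast))\subseteq\mathcal{V}$. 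Define
\[
\mathbf{y}:=\phi^{-1}_{\ast}\mathbf{z}=\bigl(\phi^{-1}\circ z,\ (\phi^{-1})_{\ast}\circ z^\dagger\bigr).
\]
By Proposition \ref{pro.4.10}, $\mathbf{y}\in CRP_{\mathbf{X}}([0,T^\ast),M)$, and a direct unwinding of the definitions gives
\[
y^\dagger_s=(\phi^{-1})_{\ast z_s}\circ d\phi\circ F_{(\cdot)}(y_s)=F_{(\cdot)}(y_s),
\]
as required for Definition \ref{def.5.2}.

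To verify that $\mathbf{y}$ solves the RDE, I will use Definition \ref{def.5.2} directly: for any $f\in C^\infty(M)$ and any compact $[a,b]\subseteq[0,T^\ast)$, set $g:=f\circ\phi^{-1}\in C^\infty(\mathcal{V})$. The chain rule gives the pointwise identity $F^\phi_w(g)=(F_w f)\circ\phi^{-1}$, and iterating this identity over simple tensors yields
\[
\bigl(\mathcal{F}^\phi_{\mathbf{X}_{s,t}}g\bigr)(z_s)=\bigl(\mathcal{F}_{\mathbf{X}_{s,t}}f\bigr)(y_s),
\]
where $\mathcal{F}^\phi$ is the extension of $F^\phi$ to the tensor algebra. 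Since $\mathbf{z}$ solves the flat RDE for $F^\phi$, Proposition \ref{pro.5.1} (together with the Euclidean Taylor expansion applied to $g$) gives
\[
f(y_t)-f(y_s)=g(z_t)-g(z_s)\underset{^{3}}{\approx}\bigl(\mathcal{F}^\phi_{\mathbf{X}_{s,t}}g\bigr)(z_s)=\bigl(\mathcal{F}_{\mathbf{X}_{s,t}}f\bigr)(y_s),
\]
which is the condition of Definition \ref{def.5.2}.

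The only genuinely nontrivial input is the Euclidean existence theorem, which is classical. The main obstacle in the presentation will be checking that the iterated differential-operator identity $\mathcal{F}^\phi g=\mathcal{F}f\circ\phi^{-1}$ really does propagate through the tensor algebra and pair correctly against $\mathbb{X}_{s,t}$; this is routine but needs to be done carefully since $\mathcal{F}_{\mathbf{X}_{s,t}}$ encodes two differentiations whose chart transformation law one has to track. Everything else is localization and push-forward, both already handled by Proposition \ref{pro.4.10} and the equivalences in Section \ref{sub.2.5}.
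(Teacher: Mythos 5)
Your proof is correct and uses the same core idea as the paper (solve the pulled-back RDE in a single chart, push the flat solution back to $M$), but you close the argument through a different face of the equivalence in Theorem \ref{the.5.3}. The paper verifies item 1 of Theorem \ref{the.5.3}: it shows the RDE approximation holds in every chart $\tilde\phi$ by observing $F^{\tilde\phi}=(F^{\phi})^{\tilde\phi\circ\phi^{-1}}$ and invoking Corollary \ref{cor.6.12} on the diffeomorphism-covariance of flat RDE solutions. You instead verify Definition \ref{def.5.2} directly, pushing each global test function $f$ into the chart as $g=f\circ\phi^{-1}$ and using the chain-rule identity $F^\phi_w g=(F_wf)\circ\phi^{-1}$ to transport the second-order Taylor data. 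Both are clean; yours avoids quoting Corollary \ref{cor.6.12} at the cost of spelling out the transport of the iterated differential operators, while the paper's version defers that bookkeeping to the Euclidean appendix. One small correction to your citations: Proposition \ref{pro.5.1} is an equivalence between the RDE approximation and its expression via a logarithm $\bar\psi$ — it does not give you $g(z_t)-g(z_s)\underset{^{3}}{\approx}\left(\mathcal{F}^\phi_{\mathbf{X}_{s,t}}g\right)(z_s)$. For that step you should instead invoke Theorem \ref{the.6.9} with $\alpha=dg$ together with $\int_s^t dg\left(d\mathbf{z}\right)=g(z_t)-g(z_s)$ (Theorem \ref{the.4.13}), or just run Taylor's theorem on $g$ against the defining approximation for $\mathbf{z}$ as in the derivation of Eq. (\ref{equ.5.12}).
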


\begin{proof}
Let $\phi$ be any chart such that $\bar{y}_{0}\in D\left(  \phi\right)  $.
Then there exists a solution on some time interval $\left[  0,\tau\right]  $
in $R\left(  \phi\right)  $ to the differential equation%
\[
d\mathbf{z}_{t}=F_{d\mathbf{X}_{t}}^{\phi}\left(  z_{t}\right)
\]
with initial condition $z_{0}=\phi\left(  \bar{y}_{0}\right).$ If
$\tilde{\phi}$ is any other chart such that $\left[  a,b\right]
\subseteq\left[  0,\tau\right]  $ and $y\left(  \left[  a,b\right]  \right)
\subseteq D\left(  \tilde{\phi}\right)  $, then the transition map
$\tilde{\phi}\circ\phi^{-1}$ has a domain containing $z\left(  \left[
a,b\right]  \right)  $. It is easy to check that
\[
F^{\tilde{\phi}}=\left(  F^{\phi}\right)  ^{\tilde{\phi}\circ\phi^{-1}}%
\]
and by Corollary \ref{cor.6.12}, after unraveling the notation, we have
\[
\tilde{\phi}\left(  y_{t}\right)  \underset{^{3}}{\approx}\tilde{\phi}\left(
y_{s}\right)  +d\tilde{\phi}\circ F_{x_{s,t}}\left(  y_{s}\right)
+F_{w}\left(  y_{s}\right)  \left[  d\tilde{\phi}\circ F_{\tilde{w}}\right]
|_{w\otimes\tilde{w}=\mathbb{X}_{s,t}}.
\]
Thus satisfying the rough differential equation approximation in one chart is
sufficient prove that it hold in all charts.
\end{proof}

Solutions to rough differential equations will be unique on the intersection
of their time domain up to some possible explosion time. This is stated more
precisely in the following theorem.

\begin{theorem}
\label{the.5.5}Let $T>0$. There is unique solution $\mathbf{y}_{t}\in
CRP_{\mathbf{X}}\left(  M\right)  $ to $d\mathbf{y}_{t}=F_{d\mathbf{X}_{t}%
}\left(  y_{t}\right)  $ with initial condition $y_{0}=\bar{y}_{0}$ existing
either on all of $\left[  0,T\right]  $ or on $[0,\tau)$ for some $\tau<T$
such that the closure of $\left\{  y_{t}:0\leq t<\tau\right\}  $ is not compact.
\end{theorem}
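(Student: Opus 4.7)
The plan is to first establish local uniqueness, then use a Zorn/supremum argument to build a maximal solution, and finally rule out the bounded case by using local existence (Theorem \ref{the.5.4}) to extend past any interior limit point.

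For \textbf{local uniqueness}, suppose $\mathbf{y}$ and $\mathbf{\tilde y}$ both solve the RDE on $[0,b]$ with $y_0 = \tilde y_0 = \bar y_0$. Pick any chart $\phi$ around $\bar y_0$ whose range $R(\phi)$ is convex, and choose $\tau^\ast > 0$ small enough that $y([0,\tau^\ast]) \cup \tilde y([0,\tau^\ast]) \subseteq D(\phi)$. Then $\phi_\ast \mathbf{y}$ and $\phi_\ast \mathbf{\tilde y}$ both solve the flat RDE $d\mathbf{z}_t = F_{d\mathbf{X}_t}^\phi(z_t)$ with $z_0 = \phi(\bar y_0)$ (this is exactly the equivalence in item~1 of Theorem~\ref{the.5.3}). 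Uniqueness in flat space (Lyons' theorem, cited in Subsection~\ref{sub.6.5} of the Appendix) gives $\phi_\ast \mathbf{y} = \phi_\ast \mathbf{\tilde y}$ on $[0,\tau^\ast]$, and since $\phi$ is a diffeomorphism onto its image this forces $\mathbf{y} = \mathbf{\tilde y}$ on $[0,\tau^\ast]$. A standard connectedness argument (the set where two continuous controlled rough paths agree is both open and closed in $[0,b]$) then upgrades this to equality on all of $[0,b]$.

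For \textbf{maximal existence}, let
\[
\tau := \sup\bigl\{ t \in (0,T] : \text{a solution to the RDE with } y_0 = \bar y_0 \text{ exists on } [0,t] \bigr\},
\]
which is positive by Theorem~\ref{the.5.4}. By the uniqueness argument above, for any two times $t_1 < t_2 < \tau$ the corresponding solutions on $[0,t_1]$ and $[0,t_2]$ coincide on the overlap, so they patch together to a well-defined solution $\mathbf{y}$ on $[0,\tau)$. If $\tau = T$ and a solution exists on all of $[0,T]$, we are done; if $\tau < T$, we must show the closure of $\{y_t : 0 \leq t < \tau\}$ is not compact in $M$.

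The \textbf{blow-up argument} proceeds by contradiction. Suppose $K := \overline{\{y_t : 0 \leq t < \tau\}}$ is compact. I would first show $y_t$ has a limit as $t \uparrow \tau$: using a finite cover of $K$ by chart domains with convex range and Lebesgue-number/uniform-continuity arguments against the controlled rough path estimates in a single chart (Definition~\ref{def.2.40}), one gets uniform bounds on $|\phi(y_t) - \phi(y_s)|$ and on $|d\phi \circ y^\dag_t - d\phi \circ y^\dag_s|$ as $s,t \uparrow \tau$, which together with compactness of $K$ force $y_t \to y_\tau \in K$ and extend $y^\dag$ continuously to $\tau$ with $y^\dag_\tau \in L(W, T_{y_\tau} M)$. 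Now apply Theorem~\ref{the.5.4} to the RDE with initial condition $y_\tau$ to obtain a solution $\mathbf{\tilde y}$ on some $[\tau, \tau + \epsilon]$ with $\tilde y^\dag_\tau = F_{(\cdot)}(y_\tau) = y^\dag_\tau$. Concatenating $\mathbf{y}$ on $[0,\tau]$ (including the extended endpoint) with $\mathbf{\tilde y}$ on $[\tau, \tau + \epsilon]$ yields a controlled rough path on $[0, \tau + \epsilon]$ --- the sewing-style Lemma~\ref{lem.2.6} gives the controlled rough path estimates globally, and the RDE property is checked chartwise via item~1 of Theorem~\ref{the.5.3} using a common chart around $y_\tau$. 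This contradicts maximality of $\tau$.

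The \textbf{main obstacle} is the extension step at $t = \tau$: proving that $y_t$ and $y^\dag_t$ actually have limits as $t \uparrow \tau$ and that $y^\dag_\tau = F_{(\cdot)}(y_\tau)$. The derivative process lives in the varying vector bundle $L(W,TM)$, so continuity at $\tau$ must be checked after trivializing via a chart around the (hypothetical) limit point; this is where the compactness of $K$ is essential, since it guarantees finitely many charts suffice and the flat-space estimates (\ref{equ.2.24})--(\ref{equ.2.25}) can be applied with uniform constants to obtain the needed Cauchy property as $t \uparrow \tau$.
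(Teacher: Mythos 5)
Your high-level strategy matches the paper's: define the maximal solution via a supremum, handle uniqueness chartwise by reduction to Lyons' flat-space result, and argue by contradiction that a precompact trajectory can be extended. However, your extension step at $t = \tau$ contains a genuine gap, and the paper handles this step differently.

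You try to show that $y_t \to y_\tau$ and that $y_t^\dag$ extends continuously to $\tau$, so that Theorem~\ref{the.5.4} can be applied at the limit point. To get the Cauchy property you invoke the controlled rough path estimates (\ref{equ.2.24})--(\ref{equ.2.25}), but the constants there are $C_{\phi,a,b}$ and are a priori only guaranteed on compact subintervals $[a,b] \subset [0,\tau)$; nothing in Definition~\ref{def.2.40} (or in the hypotheses you cite) prevents $C_{\phi,a,b}$ from blowing up as $b \uparrow \tau$. You would need a \emph{uniform} a priori estimate for RDE solutions whose trajectory stays in a fixed compact set — this is true, but it is a quantitative fact about RDEs, not something that follows from compactness of $K$ plus the qualitative definitions you cite. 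You name this obstacle but do not resolve it.

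The paper's proof avoids the issue entirely: it never establishes that $y_t$ has a limit as $t \uparrow \tau$. Instead it picks a sequence $t_n \uparrow \tau$ with $y_{t_n}$ convergent (possible by compactness of $K$) and invokes Lemma~\ref{lem.6.8}, which gives a \emph{uniform} local existence time $\delta$ for RDE solutions started anywhere in a precompact set, at any initial time. Choosing $n$ with $\tau - t_n < \delta$, the solution started at $y_{t_n}$ exists on $[t_n, t_n + \delta]$, which strictly exceeds $\tau$; concatenating with $\mathbf{y}|_{[0,t_n]}$ via Lemma~\ref{lem.2.6} and Lemma~\ref{lem.6.13} gives a solution past $\tau$, contradicting maximality. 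This is cleaner because it needs only convergence along a subsequence, not convergence of $y_t$ itself. To close the gap in your argument you should either replace the ``$y_t \to y_\tau$'' step with this subsequence-plus-Lemma~\ref{lem.6.8} argument, or explicitly supply the uniform RDE a priori estimate that underlies Lemma~\ref{lem.6.8} (cf.\ Theorem 10.14 in the Friz--Victoir reference cited there).
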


\begin{proof}
This proof follows the strategy of the proof of Theorem 4.2 in \cite{CDL13}.
First we will show that we can always concatenate a solution $\mathbf{y}$
provided it has not exploded yet:

Suppose there exists a $\mathbf{y}$ solving $d\mathbf{y}_{t}=F_{d\mathbf{X}%
_{t}}\left(  y_{t}\right)  $ with initial condition $y_{0}=\bar{y}_{0}$ on
$[0,\tau)$. If there exists a compact $K\subseteq M$ such that $\left\{
y_{t}:0\leq t<\tau\right\}  \subseteq K$, then there is a sequence of
increasing times $t_{n}\in\lbrack0,\tau)$ such that $t_{n}\rightarrow\tau$ and
$y_{\infty}:=\lim_{n\rightarrow\infty}y\left(  t_{n}\right)  $ exists and is
in $K$. We can now choose a chart $\phi$ such that the closure of $D\left(
\phi\right)  $ is compact and such that $y_{\infty}\in D\left(  \phi\right)
.$ Let $\mathbf{z}_{t}$ and $a$ be such that $\mathbf{z}_{t}:=\phi_{\ast
}\mathbf{y}$ on some time interval $[a,\tau)$ such that $y\left(
[a,\tau)\right)  \subseteq D\left(  \phi\right)  $. By appealing to Lemma
\ref{lem.6.8} in the Appendix, there exists an $\epsilon>0$ and a $U\subseteq
D\left(  \phi\right)  $ containing $y_{\infty}$ such that for any $s\in\left[
\tau-\epsilon,\tau\right]  $ and $\bar{z}\in U$, there exists $\mathbf{\tilde
{z}\in}CRP_{\mathbf{X}}\left(  \mathbb{R}^{d}\right)  $ defined on $\left[
s,\tau+\epsilon\right]  $ which solves%
\[
d\mathbf{\tilde{z}}_{t}=F_{x_{s,t}}^{\phi}\left(  \tilde{z}_{t}\right)
\quad\text{with\quad}\tilde{z}_{s}=\bar{z}.
\]
Letting $n$ be sufficiently large, we have that $t_{n}\in\left[  \tau
-\epsilon,\tau\right]  $ and we let $\mathbf{\tilde{z}}$ be the solution to
$d\mathbf{\tilde{z}}_{t}=F_{x_{s,t}}^{\phi}\left(  \tilde{z}_{t}\right)  $
with initial condition $\tilde{z}_{s}=z\left(  t_{n}\right)  .$ Then we can
concatenate $\mathbf{z}$ and $\mathbf{\tilde{z}}$ in the sense of Lemma
\ref{lem.2.6}. By pulling these back to the manifold by $\phi^{-1}$, we now
have a solution $\mathbf{\tilde{y}}$ on $M$ which is defined on $\left[
0,\tau+\epsilon\right]  .$

With the preceding fact shown, we may now prove the theorem. We define%
\[
\tau:=\sup\left\{  T_{0}\in\left(  0,T\right)  :\exists\mathbf{y}\text{
solving }d\mathbf{y}_{t}=F_{d\mathbf{X}_{t}}\left(  y_{t}\right)  \text{ with
}y_{0}=\bar{y}_{0}\right\}  .
\]
We can then for any $t<\tau$ define $\mathbf{y}_{t}:=\mathbf{\hat{y}}_{t}$
where $\mathbf{\hat{y}}_{t}$ is any solution to $d\mathbf{y}_{t}%
=F_{d\mathbf{X}_{t}}\left(  y_{t}\right)  $ with initial condition $y_{0}%
=\bar{y}_{0}.$ By the uniqueness of solutions to rough differential equations
on flat space and the fact that we can cover any portion of the path with the
domain of a chart, we know that $\mathbf{y}_{t}$ is well defined, and in fact
satisfies $d\mathbf{y}_{t}=F_{d\mathbf{X}_{t}}\left(  y_{t}\right)  $ on all
of $[0,\tau)$. If the closure of $\left\{  y_{t}:0\leq t<\tau\right\}  $ is
compact, then from what we showed above, we can produce a solution
$\mathbf{\tilde{y}}$ which is defined on $\left[  0,\tau+\epsilon\right]  $
for some $\epsilon>0$. In this case, $\tau$ must be $T$ and $\mathbf{\tilde
{y}|}_{\left[  0,T\right]  }$ is a solution defined on all of $\left[
0,T\right]  $.
\end{proof}

\begin{definition}
\label{def.5.6}Let $f:M\rightarrow N$ be a smooth map between manifolds. Let
$F:W\rightarrow\Gamma\left(  TM\right)  $ and $\tilde{F}:W\rightarrow
\Gamma\left(  TN\right)  $ be linear. We say $F$ and $\tilde{F}$ are $f$ $-$
\textbf{related dynamical systems} if%
\[
f_{\ast}F_{w}=\tilde{F}_{w}\circ f\text{ for all }w\in W.
\]

\end{definition}

As in the flat case and shown in the Appendix in Theorem \ref{the.6.11}, we
have a relation between dynamical systems.\ The proof is no different in the
manifold case, and so we omit it.

\begin{theorem}
\label{the.5.7}Suppose $f:M\rightarrow N$ is a smooth map between manifolds
and let $F:W\rightarrow\Gamma\left(  TM\right)  $ and $\tilde{F}%
:W\rightarrow\Gamma\left(  TN\right)  $ be $f-$related dynamical systems. If
$\mathbf{y}$ solves the initial value problem Eq. (\ref{equ.5.8}), then
$\mathbf{\tilde{y}}_{t}:=\left(  \tilde{y}_{t},\tilde{y}_{s}^{\dag}\right)
:=f_{\ast}\mathbf{\tilde{y}}_{t}$ solves%
\[
d\mathbf{\tilde{y}}_{t}=\tilde{F}_{d\mathbf{X}_{t}}\left(  \tilde{y}%
_{t}\right)  \text{\quad with\quad}\tilde{y}_{0}=f\left(  \bar{y}_{0}\right)
.
\]

\end{theorem}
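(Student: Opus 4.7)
My plan is to verify Theorem \ref{the.5.7} by leveraging the smooth function characterization of solutions to RDEs given in Definition \ref{def.5.2}. First, I would note that $\mathbf{\tilde{y}} = f_*\mathbf{y} \in CRP_{\mathbf{X}}(N)$ immediately by Proposition \ref{pro.4.10}, and the initial condition $\tilde{y}_0 = f(y_0) = f(\bar{y}_0)$ is automatic from the definition of the push-forward.

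Next I would verify the derivative process condition. By the definition of the push-forward, $\tilde{y}_s^\dag = f_* \circ y_s^\dag$. Since $\mathbf{y}$ solves the RDE, $y_s^\dag = F_{(\cdot)}(y_s)$, so for any $w \in W$,
\[
\tilde{y}_s^\dag w = f_*(F_w(y_s)) = (f_* F_w)(y_s) = (\tilde{F}_w \circ f)(y_s) = \tilde{F}_w(\tilde{y}_s),
\]
where the third equality is precisely the $f$-relatedness condition from Definition \ref{def.5.6}. Hence $\tilde{y}_s^\dag = \tilde{F}_{(\cdot)}(\tilde{y}_s)$ as required.

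The main step is to verify the Taylor-type approximation for arbitrary $g \in C^\infty(N)$. The key algebraic observation is that $f$-relatedness propagates through compositions of vector fields: for any $w \in W$ and $h \in C^\infty(N)$,
\[
F_w(h \circ f)(m) = d(h \circ f)|_m(F_w(m)) = dh|_{f(m)}(f_* F_w(m)) = (\tilde{F}_w h)(f(m)),
\]
so $F_w(h \circ f) = (\tilde{F}_w h) \circ f$. Iterating once gives
\[
F_w F_{\tilde{w}}(g \circ f) = F_w\bigl((\tilde{F}_{\tilde{w}} g) \circ f\bigr) = (\tilde{F}_w \tilde{F}_{\tilde{w}} g) \circ f,
\]
and consequently the extended operators satisfy $\mathcal{F}_{\mathbf{X}_{s,t}}(g \circ f) = (\tilde{\mathcal{F}}_{\mathbf{X}_{s,t}} g) \circ f$. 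Since $\mathbf{y}$ solves Eq.\ (\ref{equ.5.8}), applying Definition \ref{def.5.2} to the smooth function $g \circ f \in C^\infty(M)$ yields
\[
g(\tilde{y}_t) - g(\tilde{y}_s) = (g \circ f)(y_t) - (g \circ f)(y_s) \underset{^{3}}{\approx} \bigl(\mathcal{F}_{\mathbf{X}_{s,t}}(g \circ f)\bigr)(y_s) = (\tilde{\mathcal{F}}_{\mathbf{X}_{s,t}} g)(f(y_s)) = (\tilde{\mathcal{F}}_{\mathbf{X}_{s,t}} g)(\tilde{y}_s).
\]
Since $g$ was arbitrary, $\mathbf{\tilde{y}}$ satisfies the defining condition for solving $d\mathbf{\tilde{y}}_t = \tilde{F}_{d\mathbf{X}_t}(\tilde{y}_t)$.

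There is no substantial obstacle here; the proof is essentially a direct translation through the push-forward. The only subtlety is confirming the chain rule for iterated vector field operators respects $f$-relatedness, but this is immediate from the first-order case since each vector field action produces a function on $M$ that is itself a pull-back of a function on $N$.
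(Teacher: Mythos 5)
Your proof is correct. The paper omits a proof for this theorem, referring to the Euclidean-case argument in Theorem \ref{the.6.11}, which starts from the integral characterization (Theorem \ref{the.6.9} with $\alpha=df$) and manipulates the Euler increment formula term by term. You instead verify directly the smooth-function characterization of Definition \ref{def.5.2} (equivalently, item 3 of Theorem \ref{the.5.3}), using the elementary fact that $f$-relatedness propagates through iterated vector-field application: $F_w(h\circ f)=(\tilde F_w h)\circ f$, hence $\mathcal{F}_{\mathbf{X}_{s,t}}(g\circ f)=(\tilde{\mathcal{F}}_{\mathbf{X}_{s,t}}g)\circ f$. Both routes are valid; yours is arguably the more intrinsic one for the manifold setting, since it works solely with the defining characterization and avoids charts or rough integrals. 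One small stylistic point: in the derivative-process step, $f_*F_w$ is not literally a vector field on $N$ (unless $f$ is a diffeomorphism), so it is cleaner to write $f_{*y_s}(F_w(y_s))=\tilde F_w(f(y_s))$ directly rather than inserting $(f_*F_w)(y_s)$ as an intermediate expression, but the intended meaning is unambiguous.
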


\subsubsection{RDEs from the Gauge Perspective\label{sub.5.1.1}}

Following the theme of Theorem \ref{the.2.44}, we also have a way
to view a solution to a differential equation using the gauge perspective. Let
$\psi$ be a logarithm on $M$ with diagonal domain $\mathcal{D}$.

\begin{theorem}
\label{the.5.8}Let $y$ be a path in $M$ on $I_{0}$ with $y_{s}^{\dag}%
=F_{\cdot}\left(  y_{s}\right)  .$ Let $\mathbf{y}=\left(  y,y^{\dag}\right)
$. Then $\mathbf{y}$ solves Equation (\ref{equ.5.8}) if and only if for every
$a,b$ such that $\left[  a,b\right]  \subseteq I_{0}$, there exists a
$\delta>0$ such that
\begin{equation}
\psi\left(  y_{s},y_{t}\right)  \underset{^{3}}{\approx}F_{x_{s,t}}\left(
y_{s}\right)  +F_{w}\left(  y_{s}\right)  \left[  \left(  \psi_{y_{s}}\right)
_{\ast}F_{\tilde{w}}\right]  |_{w\otimes\tilde{w}=\mathbb{X}_{s,t}}.
\label{equ.5.17}%
\end{equation}
provided $a\leq s\leq t\leq b$ \ and $t-s<\delta$.
\end{theorem}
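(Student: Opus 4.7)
The plan is to reduce the theorem to its flat-space counterpart, Proposition~\ref{pro.5.1}, via a chart and a patching argument. The left-hand side of Eq.~(\ref{equ.5.17}) is a local statement in $\psi$ near the diagonal, and the right-hand side depends only on germs of $F$ and $\psi$ near $y_s$, so I would first invoke Remark~\ref{rem.2.52} (with the auxiliary cover consisting of chart domains $\mathcal{V}_m$ whose images $\mathcal{W}_m = \phi^m(\mathcal{V}_m)$ are convex and $\mathcal{V}_m\times\mathcal{V}_m\subseteq D(\psi)$) to reduce to the case where $y([a,b])$ lies in the domain of a single chart $\phi$, with $R(\phi)$ convex, and $D(\phi)\times D(\phi)\subseteq\mathcal{D}$. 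Choose $\delta$ small enough that $(y_s,y_t)\in D(\phi)\times D(\phi)$ whenever $|t-s|\le\delta$.

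Once localized, set $\mathbf{z}_t := \phi_*\mathbf{y}_t = (\phi(y_t), d\phi\circ y_t^\dag)$, $F^\phi := d\phi\circ F\circ\phi^{-1}$ (a map $W\to\Gamma(TR(\phi))$), and $\psi^\phi(x,y) := \phi_*\psi(\phi^{-1}(x),\phi^{-1}(y))$, with associated $\bar\psi^\phi$ as in Notation~\ref{not.2.23}. Then $\psi^\phi$ is a local logarithm on $R(\phi)$ in the sense of Definition~\ref{def.2.15}, and by Theorem~\ref{the.5.3}(1), $\mathbf{y}$ solves the RDE if and only if
\[
z_{s,t} \underset{^3}{\approx} F^\phi_{x_{s,t}}(z_s) + \bigl(\partial_{F^\phi_w(z_s)} F^\phi_{\tilde w}\bigr)(z_s)\big|_{w\otimes\tilde w=\mathbb{X}_{s,t}},
\]
which is precisely Eq.~(\ref{equ.5.1}) for $\mathbf{z}$. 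By Proposition~\ref{pro.5.1}, this is equivalent to Eq.~(\ref{equ.5.2}), namely
\[
\bar\psi^\phi(z_s,z_t) \underset{^3}{\approx} F^\phi_{x_{s,t}}(z_s) + \bigl(\partial_{F^\phi_w(z_s)}[(\bar\psi^\phi_{z_s})'(\cdot)F^\phi_{\tilde w}(\cdot)]\bigr)(z_s)\big|_{w\otimes\tilde w=\mathbb{X}_{s,t}}.
\]

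The remaining work is pure translation between chart and manifold notation, which I would verify as a short lemma: applying $(d\phi_{y_s})^{-1}$ to the displayed Euclidean identity recovers Eq.~(\ref{equ.5.17}). The two ingredients are (i) $(d\phi_{y_s})^{-1}\bar\psi^\phi(z_s,z_t) = \psi(y_s,y_t)$, immediate from the definition of $\psi^\phi$; and (ii) the naturality identity
\[
(d\phi_{y_s})^{-1}\bigl(\partial_{F^\phi_w(z_s)}[(\bar\psi^\phi_{z_s})'(\cdot)F^\phi_{\tilde w}(\cdot)]\bigr)(z_s) = F_w(y_s)\bigl[(\psi_{y_s})_*F_{\tilde w}\bigr],
\]
which follows from the chain rule upon recognizing that, under the identification of $T_{y_s}M$ with its image under $d\phi_{y_s}$, the map $y\mapsto (\psi_{y_s})_*F_{\tilde w}(y)$ is precisely $\phi^{-1}$-conjugate to $x\mapsto (\bar\psi^\phi_{z_s})'(x)F^\phi_{\tilde w}(x)$, and that $F_w(y_s)$ acts on the first coordinate as the directional derivative $\partial_{F^\phi_w(z_s)}$.

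The main (and essentially only) obstacle is bookkeeping the identification in (ii), because $(\psi_{y_s})_*$ conflates two roles of $T_{y_s}M$ (base and tangent of a vector space), and one must keep careful track of which derivative is being taken in which variable. Once that is settled, the two ``if and only if'' implications follow by simply chaining Theorem~\ref{the.5.3}(1), Proposition~\ref{pro.5.1}, and the translation identity; no further rough-path estimates are needed, and Lemma~\ref{lem.2.51} (invoked via Remark~\ref{rem.2.52}) handles the patching of local constants into a single $C$ valid on the full interval $[a,b]$.
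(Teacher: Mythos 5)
Your proposal is correct and follows the paper's own strategy essentially line for line: localize via charts using the interlaced-cover machinery, push Eq.\ (\ref{equ.5.17}) forward by $\phi$ to its Euclidean form (\ref{equ.5.2}), apply Proposition \ref{pro.5.1} to pass to the flat RDE condition (\ref{equ.5.10}), and patch back; the translation identity you isolate as the only delicate step is precisely what the paper compresses into ``Eq.\ (\ref{equ.5.17}), once pushed forward by $\phi$, can be written as\dots''. The one minor cosmetic divergence is that for the direction ``(\ref{equ.5.17})\,$\implies$\,RDE'' the paper patches the localized flat RDE estimate via Lemma \ref{lem.2.48} together with Lemma \ref{lem.6.13} rather than via Remark \ref{rem.2.52}, but both accomplish the same thing.
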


\begin{proof}
This proof will be similar to the proof of Theorem \ref{the.2.44}.

First we show the condition of Theorem \ref{the.5.8} implies that $\mathbf{y}$
solves Equation (\ref{equ.5.8}). Let $\phi$ be a chart and let $\left[
a,b\right]  $ be such that $y\left(  \left[  a,b\right]  \right)  \subseteq
D\left(  \phi\right)  $. By defining
\begin{align*}
z_{s} &  :=\phi\left(  y_{s}\right)  \\
\psi^{\phi}\left(  x,y\right)   &  :=\phi_{\ast}\psi\left(  \phi^{-1}\left(
x\right)  ,\phi^{-1}\left(  y\right)  \right)  \\
F_{w}^{\phi}\left(  x\right)   &  :=d\phi\left(  F_{w}\left(  \phi^{-1}\left(
x\right)  \right)  \right)
\end{align*}
and denoting $\psi^{\phi}\left(  x,y\right)  =\left(  x,\bar{\psi}^{\phi
}\left(  x,y\right)  \right)  $, Eq. (\ref{equ.5.17}), once pushed forward by
$\phi$, can be written as%
\[
\bar{\psi}^{\phi}\left(  z_{s},z_{t}\right)  \underset{^{3}}{\approx
}F_{x_{s,t}}^{\phi}\left(  z_{s}\right)  +\left(  \partial_{F_{w}^{\phi
}\left(  z_{s}\right)  }\left[  \bar{\psi}_{z_{s}}^{\phi\prime}\left(
\cdot\right)  F_{\tilde{w}}^{\phi}\left(  \cdot\right)  \right]  \right)
\left(  z_{s}\right)  |_{w\otimes\tilde{w}=\mathbb{X}_{s,t}}%
\]
provided $a\leq s\leq t\leq b$ \ and $t-s<\delta.$ We then must prove that $z$
solves Eq. (\ref{equ.5.10}) for all $a\leq s\leq t\leq b.$ However, by
appealing to Lemma \ref{lem.2.48} and Lemma \ref{lem.6.13} of the Appendix, we
only need to prove Eq. (\ref{equ.5.10}) holds for every $u$ in $\left[
a,b\right]  $ for $s\leq t$ in $\left(  u-\delta_{u},u+\delta_{u}\right)
\cap\left[  a,b\right]  $ for some $\delta_{u}$. We do this now:

For any $u\in\left[  a,b\right]  $, let $\mathcal{W}_{u}$ be an open convex
set of $z_{u}$ such that $\mathcal{W}_{u}\times\mathcal{W}_{u}\subseteq
D\left(  \psi^{\phi}\right)  $. We then choose $\delta_{u}>0$ such that
$z\left(  \left[  u-\delta_{u}\text{,}u+\delta_{u}\right]  \cap\left[
a,b\right]  \right)  \subseteq\mathcal{W}_{u}$ and $2\delta_{u}\leq\delta$. We
are now in the setting of Proposition \ref{pro.5.1} and have therefore shown
$\mathbf{y}$ solves Eq. (\ref{equ.5.8}).

For the reverse implication, let $\left[  a,b\right]  \subseteq I_{0}$ be
given. Choose $\delta>0$ such that $\left\vert t-s\right\vert \leq\delta$ for
$a\leq s\leq t\leq b$ implies that $\left\vert \psi\left(  y_{s},y_{t}\right)
\right\vert _{g}$ is bounded. Around every point $m$ of $y\left(  \left[
a,b\right]  \right)  $, there exists an open $\mathcal{O}_{m}$ containing $m$
such that $\mathcal{O}_{m}\times\mathcal{O}_{m}\subseteq\mathcal{D}$.
Additionally for each $m$ there exists a chart $\phi^{m}$ such that $m\in
D\left(  \phi^{m}\right)  $, $D\left(  \phi^{m}\right)  \subseteq
\mathcal{O}_{m}$, and $\mathcal{W}_{m}:=R\left(  \phi^{m}\right)  $ is convex.
We may now use Remark \ref{rem.2.52} with the cover $\left\{  \mathcal{V}%
_{m}\right\}  _{m\in y\left(  \left[  a,b\right]  \right)  }$ and $D=\left\{
\left(  s,t\right)  :a\leq s\leq t\leq b\text{ and }\left\vert t-s\right\vert
\leq\delta\right\}  $ with the function
\[
\left(  s,t\right)  \longrightarrow\psi\left(  y_{s},y_{t}\right)
-F_{x_{s,t}}\left(  y_{s}\right)  -F_{w}\left(  y_{s}\right)  \left[  \left(
\psi_{y_{s}}\right)  _{\ast}\circ F_{\tilde{w}}\right]  |_{w\otimes\tilde
{w}=\mathbb{X}_{s,t}}.
\]
Doing this, we have reduced to considering the case of our path being
contained in the domain of a single chart $\phi$ such that $D\left(
\phi\right)  \times D\left(  \phi\right)  \subseteq\mathcal{D}$ and $R\left(
\phi\right)  $ is convex. By using the same definitions above for $z_{s}$,
$F^{\phi}$, and $\psi^{\phi}$, we reduce proving
\[
\psi\left(  y_{s},y_{t}\right)  \underset{^{3}}{\approx}F_{x_{s,t}}\left(
y_{s}\right)  +F_{w}\left(  y_{s}\right)  \left[  \left(  \psi_{y_{s}}\right)
_{\ast}\circ F_{\tilde{w}}\right]  |_{w\otimes\tilde{w}=\mathbb{X}_{s,t}}%
\]
to the flat case%
\[
\bar{\psi}^{\phi}\left(  z_{s},z_{t}\right)  \underset{^{3}}{\approx
}F_{x_{s,t}}^{\phi}\left(  z_{s}\right)  +\left(  \partial_{F_{w}^{\phi
}\left(  z_{s}\right)  }\left[  \bar{\psi}_{z_{s}}^{\phi\prime}\left(
\cdot\right)  F_{\tilde{w}}^{\phi}\left(  \cdot\right)  \right]  \right)
\left(  z_{s}\right)  |_{w\otimes\tilde{w}=\mathbb{X}_{s,t}}.
\]
This is now in the setting of Proposition \ref{pro.5.1} and hence we are finished.
\end{proof}

Akin to the integral formulas, there is also a characterization of solving a
differential equation which involves a gauge $\left(  \psi,U\right)  $.

\begin{theorem}
\label{the.5.9}$\mathbf{y}=\left(  y,y^{\dag}\right)  $ on $I_{0}$ solves
(\ref{equ.5.8}) if and only if $y_{s}^{\dag}=F_{\left(  \cdot\right)  }\left(
y_{s}\right)  $ and for all $\left[  a,b\right]  \subseteq I_{0}$, there
exists a $\delta>0$ \ such that $\left\vert t-s\right\vert \leq\delta$, and
$a\leq s\leq t\leq b$ implies%
\[
\psi\left(  y_{s},y_{t}\right)  \underset{^{3}}{\approx}F_{x_{s,t}}\left(
y_{s}\right)  +\left(  -S_{y_{s}}^{\psi_{\ast},U}\left[  F_{w}\left(
y_{s}\right)  \otimes F_{\tilde{w}}\left(  y_{s}\right)  \right]
+F_{w}\left(  y_{s}\right)  \left[  U\left(  y_{s},\cdot\right)  F_{\tilde{w}%
}\right]  \right)  |_{w\otimes\tilde{w}=\mathbb{X}_{s,t}}.
\]

\end{theorem}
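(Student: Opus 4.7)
My plan is to deduce Theorem~\ref{the.5.9} from Theorem~\ref{the.5.8} by showing that the two right-hand sides differ only by terms of order $\omega(s,t)^{3/p}$. Since both approximations are statements modulo $\underset{^{3}}{\approx}$, it suffices to establish the pointwise identity
\[
F_{w}(y_{s})\left[(\psi_{y_{s}})_{\ast}F_{\tilde{w}}\right]\Big|_{w\otimes\tilde{w}=\mathbb{X}_{s,t}}
=\Bigl(-S_{y_{s}}^{\psi_{\ast},U}[F_{w}(y_{s})\otimes F_{\tilde{w}}(y_{s})]+F_{w}(y_{s})[U(y_{s},\cdot)F_{\tilde{w}}]\Bigr)\Big|_{w\otimes\tilde{w}=\mathbb{X}_{s,t}},
\]
because then Theorem~\ref{the.5.8} becomes the condition stated in Theorem~\ref{the.5.9}.

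The key input is Corollary~\ref{cor.3.16}, which gives (with $n=y_{s}$ and variable $m$)
\[
\psi(y_{s},\cdot)_{\ast m}-U(y_{s},m)=_{2}U(y_{s},m)\,S_{m}^{\mathcal{G}}\bigl(\psi(m,y_{s})\otimes(\cdot)\bigr).
\]
Both sides vanish at $m=y_{s}$, so the content of the identity lies in its first derivative there. I would differentiate in the direction of an arbitrary $v\in T_{y_{s}}M$ and use that $U(y_{s},y_{s})=I$, $S^{\mathcal{G}}$ is smooth, and $d_{m}\psi(m,y_{s})|_{m=y_{s}}=-I$ (since $\psi(m,n)\approx n-m$ near the diagonal). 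All terms but one are killed by the factor $\psi(m,y_{s})|_{m=y_{s}}=0$, so only the derivative of $\psi(m,y_{s})$ survives, yielding
\[
d_{m}\!\left[(\psi_{y_{s}})_{\ast m}w-U(y_{s},m)w\right]\big|_{m=y_{s}}(v)=-S_{y_{s}}^{\psi_{\ast},U}(v\otimes w),
\]
for every $w\in T_{y_{s}}M$.

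With that pointwise identity in hand, the final step is a routine product rule calculation: the differential operator $F_{w}(y_{s})$ applied to either of the (fixed-target) vector-field valued maps $m\mapsto(\psi_{y_{s}})_{\ast m}F_{\tilde{w}}(m)$ and $m\mapsto U(y_{s},m)F_{\tilde{w}}(m)$ splits into a term that differentiates the $m$-dependent linear map against the constant vector $F_{\tilde{w}}(y_{s})$, plus a term which at $m=y_{s}$ equals $\partial_{F_{w}(y_{s})}F_{\tilde{w}}$ in both cases (since both linear maps are the identity at $m=y_{s}$). The latter cancels in the difference, and the former gives exactly $-S_{y_{s}}^{\psi_{\ast},U}(F_{w}(y_{s})\otimes F_{\tilde{w}}(y_{s}))$ by the displayed differentiation identity. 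Evaluating at $w\otimes\tilde{w}=\mathbb{X}_{s,t}$ in the sense of Notation~\ref{not.2.11} then yields the desired equality of the two $\underset{^{3}}{\approx}$-classes, and hence Theorem~\ref{the.5.9} via Theorem~\ref{the.5.8}.

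The only obstacle I foresee is bookkeeping: one must be careful that the identification implicit in ``$w\otimes\tilde{w}=\mathbb{X}_{s,t}$'' commutes with the pointwise linear operations being performed, and that the ``$=_{2}$'' remainder from Corollary~\ref{cor.3.16} does not contaminate the zeroth- and first-order terms we need to extract. Both concerns are resolved by working with smooth coefficients in a chart on a convex neighborhood, as has been the standard device throughout the paper, so no new analytic input is required beyond the gauge-compatibility results of Section~\ref{sub.3.2} and the equivalent formulation in Theorem~\ref{the.5.8}.
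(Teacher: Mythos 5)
Your proposal is correct and follows essentially the same route as the paper: reduce Theorem~\ref{the.5.9} to Theorem~\ref{the.5.8} by an exact pointwise identity, obtained from the product rule applied to the difference $\left(\psi_{y_s}\right)_{\ast(\cdot)}-U(y_s,\cdot)$, identifying its derivative at the diagonal with $-S_{y_s}^{\psi_\ast,U}$. The only cosmetic difference is that you reach that derivative identity by differentiating the $=_2$ statement of Corollary~\ref{cor.3.16}, whereas the paper reads it off directly from Definition~\ref{def.3.7}/Remark~\ref{rem.3.8} after inserting $U(y_s,\cdot)^{-1}U(y_s,\cdot)$.
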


\begin{proof}
This follows immediately from the product rule:%
\begin{align*}
F_{w}\left(  y_{s}\right)  \left[  \left(  \psi_{y_{s}}\right)  _{\ast
}F_{\tilde{w}}\right]   &  =F_{w}\left(  y_{s}\right)  \left[  \left(
\psi_{y_{s}}\right)  _{\ast\left(  \cdot\right)  }U\left(  y_{s},\cdot\right)
^{-1}U\left(  y_{s},\cdot\right)  F_{\tilde{w}}\right]  \\
&  =-S_{y_{s}}^{\psi_{\ast},U}\left[  F_{w}\left(  y_{s}\right)  \otimes
F_{\tilde{w}}\left(  y_{s}\right)  \right]  +F_{w}\left(  y_{s}\right)
\left[  U\left(  y_{s},\cdot\right)  F_{\tilde{w}}\right]
\end{align*}

\end{proof}

\begin{example}
If $\nabla$ is a covariant derivative, then $\mathbf{y}$ on $I_{0}$ solves
(\ref{equ.5.8}) if and only if $y_{s}^{\dag}=F\left(  y_{s}\right)  $ and
\[
\exp_{y_{s}}^{-1}\left(  y_{t}\right)  \underset{^{3}}{\approx}F_{x_{s,t}%
}\left(  y_{s}\right)  +\left(  \nabla_{F_{w}\left(  y_{s}\right)  }%
F_{\tilde{w}}\right)  -\frac{1}{2}T^{\nabla}\left[  F_{w}\left(  y_{s}\right)
\otimes F_{\tilde{w}}\left(  y_{s}\right)  \right]  |_{w\otimes\tilde
{w}=\mathbb{X}_{s,t}}%
\]
for $s$ and $t$ close.
\end{example}

\section{Appendix\label{sec.6}}

\subsection{Taylor Expansion on a Riemannian manifold\label{sub.6.1}}

Let $\left(  M,g\right)  $ be a Riemannian manifold, $\nabla$ be the
Levi-Civita covariant derivative, $\exp\left(  tv\right)  $ be the geodesic
flow, and $//_{t}\left(  \sigma\right)  $ denote parallel translation relative
to $\nabla.$ Recall that Taylor's formula with integral remainder states for
any smooth function $g$ on $\left[  0,1\right]  ,$ that%
\begin{equation}
G\left(  1\right)  =\sum_{k=0}^{n}\frac{1}{n!}G^{\left(  k\right)  }\left(
0\right)  +\frac{1}{n!}\int_{0}^{1}G^{\left(  n+1\right)  }\left(  t\right)
\left(  1-t\right)  ^{n}dt. \label{equ.6.1}%
\end{equation}
We now apply this result to $G\left(  t\right)  :=f\left(  \exp_{m}\left(
tv\right)  \right)  $ where $f\in C^{\infty}\left(  M\right)  ,$ $v\in T_{m}M$
and $m\in M.$ To this end let $\sigma\left(  t\right)  :=\exp\left(
tv\right)  $ so that $\nabla\dot{\sigma}\left(  t\right)  /dt=0.$ It then
follows that%
\begin{align}
\dot{G}\left(  t\right)   &  =df\left(  \dot{\sigma}\left(  t\right)  \right)
=df_{\sigma\left(  t\right)  }\left(  \dot{\sigma}\left(  t\right)  \right)
,\nonumber\\
\ddot{G}\left(  t\right)   &  =\frac{d}{dt}df_{\sigma\left(  t\right)
}\left(  \dot{\sigma}\left(  t\right)  \right)  =\left(  \nabla_{\dot{\sigma
}\left(  t\right)  }df\right)  \left(  \dot{\sigma}\left(  t\right)  \right)
+df_{\sigma\left(  t\right)  }\left(  \frac{\nabla}{dt}\dot{\sigma}\left(
t\right)  \right) \nonumber\\
&  =\left(  \nabla_{\dot{\sigma}\left(  t\right)  }df\right)  \left(
\dot{\sigma}\left(  t\right)  \right)  =\left(  \nabla df\right)  \left(
\dot{\sigma}\left(  t\right)  \otimes\dot{\sigma}\left(  t\right)  \right)
\nonumber\\
&  \vdots\nonumber\\
G^{\left(  k\right)  }\left(  t\right)   &  =\left(  \nabla^{k-1}df\right)
\left(  \dot{\sigma}\left(  t\right)  ^{\otimes k}\right)  =\left(
\nabla^{k-1}df\right)  \left(  \overset{k\text{ times}}{\overbrace{\dot
{\sigma}\left(  t\right)  \otimes\dots\otimes\dot{\sigma}\left(  t\right)  }%
}\right)  . \label{equ.6.2}%
\end{align}
Therefore we may conclude that
\begin{align}
f\left(  \exp_{m}\left(  v\right)  \right)   &  =G\left(  1\right)
=\sum_{k=0}^{n}\frac{1}{n!}G^{\left(  k\right)  }\left(  0\right) \nonumber\\
&  =f\left(  x\right)  +\sum_{k=1}^{n}\frac{1}{k!}\left(  \nabla
^{k-1}df\right)  \left(  v^{\otimes k}\right)  +\frac{1}{n!}\int_{0}%
^{1}\left(  \nabla^{n}df\right)  \left(  \dot{\sigma}\left(  t\right)
^{\otimes\left(  n+1\right)  }\right)  \left(  1-t\right)  ^{n}dt.
\label{equ.6.3}%
\end{align}
Letting $n=\exp_{m}\left(  v\right)  $ in this formula then gives the
following version of Taylor's theorem on a manifold.

\begin{theorem}
\label{the.6.1}Let $f\in C^{\infty}\left(  M\right)  $ and $m,n\in M$ with
$d_{g}\left(  m,n\right)  $ sufficiently small so that there exists a unique
$v\in T_{m}M$ such that $\left\vert v\right\vert _{g_{m}}\leq d\left(
m,n\right)  $ and $n=\exp_{m}\left(  v\right)  .$ Then we have%
\begin{align}
f\left(  n\right)   &  =f\left(  m\right)  +\sum_{k=1}^{n}\frac{1}{k!}\left(
\nabla^{k-1}df\right)  \left(  v^{\otimes k}\right)  +\frac{1}{n!}\int_{0}%
^{1}\left(  \nabla^{n}df\right)  \left(  \dot{\sigma}\left(  t\right)
^{\otimes\left(  n+1\right)  }\right)  \left(  1-t\right)  ^{n}dt\nonumber\\
&  =f\left(  m\right)  +\sum_{k=1}^{n}\frac{1}{k!}\left(  \nabla
^{k-1}df\right)  \left(  \left[  \exp_{m}^{-1}\left(  n\right)  \right]
^{\otimes k}\right)  +\frac{1}{n!}\int_{0}^{1}\left(  \nabla^{n}df\right)
\left(  \dot{\sigma}\left(  t\right)  ^{\otimes\left(  n+1\right)  }\right)
\left(  1-t\right)  ^{n}dt \label{equ.6.4}%
\end{align}
where $\sigma\left(  t\right)  =\exp_{m}\left(  tv\right)  .$ In particular
since $\left\vert \dot{\sigma}\left(  t\right)  \right\vert _{g}=\left\vert
v\right\vert _{g}=d_{g}\left(  m,n\right)  $ it follows that
\begin{equation}
f\left(  n\right)  =f\left(  m\right)  +\sum_{k=1}^{n}\frac{1}{k!}\left(
\nabla^{k-1}df\right)  \left(  \left[  \exp_{m}^{-1}\left(  n\right)  \right]
^{\otimes k}\right)  +O\left(  d\left(  m,n\right)  ^{n+1}\right)  .
\label{equ.6.5}%
\end{equation}

\end{theorem}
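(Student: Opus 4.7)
The plan is to reduce the manifold statement to the classical one-variable Taylor formula with integral remainder (Equation (\ref{equ.6.1})) applied to the function
\[
G(t) := f(\sigma(t)), \qquad \sigma(t) := \exp_m(tv), \quad t \in [0,1],
\]
evaluated between $0$ and $1$, so that $G(0) = f(m)$ and $G(1) = f(\exp_m(v)) = f(n)$. The entire substance of the theorem is then to identify the derivative $G^{(k)}(0)$ with $(\nabla^{k-1} df)(v^{\otimes k})$ and, more generally, to identify $G^{(k)}(t)$ with $(\nabla^{k-1} df)(\dot\sigma(t)^{\otimes k})$ so that the remainder term takes the stated form.

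First I would record the key geometric input: because $\sigma$ is a $\nabla$-geodesic, $\nabla \dot\sigma(t)/dt = 0$ for all $t$. This is the single fact that makes the higher-order derivatives of $G$ collapse into a clean expression. I would then prove by induction on $k \geq 1$ that
\[
G^{(k)}(t) = (\nabla^{k-1} df)\bigl(\dot\sigma(t)^{\otimes k}\bigr).
\]
The base case $k=1$ is immediate: $G'(t) = df_{\sigma(t)}(\dot\sigma(t))$. For the inductive step, differentiate the expression $(\nabla^{k-1}df)(\dot\sigma(t)^{\otimes k})$ using the tensorial product rule along $\sigma$. One obtains one term of the form $(\nabla^k df)(\dot\sigma(t)^{\otimes(k+1)})$ from differentiating the tensor $\nabla^{k-1}df$, and $k$ additional terms coming from differentiating each copy of $\dot\sigma(t)$ in the tensor product. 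Each of these $k$ additional terms contains a factor $\nabla\dot\sigma/dt$ and therefore vanishes by the geodesic equation. This is the one place that requires attention, and it is essentially the whole hard part of the proof; the rest is book-keeping.

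Next, applying Taylor's formula (\ref{equ.6.1}) to $G$ at $t=1$ and substituting the identity just established yields
\[
f(n) = G(1) = \sum_{k=0}^{n} \frac{1}{k!} G^{(k)}(0) + \frac{1}{n!}\int_0^1 G^{(n+1)}(t)(1-t)^n\,dt,
\]
which becomes exactly the first displayed formula of the theorem once one inserts $\dot\sigma(0) = v$ and the formula for $G^{(k)}(t)$. The second displayed equality (\ref{equ.6.4}) is just the rewriting $v = \exp_m^{-1}(n)$, which is legitimate under the smallness assumption on $d_g(m,n)$ (invoking the setup of Section \ref{sub.2.3.1} and Corollary \ref{cor.2.33} to guarantee the inverse exponential is a well-defined diffeomorphism onto its image).

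Finally, to extract the error estimate (\ref{equ.6.5}) I would use that $|\dot\sigma(t)|_g = |v|_g = d_g(m,n)$ (constant speed of the geodesic) together with the fact that $\nabla^n df$ is a smooth tensor field; hence for $(m,n)$ ranging over a compact neighborhood, the integrand is bounded by a constant times $|\dot\sigma(t)|_g^{n+1} = d_g(m,n)^{n+1}$, and the integral is $O(d_g(m,n)^{n+1})$. No additional estimation work is required beyond this uniform boundedness on compacta.
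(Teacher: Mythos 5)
Your proposal is correct and follows essentially the same route as the paper: reduce to the one-variable Taylor formula for $G(t)=f(\exp_m(tv))$, use the geodesic equation $\nabla\dot\sigma/dt=0$ to collapse $G^{(k)}(t)$ to $(\nabla^{k-1}df)(\dot\sigma(t)^{\otimes k})$, and bound the remainder via constant speed and smoothness of $\nabla^n df$ on compacta. The only cosmetic difference is that you make the identification of $G^{(k)}$ a formal induction where the paper writes out the first two orders and indicates the pattern with an ellipsis.
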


\begin{lemma}
\label{lem.6.2}Let $M$ be an embedded submanifold of $W=\mathbb{R}^{k}$ and
$P\left(  m\right)  :W\rightarrow T_{m}M$ be orthogonal projection onto the
tangent space. If $m,n\in M$ are close, then;

\begin{enumerate}
\item \label{Ite.29}$P\left(  m\right)  \left[  \exp_{m}^{-1}\left(  n\right)
-\left(  n-m\right)  \right]  =O\left(  \left\vert n-m\right\vert ^{3}\right)
.$

Moreover, $\exp_{m}^{-1}\left(  n\right)  -\left(  n-m\right)  =O\left(
\left\vert n-m\right\vert ^{2}\right)  $

\item $U^{\nabla}\left(  n,m\right)  =P\left(  m\right)  +dP\left(  \exp
_{m}^{-1}\left(  n\right)  \right)  +O\left(  \left\vert n-m\right\vert
^{2}\right)  =P\left(  n\right)  +O\left(  \left\vert n-m\right\vert
^{2}\right)  $

\item $P\left(  n\right)  -P\left(  m\right)  =dP\left(  \exp_{m}^{-1}\left(
n\right)  \right)  +O\left(  \left\vert n-m\right\vert ^{2}\right)  .$

Here $U^{\nabla}\left(  n,m\right)  $ refers to the parallelism defined in
Example \ref{exa.2.19}.
\end{enumerate}
\end{lemma}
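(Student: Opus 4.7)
The plan is to reduce all three assertions to Taylor expansions in the ambient space $W$, exploiting the fact that for the induced Levi-Civita connection on an embedded submanifold, the covariant derivative along a curve is obtained by applying $P$ to the ordinary Euclidean derivative. I would prove the three parts in the order $(1), (3), (2)$, as the last item builds on the first two.

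For part $(1)$, the Levi-Civita geodesic $\sigma(t):=\exp_m(tv)$ with $v\in T_m M$ satisfies $\ddot\sigma(t)\in(T_{\sigma(t)}M)^{\perp}$, since $\nabla\dot\sigma/dt=P(\sigma)\ddot\sigma=0$ forces $\ddot\sigma(t)$ to be normal to $M$ at $\sigma(t)$. In particular $P(m)\ddot\sigma(0)=0$. A second-order Taylor expansion in $W$ gives
\[
n=\sigma(1)=m+v+\tfrac12\ddot\sigma(0)+O(|v|^3),
\]
so that $n-m=v+O(|v|^2)$ (which is the ``moreover'' claim, after noting $|v|\asymp|n-m|$), and applying $P(m)$ annihilates the quadratic term, yielding $P(m)[(n-m)-v]=O(|v|^3)$. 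Since $v=\exp_m^{-1}(n)$, this is exactly (1).

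For part $(3)$, apply Taylor's theorem to the smooth $\operatorname{End}(W)$-valued function $P$ (defined on a neighborhood of $M$ in $W$, or directly on $M$):
\[
P(n)-P(m)=dP_m(n-m)+O(|n-m|^2)=dP_m(\exp_m^{-1}(n))+O(|n-m|^2),
\]
where the second equality uses the moreover clause from part $(1)$ to replace $n-m$ by $\exp_m^{-1}(n)$ inside the linear term $dP_m$ at the cost of a quadratic error.

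For part $(2)$, let $w\in T_m M$ and set $X(t):=U^\nabla(\sigma(t),m)w$, so that $X(0)=w$ and $\nabla X/dt=0$. Embedded-submanifold parallel transport is characterized by $P(\sigma(t))\dot X(t)=0$. Differentiating the identity $X(t)=P(\sigma(t))X(t)$ gives
\[
\dot X(t)=dP_{\sigma(t)}(\dot\sigma(t))X(t)+P(\sigma(t))\dot X(t)=dP_{\sigma(t)}(\dot\sigma(t))X(t),
\]
so $\dot X(0)=dP_m(v)w$. A first-order Taylor expansion then gives
\[
U^\nabla(n,m)w=w+dP_m(v)w+O(|v|^2)=P(m)w+dP_m(\exp_m^{-1}(n))w+O(|n-m|^2),
\]
which is the first equality in $(2)$ (with $U^\nabla(n,m)$ extended to $W$ in the natural way via $P(m)$, which is the convention in which the identity is applied in Examples~\ref{exa.2.55} and~\ref{exa.2.56}). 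The second equality in $(2)$ is then immediate by combining with part $(3)$: $P(m)+dP_m(\exp_m^{-1}(n))=P(n)+O(|n-m|^2)$.

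The main obstacle is not conceptual but rather bookkeeping: one must carefully justify the characterizations of embedded geodesics and parallel transport as projected ODEs in $W$ (so that intrinsic quantities can be replaced with ambient quantities plus normal corrections), and one must be attentive to the convention used to interpret $U^\nabla(n,m)$ as a map on all of $W$ so that the displayed equality in $(2)$ is literally meaningful. Once these conventions are in place, every step reduces to a standard first- or second-order Taylor expansion in Euclidean space, together with $P^2=P$.
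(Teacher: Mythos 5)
Your argument is correct and follows the same route as the paper's proof: both rest on second-order Taylor expansion of the geodesic $t\mapsto\exp_m(tv)$ (resp.\ first-order expansion of parallel transport) and the observation that geodesic acceleration, resp.\ the derivative of a parallel field, lands in the normal, resp.\ tangent-corrected, direction. The only cosmetic difference is that you work directly with the ambient ODE characterizations $P(\sigma)\ddot\sigma=0$ and $\dot X=dP(\dot\sigma)X$, whereas the paper routes the same facts through its intrinsic Taylor theorem on manifolds (Theorem~\ref{the.6.1}) together with the projection identity $P\,dP\,P=0$; the content is identical.
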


\begin{proof}
We will denote $v:=\exp_{m}^{-1}\left(  n\right)  $ $\in T_{m}M$ and
$\sigma\left(  t\right)  =\exp_{m}\left(  tv\right)  $.

For 1, we have by Taylor expansion on manifolds (Theorem \ref{the.6.1}) that
\[
G\left(  n\right)  =G\left(  m\right)  +dG\left(  v\right)  +\frac{1}%
{2}\left(  \nabla dG\right)  \left(  v\otimes v\right)  +\frac{1}{2}\int%
_{0}^{1}\left(  \nabla^{2}dG\right)  \left(  \dot{\sigma}\left(  t\right)
^{\otimes3}\right)  \left(  1-t\right)  ^{2}dt
\]
where $G\in C^{\infty}(M,W).$ Letting $G(m)=m$ as a function into $W$, we have%
\[
n=m+\exp_{m}^{-1}\left(  n\right)  +\frac{1}{2}\left(  \nabla P\right)
\left(  v\otimes v\right)  +O\left(  \left\vert v\right\vert _{g}^{3}\right)
.
\]
Rearranging, we have%
\begin{equation}
\exp_{m}^{-1}\left(  n\right)  -\left(  n-m\right)  =-\frac{1}{2}\left(
\nabla P\right)  \left(  v\otimes v\right)  +O\left(  \left\vert v\right\vert
_{g}^{3}\right)  \label{equ.6.6}%
\end{equation}
so that%
\[
P\left(  m\right)  \left[  \exp_{m}^{-1}\left(  n\right)  -\left(  n-m\right)
\right]  =-\frac{1}{2}P(m)\left(  \nabla P\right)  \left(  v\otimes v\right)
+O\left(  \left\vert v\right\vert _{g}^{3}\right)  .
\]
Note that $\left(  \nabla P\right)  \left(  v\otimes v\right)  =dP\left(
v\right)  v=dP\left(  v\right)  P\left(  m\right)  v.$ Using the identities
$dPQ-PdQ=0$ and $dP=-dQ$, where $Q=I-P$, we get that $PdPP=0.$ Thus we have%
\[
P\left(  m\right)  \left[  \exp_{m}^{-1}\left(  n\right)  -\left(  n-m\right)
\right]  =O\left(  \left\vert v\right\vert ^{3}\right)  .
\]
Lastly, in a small neighborhood around $m$, $\left\vert v\right\vert
_{g}=\left\vert m-n\right\vert +o\left(  \left\vert m-n\right\vert \right)  $
so that
\[
P\left(  m\right)  \left[  \exp_{m}^{-1}\left(  n\right)  -\left(  n-m\right)
\right]  =O\left(  \left\vert n-m\right\vert ^{3}\right)
\]
The fact that $\exp_{m}^{-1}\left(  n\right)  -\left(  n-m\right)  =O\left(
\left\vert n-m\right\vert ^{2}\right)  $ is immediate from Eq. (\ref{equ.6.6}).

For 3, we use Taylor's theorem again this time with $G:M\longrightarrow
L\left(  W,W\right)  $ defined by $G\left(  n\right)  :=P\left(  n\right)  $
to see that%
\[
P\left(  n\right)  -P\left(  m\right)  =dP\left(  \exp_{m}^{-1}\left(
n\right)  \right)  +O\left(  \left\vert v\right\vert ^{2}\right)  .
\]
As before, this is equivalent to $P\left(  n\right)  -P\left(  m\right)
=dP\left(  \exp_{m}^{-1}\left(  n\right)  \right)  +O\left(  \left\vert
m-n\right\vert ^{2}\right)  .$

Lastly for 2, Taylor applied to $G_{m}:M\longrightarrow L\left(
T_{m}M,\mathbb{R}^{N}\right)  $ defined by $G_{m}\left(  n\right)  =U^{\nabla
}\left(  n,m\right)  $ gives
\[
U^{\nabla}\left(  n,m\right)  -P\left(  m\right)  =dG_{m}\left(  \exp_{m}%
^{-1}\left(  n\right)  \right)  +O\left(  \left\vert m-n\right\vert
^{2}\right)  .
\]
But
\begin{align*}
dG_{m}\left(  \exp_{m}^{-1}\left(  n\right)  \right)   &  =\frac{d}{dt}%
|_{0}U\left(  \sigma\left(  t\right)  ,m\right)  \\
&  =-dQ\left(  \dot{\sigma}\left(  t\right)  \right)  |_{0}\\
&  =-dQ\left(  \exp_{m}^{-1}\left(  n\right)  \right)  \\
&  =dP\left(  \exp_{m}^{-1}\left(  n\right)  \right)  .
\end{align*}
Thus we have
\[
U^{\nabla}\left(  n,m\right)  =P\left(  m\right)  +dP\left(  \exp_{m}%
^{-1}\left(  n\right)  \right)  +O\left(  \left\vert m-n\right\vert
^{2}\right)
\]
which is the first equality of 2. The second equality follows trivially from
this and 3.
\end{proof}

\subsection{Equivalence of Riemannian Metrics on Compact Sets\label{sub.6.2}}

\begin{proposition}
\label{pro.6.3}Let $\pi:E\rightarrow N$ be a real rank $d<\infty$ vector
bundle over a finite dimensional manifold $N.$ Further suppose that $E$ is
equipped with smoothly varying fiber inner product $g$ and let $S_{g}%
:=\left\{  \xi\in E:g\left(  \xi,\xi\right)  =1\right\}  $ be a sub-bundle of
$E.$ Then for any compact $K\subseteq N$, $\pi^{-1}\left(  K\right)  \cap
S_{g}$ is a compact sets.
\end{proposition}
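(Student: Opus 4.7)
The plan is to reduce the claim to a local statement via trivializations of $E$, and then to use the compactness of $K$ to stitch finitely many local pieces together. First I would cover $K$ by finitely many open sets $U_1,\dots,U_n\subseteq N$ over which $E$ admits local trivializations $\phi_i:\pi^{-1}(U_i)\to U_i\times\mathbb{R}^d$, and moreover choose slightly smaller open sets $V_i\subseteq\overline{V_i}\subseteq U_i$ which still cover $K$ and have $\overline{V_i}$ compact; this is standard for finite-dimensional manifolds using paracompactness (or just local compactness plus compactness of $K$).

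Next, under each $\phi_i$, the fiber inner product $g$ pulls back to a smoothly varying positive definite symmetric bilinear form on $\mathbb{R}^d$, i.e.\ a smooth map $G_i:U_i\to\mathrm{Sym}^2_+((\mathbb{R}^d)^*)$. The key observation is that $\phi_i(\pi^{-1}(U_i)\cap S_g)=\{(m,v)\in U_i\times\mathbb{R}^d:G_i(m)(v,v)=1\}$. On the compact set $K_i:=\overline{V_i}\cap K$, the smallest eigenvalue $\lambda_i(m)$ of $G_i(m)$ is a continuous strictly positive function, hence bounded below by some $\varepsilon_i>0$. Therefore any $(m,v)$ with $m\in K_i$ and $G_i(m)(v,v)=1$ satisfies $|v|^2\leq 1/\varepsilon_i$, so $\phi_i(\pi^{-1}(K_i)\cap S_g)$ is a closed subset of the compact set $K_i\times\overline{B_{1/\sqrt{\varepsilon_i}}(0)}\subseteq U_i\times\mathbb{R}^d$, and is therefore compact. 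Since $\phi_i$ is a homeomorphism onto its image, $\pi^{-1}(K_i)\cap S_g$ itself is compact.

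Finally, since $\{V_i\}$ covers $K$, we have $K=\bigcup_{i=1}^n K_i$, and consequently
\[
\pi^{-1}(K)\cap S_g=\bigcup_{i=1}^n\bigl(\pi^{-1}(K_i)\cap S_g\bigr),
\]
which is a finite union of compact sets and hence compact. I do not anticipate a serious obstacle: the only thing one has to verify carefully is the uniform lower bound on the smallest eigenvalue of $G_i$ over $K_i$, which follows immediately from continuity of eigenvalues of symmetric matrices together with compactness of $K_i$ and positive-definiteness of each $G_i(m)$.
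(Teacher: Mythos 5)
Your proof is correct and rests on the same essential ingredient as the paper's: after locally trivializing $E$, a uniform lower bound on the fiber quadratic form over a compact piece of the base forces the vector components of points in $S_g$ into a bounded ball, giving compactness. The only difference is packaging: the paper runs a sequential-compactness argument (extract a convergent subsequence from $\{\pi(\xi_l)\}$, then localize to a single trivialization near the limit, then bound $|v_l|$), whereas you cover $K$ once and for all by finitely many shrunken trivializing neighborhoods and show each $\pi^{-1}(K_i)\cap S_g$ is compact as a closed subset of $K_i\times\overline{B_{1/\sqrt{\varepsilon_i}}(0)}$, then take a finite union. Both are fine; yours avoids the repeated passage to subsequences at the cost of invoking a shrinking lemma for the cover, and the paper gets the lower bound via compactness of $K_0\times S^{d-1}$ rather than via continuity of the smallest eigenvalue, but these are interchangeable ways to state the same estimate.
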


\begin{proof}
We wish to show that every sequence $\left\{  \xi_{l}\right\}  _{l=1}^{\infty
}\subset\pi^{-1}\left(  K\right)  \cap S_{g}$ has a convergent subsequence.
Since $\left\{  \pi\left(  \xi_{l}\right)  \right\}  _{l=1}^{\infty}$ is a
sequence in $K,$ by passing to a subsequence if necessary we may assume that
$m:=\lim_{l\rightarrow\infty}\pi\left(  \xi_{l}\right)  $ exists in $K.$ By
passing to a further subsequence if necessary we may assume that $\left\{
\xi_{l}\right\}  _{l=1}^{\infty}\in\pi^{-1}\left(  K_{0}\right)  \cap S_{g}$
where $K_{0}$ is a compact neighborhood of $m$ which is contained in an open
neighborhood $U$ over which $E$ is trivializable and hence we may now assume
that $\pi^{-1}\left(  U\right)  =U\times\mathbb{R}^{d}$ and that $\xi
_{l}=\left(  n_{l},v_{l}\right)  $ where $\lim_{l\rightarrow\infty}n_{l}=m\in
K_{0}.$

Let $S^{d-1}$ denote the standard Euclidean unit sphere inside of
$\mathbb{R}^{d}.$ The function, $F:U\times S^{d-1}\rightarrow\left(
0,\infty\right)  $ defined by $F\left(  n,v\right)  =g\left(  \left(
n,v\right)  ,\left(  n,v\right)  \right)  $ is smooth and hence has a minimum
$c>0$ and a maximum, $C<\infty$ on the compact set, $K\times S^{d-1}.$
Therefore by a simple scaling argument we conclude that
\begin{equation}
c\left\vert v\right\vert ^{2}\leq g\left(  \left(  n,v\right)  ,\left(
n,v\right)  \right)  \leq C\left\vert v\right\vert ^{2}\text{ }\forall~n\in
K\text{ and }v\in\mathbb{R}^{d}. \label{equ.6.7}%
\end{equation}
From the lower bound in Inequality (\ref{equ.6.7}) and the assumption that
$1=g\left(  \xi_{l},\xi_{l}\right)  $ it follows that $\left\vert
v_{l}\right\vert _{\mathbb{R}^{d}}\leq1/\sqrt{c}$ for all $l$ and therefore
has a convergent sub-sequence $\left\{  v_{l_{k}}\right\}  _{k=1}^{\infty}.$
This completes the proof as $\left\{  \xi_{l_{k}}=\left(  n_{l_{k}},v_{l_{k}%
}\right)  \right\}  _{k=1}^{\infty}$ is convergent as well.
\end{proof}

\begin{corollary}
\label{cor.6.4}If $g,\tilde{g}$ are two Riemannian metrics on $TM$,
$K\subseteq M$ is compact, then there exists $0<c_{K},C_{K}<\infty$ such that%
\begin{equation}
c_{K}\left\vert v\right\vert _{\tilde{g}_{m}}\leq\left\vert v\right\vert
_{g_{m}}\leq C_{K}\left\vert v\right\vert _{\tilde{g}_{m}}~\forall~v\in
\pi^{-1}\left(  K\right)  . \label{equ.6.8}%
\end{equation}
In other words, all Riemannian metrics are equivalent when restricted to
compact subsets, $K\subset M.$
\end{corollary}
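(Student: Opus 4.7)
The plan is to deduce Corollary \ref{cor.6.4} directly from Proposition \ref{pro.6.3} by a standard compactness-plus-homogeneity argument applied to the unit sphere bundle of one of the two metrics. The idea is that once we have uniform bounds on the ratio $|v|_{g_m}/|v|_{\tilde{g}_m}$ for $v$ ranging over vectors of $\tilde{g}$-length $1$ above $K$, the general estimate follows by rescaling, since both norms are positively homogeneous of degree one in the fiber.

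More precisely, I would first form the unit sphere sub-bundle $S_{\tilde{g}} := \{v \in TM : |v|_{\tilde{g}} = 1\}$ of $TM$ with respect to $\tilde{g}$. Applying Proposition \ref{pro.6.3} with $E = TM$, $N = M$ and the metric $\tilde{g}$, the set $\pi^{-1}(K) \cap S_{\tilde{g}}$ is compact. Next, I would consider the continuous function
\[
F : \pi^{-1}(K) \cap S_{\tilde{g}} \longrightarrow (0,\infty), \qquad F(v) := |v|_{g_{\pi(v)}},
\]
which is strictly positive because $g$ is a Riemannian metric (so $|v|_{g_m} = 0$ only when $v = 0_m$, whereas $|v|_{\tilde{g}_m} = 1$). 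By compactness, $F$ attains a positive minimum $c_K > 0$ and a finite maximum $C_K < \infty$, so
\[
c_K \leq |v|_{g_{\pi(v)}} \leq C_K \quad \text{for all } v \in \pi^{-1}(K) \cap S_{\tilde{g}}.
\]

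Finally, for an arbitrary nonzero $v \in \pi^{-1}(K)$, let $m = \pi(v)$ and $\hat v := v/|v|_{\tilde{g}_m} \in \pi^{-1}(K) \cap S_{\tilde{g}}$. By linearity (i.e.\ positive homogeneity of both norms in the fiber), $|v|_{g_m} = |v|_{\tilde{g}_m} \cdot |\hat v|_{g_m}$, and the displayed estimate for $\hat v$ immediately yields
\[
c_K |v|_{\tilde{g}_m} \leq |v|_{g_m} \leq C_K |v|_{\tilde{g}_m},
\]
which is exactly \eqref{equ.6.8}; the case $v = 0_m$ is trivial. There is essentially no obstacle here beyond invoking Proposition \ref{pro.6.3}; the only point requiring a moment's care is the strict positivity of the minimum $c_K$, which is guaranteed by the positive-definiteness of $g$ on $TM$ together with the fact that $0_m \notin S_{\tilde{g}}$.
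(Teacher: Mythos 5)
Your proof is correct and follows essentially the same path as the paper's: both invoke Proposition \ref{pro.6.3} to get compactness of the $\tilde{g}$-unit sphere bundle over $K$, use continuity and strict positivity of the $g$-norm (or, in the paper's version, the squared norm $g(v,v)$) to extract bounds there, and finish by homogeneity in the fiber. The only cosmetic difference is that the paper works with the squared norm $F(v)=g(v,v)$ to have a smooth function, whereas you work with $|v|_g$ directly, which suffices since continuity is all that the compactness argument requires.
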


\begin{proof}
The function, $F:TM\rightarrow\lbrack0,\infty),$ defined by $F\left(
v\right)  :=g\left(  v,v\right)  $ is smooth and positive when restricted to
$S_{\tilde{g}}\cap\pi^{-1}\left(  K\right)  $ which is compact by Proposition
\ref{pro.6.3}. Therefore there exists $0<c_{K}<C_{K}<\infty$ such that
$c_{K}^{2}\leq g\left(  v,v\right)  \leq C_{K}^{2}$ for all $v\in S_{\tilde
{g}}\cap\pi^{-1}\left(  K\right)  $ from which Inequality (\ref{equ.6.8})
follows by a simple scaling argument.
\end{proof}

\subsection{Covariant Derivatives on Euclidean Space\label{sub.6.3}}

On $\mathbb{R}^{d}$ every covariant derivative takes the form $\nabla_{\left(
x,v\right)  }=\partial_{v}+A_{x}\left\langle v\right\rangle $ where
$A:\mathbb{R}^{d}\rightarrow L\left(  \mathbb{R}^{d},L\left(  \mathbb{R}%
^{d},\mathbb{R}^{d}\right)  \right)  .$ If $\sigma_{x}^{v}\left(  t\right)
=\exp_{x}\left(  tv\right)  $ where $\exp=\exp^{\nabla},$ we have by
definition
\begin{align*}
\partial_{\dot{\sigma}_{x}^{v}\left(  t\right)  }\dot{\sigma}_{x}^{v} &
=-A_{\sigma_{x}^{v}\left(  t\right)  }\left\langle \dot{\sigma}_{x}^{v}\left(
t\right)  \right\rangle \dot{\sigma}_{x}^{v}\left(  t\right)  \\
\dot{\sigma}_{x}^{v}\left(  0\right)   &  =v\\
\sigma_{x}^{v}\left(  0\right)   &  =x
\end{align*}
In particular if $f_{x}=\exp_{x}\left(  \cdot\right)  $ plugging in at $t=0$
we get%
\[
f_{x}^{\prime\prime}\left(  0\right)  \left[  v\otimes v\right]
=-A_{x}\left\langle v\right\rangle v.
\]
Now if we denote $G_{x}:=\exp_{x}^{-1}\left(  \cdot\right)  $ and by
differentiating $f_{x}\circ G_{x}$ twice, we get that%
\[
G_{x}^{\prime\prime}\left(  x\right)  \left[  v\otimes v\right]
=A_{x}\left\langle v\right\rangle v.
\]
Indeed we have%
\begin{align*}
0 &  =\left(  f_{x}\circ G_{x}\right)  ^{\prime\prime}\left(  x\right)  \\
&  =\left[  f_{x}^{\prime}\left(  G_{x}\left(  x\right)  \right)
G_{x}^{\prime}\left(  x\right)  \right]  ^{\prime}\\
&  =f_{x}^{\prime\prime}\left(  G_{x}\left(  x\right)  \right)  \left[
G_{x}^{\prime}\left(  x\right)  \otimes G_{x}^{\prime}\left(  x\right)
\right]  +f_{x}^{\prime}\left(  G_{x}\left(  x\right)  \right)  G_{x}%
^{\prime\prime}\left(  x\right)  .
\end{align*}
Since $G_{x}\left(  x\right)  =0$, $G_{x}^{\prime}\left(  x\right)  =I$, and
$f_{x}^{\prime}\left(  0\right)  =I$ we have
\[
f_{x}^{\prime\prime}\left(  0\right)  =-G_{x}^{\prime\prime}\left(  x\right)
.
\]
Parallel translation $U^{\nabla}\left(  \sigma_{x}^{v}\left(  t\right)
,x\right)  $ solves%
\begin{align*}
\frac{d}{dt}U^{\nabla}\left(  \sigma_{x}^{v}\left(  t\right)  ,x\right)   &
=-A_{\sigma_{x}^{v}\left(  t\right)  }\left\langle \dot{\sigma}_{x}^{v}\left(
t\right)  \right\rangle U^{\nabla}\left(  \sigma_{x}^{v}\left(  t\right)
,x\right)  \\
U^{\nabla}\left(  x,x\right)   &  =I
\end{align*}
Again, using $t=0$ we have that if $\tilde{G}_{x}=U^{\nabla}\left(
\cdot,x\right)  $ then
\[
\tilde{G}_{x}^{\prime}\left(  x\right)  v=-A_{x}\left\langle v\right\rangle .
\]

To summarize, we have
\begin{equation}
\left(  \exp_{x}^{-1}\right)  ^{\prime\prime}\left(  x\right)  \left[
v\otimes v\right]  =A_{x}\left\langle v\right\rangle v\label{equ.6.9}%
\end{equation}
and%
\[
\left(  U^{\nabla}\left(  \cdot,x\right)  \right)  ^{\prime}\left(  x\right)
v=-A_{x}\left\langle v\right\rangle .
\]
Since $\left(  \exp_{x}^{-1}\right)  ^{\prime\prime}\left(  x\right)  $ is
symmetric, we have that%
\begin{align}
\left(  \exp_{x}^{-1}\right)  ^{\prime\prime}\left(  x\right)  \left[
v\otimes w\right]   &  =\frac{1}{2}\left(  \exp_{x}^{-1}\right)
^{\prime\prime}\left(  x\right)  \left(  v\otimes w+w\otimes v\right)
+\frac{1}{2}\left(  \exp_{x}^{-1}\right)  ^{\prime\prime}\left(  x\right)
\left(  v\otimes w-w\otimes v\right)  \nonumber\\
&  =\frac{1}{2}\left(  \exp_{x}^{-1}\right)  ^{\prime\prime}\left(  x\right)
\left(  v\otimes w+w\otimes v\right)  \nonumber\\
&  =\frac{1}{2}A_{x}\left(  v\otimes w+w\otimes v\right)  \nonumber\\
&  =\frac{1}{2}\left(  A_{x}\left\langle v\right\rangle w+A_{x}\left\langle
w\right\rangle v\right)  \label{equ.6.10}%
\end{align}
Another way of saying this is that $\left(  \exp_{x}^{-1}\right)
^{\prime\prime}\left(  x\right)  $ equals the symmetric part of $A_{x}$. By
using this fact and Taylor's theorem, we get the following result.

\begin{lemma}
\label{lem.6.5}If $\nabla_{\left(  x,v\right)  }=\partial_{v}+A_{x}%
\left\langle v\right\rangle $ is a covariant derivative on $\mathbb{R}^{d}$,
then
\[
\left(  \exp_{x}^{\nabla}\right)  ^{-1}\left(  y\right)  -\left(  y-x\right)
-\frac{1}{2}A_{x}\left\langle y-x\right\rangle \left\langle y-x\right\rangle
=O\left(  \left\vert y-x\right\vert ^{3}\right)
\]%
\begin{equation}
U^{\nabla}\left(  y,x\right)  -I+A_{x}\left\langle y-x\right\rangle =O\left(
\left\vert y-x\right\vert ^{2}\right)  \label{equ.6.11}%
\end{equation}

where $\left\vert x-y\right\vert $ is small enough for these terms to make sense.
\end{lemma}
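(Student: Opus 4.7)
The plan is to deduce both estimates directly from Taylor's theorem applied to the smooth functions $y \mapsto (\exp_x^\nabla)^{-1}(y)$ and $y \mapsto U^\nabla(y,x)$, using the values of their derivatives at the diagonal point $y=x$ that were already computed in the discussion preceding the lemma.

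For the first estimate, I would note that the function $G_x(y) := (\exp_x^\nabla)^{-1}(y)$ satisfies $G_x(x) = 0$ and $G_x'(x) = I$ (since $\exp_x^\nabla$ is the local inverse of $G_x$ near $0$ with derivative $I$ at the origin). The content of Eq.~(\ref{equ.6.9}) in the excerpt is precisely that $G_x''(x)[v \otimes v] = A_x\langle v\rangle v$. Substituting these into the second-order Taylor expansion of $G_x$ around $x$ with integral remainder,
\[
G_x(y) = G_x(x) + G_x'(x)(y-x) + \tfrac{1}{2} G_x''(x)\bigl[(y-x)^{\otimes 2}\bigr] + \int_0^1 \tfrac{1}{2}G_x'''(x+t(y-x))\bigl[(y-x)^{\otimes 3}\bigr](1-t)^2\,dt,
\]
yields the first claim since the integral remainder is $O(|y-x|^3)$ for $y$ in a neighborhood where $G_x$ is smooth.

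For the second estimate, I would apply the first-order Taylor expansion to $\tilde G_x(y) := U^\nabla(y,x)$ around $y=x$. One has $\tilde G_x(x) = I$ by the definition of a parallelism, while the excerpt established $\tilde G_x'(x)v = -A_x\langle v\rangle$ from the parallel transport ODE evaluated at $t=0$. Thus
\[
U^\nabla(y,x) = I - A_x\langle y-x\rangle + \int_0^1 \tilde G_x''(x+t(y-x))\bigl[(y-x)^{\otimes 2}\bigr](1-t)\,dt,
\]
which gives exactly Eq.~(\ref{equ.6.11}) once the remainder is estimated as $O(|y-x|^2)$.

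There is no real obstacle in this argument, as the lemma is essentially a repackaging of the two derivative identities computed in Subsection 6.3 as Taylor expansions; the only point requiring any care is to make sure one has enough local smoothness of $\exp_x^\nabla$ (and thus its inverse) and of parallel translation to justify bounded third and second derivatives on a neighborhood of $x$, which follows from standard ODE smooth-dependence results.
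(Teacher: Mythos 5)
Your proposal is correct and takes essentially the same route the paper intends: the paper derives the identities $(\exp_x^{-1})''(x)[v\otimes v]=A_x\langle v\rangle v$ and $(U^{\nabla}(\cdot,x))'(x)v=-A_x\langle v\rangle$ immediately before the lemma and then states that the lemma follows ``by using this fact and Taylor's theorem,'' which is exactly what you carried out, with the trivial values $G_x(x)=0$, $G_x'(x)=I$, $\tilde G_x(x)=I$ at the diagonal and smoothness of $\exp^\nabla$ and parallel transport supplying the remainder bounds.
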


\begin{corollary}
\label{cor.6.6}If $\nabla_{\left(  x,v\right)  }=\partial_{v}+A_{x}%
\left\langle v\right\rangle $ is a covariant derivative on $\mathbb{R}^{d}$,
then
\[
U^{\nabla}\left(  y,x\right)  -I-A_{y}\left\langle x-y\right\rangle =O\left(
\left\vert y-x\right\vert ^{2}\right)
\]
where $\left\vert x-y\right\vert $ is small enough for these terms to make
sense. In particular, we have%
\[
\left(  U^{\nabla}\left(  x,\cdot\right)  \right)  ^{\prime}\left(  x\right)
v=A_{x}\left\langle v\right\rangle
\]

\end{corollary}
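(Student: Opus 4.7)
The plan is to deduce Corollary \ref{cor.6.6} directly from Lemma \ref{lem.6.5} by exploiting the smoothness of the Christoffel-type map $x \mapsto A_x$. Equation (\ref{equ.6.11}) of Lemma \ref{lem.6.5} already gives
\[
U^{\nabla}(y,x) = I - A_x\langle y-x\rangle + O(|y-x|^2),
\]
so the only thing separating us from the claimed identity is the switch of base point $A_x \rightsquigarrow A_y$. Since $x \mapsto A_x$ is smooth, Taylor's theorem gives $A_y - A_x = O(|y-x|)$, hence
\[
\bigl(A_y - A_x\bigr)\langle y-x\rangle = O(|y-x|^2).
\]
Therefore I would simply replace $A_x$ with $A_y$ in the expansion above, absorbing the difference into the error term:
\[
U^{\nabla}(y,x) = I - A_y\langle y-x\rangle + O(|y-x|^2) = I + A_y\langle x-y\rangle + O(|y-x|^2),
\]
which is the first assertion of the corollary.

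For the ``in particular'' statement, I would swap the roles of $x$ and $y$ in the form of Lemma \ref{lem.6.5} that was originally derived (namely $U^{\nabla}(y,x) = I - A_x\langle y-x\rangle + O(|y-x|^2)$) to obtain
\[
U^{\nabla}(x,y) = I - A_y\langle x-y\rangle + O(|x-y|^2) = I + A_y\langle y-x\rangle + O(|x-y|^2).
\]
Then, either by applying the first part of the corollary (with $x$ and $y$ interchanged) or by the same smoothness-of-$A$ argument, one may replace $A_y$ by $A_x$ to the stated order, giving
\[
U^{\nabla}(x,y) = I + A_x\langle y-x\rangle + O(|y-x|^2).
\]
Differentiating the right-hand side in $y$ at $y = x$ yields $(U^{\nabla}(x,\cdot))'(x)v = A_x\langle v\rangle$.

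There is no real obstacle here; the whole argument is essentially a one-line consequence of Lemma \ref{lem.6.5} plus the continuity of $A$. The only thing to be careful about is bookkeeping the sign convention $A_y\langle x-y\rangle = -A_y\langle y-x\rangle$, and noting that the second ``in particular'' identity is really dual to the formula $\tilde{G}_x'(x)v = -A_x\langle v\rangle$ recorded before Lemma \ref{lem.6.5}, just with the roles of the two slots of $U^{\nabla}$ exchanged.
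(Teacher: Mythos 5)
Your argument matches the paper's own proof: both derive the first display by expanding $A_{(\cdot)}$ to first order about $x$ in Eq.\ (\ref{equ.6.11}), so that the replacement $A_x \rightsquigarrow A_y$ costs only $O(|y-x|^2)$. Your derivation of the ``in particular'' statement is a touch roundabout (swapping variables in Lemma \ref{lem.6.5} and then applying the smoothness argument a second time, when swapping variables once in the first display already gives $U^{\nabla}(x,y)=I+A_x\langle y-x\rangle+O(|y-x|^2)$ directly), but it is correct and in the same spirit.
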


\begin{proof}
This is immediate after expanding $A_{\left(  \cdot\right)  }$ about $x$ in
the direction $y-x$ in Eq. (\ref{equ.6.11}) with Taylor's theorem.
\end{proof}

\subsection{Second order gauge inequality does not imply second order chart
inequality.}

\begin{example}
\label{exa.6.7}Let $x_{s}$ and $y_{s}$ be the $C\left(  \left[  0,2\right]
,\mathbb{R}\right)  $ paths defined by
\end{example}

\[
y_{s}=x_{s}=\left\{
\begin{array}
[c]{ccc}%
0 & \text{if} & 0\leq s\leq1\\
s^{1/p}-1 & \text{if} & 1\leq s\leq2
\end{array}
\right.
\]
and the control $\omega\left(  s,t\right)  $ be defined by%
\[
\omega\left(  s,t\right)  =\left\{
\begin{array}
[c]{ccc}%
0 & \text{if} & t\leq1\\
t-\left(  s\vee1\right)  & \text{if} & t\geq1
\end{array}
\right.  .
\]
Then it is easy to check that
\[
\left\vert x_{s,t}\right\vert \leq\omega\left(  s,t\right)  ^{1/p}%
\]

Let
\[
y_{s}^{\dag}=\left\{
\begin{array}
[c]{ccc}%
2-2s & \text{if} & 0\leq s\leq\frac{1}{2}\\
1 & \text{else} & \frac{1}{2}\leq s\leq2
\end{array}
\right.  .
\]
Then if $t-s\leq1/2$, $y_{s,t}-y_{s}^{\dag}x_{s,t}=0$ so that $\left(
y,y^{\dag}\right)  $ satisfies Inequality (\ref{equ.2.22}) with $\delta=1/2$
and $\psi\left(  x,y\right)  =y-x$. On the other hand if $s=0$ and
$t=1+\epsilon$, then%
\[
y_{s,t}-y_{s}^{\dag}x_{s,t}=\epsilon^{1/p}-2\epsilon^{1/p}=-\epsilon^{1/p}.
\]
Thus%
\[
\frac{\left\vert y_{0,1+\epsilon}-y_{0}^{\dag}x_{0,1+\epsilon}\right\vert
}{\omega\left(  0,1+\epsilon\right)  ^{2/p}}=\frac{1}{\epsilon^{1/p}}%
\]
so that $\left(  y,y^{\dag}\right)  $ does not satisfy Inequality
(\ref{equ.2.24}) with the identity chart.

\subsection{Rough Differential Equation Results in Euclidean
Space\label{sub.6.5}}

The following lemma (which is Corollary 2.17 in \cite{CDL13} and was proved
using Theorem 10.14 of \cite{FV}) proves useful in the manifold case.

\begin{lemma}
\label{lem.6.8}Let $U\subseteq\mathbb{R}^{d}$ be an open set and $U_{1}$ be a
precompact open set whose closure is contained in $U.$ There exists $\ $a
$\delta>0$ such that for all $\left(  \bar{z}_{0},t_{0}\right)  \in
U_{1}\times\left[  0,T\right]  $, the rough differential equation%
\[
d\mathbf{z}_{t}=F_{d\mathbf{X}_{t}}\left(  z_{t}\right)  \quad\text{with\quad
}z_{t_{0}}=\bar{z}_{0}%
\]
has a unique solution $\mathbf{z\in}CRP_{\mathbf{X}}\left(  \mathbb{R}%
^{d}\right)  $ which is defined on $\left[  t_{0,}t_{0}+\delta\wedge T\right]
$ with $z_{t}\in U$ for all $t\in\left[  t_{0,}t_{0}+\delta\wedge T\right]  .$
\end{lemma}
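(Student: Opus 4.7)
The plan is to reduce the statement to the well-known global existence and uniqueness result for RDEs with smooth, globally bounded coefficients, by first modifying $F$ outside a neighborhood of $\overline{U_1}$ via a cutoff. Concretely, I would fix an open set $U_2$ with $\overline{U_1}\subseteq U_2\subseteq\overline{U_2}\subseteq U$ and choose a smooth cutoff $\chi\colon\mathbb{R}^d\to[0,1]$ with $\chi\equiv1$ on $U_2$ and $\operatorname{supp}\chi\subseteq U$. Setting $\tilde F_w:=\chi F_w$ produces a linear map $\tilde F\colon W\to\Gamma(T\mathbb{R}^d)$ whose derivatives of all orders are globally bounded. By the classical flat-space RDE theorem (e.g.\ \cite[Theorem~10.14]{FV}, restated in this controlled-path form in \cite{gubinelli}), for every $(\bar z_0,t_0)\in\mathbb{R}^d\times[0,T]$ there exists a unique $\tilde{\mathbf{z}}\in CRP_{\mathbf{X}}([t_0,T],\mathbb{R}^d)$ solving
\[
d\tilde{\mathbf{z}}_t=\tilde F_{d\mathbf{X}_t}(\tilde z_t),\qquad \tilde z_{t_0}=\bar z_0.
\]

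The second step is to extract a uniform local estimate. Because $\tilde F$ and its derivatives are uniformly bounded and because $\mathbf{X}$ is controlled by $\omega$, inspection of the proof of the cited RDE theorem shows that the constants appearing in the approximation $\tilde z_{s,t}\underset{^{1}}{\approx}\tilde F_{x_{s,t}}(\tilde z_s)$ depend only on these bounds, not on the initial data. Consequently there is a constant $C<\infty$, independent of $(\bar z_0,t_0)\in U_1\times[0,T]$, with
\[
|\tilde z_t-\bar z_0|\le C\,\omega(t_0,t)^{1/p}\quad\text{for all }t_0\le t\le T.
\]
Since $\omega$ is continuous on the compact set $\Delta_{[0,T]}$ and vanishes on the diagonal, I can choose $\delta>0$ so small that $C\,\omega(s,s+\delta\wedge T)^{1/p}$ is strictly less than the Euclidean distance $d(\overline{U_1},\partial U_2)>0$ for every $s\in[0,T]$. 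With this $\delta$, whenever $\bar z_0\in U_1$ the path $\tilde z_t$ lies in $U_2$ throughout $[t_0,t_0+\delta\wedge T]$.

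On this interval $\chi\equiv 1$ along $\tilde z$, so $\tilde F_w(\tilde z_t)=F_w(\tilde z_t)$, and therefore $\tilde{\mathbf{z}}|_{[t_0,t_0+\delta\wedge T]}$ solves the original RDE with values in $U$. For uniqueness, any $\mathbf{z}\in CRP_{\mathbf{X}}(\mathbb{R}^d)$ solving the original equation satisfies the modified equation for $\tilde F$ wherever $z_t\in U_2$; a continuity-in-time argument (using the same uniform local bound applied to $z$ itself, after possibly shrinking $\delta$) shows $z_t$ stays in $U_2$ on $[t_0,t_0+\delta\wedge T]$, so the uniqueness clause for $\tilde F$ forces $\mathbf{z}=\tilde{\mathbf{z}}$ there. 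The main obstacle is the uniform-in-$(\bar z_0,t_0)$ local estimate on $\tilde z$; this is precisely what the boundedness of $\tilde F$ and the uniform continuity of $\omega$ on $\Delta_{[0,T]}$ buy us, and it is the only nontrivial input beyond citing the classical flat-space theorem.
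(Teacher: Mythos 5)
The paper does not supply a proof of this lemma: immediately before the statement it says the result is Corollary 2.17 of \cite{CDL13}, proved there using Theorem 10.14 of \cite{FV}. Your cutoff argument reconstructs exactly that strategy — replace $F$ by $\chi F$ to get a vector field with globally bounded derivatives, invoke the global flat theory to get a uniform a priori modulus $|\tilde z_t-\bar z_0|\le C\,\omega(t_0,t)^{1/p}$ with $C$ independent of $(\bar z_0,t_0)$, and shrink $\delta$ via the uniform continuity of $\omega$ on the compact simplex so the cutoff never activates — and it is correct. The one spot worth tightening is the uniqueness step: the uniform estimate comes from the theory for $\tilde F$, so you cannot directly ``apply the same bound to $z$'' when $z$ a priori only solves the uncut equation; the clean version is to let $\tau$ be the exit time of $z$ from $U_2$, note $z=\tilde z$ on $[t_0,\tau)$ by uniqueness for $\tilde F$, and conclude $\tau\ge t_0+\delta\wedge T$ because the estimate already keeps $\tilde z$ in $U_2$ over that whole interval. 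This is what your ``continuity-in-time argument'' is gesturing at, and it resolves the apparent circularity without shrinking $\delta$ further.
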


We now state an equivalent condition for the path $\mathbf{z}$ to solve Eq.
(\ref{equ.2.11}).

\begin{theorem}
\label{the.6.9}Let $U\subseteq\mathbb{R}^{d}$ be open such and $\mathbf{z}%
=\left(  z,z^{\dag}\right)  \in CRP_{\mathbf{X}}\left(  \mathbb{R}^{d}\right)
$ defined on $I_{0}$ such that $z\left(  I_{0}\right)  \subseteq U$. Then
$\mathbf{z}$ solves Eq. (\ref{equ.2.11}) if and only if $z_{s}^{\dag}%
=F_{\cdot}\left(  z_{s}\right)  $ and for every $\left[  a,b\right]  \subseteq
I_{0},$ Banach space $V$, and $\alpha\in\Omega^{1}\left(  U,V\right)  $, the
approximation
\[
\int_{s}^{t}\alpha\left(  d\mathbf{z}\right)  \underset{^{3}}{\approx}%
\alpha_{z_{s}}\left(  F_{x_{s,t}}\left(  z_{s}\right)  \right)  +\left(
\partial_{F_{w}\left(  z_{s}\right)  }\left[  \alpha\circ F_{\tilde{w}%
}\right]  \right)  \left(  z_{s}\right)  |_{w\otimes\tilde{w}=\mathbb{X}%
_{s,t}}%
\]
holds.
\end{theorem}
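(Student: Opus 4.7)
The plan is to pass through the flat-space integration formula and then test against the right family of one-forms. Working on $\mathbb{R}^{d}$ with the standard gauge $\mathcal{G}=(\psi,U)=(y-x,I)$ of Example~\ref{exa.2.18}, we have $S^{\mathcal{G}}\equiv 0$ and the $U$-controlled one-form along $z$ attached to a smooth $\alpha\in\Omega^{1}(U,V)$ is simply $\boldsymbol{\alpha}^{(z,I)}_{s}=\bigl(\alpha_{z_{s}},\,(\partial_{z_{s}^{\dag}(\cdot)}\alpha)(\cdot)\bigr)$. Proposition~\ref{pro.4.4} (or equivalently the flat-space Theorem~\ref{the.2.9}) then gives
\[
\int_{s}^{t}\alpha(d\mathbf{z})\underset{^{3}}{\approx}\alpha_{z_{s}}(z_{s,t})+(\partial_{z_{s}^{\dag}w}\alpha)(z_{s}^{\dag}\tilde{w})\big|_{w\otimes \tilde{w}=\mathbb{X}_{s,t}},
\]
and substituting the standing hypothesis $z_{s}^{\dag}=F_{\cdot}(z_{s})$ rewrites this as
\[
\int_{s}^{t}\alpha(d\mathbf{z})\underset{^{3}}{\approx}\alpha_{z_{s}}(z_{s,t})+(\partial_{F_{w}(z_{s})}\alpha)(F_{\tilde{w}}(z_{s}))\big|_{w\otimes\tilde{w}=\mathbb{X}_{s,t}}.
\]
This identity is the common bridge for both directions.

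For the forward implication, I would assume $\mathbf{z}$ solves Eq.~(\ref{equ.2.11}), so that
\[
z_{s,t}\underset{^{3}}{\approx}F_{x_{s,t}}(z_{s})+(\partial_{F_{w}(z_{s})}F_{\tilde{w}})(z_{s})\big|_{w\otimes\tilde{w}=\mathbb{X}_{s,t}},
\]
apply the bounded linear map $\alpha_{z_{s}}$ to both sides (the $\omega^{3/p}$ remainder survives because the operator norm of $\alpha_{z_{s}}$ is uniformly bounded on the compact set $z([a,b])$), and then substitute into the bridge identity above. The two resulting second-order terms combine by the ordinary product rule
\[
\partial_{F_{w}(z_{s})}[\alpha\circ F_{\tilde{w}}](z_{s})=\alpha_{z_{s}}\bigl((\partial_{F_{w}(z_{s})}F_{\tilde{w}})(z_{s})\bigr)+(\partial_{F_{w}(z_{s})}\alpha)(F_{\tilde{w}}(z_{s})),
\]
producing exactly the right-hand side of the approximation claimed in the theorem.

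For the converse, I would specialize to the constant coordinate one-forms $\alpha=dx^{i}$, $i=1,\dots,d$. Constancy gives $\partial_{F_{w}(z_{s})}(dx^{i})=0$, so the hypothesized right-hand side collapses to $(dx^{i})\bigl(F_{x_{s,t}}(z_{s})+(\partial_{F_{w}(z_{s})}F_{\tilde{w}})(z_{s})\big|_{w\otimes\tilde w=\mathbb{X}_{s,t}}\bigr)$, while the fundamental theorem of calculus (Theorem~\ref{the.4.13}) identifies the left-hand side with $(z_{s,t})^{i}$. Running this over every $i$ and reassembling components yields
\[
z_{s,t}\underset{^{3}}{\approx}F_{x_{s,t}}(z_{s})+(\partial_{F_{w}(z_{s})}F_{\tilde{w}})(z_{s})\big|_{w\otimes\tilde{w}=\mathbb{X}_{s,t}},
\]
which together with $z_{s}^{\dag}=F_{\cdot}(z_{s})$ is exactly what it means for $\mathbf{z}$ to solve Eq.~(\ref{equ.2.11}).

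The main technical point is the bookkeeping: correctly unraveling the definition of the $U$-controlled one-form $\boldsymbol{\alpha}^{(z,I)}$ in the flat-space setting so that the integration formula takes the clean form displayed above. Once that form is in hand, both directions reduce to linearity together with a single application of the ordinary product rule, and no further appeal to the weak-geometric structure of $\mathbf{X}$ is needed beyond what is already built into the construction of the rough integral.
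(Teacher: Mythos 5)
Your proof is correct and follows essentially the same route as the paper: both directions pass through the flat-space integration formula $\int_s^t\alpha(d\mathbf{z})\underset{^3}{\approx}\alpha_{z_s}(z_{s,t})+\alpha'_{z_s}(z_s^\dag\otimes z_s^\dag)\mathbb{X}_{s,t}$, the forward direction then amounts to substituting the defining approximation for $z_{s,t}$ and regrouping with the ordinary product rule, and the converse tests against a coordinate one-form and invokes the fundamental theorem of calculus (Theorem~\ref{the.4.13}). The only cosmetic difference is that you specialize to the scalar coordinate one-forms $dx^{i}$ one at a time while the paper uses the single $\mathbb{R}^{d}$-valued one-form $d(I_{U})$; these are trivially equivalent.
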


\begin{proof}
This is proved in \cite{CDL13} [Theorem 4.5 by letting $M=U$] but included
here for completeness. To prove the \textquotedblleft if\textquotedblright%
\ direction, it suffices to let $\alpha=d\left(  I_{U}\right)  $ and notice
that%
\[
\int_{s}^{t}d\left(  I_{U}\right)  \left(  d\mathbf{z}\right)  =z_{t}-z_{s}%
\]
by Theorem \ref{the.4.13} and that $d\left(  I_{U}\right)  _{u}\left(
\tilde{u}\right)  =\tilde{u}$ so that
\[
d\left(  I_{U}\right)  _{z_{s}}\left(  F_{x_{s,t}}\left(  z_{s}\right)
\right)  =F_{x_{s,t}}\left(  z_{s}\right)
\]
and%
\[
\left(  \partial_{F_{w}\left(  z_{s}\right)  }\left[  d\left(  I_{U}\right)
\circ F_{\tilde{w}}\right]  \right)  \left(  z_{s}\right)  =\left(
\partial_{F_{w}\left(  z_{s}\right)  }F_{\tilde{w}}\right)  \left(
z_{s}\right)  .
\]

To prove the \textquotedblleft only if\textquotedblright\ direction, by
definition we have%
\[
z_{s,t}\underset{^{3}}{\approx}F_{x_{s,t}}\left(  z_{s}\right)  +\left(
\partial_{F_{w}\left(  z_{s}\right)  }F_{\tilde{w}}\right)  \left(
z_{s}\right)  |_{w\otimes\tilde{w}=\mathbb{X}_{s,t}}%
\]
and
\[
\int_{s}^{t}\alpha\left(  d\mathbf{z}\right)  \underset{^{3}}{\approx}%
\alpha_{z_{s}}\left(  z_{s,t}\right)  +\alpha_{z_{s}}^{\prime}\left(
F_{\cdot}\left(  z_{s}\right)  \otimes F_{\cdot}\left(  z_{s}\right)
\mathbb{X}_{s,t}\right)  .
\]
Combining these approximations, we have%
\begin{align*}
\int_{s}^{t}\alpha\left(  d\mathbf{z}\right)   &  \underset{^{3}}{\approx
}\alpha_{z_{s}}\left(  z_{s,t}\right)  +\alpha_{z_{s}}^{\prime}\left(
F_{\cdot}\left(  z_{s}\right)  \otimes F_{\cdot}\left(  z_{s}\right)
\mathbb{X}_{s,t}\right) \\
&  \underset{^{3}}{\approx}\alpha_{z_{s}}\left(  F_{x_{s,t}}\left(
z_{s}\right)  +\left(  \partial_{F_{w}\left(  z_{s}\right)  }F_{\tilde{w}%
}\right)  \left(  z_{s}\right)  \right)  +\alpha_{z_{s}}^{\prime}\left(
F_{w}\left(  z_{s}\right)  \otimes F_{\tilde{w}}\left(  z_{s}\right)  \right)
|_{w\otimes\tilde{w}=\mathbb{X}_{s,t}}\\
&  =\alpha_{z_{s}}\left(  F_{x_{s,t}}\left(  z_{s}\right)  \right)  +\left(
\partial_{F_{w}\left(  z_{s}\right)  }\left[  \alpha\circ F_{\tilde{w}%
}\right]  \right)  \left(  z_{s}\right)  |_{w\otimes\tilde{w}=\mathbb{X}%
_{s,t}}%
\end{align*}
where the last equality follows from the calculation%
\begin{align*}
\left(  \partial_{F_{w}\left(  z_{s}\right)  }\left[  \alpha\circ F_{\tilde
{w}}\right]  \right)  \left(  z_{s}\right)   &  =\left(  \partial
_{F_{w}\left(  z_{s}\right)  }\left[  \alpha_{z_{s}}\circ F_{\tilde{w}}\left(
\cdot\right)  \right]  \right)  \left(  z_{s}\right)  +\left(  \partial
_{F_{w}\left(  z_{s}\right)  }\alpha_{\left(  \cdot\right)  }\circ
F_{\tilde{w}}\left(  z_{s}\right)  \right)  \left(  z_{s}\right) \\
&  =\alpha_{z_{s}}\left(  \left(  \partial_{F_{w}\left(  z_{s}\right)
}F_{\tilde{w}}\right)  \left(  z_{s}\right)  \right)  +\alpha_{z_{s}}^{\prime
}\left(  F_{w}\left(  z_{s}\right)  \otimes F_{\tilde{w}}\left(  z_{s}\right)
\right)
\end{align*}

\end{proof}

Theorem \ref{the.6.11} below is useful in showing that a solution to an RDE in
the flat case satisfies our manifold Definition \ref{def.5.2}. Let $U$ and
$\tilde{U}$ be open sets for the remainder of this subsection.

\begin{definition}
\label{def.6.10}Let $f:U\subseteq\mathbb{R}^{d}\rightarrow\tilde{U}%
\subseteq\mathbb{R}^{\tilde{d}}$ be a smooth map. Let $F:U\rightarrow L\left(
W,\mathbb{R}^{d}\right)  $ and $\tilde{F}:\tilde{U}\rightarrow L\left(
W,\mathbb{R}^{\tilde{d}}\right)  $ be smooth. We say $F$ and $\tilde{F}$ are
$f$ $-$ \textbf{related dynamical systems} if%
\[
f^{\prime}\left(  x\right)  F_{w}\left(  x\right)  =\tilde{F}_{w}\circ
f\left(  x\right)  \text{ for all }w\in W.
\]

\end{definition}

\begin{theorem}
\label{the.6.11}Suppose $f:U\subseteq\mathbb{R}^{d}\rightarrow\tilde
{U}\subseteq\mathbb{R}^{\tilde{d}}$ is a smooth map and let $F:U\rightarrow
L\left(  W,\mathbb{R}^{d}\right)  $ and $\tilde{F}:\tilde{U}\rightarrow
L\left(  W,\mathbb{R}^{\tilde{d}}\right)  $ be $f-$related dynamical systems.
If $\mathbf{z}$ solves
\[
d\mathbf{z}_{t}=F_{d\mathbf{X}_{t}}\left(  z_{t}\right)
\]
with initial condition $z_{0}=\bar{z}_{0}$, then $\mathbf{\tilde{z}}%
_{t}:=\left(  \tilde{z}_{t},\tilde{z}_{s}^{\dag}\right)  :=f_{\ast}%
\mathbf{z}_{t}$ solves%
\[
d\mathbf{\tilde{z}}_{t}=\tilde{F}_{d\mathbf{X}_{t}}\left(  \tilde{z}%
_{t}\right)
\]
with initial condition $\tilde{z}_{0}=f\left(  \bar{z}_{0}\right)  .$
\end{theorem}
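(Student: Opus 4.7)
The plan is to verify the two items in the definition of a solution to an RDE (the ``derivative-process'' condition and the cubic Taylor-type approximation). Set $\tilde z_s := f(z_s)$ and $\tilde z_s^\dag := f'(z_s)\, z_s^\dag$. By Proposition \ref{pro.4.10} (or direct computation) $\tilde{\mathbf z} \in CRP_{\mathbf X}(\mathbb{R}^{\tilde d})$. The initial condition $\tilde z_0 = f(\bar z_0)$ is immediate, and the identity
\[
\tilde z_s^\dag = f'(z_s)\, z_s^\dag = f'(z_s) F_{(\cdot)}(z_s) = \tilde F_{(\cdot)}(f(z_s)) = \tilde F_{(\cdot)}(\tilde z_s)
\]
follows directly from the hypothesis that $F$ and $\tilde F$ are $f$-related. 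Thus the first condition in the definition of solving the RDE holds for $\tilde{\mathbf z}$.

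Next I would expand $f(z_t)-f(z_s)$ using Taylor's theorem to second order (with a cubic remainder controlled by $|z_{s,t}|^3 \lesssim \omega(s,t)^{3/p}$):
\[
\tilde z_{s,t} \underset{^3}{\approx} f'(z_s)\, z_{s,t} + \tfrac{1}{2} f''(z_s)\,(z_{s,t})^{\otimes 2}.
\]
Into the first term I substitute the RDE approximation for $\mathbf z$, namely $z_{s,t} \underset{^3}{\approx} F_{x_{s,t}}(z_s) + (\partial_{F_w(z_s)} F_{\tilde w})(z_s)|_{w\otimes\tilde w = \mathbb{X}_{s,t}}$, and use $f$-relatedness on the leading piece, i.e.\ $f'(z_s)F_{x_{s,t}}(z_s) = \tilde F_{x_{s,t}}(\tilde z_s)$. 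Into the second term I use that $z_{s,t} = F_{x_{s,t}}(z_s) + O(\omega(s,t)^{2/p})$, so that
\[
\tfrac{1}{2} f''(z_s)(z_{s,t})^{\otimes 2} \underset{^3}{\approx} \tfrac{1}{2} f''(z_s)\bigl(F(z_s)^{\otimes 2}(x_{s,t}\otimes x_{s,t})\bigr).
\]
Since $f''(z_s)$ is symmetric (mixed partials commute) and $\mathbf X$ is weak-geometric, $f''(z_s) F(z_s)^{\otimes 2}(x_{s,t}\otimes x_{s,t}) = 2\, f''(z_s) F(z_s)^{\otimes 2}\,\mathbb{X}_{s,t}$. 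Assembling everything:
\[
\tilde z_{s,t} \underset{^3}{\approx} \tilde F_{x_{s,t}}(\tilde z_s) + \Bigl[f'(z_s)\bigl(\partial_{F_w(z_s)} F_{\tilde w}\bigr)(z_s) + f''(z_s)\bigl(F_w(z_s)\otimes F_{\tilde w}(z_s)\bigr)\Bigr]\Big|_{w\otimes\tilde w=\mathbb{X}_{s,t}}.
\]

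The final step is the algebraic identification of the bracketed expression with $(\partial_{\tilde F_w(\tilde z_s)}\tilde F_{\tilde w})(\tilde z_s)$. Differentiating the $f$-relatedness identity $\tilde F_{\tilde w}\circ f = f'(\cdot)\,F_{\tilde w}(\cdot)$ in a direction $v$ and then specializing $v = F_w(z_s)$ gives, by the chain and product rules,
\[
\bigl(\partial_{\tilde F_w(\tilde z_s)}\tilde F_{\tilde w}\bigr)(\tilde z_s) = f''(z_s)\bigl(F_w(z_s), F_{\tilde w}(z_s)\bigr) + f'(z_s)\bigl(\partial_{F_w(z_s)} F_{\tilde w}\bigr)(z_s),
\]
which is exactly what we need. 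This verifies the cubic approximation in Definition \ref{def.5.2} (equivalently, the characterization in Theorem \ref{the.5.3}, item 1, applied with the identity chart on $\tilde U$), completing the proof.

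The main obstacle is bookkeeping: one must carefully track which terms are of cubic order and be precise with the weak-geometric identity in handling the $f''$ contribution. The only genuine piece of differential calculus is the one-line chain-and-product-rule computation that matches the $f''$-plus-$\partial F$ combination coming from Taylor's theorem to the single $\partial \tilde F$ term in the $\tilde F$-driven RDE.
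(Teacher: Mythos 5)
Your proof is correct. The route you take is a bit more hands-on than the paper's: you expand $f(z_t)-f(z_s)$ by Taylor's theorem from scratch, which makes the $f''$-term and the weak-geometric identity $f''(z_s)\bigl(F(z_s)^{\otimes2}(x_{s,t}\otimes x_{s,t})\bigr)=2\,f''(z_s)\bigl(F(z_s)^{\otimes2}\mathbb{X}_{s,t}\bigr)$ explicit, and then you close the argument by differentiating the $f$-relatedness identity (chain plus product rule) to match the two second-order terms. The paper instead invokes Theorem \ref{the.6.9} with $\alpha=df$ (so the Taylor/weak-geometric step is hidden inside that theorem and Theorem \ref{the.4.13}), and it applies $f$-relatedness \emph{before} differentiating, rewriting $f'(\cdot)F_{\tilde w}(\cdot)=\tilde F_{\tilde w}\circ f$ so that only a chain rule is needed and $f''$ never appears on the page. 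The two calculations are algebraically identical once unwound; yours is more self-contained (it doesn't lean on the rough one-form integration machinery), while the paper's is shorter because the auxiliary Theorems \ref{the.4.13} and \ref{the.6.9} have already absorbed the symmetry and weak-geometric bookkeeping.
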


\begin{proof}
We have by letting $\alpha:=df$ in Theorem \ref{the.6.9}
\begin{align*}
\tilde{z}_{s,t}  &  =f\left(  z_{t}\right)  -f\left(  z_{s}\right) \\
&  \underset{^{3}}{\approx}f^{\prime}\left(  z_{s}\right)  F_{x_{s,t}}\left(
z_{s}\right)  +\partial_{F_{w}\left(  z_{s}\right)  }\left[  f^{\prime}\left(
\cdot\right)  F_{\tilde{w}}\left(  \cdot\right)  \right]  \left(
z_{s}\right)  |_{w\otimes\tilde{w}=\mathbb{X}_{s,t}}\\
&  \underset{^{3}}{\approx}\tilde{F}_{x_{s,t}}\left(  \tilde{z}_{s}\right)
+\left(  \partial_{F_{w}\left(  z_{s}\right)  }\tilde{F}_{\tilde{w}}\circ
f\right)  \left(  z_{s}\right)  |_{w\otimes\tilde{w}=\mathbb{X}_{s,t}}\\
&  \underset{^{3}}{\approx}\tilde{F}_{x_{s,t}}\left(  \tilde{z}_{s}\right)
+\tilde{F}_{\tilde{w}}^{\prime}\left(  f\left(  z_{s}\right)  \right)
f^{\prime}\left(  z_{s}\right)  F_{w}\left(  z_{s}\right)  |_{w\otimes
\tilde{w}=\mathbb{X}_{s,t}}\\
&  \underset{^{3}}{\approx}\tilde{F}_{x_{s,t}}\left(  \tilde{z}_{s}\right)
+\tilde{F}_{\tilde{w}}^{\prime}\left(  f\left(  z_{s}\right)  \right)
\tilde{F}_{w}\circ f\left(  z_{s}\right)  |_{w\otimes\tilde{w}=\mathbb{X}%
_{s,t}}\\
&  \underset{^{3}}{\approx}\tilde{F}_{x_{s,t}}\left(  \tilde{z}_{s}\right)
+\left(  \partial_{\tilde{F}_{w}\left(  \tilde{z}_{s}\right)  }\tilde
{F}_{\tilde{w}}\right)  \left(  \tilde{z}_{s}\right)  |_{w\otimes\tilde
{w}=\mathbb{X}_{s,t}}.
\end{align*}
Additionally
\[
\tilde{z}_{t}^{\dag}=f^{\prime}\left(  z_{t}\right)  z_{t}^{\dag}=f^{\prime
}\left(  z_{t}\right)  F_{\left(  \cdot\right)  }\left(  z_{t}\right)
=\tilde{F}_{\left(  \cdot\right)  }\left(  \tilde{z}_{t}\right)  .
\]

\end{proof}

\begin{corollary}
\label{cor.6.12}Let $\phi:U\subseteq\mathbb{R}^{d}\rightarrow\tilde
{U}\subseteq\mathbb{R}^{d}$ be a diffeomorphism with $\phi\left(  z\left(
I_{0}\right)  \right)  \subseteq U$. Then $\mathbf{z}$ on $I_{0}$ solves
\[
d\mathbf{z}_{t}=F_{d\mathbf{X}_{t}}\left(  z_{t}\right)
\]
with initial condition $z_{0}=\bar{z}_{0}$ if and only if $\mathbf{\tilde{z}%
}:=\phi_{\ast}\mathbf{z}$ on $I_{0}$ solves
\[
d\mathbf{\tilde{z}}_{t}=F_{d\mathbf{X}_{t}}^{\phi}\left(  \tilde{z}%
_{t}\right)
\]
with initial condition $\tilde{z}_{0}=\phi\left(  \bar{z}_{0}\right)  $ where
$F^{\phi}:=d\phi\circ\left(  F\circ\phi^{-1}\right)  .$
\end{corollary}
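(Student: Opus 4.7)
The plan is to recognize Corollary 6.12 as essentially a direct consequence of Theorem 6.11 applied in both directions with $\phi$ and $\phi^{-1}$. The key observation is that the pair $(F, F^{\phi})$ is an example of $\phi$-related dynamical systems in the sense of Definition 6.10. Indeed, unwinding the definition $F^{\phi} := d\phi \circ (F \circ \phi^{-1})$ gives
\[
F^{\phi}_w \circ \phi(x) = d\phi(x)\, F_w(x) \quad \text{for all } x \in U,\; w \in W,
\]
which is exactly the $\phi$-relatedness condition.

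First I would verify this $\phi$-relatedness identity, then invoke Theorem 6.11 directly: if $\mathbf{z}$ solves $d\mathbf{z}_t = F_{d\mathbf{X}_t}(z_t)$ with $z_0 = \bar{z}_0$, then $\phi_\ast \mathbf{z}$ solves $d\tilde{\mathbf{z}}_t = F^{\phi}_{d\mathbf{X}_t}(\tilde{z}_t)$ with $\tilde{z}_0 = \phi(\bar{z}_0)$. This handles the forward implication.

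For the reverse direction, the plan is to apply Theorem 6.11 again, but now using the diffeomorphism $\phi^{-1}: \tilde{U} \to U$. I would check that $(F^{\phi})^{\phi^{-1}} = F$ by a short computation:
\[
(F^{\phi})^{\phi^{-1}}_w = d\phi^{-1} \circ (F^{\phi}_w \circ \phi) = d\phi^{-1} \circ d\phi \circ F_w = F_w,
\]
so that $F^{\phi}$ and $F$ are $\phi^{-1}$-related. Applying Theorem 6.11 with $f := \phi^{-1}$ then gives that if $\tilde{\mathbf{z}}$ solves $d\tilde{\mathbf{z}}_t = F^{\phi}_{d\mathbf{X}_t}(\tilde{z}_t)$, then $(\phi^{-1})_\ast \tilde{\mathbf{z}} = \mathbf{z}$ solves $d\mathbf{z}_t = F_{d\mathbf{X}_t}(z_t)$, with matching initial conditions.

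Since every step is a direct quotation of Theorem 6.11 plus one line of bookkeeping on the chain rule, there is no real obstacle here; the only thing to be careful about is the assumption $\phi(z(I_0)) \subseteq U$ (which I believe should read $\phi(z(I_0)) \subseteq \tilde{U}$) ensuring $F^{\phi}$ is defined along the push-forward path, and the symmetric assumption $\phi^{-1}(\tilde z(I_0)) \subseteq U$ which is automatic. The net proof is essentially two invocations of Theorem 6.11 sandwiched around the verification $(F^{\phi})^{\phi^{-1}} = F$.
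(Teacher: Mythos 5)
Your proof is correct and takes the same route as the paper: the paper's proof is the one-line remark that the claim follows from Theorem \ref{the.6.11} once one observes $F$ is $\phi$-related to $F^{\phi}$. You spell out what the paper leaves implicit, namely that the reverse direction of the equivalence comes from applying Theorem \ref{the.6.11} to $\phi^{-1}$ after checking $(F^{\phi})^{\phi^{-1}}=F$, so your version is, if anything, slightly more careful than the original. Your side remark about the hypothesis is reasonable; the domain condition as printed does look like a typo (the natural requirement is $z(I_{0})\subseteq U$, which then forces $\tilde{z}(I_{0})\subseteq\tilde{U}$), but this has no bearing on the argument.
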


\begin{proof}
This follows from Theorem \ref{the.6.11} by seeing that $F$ is $\phi-$related
to $F^{\phi}.$
\end{proof}

This last lemma helps patch solutions in the manifold case.

\begin{lemma}
\label{lem.6.13}Let $z\in C\left(  \left[  0,T\right]  ,V\right)  $ and let
$0=t_{0}<t_{1}<\ldots<t_{l}=T$ be a partition of $\left[  0,T\right]  .$ If
\begin{equation}
z_{s,t}\underset{^{3}}{\approx}F_{x_{s,t}}\left(  z_{s}\right)  +\left(
\partial_{F_{w}\left(  z_{s}\right)  }F_{\tilde{w}}\right)  \left(
z_{s}\right)  |_{w\otimes\tilde{w}=\mathbb{X}_{s,t}}\label{equ.6.12}%
\end{equation}
holds for all $t_{i}\leq s\leq t\leq t_{i+1}$ and $0\leq i<l$ then Eq.
(\ref{equ.6.12}) holds for $0\leq s\leq t\leq T$.

In particular, if $\mathbf{z}_{t}$ solves $d\mathbf{z}_{t}=F_{d\mathbf{X}_{t}%
}\left(  z_{t}\right)  $ with $z_{0}=\bar{z}_{0}$ on $\left[  0,\tau\right]  $
and $\mathbf{\tilde{z}}_{t}$ solves $d\mathbf{\tilde{z}}_{t}=F_{d\mathbf{X}%
_{t}}\left(  \tilde{z}_{t}\right)  $ with $\tilde{z}_{\tau}=z_{\tau}$ on
$\left[  \tau,T\right]  $, then the concatenation of $\mathbf{z}_{t}$ and
$\mathbf{\tilde{z}}_{t}$ in the sense of Lemma \ref{lem.2.6} solves
$d\mathbf{z}_{t}=F_{d\mathbf{X}_{t}}\left(  z_{t}\right)  $ with $z_{0}%
=\bar{z}_{0}$ on $\left[  0,T\right]  $.
\end{lemma}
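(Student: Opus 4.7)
The plan is to follow the telescoping argument of the Sewing Lemma \ref{lem.2.6}, modified to handle the fact that the second order approximant
\[
\Phi_{s,t} := F_{x_{s,t}}(z_s) + \left(\partial_{F_w(z_s)} F_{\tilde w}\right)(z_s)\big|_{w\otimes\tilde w = \mathbb{X}_{s,t}}
\]
is not itself additive in $(s,t)$, so one cannot simply invoke Lemma \ref{lem.2.6} as a black box. Given $0 \le s \le t \le T$, I choose indices $j \le j^{\ast}$ with $s \in [t_j, t_{j+1}]$ and $t \in [t_{j^\ast}, t_{j^\ast+1}]$ and decompose $z_{s,t}$ as a telescoping sum $z_{s,t_{j+1}} + \sum_{i=j+1}^{j^\ast - 1} z_{t_i,t_{i+1}} + z_{t_{j^\ast},t}$. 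Each summand is close to the corresponding $\Phi$-increment by the local hypothesis, so what remains is to control the residual $\sum \Phi_{\cdot,\cdot} - \Phi_{s,t}$ arising from the non-additivity of $\Phi$.

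The core of the argument is a two-step almost-additivity estimate for $\Phi$: for $s \le r \le u$ with $u-s$ small,
\[
\Phi_{s,r} + \Phi_{r,u} - \Phi_{s,u} = O\!\left(\omega(s,u)^{3/p}\right).
\]
This follows by combining three ingredients: (i) the linearity $F_{x_{s,r}}(z_s) + F_{x_{r,u}}(z_s) = F_{x_{s,u}}(z_s)$; (ii) Chen's identity $\mathbb{X}_{s,u} = \mathbb{X}_{s,r} + \mathbb{X}_{r,u} + x_{s,r} \otimes x_{r,u}$; and (iii) a first order Taylor expansion of $F_{x_{r,u}}(\cdot)$ around $z_s$, using the local hypothesis $z_{s,r} = F_{x_{s,r}}(z_s) + O(\omega(s,r)^{2/p})$ to recognize $F'_{x_{r,u}}(z_s)(z_r - z_s)$ as the cross term $\left(\partial_{F_{x_{s,r}}(z_s)} F_{x_{r,u}}\right)(z_s) = \mathcal{F}(z_s)[x_{s,r} \otimes x_{r,u}]$ modulo $O(\omega(s,u)^{3/p})$. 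The remaining difference $\mathcal{F}(z_r)[\mathbb{X}_{r,u}] - \mathcal{F}(z_s)[\mathbb{X}_{r,u}]$ is $O(\omega(s,r)^{1/p} \omega(r,u)^{2/p}) = O(\omega(s,u)^{3/p})$ by smoothness of $\mathcal{F}$ and superadditivity of $\omega$.

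With the almost-additivity of $\Phi$ in hand, iterating it finitely many times (at most $l$ times) along the partition converts $\sum_{i} \Phi_{t_i, t_{i+1}}$ into $\Phi_{s,t}$ with cumulative error $O(\omega(s,t)^{3/p})$, exactly as in the proof of Lemma \ref{lem.2.6}. Combining with the local bounds on $z_{t_i,t_{i+1}} - \Phi_{t_i, t_{i+1}}$ (summed via superadditivity) yields the global estimate $z_{s,t} \underset{^3}{\approx} \Phi_{s,t}$ on $[0,T]$.

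The main obstacle will be the bookkeeping required to ensure the constants in the Taylor estimate are uniform: the Taylor remainders depend on bounds for $|z_{s,t}|$ and on $\sup_{\tau}\|F'(z_\tau)\|, \|F''(z_\tau)\|$. These bounds are available because $z$ is continuous on the compact interval $[0,T]$ (so $z([0,T])$ is compact), $F$ is smooth, and the local hypothesis combined with continuity gives a uniform estimate $|z_{s,t}| \le C\omega(s,t)^{1/p}$ by the standard bootstrapping argument from the proof of Theorem \ref{the.2.47} together with the superadditivity of $\omega$ across the finitely many partition intervals. Once this uniformity is secured, all estimates collapse as above. The final "in particular" statement about concatenating RDE solutions is then immediate: apply the just-proved patching with the two-piece partition $\{0, \tau, T\}$, and note that the Gubinelli derivative condition $z_s^{\dag} = F_\cdot(z_s)$ holds pointwise on each piece and thus on the whole interval.
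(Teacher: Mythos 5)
Your proposal is correct and follows essentially the same approach as the paper's. The paper proves the two‑subinterval case by exactly the chain of ingredients you isolate (linearity of $w\mapsto F_w$, a first–order Taylor expansion of $F_{x_{\tau,t}}(\cdot)$ at $z_s$, and Chen's identity to collapse $\mathbb{X}_{s,\tau}+\mathbb{X}_{\tau,t}+x_{s,\tau}\otimes x_{\tau,t}$ into $\mathbb{X}_{s,t}$), and then leaves the iteration to the general partition implicit; you simply make that iteration explicit and flag the uniform-constant bookkeeping via the finitely many subintervals, which is a routine point the paper does not spell out.
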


\begin{proof}
This proof is identical from \cite{CDL13} [Lemma A.2], adapted here with
different notation. We will only prove it in the case of two subintervals.
First note that
\[
F_{w}\left(  y\right)  =F_{w}\left(  x\right)  +F_{w}^{\prime}\left(
x\right)  \left(  y-x\right)  +O\left(  \left\vert w\right\vert \left\vert
y-x\right\vert ^{2}\right)
\]
and%
\[
\left(  \partial_{F_{w}\left(  y\right)  }F_{\tilde{w}}\right)  \left(
y\right)  =\left(  \partial_{F_{w}\left(  x\right)  }F_{\tilde{w}}\right)
\left(  x\right)  +O\left(  \left\vert w\right\vert \left\vert \tilde
{w}\right\vert \left\vert y-x\right\vert \right)
\]
by Taylor's theorem and the fact that $w\rightarrow F_{w}$ is linear. Using
these facts, we have%
\begin{align*}
z_{s,t}  &  =z_{s,\tau}+z_{\tau,t}\\
&  \underset{^{3}}{\approx}F_{x_{s,\tau}}\left(  z_{s}\right)  +\left(
\partial_{F_{w}\left(  z_{s}\right)  }F_{\tilde{w}}\right)  \left(
z_{s}\right)  |_{w\otimes\tilde{w}=\mathbb{X}_{s,\tau}}+F_{x_{\tau,t}}\left(
z_{\tau}\right)  +\left(  \partial_{F_{w}\left(  z_{\tau}\right)  }%
F_{\tilde{w}}\right)  \left(  z_{\tau}\right)  |_{w\otimes\tilde{w}%
=\mathbb{X}_{\tau,t}}\\
&  \underset{^{3}}{\approx}F_{x_{s,t}}\left(  z_{s}\right)  +F_{x_{\tau,t}%
}^{\prime}\left(  z_{s}\right)  \left(  z_{s,\tau}\right)  +\left(
\partial_{F_{w}\left(  z_{s}\right)  }F_{\tilde{w}}\right)  \left(
z_{s}\right)  |_{w\otimes\tilde{w}=\mathbb{X}_{s,\tau}}+\left(  \partial
_{F_{w}\left(  z_{s}\right)  }F_{\tilde{w}}\right)  \left(  z_{s}\right)
|_{w\otimes\tilde{w}=\mathbb{X}_{\tau,t}}\\
&  \underset{^{3}}{\approx}F_{x_{s,t}}\left(  z_{s}\right)  +F_{x_{\tau,t}%
}^{\prime}\left(  z_{s}\right)  \left(  F_{x_{s,\tau}}\left(  z_{s}\right)
\right)  +\left(  \partial_{F_{w}\left(  z_{s}\right)  }F_{\tilde{w}}\right)
\left(  z_{s}\right)  |_{w\otimes\tilde{w}=\mathbb{X}_{s,\tau}+\mathbb{X}%
_{\tau,t}}\\
&  =F_{x_{s,t}}\left(  z_{s}\right)  +\left(  \partial_{F_{w}\left(
z_{s}\right)  }F_{\tilde{w}}\right)  \left(  z_{s}\right)  |_{w\otimes
\tilde{w}=\mathbb{X}_{s,\tau}+\mathbb{X}_{\tau,t}+x_{s,\tau}\otimes x_{\tau
,t}}\\
&  =F_{x_{s,t}}\left(  z_{s}\right)  +\left(  \partial_{F_{w}\left(
z_{s}\right)  }F_{\tilde{w}}\right)  \left(  z_{s}\right)  |_{w\otimes
\tilde{w}=\mathbb{X}_{s,t}}.
\end{align*}

\end{proof}

\bibliographystyle{amsplain}

\bibliography{CRPI}
\newpage
\section*{Contact Information}
\noindent
Bruce K. Driver \newline
Department of Mathematics, 0112 \newline
University of California, San Diego \newline
9500 Gilman Drive \newline
La Jolla, California 92093-0112 USA \newline \newline
\emph{E-mail address}: \texttt{bdriver@ucsd.edu}
\newline \newline \newline 
Jeremy S. Semko \newline
Department of Mathematics, 0112 \newline
University of California, San Diego \newline
9500 Gilman Drive \newline
La Jolla, California 92093-0112 USA \newline \newline
\emph{E-mail address}: \texttt{jsemko@ucsd.edu}


\section{Smooth Horizontal Lifts for Principal Bundles\label{sec.7}}

This section is provided as a smooth warm-up for things to come in the rough
category. Before we get down to business let us recall some basic facts about
connections on principal bundles.

\subsection{Connections\label{sub.7.1}}

For the sake of motivation, let $E\rightarrow M$ be a vector bundle with fiber
$V$ and let $G=\operatorname*{Aut}\left(  V\right)  .$ Further let $P$ be the
associated principal bundle to $E,$ i.e. $P\rightarrow M$ is a fiber bundle
with fibers, $P_{m}:=\operatorname*{Aut}\left(  V,E_{m}\right)  $ for each
$m\in M.$ Notice that these fibers are diffeomorphic to $G,$ $G$ acts on the
right of $P$ by composition, so that $u_{m}g=u_{m}\circ g$ for all $u_{m}\in
P_{m}$ and $g\in G.$

If $E$ is equipped with a covariant derivative, $\nabla,$ we may construct a
$\mathfrak{g:}=\operatorname*{Lie}\left(  G\right)  =\operatorname*{End}%
\left(  V\right)  $ -- valued one-form $\omega=\omega^{\nabla}$ on $P$ by
\[
\omega^{\nabla}\left(  \dot{u}\left(  0\right)  \right)  :=u\left(  0\right)
^{-1}\frac{\nabla}{dt}|_{t=0}u\left(  t\right)
\]
for all smooth paths in $P.$ This one-form has the following properties;

\begin{enumerate}
\item If $u\left(  t\right)  =u_{0}e^{tA}$ for some $u_{o}\in P$ and
$A\in\mathfrak{g}$ then
\[
\omega^{\nabla}\left(  \dot{u}\left(  0\right)  \right)  :=u_{o}^{-1}%
\frac{\nabla}{dt}|_{t=0}u_{0}e^{tA}=u_{o}^{-1}u_{0}A=A.
\]
In the future we denote $\dot{u}\left(  0\right)  $ in this example by
$\tilde{A}\left(  u_{0}\right)  $ or simply by $u_{0}A.$

\item If $g\in G,$ then
\begin{align*}
\left(  R_{g}^{\ast}\omega^{\nabla}\right)  \left(  \dot{u}\left(  0\right)
\right)   &  =\omega^{\nabla}\left(  \left(  R_{g}\right)  _{\ast}\dot
{u}\left(  0\right)  \right)  =\omega^{\nabla}\left(  \frac{d}{dt}|_{0}\left[
u\left(  t\right)  g\right]  \right) \\
&  =\left[  u\left(  0\right)  g\right]  ^{-1}\frac{\nabla}{dt}|_{t=0}\left[
u\left(  t\right)  g\right]  =g^{-1}\left[  u\left(  0\right)  ^{-1}%
\frac{\nabla}{dt}|_{t=0}u\left(  t\right)  \right]  g\\
&  =Ad_{g^{-1}}\left[  \omega^{\nabla}\left(  \dot{u}\left(  0\right)
\right)  \right]  .
\end{align*}

\end{enumerate}

This shows that every covariant derivatives gives rise to connection one-form
$\left(  \omega\right)  $ as in Definition \ref{def.7.1} on $P\left(
E\right)  .$

\begin{definition}
\label{def.7.1}Let $G$ be a Lie group, $P\rightarrow M$ be a principal bundle
with structure group, $G,$ and $\mathfrak{g}:=\operatorname*{Lie}\left(
G\right)  :=T_{e}G.$ [In the future we write $G\rightarrow P\overset{\pi
}{\rightarrow}M$ to denote that $P$ is a principal bundle over $M$ with
structure group $G$ and projection map $\pi.]$ A $\mathfrak{g}$ -- valued
one-form, $\omega,$ on $P$ is a connection one-form provided;

\begin{enumerate}
\item $\omega\left(  \tilde{A}\left(  \cdot\right)  \right)  =A$ for all
$A\in\mathfrak{g}$ where $\tilde{A}\left(  u_{0}\right)  :=\frac{d}{dt}%
|_{0}u_{o}e^{tA}$ -- which is a typical \textquotedblleft
vertical\textquotedblright\ vector in $TP.$

\item $R_{g}^{\ast}\omega=Ad_{g^{-1}}\omega$ for all $g\in G,$ i.e.
\[
\left(  R_{g}^{\ast}\omega\right)  \left(  \xi_{u}\right)  =Ad_{g^{-1}}%
\omega\left(  \xi_{u}\right)  ~\forall~g\in G\text{ and }\xi_{u}\in TP.
\]

\end{enumerate}
\end{definition}

\begin{example}
[Trivial Bundle Case]\label{exa.7.2}Suppose that $P=M\times G$ is a trivial
principal bundle and $\omega$ is a connection form on $P.$ In this case we may
associate to $\omega$ a one-form on $M$ with values in $\mathfrak{g}$ by
setting
\[
\Gamma\left(  v_{m}\right)  :=\omega\left(  \left(  v_{m},0_{e}\right)
\right)  \in\mathfrak{g~\forall~}v_{m}\in T_{m}M.
\]
Furthermore we may reconstruct $\omega$ from $\Gamma$ as follows. Let
$v_{m}\in T_{m}M,$ $A\in\mathfrak{g},$ and $g\in G$ so that $\tilde{A}\left(
g\right)  $ is the generic element of $T_{g}G,$ then we must have
\begin{align*}
\omega\left(  v_{m},\tilde{A}\left(  g\right)  \right)   &  =\omega\left(
v_{m},0_{g}\right)  +\omega\left(  0_{m},\tilde{A}\left(  g\right)  \right) \\
&  =\omega\left(  \left(  R_{g}\right)  _{\ast}\left(  v_{m},0_{e}\right)
\right)  +A\\
&  =\left(  R_{g}^{\ast}\omega\right)  \left(  \left(  v_{m},0_{e}\right)
\right)  +A\\
&  =Ad_{g^{-1}}\left[  \omega\left(  \left(  v_{m},0_{e}\right)  \right)
\right]  +A\\
&  =A+Ad_{g^{-1}}\Gamma\left(  v_{m}\right)  .
\end{align*}
In this way we see that connections on $M\times G$ are in one to one
correspondence with $\mathfrak{g}$ -- valued one-forms on $M.$

Before finishing this example let us compute $\omega\left(  \dot{u}\left(
t\right)  \right)  $ where $u\left(  t\right)  =\left(  y\left(  t\right)
,g\left(  t\right)  \right)  $ is any smooth curve in $P.$ The key point is to
observe that $\dot{g}\left(  t\right)  =\tilde{A}\left(  g\left(  t\right)
\right)  $ where $A\mathbf{:=}L_{g\left(  t\right)  ^{-1}\ast}\dot{g}\left(
t\right)  $ and therefore,%
\begin{equation}
\omega\left(  \dot{u}\left(  t\right)  \right)  =\omega\left(  \left(  \dot
{y}\left(  t\right)  ,\dot{g}\left(  t\right)  \right)  \right)  =L_{g\left(
t\right)  ^{-1}\ast}\dot{g}\left(  t\right)  +Ad_{g\left(  t\right)  ^{-1}%
}\Gamma\left(  \dot{y}\left(  t\right)  \right)  . \label{equ.7.1}%
\end{equation}
Alternatively stated, if $\left(  v_{m},\xi_{g}\right)  \in T_{\left(
m,g\right)  }\left(  M\times G\right)  \cong T_{m}M\times T_{g}G,$ then
\begin{equation}
\omega\left(  \left(  v_{m},\xi_{g}\right)  \right)  =\theta\left(  \xi
_{g}\right)  +Ad_{g^{-1}}\Gamma\left(  v_{m}\right)  , \label{equ.7.2}%
\end{equation}
where
\begin{equation}
\theta\left(  \xi_{g}\right)  :=L_{g^{-1}\ast}\xi_{g}\in\mathfrak{g}
\label{equ.7.3}%
\end{equation}
is the left \textbf{Maurer--Cartan}\textit{ }form on $G.$
\end{example}

\subsection{Horizontal Lifts\label{sub.7.2}}

\begin{definition}
[Smooth Horiontal Lifts]\label{def.7.3}Let $\left(  G\rightarrow
P\overset{\pi}{\rightarrow}M,\omega\right)  $ be a principal bundle with
connection, $\omega,$ and $y\left(  t\right)  $ be a smooth curve in $M.$ We
say that $t\rightarrow u\left(  t\right)  \in P$ is a horizontal lift of $y$
provided; i) it is a lift, i.e. $\pi\circ u=y$ (or equivalently $u\left(
t\right)  \in P_{y\left(  t\right)  }$ for all $t)$ and ii) it is horizontal,
i.e. $\omega\left(  \dot{u}\left(  t\right)  \right)  =0$ for all $t.$
\end{definition}

\begin{example}
[Trivial Bundle Case II]\label{exa.7.4}Let us continue the notation in Example
\ref{exa.7.2} and suppose that $y\left(  t\right)  \in M$ is a smooth curve.
Any lift of $y$ is of the form $u\left(  t\right)  =\left(  y\left(  t\right)
,g\left(  t\right)  \right)  $ for some smooth curve, $t\rightarrow g\left(
t\right)  \in G.$ From Eq. (\ref{equ.7.1}) it follows that $u$ is horizontal
iff
\[
0=L_{g\left(  t\right)  ^{-1}\ast}\dot{g}\left(  t\right)  +Ad_{g\left(
t\right)  ^{-1}}\Gamma\left(  \dot{y}\left(  t\right)  \right)
\]
or equivalently by applying $Ad_{g\left(  t\right)  \ast}$ to both sides of
this equation iff
\[
0=R_{g\left(  t\right)  ^{-1}\ast}\dot{g}\left(  t\right)  +\Gamma\left(
\dot{y}\left(  t\right)  \right)  .
\]
In the matrix group case this is equivalent to solving,%
\[
\dot{g}\left(  t\right)  +\Gamma\left(  \dot{y}\left(  t\right)  \right)
g\left(  t\right)  =0.
\]
These differential equations have global unique solutions once we specify
$g\left(  0\right)  =g_{0}$ for some $g_{0}\in G.$ Hence for trivial bundles
we have shown that to each $u_{0}\in P_{y\left(  0\right)  }$ there exists a
unique horizontal lift, $u\left(  \cdot\right)  ,$ of $y$ such that $u\left(
0\right)  =u_{0}.$
\end{example}

Before ending this section, let us consider what happens to all of these
structures under pull backs.

\begin{example}
\label{exa.7.5}Suppose $G\rightarrow\tilde{P}\overset{\tilde{\pi}%
}{\rightarrow}\tilde{M}$ is another principal bundle with the same structure
group, $f:M\rightarrow\tilde{M}$ is a smooth map, and $F:P\rightarrow\tilde
{P}$ is a bundle map above $f,$ i.e. $\tilde{\pi}\circ F=f\circ\pi$ and
$F\left(  ug\right)  =F\left(  u\right)  g$ for all $u\in P$ and $g\in G.$
This last statement may be written as $F\circ R_{g}=R_{g}\circ F.$

A connection $\tilde{\omega}$ on $\tilde{P}$ pulls back to a $\mathfrak{g}$ --
valued one-form, $\omega:=F^{\ast}\tilde{\omega}$ on $P.$ This one-form is
again a connection on $P$ since;
\begin{align*}
\omega\left(  u\cdot A\right)   &  =\left(  F^{\ast}\tilde{\omega}\right)
\left(  \frac{d}{dt}|_{0}ue^{tA}\right)  =\tilde{\omega}\left(  F_{\ast}%
\frac{d}{dt}|_{0}ue^{tA}\right) \\
&  =\tilde{\omega}\left(  \frac{d}{dt}|_{0}F\left(  ue^{tA}\right)  \right)
=\tilde{\omega}\left(  \frac{d}{dt}|_{0}F\left(  u\right)  e^{tA}\right) \\
&  =\tilde{\omega}\left(  F\left(  u\right)  \cdot A\right)  =A
\end{align*}
and%
\begin{align*}
R_{g}^{\ast}\omega &  =R_{g}^{\ast}F^{\ast}\tilde{\omega}=\left(  F\circ
R_{g}\right)  ^{\ast}\tilde{\omega}=\left(  R_{g}\circ F\right)  ^{\ast}%
\tilde{\omega}\\
&  =F^{\ast}R_{g}^{\ast}\tilde{\omega}=F^{\ast}\left(  Ad_{g^{-1}}%
\tilde{\omega}\right)  =Ad_{g^{-1}}F^{\ast}\tilde{\omega}=Ad_{g^{-1}}\omega.
\end{align*}

Moreover, if $y\left(  t\right)  $ is a smooth curve in $M$ and $u\left(
t\right)  $ is a horizontal lift of $y,$ then $F\circ u$ is a horizontal lift
of $f\circ y.$ To see this is the case we have $\tilde{\pi}\circ F\circ
u=f\circ\pi\circ u=f\circ y$ so that $F\circ u$ is a lift of $f\circ y.$
Moreover,%
\[
\tilde{\omega}\left(  \frac{d}{dt}F\left(  u\left(  t\right)  \right)
\right)  =\left(  F^{\ast}\tilde{\omega}\right)  \left(  \dot{u}\left(
t\right)  \right)  =\omega\left(  \dot{u}\left(  t\right)  \right)  =0.
\]

\end{example}

As a consequence of these examples we may easily prove the following theorem.

\begin{theorem}
[Existence of Horizontal Lifts]\label{the.7.6}Let $G\rightarrow P\overset{\pi
}{\rightarrow}M$ be a principal bundle with connection $\omega,$ $y$ be a
smooth curve in $M,$ and $u_{0}\in P_{y\left(  0\right)  }.$ Then there exists
a unique horizontal lift $u\left(  t\right)  $ above $y$ such that $u\left(
0\right)  =u_{0}.$
\end{theorem}

\begin{proof}
Let us first prove local existence and uniqueness. We choose an open
neighborhood, $U\subseteq M$ of $y\left(  0\right)  $ such that $\pi
^{-1}\left(  U\right)  $ may be trivialized, i.e. there exists a bundle
isomorphism $F:U\times G\rightarrow\pi^{-1}\left(  U\right)  $ such that
$\pi\circ F=I_{U}\circ\tilde{\pi}$ where $\tilde{\pi}$ is projection onto the
first factor of $U\times G.$ We then let $\tilde{\omega}:=F^{\ast}\omega$ and
$g_{0}\in G$ be determined by $\left(  y\left(  0\right)  ,g_{0}\right)
=F^{-1}\left(  u_{0}\right)  .$

From Example \ref{exa.7.5} we know that $\tilde{\omega}$ is a connection on
$U\times G.$ Now choose $\tau>0$ so that $y\left(  \left[  0,\tau\right]
\right)  \subseteq U.$ We may then use Example \ref{exa.7.4} to conclude there
exists a unique $g\left(  t\right)  \in G$ for $0\leq t\leq\tau$ such that
$\tilde{u}\left(  t\right)  :=\left(  y\left(  t\right)  ,g\left(  t\right)
\right)  $ is the $\tilde{\omega}$ -- horizontal lift of $y$ starting at
$\tilde{u}\left(  0\right)  =\left(  y\left(  0\right)  ,g_{0}\right)  .$ It
then follows from Example \ref{exa.7.5} that $u\left(  t\right)  =F\left(
\tilde{u}\left(  t\right)  \right)  $ is the unique horizontal lift of
$y\left(  t\right)  $ for $0\leq t\leq\tau$ starting at $u_{0}.$ By a finite
covering argument we may continue this horizontal lift to the time interval
for which $y$ is defined. The uniqueness is also easily proved at the same time.
\end{proof}

There is one last horizontal lifting proposition we should record here.

\begin{notation}
\label{not.7.7}Let $G\rightarrow P\overset{\pi}{\rightarrow}M$ be a principal
bundle with connection $\omega.$ The \textbf{vertical subspace }at $u\in P$ is
defined by%
\[
\mathcal{V}_{u}:=\left\{  \xi\in T_{u}P:\pi_{\ast}\xi=0\right\}  \subseteq
T_{u}P
\]
and the \textbf{horizontal subspace} at $u\in P$ is defined by%
\[
\mathcal{H}_{u}^{\omega}:=\left\{  \xi\in T_{u}P:\omega\left(  \xi\right)
=0\right\}  \subseteq T_{u}P.
\]

\end{notation}

\begin{proposition}
[Tangent Horizontal Lifting]\label{pro.7.8}If $G\rightarrow P\overset{\pi
}{\rightarrow}M$ be a principal bundle with connection $\omega,$ then;

\begin{enumerate}
\item $\mathcal{V}_{u}=\left\{  u\cdot A:A\in\mathfrak{g}\right\}  ,$

\item $T_{u}P=\mathcal{V}_{u}\oplus\mathcal{H}_{u}^{\omega}$ for all $u\in P,$ and

\item $\pi_{\ast}:\mathcal{H}_{u}^{\omega}\rightarrow T_{\pi\left(  u\right)
}M$ is a linear isomorphism.
\end{enumerate}
\end{proposition}

\begin{proof}
The results of this proposition are purely local and hence we may assume that
$P$ is the trivial bundle $M\times G.$ In this model with $u=\left(
m,g\right)  ,$
\begin{align*}
\mathcal{V}_{u}  &  =\left\{  \left(  0_{m},\xi_{g}\right)  :\xi_{g}\in
T_{g}G\right\}  =\left\{  \left(  0_{m},\tilde{A}\left(  g\right)  \right)
:A\in\mathfrak{g}\right\} \\
&  =\left\{  \left(  0_{m},L_{g\ast}A\right)  :A\in\mathfrak{g}\right\}
=\left\{  u\cdot A:A\in\mathfrak{g}\right\}
\end{align*}
which prove item 1. Letting $\Gamma\left(  v_{m}\right)  :=\omega\left(
\left(  v_{m},0_{e}\right)  \right)  $ as in Example \ref{exa.7.2}, we have
\[
\omega\left(  v_{m},\tilde{A}\left(  g\right)  \right)  =A+Ad_{g^{-1}}%
\Gamma\left(  v_{m}\right)
\]
and so $\left(  v_{m},\tilde{A}\left(  g\right)  \right)  $ is horizontal iff
$A=-Ad_{g^{-1}}\Gamma\left(  v_{m}\right)  .$ Therefore it follows that
\begin{align*}
\mathcal{H}_{u}^{\omega}  &  =\left\{  \left(  v_{m},-L_{g\ast}\cdot
Ad_{g^{-1}}\Gamma\left(  v_{m}\right)  \right)  :v_{m}\in T_{m}M\right\} \\
&  =\left\{  \left(  v_{m},-R_{g\ast}\Gamma\left(  v_{m}\right)  \right)
:v_{m}\in T_{m}M\right\}  .
\end{align*}
From these descriptions of $\mathcal{V}_{u}$ and $\mathcal{H}_{u}^{\omega}$ it
is easily seen that $\mathcal{V}_{u}\cap\mathcal{H}_{u}^{\omega}=\left\{
0\right\}  $ and $T_{u}P=\mathcal{V}_{u}+\mathcal{H}_{u}^{\omega}$ and hence
item 2. is proved. Item 3. is also now trivial to check since it is clear that
$\pi_{\ast}\left(  v_{m},-R_{g\ast}\Gamma\left(  v_{m}\right)  \right)
=v_{m}$ defines an isomorphism from $\mathcal{H}_{u}^{\omega}$ onto $T_{m}M.$
\end{proof}

\begin{notation}
\label{not.7.9}If $G\rightarrow P\overset{\pi}{\rightarrow}M$ be a principal
bundle with connection $\omega,$ let $\mathcal{B}_{u}^{\omega}:T_{\pi\left(
u\right)  }M\rightarrow\mathcal{H}_{u}^{\omega}$ be the inverse of $\pi_{\ast
u}|_{\mathcal{H}_{u}^{\omega}}.$ Thus if $v\in T_{\pi\left(  u\right)  }M,$
then $\xi=\mathcal{B}_{u}^{\omega}v$ iff $\pi_{\ast}\xi=v$ and $\omega\left(
\xi\right)  =0.$ We refer to $\mathcal{B}_{u}^{\omega}v$ as the
\textbf{horizontal lift }of $v$ to $T_{u}P.$
\end{notation}

\section{Some Auxiliary Results\label{sec.8}}

This section gathers some further results which are needed below to discuss
rough horizontal lifts.

\begin{lemma}
\label{lem.8.1}\marginpar{Should we just state this.\ Haven't thought about it
completely but seems like you could just define a chart on the product as the
product chart from which it should be obvious}Let $M$ and $N$ are two
manifolds and $\pi^{M}$ and $\pi^{N}$ be the projection maps from $M\times N$
to $M$ and $N$ respectively. The map,%
\begin{equation}
\left(  \pi_{\ast}^{M},\pi_{\ast}^{N}\right)  :CRP_{\mathbf{X}}\left(  M\times
N\right)  \rightarrow CRP_{\mathbf{X}}\left(  M\right)  \times CRP_{\mathbf{X}%
}\left(  N\right)  , \label{equ.8.1}%
\end{equation}
is a bijection. \marginpar{I think this Lemma should be proved with general
gauges. I think the notion of a product gauge should be formalized as well.
The covariant derivative associated to a product parallelism is the
\textquotedblleft sum\textquotedblright\ of covariant derivatives.}
\end{lemma}

\begin{proof}
Let $\mathbf{u}=\left(  u,u^{\dag}\right)  \in CRP_{\mathbf{X}}\left(  M\times
N\right)  ,$ $\mathbf{y}=\left(  y,y^{\dag}\right)  :=\pi_{\ast}^{M}\left(
\mathbf{u}\right)  \in CRP_{\mathbf{X}}\left(  M\right)  $ and $\mathbf{z=}%
\left(  z,z^{\dag}\right)  :=\pi_{\ast}^{N}\left(  \mathbf{u}\right)
\mathbf{\in}CRP_{\mathbf{X}}\left(  N\right)  .$ Then $u_{t}=\left(
y_{t},z_{t}\right)  $ is uniquely determined by $y$ and $z.$ Similarly, since
\[
\left(  \pi_{\ast}^{M},\pi_{\ast}^{N}\right)  :T_{u_{t}}\left(  M\times
N\right)  \longrightarrow T_{y_{t}}M\times T_{z_{t}}N
\]
is an isomorphism it follows that $u_{t}^{\dag}:W\rightarrow T_{u_{t}}\left(
M\times N\right)  $ is uniquely determined by $\left(  \pi_{\ast}^{M}%
,\pi_{\ast}^{N}\right)  u_{t}^{\dag}=\left(  y_{t}^{\dag},z_{t}^{\dag}\right)
.$ So the only real content of the Lemma that the map in Eq. (\ref{equ.8.1})
is surjective.

So now suppose that $\mathbf{y}=\left(  y,y^{\dag}\right)  \mathbf{\in
}CRP_{\mathbf{X}}\left(  M\right)  $ and $\mathbf{z=}\left(  z,z^{\dag
}\right)  \mathbf{\in}CRP_{\mathbf{X}}\left(  N\right)  $ are given and define
$\mathbf{u}=\left(  u,u^{\dag}\right)  $ so that $u=\left(  y,z\right)  $ and
$\left(  \pi_{\ast}^{M},\pi_{\ast}^{N}\right)  u^{\dag}=\left(  y_{t}^{\dag
},z_{t}^{\dag}\right)  .$ To finish the proof we need only verify that
$\mathbf{u\in}CRP_{\mathbf{X}}\left(  M\times N\right)  .$ To this end suppose
that $\psi^{M}$ and $\psi^{N}$ are logarithms on $M$ and $N$ respectively. We
then let
\[
\psi\left(  \left(  m,n\right)  ,\left(  \tilde{m},\tilde{n}\right)  \right)
:=\left(  \psi^{M}\left(  m,\tilde{m}\right)  ,\psi^{N}\left(  n,\tilde
{n}\right)  \right)  \in T_{\left(  m,n\right)  }\left[  M\times N\right]
\]
where we are now using $\left(  \pi_{\ast}^{M},\pi_{\ast}^{N}\right)  $ to
identify $T\left[  M\times N\right]  $ with $TM\times TN.$ The map $\psi$ is
now a logarithm on $M\times N.$ The parallelism, $U^{\psi},$ associated to
this logarithm satisfies,
\[
U^{\psi}\left(  \left(  m,n\right)  ,\left(  \tilde{m},\tilde{n}\right)
\right)  =U^{\psi^{M}}\left(  m,\tilde{m}\right)  \times U^{\psi^{N}}\left(
n,\tilde{n}\right)  .
\]
Therefore we have
\[
\psi\left(  \left(  y_{s},z_{s}\right)  ,\left(  y_{t},z_{t}\right)  \right)
=\left(  \psi^{M}\left(  y_{s},y_{t}\right)  ,\psi^{N}\left(  z_{s}%
,z_{t}\right)  \right)  \underset{^{2}}{\approx}\left(  y_{s}^{\dag}%
x_{s,t},z_{s}^{\dag}x_{s,t}\right)  =u_{s}^{\dag}x_{s,t}%
\]
and
\begin{align*}
U^{\psi}\left(  \left(  y_{s},z_{s}\right)  ,\left(  y_{t},z_{t}\right)
\right)  u_{t}^{\dag}  &  =\left(  U^{\psi^{M}}\left(  y_{s},y_{t}\right)
\times U^{\psi^{N}}\left(  z_{s},z_{t}\right)  \right)  \left(  y_{t}^{\dag
},z_{t}^{\dag}\right) \\
&  =\left(  U^{\psi^{M}}\left(  y_{s},y_{t}\right)  y_{t}^{\dag},U^{\psi^{N}%
}\left(  z_{s},z_{t}\right)  z_{t}^{\dag}\right) \\
&  \underset{^{1}}{\approx}\left(  y_{s}^{\dag},z_{s}^{\dag}\right)
=u_{s}^{\dag}%
\end{align*}
from which it follows that $\mathbf{u\in}CRP_{\mathbf{X}}\left(  M\times
N\right)  .$
\end{proof}

\begin{theorem}
\label{the.8.2}Let $G$ be a Lie group and set $F_{A}\left(  g\right)
:=-A\cdot g=-R_{g\ast}A$ for all $A\in\mathfrak{g.}$ Let $\mathbf{z\in
}CRP_{\mathbf{X}}\left(  \mathfrak{g}\right)  $ be defined on $\left[
0,T\right]  $. There exists a unique global solution $\mathbf{g}\in
CRP_{\mathbf{X}}\left(  G\right)  $ solving
\begin{equation}
d\mathbf{g}_{t}=F_{d\mathbf{z}_{t}}\left(  \mathbf{g}_{t}\right)
\label{equ.8.2}%
\end{equation}
with initial condition
\[
g_{0}=e\in G.
\]
Here we are abusing notation and mean $d\mathbf{Z}_{t}$ in Eq. (\ref{equ.8.2})
where $\mathbf{Z}$ is the rough path associated to the controlled rough path
$\mathbf{z\in}CRP_{\mathbf{X}}\left(  \mathfrak{g}\right)  $.
\end{theorem}

\begin{proof}
This proof is adapted from Theorem 4.20 in \cite{CDL13}. By Theorem
\ref{the.5.5}, we know we only have to rule out the case that $\mathbf{g}$
exists maximally on $[0,\tau)$ for some $\tau\leq T$ where $\left\{
g_{t}:0\leq t<\tau\right\}  $ does not have compact closure.

By Lemma \ref{lem.6.8}, there exists an $\epsilon>0$ such that for any
$t_{0}\in\left[  0,T\right]  $ there is a solution $\mathbf{h}$ defined on
$\left[  t_{0},\left(  t_{0}+\epsilon\right)  \wedge T\right]  $ solving
$d\mathbf{h}=F_{d\mathbf{z}_{t}}\left(  \mathbf{h}_{t}\right)  $ with initial
condition $h_{t_{0}}=e$. If right multiplication map is given by $R_{g_{2}%
}\left(  g_{1}\right)  =g_{1}g_{2}$, then it is easy to see that $F$ is
$R_{g}-$related to itself so that $\mathbf{k}=\left(  R_{\bar{g}}\right)
_{\ast}\mathbf{h}$ on $\left[  t_{0},\left(  t_{0}+\epsilon\right)  \wedge
T\right]  $ solves
\[
d\mathbf{k}_{t}=F_{d\mathbf{z}_{t}}\left(  \mathbf{k}_{t}\right)
\]
with initial condition
\[
k_{t_{0}}=\bar{g}.
\]
Choosing $t_{0}\in\left(  0\vee\left(  \tau-\epsilon/2\right)  ,\tau\right)
$, we can concatenate $\mathbf{g}$ and $\mathbf{k}$ where we start
$\mathbf{k}$ at $t_{0}$ with initial condition $k_{t_{0}}=g\left(
t_{0}\right)  $. This gives us a solution defined on a strictly larger
interval than $[0,\tau)$ and thus shows that the second case mentioned above
can not occur.
\end{proof}

\begin{theorem}
\label{the.8.3}Let $G$ be a Lie group and $\theta:=\theta_{r}\in\Omega
^{1}\left(  G,\mathfrak{g}\right)  $ be the right -- Maurer-Cartan form on
$G,$ i.e.
\[
\theta\left(  \xi_{g}\right)  =\xi_{g}\cdot g^{-1}:=R_{g^{-1}\ast}\xi_{g}.
\]
Further set $F_{A}\left(  g\right)  :=-A\cdot g=-R_{g\ast}A$ for all
$A\in\mathfrak{g}.$ If $\mathbf{z:=}\left(  z,z^{\dag}\right)  \in
CRP_{\mathbf{X}}\left(  \mathfrak{g}\right)  $ is a $\mathfrak{g}$ -- valued
rough path, then $\mathbf{g}=\left(  g,g^{\dag}\right)  \in CRP_{\mathbf{X}%
}\left(  G\right)  $ satisfies
\begin{equation}
d\mathbf{g}=F_{d\mathbf{Z}}\left(  g\right)  \label{equ.8.3}%
\end{equation}
iff%
\begin{equation}
\int\theta\left(  d\mathbf{g}\right)  =-\mathbf{z}. \label{equ.8.4}%
\end{equation}

\end{theorem}

\begin{proof}
First we note that
\[
\theta\left(  F_{b}\left(  g\right)  \right)  =-b\text{\quad for all\quad}%
b\in\mathfrak{g},g\in G
\]
and%
\[
F_{\theta\left(  \xi_{g}\right)  }\left(  g\right)  =-\xi_{g}\quad\text{for
all}\quad\xi_{g}\in T_{g}G
\]

Assume Eq. (\ref{equ.8.3}) holds. Using Theorem \ref{the.5.3} we learn that
\[
\left[  \int\theta\left(  d\mathbf{g}\right)  \right]  _{s,t}^{1}%
\underset{^{3}}{\approx}\theta\left(  F_{z_{s,t}}\left(  g_{s}\right)
\right)  +F_{a}\left(  g_{s}\right)  \theta\left(  F_{b}\right)  |_{a\otimes
b=z_{s}^{\dag}\otimes z_{s}^{\dag}\mathbb{X}_{s,t}}=-z_{s,t}%
\]
wherein we have used $\theta\left(  F_{b}\left(  g\right)  \right)  =-b$ for
all $b\in\mathfrak{g}$, $g\in G$ and therefore $\left[  F_{a}\left(
g_{s}\right)  \right]  \theta\left(  F_{b}\right)  =0.$ Moreover we have
\[
\left[  \int\theta\left(  d\mathbf{g}\right)  \right]  _{s}^{^{\dag}}%
=\theta\left(  F_{z_{s}^{\dag}\left(  \cdot\right)  }\left(  g_{s}\right)
\right)  =-z_{s}^{\dag}%
\]
and hence we have shown that Eq. (\ref{equ.8.4}) holds.

Conversely, let us suppose that Eq. (\ref{equ.8.4}) holds and $\alpha\in
\Omega^{1}\left(  G,V\right)  $ is a smooth one-form on $G.$ We will show that%
\begin{equation}
\int_{s}^{t}\alpha\left(  d\mathbf{g}\right)  \underset{^{3}}{\approx}%
\alpha\left(  F_{z_{s,t}}\left(  g_{s}\right)  \right)  +F_{A}\left(
g_{s}\right)  \alpha\left(  F_{B}\right)  |_{A\otimes B=z_{s}^{\dag}\otimes
z_{s}^{\dag}\mathbb{X}_{s,t}} \label{equ.8.5}%
\end{equation}
and%
\begin{equation}
g_{s}^{\dag}=F_{z_{s}^{\dag}\left(  \cdot\right)  }\left(  g_{s}\right)
\label{equ.8.6}%
\end{equation}
To prove Eq. (\ref{equ.8.6}), we note that
\[
g_{s}^{\dag}=-F_{\theta\left(  g_{s}^{\dag}\left(  \cdot\right)  \right)
}\left(  g_{s}\right)  =F_{z_{s}^{\dag}\left(  \cdot\right)  }\left(
g_{s}\right)
\]
where the second equality follows from the fact that%
\[
\theta\left(  g_{s}^{\dag}\left(  \cdot\right)  \right)  =-z_{s}^{\dag}%
\]
To prove Eq. (\ref{equ.8.5}), we will first write $\alpha$ as the composition
of two functions. Given $\xi_{g}\in T_{g}G$ we have $\xi_{g}=\theta\left(
\xi_{g}\right)  \cdot g=R_{g\ast}\theta\left(  \xi_{g}\right)  $ and therefore%
\[
\alpha\left(  \xi_{g}\right)  =\alpha\left(  R_{g\ast}\theta\left(  \xi
_{g}\right)  \right)  =\left(  R_{g}^{\ast}\alpha\right)  \theta\left(
\xi_{g}\right)  .
\]
This shows $\alpha=K\theta$ where $K:G\rightarrow\operatorname*{End}\left(
\mathfrak{g},V\right)  $ is the function defined by $K\left(  g\right)
=R_{g}^{\ast}\alpha=\alpha_{g}\circ R_{g\ast}.$ Applying Theorem \ref{the.4.9}
with $\mathbf{y}$ replaced by $\mathbf{g}$ shows,%
\[
\int\alpha\left(  d\mathbf{g}\right)  =\int\left(  K\theta\right)  \left(
d\mathbf{g}\right)  =-\int K_{\ast}\left(  \mathbf{g}\right)  d\mathbf{z.}%
\]
So, according to Theorem \ref{the.5.3}, it only remains to show%
\begin{equation}
-\left[  \int K_{\ast}\left(  \mathbf{g}\right)  d\mathbf{z}\right]
_{s,t}^{1}\underset{^{3}}{\approx}\alpha\left(  F_{z_{s,t}}\left(
g_{s}\right)  \right)  +F_{A}\left(  g_{s}\right)  \alpha\left(  F_{B}\right)
|_{A\otimes B=z_{s}^{\dag}\otimes z_{s}^{\dag}\mathbb{X}_{s,t}}.
\label{equ.8.7}%
\end{equation}

In order to work out the left side of Eq. (\ref{equ.8.7}) we need to expand
out $K_{\ast g_{s}}g_{s}^{\dag}$ which we now do. If $\xi_{g}=\dot{g}\left(
0\right)  \in T_{g}G$ and $A\in\mathfrak{g,}$ then
\begin{align*}
\left(  K_{\ast}\xi_{g}\right)  A  &  =\frac{d}{dt}|_{0}K\left(  g\left(
t\right)  \right)  A=\frac{d}{dt}|_{0}\left(  R_{g\left(  t\right)  }^{\ast
}\alpha\right)  A\\
&  =\frac{d}{dt}|_{0}\alpha\left(  R_{g\left(  t\right)  \ast}A\right)
=-\frac{d}{dt}|_{0}\alpha\left(  F_{A}\left(  g\left(  t\right)  \right)
\right) \\
&  =-\xi_{g}\alpha\left(  F_{A}\right)  .
\end{align*}
Therefore for $A,B\in\mathfrak{g},$%
\begin{equation}
\left(  K_{\ast g_{s}}g_{s}^{\dag}A\right)  B=-\left(  g_{s}^{\dag}A\right)
\alpha\left(  F_{B}\right)  . \label{equ.8.8}%
\end{equation}
As mentioned above, we have%
\[
g_{s}^{\dag}A=F_{z_{s}^{\dag}A}\left(  g_{s}\right)
\]
which combined with Eq. (\ref{equ.8.8}) shows
\[
\left(  K_{\ast g_{s}}g_{s}^{\dag}A\right)  B=-F_{z_{s}^{\dag}A}\left(
g_{s}\right)  \alpha\left(  F_{B}\right)  .
\]

Using these result it now follows that%
\begin{align*}
&  -\left[  \int K_{\ast}\left(  \mathbf{g}\right)  d\mathbf{z}\right]
_{s,t}^{1}\underset{^{3}}{\approx}-K\left(  g_{s}\right)  z_{s,t}-\left(
K_{\ast g_{s}}g_{s}^{\dag}\right)  \left(  z_{s}^{\dag}\otimes z_{s}^{\dag
}\right)  \mathbb{X}_{s,t}\\
&  \quad=-\alpha_{g_{s}}R_{g_{s}\ast}z_{s,t}+F_{z_{s}^{\dag}A}\left(
g_{s}\right)  \alpha\left(  F_{z_{s}^{\dag}B}\right)  |_{A\otimes
B=\mathbb{X}_{s,t}}\\
&  \quad=\alpha\left(  F_{z_{s,t}}\left(  g_{s}\right)  \right)  +F_{A}\left(
g_{s}\right)  \alpha\left(  F_{B}\right)  |_{A\otimes B=z_{s}^{\dag}\otimes
z_{s}^{\dag}\mathbb{X}_{s,t}}%
\end{align*}
which is the desired result.
\end{proof}

\section{Controlled Rough Path Horizontal Lifts\label{sec.9}}

Here we show that rough horizontals always exist and are unique. Given all of
the preparation, this section is now fairly straightforward and clean.

\begin{definition}
[Rough Horiontal Lifts]\label{def.9.1}Let $\left(  G\rightarrow P\overset{\pi
}{\rightarrow}M,\omega\right)  $ be a principal bundle with connection,
$\omega,$ and $\mathbf{y}=\left(  y,y^{\dag}\right)  \in CRP_{\mathbf{X}%
}\left(  M\right)  .$ A controlled rough path $\mathbf{u}=\left(  u,u^{\dag
}\right)  \in CRP_{\mathbf{X}}\left(  P\right)  $ is said to be a\textbf{
horizontal lift }of $\mathbf{y}$ if
\begin{equation}
\pi_{\ast}\left(  \mathbf{u}\right)  =\mathbf{y}\text{\quad and\quad}%
\int\omega\left(  d\mathbf{u}\right)  \equiv\mathbf{0} \label{equ.9.1}%
\end{equation}
where $\mathbf{0}$ is the controlled rough path whose path and derivative
process are identically $0$. We will write $\mathbf{u}\in CRP_{\mathbf{y}}%
^{H}\left(  P\right)  $ for such lifts.
\end{definition}

\begin{remark}
\label{rem.9.2}One might ask that Definition \ref{def.9.1} should include the
requirement that%
\[
u^{\dag}=\mathcal{B}_{u}^{\omega}y^{\dag}%
\]
or more explicitly $u_{t}^{\dag}w=\mathcal{B}_{u_{t}}^{\omega}y_{t}^{\dag}w$
for all $t$ and $w\in W.$ However, this is redundant as $\pi_{\ast}\left(
\mathbf{u}\right)  =\mathbf{y}$ implies that%
\begin{equation}
\pi_{\ast}u_{s}^{\dag}=y_{s}^{\dag} \label{equ.9.2}%
\end{equation}
while $\int\omega\left(  d\mathbf{u}\right)  \equiv0$ implies that
\begin{equation}
\omega\left(  u_{s}^{\dag}\right)  =0. \label{equ.9.3}%
\end{equation}
However, Eq. (\ref{equ.9.2}) and Eq. (\ref{equ.9.3}) together imply that
$u^{\dag}=\mathcal{B}_{u}^{\omega}y^{\dag}.$
\end{remark}

\begin{proposition}
\label{pro.9.3}Suppose $G\rightarrow\tilde{P}\overset{\tilde{\pi}%
}{\rightarrow}\tilde{M}$ is another principal bundle with connection
$\tilde{\omega}.$ Further suppose that $f:M\rightarrow\tilde{M}$ is a smooth
map, $F:P\rightarrow\tilde{P}$ is a bundle map above $f,$ and $\omega=F^{\ast
}\tilde{\omega}.$ If $\mathbf{u}=\left(  u,u^{\dag}\right)  \in
CRP_{\mathbf{X}}\left(  P\right)  $ a horizontal lift of $\mathbf{y\in}\in
CRP_{\mathbf{X}}\left(  M\right)  $, then $\mathbf{v}:=F_{\ast}\left(
\mathbf{u}\right)  \in CRP_{\mathbf{X}}\left(  \tilde{P}\right)  $ is a a
horizontal lift of $F_{\ast}\left(  \mathbf{y}\right)  \mathbf{\in}\in
CRP_{\mathbf{X}}\left(  \tilde{M}\right)  .$
\end{proposition}

\begin{proof}
By Remark \ref{rem.4.12} and the fact that $\mathbf{u}$ is a lift of
$\mathbf{y}$ we have
\[
\tilde{\pi}_{\ast}F_{\ast}\left(  \mathbf{u}\right)  =\left(  \tilde{\pi}\circ
F\right)  _{\ast}\left(  \mathbf{u}\right)  =\left(  f\circ\pi\right)  _{\ast
}\left(  \mathbf{u}\right)  =f_{\ast}\pi_{\ast}\left(  \mathbf{u}\right)
=f_{\ast}\left(  \mathbf{y}\right)
\]
and hence $F_{\ast}\left(  \mathbf{u}\right)  $ is a lift of $f_{\ast}\left(
\mathbf{y}\right)  .$ Secondly we observe from the push-me pull-me Theorem
\ref{the.4.15} that%
\[
\int\tilde{\omega}\left(  d\mathbf{v}\right)  =\int\tilde{\omega}\left(
dF_{\ast}\mathbf{u}\right)  =\int\left(  F^{\ast}\tilde{\omega}\right)
\left(  d\mathbf{u}\right)  =\int\omega\left(  d\mathbf{u}\right)
\equiv\mathbf{0}.
\]

\end{proof}

\begin{theorem}
[Existence of Horizontal Lifts]\label{the.9.4}Let $G\rightarrow P\overset{\pi
}{\rightarrow}M$ be a principal bundle with connection $\omega,$
$\mathbf{y}=\left(  y,y^{\dag}\right)  \in CRP_{\mathbf{X}}\left(  M\right)
,$ and $\bar{u}_{0}\in P_{y_{0}}.$ Then there exists a unique horizontal lift
$\mathbf{u}=\left(  u,u^{\dag}\right)  \in CRP_{\mathbf{X}}\left(  P\right)  $
above $\mathbf{y}$ such that $u_{0}=\bar{u}_{0}.$
\end{theorem}

\begin{proof}
The proof follows the lines of the smooth case. Because of Proposition
\ref{pro.9.3} and simple patching arguments we may reduce to considering the
case that $P=M\times G$ is a trivial bundle where $M$ is now an open subset of
$\mathbb{R}^{d}.$ In light of Lemma \ref{lem.8.1}, the desired horizontal lift
may be expressed in the form, $\mathbf{u}_{s}=\left(  \mathbf{y}%
_{s},\mathbf{g}_{s}\right)  $ for some $\mathbf{g}=\left(  g,g^{\dag}\right)
\mathbf{\in}CRP_{\mathbf{X}}\left(  G\right)  $ which is to be determined. We
now find the equations that $\mathbf{g}$ has to solve in order for\textbf{
}$\mathbf{u}$ to be horizontal.

Let $\pi_{G}:P=M\times G\rightarrow G$ and $\pi_{M}:P\rightarrow M$ be the
natural projection maps. Making use of Theorem \ref{the.4.9}, we may deduce
that
\begin{equation}
\int\omega\left(  d\mathbf{u}\right)  \equiv\mathbf{0\iff}\int\left(
Ad_{\pi_{G}}\omega\right)  \left(  d\mathbf{u}\right)  \equiv0.
\label{equ.9.4}%
\end{equation}
where by $Ad_{\pi_{G}}\omega$, we mean the one-form given by%
\[
Ad_{\pi_{G}}\omega\left(  v_{m},\xi_{g}\right)  =g\left[  \omega\left(
v_{m},\xi\,g\right)  \right]  g^{-1}.
\]
Indeed if $\int\omega\left(  d\mathbf{u}\right)  \equiv\mathbf{0}$, then
\[
\int Ad_{\pi_{G}}\omega\left(  d\mathbf{u}\right)  =\int\left(  Ad_{\pi_{G}%
}\right)  _{\ast}\left(  \mathbf{u}\right)  \left(  \int\omega\left(
d\mathbf{u}\right)  \right)  \equiv0
\]
and if $\int\left(  Ad_{\pi_{G}}\omega\right)  \left(  d\mathbf{u}\right)
\equiv0$
\begin{align*}
\int\omega\left(  d\mathbf{u}\right)   &  =\int\left(  Ad_{\pi_{G}^{-1}%
}Ad_{\pi_{G}}\omega\right)  \left(  d\mathbf{u}\right) \\
&  =\int Ad_{\pi_{G}^{-1}}\left(  \mathbf{u}\right)  \left(  \int Ad_{\pi_{G}%
}\omega\left(  d\mathbf{u}\right)  \right)  \equiv0
\end{align*}
On the other hand
\begin{align*}
\left(  Ad_{\pi_{G}}\omega\right)  \left(  \left(  v_{m},\xi_{g}\right)
\right)   &  =Ad_{g}\left[  \theta_{l}\left(  \xi_{g}\right)  +Ad_{g^{-1}%
}\Gamma\left(  v_{m}\right)  \right] \\
&  =\theta_{r}\left(  \xi_{g}\right)  +\Gamma\left(  v_{m}\right)
\end{align*}
from which we deduce
\[
Ad_{\pi_{G}}\omega=\pi_{G}^{\ast}\theta_{r}+\pi_{M}^{\ast}\Gamma.
\]
An application of the push-me pull-me Theorem \ref{the.4.15} then shows
\begin{align*}
\int\left(  Ad_{\pi_{G}}\omega\right)  \left(  d\mathbf{u}\right)   &
=\int\left(  \pi_{G}^{\ast}\theta_{r}\right)  \left(  d\mathbf{u}\right)
+\int\left(  \pi_{M}^{\ast}\Gamma\right)  \left(  d\mathbf{u}\right) \\
&  =\int\theta_{r}\left(  d\mathbf{g}\right)  +\int\Gamma\left(
d\mathbf{y}\right)  .
\end{align*}
Combining these statements shows $\int\omega\left(  d\mathbf{u}\right)
\equiv\mathbf{0}$ is equivalent to $\mathbf{u}_{s}=\left(  \mathbf{y}%
_{s},\mathbf{g}_{s}\right)  $ satisfying
\[
\int\theta_{r}\left(  d\mathbf{g}\right)  =-\int\Gamma\left(  d\mathbf{y}%
\right)
\]
which according to Theorem \ref{the.8.3} is equivalent to $\mathbf{g}$
satisfying
\begin{equation}
d\mathbf{g}=F_{d\mathbf{z}}\left(  g\right)  \label{equ.9.5}%
\end{equation}
where $F_{A}\left(  g\right)  =-R_{g\ast}A$ for all $A\in\mathfrak{g}$ and
$\mathbf{z}:=\int\Gamma\left(  d\mathbf{y}\right)  .$ It is now know by
Theorem \ref{the.8.2} that (given initial conditions for $g_{0})$ that Eq.
(\ref{equ.9.5}) has global unique solutions.
\end{proof}

We can now specialize the above to define parallel translation.

\begin{definition}
\label{def.9.5}Let $GL\left(  M\right)  $ be the frame bundle above $M$
with$,$ let $\mathbf{y}=\left(  y,y^{\dag}\right)  \in CRP_{\mathbf{X}}\left(
M\right)  $, and let $\nabla$ be a covariant derivative on $M$. Let $\bar
{u}_{0}\in GL\left(  M\right)  _{y_{0}}$. \textbf{Parallel translation along}
$\mathbf{y}$ \textbf{starting at} $\bar{u}_{0}$ is the unique element
$\mathbf{u}\in CRP_{\mathbf{y}}^{H,\nabla}\left(  GL\left(  M\right)  \right)
$ such that

\begin{enumerate}
\item $\pi_{\ast}\mathbf{u}=\mathbf{y}$

\item $\int\omega^{\nabla}\left(  d\mathbf{u}\right)  \equiv\mathbf{0}$
\end{enumerate}

where $\omega^{\nabla}$ is the connection form associated to $\nabla$.
\end{definition}

\section{Rough Rolling and Unrolling}

We will first introduce some terminology which will be useful for this section.

\begin{definition}
\label{def.10.1}A manifold $M$ is \textbf{parallelizable }\ if there exists a
linear map, $Y:\mathbb{R}^{d}\rightarrow\Gamma\left(  TM\right)  $ such that%
\[
Y_{\left(  \cdot\right)  }\left(  m\right)  :a\rightarrow Y_{a}\left(
m\right)
\]
is an isomorphism for all $m\in M.$

For every $Y$ that parallelizes $M$, there exists an $\mathbb{R}^{d}-$ valued
one-form given by%
\[
\theta^{Y}\left(  v_{m}\right)  :=\left[  Y_{\left(  \cdot\right)  }\left(
m\right)  \right]  ^{-1}v_{m}.
\]

\end{definition}

\begin{example}
\label{exa.10.3}Let $M$ be a manifold and $GL\left(  M\right)  $ be the
associated frame bundle. Let $\nabla$ be a covariant derivative on $M$. Then
$GL\left(  M\right)  $ is parallelizable with $T_{g}GL\left(  M\right)
\cong\mathbb{R}^{d}\times gl\left(  d\right)  $ where $gl\left(  d\right)  $
are the $d\times d$ matrices. One choice of $Y^{GL\left(  M\right)  }$ is this
case is defined by%
\[
Y^{GL\left(  M\right)  }\left(  g\right)  \left(  a,A\right)  :=B_{a}^{\nabla
}\left(  g\right)  +\tilde{A}\left(  g\right)
\]
where $B_{a}^{\nabla}$ is the horizontal vector field defined by
\[
B_{a}^{\nabla}\left(  g\right)  =\dot{u}\left(  0\right)  \text{\quad
where\quad}u\left(  t\right)  :=//_{t}\left(  \exp^{\nabla}\left(  \left(
\cdot\right)  ga\right)  \right)  g
\]
and where $\tilde{A}$ is the vertical vector field given by
\[
\tilde{A}\left(  g\right)  :=\frac{d}{dt}|_{0}ge^{tA}=g\cdot A\text{.}%
\]
Moreover, we have the associated $\mathbb{R}^{d}\times gl\left(  d\right)
-$valued one-form $\theta^{Y^{GL\left(  M\right)  }}:=\left(  \hat{\theta
},\omega^{\nabla}\right)  $ which is constructed such that
\[
\left(  \hat{\theta},\omega^{\nabla}\right)  \left(  B_{a}^{\nabla}\left(
g\right)  +\tilde{A}\left(  g\right)  \right)  =\left(  a,A\right)
\]
for all $\left(  a,A\right)  \in\mathbb{R}^{d}\times gl\left(  d\right)  $ and
$g\in GL\left(  M\right)  $ where $\hat{\theta}$ is the canonical one-form
given by%
\[
\hat{\theta}\left(  \dot{u}_{0}\right)  =u_{0}^{-1}\left(  \pi_{\ast}\dot
{u}_{0}\right)  .
\]

\end{example}

\subsection{Rolling and Unrolling of paths}

Here we have our main theorem about rolling and unrolling of paths.

\begin{theorem}
\label{the.10.4}Let $M$ be a parallelizable manifold (by $Y$) and let
$\theta^{Y}$ the associated one-form. Fix some point $o\in M$. Then every
$\mathbf{y}\in CRP_{\mathbf{X}}\left(  M\right)  $ with $y_{0}=o$ on the
interval $\left[  0,T\right]  $ determines a path $\mathbf{z}\in
CRP_{\mathbf{X}}\left(  \mathbb{R}^{d}\right)  $ with $z_{0}=0$ on the
interval $\left[  0,T\right]  $ by the map%
\[
\mathbf{y}\longrightarrow\mathbf{z}:=\int\theta^{Y}\left(  d\mathbf{y}\right)
.
\]
such that
\[
d\mathbf{y}_{t}=Y_{d\mathbf{z}_{t}}\left(  \mathbf{y}_{t}\right)
\quad\text{and\quad}y_{0}=o.
\]

Alternatively, suppose that $\mathbf{z}\in CRP_{\mathbf{X}}\left(
\mathbb{R}^{d}\right)  $ with $z_{0}=0$ on the interval $\left[  0,T\right]  $
and let $\mathbf{y}$ be the solution to%
\[
d\mathbf{y}_{t}=Y_{d\mathbf{z}_{t}}\left(  \mathbf{y}_{t}\right)
\]
with initial condition $y_{0}=o$ with possible explosion time $\tau$. Then
over $[0,\tau)$ we have%
\[
\int\theta^{Y}\left(  d\mathbf{y}\right)  =\mathbf{z}.
\]

\end{theorem}

\begin{proof}
This proof follows nearly word for word the proof of Theorem \ref{the.8.3} if
we replace $F$ with $-Y$. Though this earlier proof was specialized to the Lie
group case (which could not explode in finite time), the only fact we used
about $F$ and $\theta$ was how they interacted with each other.
\end{proof}

We can now use Theorem \ref{the.10.4} with Example \ref{exa.10.3} to
specialize to a correspondence of paths of $GL\left(  M\right)  $ and those of
$\mathbb{R}^{d}\times gl\left(  d\right)  $.

\begin{theorem}
\label{the.10.5}Fix some point $o\in M$ and $g_{o}$ a frame at $o$. Every
$\mathbf{u}\in CRP_{\mathbf{X}}\left(  GL\left(  M\right)  \right)  $ with
$u_{0}=g_{o}$ on the interval $\left[  0,T\right]  $ gives rise to a
$\mathbf{z}\in CRP_{\mathbf{X}}\left(  \mathbb{R}^{d}\times gl\left(
d\right)  \right)  $ with $z_{0}=\left(  0,0\right)  $ on the interval
$\left[  0,T\right]  $ via the map%
\[
\mathbf{u}\longrightarrow\mathbf{z}:=\int\theta^{Y^{GL\left(  M\right)  }%
}\left(  d\mathbf{u}\right)  .
\]
such that%
\[
d\mathbf{u}_{t}=Y_{d\mathbf{z}_{t}}^{GL\left(  M\right)  }\left(
\mathbf{u}_{t}\right)  \quad\text{and\quad}u_{0}=g_{o}%
\]

Alternatively, every $\mathbf{z}\in CRP_{\mathbf{X}}\left(  \mathbb{R}%
^{d}\times gl\left(  d\right)  \right)  $ with $z_{0}=\left(  0,0\right)  $ on
the interval $\left[  0,T\right]  $ gives rise to $\mathbf{u}\in
CRP_{\mathbf{X}}\left(  GL\left(  M\right)  \right)  $ with $u_{0}=g_{o}$ on
with possible explosion time $\tau$ via the differential equation%
\[
d\mathbf{u}_{t}=Y_{d\mathbf{z}_{t}}^{GL\left(  M\right)  }\left(
\mathbf{u}_{t}\right)  \quad\text{and\quad}u_{0}=g_{o}.
\]
In this case, over $[0,\tau)$, we have
\[
\mathbf{z}:=\int\theta^{Y^{GL\left(  M\right)  }}\left(  d\mathbf{u}\right)
.
\]

\end{theorem}

We can now use Theorem \ref{the.10.5} to give an alternative characterization
of parallel translation.

\begin{theorem}
\label{the.par}Let $\mathbf{u}\in CRP_{\mathbf{X}}\left(  GL\left(  M\right)
\right)  $ such that $\pi_{\ast}\mathbf{u}=\mathbf{y}$. Then $\mathbf{u}$ is
an element of $CRP_{\mathbf{y}}^{H,\nabla}\left(  GL\left(  M\right)  \right)
$ if and only if there exists an $\mathbb{R}^{d}$-valued controlled rough path
$\mathbf{a}=\left(  a,a^{\dag}\right)  $ such that\footnote{We are abusing
notation slightly as we still need $u_{s}^{\dag}=B_{a_{s}^{\dag}\left(
\cdot\right)  }^{\nabla}\left(  u_{s}\right)  $ so that $\mathbf{u}$ is indeed
a rough path controlled by $\mathbf{X}$.}
\[
d\mathbf{u}_{t}=B_{d\mathbf{a}_{t}}^{\nabla}\left(  \mathbf{u}_{t}\right)
\]
where $B^{\nabla}$ are the horizontal vector fields introduced in Example
\ref{exa.10.3}.
\end{theorem}

\begin{proof}
If $d\mathbf{u}_{t}=B_{d\mathbf{a}_{t}}^{\nabla}\left(  \mathbf{u}_{t}\right)
$, then by Theorem \ref{the.5.3} we have
\begin{align*}
\int_{s}^{t}\omega^{\nabla}\left(  d\mathbf{u}\right)   &  \underset{^{3}%
}{\approx}\omega^{\nabla}\left(  B_{a_{s,t}}^{\nabla}\left(  u_{s}\right)
\right)  +B_{a_{s}^{\dag}w}\left(  u_{s}\right)  \left[  \omega^{\nabla}\circ
B_{a_{s}^{\dag}\tilde{w}}\right]  |_{w\otimes\tilde{w}=\mathbb{X}_{s,t}}\\
&  =0
\end{align*}
as $\omega^{\nabla}\circ B_{a}^{\nabla}=0$ for all $a\in\mathbb{R}^{d}$.
Additionally $\left[  \int\omega^{\nabla}\left(  d\mathbf{u}\right)  \right]
_{s}^{\dag}=\omega^{\nabla}\left(  B_{a_{s}^{\dag}\left(  \cdot\right)
}\left(  u_{s}\right)  \right)  =0.$

Conversely, if $\mathbf{u}\in CRP_{\mathbf{y}}^{H,\nabla}\left(  GL\left(
M\right)  \right)  $, then by Theorem \ref{the.10.5}, we have the existence of
$\mathbf{z}$ given by
\[
\mathbf{z}:=\int\theta^{Y^{GL\left(  M\right)  }}\left(  d\mathbf{u}\right)
\]
where $\theta^{Y^{GL\left(  M\right)  }}=\left(  \hat{\theta},\omega^{\nabla
}\right)  $. Additionally, we have
\[
d\mathbf{u}_{t}=Y_{d\mathbf{z}_{t}}^{GL\left(  M\right)  }\left(
\mathbf{u}_{t}\right)
\]
Let $\mathbf{a}=\pi_{1\ast}\mathbf{z}$ where $\pi_{1}:\mathbb{R}^{d}\times
gl\left(  d\right)  $ is projection onto the first component. To finish the
proof, we must verify that $Y_{z_{s,t}}^{GL\left(  M\right)  }=B_{\pi
_{1}\left(  z_{t}\right)  -\pi_{1}\left(  z_{s}\right)  }^{\nabla}$ and
$Y_{z_{s}^{\dag}w}^{GL\left(  M\right)  }Y_{z_{s}^{\dag}\tilde{w}}^{GL\left(
M\right)  }=B_{\pi_{1\ast}z_{s}^{\dag}w}^{\nabla}B_{\pi_{1}\ast z_{s}^{\dag
}\tilde{w}}^{\nabla}$. However, this is trivial due to the fact that
\begin{align*}
Y_{\left(  a,0\right)  }^{GL\left(  M\right)  }\left(  g\right)   &
=B_{a}^{\nabla}\left(  g\right)  +\tilde{0}\left(  g\right) \\
&  =B_{a}^{\nabla}\left(  g\right)
\end{align*}
and that $\mathbf{z}:=\int\theta^{Y^{GL\left(  M\right)  }}\left(
d\mathbf{u}\right)  =\int\left(  \hat{\theta},\omega^{\nabla}\right)  \left(
d\mathbf{u}\right)  =\left(  \int\hat{\theta}\left(  d\mathbf{u}\right)
,\mathbf{0}\right)  $.
\end{proof}

We can now put parallel translation and Theorem \ref{the.10.5} together to get
a correspondence of paths between $CRP_{\mathbf{X}}\left(  M\right)  $ and
$CRP_{\mathbf{X}}\left(  \mathbb{R}^{d}\right)  .$

\begin{corollary}
\label{cor.10.6}Let $\nabla$ be a covariant derivative on $M$, $o\in M$, and
$g_{o}\in GL\left(  M\right)  _{o}$. Let $\mathbf{h}\left(  \mathbf{y}%
,g_{y_{0}}\right)  $ be the map that takes a path $\mathbf{y}$ and initial
frame $g_{y_{0}}$ to its parallel translation. There is a one-to-one map from
$CRP_{\mathbf{X}}\left(  M\right)  $ starting at $o$ defined on $\left[
0,T\right]  $ and $CRP_{\mathbf{X}}\left(  \mathbb{R}^{d}\right)  $ starting
at $0$ defined on $\left[  0,T\right]  $ given by%
\[%
\begin{array}
[c]{ccccc}%
CRP_{\mathbf{X}}\left(  M\right)  & \longrightarrow & CRP_{\mathbf{y}%
}^{H,\nabla}\left(  GL\left(  M\right)  \right)  & \longrightarrow &
CRP_{\mathbf{X}}\left(  \mathbb{R}^{d}\right) \\
\mathbf{y} & \longrightarrow & \mathbf{h}\left(  \mathbf{y},g_{o}\right)  &
\longrightarrow & \int\hat{\theta}\left(  d\mathbf{h}\left(  \mathbf{y}%
,g_{o}\right)  \right)
\end{array}
\]
where $\hat{\theta}$ is the canonical one-form.
\end{corollary}

\subsection{Rolling and Unrolling of rough one-forms}

\marginpar{This section is still pretty messy}Let $\nabla$ be a covariant derivative.

\begin{lemma}
\label{lem.10.7}Let $\mathbf{y}=\left(  y,y^{\dag}\right)  \in CRP_{\mathbf{X}%
}\left(  M\right)  $ and $U^{\nabla}$ be the parallelism associated to
$\nabla$ and $\mathbf{u}=\left(  u,u^{\dag}\right)  $ be parallel translation
along $\mathbf{y}$\textbf{ }with any starting point. We have the approximation%
\begin{equation}
u_{t}-U_{y_{t},y_{s}}^{\nabla}u_{s}\underset{^{2}}{\approx}0. \label{equ.10.1}%
\end{equation}

\end{lemma}

\begin{proof}
These are all local statements, thus we may assume that we are working in
$\mathbb{R}^{d}\times GL\left(  d\right)  $. In that case, we have
$\nabla_{\left(  m,v_{m}\right)  }=\partial_{\left(  m,v_{m}\right)  }%
+\Gamma_{m}\left\langle v_{m}\right\rangle $ and may write Eq. (\ref{equ.10.1}%
) as $\left(  y_{t},g_{t}\right)  -$ $U_{y_{t},y_{s}}^{\nabla}\left(
y_{s},g_{s}\right)  $ where $U_{y_{t},y_{s}}^{\nabla}\left(  y_{s},g\right)
=\left(  y_{t},\bar{U}_{y_{t},y_{s}}^{\nabla}g\right)  $ for some $\bar
{U}:\left(  \mathbb{R}^{d}\right)  ^{2}\rightarrow\operatorname*{Aut}\left(
GL\left(  d\right)  \right)  $. Thus%
\begin{align*}
u_{t}-U_{y_{t},y_{s}}^{\nabla}u_{s}  &  =\left(  y_{t},g_{t}\right)
-U_{y_{t},y_{s}}^{\nabla}\left(  y_{s},g_{s}\right) \\
&  =\left(  0,g_{t}-\bar{U}_{y_{t},y_{s}}^{\nabla}g_{s}\right)  .
\end{align*}
Therefore we just need to show
\[
g_{t}-\bar{U}_{y_{t},y_{s}}^{\nabla}g_{s}\underset{^{2}}{\approx}0.
\]
By Lemma \ref{lem.6.5}, we have $\bar{U}_{y_{t},y_{s}}^{\nabla}\underset{^{2}%
}{\approx}I-\Gamma_{y_{s}}\left\langle y_{t}-y_{s}\right\rangle $ and
therefore
\[
g_{t}-\bar{U}_{y_{t},y_{s}}^{\nabla}g_{s}\underset{^{2}}{\approx}g_{t}-\left(
I-\Gamma_{y_{s}}\left\langle y_{t}-y_{s}\right\rangle \right)  g_{s}%
\]
On the other hand, by the proof of Theorem \ref{the.9.4}, we have that
\[
\theta_{r}\left(  \left[  g_{t}-g_{s}\right]  _{g_{s}}\right)  \underset{^{2}%
}{\approx}-\Gamma_{y_{s}}\left\langle y_{t}-y_{s}\right\rangle
\]
where $\theta_{r}\left(  \left[  g_{t}-g_{s}\right]  _{g_{s}}\right)
=g_{t}g_{s}^{-1}-I$ (we leave it to the reader to verify the $\Gamma$ in this
case indeed corresponds to $\Gamma$ that determines the covariant derivative).
Thus $g_{t}\underset{^{2}}{\approx}g_{s}-\left(  \Gamma_{y_{s}}\left\langle
y_{t}-y_{s}\right\rangle \right)  g_{s}$ and
\begin{align*}
g_{t}-\bar{U}_{y_{t},y_{s}}^{\nabla}g_{s}  &  \underset{^{2}}{\approx}%
g_{t}-\left(  I-\Gamma_{y_{s}}\left\langle y_{t}-y_{s}\right\rangle \right)
g_{s}\\
&  \underset{^{2}}{\approx}g_{s}-\left(  \Gamma_{y_{s}}\left\langle
y_{t}-y_{s}\right\rangle \right)  g_{s}-g_{s}+\Gamma_{y_{s}}\left\langle
y_{t}-y_{s}\right\rangle g_{s}\\
&  =0.
\end{align*}

\end{proof}

Given the Lemma, we have the following.

\begin{proposition}
\label{pro.10.8}Let $\mathbf{u}=\left(  u_{t},u_{t}^{\dag}\right)  \in
CRP_{\mathbf{X}}\left(  GL\left(  M\right)  \right)  $ be parallel translation
started at $u_{0}$ along $\mathbf{y}:=\left(  y,y^{\dag}\right)  $ with
respect to $\nabla$.

\begin{enumerate}
\item Let $\boldsymbol{\tilde{\alpha}}:=\left(  \tilde{\alpha},\tilde{\alpha
}^{\dag}\right)  \in CRP_{\mathbf{X}}\left(  L\left(  \mathbb{R}^{d},V\right)
\right)  $. Then $\boldsymbol{\alpha}^{\nabla}:=\left(  \alpha^{\nabla
},\left(  \alpha^{\dag}\right)  ^{\nabla}\right)  $ defined by
\begin{align*}
\alpha_{s}^{\nabla}  &  :=\tilde{\alpha}_{s}\circ u_{s}^{-1}\\
\left(  \alpha_{s}^{\dag}\right)  ^{\nabla}  &  :=\tilde{\alpha}_{s}^{\dag
}\circ\left(  I\otimes u_{s}^{-1}\right)
\end{align*}
is an element of $CRP_{y}^{U^{\nabla}}\left(  M,V\right)  .$

\item Let $\boldsymbol{\alpha}\in CRP_{y}^{U^{\nabla}}\left(  M,V\right)  .$
Then $\boldsymbol{\tilde{\alpha}}^{\nabla}:=\left(  \tilde{\alpha}^{\nabla
},\left(  \tilde{\alpha}^{\dag}\right)  ^{\nabla}\right)  $ defined by%
\begin{align*}
\tilde{\alpha}_{s}^{\nabla}  &  :=\alpha_{s}\circ u_{s}\\
\left(  \tilde{\alpha}_{s}^{\dag}\right)  ^{\nabla}  &  :=\alpha_{s}^{\dag
}\left(  I\otimes u_{s}\right)
\end{align*}
is an element of $CRP_{\mathbf{X}}\left(  L\left(  \mathbb{R}^{d},V\right)
\right)  .$
\end{enumerate}
\end{proposition}

\begin{proof}
In the first case (suppressing the $\nabla)$, we have%
\begin{align*}
&  \alpha_{t}\circ U_{y_{t},y_{s}}-\alpha_{s}-\alpha_{s}^{\dag}\left(
x_{s,t}\otimes\left(  \cdot\right)  \right) \\
&  \quad=\tilde{\alpha}_{t}\circ u_{t}^{-1}\circ U_{y_{t},y_{s}}-\tilde
{\alpha}_{s}\circ u_{s}^{-1}-\alpha_{s}^{\dag}\left(  x_{s,t}\otimes
u_{s}^{-1}\left(  \cdot\right)  \right) \\
&  \quad=\left[  \tilde{\alpha}_{t}-\tilde{\alpha}_{s}-\alpha_{s}^{\dag
}\left(  x_{s,t}\otimes\left(  \cdot\right)  \right)  \right]  u_{s}%
^{-1}+\tilde{\alpha}_{t}\left(  u_{t}^{-1}\circ U_{y_{t},y_{s}}-u_{s}%
^{-1}\right) \\
&  \quad\underset{^{2}}{\approx}\tilde{\alpha}_{t}\left(  u_{t}^{-1}\circ
U_{y_{t},y_{s}}-u_{s}^{-1}\right) \\
&  \quad\underset{^{2}}{\approx}0
\end{align*}
where the last step is from the Lemma. Additionally, we have%
\begin{align*}
\alpha_{t}^{\dag}\circ\left(  I\otimes U_{y_{t},y_{s}}\right)  -\alpha
_{s}^{\dag}  &  =\tilde{\alpha}_{t}^{\dag}\circ\left(  I\otimes u_{t}%
^{-1}U_{y_{t},y_{s}}\right)  -\tilde{\alpha}_{s}^{\dag}\circ\left(  I\otimes
u_{s}^{-1}\right) \\
&  \underset{^{1}}{\approx}\left(  \tilde{\alpha}_{t}^{\dag}-\tilde{\alpha
}_{s}^{\dag}\right)  \circ\left(  I\otimes u_{s}^{-1}\right)  +\tilde{\alpha
}_{t}^{\dag}\left(  I\otimes\left(  u_{t}^{-1}U_{y_{t},y_{s}}-u_{s}%
^{-1}\right)  \right) \\
&  \underset{^{1}}{\approx}0
\end{align*}

The second step is similar and also reduces to the validity of the lemma.
\end{proof}

Given a covariant derivative, we have a way to roll both a path and a rough
one-form onto flat space. The following theorem says that we get the same
answer if we do the integral on the manifold as we get if we roll both the
path and the rough one-form onto flat space and compute the integral there.

\begin{theorem}
\label{the.10.9}Let $\mathbf{y}=\left(  y,y^{\dag}\right)  \in CRP_{\mathbf{X}%
}\left(  M\right)  $, $\nabla$ a covariant derivative, and $u_{y_{0}}\in
GL\left(  M\right)  $. Further let $\boldsymbol{\alpha}\in CRP_{y}^{U^{\nabla
}}\left(  M,V\right)  $, $\boldsymbol{\tilde{\alpha}}^{\nabla}:=\left(
\tilde{\alpha}^{\nabla},\left(  \tilde{\alpha}^{\dag}\right)  ^{\nabla
}\right)  \in CRP_{\mathbf{X}}\left(  L\left(  \mathbb{R}^{d},V\right)
\right)  $, $\mathbf{\tilde{y}}:=\int\hat{\theta}\left(  d\mathbf{h}\left(
\mathbf{y},u_{o}\right)  \right)  \in CRP_{\mathbf{X}}\left(  \mathbb{R}%
^{d}\right)  $, $\psi$ be a logarithm and $\mathcal{G}=\left(  \psi,U^{\nabla
}\right)  $ Then%
\[
\int\left\langle \boldsymbol{\alpha},d\mathbf{y}^{\mathcal{G}}\right\rangle
=\int\left\langle \boldsymbol{\tilde{\alpha}}^{\nabla},d\mathbf{\tilde{y}%
}\right\rangle .
\]

\end{theorem}

\begin{proof}
Here we may trivialize appropriately to assume that $GL\left(  M\right)
=\mathbb{R}^{d}\times GL\left(  d\right)  $ and that $\nabla$ is in the form
$\nabla=\partial+\Gamma$. In this setting, we may write $h\left(
\mathbf{y},u_{0}\right)  =\mathbf{u}=\left(  \mathbf{y},\mathbf{g}\right)  $
for parallel translation in $\mathbb{R}^{d}\times GL\left(  d\right)  $. We
have (suppressing the $\nabla$)%
\begin{align}
\int_{s}^{t}\left\langle \boldsymbol{\tilde{\alpha}},d\mathbf{\tilde{y}%
}\right\rangle  &  \underset{^{3}}{\approx}\tilde{\alpha}_{s}\left(  \tilde
{y}_{s,t}\right)  +\tilde{\alpha}_{s}^{\dag}\left(  I\otimes\tilde{y}%
_{s}^{\dag}\right)  \mathbb{X}_{s,t}\\
&  =\alpha_{s}\left(  g_{s}\tilde{y}_{s,t}\right)  +\alpha_{s}^{\dag}\left(
I\otimes g_{s}\tilde{y}_{s}^{\dag}\right)  \mathbb{X}_{s,t}. \label{stop1}%
\end{align}
To continue the proof, we will have to expand $\mathbf{\tilde{y}}$ which we
will do after we introduce a few relevant geometrical objects.

Suppose $\psi$ is any logarithm on $M=\mathbb{R}^{d}$ and $\psi^{GL\left(
d\right)  }$ is any logarithm on $GL\left(  d\right)  $. It is easy to see
that $\left(  \psi,\psi^{GL\left(  d\right)  }\right)  $ is a logarithm on
$GL\left(  M\right)  =\mathbb{R}^{d}\times GL\left(  d\right)  $. In a similar
manner, let $\nabla^{GL\left(  d\right)  }$ be any covariant derivative on
$GL\left(  d\right)  $. Then $\left(  \nabla,\nabla^{GL\left(  d\right)
}\right)  $ is a covariant derivative on $\mathbb{R}^{d}\times GL\left(
d\right)  $ and $\left(  U^{\nabla},U^{\nabla^{GL\left(  d\right)  }}\right)
$ is a parallelism on $\mathbb{R}^{d}\times GL\left(  d\right)  .$ Putting it
all together we have $\left(  \mathcal{G},\mathcal{G}^{GL\left(  d\right)
}\right)  $ is a gauge on $\mathbb{R}^{d}\times GL\left(  d\right)  $ where
$\mathcal{G=}\left(  \psi,U^{\nabla}\right)  $ and $\mathcal{G}^{GL\left(
d\right)  }=$ $\left(  \psi^{GL\left(  d\right)  },U^{\nabla^{GL\left(
d\right)  }}\right)  $.

By the definition of $\mathbf{\tilde{y}}$, we have%
\begin{align*}
\tilde{y}_{s,t}  &  \underset{^{3}}{\approx}\hat{\theta}\left(  \left(
\psi,\psi^{GL\left(  d\right)  }\right)  \left(  \left(  y_{s},g_{s}\right)
,\left(  y_{t},g_{t}\right)  \right)  +\mathcal{S}^{\left(  \mathcal{G}%
,\mathcal{G}^{GL\left(  d\right)  }\right)  }\left(  y_{s}^{\dag},g_{s}^{\dag
}\right)  ^{\otimes2}\mathbb{X}_{s,t}\right) \\
&  \quad+\left[  \left(  \nabla,\nabla^{GL\left(  d\right)  }\right)
\hat{\theta}\right]  \left(  y_{s}^{\dag},g_{s}^{\dag}\right)  ^{\otimes
2}\mathbb{X}_{s,t}\\
&  =A+B
\end{align*}
where
\begin{align*}
A  &  :=\hat{\theta}\left(  \left(  \psi,\psi^{GL\left(  d\right)  }\right)
\left(  \left(  y_{s},g_{s}\right)  ,\left(  y_{t},g_{t}\right)  \right)
+\mathcal{S}^{\left(  \mathcal{G},\mathcal{G}^{GL\left(  d\right)  }\right)
}\left(  y_{s}^{\dag},g_{s}^{\dag}\right)  ^{\otimes2}\mathbb{X}_{s,t}\right)
\\
B  &  :=\left[  \left(  \nabla,\nabla^{GL\left(  d\right)  }\right)
\hat{\theta}\right]  \left(  y_{s}^{\dag},g_{s}^{\dag}\right)  ^{\otimes
2}\mathbb{X}_{s,t}.
\end{align*}
By the definition of $\hat{\theta},$ we have
\begin{align*}
A  &  =\hat{\theta}\left(  \left(  \psi,\psi^{GL\left(  d\right)  }\right)
\left(  \left(  y_{s},g_{s}\right)  ,\left(  y_{t},g_{t}\right)  \right)
+\mathcal{S}^{\left(  \mathcal{G},\mathcal{G}^{GL\left(  d\right)  }\right)
}\left(  y_{s}^{\dag},g_{s}^{\dag}\right)  ^{\otimes2}\mathbb{X}_{s,t}\right)
\\
&  =\hat{\theta}\left(  \left(  \psi\left(  y_{s},y_{t}\right)  ,\psi
^{GL\left(  d\right)  }\left(  g_{s},g_{t}\right)  \right)  +\left(
\mathcal{S}^{\mathcal{G}}y_{s}^{\dag\otimes2},\mathcal{S}^{\mathcal{G}%
^{GL\left(  d\right)  }}g_{s}^{\dag\otimes2}\right)  \mathbb{X}_{s,t}\right)
\\
&  =g_{s}^{-1}\left[  \psi\left(  y_{s},y_{t}\right)  +\mathcal{S}%
^{\mathcal{G}}y_{s}^{\dag\otimes2}\mathbb{X}_{s,t}\right]  .
\end{align*}
Additionally, we have%
\begin{align*}
\tilde{y}_{s}^{\dag}  &  =\hat{\theta}\left(  y_{s}^{\dag},g_{s}^{\dag}\right)
\\
&  =g_{s}^{-1}y_{s}^{\dag}.
\end{align*}
Plugging these terms into Eq. (\ref{stop1}), we have%
\begin{align*}
&  \int_{s}^{t}\left\langle \boldsymbol{\tilde{\alpha}},d\mathbf{\tilde{y}%
}\right\rangle \\
&  \quad\underset{^{3}}{\approx}\alpha_{s}\circ g_{s}\left(  A+B\right)
+\alpha_{s}^{\dag}\left(  I\otimes g_{s}g_{s}^{-1}y_{s}^{\dag}\right)
\mathbb{X}_{s,t}\\
&  \quad=\alpha_{s}\left(  \psi\left(  y_{s},y_{t}\right)  +\mathcal{S}%
^{\mathcal{G}}y_{s}^{\dag\otimes2}\mathbb{X}_{s,t}\right)  +\alpha_{s}^{\dag
}\left(  I\otimes y_{s}^{\dag}\right)  \mathbb{X}_{s,t}+\alpha_{s}\circ
g_{s}\left(  B\right) \\
&  \quad\underset{^{3}}{\approx}\int_{s}^{t}\left\langle \boldsymbol{\alpha
},d\mathbf{y}^{\mathcal{G}}\right\rangle +\alpha_{s}\circ g_{s}\left(
B\right)  .
\end{align*}
Thus, to prove that $\int_{s}^{t}\left\langle \boldsymbol{\tilde{\alpha}%
},d\mathbf{\tilde{y}}\right\rangle $ $\underset{^{3}}{\approx}\int_{s}%
^{t}\left\langle \boldsymbol{\alpha},d\mathbf{y}^{\mathcal{G}}\right\rangle $,
it is sufficient to prove that
\[
\left[  \left(  \nabla,\nabla^{GL\left(  d\right)  }\right)  \hat{\theta
}\right]  \left(  y_{s}^{\dag},g_{s}^{\dag}\right)  ^{\otimes2}\equiv0.
\]
\qquad

First we examine $g^{\dag}$. By Eq. (\ref{equ.9.5}) in the proof of Theorem
\ref{the.9.4}, we have that%
\begin{align*}
g_{s}^{\dag}  &  =F_{\Gamma_{y_{s}}\left\langle y_{s}^{\dag}\left(
\cdot\right)  \right\rangle }\left(  g_{s}\right) \\
&  =-\Gamma_{y_{s}}\left\langle y_{s}^{\dag}\left(  \cdot\right)
\right\rangle g_{s}.
\end{align*}
Letting $\left(  \mathbf{Y},\mathbf{G}\right)  $ be the vector field given by
\[
\mathbf{Y}:=U^{\nabla}\left(  \cdot,y_{s}\right)  y_{s}^{\dag}\tilde{w}%
\quad\text{and\quad}\mathbf{G}:=U^{\nabla^{GL\left(  d\right)  }}\left(
\cdot,g_{s}\right)  g_{s}^{\dag}\tilde{w}%
\]
for any vector $\tilde{w}\in W$, by Eq. (\ref{equ.6.11}), we have
$v_{m}\left[  \mathbf{Y}\right]  =-\Gamma_{y_{s}}\left\langle v_{m}%
\right\rangle \left\langle y_{s}\tilde{w}\right\rangle $. With these formulas,
we have
\begin{align*}
\left[  \left(  \nabla,\nabla^{GL\left(  d\right)  }\right)  \hat{\theta
}\right]  \left(  y_{s}^{\dag},g_{s}^{\dag}\right)  ^{\otimes2}\left(
w\otimes\tilde{w}\right)   &  =\left[  y_{s}^{\dag}w,g_{s}^{\dag}w\right]
\left[  \hat{\theta}\left(  \mathbf{Y},\mathbf{G}\right)  \right] \\
&  =\left[  y_{s}^{\dag}w,g_{s}^{\dag}w\right]  \left[  \left(  y,g\right)
\longrightarrow g^{-1}\mathbf{Y}\left(  y\right)  \right] \\
&  =-g_{s}^{-1}\left(  g_{s}^{\dag}w\right)  g_{s}^{-1}y_{s}^{\dag}\tilde
{w}-g_{s}^{-1}\Gamma_{y_{s}}\left\langle y_{s}^{\dag}w\right\rangle
\left\langle y_{s}^{\dag}\tilde{w}\right\rangle \\
&  =g_{s}^{-1}\Gamma_{y_{s}}\left\langle y_{s}^{\dag}w\right\rangle
\left\langle g_{s}g_{s}^{-1}y_{s}^{\dag}\tilde{w}\right\rangle -g_{s}%
^{-1}\Gamma_{y_{s}}\left\langle y_{s}^{\dag}w\right\rangle \left\langle
y_{s}^{\dag}\tilde{w}\right\rangle \\
&  =0.
\end{align*}

The fact that $\left[  \int\left\langle \boldsymbol{\alpha},d\mathbf{y}%
^{\mathcal{G}}\right\rangle \right]  _{s}^{\dag}=\left[  \int\left\langle
\boldsymbol{\tilde{\alpha}}^{\nabla},d\mathbf{\tilde{y}}\right\rangle \right]
_{s}^{\dag}$ is clear.
\end{proof}

\begin{proof}
[Alternative Proof?]\marginpar{Since we are trivializing down to
$\mathbb{R}^{d}\times GL\left(  d\right)  $, I\ suppose we might as well be as
lazy as possible?}Use instead the gauge $\psi\left(  y_{s},y_{t}\right)
=\left[  y_{t}-y_{s}\right]  _{y_{s}}$ and the usual derivative for $GL\left(
d\right)  $. Then%
\begin{align*}
\tilde{y}_{s,t}  &  \underset{^{3}}{\approx}\hat{\theta}\left(  \left[
y_{s,t}\right]  _{y_{s}},\psi^{GL\left(  d\right)  }\left(  g_{s}%
,g_{t}\right)  \right)  +\theta_{\left(  y_{s},g_{s}\right)  }^{\prime}\left(
\left(  y_{s}^{\dag},g_{s}^{\dag}\right)  ^{\otimes2}\mathbb{X}_{s,t}\right)
\\
&  =g_{s}^{-1}\left[  y_{s,t}\right]  _{y_{s}}+\frac{d}{dt}|_{0}%
\theta_{\left(  y_{s}+ty_{s}^{\dag}w,g_{s}+tg_{s}^{\dag}w\right)  }\left(
y_{s}^{\dag}\tilde{w},g_{s}^{\dag}w\right)  |_{w\otimes\tilde{w}%
=\mathbb{X}_{s,t}}\\
&  =g_{s}^{-1}\left[  y_{s,t}\right]  _{y_{s}}+\frac{d}{dt}|_{0}\theta\left(
\left[  y_{s}^{\dag}\tilde{w}\right]  _{y_{s}+ty_{s}^{\dag}w},\left[
g_{s}^{\dag}w\right]  _{g_{s}+tg_{s}^{\dag}w}\right)  |_{w\otimes\tilde
{w}=\mathbb{X}_{s,t}}\\
&  =g_{s}^{-1}\left[  y_{s,t}\right]  _{y_{s}}+\frac{d}{dt}|_{0}\left(
g_{s}+tg_{s}^{\dag}w\right)  ^{-1}y_{s}^{\dag}\tilde{w}|_{w\otimes\tilde
{w}=\mathbb{X}_{s,t}}\\
&  =g_{s}^{-1}\left[  y_{s,t}\right]  _{y_{s}}-g_{s}^{-1}\left(  g_{s}^{\dag
}w\right)  g_{s}^{-1}y_{s}\tilde{w}|_{w\otimes\tilde{w}=\mathbb{X}_{s,t}}%
\end{align*}
so that
\begin{align*}
\int_{s}^{t}\left\langle \boldsymbol{\tilde{\alpha}},d\mathbf{\tilde{y}%
}\right\rangle  &  \underset{^{3}}{\approx}\alpha_{s}\left(  g_{s}\tilde
{y}_{s,t}\right)  +\alpha_{s}^{\dag}\left(  I\otimes g_{s}\tilde{y}_{s}^{\dag
}\right)  \mathbb{X}_{s,t}\\
&  =\alpha_{s}\left(  y_{s,t}\right)  -\alpha_{s}\left(  \left(  g_{s}^{\dag
}w\right)  g_{s}^{-1}y_{s}\tilde{w}|_{w\otimes\tilde{w}=\mathbb{X}_{s,t}%
}\right)  +\alpha_{s}^{\dag}\left(  I\otimes g_{s}\tilde{y}_{s}^{\dag}\right)
\mathbb{X}_{s,t}%
\end{align*}
As before
\begin{align*}
g_{s}^{\dag}  &  =F_{\Gamma_{y_{s}}\left\langle y_{s}^{\dag}\left(
\cdot\right)  \right\rangle }\left(  g_{s}\right) \\
&  =-\Gamma_{y_{s}}\left\langle y_{s}^{\dag}\left(  \cdot\right)
\right\rangle g_{s}.
\end{align*}
so this is%
\begin{align*}
\int_{s}^{t}\left\langle \boldsymbol{\tilde{\alpha}},d\mathbf{\tilde{y}%
}\right\rangle  &  \underset{^{3}}{\approx}\alpha_{s}\left(  y_{s,t}\right)
+\alpha_{s}\left(  \left(  g_{s}^{\dag}w\right)  g_{s}^{-1}y_{s}\tilde
{w}|_{w\otimes\tilde{w}=\mathbb{X}_{s,t}}\right)  +\alpha_{s}^{\dag}\left(
I\otimes g_{s}\tilde{y}_{s}^{\dag}\right)  \mathbb{X}_{s,t}\\
&  =\alpha_{s}\left(  y_{s,t}\right)  +\alpha_{s}\left(  \mathcal{S}_{y_{s}%
}^{\mathcal{G}}\left\langle y_{s}^{\dag}w\right\rangle \left\langle
y_{s}\tilde{w}\right\rangle \right)  |_{w\otimes\tilde{w}=\mathbb{X}_{s,t}%
}+\alpha_{s}^{\dag}\left(  I\otimes g_{s}\tilde{y}_{s}^{\dag}\right)
\mathbb{X}_{s,t}\\
&  =\alpha_{s}\left(  y_{s,t}\right)  +\alpha_{s}\left(  \mathcal{S}%
^{\mathcal{G}}y_{s}^{\dag\otimes2}\right)  \mathbb{X}_{s,t}+\alpha_{s}^{\dag
}\left(  I\otimes y_{s}^{\dag}\right)  \mathbb{X}_{s,t}\\
&  \underset{^{3}}{\approx}\int\left\langle \boldsymbol{\alpha},d\mathbf{y}%
^{\mathcal{G}}\right\rangle .
\end{align*}
The fact that $\mathcal{S}^{\mathcal{G}}=\Gamma$ when $\psi\left(  x,y\right)
=\left[  y-x\right]  _{x}$ and $U=U^{\nabla}$ follows straight from its definition.
\end{proof}

\section{Bone Yard A. [My first brute force attempts follow.]\label{sec.11}}

The results of this sections could surely be pushed through to give a proof of
existence and uniqueness of horizontal lifts. However the computations were
becoming to ugly and so I changed tact which resulted in the proof given above
which I\ thinks is more appealing and fairly natural. Nevertheless,
Proposition \ref{p.bd.10} below could be used as an example of a gauge with
its compatibility tensor computed out and should probably still appear earlier
in the paper.

\begin{proposition}
\label{pro.11.1}A controlled rough path $\mathbf{y}=\left(  y,y^{\dag}\right)
\in CRP\left(  M\right)  $ solves
\[
d\mathbf{y}=F_{d\mathbf{x}}\left(  y\right)
\]
iff for any logarithm, $\psi,$ on $M$ we have%
\[
\psi\left(  y\left(  s\right)  ,y\left(  t\right)  \right)  =_{3}F_{x_{s,t}%
}\left(  y\left(  s\right)  \right)  +\left(  \nabla_{F_{a}}^{\psi}%
F_{b}\right)  \left(  y\left(  s\right)  \right)  |_{a\otimes b=\mathbb{X}%
_{s,t}}.
\]

\end{proposition}

\begin{proof}
This is a restatement of Theorem \ref{the.5.8} where it has already been
proved. Nevertheless, let us give a heuristic proof in order to motivate the result.

We start with the identity,%
\[
\frac{d}{dt}\psi\left(  y\left(  s\right)  ,y\left(  t\right)  \right)
=\psi\left(  y\left(  s\right)  ,\cdot\right)  _{\ast y\left(  t\right)  }%
\dot{y}\left(  t\right)  =U^{\psi}\left(  y\left(  s\right)  ,y\left(
t\right)  \right)  F_{\dot{x}\left(  t\right)  }\left(  y\left(  t\right)
\right)  .
\]
Using the fundamental theorem of calculus a couple of times and making
reasonable estimates shows%
\begin{align*}
\psi\left(  y\left(  s\right)  ,y\left(  t\right)  \right)   &  =\int_{s}%
^{t}U^{\psi}\left(  y\left(  s\right)  ,y\left(  \tau\right)  \right)
F_{\dot{x}\left(  \tau\right)  }\left(  y\left(  \tau\right)  \right)  d\tau\\
&  =\int_{s}^{t}d\tau\left[  F_{\dot{x}\left(  \tau\right)  }\left(  y\left(
s\right)  \right)  +\int_{s}^{\tau}d\sigma\frac{d}{d\sigma}U^{\psi}\left(
y\left(  s\right)  ,y\left(  \sigma\right)  \right)  F_{\dot{x}\left(
\tau\right)  }\left(  y\left(  \sigma\right)  \right)  \right] \\
&  \underset{^{3}}{\approx}\int_{s}^{t}d\tau\left[  F_{\dot{x}\left(
\tau\right)  }\left(  y\left(  s\right)  \right)  +\int_{s}^{\tau}d\sigma
U^{\psi}\left(  y\left(  s\right)  ,y\left(  \sigma\right)  \right)  \left(
\nabla_{\dot{y}\left(  \sigma\right)  }F_{\dot{x}\left(  \tau\right)
}\right)  \left(  y\left(  \sigma\right)  \right)  \right] \\
&  \underset{^{3}}{\approx}\int_{s}^{t}d\tau\left[  F_{\dot{x}\left(
\tau\right)  }\left(  y\left(  s\right)  \right)  +\int_{s}^{\tau}%
d\sigma\left(  \nabla_{\dot{y}\left(  \sigma\right)  }F_{\dot{x}\left(
\tau\right)  }\right)  \left(  y\left(  s\right)  \right)  \right] \\
&  =\int_{s}^{t}d\tau\left[  F_{\dot{x}\left(  \tau\right)  }\left(  y\left(
s\right)  \right)  +\int_{s}^{\tau}d\sigma\left(  \nabla_{F_{\dot{x}\left(
\sigma\right)  }\left(  y\left(  \sigma\right)  \right)  }F_{\dot{x}\left(
\tau\right)  }\right)  \left(  y\left(  s\right)  \right)  \right] \\
&  =F_{x_{s,t}}\left(  y\left(  s\right)  \right)  +\left(  \nabla_{F_{a}%
}^{\psi}F_{b}\right)  \left(  y\left(  s\right)  \right)  |_{a\otimes
b=\mathbb{X}_{s,t}}.
\end{align*}

\end{proof}

\begin{theorem}
[Existence of Horizontal Lifts]\label{the.11.2}Let $G\rightarrow
P\overset{\pi}{\rightarrow}M$ be a principal bundle with connection $\omega,$
$\mathbf{y}=\left(  y,y^{\dag}\right)  \in CRP\left(  M\right)  ,$ and
$u_{0}\in P_{y\left(  0\right)  }.$ Then there exists a unique horizontal lift
$\mathbf{u}=\left(  u,u^{\dag}\right)  \in CRP\left(  P\right)  $ above
$\mathbf{y}$ such that $u\left(  0\right)  =u_{0}.$
\end{theorem}

\begin{proof}
The proof follows the lines of the smooth case. Because of Proposition
\ref{pro.9.3} and simple patching arguments we may reduce to considering the
case that $P=M\times G$ is a trivial bundle where $M$ is now an open subset of
$\mathbb{R}^{d}.$ In light of Lemma \ref{lem.8.1}, the desired horizontal lift
may be expressed in the form, $\mathbf{u}_{s}=\left(  \mathbf{y}%
_{s},\mathbf{g}_{s}\right)  $ for some $\mathbf{g}=\left(  g,g^{\dag}\right)
\mathbf{\in}CRP\left(  G\right)  $ to be determined. We now find the equations
that $\mathbf{g}$ must satisfy. To this end we must write out what it means
for $\int\omega\left(  d\mathbf{u}\right)  \equiv0.$ To do this we take the
product of the standard gauge on $M\subset\mathbb{R}^{d}$ and the gauge
$\mathcal{G}^{\nabla}$ on $G$ introduced in Proposition \ref{p.bd.10} above.
Recall that%
\[
\omega\left(  \left(  v_{m},\xi_{g}\right)  \right)  =\theta\left(  \xi
_{g}\right)  +Ad_{g^{-1}}\Gamma\left(  v_{m}\right)
\]
and thus%
\begin{align*}
0  &  =\omega\circ u_{s}^{\dag}w=\omega\left(  \left(  y_{s}^{\dag}%
w,g_{s}^{^{\dag}}w\right)  \right) \\
&  =\theta\left(  g_{s}^{^{\dag}}w\right)  +Ad_{g_{s}^{-1}}\Gamma\left(
\left[  y_{s}^{\dag}w\right]  _{y_{s}}\right) \\
&  =g_{s}^{-1}g_{s}^{^{\dag}}w+Ad_{g_{s}^{-1}}\Gamma\left(  \left[
y_{s}^{\dag}w\right]  _{y_{s}}\right) \\
&  =\theta\left(  g_{s}^{\dag}w\right)  +Ad_{g_{s}^{-1}}\Gamma\left(  \left[
y_{s}^{\dag}w\right]  _{y_{s}}\right)  .
\end{align*}
Therefore,%
\begin{align*}
T^{\nabla}\left[  g_{s}^{\dag\otimes2}\mathbb{X}_{s,t}\right]   &
=-L_{g_{s}\ast}\left[  Ad_{g_{s}^{-1}}\Gamma y_{s}^{\dag}a,Ad_{g_{s}^{-1}%
}\Gamma y_{s}^{\dag}b\right]  |_{a\otimes b=\mathbb{X}_{s,t}}\\
&  =-L_{g_{s}\ast}Ad_{g_{s}^{-1}}\left[  \Gamma y_{s}^{\dag}a,\Gamma
y_{s}^{\dag}b\right]  |_{a\otimes b=\mathbb{X}_{s,t}}\\
&  =-R_{g_{s}\ast}\left[  \Gamma y_{s}^{\dag}a,\Gamma y_{s}^{\dag}b\right]
|_{a\otimes b=\mathbb{X}_{s,t}}.
\end{align*}
and so
\begin{align*}
\omega\left(  T^{\nabla}\left[  g_{s}^{\dag\otimes2}\mathbb{X}_{s,t}\right]
\right)   &  =-\omega\left(  R_{g_{s}\ast}\left[  \Gamma y_{s}^{\dag}a,\Gamma
y_{s}^{\dag}b\right]  |_{a\otimes b=\mathbb{X}_{s,t}}\right) \\
&  =-Ad_{g_{s}^{-1}}\omega\left(  \left[  \Gamma y_{s}^{\dag}a,\Gamma
y_{s}^{\dag}b\right]  |_{a\otimes b=\mathbb{X}_{s,t}}\right) \\
&  =-Ad_{g_{s}^{-1}}\left[  \Gamma y_{s}^{\dag}a,\Gamma y_{s}^{\dag}b\right]
|_{a\otimes b=\mathbb{X}_{s,t}}.
\end{align*}
Using this information we find,%
\begin{align*}
0  &  =\int_{s}^{t}\omega\left(  \mathbf{du}\right)  =_{2}\omega\left(
\left(  y_{s,t}\right)  _{y_{s}},\psi\left(  g_{s},g_{t}\right)  \right)
+\omega\left(  S\left[  \left(  y_{s}^{\dag},g_{s}^{\dag}\right)  ^{\otimes
2}\mathbb{X}_{s,t}\right]  \right)  +\left(  \nabla\omega\right)  \left(
y_{s}^{\dag},g_{s}^{\dag}\right)  ^{\otimes2}\mathbb{X}_{s,t}\\
&  =Ad_{g_{s}^{-1}}\Gamma\left(  \left(  y_{s,t}\right)  _{y_{s}}\right)
+\theta\left(  \psi\left(  g_{s},g_{t}\right)  \right)  +\frac{1}{2}%
Ad_{g_{s}^{-1}}\left[  \Gamma y_{s}^{\dag}a,\Gamma y_{s}^{\dag}b\right]
|_{a\otimes b=\mathbb{X}_{s,t}}+\left(  \nabla\omega\right)  \left(
y_{s}^{\dag},g_{s}^{\dag}\right)  ^{\otimes2}\mathbb{X}_{s,t}\\
&  =Ad_{g_{s}^{-1}}\Gamma\left(  \left(  y_{s,t}\right)  _{y_{s}}\right)
+\log\left(  g_{s}^{-1}g_{t}\right)  +\frac{1}{2}Ad_{g_{s}^{-1}}\left[  \Gamma
y_{s}^{\dag}a,\Gamma y_{s}^{\dag}b\right]  |_{a\otimes b=\mathbb{X}_{s,t}%
}+\left(  \nabla\omega\right)  \left(  y_{s}^{\dag},g_{s}^{\dag}\right)
^{\otimes2}\mathbb{X}_{s,t}.
\end{align*}
Moreover we have
\begin{align*}
\left(  \nabla_{\left(  w_{m},gB\right)  }\omega\right)  \left(  v_{m},\xi
_{m}\right)   &  =\frac{d}{dt}|_{0}\omega_{\left(  m+tw,ge^{tB}\right)  }%
\circ\left(  v_{m+tw},L_{e^{tB}\ast}\xi_{m}\right) \\
&  =\frac{d}{dt}|_{0}\left[  \theta\left(  L_{e^{tB}\ast}\xi_{m}\right)
+Ad_{\left(  ge^{tB}\right)  ^{-1}}\Gamma\left(  v_{m+tw}\right)  \right] \\
&  =\frac{d}{dt}|_{0}\left[  \theta\left(  \xi_{m}\right)  +Ad_{\left(
ge^{tB}\right)  ^{-1}}\Gamma\left(  v_{m+tw}\right)  \right] \\
&  =-ad_{B}Ad_{g^{-1}}\Gamma\left(  v_{m}\right)  +Ad_{g^{-1}}\partial
_{w}\Gamma\left(  v_{m}\right)
\end{align*}
and therefore,%
\begin{align*}
\left(  \nabla\omega\right)  \left(  y_{s}^{\dag},g_{s}^{\dag}\right)
^{\otimes2}\mathbb{X}_{s,t}  &  =\left(  \nabla_{\left(  y_{s}^{\dag}%
a,g_{s}^{\dag}a\right)  }\omega\right)  \left(  \left(  y_{s}^{\dag}%
b,g_{s}^{\dag}b\right)  \right)  |_{a\otimes b=\mathbb{X}_{s,t}}\\
&  =\left[  -ad_{\theta\left(  g_{s}^{\dag}a\right)  }Ad_{g_{s}^{-1}}%
\Gamma\left(  y_{s}^{\dag}b\right)  +Ad_{g_{s}^{-1}}\left(  \partial
_{y_{s}^{\dag}a}\Gamma\right)  \left(  y_{s}^{\dag}b\right)  \right]
_{_{a\otimes b=\mathbb{X}_{s,t}}}.
\end{align*}
Putting this all together we find,%
\begin{align*}
0  &  =Ad_{g_{s}^{-1}}\Gamma\left(  \left(  y_{s,t}\right)  _{y_{s}}\right)
+\log\left(  g_{s}^{-1}g_{t}\right)  +\frac{1}{2}Ad_{g_{s}^{-1}}\left[  \Gamma
y_{s}^{\dag}a,\Gamma y_{s}^{\dag}b\right]  |_{a\otimes b=\mathbb{X}_{s,t}%
}+\left(  \nabla\omega\right)  \left(  y_{s}^{\dag},g_{s}^{\dag}\right)
^{\otimes2}\mathbb{X}_{s,t}\\
&  =Ad_{g_{s}^{-1}}\Gamma\left(  \left(  y_{s,t}\right)  _{y_{s}}\right)
+\log\left(  g_{s}^{-1}g_{t}\right)  +\frac{1}{2}Ad_{g_{s}^{-1}}\left[  \Gamma
y_{s}^{\dag}a,\Gamma y_{s}^{\dag}b\right]  |_{a\otimes b=\mathbb{X}_{s,t}}\\
&  +\left[  -ad_{\theta\left(  g_{s}^{\dag}a\right)  }Ad_{g_{s}^{-1}}%
\Gamma\left(  y_{s}^{\dag}b\right)  +Ad_{g_{s}^{-1}}\left(  \partial
_{y_{s}^{\dag}a}\Gamma\right)  \left(  y_{s}^{\dag}b\right)  \right]
_{_{a\otimes b=\mathbb{X}_{s,t}}}.
\end{align*}%
\[
\omega\left(  \left(  v_{m},\xi_{g}\right)  \right)  =\theta\left(  \xi
_{g}\right)  +Ad_{g^{-1}}\Gamma\left(  v_{m}\right)
\]
where $\nabla_{\left(  v,\xi\right)  }=\partial_{v}+\nabla_{\xi}^{\psi}$ is
the covariant derivative associated to the product gauge. Recall from Eq.
(\ref{equ.7.2}) that%
\[
\omega\left(  \left(  v_{m},\xi_{g}\right)  \right)  =\theta\left(  \xi
_{g}\right)  +Ad_{g^{-1}}\Gamma\left(  v_{m}\right)
\]
so with our choices%
\[
\theta\left(  \psi\left(  g_{s},g_{t}\right)  \right)  =\log\left(  g_{s}%
^{-1}g_{t}\right)
\]
and hence
\[
\omega\left(  \left(  y_{s,t}\right)  _{y_{s}},\psi\left(  g_{s},g_{t}\right)
\right)  =\log\left(  g_{s}^{-1}g_{t}\right)  +Ad_{g_{s}^{-1}}\Gamma\left(
\left(  y_{s,t}\right)  _{y_{s}}\right)  .
\]
The $\nabla\omega$ term is more painful to work out.
\[
\nabla_{\left(  w_{m},\eta_{m}\right)  }\omega=\left(  \partial_{w_{m}}%
+\nabla_{\eta_{m}}^{\psi}\right)  \theta+\nabla_{\xi}^{\psi}\theta
\]
Thus%
\[
\omega\left(  \left(  y_{s,t}\right)  _{y_{s}},\psi\left(  g_{s},g_{t}\right)
\right)  =\theta\left(  \psi\left(  g_{s},g_{t}\right)  \right)
+Ad_{g_{s}^{-1}}\Gamma\left(  \left(  y_{s,t}\right)  _{y_{s}}\right)
\]
while .......
\end{proof}

\section{Bone Yard B. Old Vector Bundle Rough Parallel
Translation\label{sec.12}}

[Old into for this section follows: Section \ref{sec.12} (now \ref{sec.6})
develops the notion of rough parallel translation along manifold valued rough
paths. We take a general perspective, defining (and proving the existence of)
rough parallel translation along any trivial vector bundle and obtain results
for parallel translation along a rough path in $M$ as a special case. A key
ingredient to our construction is a non-explosion result for the RDEs driven
along right-invariant vector fields on a Lie group. The very useful chain rule
involving rough parallel translation and covariant differentiation is
explained in Theorem \ref{the.12.11}.]

\bigskip

In this chapter we will develop the notion of parallel translation along an
$M$ -- valued rough path. We begin with parallel translation in the trivial
bundle $M\times G$ where $G$ is a Lie group. We then use these results with
$G=O\left(  N\right)  $ in order to construct parallel translation on $TM.$

\subsection{Parallel Translation on Trivial Principle Bundles\label{sub.12.1}}

\begin{notation}
\label{not.12.1}Let $\mathfrak{g}$ denote the Lie algebra of $G$ and for
$A\in\mathfrak{g},$ let $\hat{A}$ be the right invariant vector field on $G$
agreeing with $A$ at $g=e,$ i.e.
\begin{equation}
\hat{A}\left(  g\right)  :=\frac{d}{dt}|_{0}e^{tA}g=R_{g\ast}A\text{ for all
}g\in G. \label{equ.12.1}%
\end{equation}
Furthermore, if $F\in C^{\infty}\left(  M\times G\right)  $ and $\left(
x,g\right)  \in M\times G$ we let $F^{\prime}\left(  x,g\right)
\in\mathfrak{g}^{\ast}$ be defined by
\begin{equation}
F^{\prime}\left(  x,g\right)  A:=\left(  \hat{A}F\right)  \left(  x,g\right)
=\frac{d}{dt}|_{0}F\left(  x,e^{tA}g\right)  . \label{equ.12.2}%
\end{equation}

\end{notation}

\begin{notation}
\label{not.12.2}Let $Y:\mathbb{R}^{n}\rightarrow\Gamma\left(  TM\right)  $ be
a linear map from $\mathbb{R}^{n}$ into the vector fields, $\Gamma\left(
TM\right)  ,$ on $M.$ (In more detail, for each $a\in\mathbb{R}^{n}$ we have
$Y_{a}:M\rightarrow E$ such that $Q\left(  m\right)  Y\left(  m\right)  a=0$
for all $m\in M$ and $\nu\in\mathbb{R}^{n}.$ Moreover to each connection
one-form $\Gamma\in\Omega^{1}\left(  M,\mathfrak{g}\right)  $ ($\mathfrak{g}%
=\operatorname*{Lie}\left(  G\right)  $ as above) and $a\in\mathbb{R}^{n}$ we
let $Y_{a}^{\Gamma}=Y_{a}-\widehat{\Gamma\left(  Y_{a}\right)  }\in
\Gamma\left(  T\left(  M\times G\right)  \right)  $ which is explicitly defied
by
\begin{equation}
Y_{a}^{\Gamma}\left(  m,g\right)  :=\left(  Y_{a}\left(  m\right)
,Y_{\Gamma\left(  Y_{a}\left(  m\right)  \right)  }^{G}\left(  g\right)
\right)  =\left(  Y_{a}\left(  m\right)  ,-R_{g\ast}\Gamma\left(  Y_{a}\left(
m\right)  \right)  \right)  . \label{equ.12.3}%
\end{equation}

\end{notation}

\begin{lemma}
\label{lem.12.3}For all $a,b\in\mathbb{R}^{n}$ we have%
\begin{equation}
\left[  Y_{a},\widehat{\Gamma\left(  Y_{b}\right)  }\right]  =\widehat{Y_{a}%
\Gamma\left(  Y_{b}\right)  } \label{equ.12.4}%
\end{equation}
and%
\begin{equation}
Y_{a}^{\Gamma}Y_{b}^{\Gamma}=Y_{a}Y_{b}-Y_{b}\widehat{\Gamma\left(
Y_{a}\right)  }-Y_{a}\widehat{\Gamma\left(  Y_{b}\right)  }+\widehat{Y_{b}%
\Gamma\left(  Y_{a}\right)  }+\widehat{\Gamma\left(  Y_{a}\right)
}\widehat{\Gamma\left(  Y_{b}\right)  }. \label{equ.12.5}%
\end{equation}

\end{lemma}

\begin{proof}
If $F\in C^{\infty}\left(  M\times G\right)  $ and $\left(  x,g\right)  \in
M\times G,$ then
\begin{align*}
Y_{a}\left(  \widehat{\Gamma\left(  Y_{b}\right)  }F\right)  \left(
x,g\right)   &  =Y_{a}\left(  x\rightarrow F^{\prime}\left(  x,g\right)
\Gamma\left(  Y_{b}\left(  x\right)  \right)  \right) \\
&  =\left(  Y_{a}F^{\prime}\right)  \left(  x,g\right)  \Gamma\left(
Y_{b}\left(  x\right)  \right)  +F^{\prime}\left(  x,g\right)  Y_{a}\left(
x\right)  \Gamma\left(  Y_{b}\right) \\
&  =\left(  Y_{a}F\right)  ^{\prime}\left(  x,g\right)  \Gamma\left(
Y_{b}\left(  x\right)  \right)  +F^{\prime}\left(  x,g\right)  Y_{a}\left(
x\right)  \Gamma\left(  Y_{b}\right) \\
&  =\left(  \widehat{\Gamma\left(  Y_{b}\right)  }Y_{a}F\right)  \left(
x,g\right)  +\left(  \widehat{Y_{a}\left(  x\right)  \Gamma\left(
Y_{b}\right)  }F\right)  \left(  x,g\right)
\end{align*}
from which Eq. (\ref{equ.12.4}) follows. The computation,
\begin{align*}
Y_{a}^{\Gamma}Y_{b}^{\Gamma}  &  =\left(  Y_{a}-\widehat{\Gamma\left(
Y_{a}\right)  }\right)  \left(  Y_{b}-\widehat{\Gamma\left(  Y_{b}\right)
}\right) \\
&  =Y_{a}Y_{b}-\widehat{\Gamma\left(  Y_{a}\right)  }Y_{b}-Y_{a}%
\widehat{\Gamma\left(  Y_{b}\right)  }+\widehat{\Gamma\left(  Y_{a}\right)
}\widehat{\Gamma\left(  Y_{b}\right)  }\\
&  =Y_{a}Y_{b}-\left(  Y_{b}\widehat{\Gamma\left(  Y_{a}\right)  }+\left[
\widehat{\Gamma\left(  Y_{a}\right)  },Y_{b}\right]  \right)  -Y_{a}%
\widehat{\Gamma\left(  Y_{b}\right)  }+\widehat{\Gamma\left(  Y_{a}\right)
}\widehat{\Gamma\left(  Y_{b}\right)  }\\
&  =Y_{a}Y_{b}-Y_{b}\widehat{\Gamma\left(  Y_{a}\right)  }-Y_{a}%
\widehat{\Gamma\left(  Y_{b}\right)  }+\left[  Y_{b},\widehat{\Gamma\left(
Y_{a}\right)  }\right]  +\widehat{\Gamma\left(  Y_{a}\right)  }\widehat{\Gamma
\left(  Y_{b}\right)  }\\
&  =Y_{a}Y_{b}-Y_{b}\widehat{\Gamma\left(  Y_{a}\right)  }-Y_{a}%
\widehat{\Gamma\left(  Y_{b}\right)  }+\widehat{Y_{b}\Gamma\left(
Y_{a}\right)  }+\widehat{\Gamma\left(  Y_{a}\right)  }\widehat{\Gamma\left(
Y_{b}\right)  },
\end{align*}
then proves Eq. (\ref{equ.12.5}).
\end{proof}

\begin{theorem}
\label{the.12.4}Let $\mathbf{Z\in}WG_{p}\left(  \mathbb{R}^{n}\right)  $ and
$F\in C^{\infty}\left(  M\times G\right)  .$ If $\mathbf{U=}\left(  u:=\left(
x,g\right)  ,\mathbb{U}\right)  \in WG_{p}\left(  M\times G\right)  $ solves
the RDE,%
\begin{equation}
d\mathbf{U}_{t}=Y_{d\mathbf{Z}_{t}}^{\Gamma}\left(  x_{t},g_{t}\right)  \text{
with }\left(  x\left(  0\right)  ,g\left(  0\right)  \right)  =\left(
x_{0},e\right)  \in M\times G, \label{equ.12.6}%
\end{equation}
then
\begin{equation}
\left[  F\left(  u\right)  \right]  _{s,t}\simeq\left(  \left(  \mathcal{Y}%
_{\mathbf{Z}_{s,t}}+\mathcal{Y}_{\mathbf{Z}_{s,t}}^{G}\right)  F\right)
\left(  u_{s}\right)  -\left(  \widehat{Y_{a}\Gamma\left(  Y_{b}\right)
}\right)  F\left(  u_{s}\right)  |_{a\otimes b=\mathbb{Z}_{s,t}}%
-\widehat{\Gamma\left(  Y_{z_{s,t}}\right)  }Y_{z_{s,t}}F. \label{equ.12.7}%
\end{equation}

\end{theorem}

\begin{proof}
By Theorem \ref{the.4.5},%
\[
\left[  F\left(  u\right)  \right]  _{s,t}\simeq\left(  Y_{z_{s,t}}^{\Gamma
}F\right)  \left(  u_{s}\right)  +\left(  Y_{\left(  \cdot\right)  }^{\Gamma
}Y_{\left(  \cdot\right)  }^{\Gamma}F\right)  \left(  u_{s}\right)
\mathbb{Z}_{s,t}.
\]
Since%
\[
\left(  Y_{z_{s,t}}^{\Gamma}F\right)  \left(  u_{s}\right)  =\left(
Y_{z_{s,t}}F\right)  \left(  u_{s}\right)  -\left(  \widehat{\Gamma\left(
Y_{z_{s,t}}\right)  }F\right)  \left(  u_{s}\right)
\]
and
\[
\left(  Y_{\left(  \cdot\right)  }^{\Gamma}Y_{\left(  \cdot\right)  }^{\Gamma
}F\right)  \left(  u_{s}\right)  \mathbb{Z}_{s,t}=\left(  Y_{a}Y_{b}%
-Y_{b}\widehat{\Gamma\left(  Y_{a}\right)  }-Y_{a}\widehat{\Gamma\left(
Y_{b}\right)  }+\widehat{Y_{b}\Gamma\left(  Y_{a}\right)  }+\widehat{\Gamma
\left(  Y_{a}\right)  }\widehat{\Gamma\left(  Y_{b}\right)  }\right)  F\left(
u_{s}\right)  |_{a\otimes b=\mathbb{Z}_{s,t}},
\]
it follows that%
\begin{align}
F\left(  u\right)  _{s,t}  &  \simeq\left(  \left(  \mathcal{Y}_{\mathbf{Z}%
_{s,t}}+\mathcal{Y}_{\mathbf{Z}_{s,t}}^{G}\right)  F\right)  \left(
u_{s}\right)  -\left(  Y_{b}\widehat{\Gamma\left(  Y_{a}\right)  }%
+Y_{a}\widehat{\Gamma\left(  Y_{b}\right)  }-\widehat{Y_{b}\Gamma\left(
Y_{a}\right)  }\right)  F\left(  u_{s}\right)  |_{a\otimes b=\mathbb{Z}_{s,t}%
}\nonumber\\
&  =\left(  \left(  \mathcal{Y}_{\mathbf{Z}_{s,t}}+\mathcal{Y}_{\mathbf{Z}%
_{s,t}}^{G}\right)  F\right)  \left(  u_{s}\right)  -\left(  Y_{z_{s,t}%
}\widehat{\Gamma\left(  Y_{z_{s,t}}\right)  }F\right)  \left(  u_{s}\right)
+\left(  \widehat{Y_{b}\Gamma\left(  Y_{a}\right)  }\right)  F\left(
u_{s}\right)  |_{a\otimes b=\mathbb{Z}_{s,t}}. \label{equ.12.8}%
\end{align}

We now finish the proof by showing Eq. (\ref{equ.12.8}) is in fact the same as
Eq. (\ref{equ.12.7}). To this end we have%
\begin{align*}
\left(  \widehat{Y_{b}\Gamma\left(  Y_{a}\right)  }\right)  F\left(
u_{s}\right)  |_{a\otimes b=\mathbb{Z}_{s,t}}  &  =\left(  \widehat{Y_{b}%
\Gamma\left(  Y_{a}\right)  }\right)  F\left(  u_{s}\right)  |_{a\otimes
b=\mathbb{Z}_{s,t}^{a}}+\left(  \widehat{Y_{b}\Gamma\left(  Y_{a}\right)
}\right)  F\left(  u_{s}\right)  |_{a\otimes b=\frac{1}{2}z_{s,t}\otimes
z_{s,t}}\\
&  =-\left(  \widehat{Y_{a}\Gamma\left(  Y_{b}\right)  }\right)  F\left(
u_{s}\right)  |_{a\otimes b=\mathbb{Z}_{s,t}^{a}}+\left(  \widehat{Y_{a}%
\Gamma\left(  Y_{b}\right)  }\right)  F\left(  u_{s}\right)  |_{a\otimes
b=\frac{1}{2}z_{s,t}\otimes z_{s,t}}%
\end{align*}
and%
\begin{align*}
Y_{z_{s,t}}\widehat{\Gamma\left(  Y_{z_{s,t}}\right)  }F  &
=\widehat{Y_{z_{s,t}}\Gamma\left(  Y_{z_{s,t}}\right)  }F+\widehat{\Gamma
\left(  Y_{z_{s,t}}\right)  }Y_{z_{s,t}}F\\
&  =\left(  \widehat{Y_{a}\Gamma\left(  Y_{b}\right)  }\right)  F\left(
u_{s}\right)  |_{a\otimes b=z_{s,t}\otimes z_{s,t}}+\widehat{\Gamma\left(
Y_{z_{s,t}}\right)  }Y_{z_{s,t}}F.
\end{align*}
From these last two displayed equations we conclude that%
\begin{align*}
\left(  \widehat{Y_{b}\Gamma\left(  Y_{a}\right)  }\right)   &  F\left(
u_{s}\right)  |_{a\otimes b=\mathbb{Z}_{s,t}}-\left(  Y_{z_{s,t}%
}\widehat{\Gamma\left(  Y_{z_{s,t}}\right)  }F\right)  \left(  u_{s}\right) \\
&  =-\left(  \widehat{Y_{a}\Gamma\left(  Y_{b}\right)  }\right)  F\left(
u_{s}\right)  |_{a\otimes b=\mathbb{Z}_{s,t}^{a}}-\left(  \widehat{Y_{a}%
\Gamma\left(  Y_{b}\right)  }\right)  F\left(  u_{s}\right)  |_{a\otimes
b=\frac{1}{2}z_{s,t}\otimes z_{s,t}}-\widehat{\Gamma\left(  Y_{z_{s,t}%
}\right)  }Y_{z_{s,t}}F\\
&  =-\left(  \widehat{Y_{a}\Gamma\left(  Y_{b}\right)  }\right)  F\left(
u_{s}\right)  |_{a\otimes b=\mathbb{Z}_{s,t}}-\widehat{\Gamma\left(
Y_{z_{s,t}}\right)  }Y_{z_{s,t}}F
\end{align*}
which combined with Eq. (\ref{equ.12.8}) gives Eq. (\ref{equ.12.7}).
\end{proof}

Let $\pi_{M}$ and $\pi_{G}$ be the projection maps from $M\times G$ to $M$ and
$G$ respectively. We are now going to characterize $\mathbf{U}$ solving Eq.
(\ref{equ.12.6}) in terms of its components
\begin{equation}
\mathbf{G}:=\left(  \pi_{G}\right)  _{\ast}\left(  \mathbf{U}\right)  \text{
and }\mathbf{X}:=\left(  \pi_{M}\right)  _{\ast}\left(  \mathbf{U}\right)  .
\label{equ.12.9}%
\end{equation}

\begin{theorem}
\label{the.12.5}Let $\mathbf{U}$ solves Eq. (\ref{equ.12.6}), $\mathbf{G}%
=\left(  \pi_{G}\right)  _{\ast}\left(  \mathbf{U}\right)  ,$ $\mathbf{X}%
:=\left(  \pi_{M}\right)  _{\ast}\left(  \mathbf{U}\right)  ,$ and
\begin{equation}
\mathbf{A}:=\int\Gamma\left(  d\mathbf{X}\right)  \in W_{p}\left(
\mathfrak{g}\right)  . \label{equ.12.10}%
\end{equation}
Then $\mathbf{X}$ and $\mathbf{G}$ solve%
\begin{align}
d\mathbf{X}_{t}  &  =Y_{d\mathbf{Z}_{t}}\left(  x_{t}\right)  \text{ with
}x\left(  0\right)  =x_{0}\in M\label{equ.12.11}\\
d\mathbf{G}_{t}  &  =Y_{\mathbf{dA}}^{G}\left(  g_{t}\right)  \text{ with
}g\left(  0\right)  =e\in G. \label{equ.12.12}%
\end{align}

\end{theorem}

\begin{proof}
Since $\left(  \pi_{M}\right)  _{\ast}Y_{a}^{\Gamma}=Y_{a}\circ\pi_{M},$ the
assertion in Eq. (\ref{equ.12.11}) is a direct consequence of Theorem
\ref{the.4.11}. For the second equation we first observe that
\[
\left(  \pi_{G}\right)  _{\ast}Y_{a}^{\Gamma}\left(  m,g\right)
=Y_{\Gamma\left(  Y_{a}\left(  m\right)  \right)  }^{G}\left(  g\right)
\]
and so if $f\in C^{\infty}\left(  G\right)  $ then for $a,b\in\mathbb{R}^{n},$%
\[
Y_{b}^{\Gamma}\left(  f\circ\pi_{G}\right)  =\left(  Y_{\Gamma\left(
Y_{b}\left(  m\right)  \right)  }^{G}f\right)  \left(  g\right)
\]
and
\begin{align*}
Y_{a}^{\Gamma}Y_{b}^{\Gamma}\left(  f\circ\pi_{G}\right)   &  =Y_{a}^{\Gamma
}\left[  \left(  m,g\right)  \rightarrow\left(  Y_{\Gamma\left(  Y_{b}\left(
m\right)  \right)  }^{G}f\right)  \left(  g\right)  \right] \\
&  =\left(  Y_{Y_{a}\Gamma\left(  Y_{b}\left(  m\right)  \right)  }%
^{G}f\right)  \left(  g\right)  +\left(  Y_{\Gamma\left(  Y_{a}\left(
m\right)  \right)  }^{G}Y_{\Gamma\left(  Y_{b}\left(  m\right)  \right)  }%
^{G}f\right)  \left(  g\right)  .
\end{align*}
Thus applying Theorem \ref{the.4.5} to the function $f\circ\pi_{G}\in
C^{\infty}\left(  M\times G\right)  $ we discover that%
\begin{align}
f\left(  g_{t}\right)  -f\left(  g_{s}\right)   &  =f\circ\pi_{G}\left(
u_{t}\right)  -f\circ\pi_{G}\left(  u_{s}\right) \nonumber\\
&  \simeq\left(  \mathcal{Y^{\Gamma}}_{d\mathbf{Z}_{t}}\left(  f\circ\pi
_{G}\right)  \right)  \left(  x_{s}\right) \nonumber\\
&  =\left(  Y_{\Gamma\left(  Y_{z_{s,t}}\left(  x_{s}\right)  \right)  }%
^{G}f\right)  \left(  g_{s}\right)  +\left[  \left(  Y_{Y_{\left(
\cdot\right)  }\Gamma\left(  Y_{\left(  \cdot\right)  }\left(  x_{s}\right)
\right)  }^{G}f\right)  \left(  g_{s}\right)  +\left(  Y_{\Gamma\left(
Y_{\left(  \cdot\right)  }\left(  x_{s}\right)  \right)  }^{G}Y_{\Gamma\left(
Y_{\left(  \cdot\right)  }\left(  x_{s}\right)  \right)  }^{G}f\right)
\left(  g_{s}\right)  \right]  \mathbb{Z}_{s,t}. \label{equ.12.13}%
\end{align}
On the other hand from Theorem \ref{the.4.5}, Eq. (\ref{equ.12.12}) is
equivalent to%
\begin{equation}
f\left(  g_{t}\right)  -f\left(  g_{s}\right)  \simeq\left(  Y_{a_{s,t}}%
^{G}f\right)  \left(  g_{s}\right)  +\left(  \mathcal{Y}_{\mathbb{A}_{s,t}%
}^{G}f\right)  \left(  g_{s}\right)  , \label{equ.12.14}%
\end{equation}
where $\mathcal{Y}_{a\otimes b}^{G}:=Y_{a}Y_{b}.$ Since, by item 2. of Theorem
\ref{the.4.5},%
\begin{equation}
a_{s,t}=\left[  \int_{s}^{t}\Gamma\left(  d\mathbf{X}\right)  \right]
^{1}\simeq\Gamma\left(  Y_{z_{s,t}}\left(  x_{s}\right)  \right)  +\left[
Y_{\left(  \cdot\right)  }\left(  x_{s}\right)  \Gamma\left(  Y_{\left(
\cdot\right)  }\right)  \right]  \mathbb{Z}_{s,t}, \label{equ.12.15}%
\end{equation}
and%
\begin{equation}
\mathbb{A}_{s,t}=\left[  \int_{s}^{t}\Gamma\left(  d\mathbf{X}\right)
\right]  ^{2}\simeq\left[  \Gamma_{x_{s}}Y\left(  x_{s}\right)  \otimes
\Gamma_{x_{s}}Y\left(  x_{s}\right)  \right]  \mathbb{Z}_{s,t},
\label{equ.12.16}%
\end{equation}
we see that Eqs. (\ref{equ.12.13}) and (\ref{equ.12.14}) are indeed the same.
Moreover,
\begin{align*}
\mathbb{G}_{s,t}  &  \simeq\left(  \pi_{G}\right)  _{\ast}\otimes\left(
\pi_{G}\right)  _{\ast}\mathbb{U}_{s,t}\\
&  \simeq\left(  \pi_{G}\right)  _{\ast}\otimes\left(  \pi_{G}\right)  _{\ast
}Y_{\left(  \cdot\right)  }^{\Gamma}\left(  u_{t}\right)  \otimes Y_{\left(
\cdot\right)  }^{\Gamma}\left(  u_{t}\right)  \mathbb{Z}_{s,t}\\
&  =Y_{\Gamma\left(  Y_{\left(  \cdot\right)  }\left(  x_{s}\right)  \right)
}^{G}\left(  g_{s}\right)  \otimes Y_{\Gamma\left(  Y_{\left(  \cdot\right)
}\left(  x_{s}\right)  \right)  }^{G}\left(  g_{s}\right)  \mathbb{Z}_{s,t}\\
&  \simeq Y_{\left(  \cdot\right)  }^{G}\left(  g_{s}\right)  \otimes
Y_{\left(  \cdot\right)  }^{G}\left(  g_{s}\right)  \left[  \Gamma_{x_{s}%
}Y\left(  x_{s}\right)  \otimes\Gamma_{x_{s}}Y\left(  x_{s}\right)  \right]
\mathbb{Z}_{s,t}\\
&  \simeq Y_{\left(  \cdot\right)  }^{G}\left(  g_{s}\right)  \otimes
Y_{\left(  \cdot\right)  }^{G}\left(  g_{s}\right)  \mathbb{A}_{s,t}%
\end{align*}
which shows that the level two part of $\left(  \pi_{G}\right)  _{\ast}\left(
\mathbf{U}\right)  $ satisfies the equation dictated by Eq. (\ref{equ.12.12}).
\end{proof}

\begin{proposition}
\label{pro.12.6}If $\mathbf{X}$ solves Eq. (\ref{equ.12.11}) and $\mathbf{G}$
solves Eq. (\ref{equ.12.12}) (which exists on $\left[  0,T\right]  $ by
Theorem \ref{the.4.20}), then $u_{t}:=\left(  x_{t},g_{t}\right)  $ solves the
level one equations dictated by the RDE\ Eq. (\ref{equ.12.6}).
\end{proposition}

\begin{proof}
If $F\in C^{\infty}\left(  M\times G\right)  $ and $0\leq s<t\leq T,$ then
using Theorem \ref{the.4.5} for the RDE in Eq. (\ref{equ.12.11}) and then
using Theorem \ref{the.4.5} for the RDE in Eq. (\ref{equ.12.12}) we conclude%
\begin{align*}
F\left(  x_{t},g_{t}\right)  \simeq &  F\left(  x_{t},g_{s}\right)  -\left(
\left(  a_{s,t}+\mathbb{A}_{s,t}\right)  ^{\symbol{94}}F\right)  \left(
x_{t},g_{s}\right) \\
\simeq &  F\left(  x_{s},g_{s}\right)  +\left(  \mathcal{Y}_{\mathbf{Z}_{s,t}%
}F\right)  \left(  x_{s},g_{s}\right)  -\left(  \left(  a_{s,t}+\mathbb{A}%
_{s,t}\right)  ^{\symbol{94}}F\right)  \left(  x_{s},g_{s}\right) \\
&  -\left(  \mathcal{Y}_{\mathbf{Z}_{s,t}}\left(  a_{s,t}+\mathbb{A}%
_{s,t}\right)  ^{\symbol{94}}F\right)  \left(  x_{s},g_{s}\right) \\
\simeq &  F\left(  x_{s},g_{s}\right)  +\left(  \mathcal{Y}_{\mathbf{Z}_{s,t}%
}F\right)  \left(  x_{s},g_{s}\right)  -\left(  \left(  a_{s,t}+\mathbb{A}%
_{s,t}\right)  ^{\symbol{94}}F\right)  \left(  x_{s},g_{s}\right)  -\left(
Y_{z_{s,t}}\hat{a}_{s,t}F\right)  \left(  x_{s},g_{s}\right) \\
\simeq &  F\left(  x_{s},g_{s}\right)  +\left(  \mathcal{Y}_{\mathbf{Z}_{s,t}%
}F\right)  \left(  x_{s},g_{s}\right)  -\left(  \left(  a_{s,t}+\mathbb{A}%
_{s,t}\right)  ^{\symbol{94}}F\right)  \left(  x_{s},g_{s}\right)  -\left(
\hat{a}_{s,t}Y_{z_{s,t}}F\right)  \left(  x_{s},g_{s}\right)  .
\end{align*}
Replacing $a_{s,t}$ and $\mathbb{A}_{s,t}$ above by the approximate identities
in Eqs. (\ref{equ.12.15}) and (\ref{equ.12.16}) respectively shows%
\begin{align*}
\left[  F\left(  u\right)  \right]  _{s,t}  &  \simeq\left(  \mathcal{Y}%
_{\mathbf{Z}_{s,t}}F\right)  \left(  u_{s}\right) \\
&  -\left(  \left(  \Gamma\left(  Y_{z_{s,t}}\left(  x_{s}\right)  \right)
+\left[  Y_{\left(  \cdot\right)  }\left(  x_{s}\right)  \Gamma\left(
Y_{\left(  \cdot\right)  }\right)  \right]  \mathbb{Z}_{s,t}+\left[
\Gamma_{x_{s}}Y\left(  x_{s}\right)  \otimes\Gamma_{x_{s}}Y\left(
x_{s}\right)  \right]  \mathbb{Z}_{s,t}\right)  ^{\symbol{94}}F\right)
\left(  u_{s}\right) \\
&  -\left(  \widehat{\Gamma\left(  Y_{z_{s,t}}\left(  x_{s}\right)  \right)
}Y_{z_{s,t}}F\right)  \left(  u_{s}\right) \\
&  \simeq\left(  \mathcal{Y}_{\mathbf{Z}_{s,t}}F\right)  \left(  u_{s}\right)
-\left(  \left(  \Gamma\left(  Y_{z_{s,t}}\left(  x_{s}\right)  \right)
+\left[  \Gamma_{x_{s}}Y\left(  x_{s}\right)  \otimes\Gamma_{x_{s}}Y\left(
x_{s}\right)  \right]  \mathbb{Z}_{s,t}\right)  ^{\symbol{94}}F\right)
\left(  u_{s}\right) \\
&  -\left(  \left[  Y_{\left(  \cdot\right)  }\left(  x_{s}\right)
\Gamma\left(  Y_{\left(  \cdot\right)  }\right)  \right]  \mathbb{Z}%
_{s,t}\right)  ^{\symbol{94}}F\left(  u_{s}\right)  -\left(  \widehat{\Gamma
\left(  Y_{z_{s,t}}\left(  x_{s}\right)  \right)  }Y_{z_{s,t}}F\right)
\left(  u_{s}\right) \\
&  \simeq\left(  \left(  \mathcal{Y}_{\mathbf{Z}_{s,t}}+\mathcal{Y}%
_{\mathbf{Z}_{s,t}}^{G}\right)  F\right)  \left(  u_{s}\right)  -\left(
\left[  Y_{\left(  \cdot\right)  }\left(  x_{s}\right)  \Gamma\left(
Y_{\left(  \cdot\right)  }\right)  \right]  \mathbb{Z}_{s,t}\right)
^{\symbol{94}}F\left(  u_{s}\right)  -\left(  \widehat{\Gamma\left(
Y_{z_{s,t}}\left(  x_{s}\right)  \right)  }Y_{z_{s,t}}F\right)  \left(
u_{s}\right)
\end{align*}
which indeed matches Eq. (\ref{equ.12.7}).

\end{proof}

\begin{corollary}
\label{cor.12.7}If the solution $\mathbf{X}=\left(  x,\mathbb{X}\right)  $ to
Eq. (\ref{equ.12.12}) exists on $\left[  0,T\right]  ,$ then the solution
$\mathbf{U}\in WG_{p}\left(  M\times G\right)  $ to the RDE in Eq.
(\ref{equ.12.6}) exists on $\left[  0,T\right]  .$ (In other words, the
explosion time of $\mathbf{U}$ is the same as the explosion time for the level
one component $(x)$ of $\mathbf{X}.$)
\end{corollary}

\begin{proof}
Let $\mathbf{G}$ solves Eq. (\ref{equ.12.12}) which exists on $\left[
0,T\right]  $ by Theorem \ref{the.4.20}). Then by Proposition \ref{pro.12.6}
$u_{t}:=\left(  x_{t},g_{t}\right)  $ solves the level one equations dictated
by the RDE\ in Eq. (\ref{equ.12.6}).The result now follows by an application
of Lemma \ref{lem.2.18}.
\end{proof}

\begin{definition}
[Parallel Translation I.]\label{def.12.8}If $\Gamma\in\Omega^{1}\left(
M,\mathfrak{g}\right)  $ and $\mathbf{X\in}WG_{p}\left(  M\right)  ,$ we refer
to $\mathbf{U=}\left(  u:=\left(  x,g\right)  ,\mathbb{U}\right)  \in
WG_{p}\left(  M\times G\right)  $ solving Eq. (\ref{equ.12.6}) with $k=N,$
$\mathbf{Z}:=\mathbf{X},$ and $Y=V$ as in Example \ref{exa.3.7} as parallel
translation in the trivial principal bundle, $M\times G,$ relative to the
connection $\Gamma.$
\end{definition}

\subsection{Rough Parallel Translation on Trivial Vector
Bundles\label{sub.12.2}}

Let $\mathcal{E}$ be a finite dimensional vector space and $M\times
\mathcal{E}$ be the trivial vector bundle with fiber being $\mathcal{E}.$
(Eventually we will take $\mathcal{E}=\mathbb{R}^{N}$ -- the ambient space in
which $M$ is embedded.) As usual we identify sections of $M\times\mathcal{E}$
with functions, $T:M\rightarrow\mathcal{E}.$ We are now going to specialize
the results in the previous subsection to the case where $G=GL\left(
\mathcal{E}\right)  ,$ the matrix Lie group of invertible linear
transformations on $\mathcal{E}.$ In his case $\mathfrak{g}%
=\operatorname*{Lie}GL\left(  \mathcal{E}\right)  =\operatorname*{End}\left(
\mathcal{E}\right)  .$ Let us fix $\mathbf{X}\in WG_{p}\left(  M,\omega
\right)  $ and a connection one-form $\Gamma\in\Omega^{1}\left(
M,\operatorname*{End}\left(  \mathcal{E}\right)  \right)  .$

\begin{notation}
\label{not.12.9}The \textbf{covariant derivative} associated to $\Gamma$ of a
smooth function, $T:M\rightarrow\mathcal{E},$ is defined by%
\begin{equation}
\nabla_{v_{m}}T:=\left(  dT\right)  \left(  v_{m}\right)  +\Gamma\left(
v_{m}\right)  T\left(  m\right)  \text{ for all }v_{m}\in T_{m}M.
\label{equ.12.17}%
\end{equation}

\end{notation}

\begin{remark}
[Rough Parallel Translation]\label{rem.12.10}Recall that \textbf{rough
parallel translation }$\left(  \mathbf{G}:=\left(  g,\mathbb{G}\right)
\right)  $ along $\mathbf{X}$ relative to $\Gamma$ is the solution to the RDE
in Eq. (\ref{equ.12.12}) which in this setting is equivalent to $\mathbf{G}%
:=\left(  g,\mathbb{G}\right)  $ solving%
\begin{equation}
dg+\left(  d\mathbf{A}\right)  g=0\text{ with }g_{0}=I \label{equ.12.18}%
\end{equation}
where%
\begin{equation}
\mathbf{A}=\left(  a,\mathbb{A}\right)  :=\int\Gamma\left(  \mathbf{dX}%
\right)  \in WG_{p}\left(  \operatorname*{End}\left(  \mathcal{E}\right)
,\omega\right)  . \label{equ.12.19}%
\end{equation}

\end{remark}

\begin{theorem}
\label{the.12.11}Let $\mathbf{G}:=\left(  g,\mathbb{G}\right)  $ be rough
parallel translation along $\mathbf{X}=\left(  x,\mathbb{X}\right)  $ as in
Definition \ref{rem.12.10}. If $T:M\rightarrow\mathcal{E}$ is a smooth
function, then
\begin{equation}
\left[  g^{-1}T\left(  x\right)  \right]  _{s,t}\simeq g_{s}^{-1}\left[
\left[  \nabla_{V_{z_{s,t}}}T\right]  \left(  x_{s}\right)  +\left[
\nabla_{V_{\left(  \cdot\right)  }}\nabla_{V_{\left(  \cdot\right)  }%
}T\right]  _{\mathbb{Z}_{s,t}}\left(  x_{s}\right)  \right]  ,
\label{equ.12.20}%
\end{equation}
where $\nabla T:=dT+\Gamma T,$ and for $w,z\in\mathbb{R}^{d}$ and $x\in M,$
\[
\left[  \nabla_{V_{\left(  \cdot\right)  }}\nabla_{V_{\left(  \cdot\right)  }%
}T\right]  _{w\otimes z}\left(  x\right)  =\left[  \nabla_{V_{w}}\nabla
_{V_{z}}T\right]  \left(  x\right)  .
\]
In particular, if $T$ is $\nabla$ -- parallel, i.e. $\nabla T\equiv0,$ then
$g_{t}^{-1}T\left(  x_{t}\right)  =T\left(  x_{0}\right)  $ for all
$t\in\left[  0,T\right]  .$
\end{theorem}

\begin{proof}
If we let $F\left(  x,g\right)  :=g^{-1}T\left(  x\right)  $ we find
\[
\left(  V_{z}^{\Gamma}F\right)  \left(  x,g\right)  =g^{-1}dT\left(
P_{x}z\right)  +g^{-1}\Gamma\left(  P_{x}z\right)  T\left(  x\right)
=g^{-1}\left(  \nabla_{V_{z}}T\right)  \left(  x\right)  .
\]
Moreover if $z,w\in\mathbb{R}^{N}$ we have
\[
\left(  \mathcal{V^{\Gamma}}_{w\otimes z}F\right)  \left(  x,g\right)
=\left(  V_{w}^{\Gamma}V_{z}^{\Gamma}F\right)  \left(  x,g\right)
=g^{-1}\left(  \nabla_{V_{w}}\nabla_{V_{z}}T\right)  \left(  x\right)  .
\]
From Theorem \ref{the.4.5} and the last two equation we learn%
\[
\left[  F\left(  x,g\right)  \right]  _{s,t}\simeq\left(  \mathcal{V^{\Gamma}%
}_{z_{s,t}+\mathbb{Z}_{s,t}}F\right)  \left(  x_{s},g_{s}\right)  =g_{s}%
^{-1}\left[  \left[  \nabla_{V_{z_{s,t}}}T\right]  \left(  x_{s}\right)
+\left[  \nabla_{V_{\left(  \cdot\right)  }}\nabla_{V_{\left(  \cdot\right)
}}T\right]  _{\mathbb{Z}_{s,t}}\left(  x_{s}\right)  \right]  .
\]

\end{proof}

\begin{notation}
\label{not.12.12}Using the product rule, this covariant derivative in Notation
\ref{not.12.9} extends to a covariant derivative on any associated bundle to
$M\times\mathcal{E}$ and in particular on $M\times\operatorname*{End}\left(
\mathcal{E}\right)  .$ In detail, if $U:M\rightarrow\operatorname*{End}\left(
\mathcal{E}\right)  $ is a smooth function (thought of as a section of the
trivial vector bundle, $M\times\operatorname*{End}\left(  \mathcal{E}\right)
\mathcal{)},$ and $v_{m}\in T_{m}M$ then%
\[
\nabla_{v_{m}}U:=dU\left(  v_{m}\right)  +\left[  \Gamma\left(  v_{m}\right)
,U\left(  m\right)  \right]  =dU\left(  v_{m}\right)  +ad_{\Gamma\left(
v_{m}\right)  }U\left(  m\right)  \in\operatorname*{End}\left(  \mathcal{E}%
\right)  .
\]

\end{notation}

\begin{lemma}
\label{lem.12.13}Continuing the notation above, the product rule,%
\begin{equation}
\nabla_{v}\left[  UT\right]  =\left[  \nabla vU\right]  T\left(  x\right)
+U\left(  x\right)  \nabla_{v}T\text{ for all }v\in T_{x}M \label{equ.12.21}%
\end{equation}
for all smooth functions, $T:M\rightarrow\mathcal{E}$ and $U:M\rightarrow
\operatorname*{End}\left(  \mathcal{E}\right)  .$
\end{lemma}

\begin{proof}
The product rule is a simple consequence of the product rule for $d$ and the
definition of $\nabla;$%
\begin{align*}
\nabla\left[  UT\right]   &  =d\left[  UT\right]  +\Gamma\left[  UT\right]
=\left(  dU\right)  T+UdT+\Gamma UT\\
&  =\left(  dU\right)  T+UdT+\left[  \Gamma,U\right]  T+U\Gamma T\\
&  =\left(  dU\right)  T+\left[  \Gamma,U\right]  T+U\left(  dT+\Gamma
T\right) \\
&  =\left[  \nabla U\right]  T+U\left[  \nabla T\right]  .
\end{align*}

\end{proof}

\begin{lemma}
\label{lem.12.14}Parallel translation relative to $ad_{\Gamma\left(
\cdot\right)  }$ is given by $Ad_{g_{t}}$ where $Ad_{g}U:=gUg^{-1}$ and
$g_{t}$ is the the solution to Eq. (\ref{equ.12.18}).
\end{lemma}

\begin{proof}
From Theorem \ref{the.G.1} and the fact that $\rho\left(  g\right)  :=Ad_{g}$
is a homomorphism from $GL\left(  \mathcal{E}\right)  $ to $GL\left(
\operatorname*{End}\left(  \mathcal{E}\right)  \right)  $ such that
\begin{align*}
d\rho\left(  a\right)  b  &  :=\frac{d}{dt}|_{0}\rho\left(  e^{ta}\right)
b=\frac{d}{dt}|_{0}\left[  e^{ta}be^{-ta}\right] \\
&  =ab-ba=\left[  a,b\right]  =ad_{a}b
\end{align*}
for all $a,b\in\operatorname*{End}\left(  \mathcal{E}\right)  .$
\end{proof}

Applying this result with $\mathcal{E}$ replaced by $\operatorname*{End}%
\left(  \mathcal{E}\right)  $ and $\Gamma$ by $ad_{\Gamma}$ leads to the
following corollary.

\begin{corollary}
\label{cor.12.15}Continue the notation of Theorem \ref{the.12.11} but now
suppose that $U:M\rightarrow\operatorname*{End}\left(  \mathcal{E}\right)  $
is a smooth function which we view as a section of the trivial bundle,
$M\times\operatorname*{End}\left(  \mathcal{E}\right)  .$ Then
\begin{equation}
\left[  Ad_{g^{-1}}U\left(  x\right)  \right]  _{s,t}\simeq Ad_{g_{s}^{-1}%
}\left[  \left[  \nabla_{V_{z_{s,t}}}U\right]  \left(  x_{s}\right)  +\left[
\nabla_{V_{\left(  \cdot\right)  }}\nabla_{V_{\left(  \cdot\right)  }%
}U\right]  _{\mathbb{Z}_{s,t}}\left(  x_{s}\right)  \right]  ,/
\label{equ.12.22}%
\end{equation}
where $\nabla U:=dU+\left[  \Gamma,U\right]  ,$ and for $w,z\in\mathbb{R}^{d}$
and $x\in M,$
\[
\left[  \nabla_{V_{\left(  \cdot\right)  }}\nabla_{V_{\left(  \cdot\right)  }%
}U\right]  _{w\otimes z}\left(  x\right)  =\left[  \nabla_{V_{w}}\nabla
_{V_{z}}U\right]  \left(  x\right)  .
\]
In particular, if $U$ is $\nabla$ -- parallel, i.e. $\nabla U\equiv0,$ then
$g_{t}^{-1}U\left(  x_{t}\right)  =U\left(  x_{0}\right)  $ for all
$t\in\left[  0,T\right]  .$
\end{corollary}

\begin{corollary}
\label{cor.12.16}As in Lemma \ref{lem.G.4}, let $A$ be any
$\operatorname*{End}\left(  \mathbb{R}^{N}\right)  $ -- valued one-form on $M$
such that $\left[  A\left(  v_{x}\right)  ,P_{x}\right]  =0$ for all $x\in M$
and $v_{x}\in T_{x}M$ and define $\Gamma:=dPQ+dQP+A$ which is an
$\operatorname*{End}\left(  \mathbb{R}^{N}\right)  $ -- valued one-form on
$M.$ Then for every $\mathbf{X}\in WG_{p}\left(  M,\omega\right)  $ we have%
\[
\left[  Ad_{g}^{-1}P_{x}\right]  _{s,t}\simeq0\implies P_{x_{t}}g_{t}%
=g_{t}P_{x_{0}}\text{ for }0\leq t\leq T,
\]
where $\mathbf{G}=\left(  g_{s,t},\mathbb{G}_{s,t}\right)  $ is parallel
translation along $\mathbf{X}.$
\end{corollary}

\section{Boneyard Semko}

First we will need a lemma that is in the same spirit as Lemma \ref{lem.1.32}.
Its proof is the same besides the few obvious modifications.

\begin{lemma}
\label{lem.13.1}Suppose $\left(  TM,g\right)  $ and $\left(  TN,h\right)  $
are tangent bundles with inner products over $M$ and $N$ respectively. Let
$\zeta$ be a smooth function from $M$ to $N$ and consider the vector bundle
$\mathcal{\tilde{E}}:=\left(  \pi_{M}^{\ast}TM\otimes\pi_{M}^{\ast}%
TM\otimes\pi_{M}^{\ast}TM\right)  ^{\ast}\otimes\left(  \pi_{M}\circ
\zeta\right)  ^{\ast}TN$ on $TM$. Let $F:TM\rightarrow\mathcal{\tilde{E}}$ be
a smooth section. For every compact $K\subseteq TM$, there exists a $C_{K}$
such that
\begin{equation}
\left\vert F\left(  v\right)  \left(  v^{1}\otimes v^{2}\otimes v^{3}\right)
\right\vert _{h_{\zeta\left(  m\right)  }}\leq C_{K}\left\vert v^{1}%
\right\vert _{g_{m}}\left\vert v^{2}\right\vert _{g_{m}}\left\vert
v^{3}\right\vert _{g_{m}}%
\end{equation}
for $v\in K$.
\end{lemma}

Before we prove this, we will mention and prove a simple geometrical lemma.

\begin{lemma}
[Product rule]\label{lem.13.2}Let $m\in M$ and suppose $X,Y$ are vector fields
on $M.$ If $\phi$ is a chart such that $m\in D\left(  \phi\right)  $ then%
\[
XYf\left(  m\right)  =\left(  f\circ\phi^{-1}\right)  ^{\prime\prime}\left(
\phi\left(  m\right)  \right)  \left[  d\phi_{m}X\left(  m\right)  \otimes
d\phi_{m}Y\left(  m\right)  \right]  +\left(  f\circ\phi^{-1}\right)
^{\prime}\left(  \phi\left(  m\right)  \right)  \left[  \partial_{X\left(
m\right)  }d\phi\circ Y\right]
\]

\end{lemma}

\begin{proof}
We have
\begin{align*}
XYf\left(  m\right)   &  =X\left(  Y\left(  f\circ\phi^{-1}\circ\phi\right)
\right)  \left(  m\right) \\
&  =X\left(  \left(  f\circ\phi^{-1}\right)  ^{\prime}\left(  \phi\left(
\cdot\right)  \right)  d\phi\circ Y\left(  \cdot\right)  \right)  \left(
m\right) \\
&  =\left(  f\circ\phi^{-1}\right)  ^{\prime\prime}\left(  \phi\left(
m\right)  \right)  \left[  d\phi_{m}X\left(  m\right)  \otimes d\phi
_{m}Y\left(  m\right)  \right]  +\left(  f\circ\phi^{-1}\right)  ^{\prime
}\left(  \phi\left(  m\right)  \right)  \left[  \partial_{X\left(  m\right)
}d\phi\circ Y\right]
\end{align*}

\end{proof}

\begin{proof}
This will be similar to the proof of Theorem \ref{the.1.22}.

$\left(  \implies\right)  $First we show that if Definition \ref{def.5.2}
holds, then Equation (\ref{equ.4.8}) holds. This will mimic the proof of step
two of Theorem \ref{the.1.22}.

Let $a,b$ be such that $\left[  a,b\right]  \subseteq I_{0}$. Then around
every point $m\in y\left(  \left[  a,b\right]  \right)  $, we can find a chart
$\phi^{m}$ and an open $\mathcal{W}_{m}$ such that $\mathcal{W}_{m}\subseteq
D\left(  \phi^{m}\right)  $, $m\in\mathcal{W}_{m}$ and $\mathcal{V}_{m}%
:=\phi\left(  \mathcal{W}_{m}\right)  $ is convex. By using Remark
\ref{rem.1.33} on these sets, we may assume WLOG that $y\left(  \left[
a,b\right]  \right)  \subseteq\mathcal{W}$ where $\phi\left(  \mathcal{W}%
\right)  =\mathcal{V}$ is convex. We can then apply Taylor's Theorem to the
function%
\[
x\longrightarrow f\circ\phi^{-1}\left(  x\right)
\]
to see that%
\begin{align*}
f\circ\phi^{-1}\left(  \tilde{x}\right)   &  =f\circ\phi^{-1}\left(  x\right)
+\left(  f\circ\phi^{-1}\right)  ^{\prime}\left(  x\right)  \left[  \tilde
{x}-x\right]  +\frac{1}{2}\left(  f\circ\phi^{-1}\right)  ^{\prime\prime
}\left(  x\right)  \left[  \tilde{x}-x\right]  ^{\otimes2}\\
&  \quad+\frac{1}{2}\int_{0}^{1}\left(  f\circ\phi^{-1}\right)  ^{\prime
\prime\prime}\left(  t\left(  \tilde{x}-x\right)  +x\right)  \left[  \tilde
{x}-x\right]  ^{\otimes3}\left(  1-t\right)  ^{2}dt.
\end{align*}
We have%
\[
\left\vert \frac{1}{2}\int_{0}^{1}\left(  f\circ\phi^{-1}\right)
^{\prime\prime\prime}\left(  t\left(  \tilde{x}-x\right)  +x\right)  \left[
\tilde{x}-x\right]  ^{\otimes3}\left(  1-t\right)  ^{2}dt\right\vert \leq
C_{1}\left\vert \tilde{x}-x\right\vert ^{3}%
\]
provided $x,\tilde{x}$ come from a compact set (which the convex hull of
$\phi\left(  y\left(  \left[  a,b\right]  \right)  \right)  $ is). If we plug
in $\tilde{x}=\phi\left(  y_{t}\right)  $ and $x=\phi\left(  y_{s}\right)  $,
we see that%
\begin{align*}
f\left(  y_{t}\right)  -f\left(  y_{s}\right)   &  \underset{^{3}}{\approx
}\left(  f\circ\phi^{-1}\right)  ^{\prime}\left(  \phi\left(  y_{s}\right)
\right)  \left[  \phi\left(  y_{t}\right)  -\phi\left(  y_{s}\right)  \right]
+\frac{1}{2}\left(  f\circ\phi^{-1}\right)  ^{\prime\prime}\left(  \phi\left(
y_{s}\right)  \right)  \left[  \phi\left(  y_{t}\right)  -\phi\left(
y_{s}\right)  \right]  ^{\otimes2}\\
&  =\left(  f\circ\phi^{-1}\right)  ^{\prime}\left(  \phi\left(  y_{s}\right)
\right)  \left[  d\phi\circ F_{x_{s,t}}\left(  y_{s}\right)  +\left(
\partial_{F_{w}\left(  y_{s}\right)  }d\phi\circ F_{\tilde{w}}\right)
|_{w\otimes\tilde{w}=\mathbb{X}_{s,t}}\right] \\
&  \quad+\frac{1}{2}\left(  f\circ\phi^{-1}\right)  ^{\prime\prime}\left(
\phi\left(  y_{s}\right)  \right)  \left[  d\phi\circ F_{x_{s,t}}\left(
y_{s}\right)  \right]  ^{\otimes2}%
\end{align*}
by Definition \ref{def.5.2} and the fact that the other terms are
approximately zero. We have by the chain rule that%
\begin{align*}
\left(  f\circ\phi^{-1}\right)  ^{\prime}\left(  \phi\left(  y_{s}\right)
\right)  \left[  d\phi\circ F_{x_{s,t}}\left(  y_{s}\right)  \right]   &
=f_{\ast y_{s}}F_{x_{s,t}}\left(  y_{s}\right) \\
&  =\left(  F_{x_{s,t}}f\right)  \left(  y_{s}\right)  .
\end{align*}
Therefore, to finish this step, we just need to show that%
\[
\left(  f\circ\phi^{-1}\right)  ^{\prime}\left(  \phi\left(  y_{s}\right)
\right)  \left(  \partial_{F_{w}\left(  y_{s}\right)  }d\phi\circ F_{\tilde
{w}}\right)  |_{w\otimes\tilde{w}=\mathbb{X}_{s,t}}+\frac{1}{2}\left(
f\circ\phi^{-1}\right)  ^{\prime\prime}\left(  \phi\left(  y_{s}\right)
\right)  \left[  d\phi\circ F_{x_{s,t}}\left(  y_{s}\right)  \right]
^{\otimes2}=\left(  \mathcal{F}_{\mathbb{X}_{s,t}}f\right)  \left(
y_{s}\right)  .
\]
First, note that $\left(  f\circ\phi^{-1}\right)  ^{\prime\prime}\left(
\phi\left(  y_{s}\right)  \right)  $ is symmetric so that%
\begin{equation}
\frac{1}{2}\left(  f\circ\phi^{-1}\right)  ^{\prime\prime}\left(  \phi\left(
y_{s}\right)  \right)  \left[  d\phi\circ F_{x_{s,t}}\left(  y_{s}\right)
\right]  ^{\otimes2}=\left(  f\circ\phi^{-1}\right)  ^{\prime\prime}\left(
\phi\left(  y_{s}\right)  \right)  \left[  d\phi\circ F_{w}\left(
y_{s}\right)  \right]  \otimes\left[  d\phi\circ F_{\tilde{w}}\left(
y_{s}\right)  \right]  |_{w\otimes\tilde{w}=\mathbb{X}_{s,t}}.
\label{equ.13.2}%
\end{equation}
Thus when \textquotedblleft$w\otimes\tilde{w}=\mathbb{X}_{s,t}$%
\textquotedblright, we have by Lemma \ref{lem.13.2} with $X=F_{w}$,
$Y=F_{\tilde{w}}$and Equation (\ref{equ.13.2}) that
\begin{align*}
&  \left(  f\circ\phi^{-1}\right)  ^{\prime}\left(  \phi\left(  y_{s}\right)
\right)  \left(  \partial_{F_{w}\left(  y_{s}\right)  }d\phi\circ F_{\tilde
{w}}\right)  +\frac{1}{2}\left(  f\circ\phi^{-1}\right)  ^{\prime\prime
}\left(  \phi\left(  y_{s}\right)  \right)  \left[  d\phi\circ F_{x_{s,t}%
}\left(  y_{s}\right)  \right]  ^{\otimes2}\\
&  \quad=\left(  f\circ\phi^{-1}\right)  ^{\prime}\left(  \phi\left(
y_{s}\right)  \right)  \left(  \partial_{F_{w}\left(  y_{s}\right)  }%
d\phi\circ F_{\tilde{w}}\right)  +\left(  f\circ\phi^{-1}\right)
^{\prime\prime}\left(  \phi\left(  y_{s}\right)  \right)  \left[  d\phi\circ
F_{w}\left(  y_{s}\right)  \right]  \otimes\left[  d\phi\circ F_{\tilde{w}%
}\left(  y_{s}\right)  \right]  .\\
&  \quad=\left(  F_{w}F_{\tilde{w}}f\right)  \left(  y_{s}\right)
\end{align*}
Which is what we needed to prove.

$\left(  \Longleftarrow\right)  $For the reverse implication, we assume that
the approximation in Equation (\ref{equ.4.8}) holds for each $f\in C^{\infty
}\left(  M\right)  $. Let $\phi$ be a chart and let $a,b$ be such that
$\left[  a,b\right]  \subseteq I_{0}$ and $y\left(  \left[  a,b\right]
\right)  \subseteq D\left(  \phi\right)  $. Let $\mathcal{U\subseteq}D\left(
\phi\right)  $ be an open set in $M$ that contains $y\left(  \left[
a,b\right]  \right)  $ which has compact closure inside a domain of a chart,
we can find an open set which contains the interval which has compact closure.
Then using a smoothing function, we can manufacture global functions $f^{i}$
which agree with $\phi^{i}$ on $\mathcal{U}$. It is easy then to see that
Equation (\ref{equ.5.12}) holds.
\end{proof}

\subsection{Rough Parallel Translation}

Let $M^{d}$ be a Riemannian manifold and $O\left(  M\right)  $ be the
orthogonal frame bundle of $M$ where the fiber over $m$ is given by the
isometric linear transformations from $\mathbb{R}^{d}$ to $T_{m}M.$ Let $\pi$
denote the projection map and$.$let $\left(  y_{s},y_{s}^{\dag}\right)  $ be a
rough path controlled by $\mathbf{X}=\left(  x,\mathbb{X}\right)  .$

\begin{definition}
We call $\left(  u_{s},\acute{u}_{s}\right)  $ parallel translation along
$\left(  y_{s},y_{s}^{\dag}\right)  $ starting at $u_{0}$ if

\begin{enumerate}
\item $\pi\left(  u_{t}\right)  =y_{t}$ for all $t\in\left[  0,T\right]  $

\item $\int\omega\left(  d\left(  u_{t},\acute{u}_{t}\right)  \right)
=\left(  0,\acute{0}\right)  $ if $\omega$ is the $so\left(  d\right)  $
valued one-form defined by $\omega\left(  \dot{u}_{0}\right)  =u_{0}^{-1}%
\frac{\nabla u}{dt}\left(  0\right)  $
\end{enumerate}
\end{definition}

\begin{theorem}
Given $\left(  y,y^{\dag}\right)  \in CR_{\mathbf{X}}\left(  M\right)  $ and
$\pi\left(  u_{0}\right)  =y_{0}$, there exists a parallel translation locally.
\end{theorem}

Before proving this, we will set up some notation and prove some lemmas.

Let $\mathcal{O}$ be an open set containing $u_{0}$ such that $\left(
\pi,\phi\right)  :\mathcal{O}\longrightarrow V\subseteq M\times O_{d}$ is a
diffeomorphic trivialization, i.e. if $h\in O_{d}$ then
\[
\phi\left(  uh\right)  =\phi\left(  u\right)  h
\]
Next we define%
\[
\alpha:=\alpha^{\phi}:=\left[  \left(  \pi,\phi\right)  ^{-1}\right]  ^{\ast
}\omega
\]

\begin{lemma}
$\alpha$ satisfies the relation%
\[
\alpha\left(  v_{m},x_{g}\right)  =g^{-1}\alpha\left(  v_{m},0_{e}\right)
g+g^{-1}x_{g}%
\]

\end{lemma}

\begin{proof}
Let $\sigma_{t}$ be such that $\dot{\sigma}_{0}=v_{m}$ and let $\rho
_{t}=ge^{tg^{-1}x_{g}}.$ Then
\begin{align*}
\alpha\left(  v_{m},x_{g}\right)   &  =\left[  \left(  \pi,\phi\right)
^{-1}\right]  ^{\ast}\omega\left(  v_{m},x_{g}\right) \\
&  =\omega\left(  \left[  \left(  \pi,\phi\right)  ^{-1}\right]  _{\ast
}\left(  v_{m},x_{g}\right)  \right) \\
&  =\omega\left(  \frac{d}{dt}|_{t=0}\left(  \pi,\phi\right)  ^{-1}\left(
\sigma_{t},ge^{tg^{-1}x_{g}}\right)  \right) \\
&  =\omega\left(  \frac{d}{dt}|_{t=0}\left(  \pi,\phi\right)  ^{-1}\left(
\sigma_{t},e\right)  ge^{tg^{-1}x_{g}}\right) \\
&  =\omega\left(  \frac{d}{dt}|_{t=0}\left(  \pi,\phi\right)  ^{-1}\left(
\sigma_{t},e\right)  g+\frac{d}{dt}|_{t=0}\left(  \pi,\phi\right)
^{-1}\left(  m,e\right)  ge^{tg^{-1}x_{g}}\right) \\
&  =\omega\left(  \frac{d}{dt}|_{t=0}\left(  \pi,\phi\right)  ^{-1}\left(
\sigma_{t},e\right)  g\right)  +\omega\left(  \frac{d}{dt}|_{t=0}\left(
\pi,\phi\right)  ^{-1}\left(  m,g\right)  e^{tg^{-1}x_{g}}\right) \\
&  =g^{-1}\omega\left(  \frac{d}{dt}|_{t=0}\left(  \pi,\phi\right)
^{-1}\left(  \sigma_{t},e\right)  \right)  g+g^{-1}x_{g}%
\end{align*}

\end{proof}

Now let $\nabla:=\left(  \nabla^{M},\nabla^{O_{d}}\right)  $ where
$\nabla_{O_{d}}$ is the Levi-Civita covariant derivative determined by the
inner product $\left\langle A,B\right\rangle =Tr\left(  A^{t}B\right)  .$

We'll state the next lemma without proof

\begin{lemma}
If $X,Y$ are both either right or left invariant vector fields and $\nabla$ is
determined by a bi-invariant Riemannian metric, then%
\[
\nabla_{X}Y=\frac{1}{2}\left[  X,Y\right]
\]

\end{lemma}

\begin{corollary}
\label{cor.13.7}If $A,B\in so\left(  d\right)  $ then if $\tilde{A}\left(
g\right)  =gA$, $\hat{A}\left(  g\right)  =Ag$ then
\[
\nabla_{\tilde{A}}^{O_{d}}\tilde{B}=\frac{1}{2}\left[  \tilde{A},\tilde
{B}\right]  =\frac{1}{2}\widetilde{\left[  A,B\right]  }%
\]

and%
\[
\nabla_{\hat{A}}^{O_{d}}\hat{B}=\frac{1}{2}\left[  \hat{A},\hat{B}\right]
=\frac{1}{2}\widehat{\left[  B,A\right]  }%
\]

\end{corollary}

\begin{lemma}
$\nabla\alpha$ satisfies the relation%
\begin{align*}
\nabla\alpha\left(  \left(  v_{m},x_{g}\right)  \otimes\left(  w_{m}%
,y_{g}\right)  \right)   &  =g^{-1}\nabla\alpha\left(  \left(  v_{m}%
,0_{e}\right)  \otimes\left(  w_{m},0_{e}\right)  \right)  g-g^{-1}x_{g}%
g^{-1}\alpha\left(  w_{m},0_{e}\right)  g\\
&  +g^{-1}\alpha\left(  w_{m},0_{e}\right)  x_{g}-\frac{1}{2}g^{-1}\left[
x_{g}g^{-1}y_{g}-y_{g}g^{-1}x_{g}\right]
\end{align*}

\end{lemma}

\begin{proof}
We have%
\[
\nabla\alpha\left(  \left(  v_{m},x_{g}\right)  \otimes\left(  w_{m}%
,y_{g}\right)  \right)  =\left[  v_{m},x_{g}\right]  \alpha\left(  W,Y\right)
-\alpha\left(  \nabla_{\left(  v_{m},x_{g}\right)  }\left(  W,Y\right)
\right)
\]
where $W$ and $Y$ are any vector fields with $W\left(  m\right)  =w_{m}$ and
$Y\left(  g\right)  =y_{g}$. We'll choose $Y\left(  h\right)  =hg^{-1}%
y_{g}=\widetilde{g^{-1}y_{g}}\left(  h\right)  $ and $\rho_{t}=ge^{tg^{-1}%
x_{g}}.$ Let $\sigma_{t}$ be such that $\sigma_{0}=m,\dot{\sigma}_{0}=v_{m}$.

Also, note that $\rho_{t}^{-1}Y\left(  \rho_{t}\right)  =\rho_{t}^{-1}\rho
_{t}g^{-1}y_{g}=g^{-1}y_{g}$. Examining the first term, we have%
\begin{align*}
\left[  v_{m},x_{g}\right]  \alpha\left(  W,Y\right)   &  =\frac{d}{dt}%
|_{t=0}\alpha\left(  W\left(  \sigma_{t}\right)  ,Y\left(  \rho_{t}\right)
\right) \\
&  =\frac{d}{dt}|_{t=0}\alpha\left(  W\left(  \sigma_{t}\right)  ,Y\left(
\rho_{0}\right)  \right)  +\frac{d}{dt}|_{t=0}\alpha\left(  W\left(
\sigma_{0}\right)  ,Y\left(  \rho_{t}\right)  \right) \\
&  =g^{-1}\frac{d}{dt}|_{t=0}\alpha\left(  W\left(  \sigma_{t}\right)
,0_{e}\right)  g+\frac{d}{dt}|_{t=0}\rho_{t}^{-1}\alpha\left(  W\left(
\sigma_{0}\right)  ,0_{e}\right)  \rho_{t}+\frac{d}{dt}|_{t=0}\rho_{t}%
^{-1}Y\left(  \rho_{t}\right) \\
&  =g^{-1}\frac{d}{dt}|_{t=0}\alpha\left(  W\left(  \sigma_{t}\right)
,0_{e}\right)  g+\frac{d}{dt}|_{t=0}\rho_{t}^{-1}\alpha\left(  W\left(
\sigma_{0}\right)  ,0_{e}\right)  \rho_{t}\\
&  =g^{-1}\left[  v_{m},0_{e}\right]  \alpha\left(  W,0\right)  g-g^{-1}%
x_{g}g^{-1}\alpha\left(  w_{m},0_{e}\right)  g+g^{-1}\alpha\left(  w_{m}%
,0_{e}\right)  x_{g}\\
&  =g^{-1}\nabla\alpha\left(  \left(  v_{m},0_{e}\right)  \otimes\left(
w_{m},0_{e}\right)  \right)  g+g^{-1}\alpha\left(  \nabla_{v_{m}}%
W,0_{e}\right)  g-g^{-1}x_{g}g^{-1}\alpha\left(  w_{m},0_{e}\right)
g+g^{-1}\alpha\left(  w_{m},0_{e}\right)  x_{g}%
\end{align*}
Examining the second term, we have%
\[
\alpha\left(  \nabla_{\left(  v_{m},x_{g}\right)  }\left(  W,Y\right)
\right)  =\alpha\left(  \nabla_{v_{m}}W,\nabla_{x_{g}}Y\right)
\]
Lets first compute $\nabla_{x_{g}}Y$. We note that $X\left(  h\right)
=hg^{-1}x_{g}=\widetilde{g^{-1}x_{g}}\left(  h\right)  $ has the property that
$X\left(  g\right)  =x_{g}.$ Thus%
\begin{align*}
\left(  \nabla_{X}Y\right)  \left(  g\right)   &  =\frac{1}{2}\left[
XY-YX\right]  \left(  g\right) \\
&  =\frac{1}{2}\widetilde{\left[  g^{-1}x_{g}g^{-1}y_{g}-g^{-1}y_{g}%
g^{-1}x_{g}\right]  }\left(  g\right) \\
&  =\frac{1}{2}\left[  x_{g}g^{-1}y_{g}-y_{g}g^{-1}x_{g}\right]
\end{align*}
Thus we have%
\begin{align*}
\alpha\left(  \nabla_{\left(  v_{m},x_{g}\right)  }\left(  W,Y\right)
\right)   &  =\alpha\left(  \nabla_{v_{m}}W,\nabla_{x_{g}}Y\right) \\
&  =g^{-1}\alpha\left(  \nabla_{v_{m}}W,0_{e}\right)  g+\frac{1}{2}%
g^{-1}\left[  x_{g}g^{-1}y_{g}-y_{g}g^{-1}x_{g}\right]
\end{align*}
Putting the two together, we have%
\begin{align*}
\nabla\alpha\left(  \left(  v_{m},x_{g}\right)  \otimes\left(  w_{m}%
,y_{g}\right)  \right)   &  =\left[  v_{m},x_{g}\right]  \alpha\left(
W,Y\right)  -\alpha\left(  \nabla_{\left(  v_{m},x_{g}\right)  }\left(
W,Y\right)  \right) \\
&  =g^{-1}\nabla\alpha\left(  \left(  v_{m},0_{e}\right)  \otimes\left(
w_{m},0_{e}\right)  \right)  g+g^{-1}\alpha\left(  \nabla_{v_{m}}%
W,0_{e}\right)  g\\
&  -g^{-1}x_{g}g^{-1}\alpha\left(  w_{m},0_{e}\right)  g+g^{-1}\alpha\left(
w_{m},0_{e}\right)  x_{g}\\
&  -\left(  g^{-1}\alpha\left(  \nabla_{v_{m}}W,0_{e}\right)  g+\frac{1}%
{2}g^{-1}\left[  x_{g}g^{-1}y_{g}-y_{g}g^{-1}x_{g}\right]  \right) \\
&  =g^{-1}\nabla\alpha\left(  \left(  v_{m},0_{e}\right)  \otimes\left(
w_{m},0_{e}\right)  \right)  g-g^{-1}x_{g}g^{-1}\alpha\left(  w_{m}%
,0_{e}\right)  g\\
&  +g^{-1}\alpha\left(  w_{m},0_{e}\right)  x_{g}-\frac{1}{2}g^{-1}\left[
x_{g}g^{-1}y_{g}-y_{g}g^{-1}x_{g}\right]
\end{align*}

\end{proof}

Now we are ready to prove the theorem.

\begin{proof}
[Proof of Theorem]Let
\[
\left(  a_{s},\mathbb{A}_{s,t}\right)  :=\int\alpha\left(  d\left(  \left(
y,0_{e}\right)  ,\left(  y^{\dag},\acute{0}\right)  \right)  \right)  .
\]
This is a rough path living in $so\left(  d\right)  \oplus so\left(  d\right)
^{\otimes2}$. Note that $\mathbb{A}_{s,t}\approx\left(  \alpha\circ
y_{s}^{\dag}\right)  ^{\otimes2}\mathbb{X}_{s,t}.$ We can then solve the rough
differential equation%
\[
dg=V_{d\mathbf{A}}\left(  g\right)
\]
with $g_{0}=\phi\left(  u_{0}\right)  $ and $V_{a}\left(  g\right)
=-ag=\widehat{-a}\left(  g\right)  .$ If we then define%
\[
\acute{g}_{s}:=-\alpha\left(  y_{s}^{\dag}\left(  \cdot\right)  \right)  g_{s}%
\]
it is easy to check that the pair $\left(  g_{s},\acute{g}_{s}\right)  $ is a
rough path on $O_{d}$ controlled by $\mathbf{X}.$We will now show that
\[
\left(  u_{s},\acute{u}_{s}\right)  :=\left[  \left(  \pi,\phi\right)
^{-1}\right]  _{\ast}\left(  \left(  y_{s,}g_{s}\right)  ,\left(  y_{s,}%
^{\dag}\acute{g}_{s}\right)  \right)
\]
is parallel translation. To show this, it suffices to show that
\[
\left[  \int\alpha\left(  d\left(  \left(  y,g\right)  ,\left(  y^{\dag
},\acute{g}\right)  \right)  \right)  \right]  _{s}=\left(  0,\acute
{0}\right)
\]
By definition of $g_{s,}$we have
\begin{align*}
\exp_{g_{s}}^{-1}g_{t}  &  \approx-a_{s,t}g_{s}+\left(  \nabla_{\widehat{-a}%
}\widehat{-b}\right)  \left(  g_{s}\right)  |_{a\otimes b=\mathbb{A}_{s,t}}\\
&  \approx-a_{s,t}g_{s}+\left(  \nabla_{\widehat{-a}}\widehat{-b}\right)
\left(  g_{s}\right)  |_{a\otimes b=\left(  \alpha\circ y_{s}^{\dag}\right)
^{\otimes2}\mathbb{X}_{s,t}}%
\end{align*}
From corollary \ref{cor.13.7} we have
\[
\left(  \nabla_{\widehat{-a}}\widehat{-b}\right)  \left(  g_{s}\right)
=\frac{1}{2}\widehat{\left(  ab-ba\right)  }\left(  g_{s}\right)  =-\frac
{1}{2}\left(  ab-ba\right)  g_{s}%
\]
Now%
\[
\left[  \int\alpha\left(  d\left(  \left(  y,g\right)  ,\left(  y^{\dag
},\acute{g}\right)  \right)  \right)  \right]  _{s,t}^{1}\approx\alpha\left(
\exp_{y_{s}}^{-1}y_{t},\exp_{g_{s}}^{-1}g_{t}\right)  +\nabla\alpha\left(
\left(  y_{s}^{\dag},\acute{g}_{s}\right)  ^{\otimes2}\mathbb{X}_{s,t}\right)
\]
We have%
\begin{align*}
\alpha\left(  \exp_{y_{s}}^{-1}y_{t},\exp_{g_{s}}^{-1}g_{t}\right)   &
=g_{s}^{-1}\alpha\left(  \exp_{y_{s}}^{-1}y_{t},0_{e}\right)  g_{s}+g_{s}%
^{-1}\exp_{g_{s}}^{-1}g_{t}\\
&  \approx g_{s}^{-1}\alpha\left(  \exp_{y_{s}}^{-1}y_{t},0_{e}\right)
g_{s}+g_{s}^{-1}\left[  -a_{s,t}g_{s}-\frac{1}{2}\left(  ab-ba\right)
g_{s}|_{a\otimes b=\left(  \alpha\circ y_{s}^{\dag}\right)  ^{\otimes
2}\mathbb{X}_{s,t}}\right] \\
&  \approx g_{s}^{-1}\alpha\left(  \exp_{y_{s}}^{-1}y_{t},0_{e}\right)
g_{s}\\
&  +g_{s}^{-1}\left[  -\left[  \alpha\left(  \exp_{y_{s}}^{-1}y_{t}%
,0_{e}\right)  +\nabla\alpha\left(  \left(  y_{s},0_{e}\right)  ^{\otimes
2}\mathbb{X}_{s,t}\right)  \right]  g_{s}-\frac{1}{2}\left(  ab-ba\right)
g_{s}|_{a\otimes b=\left(  \alpha\circ y_{s}^{\dag}\right)  ^{\otimes
2}\mathbb{X}_{s,t}}\right] \\
&  =-g_{s}^{-1}\nabla\alpha\left(  \left(  y_{s},0_{e}\right)  ^{\otimes
2}\mathbb{X}_{s,t}\right)  g_{s}-\frac{1}{2}g_{s}^{-1}\left(  ab-ba\right)
g_{s}|_{a\otimes b=\left(  \alpha\circ y_{s}^{\dag}\right)  ^{\otimes
2}\mathbb{X}_{s,t}}%
\end{align*}
Therefore%
\begin{align*}
&  \left[  \int\alpha\left(  d\left(  \left(  y,g\right)  ,\left(  y^{\dag
},\acute{g}\right)  \right)  \right)  \right]  _{s,t}^{1}\\
&  \approx\alpha\left(  \exp_{y_{s}}^{-1}y_{t},\exp_{g_{s}}^{-1}g_{t}\right)
+\nabla\alpha\left(  \left(  y_{s}^{\dag},\acute{g}_{s}\right)  ^{\otimes
2}\mathbb{X}_{s,t}\right) \\
&  \approx-g_{s}^{-1}\nabla\alpha\left(  \left(  y_{s}^{\dag},0_{e}\right)
^{\otimes2}\mathbb{X}_{s,t}\right)  g_{s}-\frac{1}{2}g_{s}^{-1}\left(
ab-ba\right)  g_{s}|_{a\otimes b=\left(  \alpha\circ y_{s}^{\dag}\right)
^{\otimes2}\mathbb{X}_{s,t}}+\nabla\alpha\left(  \left(  y_{s}^{\dag}%
,\acute{g}_{s}\right)  ^{\otimes2}\mathbb{X}_{s,t}\right) \\
&  \approx-g_{s}^{-1}\nabla\alpha\left(  \left(  y_{s}^{\dag}a,0_{e}\right)
\mathbb{\otimes}\left(  y_{s}^{\dag}b,0_{e}\right)  \right)  g_{s}|_{a\otimes
b=\mathbb{X}_{s,t}}-\frac{1}{2}g_{s}^{-1}\left(  ab-ba\right)  g_{s}%
|_{a\otimes b=\left(  \alpha\circ y_{s}^{\dag}\right)  ^{\otimes2}%
\mathbb{X}_{s,t}}\\
&  +\nabla\alpha\left(  \left(  y_{s}^{\dag}a,\acute{g}_{s}a\right)
\otimes\left(  y_{s}^{\dag}b,\acute{g}_{s}b\right)  \right)  |_{a\otimes
b=\mathbb{X}_{s,t}}\\
&  \approx-g_{s}^{-1}\nabla\alpha\left(  \left(  y_{s}^{\dag}a,0_{e}\right)
\mathbb{\otimes}\left(  y_{s}^{\dag}b,0_{e}\right)  \right)  g_{s}-\frac{1}%
{2}g_{s}^{-1}\left[  \alpha\left(  y_{s}^{\dag}a,0_{e}\right)  \alpha\left(
y_{s}^{\dag}b,0_{e}\right)  -\alpha\left(  y_{s}^{\dag}b,0_{e}\right)
\alpha\left(  y_{s}^{\dag}a,0_{e}\right)  \right]  g_{s}\\
&  +\nabla\alpha\left(  \left(  y_{s}^{\dag}a,\acute{g}_{s}a\right)
\otimes\left(  y_{s}^{\dag}b,\acute{g}_{s}b\right)  \right)  |_{a\otimes
b=\mathbb{X}_{s,t}}\\
&  =Ad_{g_{s}^{-1}}\left[  -\nabla\alpha\left(  \left(  y_{s}^{\dag}%
a,0_{e}\right)  \mathbb{\otimes}\left(  y_{s}^{\dag}b,0_{e}\right)  \right)
-\frac{1}{2}\left[  \alpha\left(  y_{s}^{\dag}a,0_{e}\right)  \alpha\left(
y_{s}^{\dag}b,0_{e}\right)  -\alpha\left(  y_{s}^{\dag}b,0_{e}\right)
\alpha\left(  y_{s}^{\dag}a,0_{e}\right)  \right]  \right] \\
&  +\nabla\alpha\left(  \left(  y_{s}^{\dag}a,\acute{g}_{s}a\right)
\otimes\left(  y_{s}^{\dag}b,\acute{g}_{s}b\right)  \right)  |_{a\otimes
b=\mathbb{X}_{s,t}}\\
&  =Ad_{g_{s}^{-1}}\left[
\begin{array}
[c]{c}%
-\nabla\alpha\left(  \left(  y_{s}^{\dag}a,0_{e}\right)  \mathbb{\otimes
}\left(  y_{s}^{\dag}b,0_{e}\right)  \right)  -\frac{1}{2}\left[
\alpha\left(  y_{s}^{\dag}a,0_{e}\right)  \alpha\left(  y_{s}^{\dag}%
b,0_{e}\right)  -\alpha\left(  y_{s}^{\dag}b,0_{e}\right)  \alpha\left(
y_{s}^{\dag}a,0_{e}\right)  \right] \\
+\nabla\alpha\left(  \left(  y_{s}^{\dag}a,0_{e}\right)  \mathbb{\otimes
}\left(  y_{s}^{\dag}b,0_{e}\right)  \right)  -\acute{g}_{s}ag_{s}^{-1}%
\alpha\left(  y_{s}^{\dag}b,0_{e}\right) \\
+\alpha\left(  y_{s}^{\dag}b,0_{e}\right)  \acute{g}_{s}ag_{s}^{-1}-\frac
{1}{2}\left[  \acute{g}_{s}ag_{s}^{-1}\acute{g}_{s}b-\acute{g}_{s}bg_{s}%
^{-1}g_{s}a\right]  g_{s}^{-1}%
\end{array}
\right]  |_{a\otimes b=\mathbb{X}_{s,t}}\\
&  =Ad_{g^{-1}}\left[
\begin{array}
[c]{c}%
-\frac{1}{2}\left[  \alpha\left(  y_{s}^{\dag}a,0_{e}\right)  \alpha\left(
y_{s}^{\dag}b,0_{e}\right)  -\alpha\left(  y_{s}^{\dag}b,0_{e}\right)
\alpha\left(  y_{s}^{\dag}a,0_{e}\right)  \right]  -\acute{g}_{s}ag_{s}%
^{-1}\alpha\left(  y_{s}^{\dag}b,0_{e}\right) \\
+\alpha\left(  y_{s}^{\dag}b,0_{e}\right)  \acute{g}_{s}ag_{s}^{-1}-\frac
{1}{2}\left[  \acute{g}_{s}ag_{s}^{-1}\acute{g}_{s}b-\acute{g}_{s}bg_{s}%
^{-1}g_{s}a\right]  g_{s}^{-1}%
\end{array}
\right]  |_{a\otimes b=\mathbb{X}_{s,t}}\\
&  =Ad_{g^{-1}}\left[
\begin{array}
[c]{c}%
-\frac{1}{2}\left[  \alpha\left(  y_{s}^{\dag}a,0_{e}\right)  \alpha\left(
y_{s}^{\dag}b,0_{e}\right)  -\alpha\left(  y_{s}^{\dag}b,0_{e}\right)
\alpha\left(  y_{s}^{\dag}a,0_{e}\right)  \right]  +\alpha\left(  y_{s}^{\dag
}a,0_{e}\right)  \alpha\left(  y_{s}^{\dag}b,0_{e}\right) \\
-\alpha\left(  y_{s}^{\dag}b,0_{e}\right)  \alpha\left(  y_{s}^{\dag}%
a,0_{e}\right)  -\frac{1}{2}\left[  \alpha\left(  y_{s}^{\dag}a,0_{e}\right)
\alpha\left(  y_{s}^{\dag}b,0_{e}\right)  -\alpha\left(  y_{s}^{\dag}%
b,0_{e}\right)  \alpha\left(  y_{s}^{\dag}a,0_{e}\right)  \right]
\end{array}
\right]  |_{a\otimes b=\mathbb{X}_{s,t}}\\
&  =Ad_{g^{-1}}\left[  0\right]  |_{a\otimes b=\mathbb{X}_{s,t}}\\
&  =0
\end{align*}

Next we have to check that the derivative process of $\int\alpha\left(
d\left(  \left(  y,g\right)  ,\left(  y^{\dag},\acute{g}\right)  \right)
\right)  $ is 0. By definition, it is
\[
\alpha\circ\left(  y_{s}^{\dag},\acute{g}_{s}\right)
\]
Acting on a vector $v$ in $\mathbb{R}^{n}$ it is given by%
\begin{align*}
&  \alpha\circ\left(  y_{s}^{\dag},\acute{g}_{s}\right)  \left(  v\right) \\
&  =\alpha\left(  y_{s}^{\dag}v,\acute{g}_{s}v\right) \\
&  =g_{s}^{-1}\alpha\left(  y_{s}^{\dag}v,0_{e}\right)  g_{s}+g_{s}^{-1}%
\acute{g}_{s}v\\
&  =g_{s}^{-1}\alpha\left(  y_{s}^{\dag}v,0_{e}\right)  g_{s}-g_{s}^{-1}%
\alpha\left(  y_{s}^{\dag}v,0_{e}\right)  g_{s}\\
&  =0
\end{align*}

\end{proof}

We have%
\begin{align*}
0  &  =\left[  \int\alpha\left(  d\left(  \left(  y,g\right)  ,\left(
y^{\dag},\acute{g}\right)  \right)  \right)  \right]  _{s,t}^{1}\\
&  \approx\alpha\left(  \exp_{y_{s}}^{-1}y_{t},\exp_{g_{s}}^{-1}g_{t}\right)
+\nabla\alpha\left(  \left(  y_{s}^{\dag},\acute{g}_{s}\right)  ^{\otimes
2}\mathbb{X}_{s,t}\right) \\
&  \approx g_{s}^{-1}\alpha\left(  \exp_{y_{s}}^{-1}y_{t},0_{e}\right)
g_{s}+g_{s}^{-1}\exp_{g_{s}}^{-1}g_{t}\\
&  +g_{s}^{-1}\nabla\alpha\left(  \left(  y_{s}^{\dag}a,0_{e}\right)
\mathbb{\otimes}\left(  y_{s}^{\dag}b,0_{e}\right)  \right)  g_{s}-g_{s}%
^{-1}\acute{g}_{s}ag_{s}^{-1}\alpha\left(  y_{s}^{\dag}b,0_{e}\right)  g_{s}\\
&  +g_{s}^{-1}\alpha\left(  y_{s}^{\dag}b,0_{e}\right)  \acute{g}_{s}%
a-\frac{1}{2}\left[  g_{s}^{-1}\acute{g}_{s}ag_{s}^{-1}\acute{g}_{s}%
b-g_{s}^{-1}\acute{g}_{s}bg_{s}^{-1}\acute{g}_{s}a\right]
\end{align*}
Which means%
\begin{align*}
\exp_{g_{s}}^{-1}g_{t}  &  \approx-\alpha\left(  \exp_{y_{s}}^{-1}y_{t}%
,0_{e}\right)  g_{s}-\nabla\alpha\left(  \left(  y_{s}^{\dag}a,0_{e}\right)
\mathbb{\otimes}\left(  y_{s}^{\dag}b,0_{e}\right)  \right)  g_{s}+\acute
{g}_{s}ag_{s}^{-1}\alpha\left(  y_{s}^{\dag}b,0_{e}\right)  g_{s}\\
&  -\alpha\left(  y_{s}^{\dag}b,0_{e}\right)  \acute{g}_{s}a+\frac{1}%
{2}\left[  \acute{g}_{s}ag_{s}^{-1}\acute{g}_{s}b-\acute{g}_{s}bg_{s}%
^{-1}g_{s}a\right] \\
&  \approx a_{s,t}g_{s}+\acute{g}_{s}ag_{s}^{-1}\alpha\left(  y_{s}^{\dag
}b,0_{e}\right)  g_{s}-\alpha\left(  y_{s}^{\dag}b,0_{e}\right)  \acute{g}%
_{s}a+\frac{1}{2}\left[  \acute{g}_{s}ag_{s}^{-1}\acute{g}_{s}b-\acute{g}%
_{s}bg_{s}^{-1}\acute{g}_{s}a\right]
\end{align*}

\input{end}

\bibliographystyle{plain}
\bibliography{CRPI}

\providecommand{\bysame}{\leavevmode\hbox to3em{\hrulefill}\thinspace}
\providecommand{\MR}{\relax\ifhmode\unskip\space\fi MR }
\providecommand{\MRhref}[2]{%
  \href{http://www.ams.org/mathscinet-getitem?mr=#1}{#2}
}
\providecommand{\href}[2]{#2}
\begin{thebibliography}{10}

\bibitem{Blanchet}
J.~{Blanchet}, X.~{Chen}, and J.~{Dong}, \emph{{$\epsilon$-Strong Simulation
  for Multidimensional Stochastic Differential Equations via Rough Path
  Analysis}}, ArXiv e-prints (2014).

\bibitem{Boe}
H.~{Boedihardjo}, X.~{Geng}, and Z.~{Qian}, \emph{Quasi-sure existence of
  gaussian rough paths and large deviation principles for capacities}, ArXiv
  e-prints (2013).

\bibitem{CDL13}
T.~{Cass}, B.~K. {Driver}, and C.~{Litterer}, \emph{{Constrained Rough Paths}},
  ArXiv e-prints (2014).

\bibitem{CF}
Thomas Cass and Peter Friz, \emph{Densities for rough differential equations
  under {H}\"ormander's condition}, Ann. of Math. (2) \textbf{171} (2010),
  no.~3, 2115--2141. \MR{2680405 (2011f:60109)}

\bibitem{CFV}
Thomas Cass, Peter Friz, and Nicolas Victoir, \emph{Non-degeneracy of {W}iener
  functionals arising from rough differential equations}, Trans. Amer. Math.
  Soc. \textbf{361} (2009), no.~6, 3359--3371. \MR{2485431 (2010g:60084)}

\bibitem{CHLT}
Thomas Cass, Martin Hairer, Christian Litterer, and Samy Tindel,
  \emph{Smoothness of the density for solutions to gaussian differential
  equations}, Ann. Prob., forthcoming, 2013.

\bibitem{Cass2012a}
Thomas Cass, Christian Litterer, and Terry Lyons, \emph{Rough paths on
  manifolds}, New trends in stochastic analysis and related topics,
  Interdiscip. Math. Sci., vol.~12, World Sci. Publ., Hackensack, NJ, 2012,
  pp.~33--88. \MR{2920195}

\bibitem{CLL}
\bysame, \emph{Integrability and tail estimates for gaussian rough differential
  equations}, Ann. Probab. \textbf{41} (2013), no.~4, 3026--3050.

\bibitem{Delaure}
F.~{Delarue} and R.~{Diel}, \emph{{Rough paths and 1d sde with a time dependent
  distributional drift. Application to polymers}}, ArXiv e-prints (2014).

\bibitem{Elworthy1982}
K.~D. Elworthy, \emph{Stochastic differential equations on manifolds}, London
  Mathematical Society Lecture Note Series, vol.~70, Cambridge University
  Press, Cambridge-New York, 1982. \MR{675100 (84d:58080)}

\bibitem{Emery1989}
Michel {\'E}mery, \emph{Stochastic calculus in manifolds}, Universitext,
  Springer-Verlag, Berlin, 1989, With an appendix by P.-A. Meyer. \MR{1030543
  (90k:58244)}

\bibitem{FH2014}
Peter~K. Friz and Martin Hairer, \emph{A course on rough paths}, Universitext,
  Springer, Cham, 2014, With an introduction to regularity structures.
  \MR{3289027}

\bibitem{FV}
Peter~K. Friz and Nicolas~B. Victoir, \emph{Multidimensional stochastic
  processes as rough paths}, Cambridge Studies in Advanced Mathematics, vol.
  120, Cambridge University Press, Cambridge, 2010, Theory and applications.
  \MR{2604669}

\bibitem{gubinelli}
M.~Gubinelli, \emph{Controlling rough paths}, J. Funct. Anal. \textbf{216}
  (2004), no.~1, 86--140. \MR{MR2091358 (2005k:60169)}

\bibitem{Hairer2014}
M.~Hairer, \emph{A theory of regularity structures}, Invent. Math. \textbf{198}
  (2014), no.~2, 269--504. \MR{3274562}

\bibitem{Hsu2002}
Elton~P. Hsu, \emph{Stochastic analysis on manifolds}, Graduate Studies in
  Mathematics, vol.~38, American Mathematical Society, Providence, RI, 2002.
  \MR{1882015 (2003c:58026)}

\bibitem{Ikeda1989}
Nobuyuki Ikeda and Shinzo Watanabe, \emph{Stochastic differential equations and
  diffusion processes}, second ed., North-Holland Mathematical Library,
  vol.~24, North-Holland Publishing Co., Amsterdam; Kodansha, Ltd., Tokyo,
  1989. \MR{1011252 (90m:60069)}

\bibitem{Inahama}
Y.~{Inahama}, \emph{{Short time kernel asymptotics for rough differential
  equation driven by fractional Brownian motion}}, ArXiv e-prints (2014).

\bibitem{Kelly}
D.~{Kelly} and I.~{Melbourne}, \emph{{Smooth approximation of stochastic
  differential equations}}, ArXiv e-prints (2014).

\bibitem{lyons-94}
Terry Lyons, \emph{Differential equations driven by rough signals. {I}. {A}n
  extension of an inequality of {L}. {C}. {Y}oung}, Math. Res. Lett. \textbf{1}
  (1994), no.~4, 451--464. \MR{96b:60150}

\bibitem{lyons-95}
\bysame, \emph{The interpretation and solution of ordinary differential
  equations driven by rough signals}, Stochastic analysis (Ithaca, NY, 1993),
  Amer. Math. Soc., Providence, RI, 1995, pp.~115--128. \MR{96d:34076}

\bibitem{lyons-98}
\bysame, \emph{Differential equations driven by rough signals}, Rev. Mat.
  Iberoamericana \textbf{14} (1998), no.~2, 215--310. \MR{2000c:60089}

\end{thebibliography}

\end{document}